\newtheorem{thm}{Theorem}[section]
\newtheorem{prop}[thm]{Proposition}
\newtheorem{lemma}[thm]{Lemma}
\newtheorem{corollary}[thm]{Corollary}
\newtheorem{conjecture}[thm]{Conjecture}
\newtheorem*{thm*}{Theorem}
\theoremstyle{definition}
\newtheorem{definition}[thm]{Definition}
\theoremstyle{remark}
\newtheorem{remark}[thm]{Remark}
\newtheorem{example}[thm]{Example}
\numberwithin{equation}{section}
\newcommand{\calc}{\mathcal{C}}
\newcommand{\cals}{\mathcal{S}}
\newcommand{\cald}{\mathcal{D}}
\newcommand{\ssi}{\Leftrightarrow}
\newcommand{\za}{\alpha}
\newcommand{\zb}{\beta}
\newcommand{\zd}{\delta}
\newcommand{\ze}{\epsilon}
\newcommand{\zg}{\gamma}
\newcommand{\zG}{\Gamma}
\newcommand{\zs}{\sigma}
\newcommand{\zS}{\Sigma}
\newcommand{\zL}{\Lambda}
\newcommand{\zO}{\Omega}
\newcommand{\kb}{\Bbbk}
\newcommand{\ot}{\leftarrow}
\newcommand{\Hom}{\textup{Hom}}
\newcommand{\add}{\textup{add}}
\newcommand{\rad}{\textup{rad}\,}
\newcommand{\id}{\textup{id}}
\newcommand{\pd}{\textup{pd}}
\newcommand{\Ext}{\textup{Ext}}
\newcommand{\End}{\textup{End}}
\newcommand{\cmp}{\textup{CMP}}
\newcommand{\scmp}{\underline{\cmp}}
\newcommand{\diag}{\textup{Diag}(\cals)}
\newcommand{\coker}{\textup{coker}}
\newcommand{\im}{\textup{im}}
\newcommand{\Aut}{\textup{Aut}}
\newcommand{\eins}{1}
\newcommand{\zwei}{2}
\newcommand{\drei}{3}
\newcommand{\vier}{4}
\newcommand{\fuenf}{5}
\newcommand{\sechs}{6}
\newcommand{\sieben}{7}
\newcommand{\acht}{8}
\newcommand{\neun}{9}
\newcommand{\zehn}{10}
\newcommand{\elf}{11}
\newcommand{\zwoelf}{12}
\newcommand{\dreizehn}{13}
\title{A geometric model for syzygies over  2-Calabi-Yau tilted algebras} 
\author{Ralf Schiffler}
\thanks{The first author was supported by the NSF grants  DMS-1800860,  DMS-205461, and by the University of Connecticut. The second author was supported in part by NSF grant DMS-2054255 and NSF Postdoctoral Fellowship MSPRF-1502881. This work was partially
supported by a grant from the Simons Foundation.  The authors would like to thank the Isaac Newton Institute for Mathematical Sciences for support and hospitality during the programme Cluster Algebras and Representation Theory when work on this paper was undertaken. This work was supported by: EPSRC Grant Number EP/R014604/1.}
\address{Department of Mathematics, University of Connecticut, Storrs, CT 06269-1009, USA}
\email{schiffler@math.uconn.edu}
\author{Khrystyna Serhiyenko}
\thanks{} 
\address{Department of Mathematics, University of Kentucky, Lexington, KY 40506-0027, USA }
\email{khrystyna.serhiyenko@uky.edu}
\subjclass[2010]{Primary  16G20, 
Secondary 13F60} 
\begin{document}
\maketitle

\setcounter{tocdepth}{2}

\begin{abstract}
In this article, we consider the class of 2-Calabi-Yau tilted algebras that are defined by a quiver with potential whose dual graph is a tree. We call these algebras \emph{dimer tree algebras} because they can also be realized as quotients of dimer algebras on a disc. These algebras are wild in general.  For every such algebra $B$, we construct a polygon $\cals$ with a checkerboard pattern in its interior that gives rise to a category $\diag$.  The indecomposable objects of $\diag$ are the 2-diagonals in $\cals$, and its morphisms are given by certain pivoting moves between the 2-diagonals. We conjecture that the category $\diag$ is equivalent to the stable syzygy category over the algebra $B$, such that the rotation of the polygon corresponds to the shift functor on the syzygies.  In particular, the number of indecomposable syzygies is finite and the projective resolutions are periodic.  We prove the conjecture in the special case where every chordless cycle in the quiver is of length three.  

As a consequence, we obtain an explicit description of the projective resolutions.  Moreover, we show that the syzygy category is equivalent to the 2-cluster category of type $\mathbb{A}$, and we introduce a new derived invariant for the algebra $B$ that can be read off easily from the quiver. 

\end{abstract}

\tableofcontents

\section{Introduction}\label{sect 1}
\subsubsection*{Overview} In this paper, we study the syzygy categories of certain 2-Calabi-Yau tilted algebras. Syzygies are the building blocks for free resolutions in commutative algebra as well as for projective resolutions of modules in representation theory of algebras. Every module has a projective resolution which may be thought of  as an approximation of the module by projectives. Conversely, any morphism between projectives defines a module via its cokernel. For any finite dimensional algebra, there are only finitely many indecomposable projectives but, in general, there is no hope for a classification of all indecomposable modules. 

It is therefore natural to study the subcategory of all syzygy modules over the algebra. In the extreme cases, it is possible that every module is a syzygy (if the algebra is self-injective) or that only the projective modules are syzygies (if the algebra is hereditary), and in general the behavior can be anywhere in between these two extremes. A particularly nice situation is when the number of indecomposable syzygies is finite. It is an open problem to classify all syzygy-finite algebras.

We are interested in 2-Calabi-Yau tilted algebras, a class of finite dimensional non-commutative algebras over a field, introduced in \cite{R} in the context of the categorification of cluster algebras by \cite{BMRRT, A}. These algebras are far-reaching generalizations of cluster-tilted algebras introduced in \cite{CCS,BMR} and studied extensively by a large number of authors, see for example the lecture notes \cite{Assem, BuMa} and references therein. 
For example, every (finite dimensional) Jacobian algebra of a quiver with potential is 2-Calabi-Yau tilted \cite{A}. These algebras also arise naturally in mathematical physics, for example, in relation to Postnikov diagrams and dimer models \cite{HK, Po, JKS, BKM,Pr}. Another family of examples of 2-Calabi-Yau tilted algebras are the 3-preprojective algebras of an algebra of global dimension at most 2, see \cite{IO}.

Keller and Reiten showed in \cite{KR} that 2-Calabi-Yau tilted algebras are Iwanaga-Gorenstein and that their stable syzygy categories are 3-Calabi-Yau. The Iwanaga-Gorenstein property implies that the syzygy category is equivalent to the category of Cohen-Macaulay modules over the algebra, which in turn is equivalent to  the singularity category \cite{Bu,O}. 
In this setting, Cohen-Macaulay modules are also the same as Gorenstein-projective modules. 
 Cohen-Macaulay modules play a central role in commutative algebra and algebraic geometry, in particular in the McKay correspondence and resolutions of singularities, see for example the textbooks \cite{BD,LW,Y}, the survey \cite{I}, and more recently in matrix factorization \cite{HIMO, M, PV}. 

In this paper, we restrict to a certain class of 2-Calabi-Yau tilted algebras $B$, which we call \emph{dimer tree algebras}, because they can be realized as a quotient of a dimer algebra on a disc. 
For example, algebras arising from the coordinate rings of the Grassmannians $\textup{Gr}(3,n)$ are dimer tree algebras.

 We give a complete description of the category of all syzygies $\scmp\,B$, including a description of objects, morphisms, shift, exact structure, and the Auslander-Reiten quiver. Furthermore, we construct explicit projective resolutions for the syzygies. In particular, we show that these algebras, which are generally
of wild representation type, only admit a finite number of indecomposable syzygies.

\subsubsection*{Main results} Let $B$ be a dimer tree algebra, that is, $B$ is a Jacobian algebra given by a quiver with potential such that  the dual graph of the quiver is a tree, see Definition~\ref{def Q}. We construct a combinatorial model for the syzygy category inside a polygon $\cals$ with $2N$ vertices.  Our polygon is equipped with a checkerboard pattern defined by a set of radical lines $\rho(i)$ corresponding to the vertices $i$ of the quiver. Then each 2-diagonal $\zg$ in $\cals$ corresponds to an indecomposable syzygy $M_\zg$  and its intersection with the radical lines gives a projective presentation of $M_\zg$. 

 We consider the category $\diag$ of 2-diagonals. The morphisms in $\diag$ are defined combinatorially via certain pivoting moves between the 2-diagonals. Moreover, $\diag$ is a triangulated category with shift functor given by the rotation by $\pi/N$.
If we omit the checkerboard pattern, this category has already appeared in \cite{BM}, where it was used to give a combinatorial model for the 2-cluster category of type $\mathbb{A}$.
\footnote{Elsewhere in the literature e.g. \cite{KR2}, this category is referred to as the 3-cluster category. We recall the definition in Section~\ref{sec25}.}

Before stating our main result, we introduce the following notation.
 Let $\zg$ be a 2-diagonal in $\cals$. Then $\zg$ crosses several of the radical lines $\rho(i)$ of the checkerboard pattern of $\cals.$ Each of these crossings has  degree $0$ or $1$ according to Definition \ref{def degree}. We define two projective $B$-modules $P_0(\zg)$ and $P_1(\zg)$ as follows. 
Let $P_0(\zg)=\oplus_iP(i)$, where the sum is over all vertices $i$ of $ Q$ such that the radical line $\rho(i)$ crosses $\zg$ in degree 0.
Similarly, let $P_1(\zg)=\oplus_jP(j)$, where the sum is over all vertices $j$ of $Q$ such that the radical line $\rho(j)$ crosses $\zg$ in degree 1. Let $\zO$ denote the syzygy functor.
\medskip

We are now ready to state our main conjecture and our main result.

\begin{conjecture}\label{main conj}
Let $B$ be a dimer tree algebra. For each 2-diagonal $\zg$ in $\cals$ there exists a morphism $f_\zg\colon P_1(\zg)\to P_0(\zg)$ such that the mapping $\zg\mapsto \coker f_\zg$ induces an equivalence of categories 
\[F\colon\diag \to \scmp \, B.\]
 Under this equivalence, the radical line $\rho(i)$ corresponds to the radical of the indecomposable projective $P(i)$ for all $i\in Q_0$. The clockwise rotation $R$ of $\cals$ corresponds to the inverse shift $\zO$ in $\scmp\,B$ and $R^2$ corresponds to the  inverse Auslander-Reiten translation $\tau^{-1}=\zO^2$. 
Thus
\[ F(\rho(i))=\rad P(i)\]
\[ F(R(\zg))= \zO\,F(\zg)\]
\[ F(R^2(\zg))= \tau^{-1}\,F(\zg)\]
Furthermore, $F$ maps the 2-pivots in $\diag$ to the irreducible morphisms in $\scmp\, B$, and the  meshes in $\diag$ to the Auslander-Reiten triangles in $\scmp\,B$.
\end{conjecture}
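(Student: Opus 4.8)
The plan is to construct the functor $F$ explicitly and then check the listed compatibilities, organising the construction as an induction that peels chordless cycles off $Q$ one at a time. First I would define, for every 2-diagonal $\zg$, the morphism $f_\zg\colon P_1(\zg)\to P_0(\zg)$ directly from the way $\zg$ travels through the checkerboard pattern: at a degree-$1$ crossing with $\rho(j)$ the corresponding summand $P(j)$ of $P_1(\zg)$ is mapped to the summands $P(i)$ of $P_0(\zg)$ at the nearby degree-$0$ crossings, the components being the paths in $Q$ that record the checkerboard cells $\zg$ passes through, taken with the sign prescribed by the potential $W$. One then has to show that $M_\zg:=\coker f_\zg$ is indecomposable and non-projective, that $f_\zg$ is a minimal projective presentation, and---the first substantial point---that $M_\zg$ is Cohen-Macaulay, so that $M_\zg\in\scmp\,B$; here the 1-Gorenstein property of $B$ and an explicit description of how the syzygy functor $\zO$ acts on presentations of this shape should be combined. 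With $F$ defined on objects, I would put it on morphisms by sending each 2-pivot to the map of cokernels induced by the evident morphism of presentations, and verify that the mesh relations of $\diag$ go to zero and that composition is respected, so that $F$ becomes a functor.

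\textbf{The induction.} For the equivalence itself I would induct on the number of vertices of the dual tree of $Q$, that is, on the number of chordless cycles. If the tree has more than one vertex, choose a leaf: a chordless cycle $C$ of length $d$ glued to the rest of $Q$ along a single arrow $\za$. Deleting $C$ yields a dimer tree algebra $B'$ whose polygon $\cals'$ is obtained from $\cals$ by collapsing a fan of $d-1$ consecutive boundary edges, together with their radical lines, to a single edge. I would then prove that $\scmp\,B$ and $\scmp\,B'$ are related in a way mirrored by the inclusion $\cals'\hookrightarrow\cals$: the 2-diagonals of $\cals$ that avoid the new region correspond to the 2-diagonals of $\cals'$, hence (inductively) to the syzygies inherited from $B'$, while the finitely many 2-diagonals crossing the new fan account for the extra Cohen-Macaulay $B$-modules supported near $C$; matching these two pictures, together with the two shift functors, closes the step. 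The base case is the smallest configuration, a single chordless cycle, where $B$ is a self-injective Nakayama algebra, $\scmp\,B$ is its stable module category, and the claim becomes the identification of that stable category with the 2-cluster category of type $\mathbb{A}$ on the associated polygon, which follows from \cite{BM} after an elementary computation of the syzygies of the simple modules. (An alternative induction on $\sum_C(\ell(C)-3)$ would instead triangulate one over-long chordless cycle and compare $B$ with the resulting algebra; either way the comparison of singularity categories is the heart of the matter.)

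\textbf{The remaining assertions.} Once $F$ is an equivalence intertwining the clockwise rotation $R$ with $\zO$, the other statements are essentially formal. The identity $F(R^{2}(\zg))=\tau^{-1}F(\zg)$ follows from $\zO^{2}=\tau^{-1}$ on $\scmp\,B$, which is a consequence of the 3-Calabi-Yau property of $\scmp\,B$ proved in \cite{KR} together with the Auslander-Reiten formula. That $F$ carries 2-pivots to irreducible morphisms and meshes to Auslander-Reiten triangles is then automatic: an equivalence of triangulated categories preserves Auslander-Reiten triangles, and by \cite{BM} the 2-pivots are precisely the arrows of the Auslander-Reiten quiver of $\diag$. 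Finally $F(\rho(i))=\rad P(i)$ is verified on the 2-diagonal lying closest to the vertex $i$: its cokernel is $\zO S(i)=\rad P(i)$, which one checks to be indecomposable for a dimer tree algebra, and the identification is then transported to all of the $\rho(i)$ along 2-pivots.

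\textbf{Main obstacle.} The genuine difficulty---and the reason only the case where every chordless cycle has length three is settled here---is the inductive step across a long chordless cycle. When $d=3$ the local module category at $C$ is rigid enough that $M_\zg$ and its syzygy can be pinned down by hand; for $d>3$ one needs, besides a proof of syzygy-finiteness that does not use the triangle hypothesis (perhaps a uniform bound on the Loewy length of syzygies extracted from 1-Gorensteinness), a functorial comparison of the singularity categories of $B$ and $B'$. Singularity and Cohen-Macaulay categories do not behave transparently under collapsing a fan of boundary edges, nor under triangulating a face, so making this comparison precise---perhaps by realising $B$ and $B'$ inside one 2-Calabi-Yau category and tracking the two cluster-tilting objects in the framework of \cite{KR}---is the step I expect to be the crux.
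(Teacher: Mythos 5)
First, note that the statement you are proving is stated in the paper only as a conjecture: the paper itself establishes it only under the additional hypothesis that every chordless cycle of $Q$ has length three, and it does so by a direct, non-inductive construction (an explicit definition of $f_\zg$ via valid paths and forward steps, exactness of $f_\zg\circ\bar f_{R\zg}$, homotopy invariance and indecomposability of $\coker f_\zg$, pivot morphisms realizing the 2-pivots, vanishing of nilpotent stable endomorphisms, construction of the Auslander--Reiten triangles, and finally a connectedness argument showing the image is the whole Auslander--Reiten quiver of $\scmp\,B$). Your proposal takes a genuinely different route, an induction on the number of chordless cycles, but as written it has a genuine gap rather than a complete argument.

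The gap is exactly where you locate it, and it is not a technicality that the rest of the plan can absorb. Deleting a leaf chordless cycle $C$ glued along an arrow $\za$ does not act on the checkerboard polygon in the local way you describe: the size of $\cals$ is $\sum_\za\textup{wt}(\za)$ where the weight of a boundary arrow is determined by the parity of its cycle path, and cycle paths of boundary arrows far from $C$ may run through $C$; after deleting $C$ they terminate at $\za$ (now a boundary arrow of $Q'$), so their parities and hence the weights can change. Thus $\cals'$ is not simply $\cals$ with a fan of $d-1$ boundary edges collapsed, and the proposed matching of 2-diagonals breaks already at the combinatorial level. On the algebraic side, the comparison of $\scmp\,B$ with $\scmp\,B'$ is equally unsupported: the gluing arrow $\za$ lies in relations of $B$ (it belongs to two chordless cycles), so the triangular-matrix/decomposition arguments that the paper invokes in Remark~\ref{rem relax} (which require a connecting arrow not contained in any relation) do not apply, and no functorial relation between the two singularity categories under this operation is available. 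Since the entire induction hinges on this step, and your "main obstacle" paragraph explicitly leaves it open, the proposal is a program rather than a proof; indeed, even in the length-three case the paper does not argue this way, but instead proves syzygy-finiteness and the equivalence simultaneously by exhibiting the whole Auslander--Reiten component explicitly. A secondary caution: your description of $f_\zg$ ("paths to the nearby degree-$0$ crossings with signs from the potential") is too coarse -- the correct components are the valid paths subject to the forward-step condition, the map is not generic, and the signs enter only in the modified map $\bar f_{R\zg}$ used for exactness -- so even the object-level construction would need the finer combinatorics (crossing sequences, rectangular/trapezoidal steps) that the paper develops before indecomposability and $\zO$-compatibility can be checked.
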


In this paper, we prove the conjecture in the following special case. We shall prove the full conjecture in a forthcoming paper using the model developed here.  Recall that a cycle in a graph is said to be chordless if it is equal to the induced subgraph on its vertices.

\begin{thm}\label{thm1}(Theorem \ref{main thm})
 The conjecture holds in the special case where every chordless cycle in $Q$ is of length 3. 
\end{thm}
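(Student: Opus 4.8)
The plan is to establish the equivalence $F\colon\diag\to\scmp\,B$ by checking it on objects, morphisms, and the triangulated structure, and the restriction to chordless $3$-cycles is what makes the combinatorics of the checkerboard pattern tractable. First I would set up the dictionary between the quiver $Q$ with potential and the polygon $\cals$: since every chordless cycle has length $3$, the dual graph of $Q$ being a tree means $N$ is essentially the number of such triangles plus one, and the radical lines $\rho(i)$ partition $\cals$ into cells indexed by the cycles of $Q$. I would verify directly that for each $i\in Q_0$ the line $\rho(i)$ is itself a $2$-diagonal, compute $P_0(\rho(i))$ and $P_1(\rho(i))$ from the degree rule in Definition~\ref{def degree}, and exhibit the map $f_{\rho(i)}\colon P_1(\rho(i))\to P_0(\rho(i))$ whose cokernel is $\rad P(i)$ — this is the base case anchoring the whole correspondence, and it should follow from the explicit shape of the projectives over a dimer tree algebra with only $3$-cycles, where $\rad P(i)$ has a transparent two-term structure.

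Next I would handle the general $2$-diagonal $\zg$. The key construction is the morphism $f_\zg\colon P_1(\zg)\to P_0(\zg)$: reading the crossings of $\zg$ with the radical lines in cyclic order along $\zg$ gives an ordered list of vertices with alternating degrees, and the components of $f_\zg$ should be the arrows (or paths up to the potential relations) of $Q$ connecting consecutive crossed vertices. I would then prove $M_\zg:=\coker f_\zg$ is indeed a syzygy — equivalently a Cohen--Macaulay $B$-module, using the Gorenstein property from \cite{KR} — by exhibiting it as $\zO$ of an explicitly identified module, or by checking the relevant $\Ext$-vanishing against all projectives. To show $F$ is well-defined and bijective on objects, I would argue that distinct $2$-diagonals give non-isomorphic modules (the multiset of crossed radical lines with degrees is an isomorphism invariant readable from a minimal projective presentation) and that every indecomposable non-projective syzygy arises this way; the latter is a counting/exhaustion argument, for which knowing the shift is rotation gives periodicity of projective resolutions and hence finiteness of the syzygy category.

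Then comes the functoriality and the triangulated structure. I would show a $2$-pivot $\zg\to\zg'$ in $\diag$ induces a genuine (irreducible) $B$-module morphism $M_\zg\to M_{\zg'}$ by comparing the two projective presentations — a $2$-pivot changes exactly one endpoint of the diagonal by moving it across a triangle, which should translate into a standard one-step modification of $f_\zg$ — and conversely that every irreducible morphism in $\scmp\,B$ is obtained this way, by identifying the Auslander--Reiten quiver of $\scmp\,B$ with the mesh quiver of $\diag$. For the shift: one checks $P_i(R\zg)$ against the standard formula for $\zO M_\zg$ in terms of a projective cover, so that $F\circ R\cong\zO\circ F$; combined with Keller--Reiten's $3$-Calabi--Yau property this forces $R^2$ to match $\tau^{-1}=\zO^2$. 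Finally, meshes map to AR-triangles because $\diag$ already carries the triangulated structure from \cite{BM} (the $2$-cluster category of type $\mathbb{A}$) and $F$ is an additive equivalence intertwining the shifts, so it transports triangles to triangles.

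The main obstacle, I expect, is constructing $f_\zg$ canonically and proving $\coker f_\zg$ is a syzygy with the \emph{correct} projective presentation — i.e.\ that the presentation $P_1(\zg)\to P_0(\zg)\to M_\zg\to 0$ is minimal and that $\zO M_\zg$ is again of the form $M_{\zg'}$ so that the process is closed under taking syzygies. This is where the hypothesis that all chordless cycles have length $3$ does the real work: it controls the local structure of the potential and of $\rad P(i)$ tightly enough that the arrows between consecutive crossings are unambiguous and the relations coming from the potential match exactly the collinearity/pivoting relations in the checkerboard polygon. Handling longer chordless cycles would require tracking longer paths and more intricate cancellation in the Jacobian ideal, which is precisely why the full Conjecture~\ref{main conj} is deferred to a sequel.
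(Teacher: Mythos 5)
Your global plan (cokernels of maps read off from crossings, radical lines mapping to $\rad P(i)$, rotation to $\zO$, pivots to irreducible maps, meshes to AR-triangles, then exhaustion) is the right skeleton, but two steps as you describe them would fail. First, the map $f_\zg$ is \emph{not} built only from arrows ``connecting consecutive crossed vertices.'' In the actual construction the matrix of $f_\zg$ has entries $P(j_t)\to P(i_s)$ for non-consecutive crossing pairs as well, given by the (unique) \emph{valid} path $i_s\leadsto j_t$ whenever the subsequence of the crossing sequence from $s$ to $t$ is forward, and these longer entries are forced: without them the two-term composition $P_1(R\zg)\to P_1(\zg)\to P_0(\zg)$ is not exact, so $\coker f_\zg$ is not a syzygy with $\zO M_\zg\cong M_{R\zg}$, and the whole rotation-equals-shift mechanism collapses. (This is precisely why the paper stresses that $f_\zg$ is not a generic map determined by $P_0(\zg)$ and $P_1(\zg)$; it also forces an auxiliary check that $\coker f_\zg$ is independent of the chosen representative of the homotopy class, since different representatives have genuinely different crossing sequences.)

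Second, your surjectivity argument is circular where it matters most. Periodicity of the resolutions of the modules $M_\zg$ only tells you about the family you constructed; since $B$ is wild, it does not exclude further indecomposable syzygies outside that family, so ``counting/exhaustion'' has no basis. Likewise, you cannot invoke ``$F$ is an additive equivalence intertwining the shifts'' to send meshes to AR-triangles, because full faithfulness and density are exactly what is at stake. The route that actually closes this is: prove that indecomposable $M_\zg$ admit no nonzero nilpotent endomorphisms in $\scmp\,B$ (a long matrix analysis of endomorphisms of the two-term complex $f_\zg$), deduce via the $3$-Calabi--Yau property that $\dim\Ext^1_{\scmp B}(\zO^2M_\zg,M_\zg)=1$, construct explicit short exact sequences realizing the meshes so that they become AR-triangles, conclude that the image of $\diag$ is a single finite AR-component of $\scmp\,B$, and finally rule out any other component by factoring an arbitrary morphism from a projective cover through $\tau^{-1}\rad P(i)$ (the projectives being projective--injective in $\cmp\,B$ and the radicals already lying in the component). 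Without these ingredients your outline does not yield density of $F$, nor the identification of $2$-pivots with all irreducible morphisms.
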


\subsubsection*{A small example}
Below we provide a small example; a bigger and more detailed one is given in section~\ref{sect 3.5}.
Let $B$ be the algebra given by the  quiver in Figure~\ref{fig:intro-ex} 
with potential the sum of the two chordless cycles. 
The Auslander-Reiten quiver of the syzygy category $\text{CMP}\,B$ is given below, and the Auslander-Reiten quiver of the stable syzygy category $\scmp\,B$ is obtained from this one by removing the projective modules.  

\[\xymatrix@R=10pt@C=10pt{&{\begin{smallmatrix}P(1)\end{smallmatrix}} \ar[dr] && {\begin{smallmatrix}P(4)\end{smallmatrix}}\ar[dr] && {\begin{smallmatrix}P(3)\end{smallmatrix}} \ar[dr] && {\begin{smallmatrix}P(2)\end{smallmatrix}}\ar[dr]\\
{\begin{smallmatrix}2\\3\end{smallmatrix}} \ar[ur]\ar[dr] && {\begin{smallmatrix}1\\2\end{smallmatrix}} \ar[ur]\ar[dr] && {\begin{smallmatrix}4\,\,\,\\ 1\,5\\2\end{smallmatrix}}\ar[ur]\ar[dr] && {\begin{smallmatrix}3\\4\end{smallmatrix}}\ar[ur]\ar[dr] && {\begin{smallmatrix}2\\3\end{smallmatrix}}\\
&{\begin{smallmatrix}2\end{smallmatrix}}\ar[ur]\ar[dr] && {\begin{smallmatrix}1\,5\\2\end{smallmatrix}}\ar[ur] && {\begin{smallmatrix}4\end{smallmatrix}}\ar[ur] && {\begin{smallmatrix}3\end{smallmatrix}}\ar[ur]\\
&&{\begin{smallmatrix}P(5)\end{smallmatrix}}\ar[ur]
}\]

The polygon $\mathcal{S}$ is shown in Figure~\ref{fig:intro-ex}. The arc $\zg$ as in the figure crosses radical lines 1,2, and 5.  This gives a morphism $f_{\zg}\colon P(2)\to P(1)\oplus P(5)$ between the projectives, whose cokernel is the corresponding syzygy $M_\zg=\begin{smallmatrix}1\,5\\2\end{smallmatrix}$.  Moreover, the projective resolution of $M_\zg$ can be constructed by applying the clockwise rotation $R$.   Thus, $R\,\zg$ is the radical line $\rho(1)$ and its associated syzygy $M_{R\zg}=\text{coker}\,f_{R\zg}\colon P(4)\to P(2)$ is the radical of the projective $P(1)$. Hence, $\Omega M_{\zg} = M_{R\zg}=\rad\, P(1)$ and the first few terms of the projective resolution of $M_\zg$ are as follows. 

\[\xymatrix{&\dots\ar[r]&P(4)\ar[r]^{f_{R\zg}}&P(2)\ar[r]^-{f_\zg}&P(1)\oplus P(5)\ar[r]&M_{\zg}\ar[r]&0}\]

\begin{figure}

 \begin{minipage}{.5\textwidth}
\[ \xymatrix@R40pt@C40pt{1\ar[r] & 2 \ar[d]\\4\ar[u]&3\ar[l]\ar[r] & 5\ar[ul]}\]  \end{minipage}%
    \begin{minipage}{0.5\textwidth}
\centerline{
 \scalebox{0.8}{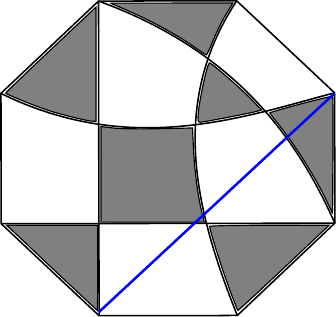}}
\end{minipage}
\caption{A quiver $Q$ together with its checkerboard polygon $\mathcal{S}$. The radical lines are labeled by the vertices of  $Q$. The crossing points of two radical lines represent arrows in the quiver, and the interior shaded regions represent chordless cycles in $Q$.}
\label{fig:intro-ex}
\end{figure}

\subsubsection*{Applications}

As an application, we obtain the following corollaries. The first one uses the results of \cite{BM}.

\begin{corollary}
 The category $\scmp B$ is equivalent to the 2-cluster category of type $\mathbb{A}_{N-2}.$ In particular, the number of indecomposable syzygies is $N(N-2)$.
\end{corollary}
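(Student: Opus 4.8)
The plan is to obtain the equivalence by composing the functor $F$ of Theorem~\ref{thm1} with an equivalence already available in the literature, and to settle the enumeration by an elementary count of 2-diagonals in a $2N$-gon. First I would invoke Theorem~\ref{thm1}: under the hypothesis that every chordless cycle of $Q$ has length $3$, the functor $F\colon\diag\to\scmp B$ is an equivalence of triangulated categories carrying the rotation of $\cals$ to the syzygy shift $\zO$. If one forgets the checkerboard pattern, $\diag$ is exactly the category whose indecomposable objects are the 2-diagonals of the $2N$-gon $\cals$ and whose morphisms are generated by 2-pivots; this is the category of \cite{BM}, where it is identified, as a triangulated category, with the 2-cluster category $\mathcal{C}_2(\mathbb{A}_n)$ for the value of $n$ for which $\cals$ is the Baur--Marsh model polygon. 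That value is $n=N-2$, since the model polygon for $\mathcal{C}_2(\mathbb{A}_n)$ has $2n+4$ vertices and $2N=2(N-2)+4$. Composing the two equivalences gives $\scmp B\simeq\mathcal{C}_2(\mathbb{A}_{N-2})$.

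For the last assertion, the number of indecomposable syzygies is the number of indecomposable objects of $\scmp B$, hence of $\diag$, hence the number of 2-diagonals of $\cals$. I would colour the $2N$ boundary vertices of $\cals$ alternately in two colours; a diagonal is a 2-diagonal exactly when it cuts $\cals$ into two polygons each with an even number of sides, which happens exactly when its two endpoints receive different colours. There are $N^2$ pairs of differently coloured vertices, of which exactly $2N$ are sides of $\cals$, so there are $N^2-2N=N(N-2)$ 2-diagonals, and therefore $N(N-2)$ indecomposable syzygies.

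I do not expect a genuine obstacle: once the definition of a 2-diagonal and the main theorem of \cite{BM} are transcribed, the equivalence is a formal composition and the enumeration is the elementary count above. The only point I would check with care is the bookkeeping of the Dynkin index --- that it is $\mathbb{A}_{N-2}$ rather than $\mathbb{A}_{N-1}$ or $\mathbb{A}_{N-3}$ --- and I would confirm it against the running example, where $N=4$ and both $\scmp B$ and $\mathcal{C}_2(\mathbb{A}_2)$ have $4\cdot 2=8$ indecomposable objects.
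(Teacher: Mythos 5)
Your proposal is correct and is essentially the paper's own argument: the corollary is obtained by composing the equivalence $F\colon\diag\to\scmp B$ of the main theorem with the Baur--Marsh equivalence (Theorem~\ref{thm BM}), which for a $2N$-gon gives the 2-cluster category of type $\mathbb{A}_{N-2}$, and the count $N(N-2)$ is exactly the elementary enumeration of 2-diagonals via the alternating colouring. The only cosmetic caveat is that your sanity check against the introductory example ($N=4$) involves a quiver with a chordless 4-cycle, so strictly it lies outside the hypothesis of Theorem~\ref{thm1}, though the numbers of course still agree.
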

Equivalently this category can also be described as the stable module category over the self-injective algebra  
$ \zL_n$
defined as the quotient of the path algebra of an oriented $n$-cycle by the $ \rad^{n-1} $.

The corollary shows that the size of the polygon $2N$ determines the syzygy category up to equivalence. In Corollary~\ref{cor size of S}, we provide a simple formula for $2N$ as a weighted sum over the boundary arrows of $Q$, where each arrow has weight 1 or 2. Since derived equivalent algebras have equivalent singularity categories, we obtain a new derived invariant that is easy to compute. 

\begin{corollary}
 The size of the polygon is a derived invariant for the algebra $B$.
\end{corollary}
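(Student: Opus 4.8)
The plan is to deduce this from Theorem~\ref{thm1}, in the form of the first Corollary stated above, together with the standard fact that the singularity category is invariant under derived equivalence.

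First, recall how the integer $N$ is attached to the algebra. Since $B$ is 2-Calabi-Yau tilted it is Gorenstein by \cite{KR}, so by \cite{Bu,O} the stable syzygy category $\scmp B$ is triangle-equivalent to the singularity category $D_{sg}(B)=D^b(\mathrm{mod}\,B)/K^b(\mathrm{proj}\,B)$. By the first Corollary above (which rests on Theorem~\ref{thm1} and \cite{BM}), $\scmp B$ is in turn triangle-equivalent to the $2$-cluster category of type $\mathbb{A}_{N-2}$, a category with exactly $N(N-2)$ indecomposable objects. Hence $D_{sg}(B)$ has precisely $N(N-2)$ indecomposable objects. As $\cals$ is a polygon we have $2N\ge 4$, i.e.\ $N\ge 2$, and the map $x\mapsto x(x-2)$ is strictly increasing, hence injective, on the integers $\ge 2$; therefore the number $N$ — and with it the polygon size $2N$, which by Corollary~\ref{cor size of S} also equals the weighted sum over the boundary arrows of $Q$ — is already determined by the algebra $B$ alone, independently of any chosen quiver-with-potential presentation.

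Second, let $B'$ be another dimer tree algebra in which every chordless cycle has length $3$, and suppose there is a triangle equivalence $D^b(\mathrm{mod}\,B')\simeq D^b(\mathrm{mod}\,B)$. Any such equivalence preserves perfect complexes — perfectness admits an intrinsic description, e.g.\ as compactness in the unbounded derived category — and therefore descends to a triangle equivalence $D_{sg}(B')\simeq D_{sg}(B)$ of the Verdier quotients. Writing $2N'$ for the size of the polygon attached to $B'$, the first step applied to both algebras gives $N(N-2)=N'(N'-2)$, and injectivity of $x\mapsto x(x-2)$ on the integers $\ge 2$ forces $N=N'$. Thus the polygon size is a derived invariant.

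I do not expect a genuine obstacle here: the argument is a formal concatenation of equivalences already available from the results quoted above. The only points that need a moment's attention are (i) that every equivalence occurring in the chain is at least additive, so that counting isomorphism classes of indecomposable objects is legitimate — which holds automatically since all of them are triangle equivalences — and (ii) the intrinsic characterization of perfect complexes used when passing to the Verdier quotient, which is classical.
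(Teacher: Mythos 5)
Your argument is correct and follows essentially the same route as the paper: derived equivalence preserves the singularity category, which by Buchweitz--Orlov is the stable syzygy category, identified (via Theorem~\ref{thm1} and \cite{BM}) with the $2$-cluster category of type $\mathbb{A}_{N-2}$, whence $N$ is recovered. The only addition is that you make explicit how $N$ is read off (counting the $N(N-2)$ indecomposables and using injectivity of $x\mapsto x(x-2)$), a detail the paper leaves implicit.
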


 The checkerboard pattern on $\cals$ determines the algebra $B$ completely.
On the other hand, two algebras may have the same size polygon while having  non-equivalent derived categories. It is an interesting problem to determine when two checkerboard patterns are derived equivalent.

\begin{corollary}(Corollary \ref{cor:536})
 The projective resolution of any syzygy is periodic of period $N$ or $2N$. An indecomposable syzygy $M_\zg$ has period $N$ if and only if the corresponding 2-diagonal $\zg$ is a diameter in $\cals.$
\end{corollary}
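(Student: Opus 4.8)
The plan is to derive the statement from Theorem~\ref{main thm}. That theorem provides an equivalence $F\colon\diag\to\scmp B$ with $F(R\zg)=\zO\,F(\zg)$ for every 2-diagonal $\zg$, so by iteration $F(R^k\zg)=\zO^k F(\zg)$ for all $k\ge 0$. Since $B$ is Gorenstein, $\scmp B$ is triangulated with $\zO^{-1}$ as suspension, so $\zO$ is an auto-equivalence of $\scmp B$; in particular, for an indecomposable non-projective syzygy $M_\zg=F(\zg)$ one has $\zO^pM_\zg\cong M_\zg$ in $\scmp B$ if and only if $F(R^p\zg)\cong F(\zg)$, and, as $F$ is an equivalence while distinct 2-diagonals are pairwise non-isomorphic indecomposable objects of $\diag$, this holds if and only if $R^p\zg=\zg$.

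Next I would translate this into periodicity of the actual minimal projective resolution of $M_\zg$. In a minimal projective resolution $\cdots\to P_1\to P_0\to M_\zg\to 0$, no higher syzygy $\zO^pM_\zg=\ker(P_{p-1}\to P_{p-2})$ has a nonzero projective direct summand, since such a summand would force the differential $P_p\to P_{p-1}$ to have an identity component, contradicting minimality; and a stable isomorphism between modules without projective summands is a genuine module isomorphism. Hence $\zO^pM_\zg\cong M_\zg$ as $B$-modules exactly when $R^p\zg=\zg$, and in that case the minimal resolution repeats with period dividing $p$. Therefore the period of $M_\zg$ equals the order of $\zg$ under the cyclic group generated by the rotation $R$.

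It remains to compute that order. The rotation $R$ acts on the $2N$ vertices of $\cals$ by $v\mapsto v+1$, so $R^{2N}=\id$ and $R^N$ is the antipodal map $v\mapsto v+N$. If $\zg=\{a,b\}$ is a 2-diagonal, then $R^p\zg=\zg$ forces either $p\equiv 0\ (\mathrm{mod}\ 2N)$, or $\{a+p,b+p\}=\{b,a\}$, that is $p\equiv b-a$ together with $2(b-a)\equiv 0\ (\mathrm{mod}\ 2N)$; since $a\ne b$, the latter occurs precisely when $b-a\equiv N$, i.e.\ when $\zg$ is a diameter, and then $p\equiv N$. So the order of $\zg$ is $N$ if $\zg$ is a diameter and $2N$ otherwise, which yields both the periodicity statement and the characterization of period $N$ for indecomposable syzygies; a projective module has the trivial resolution and may be discarded. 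For an arbitrary syzygy, after removing projective summands it is a direct sum of indecomposable syzygies, and the period of its minimal resolution is the least common multiple of the periods of the summands, hence $N$ if all summands are diameters and $2N$ as soon as one is not.

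The argument is short once Theorem~\ref{main thm} is available, and I do not anticipate a genuine obstacle. The only point demanding a little care is the passage from the stable isomorphism $\zO^pM_\zg\cong M_\zg$ to honest periodicity of the minimal projective resolution, which rests on the standard observation that the higher syzygies appearing in a minimal resolution carry no projective summands; the rest is the elementary rotation count in the $2N$-gon.
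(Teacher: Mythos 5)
Your overall route differs from the paper's: you deduce everything from the full equivalence of Theorem~\ref{main thm}, whereas the paper proves Corollary~\ref{cor:536} much earlier and more directly, by iterating Theorem~\ref{thm:omega} ($\zO M_\zg\cong M_{R\zg}$, an isomorphism of modules coming from the explicit minimal resolution built in Proposition~\ref{big-lemma}, minimality being guaranteed by Proposition~\ref{lem:cross}(a)) together with Proposition~\ref{prop:ind} and the obvious facts $R^{2N}=\id$, $R^N\zg=\zg$ for diameters. Your use of the main theorem is not circular (its proof does not rely on Corollary~\ref{cor:536}), and it does buy you something the paper leaves implicit, namely the ``only if'' half: since $F$ is an equivalence and distinct $2$-diagonals are non-isomorphic in $\diag$, you get $\zO^p M_\zg\cong M_\zg$ in $\scmp\,B$ exactly when $R^p\zg=\zg$, so the minimal period is exactly $N$ or $2N$. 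The price is that you invoke the hardest result of the paper for a statement that only needs Section~5.

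There is, however, one genuinely incorrect step in your translation from the stable category back to minimal resolutions: the claim that in a minimal projective resolution no higher syzygy can have a nonzero projective direct summand, ``since such a summand would force the differential to have an identity component.'' This is false for general algebras: for the path algebra of $1\to 2$ the minimal resolution of $S(1)$ is $0\to P(2)\to P(1)\to S(1)\to 0$, so $\zO S(1)=P(2)$ is projective while the differential lands in $\rad P(1)$. A stable isomorphism $\zO^p M_\zg\cong M_\zg$ therefore does not, by your argument alone, upgrade to a module isomorphism. In the present setting the conclusion is nevertheless true and the gap is easy to close: either quote Theorem~\ref{thm:omega} directly, which gives $\zO^k M_\zg\cong M_{R^k\zg}$ as modules (this is exactly what the paper does), or argue inside the Frobenius category $\cmp\,B$, where a projective (=\,projective-injective) summand of $\zO M$ would split off the projective cover $P_0$ and contradict minimality. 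As written, though, the justification you give for this step does not hold.
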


Recall that a module is rigid if it has no nontrivial self-extensions.
\begin{corollary}
(Corollary \ref{cor rigid})
 The indecomposable syzygies over $B$ are rigid $B$-modules.
\end{corollary}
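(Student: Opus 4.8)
The plan is to deduce rigidity of the indecomposable syzygies from the combinatorial model together with the identification of $\scmp\,B$ with the $2$-cluster category of type $\mathbb{A}_{N-2}$, which is available once Theorem~\ref{thm1} is established. Recall that in a triangulated category with shift $\zO$, a module $M_\zg$ is rigid precisely when $\Hom_{\scmp\,B}(M_\zg,\zO^2 M_\zg)=0$, since the stable syzygy category is $3$-Calabi-Yau (Keller--Reiten), so $\Ext^1_B(M_\zg,M_\zg)\cong\Hom_{\scmp\,B}(M_\zg,\zO^{-1}M_\zg)\cong D\,\Hom_{\scmp\,B}(M_\zg,\zO^{2}M_\zg)$. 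Under the equivalence $F$ of the main conjecture (proved in this case), $\zO$ corresponds to the rotation $R$ and $\zO^2$ to $R^2$, so it suffices to show that there is no morphism in $\diag$ from $\zg$ to $R^2\zg$.

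First I would recall the description of morphisms in $\diag$ in terms of sequences of $2$-pivots between $2$-diagonals; a nonzero morphism $\zg\to\delta$ exists only when $\delta$ lies in a suitable "pivoting cone" of $\zg$, and in particular when $\zg$ and $\delta$ do not cross. So the key geometric fact to verify is that a $2$-diagonal $\zg$ and its rotation $R^2\zg$ always cross in $\cals$ (or, in the degenerate diameter case, that $R^2\zg$ equals $\zg$ itself, which already forces the relevant Hom-space to behave correctly — a diameter is its own $\tau^{-1}$-orbit of length dividing two, and one checks directly that $\Hom(\zg,R^2\zg)=\Hom(\zg,\zg)$ reduces to scalars, giving no extension because the identity is not in the radical). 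For a non-diameter $2$-diagonal $\zg$ with endpoints $a,b$ among the $2N$ boundary vertices, $R^2$ shifts both endpoints by two positions in the same direction; a short interlacing argument on the cyclic order of $a,b,R^2a,R^2b$ shows the two chords intersect in the interior. One must be slightly careful with the notion of $2$-diagonal (the endpoints are constrained to have the correct parity/separation), but the parity is preserved by $R^2$ and the separation condition guarantees the endpoints of $\zg$ are far enough apart that shifting by two still produces a crossing.

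Having reduced to the statement "$\zg$ and $R^2\zg$ cross, hence $\Hom_{\diag}(\zg,R^2\zg)=0$", I would then invoke the part of Theorem~\ref{thm1} that identifies $2$-pivots with irreducible morphisms and, more importantly, the general principle that in the $2$-cluster category of type $\mathbb{A}$ two crossing diagonals have vanishing $\Ext^1$ in the shifted sense — equivalently, that the combinatorial Hom-spaces in $\diag$ vanish between crossing $2$-diagonals. This last fact is essentially the content of \cite{BM}'s model and I would cite it rather than reprove it. Translating back through $F$ gives $\Ext^1_B(M_\zg,M_\zg)=0$, i.e. $M_\zg$ is rigid.

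The main obstacle I expect is the bookkeeping around the two special cases and the exact definition of $2$-diagonal: I need to make sure that "$\zg$ crosses $R^2\zg$" is literally true for \emph{every} indecomposable object, including the $2$-diagonals that are close to being diameters, and that the $3$-Calabi-Yau duality is being applied with the correct shift (so that rigidity, which is about $\Ext^1=\Hom(-,\zO^{-1}-)$, indeed corresponds to $\Hom$ into $R^2$ rather than $R$ or $R^{-1}$). Once the conventions are pinned down, the crossing argument itself is elementary planar combinatorics, and the representation-theoretic input is entirely supplied by Theorem~\ref{thm1} and the $3$-Calabi-Yau property already recorded in the introduction.
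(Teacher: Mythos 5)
Your reduction of rigidity to a combinatorial Hom-vanishing in $\diag$ contains a genuine error in the duality step, and the combinatorial principle you then invoke is false. Since the Serre functor of the 3-Calabi-Yau category $\scmp\,B$ is $[3]=\tau[1]$, one has $\Ext^1_{\scmp B}(M,M)=\Hom(M,\zO^{-1}M)\cong D\Hom(M,M[2])=D\Hom_{\scmp B}(M,\tau M)=D\Hom_{\scmp B}(M,\zO^{-2}M)$, equivalently $D\Hom_{\scmp B}(\zO^{2}M,M)$. So rigidity corresponds to the vanishing of $\Hom_{\diag}(\zg,R^{-2}\zg)$ (equivalently of $\Hom_{\diag}(R^{2}\zg,\zg)$), \emph{not} of $\Hom_{\diag}(\zg,R^{2}\zg)$ as you claim. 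This is not a harmless sign convention: $\Hom_{\diag}(\zg,R^{2}\zg)=\Hom_{\scmp B}(M_\zg,\tau^{-1}M_\zg)$ is in general nonzero — in the mesh category the two composites $\zg\to\zg'\to R^{2}\zg$ and $\zg\to\zg''\to R^{2}\zg$ are identified up to sign by the mesh relation but are not individually zero whenever $\zg$ is not at the border of the Auslander--Reiten quiver (compare the AR quivers in Section 3.5). Relatedly, your key principle ``crossing 2-diagonals have vanishing Hom in $\diag$'' is not what the Baur--Marsh model says: crossings detect nonvanishing $\Ext^1$ in $\calc^2$ (this is exactly Corollary \ref{cor ext}), and they say nothing about vanishing of Hom; indeed $\zg$ and $R^{2}\zg$ typically cross while $\Hom_{\diag}(\zg,R^{2}\zg)\ne 0$. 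So the crossing argument, even if carried out, would not prove rigidity.

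The correct vanishing $\Hom_{\diag}(\zg,R^{-2}\zg)=0$ is of course true, but the clean way to get it — and the route the paper takes — is simply to quote Baur--Marsh: by Theorem \ref{thm BM} the category $\diag$ is the 2-cluster category of type $\mathbb{A}_{N-2}$, in which every indecomposable is rigid (Corollary \ref{lem rigid}); transporting through the equivalence $F$ of Theorem \ref{main thm} gives rigidity of $M_\zg$ in $\scmp\,B$, and then Corollary \ref{cor 82} (extension-closedness of $\cmp\,B$, Proposition \ref{prop extclosed}) upgrades this to rigidity in $\textup{mod}\,B$ — a step your write-up also uses implicitly when identifying $\Ext^1_B(M,M)$ with a stable Hom, so it should be cited explicitly. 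If you insist on a direct combinatorial argument, you must prove that there is no nonzero path in the mesh category from $\zg$ to $R^{-2}\zg=\tau\zg$ (morphisms only go ``forward'' in the translation quiver), rather than any statement about $R^{2}\zg$ or about crossings.
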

Moreover, we conjecture that the indecomposable syzygies are $\tau$-rigid in $\textup{mod}\,B$. This is of interest, since the $\tau$-rigid indecomposable modules correspond to cluster variables in the cluster algebra of $Q$ provided they are reachable by mutation.

\begin{corollary}
(Corollary \ref{cor ext})
 Let $M,N$ be indecomposable syzygies over $B$. Then the dimension of $\Ext^1_B(M,N)\oplus \Ext^1_B(N,M)$ is equal to the number of crossing points between the corresponding 2-diagonals. In particular, the dimension is either 1 or 0.
\end{corollary}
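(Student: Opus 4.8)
\emph{Proof idea.} Since a projective indecomposable syzygy is an injective object of the Frobenius category $\cmp\,B$ and corresponds to the zero object of $\diag$, we may assume $M$ and $N$ are non-projective; and if $M\cong N$ both sides vanish, the left by rigidity of indecomposable syzygies (Corollary~\ref{cor rigid}) and the right because a $2$-diagonal does not cross itself. So assume $M\not\cong N$. The plan is to transport the statement across the equivalence $F$ of Theorem~\ref{thm1} and reduce it to a combinatorial count of crossings.

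Because $B$ is $2$-Calabi--Yau tilted, it is Iwanaga--Gorenstein, $\scmp\,B$ is $3$-Calabi--Yau, and its Serre functor is $\zO^{-3}$ \cite{KR}; moreover $\cmp\,B$ is a Frobenius exact category, closed under extensions in $\textup{mod}\,B$, with the projective $B$-modules as projective--injective objects. Hence for syzygies $M,N$ one has $\Ext^1_B(M,N)\cong\underline{\Hom}(M,\zO^{-1}N)$ in $\scmp\,B$, and Serre duality gives $\Ext^1_B(N,M)\cong\underline{\Hom}(N,\zO^{-1}M)\cong D\,\underline{\Hom}(\zO^{-1}M,\zO^{-3}N)\cong D\,\Ext^2_B(M,N)$, using $\Ext^2_B(M,N)\cong\underline{\Hom}(M,\zO^{-2}N)$. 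Writing $M=F(\zg)$, $N=F(\zd)$ and using $F(R\alpha)=\zO F(\alpha)$ from Conjecture~\ref{main conj} (which holds here by Theorem~\ref{thm1}), we obtain
\[\dim\Ext^1_B(M,N)+\dim\Ext^1_B(N,M)\;=\;\dim\Hom_{\diag}(\zg,R^{-1}\zd)+\dim\Hom_{\diag}(\zg,R^{-2}\zd).\]
Thus it suffices to show that, for any two $2$-diagonals $\zg,\zd$ of $\cals$, the right-hand side equals the number of interior crossing points of $\zg$ and $\zd$.

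To prove this, label the vertices of $\cals$ cyclically; then the $2$-diagonals are the pairs of vertices at odd boundary distance, two of them meet in at most one interior point, and they cross exactly when their endpoints interleave. Recall from \cite{BM} (or re-derive from the mesh relations defining $\diag$) that every morphism space of $\diag$ between indecomposables has dimension at most $1$, and that $\Hom_{\diag}(\alpha,\beta)\neq 0$ if and only if $\beta$ lies in an explicitly describable ``forward pivoting region'' of $\alpha$, obtained by sliding the two endpoints of $\alpha$ through prescribed boundary arcs via $2$-pivots. Applying this to the pairs $(\zg,R^{-1}\zd)$ and $(\zg,R^{-2}\zd)$ and running over the possible cyclic positions of the endpoints of $\zg$ and $\zd$, one finds: if the endpoints do not interleave, then neither $R^{-1}\zd$ nor $R^{-2}\zd$ lies in the forward region of $\zg$; if they interleave, then exactly one of $R^{-1}\zd$, $R^{-2}\zd$ does, the relevant one being dictated by the orientation of the interleaving. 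Summing the two dimensions yields the crossing number, which is $0$ or $1$. This proves the identity, and hence the corollary, including the final ``in particular''.

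The heart of the argument --- and the step I expect to be the main obstacle --- is this last case analysis: one must make the ``forward pivoting region'' completely explicit and verify that it is precisely one of the two consecutive backward rotations $R^{-1}\zd,R^{-2}\zd$ that records a crossing, taking care of the degenerate configurations (diameters, or endpoints that become equal or adjacent after rotation). Two variants can reduce the bookkeeping. First, since every Hom space in $\scmp\,B$ --- equivalently, in the $2$-cluster category of type $\mathbb{A}_{N-2}$, or in $\underline{\textup{mod}}\,\zL_n$ --- has dimension at most $1$ and the support of $\Hom_{\scmp B}(M_\zg,-)$ is a hammock in the Auslander--Reiten quiver, the identity $\dim\Ext^1_B(M_\zg,M_\zd)+\dim\Ext^2_B(M_\zg,M_\zd)=\#(\zg\cap\zd)$ can instead be read off from that quiver. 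Second, one may compute $\Ext^1_B$ and $\Ext^2_B$ directly from the explicit periodic projective resolution of $M_\zg$ built earlier in the paper, using that $\dim\Hom_B(P(i),M_\zd)$ is the number of times $\rho(i)$ crosses $\zd$; then the complex of vector spaces whose cohomology computes these Ext groups is controlled entirely by the checkerboard combinatorics of $\cals$.
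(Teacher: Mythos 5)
Your reductions are sound: extension-closedness of $\cmp\,B$ (Proposition~\ref{prop extclosed}) identifies $\Ext^1_B$ between syzygies with extensions in the Frobenius category and hence with $\Hom$-spaces in $\scmp\,B$, the 3-Calabi--Yau Serre duality correctly converts $\Ext^1_B(N,M)$ into $D\,\Ext^2_B(M,N)$, and $F(R\za)=\zO F(\za)$ turns the total into $\dim\Hom_{\diag}(\zg,R^{-1}\zd)+\dim\Hom_{\diag}(\zg,R^{-2}\zd)$. The gap is exactly where you flag it: the claim that this sum equals the crossing number --- that when the endpoints of $\zg$ and $\zd$ interleave precisely one of $R^{-1}\zd$, $R^{-2}\zd$ lies in the forward Hom-hammock of $\zg$, and neither does otherwise --- is asserted as the outcome of a case analysis you do not carry out, and the degenerate configurations you mention (diameters, rotations landing on $\zg$ itself or adjacent to its endpoints) are precisely where such counts go wrong. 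As written, the proof is incomplete at its decisive step.

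The paper avoids this case analysis entirely: it combines Proposition~\ref{prop extclosed} with the equivalence $F$ of Theorem~\ref{main thm} and then invokes Theorem~\ref{thm BM} (Baur--Marsh), under which $\diag$ is the 2-cluster category of type $\mathbb{A}_{N-2}$ and, as part of the cited result, two indecomposables admit a nontrivial extension (in either direction) if and only if the corresponding 2-diagonals cross, all such spaces being at most one-dimensional. So the combinatorial identity you set out to re-derive is available by citation, and with that citation your Serre-duality detour becomes unnecessary; conversely, without it your argument only becomes a proof once the hammock case analysis is actually executed (for instance by reading the supports of $\Hom_{\scmp\,B}(M_\zg,-)$ off the Auslander--Reiten quiver of $\underline{\textup{mod}}\,\zL_n$, as in your first suggested variant).
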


It was shown in \cite{AB} that the Auslander-Reiten translation $\tau$ in $\textup{mod}\,B$ induces an equivalence from the stable syzygy category $\scmp\,B$ to the stable cosyzygy category $\underline{\textup{CMI}}\,B$.  In our geometric model the same checkerboard polygon describes both categories. To switch to the cosyzygies, it suffices to apply the Nakayama functor $\nu$ which replaces the projectives with injectives. 

\begin{corollary}
 (Corollary \ref{cor cmpcmi}) 
The following diagram commutes. 
\[\xymatrix@C40pt{
\scmp\,B\ar[r]^{\tau}&\underline{\textup{CMI}}\,B\\
\diag\ar[u]^{\textup{cok}\,f_\zg} \ar[r]^{1}
&\diag \ar[u]_{\textup{ker}\,\nu\! f_\zg}  
}
\]
\end{corollary}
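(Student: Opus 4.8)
The plan is to verify the diagram commutes by following a single 2-diagonal $\zg$ around both composites and checking they agree as objects of $\underline{\textup{CMI}}\,B$, using the main theorem (Theorem~\ref{thm1}) to identify $\scmp\,B$ with $\diag$ and using the description of $\tau$ and $\zO$ already established there. Since all four functors are additive and $\diag$ is generated by the 2-diagonals, it suffices to check commutativity on each indecomposable 2-diagonal $\zg$ and on 2-pivots (the latter then follows formally once we know the functors agree on objects and are compatible with irreducible morphisms, which is part of Theorem~\ref{thm1}).

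First I would recall that, by Theorem~\ref{thm1}, the left vertical arrow sends $\zg$ to $M_\zg=\coker f_\zg$ with $f_\zg\colon P_1(\zg)\to P_0(\zg)$ determined by the degree-$0$ and degree-$1$ crossings of $\zg$ with the radical lines. The right vertical arrow, by definition, applies the Nakayama functor $\nu$ to the projective presentation: $\nu f_\zg\colon \nu P_1(\zg)\to \nu P_0(\zg)$ is a morphism between injectives $I_1(\zg)\to I_0(\zg)$, and $\ker\nu f_\zg$ is the corresponding cosyzygy, which lies in $\underline{\textup{CMI}}\,B$ because $\nu$ carries a projective presentation of a Cohen–Macaulay module to an injective copresentation of a Cohen–Macaulay module (this is exactly the content of \cite{BR}, which also gives that $\tau$ restricts to an equivalence $\scmp\,B\to\underline{\textup{CMI}}\,B$). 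The top arrow is that equivalence $\tau$ from \cite{BR}.

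The heart of the argument is then the identity $\tau(\coker f_\zg)\cong\ker\nu f_\zg$ in $\underline{\textup{CMI}}\,B$. I would obtain this from the standard homological description of $\tau$: for a module $M$ with minimal projective presentation $P_1\xrightarrow{f}P_0\to M\to 0$, one has $\tau M = \ker(\nu f\colon \nu P_1\to\nu P_0)$, by the very definition of the Auslander–Reiten translate via the transpose and $\nu$. The only subtlety is minimality: $f_\zg$ as constructed need not a priori be the \emph{minimal} projective presentation of $M_\zg$. Here I would invoke the fact, established in the course of proving Theorem~\ref{thm1} (where the projective resolution of $M_\zg$ is built by rotating $\zg$ and shown to be the genuine, hence minimal, projective resolution, cf. Corollary~\ref{cor:536}), that $f_\zg$ is indeed minimal, or at least that the non-minimal part contributes only a projective-injective summand that dies in the stable category. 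With minimality in hand, $\tau M_\zg=\ker\nu f_\zg$ on the nose, and since both vertical functors are the same on morphisms up to the already-established dictionary between 2-pivots and irreducible maps, the square commutes.

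The step I expect to be the main obstacle is precisely the minimality/well-definedness issue: one must be sure that the combinatorially defined presentation $f_\zg$ has no superfluous summands, i.e. that no indecomposable summand of $P_0(\zg)$ is in the image of a split mono from a summand of $P_1(\zg)$. If this fails in general, the fix is to note that $\nu$ of a split projective summand is a split injective summand, so $\ker\nu f_\zg$ changes only by an injective direct summand, which is zero in $\underline{\textup{CMI}}\,B$; hence the diagram still commutes after passing to the stable category. Everything else—additivity, generation of $\diag$ by 2-diagonals, and the compatibility of $\nu$ with the rotation $R$ (which matches $\tau$ up to the shift $\zO$, already recorded in Conjecture/Theorem~\ref{thm1})—is routine bookkeeping.
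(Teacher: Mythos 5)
Your argument is correct and is essentially the paper's own proof: apply the Nakayama functor $\nu$ to the presentation $f_\zg$ and use the standard exact sequence $0\to\tau_B M_\zg\to\nu P_1(\zg)\to\nu P_0(\zg)$ to identify $\tau M_\zg\cong\ker\nu f_\zg$, so the same 2-diagonal represents both the syzygy and its translate. The minimality worry you flag is already settled in the paper (Proposition~\ref{lem:cross}(a) and the proof of Theorem~\ref{thm:omega} show $f_\zg$ is a minimal presentation), and your stable-category fallback would also suffice.
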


The proofs of the above corollaries do not depend on the additional assumption of Theorem~\ref{thm1} and will therefore generalize to the setting of Conjecture~\ref{main conj}.

We also classify all checkerboard patterns up to the decagon in Example~\ref{exbysize}. The decagon admits 17 different checkerboard patterns up to symmetry.

\subsubsection*{A few words about the proof}
The key ingredient in most of our arguments is an explicit definition of the morphism $f_\zg$ in terms of certain paths in the quiver, which we call \emph{valid paths} and whose definition involves the checkerboard pattern.  This provides a complete description of not just the modules but also the morphisms in the projective resolution of the syzygies.  It would also be useful to have a more conceptual definition of $f_\zg$, but we have not found one so far.  Let us point out that, in general, $f_\zg\colon P_1(\zg)\to P_0(\zg)$ is not simply a generic map, and the knowledge of $P_0(\zg)$ and $P_1(\zg)$ does not determine $f_\zg$ in this way.  

The first part of the proof is to show that our category of 2-diagonals $\diag$ gives a subcategory of the syzygy category.  The second step is then to show that we obtain every syzygy in this way, which may actually come as a surprise, since the algebra is wild in general.  For this we show that $\diag$ gives a finite component of the Auslander-Reiten quiver of $\scmp\,B$ by explicitly constructing the Auslander-Reiten triangles.  In addition, the argument requires a detailed analysis of the endomorphisms of the indecomposable syzygies $M_\zg$. 

\subsubsection*{Related work}
For the very special class of cluster-tilted algebras of finite representation type, the syzygy categories were studied before by  Chen, Geng and Lu in \cite{CGL}, where they gave a classification of the syzygy categories of these algebras. In particular, they show that the components of $\scmp \,B$ are equivalent to the stable categories of the self-injective algebras $\zL_n$. Their procedure involves a case by case analysis that uses a  classification of the derived  equivalence classes  of cluster-tilted algebras of Dynkin type in \cite{BHL,BHL2}. 
 Later Lu extended these results to simple polygon-tree algebras \cite{L}. One of the ingredients of the proof is successive mutation at vertices of the exterior cycles and reduction to a cluster-tilted algebra of Dynkin type $\mathbb{D}$. 
 These algebras are special cases of those included in Conjecture~\ref{main conj} and the results provide evidence for the conjecture. 
 The above results determine only the type of the syzygy category but do not describe the objects or the morphisms.

Garcia-Elsener and the first author have described the syzygy category of cluster-tilted algebras of type $\mathbb{D}$ in terms of arcs in a once-punctured polygon in \cite{GES}. 

For gentle algebras, the singularity categories have been described by Kalck in \cite{K} using $m$-cluster categories of type $\mathbb{A}_1$.  In our setting the algebra is gentle if and only if the quiver has a unique chordless cycle. This has been extended to skew-gentle algebras by Chen and Lu in \cite{CL}.
For further results on singularity categories of finite dimensional algebras see \cite{C,C2, CDZ, LZ, Sh}.

\subsubsection*{Future directions}
In a forthcoming paper \cite{SS4}, we prove  Conjecture~\ref{main conj} in full generality and in \cite{SS5}, we extend the construction to skew group algebras of dimer tree algebras. 
In a different paper, we will describe the connection to dimer algebras, where we will show how to embed our checkerboard polygon
in an alternating strand diagram of the dimer model, see Example~\ref{ex dimer}. 

Furthermore, it will be interesting to see if we can relax the conditions on the quiver such as allowing the dual graph to be disconnected or to contain cycles, see Remark~\ref{rem relax}.   Other future directions include the behavior of the checkerboard polygon under mutations, a description of the syzygies in terms of their composition factors, and the question of $\tau$-rigidity of the indecomposable syzygies. 

\subsection*{Acknowledgements:} We thank Alastair King and Matthew Pressland for interesting discussions on the connection to dimer algebras, as well as the referees for helpful suggestions for the improvement of the paper. 
\section{Preliminaries}\label{sect 2}
Let $\kb$ be an algebraically closed field. If $A$ is a finite-dimensional $\kb$-algebra, we denote by $\textup{mod}\,A$ the category of finitely generated right $A$-modules. Let $D$ denote the standard duality $D=\Hom(-,\kb)$. If $Q_A$ is the ordinary quiver of the algebra $A$, and $i$ is a vertex of $Q_A$,  we denote by $P(i),I(i),S(i)$ the corresponding indecomposable projective, injective, simple $A$-module, respectively. We let $\pd M$, $ \id M$ denote the projective dimension, respectively the injective dimension of the $A$-module $M$.

A \emph{loop} in a  quiver $Q$ is an arrow that starts and ends at the same vertex. Two arrows are called \emph{parallel} if they share the same starting point and the same endpoint.  An \emph{oriented cycle} in $Q$ is a path that starts and ends at the same vertex. Thus a loop is an oriented cycle of length 1. A \emph{2-cycle} is an oriented cycle of length 2.

For further information about representation theory and quivers we refer to \cite{ASS,S2}.

\medskip
\subsection{2-Calabi-Yau tilted algebras}
A $\kb$-linear triangulated category $\calc$ with split idempotent and finite-dimensional Hom spaces is said to be \emph{2-Calabi-Yau} 
if
$D\,\Ext_\calc^1(X,Y)\cong \Ext_\calc^1(Y,X)$, for all objects $X,Y\in \calc$. Let $\calc$ be a 2-CY category. A basic object $T$ in $\calc$ is called a \emph{cluster-tilting object} if $\add T=\{X\in \calc\mid \Ext^1_\calc(X,T)=0\}$. The endomorphism algebra $B=\End_\calc T$ of a cluster-tilting object $T$ is called a \emph{2-Calabi-Yau tilted} algebra. 
These algebras are a far reaching generalization of cluster-tilted algebra and have been studied extensively. For example, every finite dimensional Jacobian $\kb$-algebra $B$ in the sense of \cite{DWZ} is 2-Calabi-Yau tilted, because the associated generalized cluster category $\calc_B$ contains a cluster-tilting object $T$ whose endomorphism algebra is $B$, see \cite{A}.  On the other hand, not every 2-Calabi-Yau tilted algebra is a Jacobian algebra, see \cite{L}.  

Let us highlight the following result by Keller and Reiten.
Recall that a $\kb$-algebra $A$ is said to be \emph{Iwanaga-Gorenstein of dimension $d$} if $\pd DA=\id A=d<\infty$.

\begin{thm}
\label{thmKRGorenstein}
Every 2-Calabi-Yau tilted algebra is Iwanaga-Gorenstein of dimension at most 1.
\end{thm}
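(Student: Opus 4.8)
The plan is to exploit the 2-Calabi--Yau structure of the ambient cluster category $\calc$ together with the cluster-tilting object $T$, and to transfer homological information from $\calc$ to $B=\End_\calc T$ via the functor $\Hom_\calc(T,-)$. First I would recall the standard facts about cluster-tilting: the functor $G=\Hom_\calc(T,-)\colon\calc\to\textup{mod}\,B$ induces an equivalence $\calc/\add T[1]\xrightarrow{\sim}\textup{mod}\,B$, sending $T\mapsto B$, and every object $X\in\calc$ fits into a triangle $T_1\to T_0\to X\to T_1[1]$ with $T_0,T_1\in\add T$ (the cluster-tilting approximation property). Applying $G$ to such a triangle and using $G(T[1])=0$ yields a projective presentation $G(T_1)\to G(T_0)\to G(X)\to 0$ of the $B$-module $G(X)$, so the kernel of $P(T_0)\to G(X)$ is controlled by the map $T_1\to T_0$ in $\calc$.

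The core of the argument is the computation $\pd_B G(X)\le 1$ whenever $\Hom_\calc(T[1],X)$ behaves well, and more to the point $\pd_B(DB)\le 1$ and $\id_B B\le 1$. For the projective dimension of $DB$: write $DB=D\Hom_\calc(T,T)$. Using 2-Calabi--Yauness, $D\Ext^1_\calc(-,-)\cong\Ext^1_\calc(-,-)$ swapped, and more usefully $D\Hom_\calc(X,Y)\cong\Hom_\calc(Y,X[2])$ in the 2CY setting; combined with the triangle above for a general $X$ and the vanishing $\Ext^1_\calc(T,T)=0$, one shows that applying $G$ to the approximation triangle of $T[1]$-type objects produces an exact sequence of projectives of length $\le 1$ computing $DB$ as a $B$-module. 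Concretely, the key vanishing is $\Ext^2_\calc(T,T_1)=0$ for $T_1\in\add T$, which by 2CY is equivalent to $\Ext^0_\calc$ data and follows from $T$ being rigid; this forces the second syzygy of any module in the image of $G$ to again be projective, giving injective dimension $\le 1$ of $B$ by a dual argument (or by invoking $D$ and the symmetry between $\textup{mod}\,B$ and $\textup{mod}\,B^{op}$, noting $B^{op}$ is also 2CY tilted via $DT$ in the opposite category).

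The step I expect to be the main obstacle is making precise the passage from ``triangles in $\calc$'' to ``projective resolutions in $\textup{mod}\,B$'' — specifically, verifying that the connecting map in the approximation triangle, after applying $G$, really is injective on the relevant term so that one genuinely gets a length-one resolution rather than just a presentation. This requires the rigidity $\Ext^1_\calc(T,T)=0$ in an essential way: it guarantees $G$ is ``exact enough'' on $\add T$ and that $\Ext^1_B(G(X),B)$ vanishes for $X$ in the appropriate class, which is exactly the input needed to bound $\id_B B$. An alternative, perhaps cleaner, route I would also keep in mind is to quote that $B$ is a quotient of the (completed) Ginzburg dg-algebra / Jacobian-type situation and use that the relevant Ext-vanishing is built into the 2CY axioms directly, but the triangle-pushing argument above is self-contained given only the definitions in the excerpt. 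Either way, once $\pd_B DB\le 1$ and $\id_B B\le 1$ are both established, the definition of Gorenstein of dimension $\le 1$ is satisfied, and the inequality is an equality exactly when $B$ is not semisimple, which need not be addressed for the stated bound.
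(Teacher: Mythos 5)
A preliminary remark: the paper does not prove this statement at all — it is quoted as a theorem of Keller and Reiten \cite{KR} — so there is no in-paper argument to compare yours with; what follows measures your sketch against the standard Keller--Reiten proof, which is the one your outline is aiming at.

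Your overall strategy (use the $\add T$-approximation triangle, apply $G=\Hom_\calc(T,-)$, identify $DB$ with $G(T[2])$ via the 2-Calabi--Yau property, use rigidity of $T$) is the right one, but there is a genuine gap exactly at the point you flag, and the ``key vanishing'' you propose to fill it is false. Indeed, $\Ext^2_\calc(T,T_1)\cong D\Hom_\calc(T_1,T)$ by the 2-CY property (the Serre functor is $[2]$), and this is nonzero whenever $T_1\in\add T$ is nonzero; so $\Ext^2_\calc(T,T_1)=0$ cannot be what forces a length-one resolution. What actually closes the gap is the following: $DB=D\Hom_\calc(T,T)\cong\Hom_\calc(T,T[2])=G(T[2])$; take the approximation triangle $T_1\to T_0\to T[2]\to T_1[1]$ with $T_0,T_1\in\add T$, rotate it to $T[1]\to T_1\to T_0\to T[2]$, and apply $\Hom_\calc(T,-)$. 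The two outer terms of the resulting exact sequence are $\Hom_\calc(T,T[1])=\Ext^1_\calc(T,T)=0$ and $\Hom_\calc(T,T_1[1])=\Ext^1_\calc(T,T_1)=0$, both by rigidity of $T$, so one obtains a short exact sequence $0\to G(T_1)\to G(T_0)\to DB\to 0$ and hence $\pd_B DB\le 1$. In particular the injectivity of $G(T_1)\to G(T_0)$, which you leave as ``the main obstacle,'' comes precisely from the left-hand vanishing $\Hom_\calc(T,T[1])=0$ after rotating the triangle, not from any $\Ext^2$ vanishing; as written, your argument never actually establishes $\pd_B DB\le 1$. The bound $\id_B B\le 1$ then follows by the dual argument applied to $B^{\textup{op}}=\End_{\calc^{\textup{op}}}(T)$, since $\calc^{\textup{op}}$ is again 2-CY with cluster-tilting object $T$. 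Two minor further corrections: your ``alternative route'' via Jacobian/Ginzburg-type presentations does not cover the general case, since not every 2-CY tilted algebra is Jacobian (the paper points this out, citing \cite{L}); and Gorenstein dimension $0$ means self-injective, not semisimple, so your closing remark about when equality holds is inaccurate, though immaterial for the stated bound.
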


 For further results about 2-Calabi-Yau tilted algebras, we refer to the surveys \cite{Assem, R}.

\subsection{Cohen-Macaulay modules over 2-Calabi-Yau tilted algebras}\label{sect CM}
From now on, let $B$ be a 2-Calabi-Yau tilted algebra.
A $B$-module $M$ is said to be projectively Cohen-Macaulay if $\Ext^i_B(M,B)=0$ for all $i>0$. In other words, $M$ has no extensions with projective modules.
Dually, a $B$-module $N$ is said to be injectively Cohen-Macaulay if $\Ext^i_B(DB,N)=0$ for all $i>0$. 

We denote by $\cmp\,B$ and $\textup{CMI}\,B$ the full subcategories of $\textup{mod}\,B$ whose objects are the projectively Cohen-Macaulay modules
or the injectively Cohen-Macaulay modules, respectively. Both categories are Frobenius categories. The projective-injective objects in $\cmp\,B$ are  precisely the projective $B$-modules, whereas the projective-injective objects in $\textup{CMI}\,B$ are precisely the injective $B$-modules. The corresponding stable categories $\scmp\,B$ and $\underline{\textup{CMI}}\,B$ are triangulated categories.  The syzygy operator $\zO$ in $\textup{mod}\,B$ is the inverse shift in $\scmp\,B$, and the cosyzygy operator $\zO^{-1}$ in $\textup{mod}\,B$ is the shift in $\underline{\textup{CMI}}\,B$.

The Auslander-Reiten translations induce quasi-inverse triangle equivalences \cite[Chapter X]{BR}
\begin{equation}
\label{eq cmpcmi}
\tau\colon \scmp\,B \to \underline{\textup{CMI}}\,B
\qquad 
\tau^{-1}\colon
\underline{\textup{CMI}}\,B
\to \scmp\,B.
\end{equation}

Moreover, by Buchweitz's theorem \cite[Theorem 4.4.1]{Bu}, there exists a triangle equivalence between $\scmp\,B$ and the singularity category $\cald^b(B)/\cald^b_{perf} (B)$ of $B$.
Keller and Reiten showed in \cite{KR} that the category $\scmp \,B$ is 3-Calabi-Yau.
The following result was proved in \cite{GES}. 
\begin{thm}\label{gesthm}
 Let $M$ be an indecomposable module over a 2-Calabi-Yau tilted algebra $B$. Then the following are equivalent.
 \begin{itemize}
\item [(a)] $M$ is a non-projective syzygy;
\item [(b)] $M \in \textup{ind}\,\scmp\,B$; 
\item [(c)] $\zO^2_B \tau_B M \cong M$.
\end{itemize}
\end{thm}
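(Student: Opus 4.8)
The statement to prove is Theorem~\ref{gesthm}, which characterizes when an indecomposable module over a 2-Calabi-Yau tilted algebra is a non-projective syzygy, a non-projective object of the stable syzygy category, or satisfies $\zO^2_B\tau_B M\cong M$. This is attributed to \cite{GES}, so the plan is to reconstruct the argument from the Gorenstein property (Theorem~\ref{thmKRGorenstein}) and the structure of Cohen-Macaulay modules.

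\textbf{Plan of proof.}

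The equivalence (a)$\Leftrightarrow$(b) is essentially formal once we know $B$ is 1-Gorenstein. First I would recall that for a Gorenstein algebra of dimension at most 1, a module $M$ is projectively Cohen-Macaulay (i.e. $\Ext^i_B(M,B)=0$ for all $i>0$) if and only if $M$ is a submodule of a projective module, equivalently $M$ is a syzygy of some module, equivalently $\zO\zO^{-1}M\cong M$ up to projective summands. The key input is that $\id {}_BB\le 1$ forces the relevant Ext-vanishing to be detected in degree 1, so $\Ext^1_B(M,B)=0$ already implies $M\in\cmp\,B$; and conversely every syzygy $\zO X$ embeds in a projective, hence has no extensions with projectives because a short exact sequence $0\to \zO X\to P\to \zO X'\to 0$ together with $\id B\le1$ gives the vanishing. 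So the indecomposable non-projective modules in $\cmp\,B$ are exactly the indecomposable non-projective syzygies, which is the content of (a)$\Leftrightarrow$(b) since the objects of $\scmp\,B$ are by definition the non-projective objects of $\cmp\,B$.

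For (b)$\Leftrightarrow$(c) I would use that $\scmp\,B$ is a $3$-Calabi-Yau triangulated category (Keller--Reiten \cite{KR}), with shift functor $\zO^{-1}$, together with the equivalence $\tau\colon\scmp\,B\to\underline{\textup{CMI}}\,B$ of \eqref{eq cmpcmi}. In a $d$-Calabi-Yau category the Serre functor is the $d$-th power of the shift; here with $d=3$ and shift $\zO^{-1}$ the Serre functor on $\scmp\,B$ is $\zO^{-3}$. On the other hand the Auslander--Reiten translate (= Serre functor composed with shift, or rather the classical relation $\tau = S\circ[-1]$) should be computed two ways. The cleanest route: since $\tau$ induces an equivalence $\scmp\,B\to\underline{\textup{CMI}}\,B$ and the Nakayama functor $\nu$ (replacing projectives by injectives) gives an equivalence $\cmp\,B\to\textup{CMI}\,B$ intertwining $\zO$ with $\zO^{-1}$ appropriately, one gets that on $\scmp\,B$ the composite governing periodicity of projective resolutions is $\zO^2\tau$. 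Concretely, an indecomposable $M$ lies in $\scmp\,B$ (equivalently is a non-projective syzygy) precisely when it "comes back to itself" under the canonical self-equivalence dictated by the 3-CY property, and unwinding the identification of that self-equivalence yields $\zO^2_B\tau_B M\cong M$. I would verify the direction (c)$\Rightarrow$(b) directly: if $\zO^2\tau M\cong M$, then in particular $M\cong\zO(\zO\tau M)$ is a syzygy, and it is non-projective because $\tau M\ne 0$ forces $M$ non-projective; conversely (b)$\Rightarrow$(c) follows from the 3-CY Serre duality computation.

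\textbf{Main obstacle.} The subtle point is pinning down the exact self-equivalence of $\scmp\,B$ coming from the $3$-Calabi--Yau structure and matching it with $\zO^2\tau$ rather than, say, $\zO^3$ or $\zO\tau$. This requires being careful about the distinction between $\scmp\,B$ and $\underline{\textup{CMI}}\,B$, the direction of the equivalence $\tau$ in \eqref{eq cmpcmi}, and the fact that $\tau$ here denotes the Auslander--Reiten translation in $\textup{mod}\,B$ (not in the stable category), whose interaction with $\zO$ is governed by the formula $\tau\,\zO\cong\zO\,\tau$ on Cohen--Macaulay modules together with $\nu\,\zO\cong\zO^{-1}\nu$ on $\cmp\,B$. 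Getting these compatibilities right — essentially a bookkeeping of Frobenius-category syzygies versus module-category Auslander--Reiten theory under the Nakayama functor — is where the real work lies; the Ext-vanishing equivalences (a)$\Leftrightarrow$(b) are comparatively routine consequences of $1$-Gorensteinness.
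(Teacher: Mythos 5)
This statement is not proved in the paper at all: it is quoted from \cite{GES}, so the only possible comparison is with the strategy of that reference. Your outline uses the same ingredients — (a)$\Leftrightarrow$(b) from the Gorenstein property $\id B\le 1$, and (b)$\Leftrightarrow$(c) from the Keller--Reiten $3$-Calabi--Yau property of $\scmp\,B$ together with the equivalence $\tau\colon\scmp\,B\to\underline{\textup{CMI}}\,B$ — and the easy parts are essentially fine: $\Ext^i_B(\zO X,B)\cong\Ext^{i+1}_B(X,B)=0$ for $i\ge 1$ because $\id B\le 1$ gives ``syzygy $\Rightarrow$ Cohen--Macaulay''; the converse is the standard fact for $1$-Iwanaga--Gorenstein algebras (you assert it rather than prove it, which is acceptable but should be flagged or cited); and (c)$\Rightarrow$(a) is immediate as you say, since $\zO^2\tau_B M$ is by construction a syzygy and vanishes when $M$ is projective.

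The genuine gap is (b)$\Rightarrow$(c), which you explicitly defer. The $3$-CY property only identifies the Auslander--Reiten translation \emph{internal to the triangulated category} $\scmp\,B$ as $[2]=\zO^{-2}$; statement (c) concerns $\tau_B$, the AR translation of $\textup{mod}\,B$, and $\tau_B M$ lies in $\textup{CMI}\,B$, not in $\cmp\,B$, so the two translations cannot simply be identified — this discrepancy is the whole content of (c). The bridge has to be built, for instance by combining the AR formula $\Ext^1_B(X,Y)\cong D\,\overline{\Hom}_B(Y,\tau_B X)$ with Serre duality $\underline{\Hom}(X,\zO^{-1}Y)\cong D\,\underline{\Hom}(Y,\zO^{-2}X)$ in the $3$-CY category (note $\Ext^1_B(X,Y)\cong\underline{\Hom}(\zO X,Y)$), and then converting the resulting natural isomorphism ``modulo injectives'' into one ``modulo projectives'' before applying $\zO^2$ — using, e.g., that $\zO^2$ annihilates injectives because $\pd DB\le 1$, or the Beligiannis--Reiten equivalence $\tau^{-1}\colon\underline{\textup{CMI}}\,B\to\scmp\,B$ from \eqref{eq cmpcmi}. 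The two compatibilities you invoke in place of this ($\tau\zO\cong\zO\tau$ on Cohen--Macaulay modules and $\nu\zO\cong\zO^{-1}\nu$ on $\cmp\,B$) are neither proved nor obviously correct as stated, and are essentially equivalent to the assertion being proved. So the proposal is a correct plan with the right inputs, but the decisive implication is missing rather than merely routine.
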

We may therefore use the terminology ``syzygy'' and ``Cohen-Macaulay module'' interchangeably. 


For convenience of the reader, we give a proof of the following fact.
 
\begin{prop}\label{prop extclosed} The category
  $\cmp \,B$ is closed under extensions.
\end{prop}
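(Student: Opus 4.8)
The plan is to show that $\cmp\,B$ is closed under extensions using the long exact sequence in $\Ext^*_B(-,B)$. Suppose we have a short exact sequence $0\to M'\to M\to M''\to 0$ in $\textup{mod}\,B$ with $M',M''\in\cmp\,B$; I want to conclude $M\in\cmp\,B$, that is, $\Ext^i_B(M,B)=0$ for all $i>0$. Applying the contravariant functor $\Hom_B(-,B)$ to the short exact sequence yields a long exact sequence
\[
\cdots\to\Ext^i_B(M'',B)\to\Ext^i_B(M,B)\to\Ext^i_B(M',B)\to\Ext^{i+1}_B(M'',B)\to\cdots
\]
For each $i>0$, the outer terms $\Ext^i_B(M'',B)$ and $\Ext^i_B(M',B)$ vanish by the hypothesis that $M',M''$ are projectively Cohen-Macauley, so the middle term $\Ext^i_B(M,B)$ is squeezed to $0$. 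Hence $M\in\cmp\,B$.

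The only subtlety worth spelling out is that this argument is entirely formal and does not even use the 2-Calabi-Yau tilted hypothesis; the statement holds for the subcategory of projectively Cohen-Macauley modules over any finite-dimensional (or Noetherian) algebra. In fact the same reasoning shows a bit more: if $M',M\in\cmp\,B$ then $M''\in\cmp\,B$, and if $M,M''\in\cmp\,B$ one gets $\Ext^i_B(M',B)=0$ for $i\geq 2$ but only a four-term exactness constraint at $i=1$, so $\cmp\,B$ is not in general closed under kernels of epimorphisms — which is consistent with it being a Frobenius, rather than abelian, exact category. I do not anticipate any real obstacle here; the one point requiring minimal care is simply invoking the long exact sequence for $\Ext$ in the first variable, which is standard homological algebra, together with the definition of $\cmp\,B$ recalled in Section~\ref{sect CM}.
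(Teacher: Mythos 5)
Your argument is correct and is essentially the same as the paper's own proof: apply $\Hom_B(-,B)$ to the short exact sequence and use vanishing of the outer $\Ext^i$ terms to kill the middle one. The additional remarks you make are fine but not needed.
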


\begin{proof}
  Let $\xymatrix{0\ar[r]&L\ar[r]&M\ar[r]&N\ar[r]&0}$ be a short exact sequence in $\textup{mod}\,B$ with $L,N\in\cmp\,B$. Applying the functor $\Hom_B(-,B)$ yields an exact sequence
\[\xymatrix{\cdots\ar[r]&
\Ext_B^{i}(N,B)\ar[r]&\Ext_B^{i}(M,B)\ar[r]&\Ext_B^{i}(L,B)\ar[r]&\ldots
}\] for every $i\ge 1$. 
Now $\Ext_B^{i}(N,B)=\Ext_B^{i}(L,B)=0$, because  $N$ and $L$ are Cohen-Macaulay modules.  Thus $\Ext_B^{i}(M,B)=0$, and hence $M\in\cmp\,B$.
\end{proof}

\begin{corollary}\label{cor 82}
 Let $M\in\cmp\,B$.  Then $M$ is rigid in $\cmp\,B$ if and only if $M$ is rigid in $\textup{mod}\,B$.\qed
\end{corollary}

\medskip
\subsection{Translation quivers and mesh categories} We review here the notions of translation quiver and mesh category from \cite{Ri, H}. These notions are often used in order to define a category from combinatorial data. Examples of such constructions are the combinatorial constructions of cluster categories of finite type in \cite{BM, BM2, CCS, S1}. 

A {\em  translation
 quiver} $(\zG,\tau)$ is a quiver $\zG=(\zG_0,\zG_1)$ without loops
 together with an injective map $\tau\colon \zG_0'\to\zG_0$  (the {\em translation}) from a subset $\zG_0'$ of $\zG_0$ to $\zG_0$ such that, for all vertices $x\in\zG_0'$, $y\in \zG_0$, 
the number of arrows from $y \to x$ is equal to the number of arrows
 from $\tau x\to y$. 
Given a  translation quiver $(\zG,\tau)$, a \emph{polarization of} $\zG$ is
 an injective map $\sigma:\zG_1'\to\zG_1$, where $\zG_1'$ is the set of all arrows $\za\colon y\to x$ 
 with $x \in \zG_0'$, such that 
$\sigma(\za)\colon \tau x\to y$  for every arrow $\za\colon y\to x\in \zG_1$.
From now on we assume that $\zG$ has no multiple arrows. In that case, there is a unique polarization of $\zG$.

The {\em path category } of a translation quiver $(\zG,\tau)$ is the category whose  objects are
the vertices $\zG_0$ of $\zG$, and, given $x,y\in\zG_0$, the $\kb$-vector space of
morphisms from $x$ to $y$ is given by the $\kb$-vector space with basis
the set of all paths from $x $ to $y$. The composition of morphisms is
induced from the usual composition of  paths.
The {\em mesh ideal} in the path category of $\zG$ is the ideal
generated by the {\em mesh relations}
\begin{equation}\nonumber
m_x =\sum_{\za:y\to x} \sigma(\za) \za 
\end{equation}  
for all $x \in \zG_0'$.

The {\em  mesh category } of the translation quiver $(\zG,\tau)$ is the
quotient of its path 
category by the mesh ideal.

\medskip
\subsection{The category of 2-diagonals of a polygon} \label{sect 23} Let $\cals $ be a regular polygon with  an even number of vertices, say $2N$. Let $R$ be the automorphism of $\cals$  given by a clockwise rotation about $180/N $ degrees. Thus $R^{2N}$ is the identity. 

Following Baur and Marsh, we define the category $\diag$ of 2-diagonals of $\cals$ as follows. 
The indecomposable objects of $\diag$ are the 2-diagonals in $\cals$. Recall that a 2-diagonal is a diagonal of $\cals$ connecting two vertices such that the two polygons obtained by cutting $\cals$ along the diagonal both have an even number of vertices and  at least 4. In particular, boundary segments are not 2-diagonals.

The irreducible morphisms of $\cals$ are given by 2-pivots. We recall the definition below. An illustration is given in Figure \ref{fig 2-pivots}. 
\begin{definition}\label{def 2pivot}
 Let $\zg$ be a 2-diagonal in the checkerboard polygon $\cals$ and denote its endpoints by $a$ and $x$. Denote by $b$ the clockwise neighbor of $a$, and by $c$ the clockwise neighbor of $b$ on the boundary of $\cals$. At the other end, denote by $y$ the clockwise neighbor of $x$, and by $z$ the clockwise neighbor of $y$ on the  boundary of $\cals$. 
 
Unless $a$ and $z$ are neighbors on the boundary, the 2-diagonal $\zg'$  connecting $a$ and $z$ is called the {\em 2-pivot of $\zg$ fixing the endpoint $a$}.

Unless $c$ and $x$ are neighbors on the boundary, the 2-diagonal $\zg''$  connecting $c$ and $x$  is called the {\em 2-pivot of $\zg$ fixing the endpoint $x$}.
\end{definition}
\begin{figure}
\begin{center}
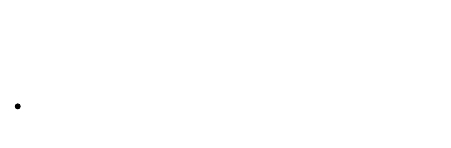
\caption{$\zg'$ is the 2-pivot of $\zg$ fixing the endpoint $a$ and $\zg''$ is the 2-pivot of $\zg$ fixing the endpoint $b$.}
\label{fig 2-pivots}
\end{center}
\end{figure}

Let $\zG$ be the quiver whose vertices is the set of 2-diagonals in $\cals$, and there is an arrow from the 2-diagonal $\zg$ to the 2-diagonal $\zg'$ precisely if $\zg'$ is obtained from $\zg$ by a 2-pivot. Then the  pair  $(\zG, R^{-2})$ is a translation quiver.

\begin{definition}\cite{BM}
 The category $\diag$    of 2-diagonals in the  polygon $\cals$ is the mesh category of the translation quiver $(\zG,R^{-2})$.
\end{definition}

\medskip
\subsection{The 2-cluster category of type $\mathbb{A}$} \label{sec25} In this subsection, let  $H$ be the path algebra of a Dynkin quiver of type $\mathbb{A}_r$. Let $\calc^2$ denote the 2-cluster category of type $\mathbb{A}_{r}$. This category is defined as the orbit category of the bounded derived category $\cald^b(\textup{mod}\,H)$ by the functor $\tau_{\cald}^{-1}[2]$. 
Here $\tau_{\cald}$ is the Auslander-Reiten translation and $[2]=[1]\circ [1]$ is the second power of the shift functor in the derived category.  Thus
\begin{equation*}
\calc^2=\cald^b(\textup{mod}\,H)/\tau_{\cald}^{-1}[2].\end{equation*}
This category was introduced in \cite{K,T}, and was studied in \cite{BRT,IY, KR2, Tor}.

\begin{thm}\label{thm BM}
 \cite{BM} Let $\cals $ be a polygon with $2N$ vertices. Then the category $\diag$ is equivalent to the 2-cluster category of type $\mathbb{A}_{N-2}$.
\end{thm}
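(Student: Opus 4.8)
The plan is to realize both $\diag$ and the $2$-cluster category $\calc^2 = \cald^b(\textup{mod}\,H)/\tau_{\cald}^{-1}[2]$ (with $H$ the path algebra of a Dynkin quiver of type $\mathbb{A}_{N-2}$) as the mesh category of one and the same translation quiver, namely $\mathbb{Z}\mathbb{A}_{N-2}/\langle\tau^{N}\rangle$, and to conclude from there. On the one side, $\diag$ is a mesh category by construction, and since this mesh category has Auslander--Reiten triangles, the translation quiver $(\zG,R^{-2})$ is its Auslander--Reiten quiver. On the other side, $\cald^b(\textup{mod}\,H)$ is standard (for $H$ hereditary of Dynkin type it is the mesh category of its Auslander--Reiten quiver $\mathbb{Z}\mathbb{A}_{N-2}$), the orbit category $\calc^2$ is triangulated of finite representation type by Keller's theorem on triangulated orbit categories, and standardness transfers along the canonical (Galois) covering $\cald^b(\textup{mod}\,H)\to\calc^2$; hence $\calc^2$ too is the mesh category of its Auslander--Reiten quiver. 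Granting this, an isomorphism of translation quivers between the two Auslander--Reiten quivers upgrades automatically to a $\kb$-linear equivalence $\diag\simeq\calc^2$.

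First I would pin down $(\zG,R^{-2})$. Label the vertices of $\cals$ cyclically by $\mathbb{Z}/2N$; every $2$-diagonal has exactly one even endpoint $e$ and one odd endpoint $o$, and it is a genuine $2$-diagonal precisely when $d:=(e-o)\bmod 2N$ lies in $\{3,5,\dots,2N-3\}$. Setting $q=\tfrac12(d-1)\in\{1,\dots,N-2\}$ and choosing a suitable companion coordinate $p\in\mathbb{Z}/N$ (a combination of $o$ and $q$), one obtains a bijection between $2$-diagonals and $(\mathbb{Z}/N)\times\{1,\dots,N-2\}$; inspecting Definition~\ref{def 2pivot}, the two $2$-pivots of a $2$-diagonal become the two arrows leaving the corresponding vertex of $\mathbb{Z}\mathbb{A}_{N-2}$, the non-existence of one pivot at $q=1$ and of the other at $q=N-2$ matching exactly the two boundary degenerations in that definition, while $R^{-2}$ becomes the translation $p\mapsto p-1$. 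Thus $(\zG,R^{-2})\cong\mathbb{Z}\mathbb{A}_{N-2}/\langle\tau^{N}\rangle$ as translation quivers; in particular the Auslander--Reiten quiver of $\diag$ is an untwisted cylinder over $\mathbb{A}_{N-2}$ with $N(N-2)$ vertices.

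Next I would identify the Auslander--Reiten quiver of $\calc^2$. The Auslander--Reiten quiver of $\cald^b(\textup{mod}\,H)$ is $\mathbb{Z}\mathbb{A}_{N-2}$ with its usual translation $\tau_{\cald}$, and $\cald^b(\textup{mod}\,H)$ is fractionally Calabi--Yau: the Serre functor $\mathbb{S}\cong\tau_{\cald}[1]$ satisfies $\mathbb{S}^{N-1}\cong[N-3]$, equivalently $\tau_{\cald}^{\,N-1}\cong[-2]$, i.e.\ $[2]\cong\tau_{\cald}^{-(N-1)}$. Hence $\tau_{\cald}^{-1}[2]\cong\tau_{\cald}^{-N}$ acts on $\mathbb{Z}\mathbb{A}_{N-2}$ as a pure power of the translation $\tau_{\cald}$, so no Möbius twist can occur, and the orbit quiver is $\mathbb{Z}\mathbb{A}_{N-2}/\langle\tau^{N}\rangle$, which is exactly the Auslander--Reiten quiver of $\calc^2$. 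Combining this with the previous paragraph and the standardness of both categories gives the desired equivalence $\diag\simeq\calc^2$, under which $2$-pivots correspond to irreducible morphisms, meshes to Auslander--Reiten triangles, and $R^{-2}$ to the Auslander--Reiten translation of $\calc^2$.

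The step I expect to be most delicate is the explicit bookkeeping of the second paragraph: translating Definition~\ref{def 2pivot} into the precise local picture of $\mathbb{Z}\mathbb{A}_{N-2}$ --- verifying that the two geometric $2$-pivots at a diagonal are exactly the two mesh arrows at the corresponding vertex, that the boundary cases match the two ends $q=1$ and $q=N-2$ of the strip, and in particular choosing the companion coordinate $p$ so that $R^{-2}$ is literally the translation and not a translation composed with a flip. A secondary technical issue is the standardness of $\calc^2$; if one prefers to sidestep covering theory, one can instead imitate Caldero--Chapoton--Schiffler and build the functor $\diag\to\calc^2$ by hand --- sending a $2$-diagonal to the indecomposable object of $\cald^b(\textup{mod}\,H)$ singled out by the orbit bijection above, and extending over morphisms along the arrows --- the only substantive content being that the mesh relations of $\diag$ map to zero, which is once again the translation-quiver isomorphism established above.
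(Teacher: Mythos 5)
The paper does not actually prove this statement; it is imported from Baur--Marsh \cite{BM}, so there is no internal proof to compare against. Your proposal is, in substance, a reconstruction of the argument of \cite{BM}: identify $(\zG,R^{-2})$ with the stable translation quiver $\mathbb{Z}\mathbb{A}_{N-2}/\langle\tau^{N}\rangle$, note that $\tau_{\cald}^{-1}[2]$ acts on the Auslander--Reiten quiver of $\cald^b(\textup{mod}\,H)$ as $\tau^{-N}$ (your fractional Calabi--Yau bookkeeping $\tau^{N-1}\cong[-2]$ for type $\mathbb{A}_{N-2}$ is correct, and the evenness of the shift does rule out a M\"obius twist, consistently with the count $N(N-2)$ of 2-diagonals), and then use standardness of the orbit category to upgrade the isomorphism of translation quivers to a $\kb$-linear equivalence. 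The two non-formal inputs you flag --- the pivot-to-mesh bookkeeping in Definition~\ref{def 2pivot} and the standardness of $\calc^2$ (via the covering from $\cald^b(\textup{mod}\,H)$, or the CCS-style explicit functor) --- are precisely the points established in \cite{BM}, so your plan is sound but coincides with the cited proof rather than offering a genuinely different route.
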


Under this equivalence each 2-diagonal of $\cals$ corresponds to an indecomposable object in $\calc^2$. Moreover, there exists a nontrivial extension between two indecomposable objects in $\calc^2$ if and only if
the corresponding 2-diagonals cross. 
And the maximal sets of non-crossing 2-diagonals, or quadrangulations,   correspond to the cluster-tilting objects in $\calc^2$. 

In particular, since every indecomposable object in $\calc^2$ is rigid, we have the following.
\begin{corollary}\label{lem rigid}
 Every 2-diagonal is a rigid object in the category $\diag $.
\end{corollary}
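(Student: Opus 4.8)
The plan is to deduce the statement directly from Theorem~\ref{thm BM}. That theorem provides an equivalence $\Phi\colon\diag\xrightarrow{\ \sim\ }\calc^2$ between the category of $2$-diagonals of $\cals$ and the $2$-cluster category of type $\mathbb{A}_{N-2}$, under which each $2$-diagonal is sent to an indecomposable object of $\calc^2$ and (being induced from the Baur--Marsh construction) the shift of $\diag$ given by rotation corresponds to the shift of $\calc^2$. Since $\Phi$ is full and faithful and commutes with the shift, for any $2$-diagonal $\zg$ one has $\Hom_{\diag}(\zg,\zg[1])\cong\Hom_{\calc^2}\big(\Phi\zg,(\Phi\zg)[1]\big)$. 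Hence it suffices to prove that every indecomposable object of $\calc^2$ is rigid, i.e. has vanishing self-extension space.

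This is a classical fact for $m$-cluster categories of Dynkin type, and I would argue it as follows. Recall the standard description of the indecomposable objects of $\calc^2=\cald^b(\textup{mod}\,H)/\tau_{\cald}^{-1}[2]$: up to isomorphism they are the stalk complexes $M$ and $M[1]$ with $M$ an indecomposable $H$-module, together with $P[2]$ for $P$ an indecomposable projective. Using that $\calc^2$ is an orbit category, $\Hom_{\calc^2}(X,Y)=\bigoplus_{p\in\mathbb{Z}}\Hom_{\cald^b(H)}\big(X,(\tau_{\cald}^{-1}[2])^pY\big)$, and since $H$ is hereditary the derived Hom-spaces between stalk complexes live only in homological degrees $0$ and $1$; so in the sum computing $\Hom_{\calc^2}(X,X[1])$ only finitely many (in fact at most two) terms can be nonzero, each expressible through $\Hom_H$, $\Ext^1_H$ or $D\Ext^1_H$ evaluated on indecomposable modules together with the explicit $\tau_{\cald}$ of a Dynkin quiver. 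Because $H$ is representation-finite hereditary of type $\mathbb{A}$, every indecomposable $H$-module $M$ is exceptional, i.e. $\Ext^1_H(M,M)=0$ (for instance by positive-definiteness of the Euler form); feeding this in forces $\Hom_{\calc^2}(X,X[1])=0$ for every indecomposable $X$. Alternatively, and more in the spirit of the geometric model, one may invoke the crossing criterion recalled after Theorem~\ref{thm BM}: a nontrivial extension between indecomposable objects of $\calc^2$ exists exactly when the corresponding $2$-diagonals cross in the interior of $\cals$, and a single $2$-diagonal, being one line segment, does not cross itself; hence $\zg$ is rigid.

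The only delicate points are bookkeeping: one must check that the equivalence of Theorem~\ref{thm BM} respects the triangulated structure (so that ``rigid'' transports correctly under $\Phi$), and one must fix the convention that a diagonal is not counted as crossing itself. Neither is a genuine obstacle; all the mathematical content of the corollary is already contained in Theorem~\ref{thm BM} and the rigidity of indecomposables in $m$-cluster categories of Dynkin type.
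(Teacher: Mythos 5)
Your proposal is correct and follows essentially the same route as the paper: the paper derives the corollary directly from Theorem~\ref{thm BM} together with the fact that every indecomposable object of the 2-cluster category $\calc^2$ is rigid (equivalently, via the crossing criterion, since a 2-diagonal does not cross itself). Your additional sketch of why indecomposables in $\calc^2$ are rigid just fills in a standard fact the paper cites without proof.
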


\begin{remark}
 Baur and Marsh actually proved more generally that the category of $m$-diagonals is equivalent to the $m$-cluster category of type $\mathbb{A}$. 
\end{remark}

\section{Construction and properties of the checkerboard polygon }\label{sect 3} 
\medskip
\subsection{The construction}\label{sect 3.1}
In this section, we define  dimer tree algebras as a class of 2-Cababi-Yau tilted algebras by imposing restrictions on the quiver and specifying the potential. We start by  constructing a checkerboard polygon $\cals$ from the quiver $Q$ in three steps. First, we associate the dual graph $G$ to $Q$, then we construct the twisted completed dual graph $\widetilde G$ from $G$, and finally we obtain the checkerboard polygon $\cals$ as the medial graph of $\widetilde G$. The smallest example of the construction is given below. 
The definition of the algebra $B$ is given at the end of this subsection, because in order to define the signs of the terms of the potential, we first need to introduce a distance function on the dual graph.
\begin{example}
\label{ex A3} 
\ \\
\[
\begin{array}{cccccccc}
  \vcenter{\vbox{\xymatrix{&2\ar[rd]\\1\ar[ru]&&3\ar[ll]}}}
&  &
   \vcenter{\vbox{  \xymatrix{\cdot\ar@{-}[rd]&&\cdot\ar@{-}[ld]\\&\cdot\ar@{-}[d]\\&\cdot}  }}
  &&
  \vcenter{\vbox{   \xymatrix{&\cdot\ar@{-}[ld]\ar@{-}[rd]\\ 
   \cdot\ar@{-}[rd]&&\cdot\ar@{-}[ld]\\
   \cdot\ar@{-}[u]\ar@{-}[rd]&\cdot\ar@{-}[d]&\cdot\ar@{-}[ld]\ar@{-}[u]
   \\&\cdot}  }}
& &\vcenter{\hbox{  \scalebox{0.5}{\huge
\begingroup%
  \makeatletter%
  \providecommand\color[2][]{%
    \errmessage{(Inkscape) Color is used for the text in Inkscape, but the package 'color.sty' is not loaded}%
    \renewcommand\color[2][]{}%
  }%
  \providecommand\transparent[1]{%
    \errmessage{(Inkscape) Transparency is used (non-zero) for the text in Inkscape, but the package 'transparent.sty' is not loaded}%
    \renewcommand\transparent[1]{}%
  }%
  \providecommand\rotatebox[2]{#2}%
  \newcommand*\fsize{\dimexpr\f@size pt\relax}%
  \newcommand*\lineheight[1]{\fontsize{\fsize}{#1\fsize}\selectfont}%
  \ifx\svgwidth\undefined%
    \setlength{\unitlength}{177.08946191bp}%
    \ifx\svgscale\undefined%
      \relax%
    \else%
      \setlength{\unitlength}{\unitlength * \real{\svgscale}}%
    \fi%
  \else%
    \setlength{\unitlength}{\svgwidth}%
  \fi%
  \global\let\svgwidth\undefined%
  \global\let\svgscale\undefined%
  \makeatother%
  \begin{picture}(1,0.87086508)%
    \lineheight{1}%
    \setlength\tabcolsep{0pt}%
    \put(0,0){\includegraphics[width=\unitlength,page=1]{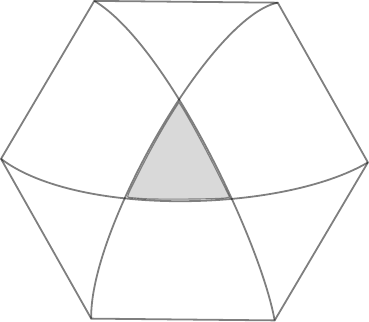}}%
    \put(0.58500132,0.48220724){\makebox(0,0)[lt]{\lineheight{1.25}\smash{\begin{tabular}[t]{l}1\end{tabular}}}}%
    \put(0.46221593,0.2411183){\makebox(0,0)[lt]{\lineheight{1.25}\smash{\begin{tabular}[t]{l}3\end{tabular}}}}%
    \put(0.34141681,0.48227617){\makebox(0,0)[lt]{\lineheight{1.25}\smash{\begin{tabular}[t]{l}2\end{tabular}}}}%
  \end{picture}%
\endgroup%
}}}
\\Q&&G&&\widetilde G&&\vcenter{\hbox{$\cals$}}
\end{array} 
\]
\end{example}

A larger example is given in subsection~\ref{sect 3.5}.

\subsubsection{The quiver $Q$}\label{sect 3.1.1} A \emph{chordless cycle} in a quiver $Q$ is a cyclic path $C=x_0\to x_1 \to\dots\to x_t\to x_0$ such that $x_i\ne x_j$ if $i\ne j$ and the full subquiver on vertices $x_0,x_1,\dots, x_t$ is equal to $C$. 
The arrows that lie in exactly one chordless cycle will be called {\em boundary arrows} and those that lie in two or more chordless cycles {\em interior arrows} of $Q$. 

\begin{definition}
  The \emph{dual graph} $G$ of $Q$ is defined as follows. The set of vertices $G_0$ is the union of 
the set of chordless cycles of $Q$ and the set of boundary arrows of $Q$. 
The set of edges $G_1$ is the union of two sets
called the set of \emph{trunk edges} and the set of \emph{leaf branches}.  A trunk edge $\xymatrix{C\ar@{-}[r]^\za&C'}$ is drawn between any pair of chordless cycles  $(C,C')$ that share an arrow $\za$. A leaf branch $\xymatrix{C\ar@{-}[r]^\za &\za}$ is drawn between any pair $(C,\za)$ where $C$ is a chordless cycle and $\za$ is a boundary arrow such that $\za $ is contained in $C$.
\end{definition}

\begin{definition}[The quiver]
\label{def Q}  Throughout the paper,  we let $Q$ be a finite connected quiver without loops and 2-cycles satisfying the following conditions. 
\begin{itemize}
\item [(Q1)] Every arrow of $Q$ lies in at least one chordless cycle.
\item[(Q2)]\label{tree} The dual graph of $Q$ is a tree. 
\item[(Q3)]\label{boundary} The boundary arrows of $Q$ form a simple (non-oriented) cycle.
\end{itemize}
\end{definition}

The following properties follow easily from the definition.
\begin{prop}\label{prop Q}  Let $Q$ be a quiver satisfying Definition~\ref{def Q}.
 \begin{enumerate}
\item $Q$ has no parallel arrows.
\item $Q$ is planar.
\item For all arrows $\za $ of $Q$,
 \begin{enumerate}
\item[(a)] either $\za$ lies in exactly one chordless cycle, 
\item[(b)] or $\za$ lies in exactly two chordless cycles.\end{enumerate}
\item Any two chordless cycles in $Q$ share at most one arrow.
\end{enumerate}
\end{prop}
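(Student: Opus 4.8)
The plan is to extract all four statements directly from conditions (Q1), (Q2) and the definitions of ``chordless cycle'' and ``dual graph''. Three of them are purely combinatorial one–liners, and planarity is the only one deserving real care. For (1): if $\za,\zb\colon i\to j$ were two distinct parallel arrows, choose by (Q1) a chordless cycle $C$ through $\za$. Then $i,j\in V(C)$, and since $C$ is a cyclic path on distinct vertices it uses the step $i\to j$ exactly once, via $\za$; hence $\zb$ is not an arrow of $C$. But then the full subquiver on $V(C)$ properly contains $C$, contradicting chordlessness, so $Q$ has no parallel arrows.

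For (3) and (4) I would use that a tree contains no cycle, not even a double edge. By (Q1) every arrow lies in at least one chordless cycle. If an arrow $\za$ lay in three distinct chordless cycles $C_1,C_2,C_3$, then $G$ would contain trunk edges joining $C_1,C_2$, joining $C_2,C_3$ and joining $C_1,C_3$, forming a triangle and contradicting (Q2); hence $\za$ lies in exactly one or exactly two chordless cycles, which is (3). If two distinct chordless cycles $C,C'$ shared two distinct arrows, then $G$ would contain two distinct trunk edges joining $C$ and $C'$, again a cycle in $G$, contradicting (Q2); this gives (4). (If one takes $G$ to be a simple graph from the outset, then (4) is already built into the requirement that $G$ be a tree.)

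For (2): condition (Q2) says precisely that $Q$ is assembled from the chordless cycles --- each an oriented polygon --- by gluing them along single shared arrows according to the tree $G$ (trunk edges record the gluings, leaf branches the free boundary arrows). A tree of polygons glued along edges is planar: one obtains an explicit embedding by rooting $G$ at a chordless-cycle vertex, drawing the root polygon convexly, and recursively attaching each further polygon into the outer region along its unique shared arrow with an already-drawn polygon. I expect this recursive attachment to be the only genuine obstacle: to make it rigorous one must control how chordless cycles can intersect --- they meet in at most one shared arrow by (4), but several polygons may still fan out around a common vertex --- and check that in every case the polygons can be placed without crossings, so that the recursion (equivalently, an induction on the number of chordless cycles, peeling off a leaf of the dual tree and tracking that the induced subquiver again satisfies (Q1)--(Q2)) goes through. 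Parts (1), (3) and (4), by contrast, follow immediately from the definitions together with the acyclicity of $G$.
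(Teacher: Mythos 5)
Parts (1), (3) and (4) of your proposal coincide with the paper's own arguments: a second parallel arrow would be a chord of any chordless cycle through the first, three chordless cycles through one arrow would create a triangle in the dual graph $G$, and two shared arrows would create a double edge in $G$, both contradicting (Q2). Nothing to add there.

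Part (2) is where you take a different route, and where your proposal has a genuine gap --- one you yourself flag. Your recursive construction (root the tree $G$ at a cycle, draw that polygon, attach each further polygon into the outer region along its shared arrow) presupposes that $Q$ is literally the result of gluing its chordless cycles along the single shared arrows recorded by $G$, with no further identifications. That is exactly the unproved step: conditions (Q1)--(Q2) of Definition~\ref{def Q} do not immediately rule out two chordless cycles (adjacent in $G$ or not) sharing vertices beyond the endpoints of a shared arrow, and it is precisely such extra identifications that could prevent you from placing a new polygon in the outer face without crossings. Your alternative phrasing as an induction peeling off a leaf of $G$ has the companion problem that deleting the non-shared arrows of a leaf cycle can change which cycles are chordless (a chord may disappear), so one must also verify that the smaller quiver still satisfies (Q1)--(Q2) before invoking the inductive hypothesis. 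The paper sidesteps all of this: it fixes a planar embedding of the tree $G$, extends the leaf edges to infinity so the plane is subdivided into regions, and reads off $Q$ as the dual of this picture --- regions correspond to vertices of $Q$, edges of $G$ to arrows --- so planarity of $Q$ is inherited directly from planarity of the tree, with no case analysis of how the polygons can meet. If you want to keep your gluing picture, you need to supply the missing lemma controlling intersections of chordless cycles; otherwise the dual-of-the-embedded-tree argument is the shorter and safer route, and note that the resulting embedding is genuinely used later (e.g.\ in Lemma~\ref{lem 36a}), so the gap is not merely cosmetic.
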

\begin{proof}
 (1) Suppose $\za,\zb$ are parallel arrows in $Q$ and let $C$ be a chordless cycle containing $\za$. Then the full subquiver on the vertices of $C$ also contains $\zb$, so it does not equal $C$, a contradiction.
 
 (2) Let $G$ be the dual graph of $Q$. Since $G$ is a tree, there is a planar embedding of $G$. With respect to this planar embedding, we can reconstruct the quiver as follows.
 We get a subdivision of the plane into unbounded regions by extending the leaves of $G$ to infinity. Each region corresponds to a vertex in $Q$. Each edge in $G$ corresponds to an arrow connecting the vertices of the two adjacent regions. This gives a planar embedding of $Q$.
 
 (3) If an arrow $\za$ lies in three (or more) chordless cycles $C_1,C_2,C_3$ then $\za$ gives rise to the following three edges 
 $\xymatrix@C10pt{C_1 \ar@{-}[r]& C_2\ar@{-}[r] & C_3\ar@{-}[r] &C_1}$ in $G$, contradicting that $G$ is a tree.

(4) If two chordless cycles share two arrows, then there would be two edges between the corresponding vertices in $G$ contradicting (Q2).
\end{proof}

\begin{remark}\label{rem relax}
 We could relax Condition (Q2) and allow  $G= G'\coprod G''$ to be a disjoint union of trees that correspond to disjoint subquivers $Q',Q''$ each satisfying  Definition~\ref{def Q}, and there is  exactly one arrow from one to the other and this arrow is not contained in a relation. In that case the algebra is a triangular matrix algebra and using \cite[Theorem 3.3]{PZ}  together with  \cite[Lemma 3.7]{CGL} we get that $\cmp\,B=\cmp\,B'\coprod\cmp\,B''$.  In particular, we can relax conditions on the quiver by allowing  arrows $\za$ on which there are no relations such that $Q\setminus \{\za\}$ is disconnected.

The condition that the tree $G$ has no cycles seems more serious. If $G$ were allowed to have cycles our construction would not yield a polygon but a more complicated surface. It is an interesting question if the results and conjectures of this paper generalize to that setting.
\end{remark}

\subsubsection{The twisted completed dual graph $\widetilde G$}
\label{sect 3.1.3}
If $Q$ has exactly one chordless cycle the twisted  dual graph $\widetilde G$ is defined to be the dual graph $G$. Suppose now that $Q$ has at least two chordless cycles. 
Then there exists a chordless cycle $C_0$ that is connected to  exactly one other chordless cycle $C_1$.
  We are going to twist the graph $G$ at every trunk edge in the following way.
  
 We label the chordless cycles $C_0,C_1,C_2,\ldots, C_s$ in such a way that the length $d(i)$ of the unique path from $C_0$ to $C_i$ in $G$ satisfies the condition $ i<j$  if  $d(i)< d(j) $.

 To construct $\widetilde G$, we start with $G$ and twist at the edge $\xymatrix{C_0\ar@{-}[r]^\za&C_1}$, meaning that we keep the connected component of $G\setminus{\za}$ that contains $C_0$, flip the other component  and reconnect the two components via $\za$. Call the result $G_1$. An example is given below.
 
\[\xymatrix@R15pt@C15pt{ &\cdot\ar@{-}[d] & \cdot\ar@{-}[d] & \cdot\ar@{-}[dl] \\
\cdot\ar@{-}[r] &C_0\ar@{-}[r]^\za & C_1\ar@{-}[r] & \cdot\ar@{-}[r]\ar@{-}[d] &\cdot &&\ar[r]^{\textup{twist along $\za$}}&\ \\
&&&\cdot }
\qquad \qquad
\xymatrix@R15pt@C15pt{ &\cdot\ar@{-}[d] &&\cdot\ar@{-}[d] \\
\cdot\ar@{-}[r] &C_0\ar@{-}[r]^\za & C_1\ar@{-}[r] & \cdot\ar@{-}[r]&\cdot\\
&&\cdot\ar@{-}[u] & \cdot\ar@{-}[ul]}
\]  
Recursively, the graph $G_i$ is the twist of the graph $G_{i-1}$ along the last edge on the unique path from $C_0$ to $C_i$. After $s$ steps, we have twisted once along every trunk edge of $G$ and we obtain the graph $G_s$. 

To define the graph $\widetilde G$, we complete the graph $G_s$ as follows.  For every pair of neighboring leave vertices $\za,\zb$ in $G_s$
\begin{itemize}
\item we add an edge $\xymatrix{\za\ar@{-}[r] &\zb}$, if the resulting face has an even number of edges; or
\item we add a new vertex $x$ and two edges  $\xymatrix{\za\ar@{-}[r] &x}$,  $\xymatrix{x\ar@{-}[r] &\zb}$, otherwise.
\end{itemize}
An example is shown below.
\[
\begin{array}{ccccc}
 \xymatrix{ \za &&&\zb\\\bullet\ar@{-}[r]\ar@{-}[u] &
\bullet\ar@{-}[r] &\bullet\ar@{-}[r] &\bullet\ar@{-}[u] 
} 
& \begin{array}{c}\\\textup{\vspace{10pt} completes to}\\ \longrightarrow\end{array}& 
\xymatrix{ \za\ar@{-}[rrr] &&&\zb\\\bullet\ar@{-}[r]\ar@{-}[u] &
\bullet\ar@{-}[r] &\bullet\ar@{-}[r] &\bullet\ar@{-}[u] 
}
\\
 \xymatrix{ \za &&\zb\\\bullet\ar@{-}[r]\ar@{-}[u] &
\bullet\ar@{-}[r]  &\bullet\ar@{-}[u] 
} 
& \begin{array}{c}\\\textup{\vspace{10pt} completes to}\\ \longrightarrow\end{array}& 
\xymatrix{ \za\ar@{-}[r] &x\ar@{-}[r]&\zb\\\bullet\ar@{-}[r]\ar@{-}[u] &
\bullet\ar@{-}[r] &\bullet\ar@{-}[u] 
}
\end{array}
\]
Note that in both cases the vertices $\za,\zb$ of $\widetilde G$ lie in a face that has an even number of vertices.
Moreover
\[\widetilde G_0 = G_0 \cup\{ \textup{completion vertices}\}
\qquad \textup{and}\qquad \widetilde G_1 = G_1 \cup\{ \textup{completion edges}\}.
\]
We denote by $\widetilde G_2$ the set of bounded faces of $\widetilde G$.
By construction, every face in $\widetilde G_2$ has an even number 
 ($\ge 4$) of edges and at least one and at most two of them are completion edges.

\subsubsection{The checkerboard polygon $\cals$}
\label{sect 3.1.4}
Recall that the \emph{medial graph} $M(H)$ of a planar graph $H$ is defined as the graph that has one vertex for each edge of $H$, and two vertices are connected in $M(H)$ if the corresponding edges in $H$ are consecutive in one of the faces of $H$.
 
The checkerboard polygon $\cals$ is obtained from the medial graph of $\widetilde G$ by adding one edge for every leaf vertex of $G$. 
The faces of $\cals$ come in two types: faces that surround a vertex of $G$ and faces that sit inside a face of $\widetilde G$. 
We think of $\cals $ as a polygon with checkerboard pattern where the shaded regions are those coming from vertices in $G$, thus from the chordless cycles and the boundary arrows in $Q$.

\begin{remark}\label{rem orientation}
 If we started with the opposite quiver we would obtain the same checkerboard polygon. To fix orientations, we shall always assume that the first chordless cycle $C_0$ of the quiver $Q$ corresponds to running around the associated shaded region of the polygon $\cals$ in counterclockwise direction.  Then the other chordless cycles in the quiver also correspond to the counterclockwise direction in their respective shaded regions, since we use the twisted dual graph. Indeed, in the quiver, the orientation of the chordless cycles  alternates between clockwise and counterclockwise, and when twisting the dual graph all chordless cycles acquire the counterclockwise orientation.
\end{remark}

The interior vertices of $\cals$ correspond to  arrows in $Q$, and every interior vertex has degree four. An edge in $\cals$ may connect two interior vertices, or an interior vertex and a boundary vertex, or two boundary vertices. 

An edge 
in $\cals$ that connects two interior vertices $\za,\zb$ carries the label $i$, where $i$ is the unique vertex in $Q$  that is  shared by the two arrows $\za$ and $\zb$. 

Every shaded boundary region in $\cals$ contains precisely one boundary edge as well as  two interior edges that meet at an interior vertex $\za$. The two interior edges are labeled by the starting vertex $s(\za)$ and the terminal vertex $t(\za)$ of the arrow $\za $ in $Q$ such that, when running around the shaded region in counterclockwise order, we go along $s(\za)$ towards $\za$ and along $t(\za)$ away from $\za$. 
The boundary edges in $\cals $ are not labeled.

\subsubsection{The algebra $B$}
We are now ready to define the algebra we are going to study. 
\begin{definition}
Let  $Q$ be quiver satisfying the conditions of Definition~\ref{def Q} and let $W$ be the potential $W=\sum_{i=0}^t (-1)^{d(i)} C_i$. Then the Jacobian algebra over $\kb$ of $(Q,W)$ is called a \emph{dimer tree algebra}.
\end{definition}
We are going to show in subsection \ref{sect 3.7} that every cyclic path in $Q$ is zero in $B$ and that any two nonzero parallel paths in $Q$ are equal. In particular, this implies that dimer tree algebras are finite-dimensional and thus 2-Calabi-Yau tilted. 

\medskip
\subsection{Example}\label{sect 3.5} Let $Q$ be the quiver below. It has 5 chordless cycles and 8 boundary arrows. We have labeled the   arrows in red.
\[ \xymatrix@R40pt@C40pt{1\ar[r]^{{\color{red} 1}}&2\ar[r]^{{\color{red} 2}}&3&4\ar[l] _{{\color{red} 3}}\\
5\ar[r]_{{\color{red} 6}}\ar@{<-}[u]^{{\color{red} 4}}&6\ar[lu]_{{\color{red} \za}}\ar@{<-}[u]_{{\color{red} \zb}}&7\ar@{<-}[l]^{{\color{red} 7}}\ar[r]_{{\color{red} 8}}\ar[lu]_{{\color{red} \zd}}\ar@{<-}[u]_{{\color{red} \ze}}&8\ar[u]_{{\color{red} 5}}}\]

Its dual graph $G$ below has 5 trunk vertices labeled $C_0,\ldots,C_4$ which correspond to the 5 chordless cycles in $Q$. It also has 8 leaf vertices labeled in red which correspond to the boundary arrows in $Q$.
\[\xymatrix@R20pt@C20pt{
&&\color{red} 1\ar@{-}[d]&&\color{red} 2\ar@{-}[d]&\color{red} 3\ar@{-}[d]\\
\color{red} 4\ar@{-}[r] &C_0\ar@{-}[r]&C_1\ar@{-}[r]&C_2\ar@{-}[r]&C_3\ar@{-}[r]&C_4\ar@{-}[r]&\color{red} 5 \\
&\color{red} 6\ar@{-}[u]&&\color{red} 7\ar@{-}[u]&&\color{red} 8\ar@{-}[u]\\
}\]

The twisted completed dual graph $\widetilde G$ is shown below. 
Note that the vertices 1 and 2 are now at the bottom because they have been involved in an odd number twist operations, while the vertices 3,7, and 8 have not changed position, since they have been involved in an even number of twists. 
The completion step has introduced 4 completion vertices marked by a dot in the figure below and 12 completion edges which create 8 bounded faces.

\[\xymatrix@R20pt@C20pt{\cdot\ar@{-}[rrrrr]\ar@{-}[d]&&&&&\color{red} 3\ar@{-}[d]&\cdot\ar@{-}[d]\ar@{-}[l]
\\
\color{red} 4\ar@{-}[r] &C_0\ar@{-}[r]& C_1\ar@{-}[r]&C_2\ar@{-}[r]&C_3\ar@{-}[r]&C_4\ar@{-}[r]&\color{red} 5 
\\
\cdot\ar@{-}[r]\ar@{-}[u]&\color{red} 6\ar@{-}[u]\ar@{-}[r]&\color{red} 1\ar@{-}[u]\ar@{-}[r]&\color{red} 7\ar@{-}[u]\ar@{-}[r]&\color{red} 2\ar@{-}[u]\ar@{-}[r]&\color{red} 8\ar@{-}[u]\ar@{-}[r]&\cdot\ar@{-}[u]\\
}\]

The polygon $\cals$ is shown below. The medial graph of $\widetilde G$ is drawn in blue and the 8 extra edges that come from the 8 leaf vertices of $G$ are drawn in red. The faces that correspond to a vertex of $G$ are labeled by that vertex as follows: the faces $C_0,\ldots,C_4$ are labeled in black and the faces corresponding to the leaf vertices $1,2,\ldots 8$ are labeled in red. The interior vertices of $\cals$ correspond to arrows in $Q$. We have labeled only those vertices that correspond to the interior arrows $\za,\zb,\zd,\ze$, since the labels of the remaining interior vertices of $\cals$ are the same as the red labels of the adjacent boundary regions.
\[\color{blue}\xymatrix@R15pt@C15pt{&&&&\cdot\ar@{-}@[red][rrrrrrr]_(.7){\color{red} \textup{\normalsize 3}}\ar@{-}[lllld]
\ar@{-}[rrrrrrd]_{\color{blue} 4}&&&&&&&\cdot\ar@{-}[rd]
\\
\cdot\ar@{-}@[red][dd]^{\color{red} \textup{\normalsize 4}}\ar@{-}[rd]^{\color{blue} 5}&&&&&&&&&&\cdot\ar@{-}[ru]_{\color{blue} 3} &&\cdot\ar@{-}@[red][dd]
_{\color{red} \textup{\normalsize 5}}
\\
&\cdot\ar@{-}@/^{10pt}/[rr] ^{\color{blue} 1}&\color{black} C_0&\color{black}\za\ar@{-}@/^{10pt}/[rr]^{\color{blue} 6}&\color{black} C_1&\color{black}\zb\ar@{-}@/^{10pt}/[rr]^{\color{blue} 2}&\color{black} C_2&\color{black}\zd\ar@{-}@/^{10pt}/[rr]^{\color{blue} 7}&\color{black} C_3&\color{black}\ze\ar@{-}[ru]^{\color{blue} 3}&\color{black} C_4&\cdot\ar@{-}[lu]_{\color{blue} 4}\ar@{-}[ru]^{\color{blue} 8}
\\
\cdot\ar@{-}[ru]_{\color{blue} 1}\ar@{-}[rd]&&\cdot\ar@{-}[ru]_{\color{blue} 6}\ar@{-}[lu]^{\color{blue} 5}\ar@{-}[rd]^{\color{blue} 5}&&\cdot\ar@{-}[ru]_{\color{blue} 2}\ar@{-}[lu]^{\color{blue} 1}\ar@{-}[rd]^{\color{blue} 1}&&\cdot\ar@{-}[ru]_{\color{blue} 7}\ar@{-}[lu]^{\color{blue} 6}\ar@{-}[rd]^{\color{blue} 6}&&\cdot\ar@{-}[ru]_{\color{blue} 3}\ar@{-}[lu]^{\color{blue} 2}\ar@{-}[rd]^{\color{blue} 2}&&\cdot\ar@{-}[ru]_{\color{blue} 8}
\ar@{-}[lu]^{\color{blue} 7}\ar@{-}[rd]^{\color{blue} 7}
&&\cdot\ar@{-}[lu]^{\color{blue} 4}
\\
&\cdot\ar@{-}@[red][rr]^{\color{red} \textup{\normalsize 6}}\ar@{-}[ru]^{\color{blue} 6}&&\cdot\ar@{-}@[red][rr]^{\color{red} \textup{\normalsize 1}}\ar@{-}[ru]^{\color{blue} 2}&&\cdot\ar@{-}@[red][rr]^{\color{red} \textup{\normalsize 7}}\ar@{-}[ru]^{\color{blue} 7}&&\cdot\ar@{-}@[red][rr]^{\color{red} \textup{\normalsize 2}}\ar@{-}[ru]^{\color{blue} 3}&&\cdot\ar@{-}@[red][rr]^{\color{red} \textup{\normalsize 8}}\ar@{-}[ru]^{\color{blue} 8}&&\cdot\ar@{-}[ru]
}\]
Each interior edge of $\cals$  carries a blue label that corresponds to a vertex of $Q$.
 For example, the edge connecting the vertices $\za$ and $\zb$ in $\cals$  is labeled by  the vertex $6$ of $Q$, since 6 is the  common vertex of the arrows $\za $ and $\zb$.

Moreover, the vertex shared by the faces labeled $C_0$ and the red 6 corresponds to the arrow labeled 6 in the quiver. This arrow shares the vertex $6$ with the arrow $\za$, so the label on the southeast edge of the face $C_0$ is labeled by a blue 6.

The polygon $\cals $ with its checkerboard pattern is redrawn in Figure \ref{fig 3.1}. The shaded regions are those labeled by a vertex of $G$.

\begin{figure}[htbp]
\begin{center}
\small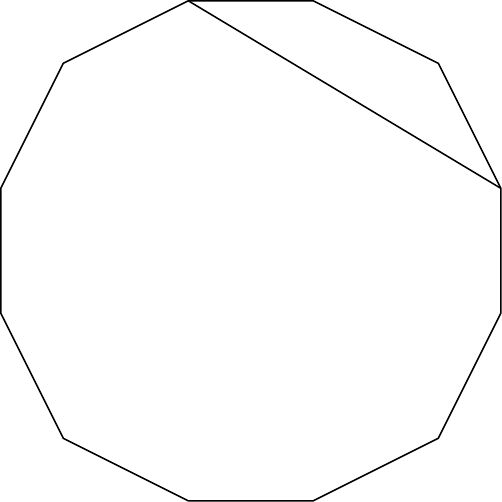
\caption{The checkerboard polygon in the example of Section \ref{sect 3.5}. The diagonals shown are called radical lines, they are labeled by the vertices of the quiver $Q$. The crossing points of two radical lines represent an arrow in the quiver and the interior shaded regions represent chordless cycles in $Q$. The boundary shaded regions correspond to the boundary arrows of $Q$.}
\label{fig 3.1}
\end{center}
\end{figure}

Conjecture \ref{main conj}  claims that the syzygy category $\cmp\,B$ is equivalent to the category of 2-diagonals.  To illustrate this, we compute the projective resolutions of the syzygies below. 

Let $\zg$ denote the radical line $\rho(4)$ in Figure \ref{fig 3.1}. Note that $\zg$ crosses the radical lines $\rho(3)$ and  $\rho(8)$. Then $\zg$ represents the projective presentation
 \[ \xymatrix{P(8) \ar[r]^{f_\zg} &P(3)\ar[r]& M_\zg \ar[r]&0}\]
and we see that  $M_\zg=\rad P(4)=\begin{smallmatrix}3\\7\end{smallmatrix}$.
Thus the radical line $\zg=\rho(4)$ represents the projective presentation of the radical of $P(4)$.
The map $f_\zg$ factors through the syzygy $\zO M_\zg=S(8)$ of $M_\zg$ whose projective presentation
 \[ \xymatrix{P(4) \ar[r]^{f_{R\zg}} &P(8)\ar[r]& \zO M_\zg \ar[r]&0}\]
is represented by the 2-diagonal $R \zg$ where $R$ is the clockwise rotation about the angle $2\pi/12$. Notice that $R\zg$ crosses the radical lines $\rho(8) $ and $\rho (4)$ corresponding to the projective modules in the projective presentation. 

We can continue this procedure and represent the projective presentation of the higher syzygies $\zO^i M_\zg$ by the rotations $R^i\zg$ of the diagonal $\zg$. In particular, after 12 steps we come back to where we started because the polygon has 12 vertices. The complete projective resolution is shown below over two lines. 
{\small\[
 \xymatrix@C0pt@R5pt{
&
P(4)\ar[rr] \ar[rd]&&
P(5)\ar[rr] \ar[rd]&&
P(1)\ar[rr] \ar[rd]&&
P(6)\ar[rr] \ar[rd]&&
P(2)\oplus P(5) \ar[rr] \ar[rd]&&
P(1)\oplus P(7)\ar@{.>}[rr] &&
\\
{\begin{smallmatrix}3\\7\end{smallmatrix}}\ar[ru]
 &&{\begin{smallmatrix}4\end{smallmatrix}}\ar[ru]
 &&{\begin{smallmatrix}5\\6\\7\\8\end{smallmatrix}}\ar[ru]
 &&{\begin{smallmatrix}1\\2\\3\end{smallmatrix}}\ar[ru]
 &&{\begin{smallmatrix}6\\7\\8\\4\end{smallmatrix}}\ar[ru]
 &&{\begin{smallmatrix} 2\ 5\\\ \ 3\ 6\\\ \  7\\\ \ 8 \end{smallmatrix}}\ar[ru]
 \\
\ar@{.>}[r]&P(3)\oplus P(6)\ar[rr] \ar[rd]&& 
P(2)\oplus P(8)\ar[rr] \ar[rd]&&
P(7)\ar[rr] \ar[rd]&&
P(4)\ar[rr] \ar[rd]&&
 P(8)\ar[rr]\ar[rd]
&& P(3)\ar[rd]&
 \\
{\begin{smallmatrix}1\ 7 \\ \ \ 2\ 8\\ \ \ \ \ 4\\\  \ 3\end{smallmatrix}}\ar[ru]
&&{\begin{smallmatrix}3\ 6\\7\\8\end{smallmatrix}}\ar[ru]
 &&{\begin{smallmatrix}8\ \ \\4\ 2\\3\end{smallmatrix}}\ar[ru]
 &&{\begin{smallmatrix}7\end{smallmatrix}}\ar[ru]
 &&{\begin{smallmatrix}4\\3\end{smallmatrix}}\ar[ru]
 &&8\ar[ru]
 &&{\begin{smallmatrix}3\\7\end{smallmatrix}}
}\]}

This projective resolution corresponds to the rotation orbit of the diagonal $\rho(4)$ in the polygon. This orbit contains all the 2-diagonals that cut the polygon into a quadrilateral and a decagon.  In the module category of $B$ this yields 12 indecomposable non-projective syzygies.

In the polygon, there is precisely one other orbit under the rotation and it contains all the 2-diagonals that cut the polygon into a hexagon and an octagon.  This orbit also yields 12 indecomposable syzygies.

The Auslander-Reiten quiver of the syzygy categories $\cmp B$ and $\scmp B$ are given below.
\[\scalebox{0.9}{\xymatrix@R10pt@C10pt{
&P(7)\ar[rd] &&&&P(5)\ar[rd] &&&&P(4)\ar[rd]
\\
{\begin{smallmatrix}8\ \ \\4\ 2\\3\end{smallmatrix}}
\ar[rd] \ar[ru] &&
{\begin{smallmatrix}1\ 7 \\ \ \ 2\ 8\\ \ \ \ \ 4\\\  \ 3\end{smallmatrix}}
\ar[rd] \ar[r]&
P(6)\ar[r]&
{\begin{smallmatrix}6\\7\\8\\4\end{smallmatrix}}
\ar[rd] \ar[ru] &&
{\begin{smallmatrix}5\\6\\7\\8\end{smallmatrix}}
\ar[rd]  &&
{\begin{smallmatrix}3\\7\end{smallmatrix}}
\ar[rd] \ar[ru]  &&
{\begin{smallmatrix}4\\3\end{smallmatrix}}
\ar[rd] \ar[r] &
P(8)\ar[r]& 
{\begin{smallmatrix}8\ \ \\4\ 2\\3\end{smallmatrix}}
\\
&{\begin{smallmatrix} 1\ \ 8\\ 2\ 4 \\3\end{smallmatrix}}
\ar[ru]\ar[rd] &&
{\begin{smallmatrix} 7\\8\\4\end{smallmatrix}}
\ar[ru]\ar[rd] &&
{\begin{smallmatrix} 6\\7\\8\end{smallmatrix}}
\ar[ru]\ar[rd] &&
{\begin{smallmatrix} 5\ \ \\6\ 3\\77\\8\end{smallmatrix}}
\ar[ru]\ar[rd] &&
{\begin{smallmatrix} 3\end{smallmatrix}}
\ar[ru]\ar[rd] &&
{\begin{smallmatrix} 2\ 4\\ 3\end{smallmatrix}}
\ar[ru]\ar[rd]
\\
{\begin{smallmatrix} 1\ \ \\ 2\ 4\\3 \end{smallmatrix}}
\ar[ru]\ar[rd] &&
{\begin{smallmatrix} 8\\4\end{smallmatrix}}
\ar[ru]\ar[rd] &&
{\begin{smallmatrix} 7\\8\end{smallmatrix}}
\ar[ru]\ar[rd]\ar[r]&
P(3)\ar[r]&
{\begin{smallmatrix} 3\ 6\\77\\8\end{smallmatrix}}
\ar[ru]\ar[rd] &&
{\begin{smallmatrix} 5\ \ \\6\ 3\\7\\8\end{smallmatrix}}
\ar[ru]\ar[rd] &&
{\begin{smallmatrix} 2\\3\end{smallmatrix}}
\ar[ru]\ar[rd]&&{\begin{smallmatrix} 1\ \ \\ 2\ 4\\3 \end{smallmatrix}}
\\
&{\begin{smallmatrix} 4 \end{smallmatrix}}
\ar[ru]&&
{\begin{smallmatrix} 8 \end{smallmatrix}}
\ar[ru]&&
{\begin{smallmatrix} 7 \end{smallmatrix}}
\ar[ru]&&
{\begin{smallmatrix} 3\ 6\\7\\8 \end{smallmatrix}}
\ar[ru]\ar[rd]&&
{\begin{smallmatrix} 2\ 5\\\ \ 3\ 6\\\ \  7\\\ \ 8 \end{smallmatrix}}
\ar[ru]\ar[r]&P(1)\ar[r]&
{\begin{smallmatrix} 1\\2\\3 \end{smallmatrix}}
\ar[ru]&&
\\
&&&&&&&&P(2)\ar[ru]
}}
\]

\[\scalebox{0.9}{\xymatrix@R10pt@C10pt{
\\
{\begin{smallmatrix}8\ \ \\4\ 2\\3\end{smallmatrix}}
\ar[rd]  &&
{\begin{smallmatrix}1\ 7 \\ \ \ 2\ 8\\ \ \ \ \ 4\\\  \ 3\end{smallmatrix}}
\ar[rd] &
&
{\begin{smallmatrix}6\\7\\8\\4\end{smallmatrix}}
\ar[rd]  &&
{\begin{smallmatrix}5\\6\\7\\8\end{smallmatrix}}
\ar[rd]  &&
{\begin{smallmatrix}3\\7\end{smallmatrix}}
\ar[rd]   &&
{\begin{smallmatrix}4\\3\end{smallmatrix}}
\ar[rd]  &
& 
{\begin{smallmatrix}8\ \ \\4\ 2\\3\end{smallmatrix}}
\\
&{\begin{smallmatrix} 1\ \ 8\\ 2\ 4 \\3\end{smallmatrix}}
\ar[ru]\ar[rd] &&
{\begin{smallmatrix} 7\\8\\4\end{smallmatrix}}
\ar[ru]\ar[rd] &&
{\begin{smallmatrix} 6\\7\\8\end{smallmatrix}}
\ar[ru]\ar[rd] &&
{\begin{smallmatrix} 5\ \ \\6\ 3\\77\\8\end{smallmatrix}}
\ar[ru]\ar[rd] &&
{\begin{smallmatrix} 3\end{smallmatrix}}
\ar[ru]\ar[rd] &&
{\begin{smallmatrix} 2\ 4\\ 3\end{smallmatrix}}
\ar[ru]\ar[rd]
\\
{\begin{smallmatrix} 1\ \ \\ 2\ 4\\3 \end{smallmatrix}}
\ar[ru]\ar[rd] &&
{\begin{smallmatrix} 8\\4\end{smallmatrix}}
\ar[ru]\ar[rd] &&
{\begin{smallmatrix} 7\\8\end{smallmatrix}}
\ar[ru]\ar[rd]&
&
{\begin{smallmatrix} 3\ 6\\77\\8\end{smallmatrix}}
\ar[ru]\ar[rd] &&
{\begin{smallmatrix} 5\ \ \\6\ 3\\7\\8\end{smallmatrix}}
\ar[ru]\ar[rd] &&
{\begin{smallmatrix} 2\\3\end{smallmatrix}}
\ar[ru]\ar[rd]&&{\begin{smallmatrix} 1\ \ \\ 2\ 4\\3 \end{smallmatrix}}
\\
&{\begin{smallmatrix} 4 \end{smallmatrix}}
\ar[ru]&&
{\begin{smallmatrix} 8 \end{smallmatrix}}
\ar[ru]&&
{\begin{smallmatrix} 7 \end{smallmatrix}}
\ar[ru]&&
{\begin{smallmatrix} 3\ 6\\7\\8 \end{smallmatrix}}
\ar[ru]&&
{\begin{smallmatrix} 2\ 5\\\ \ 3\ 6\\\ \  7\\\ \ 8 \end{smallmatrix}}
\ar[ru]&
&
{\begin{smallmatrix} 1\\2\\3 \end{smallmatrix}}
\ar[ru]&&
\\
}}
\]

Notice that the second syzygy $\zO^2$ corresponds to the inverse Auslander-Reiten translation $\tau^{-1}$ in ${\cmp\,B}$. For example, $\zO^2 {\begin{smallmatrix} 3\\7\end{smallmatrix}}={\begin{smallmatrix} 4\\3\end{smallmatrix}} = \tau^{-1} {\begin{smallmatrix} 3\\7\end{smallmatrix}}$.

\begin{example}
 \label{ex dimer}
 The checkerboard polygon $\cals$ from Figure~\ref{fig 3.1} can be embedded into the alternating strand diagram on a disc with the same number of boundary vertices shown in the left picture in Figure~\ref{fig dimer}.  The orientation of the strands is such that the shaded regions are oriented while the white regions are alternating. The corresponding dimer algebra on the disc is given by the quiver on the right in the same figure.
  Each vertex represents a white region in the alternating strand diagram and two regions are connected by an arrow if they share a crossing point.
 
 The full subquiver on the vertices 1, 2, \dots, 8 is equal to the twisted quiver $\widetilde Q$ of $Q$ in the sense of Bocklandt \cite{Bocklandt}. The vertices 9, 10, \dots 20 are frozen vertices. This 	quiver gives a seed for a cluster algebra associated to a certain positroid variety in the Grassmannian $\textup{Gr}(5,12)$.  The dual graph of $\widetilde Q$ is equal to the twist of the dual graph of  $Q$. 
 
 Thus the checkerboard polygon models the syzygy category of the Jacobian algebra of $Q$ and at the same time it models a seed in the cluster algebra of the twisted quiver $\widetilde Q$.
 
 \begin{figure}
\begin{minipage}{.5\textwidth}
\centering
 \scalebox{.55}{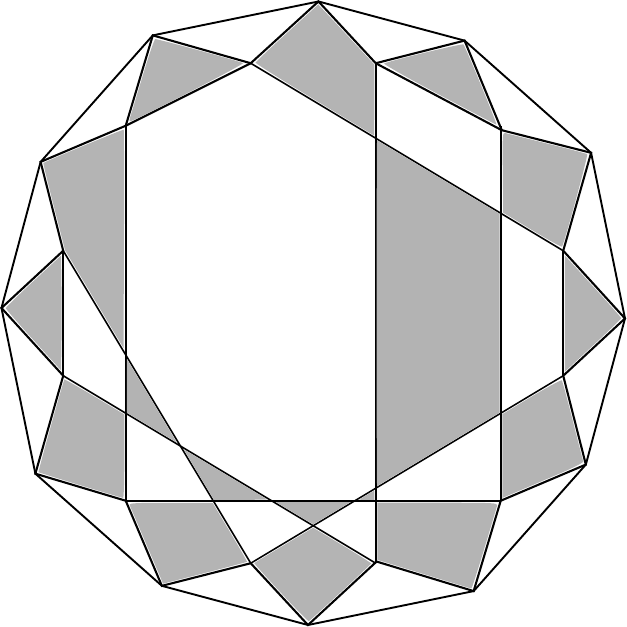}
\end{minipage}%
\begin{minipage} {.5\textwidth}
 \[\xymatrix@R15pt@C15pt{
&&\color{blue}20\ar[dd] & \color{blue}9\ar[l]\ar[rd]&& \\
&\color{blue}19\ar[ru]\ar[ld]&&4\ar[u]\ar[rd]&\color{blue}10\ar[l]&\\
\color{blue}18\ar[rd]&&3\ar[lu]\ar[ru]\ar@/_0pt/[rdd]\ar[ldd]&&8\ar[r]\ar[d]&\color{blue}11\ar[lu]\ar[d]\\
\color{blue}17\ar[u]\ar[d]&5\ar[l]\ar[ru]&&&7\ar[llu]\ar[d]&\color{blue}12\ar[lu]\\
\color{blue}16\ar[r]&1\ar[u]\ar[r]&2\ar@/_0pt/[uu]\ar[d]&6\ar[l]\ar[ru]&\color{blue}13\ar[ru]\ar[ld]&\\
&&\color{blue}15\ar[llu]\ar[r]&\color{blue}14\ar[u]}
 \]
\end{minipage}
\caption{An alternating strand diagram that contains the checkerboard polygon of Figure~\ref{fig 3.1} and its corresponding quiver.}
\label{fig dimer}
\end{figure}

\end{example}
\medskip
\subsection{Properties of $\cals$}\label{sect 3.3}

We have already observed that the interior vertices of the checkerboard polygon $\cals$ correspond to arrows in the quiver $Q$, and  the shaded regions in $\cals$ correspond to chordless cycles and boundary arrows in $Q$.

\begin{lemma}
 \label{lem 31}
 Let $\za$ be an interior vertex of $\cals$ and let $\zb_i,i=1,2,3,4$ be the four vertices connected to $\za$ in clockwise order. Denote by $x_i$ the label of the edge $\za$---$\zb_i$ in $\cals$.  
 Then $x_i=x_{i+2}$ and $x_i\ne x_{i+1}$, where the addition in the indices is modulo 4. Thus the opposite edges carry the same label. 
\end{lemma}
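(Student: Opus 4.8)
The statement is a local assertion about a single interior vertex $\za$ of $\cals$, so the plan is to unwind the definition of $\cals$ as the medial graph of $\widetilde G$ near $\za$ and read off the labels directly. Recall that $\za$ corresponds to an arrow of $Q$, equivalently to an edge of $\widetilde G$, which by construction lies in exactly two bounded faces $F_1,F_2$ of $\widetilde G$ (or one bounded and one unbounded face, if $\za$ is a boundary arrow; I will treat that case separately). The four vertices $\zb_1,\zb_2,\zb_3,\zb_4$ of $\cals$ adjacent to $\za$ are, by definition of the medial graph, the midpoints of the four edges of $\widetilde G$ that are consecutive with $\za$ in one of the two faces $F_1,F_2$: two of them, say $\zb_1$ and $\zb_3$, are the neighbours of $\za$ inside $F_1$, and $\zb_2,\zb_4$ the neighbours inside $F_2$ — so that going clockwise around $\za$ one alternates between the two faces.

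**Key steps.** First I would fix, once and for all, the bijection between edges of $\widetilde G$ meeting a given vertex $v$ of $\widetilde G$ and the labels/vertices of $Q$; concretely, if $\za$ as an edge of $\widetilde G$ joins vertices $u,v$, then each of the two faces containing $\za$ determines, at each endpoint, the next edge around that face, and by the construction of the labelling in Section~\ref{sect 3.1.4} the edge $\za$---$\zb_i$ of $\cals$ is labelled by the vertex of $Q$ ``between'' $\za$ and the corresponding neighbouring edge. Second, I would observe that the two edges of $\cals$ emanating from $\za$ into the \emph{same} face $F_k$ of $\widetilde G$ both get labelled by the vertex of $Q$ associated to that face — indeed, a bounded face of $\widetilde G$ corresponds either to a chordless cycle $C$ of $Q$ (so all its edges are labelled by vertices incident to $C$, and the two edges adjacent to $\za$ around the face both record the shared vertex $i$ of $Q$, which is the head/tail of the arrow $\za$ depending on orientation) or, for the unbounded/leaf case, one gets the boundary-arrow region. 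This gives $x_1=x_3$ and $x_2=x_4$, the ``opposite edges carry the same label'' claim. Third, for $x_i\ne x_{i+1}$: the faces $F_1$ and $F_2$ are distinct, so they correspond to two distinct chordless cycles $C,C'$ of $Q$ sharing the arrow $\za$ — or one chordless cycle and one boundary region. By Proposition~\ref{prop Q}(4), $C$ and $C'$ share \emph{exactly one} arrow, namely $\za$, and the vertices of $Q$ that label the $F_1$-edges at $\za$ are $s(\za),t(\za)$ while those labelling the $F_2$-edges are again $s(\za),t(\za)$ but in the opposite cyclic position, so each adjacent pair $\{x_i,x_{i+1}\}=\{s(\za),t(\za)\}$, and since $Q$ has no loops, $s(\za)\ne t(\za)$, giving $x_i\ne x_{i+1}$.

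**Main obstacle.** The genuinely delicate point is the bookkeeping of \emph{which} face contributes \emph{which} of the two labels $s(\za),t(\za)$, i.e.\ confirming that as one goes clockwise around $\za$ the sequence of labels is exactly $s(\za),t(\za),s(\za),t(\za)$ (up to starting point) rather than, say, $s(\za),s(\za),t(\za),t(\za)$; this is where the \emph{twisting} of the dual graph in Section~\ref{sect 3.1.3} is essential, because (by Remark~\ref{rem orientation}) after twisting every chordless cycle is traversed counterclockwise in its shaded region, and the labelling convention ``along $s(\za)$ towards $\za$ and along $t(\za)$ away from $\za$'' then forces the two faces on either side of $\za$ to present the labels in swapped order. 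So the bulk of the write-up will be a careful local picture at $\za$ — one shaded region above the edge $\za$ and one below, with the counterclockwise traversals — from which the alternation, and hence both halves of the lemma, is immediate. The boundary-arrow case is entirely analogous, reading the unbounded/leaf face in place of one of the $C_i$.
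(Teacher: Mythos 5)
There is a genuine gap: the local picture of the medial graph at $\za$ is set up incorrectly, and the key labelling claim built on it is false. If $e_\za$ is the edge of $\widetilde G$ corresponding to $\za$, the four medial edges at $\za$ in clockwise order go: into $F_1$ at one endpoint of $e_\za$, into $F_1$ at the other endpoint, into $F_2$ at that endpoint, into $F_2$ at the first endpoint. So the two neighbours of $\za$ lying in the same face $F_k$ of $\widetilde G$ are \emph{consecutive} around $\za$, not opposite as you assert; consecutive pairs of edges at $\za$ alternate between bounding a white region (which sits inside a face of $\widetilde G$) and a shaded region (which surrounds an endpoint of $e_\za$, i.e.\ a vertex of $\widetilde G$). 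Your central claim in the second step --- that the two edges of $\cals$ from $\za$ into the same face of $\widetilde G$ both carry ``the vertex of $Q$ associated to that face'' --- fails on every reading: faces of $\widetilde G$ are white regions, which are associated to boundary arrows via their cycle paths (Proposition~\ref{prop white region}), not to single vertices of $Q$, and the edge labels around a white region are pairwise distinct (Lemma~\ref{lem 32}(a)); and if instead you mean the shaded region of a chordless cycle $C$ containing $\za$ (a vertex, not a face, of $\widetilde G$), then the two edges of that region at $\za$ record the shared vertices of $\za$ with its predecessor and successor in $C$, namely $s(\za)$ and $t(\za)$, which are distinct since $Q$ has no loops. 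Indeed your own third step contradicts the second: there you say the $F_1$-edges at $\za$ carry $s(\za)$ \emph{and} $t(\za)$. So the proposed mechanism for $x_i=x_{i+2}$ does not work, and the ``main obstacle'' you flag (ruling out the pattern $s,s,t,t$) is exactly the substance of the proof, which you defer to an unwritten local picture.

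The correct argument runs the other way around, as in the paper: a pair of \emph{consecutive} edges $\za$---$\zb_i$, $\za$---$\zb_{i+1}$ bounds a shaded region, corresponding either to a chordless cycle $C$ in which $\zb_i,\za,\zb_{i+1}$ are consecutive arrows, or to a boundary-arrow region with the labelling convention of Section~\ref{sect 3.1.4}; in both cases $x_i=s(\za)$ and $x_{i+1}=t(\za)$, where the consistent rotational order (all chordless cycles run counterclockwise around their shaded regions after the twist, Remark~\ref{rem orientation}, and the boundary convention is chosen to match) guarantees that the \emph{same} order $s(\za),t(\za)$ occurs for the opposite shaded pair. The checkerboard alternation of shaded and white regions around the degree-four vertex $\za$ then yields the label pattern $s(\za),t(\za),s(\za),t(\za)$, which is precisely $x_i=x_{i+2}$ and $x_i\ne x_{i+1}$. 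Your write-up would need this consecutive-arrows-in-a-shaded-region step, plus the orientation consistency, in place of the face-based claim.
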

\begin{proof}
 The vertex $\za$ corresponds to an arrow in $Q$ and the four labels $x_i$ must be taken from the endpoints $s(\za), t(\za)$ of $\za$ in $Q$. Consider a pair of consecutive edges  $\za$---$\zb_i$,  $\za$---$\zb_{i+1}$ that are part of the same shaded region in $\cals$.  Assume first that this shaded region  has no side on the boundary of $\cals$. Then the region corresponds to a chordless cycle $C$ of $Q$ and the three arrows $\zb_i,\za,\zb_{i+1}$ form a subpath of $C$. In particular, the arrows $\zb_i,\za$ have the starting vertex $s(\za)$ of $\za$ in common, while the arrows $\za, \zb_{i+1}$ share the terminal vertex $t(\za)$ of $\za$. Thus $x_i=s(\za)$ and $x_{i+1}= t(\za)$.
 Now assume that the shaded region is labeled by a boundary vertex of $Q$. Then we also have $x_i=s(\za)$ and $x_{i+1}=t(\za)$, by definition of the labeling. 
 Because of the checkerboard pattern, the same argument shows
 that $x_{i+2}=s(\za) $ and $x_{i+3}=t(\za)$. 
 Moreover, since the quiver $Q$ does not contain any loops, we have $x_i\ne x_i+1$.
 \end{proof}
 
\begin{definition}
  The piecewise linear curve $\rho(i)$ given as the union
 of edges in $\cals$ that carry the label $i$ will be called the \emph{radical line} of the vertex $i\in Q_0$. 
The polygon $\cals$ together with the system of radical lines $\{\rho(i) \mid i\in Q_0\}$ will be called the {\em checkerboard polygon} of $Q$. \end{definition}

\begin{remark}
  We shall show  in Proposition \ref{prop:radicals correspond} that $\rho(i)$ corresponds to the radical of the projective $P(i)$ at vertex $i$. 
  \end{remark}

It follows from Lemma \ref{lem 31} that $\rho(i)$ is connected and starts and ends at the boundary of $\cals$ and by condition (Q3) of  Definition~\ref{def Q} the curve $\rho(i)$ is unique. Since $G$ is a tree, we also know that $\rho(i)$ does not intersect itself.
 It is always possible and often convenient to draw $\rho(i) $ as a smooth curve. Sometimes, we can draw $\rho(i)$ as a straight line segment, see for example Figure \ref{fig 3.1}.
 
 \subsubsection{White regions}

\begin{lemma}
 \label{lem 32}
 \begin{itemize}
\item[(a)] Every white region in $\cals$ has an even number of edges with distinct labels, and either exactly one boundary vertex or two boundary vertices connected by a boundary edge.
  \item[(b)] Every boundary vertex in $\cals$ is incident to exactly one white region and one or two shaded regions.
  \item[(c)] Every boundary vertex in $\cals$  is incident to at least one and at most two radical lines.  
\end{itemize}
\end{lemma}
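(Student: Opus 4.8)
<br>

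The plan is to analyze the local structure of $\cals$ around white regions and boundary vertices, using the description of $\cals$ as a modification of the medial graph $M(\widetilde G)$ together with the fact (recorded at the end of Section~\ref{sect 3.1.3}) that every bounded face of $\widetilde G$ has an even number $\ge 4$ of edges, of which one or two are completion edges.

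\textbf{Part (a).} Recall that $\cals$ is the medial graph of $\widetilde G$ with one extra edge added for each leaf vertex of $G$, and that the faces of $\cals$ are of two types: those surrounding a vertex of $\widetilde G$ (the shaded regions) and those sitting inside a bounded face of $\widetilde G$ (the white regions), plus the outer boundary. So a white region $w$ corresponds to a bounded face $f$ of $\widetilde G$. The vertices of $M(\widetilde G)$ on the boundary of $w$ are in bijection with the edges of $f$, and consecutive ones are joined by an edge of $M(\widetilde G)$; thus before the leaf-edge modification, $w$ has exactly as many edges as $f$ does, hence an even number $\ge 4$. Now I must track how the added leaf edges and the resulting boundary vertices of $\cals$ interact with $w$: each completion edge of $\widetilde G$ is an edge incident to a leaf vertex of $G$, and the leaf-vertex modification replaces the corresponding medial-graph structure by a boundary edge or boundary vertex of $\cals$. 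Since $f$ has one or two completion edges, $w$ acquires either one boundary vertex of $\cals$ (the one-completion-edge case) or two boundary vertices joined by a boundary edge (the two-completion-edges case); in the latter case I should check using the ``even face'' property that the two completion edges of $f$ are adjacent in $f$, so that the two boundary vertices really are joined by a single boundary edge of $\cals$. For the distinct-labels claim: by Lemma~\ref{lem 31}, opposite edges at an interior vertex carry the same label and adjacent ones differ; walking around $w$, each step crosses an interior vertex $\za$ and exchanges one endpoint-label of $\za$ for the other, so labels alternate and, since $G$ being a tree forces the radical lines not to self-intersect, a given label cannot recur around a single white region. Hence all edges of $w$ have distinct labels, and since labels alternate in a suitable sense the number of edges is even (consistent with the count above).

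\textbf{Part (b).} A boundary vertex $v$ of $\cals$ arises from the leaf-vertex modification at a leaf vertex $\ze$ of $G$, i.e.\ from a boundary arrow of $Q$. The shaded region attached to $\ze$ is incident to $v$; whether $v$ is incident to one or two shaded regions depends on whether $v$ is an endpoint of a boundary edge shared with a neighboring leaf vertex's shaded region (two shaded regions) or an isolated boundary vertex inserted between two non-adjacent leaves (one shaded region). In either case, the white region(s) meeting $v$ are exactly the white region(s) sitting in the $\widetilde G$-face(s) containing the completion edge(s) at $\ze$; by the analysis in (a), there is exactly one such white region incident to $v$. So $v$ meets one white and one or two shaded regions, and these exhaust the faces around $v$.

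\textbf{Part (c).} Radical lines are unions of labeled interior edges, and a boundary vertex $v$ of $\cals$ is incident to the interior edges bounding the shaded region(s) at $v$. Each shaded region labeled by a boundary arrow $\za$ contributes exactly the two interior edges labeled $s(\za)$ and $t(\za)$ meeting at the interior vertex $\za$ (as described right before Section~\ref{sect 3.3}); of these, exactly one is incident to $v$ if $v$ is the ``outer'' boundary vertex of that shaded region. So if $v$ is incident to one shaded region it lies on exactly one radical line, and if it is incident to two shaded regions it lies on exactly two; this gives the bound of at least one and at most two. I would conclude (c) directly from (b) by this count.

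\textbf{Main obstacle.} I expect the bookkeeping in part (a) to be the crux: one must carefully verify that the leaf-edge/completion-vertex modification of the medial graph does exactly what is claimed to each white region — in particular that in the two-completion-edge case the two completion edges of the $\widetilde G$-face are consecutive (so the two boundary vertices of $\cals$ are genuinely adjacent via a boundary edge), and that no white region ends up with zero or with an odd number of boundary vertices. This requires unwinding the definition of the completion step in Section~\ref{sect 3.1.3} and matching it against the medial-graph construction; everything else is a routine consequence of Lemma~\ref{lem 31} and the tree property of $G$.
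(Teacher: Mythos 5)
Your overall route is the paper's: in (a) you use the correspondence between white regions and the bounded faces of $\widetilde G$ (which have an even number of edges by construction), you get distinctness of labels from Lemma~\ref{lem 31} together with the tree property of $G$, and you get the boundary-vertex count from the completion step; (b) comes from the structure of the shaded boundary triangles; and (c) is read off from (b) exactly as in the paper. Two local points need attention, though neither requires a new idea. First, the issue you single out as the main obstacle in (a) is not an obstacle: when a bounded face of $\widetilde G$ has two completion edges, these are by definition the two edges joining the neighboring leaves $\za$ and $\zb$ to a single completion vertex $x$, so they are automatically consecutive in the face, and the two corresponding boundary vertices of $\cals$ are joined by a single boundary edge; no parity argument is needed. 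Second, in (b) the step ``by the analysis in (a), there is exactly one such white region incident to $v$'' is a non sequitur: part (a) bounds the number of boundary vertices per white region, not the number of white regions per boundary vertex. The clean repair is to identify a boundary vertex of $\cals$ with the medial vertex of a completion edge of $\widetilde G$, rather than with a leaf vertex of $G$ (indeed, in the one-completion-edge case a single boundary vertex is shared by the shaded triangles of two different leaves); since a completion edge lies on exactly one bounded face of $\widetilde G$, the vertex meets exactly one white region, and it meets two shaded triangles or one according to whether its completion edge joins two leaves directly or joins a leaf to a completion vertex. The paper reaches the same conclusion slightly differently, by noting that every shaded boundary region has exactly two boundary vertices. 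With this repair your count in (c), one labeled interior edge at $v$ for each shaded region at $v$, goes through as in the paper. Finally, the parenthetical suggestion in (a) that ``alternation'' of labels reconfirms evenness is not a valid argument when more than two labels occur, but it is harmless since evenness already follows from the face count.
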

\begin{proof}
(a) By construction, the faces in $\widetilde G$ have an even number of edges, hence the white regions have an even number of vertices, thus edges. The edge labels are distinct because of Lemma \ref{lem 31} and since $G$ is a tree. The fact that $G$ is a tree also implies that the white region has at least one vertex on the boundary. Moreover, it has exactly two boundary vertices connected by a boundary edge precisely if it comes from a face of $
\widetilde G$ which has a completion vertex.

(b) The shaded boundary regions correspond to boundary arrows in $Q$ and the two boundary vertices of a shaded boundary region correspond to the endpoint of that arrow. In particular, every shaded boundary region has exactly two boundary vertices. This implies that every boundary vertex $x$ is incident to exactly one white region $W$. Moreover, if $x$ is the only boundary vertex of $W$ then $x$ is incident to two shaded regions and if $W$ contains a second boundary vertex then $x$ is incident to only one shaded region.

Part (c) follows directly from (b).
\end{proof}

\begin{lemma}
 \label{lem 33} Let $W$ be a white region in $\cals$, with interior edges labeled $i_1, i_2, \ldots, i_t$ starting at the boundary and going around $W$ in the clockwise direction. 
  \begin{itemize}
\item[(a)] There is a unique path 
  $\mathfrak{c}(W)=\za_{i_1}\za_{i_2}\cdots\za_{i_{t-1}}$ in $Q$
   and a corresponding sequence of distinct chordless cycle $C_1,C_2,\ldots ,C_{t-2}$ in $Q$ 
  such that  $\za_{i_j}$ is an arrow $i_j\to i_{j+1}$ and
  $\za_{i_j}\za_{i_{j+1}}$ is a subpath of $C_j=\za_{i_j}\za_{i_{j+1}}C_j'$. 

  \item[(b)] There are two unique paths
  $\mathfrak{v}_1(W) =  C'_{t-2}C'_{t-4} \cdots C'_a$ and 
  $\mathfrak{v}_2(W) =  C'_{t-3}C'_{t-5} \cdots C'_b$
  with $a=1,b=2$ if $t$ is odd, and $a=2,b=1$ if $t$ is even.
\item[(c)] The full subquiver $Q(W)$ of $Q$ whose vertices are those visited by $\mathfrak{c}(W), \mathfrak{v}_1(W),$ or $ \mathfrak{v}_2(W)$ is the quiver given by the union of the cycles $C_1,...,C_{t-2}$, and this quiver is equal to the full subquiver generated by 
$\mathfrak{c}(W)$ if and only if all $C_i$ are 3-cycles. 
\end{itemize}
\end{lemma}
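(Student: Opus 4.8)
The plan is to read everything off the bounded face $F$ of $\widetilde G$ whose medial image is the white region $W$. By the construction in Section~\ref{sect 3.1.3}, the boundary of $F$ is a simple path $\pi$ in the twisted dual graph running between two neighbouring leaf vertices of $G$, closed up either by one completion edge or by two completion edges through a completion vertex. Twisting does not change the underlying graph, so $\pi$ is also a simple path in $G$, and since its endpoints are leaves, every interior vertex of $\pi$ is a trunk vertex; thus $\pi$ visits, in order, a leaf, chordless cycles $C_1,C_2,\dots,C_{t-2}$, and a leaf, and these cycles are pairwise distinct because $\pi$ is simple. Under the medial construction the $t-1$ edges of $\pi$ (which are arrows of $Q$) become the interior vertices of $\cals$ on $\partial W$, the $t$ vertices of $\pi$ become the $t$ interior edges of $W$, and the corner at a completion vertex, if present, becomes the boundary edge of $W$; this already pins down the numbers in the statement.

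For part (a): the interior vertex $\za_{i_j}$ of $\cals$ lies between the interior $W$-edges labelled $i_j$ and $i_{j+1}$, which are adjacent at $\za_{i_j}$ and hence carry distinct labels by Lemma~\ref{lem 31}; therefore the arrow $\za_{i_j}$ of $Q$ has endpoints $i_j$ and $i_{j+1}$. That the orientation is uniformly $\za_{i_j}\colon i_j\to i_{j+1}$ is a direct check against the edge-labelling rule of Section~\ref{sect 3.1.4} and the normalisation of orientations in Remark~\ref{rem orientation}; the alternation of clockwise and counterclockwise chordless cycles in $Q$ is exactly what the twisting corrects. Hence $\mathfrak c(W)=\za_{i_1}\cdots\za_{i_{t-1}}$ is a path in $Q$, unique because the $\za_{i_j}$ are literally the interior vertices of $\cals$ on $\partial W$. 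For $1\le j\le t-2$ the $W$-edge labelled $i_{j+1}$ has the two distinct interior vertices $\za_{i_j},\za_{i_{j+1}}$ as endpoints (distinct since $i_j\ne i_{j+2}$, the labels being distinct by Lemma~\ref{lem 32}(a)), so the shaded region across it contains at least two interior vertices of $\cals$; a shaded region coming from a boundary arrow contains only one (Section~\ref{sect 3.1.4}), so this region is a chordless cycle, necessarily the $j$-th trunk vertex $C_j$ of $\pi$. Since $C_j$ contains the arrows $\za_{i_j},\za_{i_{j+1}}$ meeting at $i_{j+1}$ with $\za_{i_j}$ incoming and $\za_{i_{j+1}}$ outgoing, $\za_{i_j}\za_{i_{j+1}}$ is a length-two subpath of $C_j$; write $C_j=\za_{i_j}\za_{i_{j+1}}C_j'$.

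For part (b): reading the cyclic word $C_j=\za_{i_j}\za_{i_{j+1}}C_j'$ with $\za_{i_j}\colon i_j\to i_{j+1}$ and $\za_{i_{j+1}}\colon i_{j+1}\to i_{j+2}$ shows that $C_j'$ is a path from $i_{j+2}$ to $i_j$, so $C_j'$ ends (at $i_j$) where $C_{j-2}'$ begins, and the alternating concatenations $\mathfrak v_1(W)=C_{t-2}'C_{t-4}'\cdots$, $\mathfrak v_2(W)=C_{t-3}'C_{t-5}'\cdots$ are well-defined paths in $Q$, ending at $i_1,i_2$ (resp.\ $i_2,i_1$) for $t$ odd (resp.\ even); uniqueness is inherited from part (a). For part (c): $\mathfrak c(W)$ visits exactly $i_1,\dots,i_t$, while $\mathfrak v_1(W),\mathfrak v_2(W)$ visit, besides some of the $i_k$, exactly the interior vertices of $C_1',\dots,C_{t-2}'$, and $C_j$ has vertex set $\{i_j,i_{j+1},i_{j+2}\}$ together with the interior vertices of $C_j'$; hence $Q(W)$ and $C_1\cup\cdots\cup C_{t-2}$ have the same vertices. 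That they coincide as subquivers follows from the planar embedding of $Q$ in the proof of Proposition~\ref{prop Q}(2): the chordless cycles of $Q$ are exactly its bounded faces, the boundary arrows lie on the outer face, and $C_1,\dots,C_{t-2}$, being consecutive along $\pi$, glue along the arrows $\za_{i_2},\dots,\za_{i_{t-2}}$ into a closed disc $\zD$. If an arrow $\za$ of $Q$ joined two vertices of $\zD$ from outside $\zD$, it would lie in a chordless cycle $D\notin\{C_1,\dots,C_{t-2}\}$ meeting $\zD$ in at most one arrow (by Proposition~\ref{prop Q}(4) and since $G$ is a tree); then the endpoints of $\za$ would be those of that arrow, making $\za$ parallel to it, against Proposition~\ref{prop Q}(1). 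Finally, the full subquiver generated by $\mathfrak c(W)$ is the one on $\{i_1,\dots,i_t\}$, and by the vertex computation above it equals $\bigcup_j C_j$ exactly when every $C_j'$ has length one, i.e.\ every $C_j$ is a $3$-cycle, in which case $C_j=\za_{i_j}\za_{i_{j+1}}\za_j''$ with $\za_j''\colon i_{j+2}\to i_j$ an arrow between two vertices visited by $\mathfrak c(W)$.

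The part I expect to be the real obstacle is the fullness claim in (c) — excluding a shortcut arrow that joins two vertices of the disc $\zD$ from outside it. The disc picture makes this transparent, but it rests on two points that must be teased out of the construction: that the chordless cycles of $Q$ are precisely the bounded faces of its planar embedding, and that any chordless cycle not among $C_1,\dots,C_{t-2}$ meets $\zD$ in at most a single arrow (this is exactly where the tree property of $G$, together with Proposition~\ref{prop Q}(4) and the absence of parallel arrows, does the work). Everything in parts (a) and (b) is bookkeeping once the identification of $W$ with $F$ and the path $\pi$ are in place; the one mildly delicate point there is the uniform orientation $\za_{i_j}\colon i_j\to i_{j+1}$, which depends on the twisting normalisation of Remark~\ref{rem orientation}.
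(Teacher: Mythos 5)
Parts (a) and (b) of your argument, and the first half of (c), follow the paper's proof essentially verbatim: the shaded region across each interior edge $i_{j+1}$ is a chordless cycle $C_j$; the sequence $C_1,\za_{i_2},C_2,\ldots,\za_{i_{t-2}},C_{t-2}$ is a path in the trunk of $G$, so the $C_j$ are distinct because $G$ is a tree; $C_j'$ is a path from $i_{j+2}$ to $i_j$, so the alternating concatenations $\mathfrak{v}_1(W),\mathfrak{v}_2(W)$ are composable; and $\mathfrak{c}(W)$ visits every vertex of $Q(W)$ if and only if each $C_j'$ is a single arrow, i.e.\ each $C_j$ is a $3$-cycle. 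The extra scaffolding (the face of $\widetilde G$, the orientation check via Remark~\ref{rem orientation} and Lemma~\ref{lem 31}) is fine and matches the conventions of the construction.

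The problem is in your justification of the fullness claim in (c), namely that no arrow $\za\colon u\to v$ of $Q$ outside $\zD=C_1\cup\cdots\cup C_{t-2}$ can join two vertices of $\zD$ --- exactly the point you identified as the real obstacle. From ``the chordless cycle $D$ containing $\za$ meets $\zD$ in at most one arrow'' you conclude ``then the endpoints of $\za$ are those of that arrow''; this is a non sequitur, and it also tacitly assumes $D$ shares an arrow with $\zD$ at all, whereas a priori $D$ could meet $\zD$ only in vertices, with several other faces wedged between $\za$ and $\partial\zD$. What is actually needed (and what the paper compresses into ``because the dual graph of $Q$ is a tree'') is a dichotomy on the arc $P$ of $\partial\zD$ cut off by $\za$: let $E$ be the region bounded by $\za$ and $P$ and disjoint from the interior of $\zD$; it is a nonempty union of chordless cycles distinct from the $C_j$, with connected dual. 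If $P$ is a single arrow $\zb$, then $\za$ and $\zb$ are parallel or form a $2$-cycle, contradicting Definition~\ref{def Q} and Proposition~\ref{prop Q}(1). If $P$ contains at least two arrows, each of them is shared between some $C_j$ and some face inside $E$, and by Proposition~\ref{prop Q}(4) these give two distinct edges of $G$ between the connected dual of $\zD$ and the connected dual of $E$; closing them up through these two connected pieces produces a cycle in $G$, contradicting (Q2). Your write-up asserts the conclusion of the length-one case without excluding the length-$\ge 2$ case, so this step needs to be reworked along these lines; the rest of the proof stands.
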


\begin{proof}
(a) Every edge $i_j$ in the sequence bounds $W$ on one side and a shaded region, hence a chordless cycle $C$, on the other side.  The vertices $\za_{i_2}, \ldots, \za_{i_{t-2}}$ of $W$ correspond to arrows in $Q$ that lie in exactly two chordless cycles in $Q$. Say $\za_{i_j}$ lies in the chordless cycles $C_{j-1} $ and $C_{j}$. Then 
the sequence $C_1,\za_{i_2},C_2,\za_{i_3},\ldots, \za_{i_{t-2}},C_{t-2}$ defines a path in the trunk of $G$. In particular, $C_j\ne C_\ell$, since $G$ is a tree. 

(b) We need to show that the paths are well-defined. Since $C_j=\za_{i_j}\za_{i_{j+1}}C_j'$ is a cycle starting at $i_j$, and the arrow $\za_{i_{j+1}}$ ends at $i_{j+2}$, we see that $C_j'$ is a path from $i_{j+2}$ to $i_{j}$. Therefore the terminal point $C_{j+2}'$ is the starting point of $C_j'$. 

(c) By definition the vertices of $Q(W)$ are precisely those in the cycles, and the  path $\mathfrak{c}(W)$ visits all of them if and only if every $C_j'$ is a single arrow, or, equivalently, if $C_j$ is a three cycle. Moreover, each arrow contained in the cycles also lies in one of the paths $\mathfrak{c}(W),\mathfrak{v}_1(W)$ or $\mathfrak{v}_2(W)$. There are no other arrows in $Q(W)$ because the dual graph of $Q$ is a tree.
\end{proof}

\begin{definition}\label{def:whitepath}
 Let $W$ be a white region. The path $\mathfrak{c}(W)$ is called the \emph{cycle path} of $W$ and the paths $\mathfrak{v}_i(W)$ are called the \emph{maximal valid paths} of $W$.
\end{definition}
\begin{remark}
 The cycle path $\mathfrak{c}(W)$ goes clockwise around the white region $W$ while the valid paths $\mathfrak{v}_i(W)$ go counterclockwise.
\end{remark}
\begin{example}\label{ex paths}
 In the example in Section \ref{sect 3.5}, there are 8 white regions. The corresponding paths are listed below, starting with the large white region and going clockwise around the polygon shown in Figure \ref{fig 3.1}.

 \[
\begin{array}{llllllll}
  \xymatrix@R20pt@C20pt{1&2&3&4\ar[l] _{{\color{red} 3}}\\
5\ar@{<-}[u]^{{\color{red} 4}}&6\ar[lu]_{{\color{red} \za}}\ar@{<-}[u]_{{\color{red} \zb}}&7\ar[lu]_{{\color{red} \zd}}\ar@{<-}[u]_{{\color{red} \ze}}}
& \quad&
 \xymatrix@R20pt@C20pt{3&4\ar[l] _{{\color{red} 3}}\\
&8\ar[u]_{{\color{red} 5}}}
& \quad&
 \xymatrix@R20pt@C20pt{&4\\
7\ar[r]_{{\color{red} 8}}&8\ar[u]_{{\color{red} 5}}}
&\quad &
\xymatrix@R20pt@C20pt{2\ar[r]^{{\color{red} 2}}&3\\
&7\ar[r]_{{\color{red} 8}}\ar@{<-}[u]_{{\color{red} \ze}}&8}
\\ \\
\xymatrix@R20pt@C20pt{
2\ar[r]^{{\color{red} 2}}&3\\
6&7\ar@{<-}[l]^{{\color{red} 7}}\ar[lu]_{{\color{red} \zd}}}
& &
 \xymatrix@R20pt@C20pt{1\ar[r]^{{\color{red} 1}}&2\\
&6\ar@{<-}[u]_{{\color{red} \zb}}&7\ar@{<-}[l]^{{\color{red} 7}}}
& &
 \xymatrix@R20pt@C20pt{1\ar[r]^{{\color{red} 1}}&2\\
5\ar[r]_{{\color{red} 6}}
&6\ar[lu]_{{\color{red} \za}}}
& &
 \xymatrix@R20pt@C20pt{1\ar[d]_{\color{red} 4}\\
5\ar[r]_{{\color{red} 6}}
&6}
\end{array}
\]
The corresponding maximal valid paths
are \[
\begin{array}{llllllll}
  \xymatrix@R20pt@C20pt{1\ar[r]^{{\color{red} 1}}&2\ar[r]^{{\color{red} 2}}&3&4\\
5\ar[r]^{{\color{red} 6}}&6\ar[r]^{{\color{red} 7}}&7\ar[r]^{{\color{red} 8}}&8\ar[u]_{{\color{red} 5}}}
& \quad&
 \xymatrix@R20pt@C20pt{3\ar[d]^{\color{red} \ze} &4\\
7\ar[r]^{{\color{red} 8}}&8}
& \quad&
 \xymatrix@R20pt@C20pt{3\ar[d]_{{\color{red} 5}}&4\ar[l]_{{\color{red} 3}}\\
7&8}
&\quad &
\xymatrix@R20pt@C20pt{2&3&4\ar[l]_{{\color{red} 3}}\\
&7\ar[lu]_{{\color{red} \za}}&8\ar[u]_{{\color{red} 5}}}
\\ \\
\xymatrix@R20pt@C20pt{
2\ar[d]_{{\color{red} \zb}}&3\ar[d]^{{\color{red} \ze}}\\
6&7}
& &
 \xymatrix@R20pt@C20pt{1&2\\
&6\ar[lu]_{{\color{red} \zd}}&7\ar[lu]_{{\color{red} \zd}}}
& &
 \xymatrix@R20pt@C20pt{1\ar[d]_{{\color{red} 4}}&2\ar[d]^{{\color{red} \zb}}\\
5
&6}
& &
 \xymatrix@R20pt@C20pt{1\\
5
&6\ar[ul]_{{\color{red} \za}}}
\end{array}
\]\end{example}

\begin{prop}
 \label{prop white region}
 There is a bijection 
 \[
\begin{array}{rcl}
\varphi\colon \{\textup{white regions in $\cals$}\}  & \longrightarrow    &  \{\textup{boundary arrows in $Q$}\}  \\
  W&\longmapsto   & \textup{first arrow in }\mathfrak{c}(W)   
\end{array}
\]
given by mapping the white region $W$ to the first arrow of its cycle path $\mathfrak{c}(W)$.
Moreover,  $W$ has exactly two vertices on the boundary of $\cals$, if $\mathfrak{c}(W)$ is of even length, and $W$ has exactly one vertex on the boundary, otherwise.
\end{prop}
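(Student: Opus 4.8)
The plan is to construct the inverse map explicitly and verify it is well-defined, then check the two maps compose to the identity on both sides; the boundary-vertex count will fall out of Lemma~\ref{lem 33}(b) together with the parity bookkeeping already recorded there. First I would show $\varphi$ is well-defined: given a white region $W$, Lemma~\ref{lem 33}(a) produces the cycle path $\mathfrak{c}(W)=\za_{i_1}\za_{i_2}\cdots\za_{i_{t-1}}$, and its first arrow $\za_{i_1}$ is the arrow dual to the edge $i_1$ separating $W$ from the first shaded region $C_1$. The vertex $\za_{i_1}$ of $\cals$ lies on the boundary of $\cals$ (the sequence $i_1,\dots,i_t$ of interior edges of $W$ starts and ends at the boundary by Lemma~\ref{lem 32}(a)), so $\za_{i_1}$ is an arrow lying in the single shaded region $C_1$ adjacent to that boundary vertex on the appropriate side — hence $\za_{i_1}$ is a boundary arrow of $Q$ by the correspondence between interior vertices of $\cals$ and arrows of $Q$ and between shaded regions and chordless cycles. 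Thus $\varphi$ lands in the set of boundary arrows.

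Next I would construct the inverse. Given a boundary arrow $\za$ of $Q$, it lies in a unique chordless cycle $C$, hence corresponds to a leaf vertex $\za$ of $G$ attached to the trunk vertex $C$. In $\cals$, the interior vertex dual to $\za$ is a boundary vertex, and by Lemma~\ref{lem 32}(b) it is incident to exactly one white region $W$; set $\psi(\za)=W$. To see $\psi$ inverts $\varphi$, one checks that for this $W$ the first interior edge $i_1$ of $\mathfrak{c}(W)$ (in the clockwise order around $W$ starting at the boundary) is precisely the edge at the boundary vertex $\za$, so that the first arrow of $\mathfrak{c}(W)$ is $\za$ itself; this is exactly the recipe in Lemma~\ref{lem 33}(a) read at the starting end of the path. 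Conversely, starting from $W$ and forming $\za_{i_1}=\varphi(W)$, the boundary vertex dual to $\za_{i_1}$ is a vertex of $W$, and by the uniqueness in Lemma~\ref{lem 32}(b) it is incident only to $W$, so $\psi(\varphi(W))=W$. Both composites are the identity, so $\varphi$ is a bijection.

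Finally, the statement about the number of boundary vertices of $W$: by Lemma~\ref{lem 33}(a) the interior edges of $W$ are $i_1,\dots,i_t$ and the cycle path $\mathfrak{c}(W)=\za_{i_1}\cdots\za_{i_{t-1}}$ has length $t-1$, while Lemma~\ref{lem 32}(a) says $W$ has an even number of edges, counting interior edges together with the one or two boundary edges. A white region coming from a face of $\widetilde G$ with a completion vertex has two boundary edges and hence $t$ interior edges with $t$ even, i.e.\ $\mathfrak{c}(W)$ of odd length $t-1$; a white region with a single boundary vertex has one boundary edge, hence $t$ interior edges with $t$ odd, i.e.\ $\mathfrak{c}(W)$ of even length. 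Combining with Lemma~\ref{lem 32}(a)'s dichotomy (one boundary vertex versus two boundary vertices joined by a boundary edge) gives exactly the claim.

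The main obstacle I anticipate is the bookkeeping in the inverse construction — making fully precise, in terms of the clockwise orientation conventions fixed in Remark~\ref{rem orientation} and the edge-labelling rules of Section~\ref{sect 3.1.4}, that the boundary vertex incident to $W$ is the \emph{source} end of $\za_{i_1}$ rather than some other arrow of $C_1$, so that ``first arrow of $\mathfrak{c}(W)$'' and ``the boundary arrow at the unique boundary vertex of $W$ nearest the start'' genuinely coincide. Everything else is a direct application of Lemmas~\ref{lem 31}--\ref{lem 33}.
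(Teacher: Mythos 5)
There are two genuine problems here, and together they undermine both halves of the statement. First, your inverse map $\psi$ is not well-defined as constructed. The vertex of $\cals$ corresponding to a boundary arrow $\za$ is an \emph{interior} vertex of $\cals$ (it is the apex of the shaded boundary triangle attached to $\za$), so it has degree four and is incident to \emph{two} white regions, not one; Lemma~\ref{lem 32}(b) is a statement about boundary vertices of $\cals$ and does not apply to it. Even if you instead pass to the boundary vertices of that shaded triangle, there are two of them, lying in two different white regions $W_1$ and $W_2$. The whole content of the proposition is the asymmetry between these two regions: $\za$ is the \emph{initial} arrow of $\mathfrak{c}(W_1)$ but the \emph{terminal} arrow of $\mathfrak{c}(W_2)$, and it is exactly this dichotomy that yields surjectivity (via $W_1$) and injectivity (because the other adjacent region $W_2$ does not map to $\za$). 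You flag this only as an anticipated ``bookkeeping obstacle'' about source versus target, but it is the missing argument, not a detail; without it neither $\psi$ nor the claim $\psi\circ\varphi=\mathrm{id}$ is justified. (Relatedly, in your well-definedness step the assertion that the vertex $\za_{i_1}$ lies on the boundary of $\cals$ is false; the correct reason the first arrow is a boundary arrow is that the shaded region across the edge $i_1$ contains a boundary vertex of $\cals$, hence is a boundary triangle, and its unique interior vertex is $\za_{i_1}$.)

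Second, the parity computation at the end is miscounted and in fact proves the opposite of the ``moreover'' claim. A white region has either exactly one boundary vertex and \emph{no} boundary edge, or two boundary vertices joined by exactly \emph{one} boundary edge (Lemma~\ref{lem 32}(a)); you instead use ``one'' and ``two'' boundary edges. With the correct counts: if $W$ has one boundary vertex, all $t$ of its edges are interior, so $t$ is even and $\mathfrak{c}(W)$ has odd length $t-1$; if $W$ has two boundary vertices, it has $t$ interior edges plus one boundary edge, so $t$ is odd and $\mathfrak{c}(W)$ has even length. Your off-by-one flips both parities, so as written your final paragraph contradicts the statement rather than establishing it.
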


\begin{proof}
 Notice first that the map $\varphi$ is well-defined by Lemma \ref{lem 33}.
 
   Every boundary arrow $\za$ of $Q$ determines a unique shaded region at the boundary of $\cals$ which in turn is adjacent to precisely two white regions $W_1$ and $W_2$.  For one of them, say $W_1$, the arrow $\za$ is the initial arrow in the path $\mathfrak{c}(W_1)$, thus $\za=\varphi (W_1)$ and $\phi $ is surjective. On the other hand, $\za $ is the terminal arrow in $\mathfrak{c}(W_2)$ which shows that $\varphi$ is injective.

 It remains to show the statement on the number of boundary vertices in $W$. If the path $\mathfrak{c}(W)$ has even length then it goes through an odd number of vertices, and thus $W$ has an odd number of edges in the interior of $\cals$. Similarly,  if  $\mathfrak{c}(W)$ has odd length then $W$ has an even number of edges in the interior of $\cals$. Now the statement follows from Lemma \ref{lem 32}.
\end{proof}

Thanks to Proposition \ref{prop white region} we can label the cycle paths by the boundary arrows of the quiver. Part (a) of the following definition is a reformulation adapted to this point of view.

\begin{definition}
 \label{def cycle path 2}
 Let $\za$ be a  boundary arrow in $Q$.  
 \begin{itemize}
\item[(a)] The 
 \emph{cycle path} of $\za$ is the unique path 
 $\mathfrak{c}(\za)=\za_1\za_2\cdots\za_{\ell(\za)}$ such that 
 \begin{itemize}
\item [(i)] $\za_1=\za$ and $\za_{\ell(\za)}$  are boundary arrows, and 
 $\za_2,\ldots,\za_{\ell(\za)-1}$ are interior arrows,
\item [(ii)] every subpath of length two $\za_i\za_{i+1}$, 
 is a subpath of a chordless cycle $C_i$, and $C_i\ne C_j$ if $i\ne j$. 
\end{itemize}

\item[(b)] The \emph{weight} $\text{wt}(\za)$ of $\za$ is defined as
\[\textup{wt}(\za) =\left\{ 
\begin{array}
 {ll} 1&\textup{if the length of  $\mathfrak{c}(\za)$ is odd;}\\
 2&\textup{if the length of  $\mathfrak{c}(\za)$ is even.}\\
\end{array} \right.\]

\end{itemize}
\end{definition}

The proposition yields the following important formula for the size of the polygon.
\begin{corollary}
 \label{cor size of S}
The number of boundary edges in $\cals$ is equal to 
\[\sum_\za \textup{wt}(\za),\]
where the sum is over all boundary arrows of $Q$.
\end{corollary}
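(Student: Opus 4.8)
The plan is to count the boundary edges of $\cals$ by sorting them according to which face of $\cals$ they bound. Since $\cals$ is topologically a disc, each boundary edge is incident to exactly one face of $\cals$, so the number of boundary edges equals $\sum_F b(F)$, where $F$ runs over all faces of $\cals$ and $b(F)$ is the number of boundary edges contained in $F$. By the construction in Subsection~\ref{sect 3.1.4}, the faces of $\cals$ are of exactly three kinds: shaded regions coming from chordless cycles of $Q$, shaded boundary regions coming from boundary arrows of $Q$, and white regions. I would first dispose of the first kind: every edge of a chordless-cycle region joins two interior vertices of $\cals$ and hence carries a label from $Q_0$ (Lemma~\ref{lem 31}), whereas boundary edges of $\cals$ are unlabeled; so $b(F)=0$ for such $F$. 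For a shaded boundary region the construction gives $b(F)=1$ exactly, and by Proposition~\ref{prop white region} (the bijection $\varphi$) there is precisely one shaded boundary region for each boundary arrow of $Q$; these therefore contribute $\#\{\text{boundary arrows of }Q\}$ in total.

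It remains to compute $b(W)$ for a white region $W$. Here I would invoke Lemma~\ref{lem 32}(a): $W$ has either exactly one boundary vertex of $\cals$, and then contains no boundary edge, or exactly two boundary vertices joined by a boundary edge, and then contains exactly one. Thus $b(W)\in\{0,1\}$, with $b(W)=1$ iff $W$ has two boundary vertices. By Proposition~\ref{prop white region} this happens iff the cycle path $\mathfrak{c}(W)$ has even length, and since $\mathfrak{c}(W)=\mathfrak{c}(\varphi(W))$ this is equivalent to $\textup{wt}(\varphi(W))=2$ by Definition~\ref{def cycle path 2}(b). Using once more that $\varphi$ is a bijection from white regions to boundary arrows, the number of white regions $W$ with $b(W)=1$ equals $\#\{\za : \textup{wt}(\za)=2\}$.

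Adding the two contributions, the number of boundary edges of $\cals$ equals
\[ \#\{\text{boundary arrows of }Q\}\;+\;\#\{\za : \textup{wt}(\za)=2\}\;=\;\sum_{\za}\textup{wt}(\za), \]
the final equality holding because $\textup{wt}(\za)\in\{1,2\}$ for every boundary arrow $\za$, so it equals $1$ plus the indicator of the event $\textup{wt}(\za)=2$. The only real difficulty I anticipate is bookkeeping: one must be sure the three families of faces genuinely exhaust all faces of $\cals$, that no boundary edge is omitted or double-counted, and in particular that chordless-cycle regions never touch the boundary of $\cals$ while shaded boundary regions carry exactly one boundary edge each. All of this is already packaged into the construction of $\cals$, Lemma~\ref{lem 31}, Lemma~\ref{lem 32}, and Proposition~\ref{prop white region}, so once the count is organized as above the argument is short.
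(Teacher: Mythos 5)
Your proof is correct and follows essentially the same route as the paper: boundary edges in shaded regions are matched with boundary arrows, and boundary edges in white regions are matched, via Proposition~\ref{prop white region}, with boundary arrows whose cycle path has even length (weight $2$), giving $\#\{\text{boundary arrows}\}+\#\{\za:\textup{wt}(\za)=2\}=\sum_\za\textup{wt}(\za)$. One tiny slip: the bijection between shaded boundary regions and boundary arrows comes from the construction of $\cals$ (cf.\ Lemma~\ref{lem 32}(b)), not from Proposition~\ref{prop white region}, which concerns white regions only.
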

\begin{proof}
 Each of the boundary edges in the shaded regions of $\cals$
 corresponds to a unique boundary arrow in $Q$. According to the proposition, each boundary in a white region corresponds to a unique boundary arrow $\za$ in $Q$ whose cycle path $\mathfrak{c}(\za)$ is of even length. Now the result follows from the definition of the weight $\textup{wt}(\za)$ of $\za$.
\end{proof}

\begin{example}
 In our running example, there are 8 boundary arrows and their cycle paths are listed in Example~\ref{ex paths}. We have $\textup{wt}({\za}) =2$ for the arrows $\za={\color{red} 3},
 {\color{red} 5},
 {\color{red} 8}, 
 {\color{red} 4},$ and 
 $\textup{wt}({\za}) =1$ for the arrows $\za={\color{red} 2},
 {\color{red} 7},
 {\color{red} 1}, 
 {\color{red} 6}.$ Thus Corollary \ref{cor size of S} implies that the polygon has 12 sides.    
 \end{example}

\begin{lemma}
 \label{lem 34} The number of boundary vertices of $\cals$ is even and at least 6.
\end{lemma}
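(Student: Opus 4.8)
The plan is to use Corollary~\ref{cor size of S}, which expresses the number $2N$ of boundary edges of $\cals$ as $\sum_\za \mathrm{wt}(\za)$, the sum being over all boundary arrows of $Q$, with each weight equal to $1$ or $2$. Since every term is a positive integer, $2N$ is automatically a positive integer; the real content is to show it is \emph{even} and \emph{at least $6$}. For evenness, I would go back to the construction: by the last paragraph of subsection~\ref{sect 3.1.3}, every bounded face of $\widetilde G$ has an even number of edges, so the medial graph construction yields white regions with an even number of edges (Lemma~\ref{lem 32}(a)), and the shaded regions pair up the boundary edges in a controlled way. More directly, the polygon $\cals$ is by hypothesis (subsection~\ref{sect 23}) a polygon with an even number $2N$ of vertices — indeed the whole theory of $2$-diagonals requires this — so evenness is essentially built in; but to make the lemma self-contained I would instead argue that the boundary edges of $\cals$ come in two types, those inside shaded regions and those inside white regions, and count them via the two bijections already established (each shaded boundary region contributes exactly one boundary edge and corresponds to a boundary arrow, while by Proposition~\ref{prop white region} each white region with $\mathfrak c(W)$ of even length contributes exactly two boundary edges). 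Hence $2N = \#\{\text{boundary arrows}\} + 2\cdot\#\{\text{white regions with even cycle path}\}$, and one shows the first summand has the same parity as the second correction term by the parity bookkeeping of the twist-and-complete procedure — equivalently, one simply invokes that the completion step in subsection~\ref{sect 3.1.3} was precisely designed so that every face of $\widetilde G$, hence every white region, has an even number of edges, forcing $2N$ even.

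For the lower bound $2N \ge 6$, I would argue as follows. By condition (Q1) and the definition of a chordless cycle, $Q$ contains at least one chordless cycle $C$, and since $Q$ has no loops and no $2$-cycles, $C$ has length at least $3$. A chordless cycle of length $t$ has at least one boundary arrow (if $Q$ has a single chordless cycle, all $t\ge 3$ of its arrows are boundary arrows; in general the leaf vertices of the tree $G$ produce boundary arrows, and a tree with at least one vertex coming from a chordless cycle has at least... well, one must be a little careful here). The cleanest route is: the smallest possible dimer tree algebra is the one in Example~\ref{ex A3}, where $Q$ is a single $3$-cycle, $G$ is the dual graph with three leaves, $\widetilde G = G$, and $\cals$ is a hexagon — so $2N = 6$ is attained. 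To see that nothing smaller can occur, note that a single chordless cycle of length $t\ge 3$ has $t$ boundary arrows, each of weight $\ge 1$, so Corollary~\ref{cor size of S} gives $2N \ge t \ge 3$; combined with evenness this forces $2N \ge 4$, and one rules out $2N=4$ by observing that a $4$-gon has no $2$-diagonals at all and, more concretely, that $t=3$ gives $2N\ge 3$ with $2N$ even hence $2N\ge 4$, but the three boundary arrows of a $3$-cycle actually have weights summing to $6$ in the relevant configuration. If $Q$ has two or more chordless cycles, then it has at least four boundary arrows (each of the at least two leaf vertices of the tree $G$ contributes... again requiring the count $\#\text{leaves}\ge 2$ for a tree with $\ge 2$ vertices, plus the trunk arrows), so $2N \ge 4$ and evenness upgrades this, but in fact in that case one checks $2N \ge 8$.

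The main obstacle I anticipate is making the parity and lower-bound counts genuinely rigorous rather than "clearly from the picture." The subtlety is that the number of boundary arrows of a chordless cycle depends on how many neighboring cycles it has in the tree $G$, and one must track how the twist and especially the \emph{completion} step alter boundary-edge counts. The honest and efficient approach is probably: (i) cite that $\cals$ has an even number of vertices as part of the standing hypothesis/construction (it is needed for subsection~\ref{sect 23} to make sense), so evenness needs no separate proof beyond a sentence pointing to subsection~\ref{sect 3.1.3}; and (ii) for the bound, case on whether $Q$ has one or at least two chordless cycles, handling the one-cycle case by the explicit hexagon of Example~\ref{ex A3} together with $2N \ge t \ge 3$ plus evenness, and the multi-cycle case by exhibiting at least four boundary arrows so that $2N = \sum_\za \mathrm{wt}(\za) \ge 4$ and then invoking evenness — though to reach exactly $6$ uniformly one really wants the sharper observation that in the one-cycle case the three boundary arrows each have the cycle path $\mathfrak c(\za)$ of odd length $1$, contributing weight $1$ each but — wait, that gives only $3$; the resolution is that $\widetilde G$'s completion adds vertices, so in Example~\ref{ex A3} there are additional boundary edges, and the correct statement is simply that $\cals$ is a hexagon there by direct inspection. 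So the safest writeup defers to the construction for the base case $2N=6$ and argues monotonically that adding chordless cycles or lengthening them only increases $2N$.
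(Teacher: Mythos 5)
There is a genuine gap on both halves of the statement. For evenness, the load-bearing step in your write-up is the assertion that, because the completion step makes every bounded face of $\widetilde G$ even, the number of boundary edges of $\widetilde G$ (equivalently of boundary vertices of $\cals$) must be even --- but this is precisely what needs an argument, and you only wave at ``the parity bookkeeping of the twist-and-complete procedure.'' The paper supplies exactly this missing step: it identifies $m$ with the number of boundary edges of $\widetilde G$ and inducts on the number of faces, noting that removing a face with $2a$ edges, $b$ of them on the boundary, changes the boundary count by $-b+(2a-b)=2(a-b)$, so parity is preserved down to the base case of Example~\ref{ex A3}. Your alternative counting route also contains an error: by Lemma~\ref{lem 32}(a) and Proposition~\ref{prop white region}, a white region whose cycle path has even length contributes \emph{one} boundary edge (two boundary vertices joined by an edge), not two, so your formula $2N=\#\{\text{boundary arrows}\}+2\cdot\#\{\text{white regions with even cycle path}\}$ is wrong; the correct identity is just Corollary~\ref{cor size of S}, whose parity is governed by the number of weight-one arrows --- and that this number is even is essentially Corollary~\ref{cor even}, which the paper deduces \emph{from} this lemma, so relying on it (or on the standing assumption that $\cals$ has $2N$ vertices, your first suggestion) would be circular.

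For the lower bound, your argument never reaches $6$: the weight estimate gives only $2N\ge t\ge 3$ for a single chordless cycle and $2N\ge 4$ in general, and evenness cannot upgrade $4$ to $6$; the claims ``in the multi-cycle case one checks $2N\ge 8$'' and ``adding or lengthening cycles only increases $2N$'' are asserted without proof (and monotonicity under face operations is not obvious --- the paper's face-removal computation only controls parity, via $2(a-b)$, not the sign of the change). The paper instead gets $m\ge 6$ by observing that the minimal admissible configuration is the single $3$-cycle of Example~\ref{ex A3}, whose polygon is a hexagon, and anchoring the induction there. So your instinct to fall back on Example~\ref{ex A3} as the base case is the right one, but the reduction of a general $Q$ to that base case is exactly the content you would still need to supply.
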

\begin{proof}
Let $m$ be the number of boundary vertices of $\cals$. By construction, $m$ is the number of boundary edges in $\widetilde G$, and we use induction on the number of faces in $\widetilde G$. 
 Since $Q$ has at least one chordless cycle, which is at least of length 3, the smallest possible case is the one shown in Example \ref{ex A3}.   Thus $m\ge 6$ and $\widetilde G$ has at least 3 faces.

Choose a face of $\widetilde G$. Let $2a$ be the number of edges of the chosen face, and denote by $b$ the number of its edges that lie on the boundary of $\widetilde G$. When we remove that face from $\widetilde G$, the difference in the number of boundary edges between $\widetilde G$ and the resulting graph is $-b+(2a-b)=2(a-b)$ which is even, so the parity is preserved. This shows that $m$ is even.
 \end{proof}
\subsubsection{Radical lines and 2-diagonals}
\begin{lemma}
 \label{lem 35}
The radical lines $\rho(i)$ and $\rho(j)$ cross if and only if there is an arrow $i\to j$ or $j\to i $ in $Q$.
\end{lemma}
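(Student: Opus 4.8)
The plan is to translate the statement into the language of the twisted completed dual graph $\widetilde G$ and its medial graph, and then to argue combinatorially. Recall that by construction the interior vertices of $\cals$ are in bijection with the arrows of $Q$, and a radical line $\rho(i)$ is by definition the union of all edges of $\cals$ carrying the label $i$. By Lemma~\ref{lem 31} each interior vertex $\za$ (i.e.\ each arrow of $Q$) is incident to exactly two radical lines, namely $\rho(s(\za))$ and $\rho(t(\za))$, meeting transversally at $\za$ (the four edges at $\za$ alternate labels $s(\za), t(\za), s(\za), t(\za)$ in clockwise order). So every interior vertex of $\cals$ is a crossing point of exactly the two radical lines corresponding to the source and target of the associated arrow. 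This already gives the ``if'' direction: if there is an arrow $\zg\colon i\to j$ (or $j\to i$) in $Q$, then the interior vertex $\zg$ of $\cals$ lies on both $\rho(i)$ and $\rho(j)$, and since the labels at $\zg$ alternate, the two radical lines genuinely cross there rather than merely touch.

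For the converse, suppose $\rho(i)$ and $\rho(j)$ cross. Since each radical line is a piecewise-linear curve whose segments are edges of $\cals$, and distinct edges of $\cals$ meet only at vertices of $\cals$, a crossing of $\rho(i)$ and $\rho(j)$ must occur at a vertex $v$ of $\cals$. By Lemma~\ref{lem 31} (more precisely, its consequence that the labels around an interior vertex alternate between $s(\za)$ and $t(\za)$) the only vertices lying on two distinct radical lines are the interior vertices, and boundary vertices carry radical lines that merely terminate there rather than cross (Lemma~\ref{lem 32}(c) together with the fact that radical lines start and end on the boundary). Hence $v$ is an interior vertex, say corresponding to the arrow $\za$ of $Q$, and the two radical lines through $v$ are exactly $\rho(s(\za))$ and $\rho(t(\za))$; so $\{i,j\}=\{s(\za),t(\za)\}$, which means $\za$ is an arrow between $i$ and $j$ in $Q$.

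The one point that needs care — and which I expect to be the main obstacle — is ruling out ``accidental'' crossings of $\rho(i)$ with itself or with $\rho(j)$ at a \emph{boundary} vertex, or the possibility that two radical lines run along a common edge. The first is handled by the remark after the definition of the radical line: since $G$ is a tree, $\rho(i)$ does not intersect itself and is an embedded arc with both endpoints on $\partial\cals$; at a boundary vertex $x$ the incident radical lines (at most two by Lemma~\ref{lem 32}(c)) each have $x$ as an endpoint, so they do not cross there in the sense of transversal intersection. The second cannot happen because each edge of $\cals$ carries a single well-defined label (interior edges get the unique shared vertex of the two adjacent arrows, per Lemma~\ref{lem 31} and the construction in Section~\ref{sect 3.1.4}), so $\rho(i)$ and $\rho(j)$ with $i\ne j$ share no edge. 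Assembling these observations gives the claimed equivalence; the argument uses nothing beyond Lemmas~\ref{lem 31}--\ref{lem 32} and the tree structure of $G$.
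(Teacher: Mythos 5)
Your argument is correct and follows essentially the same route as the paper: crossings of radical lines are identified with interior vertices of $\cals$, which by construction correspond to arrows of $Q$, and Lemma~\ref{lem 31} guarantees that the two radical lines through such a vertex are exactly $\rho(s(\za))$ and $\rho(t(\za))$. The extra care you take to rule out crossings at boundary vertices or along shared edges is a fine elaboration of points the paper leaves implicit, but it does not change the substance of the proof.
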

\begin{proof}
 If  $\rho(i)$ and $\rho(j)$ cross then there is an interior vertex $\za$ in $\cals$ that lies on both  $\rho(i)$ and $\rho(j)$. Thus $\za$ corresponds to an arrow in $Q$ that connects $i$ and $j$.
 Conversely, if $\xymatrix@C15pt{i\ar[r]^\za&j}$ is an arrow in $Q$ then there is an interior vertex $\za $ in $\cals$ which is of degree 4. By Lemma \ref{lem 31}, $\za$ lies on both  $\rho(i)$ and $\rho(j)$.
\end{proof}

\begin{corollary}
 \label{cor even}
 There is an even  number of boundary arrows $\za$ in $Q$ whose cycle path $\mathfrak{c}(\za)$ is of odd length.
\end{corollary}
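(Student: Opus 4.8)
The plan is to deduce this purely as a parity count, using the size formula of Corollary~\ref{cor size of S} together with the evenness statement of Lemma~\ref{lem 34}; there is essentially nothing else to do. First I would fix notation: let $n$ denote the total number of boundary arrows of $Q$, and let $k$ denote the number of boundary arrows $\za$ whose white region path has odd length. By Definition~\ref{def cycle path 2}(b), these are exactly the arrows with $\textup{wt}(\za)=1$, while the remaining $n-k$ boundary arrows have cycle path of even length, hence $\textup{wt}(\za)=2$. (The only bookkeeping point here is the identification of the ``white region path'' $w(\za)$ in the statement with the cycle path $\mathfrak{c}(\za)$ attached to $\za$ via the bijection $\varphi$ of Proposition~\ref{prop white region}, so that odd length corresponds precisely to weight $1$.)

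Next I would apply Corollary~\ref{cor size of S}: the number of boundary edges of $\cals$ equals $\sum_\za \textup{wt}(\za) = k\cdot 1 + (n-k)\cdot 2 = 2n-k$, the sum running over all boundary arrows of $Q$. Since $\cals$ is a polygon, its number of boundary edges equals its number of boundary vertices, which is even by Lemma~\ref{lem 34}. Therefore $2n-k$ is even, and as $2n$ is even this forces $k$ to be even, which is exactly the assertion of the corollary.

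I do not expect any real obstacle: once Corollary~\ref{cor size of S} and Lemma~\ref{lem 34} are available, the argument is a single congruence modulo $2$. The only care needed is to make sure the weight dichotomy is invoked correctly (odd length $\leftrightarrow$ weight $1$, even length $\leftrightarrow$ weight $2$) so that the count $2n-k$ comes out right.
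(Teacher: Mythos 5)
Your argument is correct and is essentially the paper's own proof: the paper simply cites Corollary~\ref{cor size of S} together with the evenness of the boundary of $\cals$ (its reference to Lemma~\ref{lem 35} is evidently meant to be Lemma~\ref{lem 34}), and your write-up just makes the parity count $2n-k\equiv 0 \pmod 2$ explicit. No gaps; the identification of $w(\za)$ with the cycle path $\mathfrak{c}(\za)$ and the weight dichotomy are handled correctly.
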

\begin{proof}
 This follows from Corollary \ref{cor size of S} and Lemma \ref{lem 34}.
\end{proof}

Recall that a \emph{2-diagonal} in a polygon is (the homotopy class of) a diagonal such that 
the dissection of the polygon along this diagonal consists of two polygons each with an even number of sides. 
\begin{lemma}
 \label{lem 36}
Each radical line $\rho(x) $ is a 2-diagonal in $\cals$.
\end{lemma}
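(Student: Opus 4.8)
The claim is that each radical line $\rho(x)$, for $x \in Q_0$, is a 2-diagonal of $\cals$, meaning: (i) $\rho(x)$ is a diagonal connecting two boundary vertices of $\cals$, not a boundary edge, and (ii) cutting $\cals$ along $\rho(x)$ yields two polygons each with an even number of sides. The strategy is to use the structural description of $\cals$ from Lemmas \ref{lem 31}, \ref{lem 32}, \ref{lem 33} together with the fact that the dual graph $G$ is a tree and every face of $\widetilde G$ has an even number of edges.

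First I would recall from the discussion after Definition \ref{def cycle path 2} (and Lemma \ref{lem 31}) that $\rho(x)$ is a connected piecewise-linear curve which does not intersect itself and whose two endpoints lie on the boundary of $\cals$; moreover $\rho(x)$ passes through an interior vertex $\za$ precisely when $\za$, viewed as an arrow of $Q$, is incident to the vertex $x$. So $\rho(x)$ genuinely connects two distinct boundary vertices. To see it is not a single boundary edge, note that $x$ is a vertex of $Q$, hence by condition (Q1) it lies on some chordless cycle, so there are at least two arrows incident to $x$; thus $\rho(x)$ contains at least one interior vertex and is therefore a diagonal, not a boundary edge. This establishes (i).

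For (ii), the key is a parity count. Cutting $\cals$ along $\rho(x)$ separates the boundary vertices of $\cals$ into two arcs, one on each side; I must show each side contains an even number of boundary edges of $\cals$ together with the two halves of $\rho(x)$... more precisely, that each of the two resulting polygons has evenly many sides. The clean way is to count, on one side of $\rho(x)$, the regions of the checkerboard pattern. Each interior edge of $\cals$ carries a label $i \in Q_0$, and the edges on one side of $\rho(x)$ that abut $\rho(x)$ are exactly the edges labelled by the out-neighbors and in-neighbors of $x$ lying on that side. Using Lemma \ref{lem 32}(a), every white region on that side has an even number of edges; using the fact that shaded regions correspond to chordless cycles (of even length $\geq 3$... here I would invoke that in $\widetilde G$ every face has even size, hence shaded faces of $\cals$ surrounding a vertex of $G$ also have a controlled parity) or to boundary arrows. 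I would then do an Euler-characteristic / handshake computation on the sub-polygon on one side of $\rho(x)$: summing edge-counts of all its faces counts each interior edge twice and each boundary edge (of the sub-polygon) once; since $\rho(x)$ contributes a fixed number of edges to this sub-polygon, and all face sizes are even, the number of boundary edges of the sub-polygon coming from $\partial\cals$ must have the same parity as the number of edges of $\rho(x)$ on that side — and the two sides share $\rho(x)$, forcing both sub-polygons to have even size.

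I expect the main obstacle to be handling the parity of the shaded regions correctly: a chordless cycle in $Q$ may have odd length (e.g. a 3-cycle), so the shaded region of $\cals$ surrounding it need not have an even number of edges, and one must carefully track how $\rho(x)$ enters and exits each shaded region it meets (by Lemma \ref{lem 31}, at an interior vertex $\za$ on $\rho(x)$ the line $\rho(x)$ uses the two opposite edges labelled $x$, so it "passes through" the region rather than turning). The bookkeeping that makes the parities cancel is exactly the statement, recorded at the end of \S\ref{sect 3.1.3}, that \emph{every face of $\widetilde G$ has an even number of edges} — equivalently, the medial-graph construction forces the white regions of $\cals$ to be even, and the shaded regions pair up consistently along $\rho(x)$. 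So the heart of the argument is translating "$\rho(x)$ passes straight through each region it meets, and all white regions are even" into the global parity statement via a handshake count; once that is set up the conclusion is immediate.
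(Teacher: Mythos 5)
Your part (i) is fine, but the heart of the lemma is the parity statement (ii), and there your argument has a genuine gap at exactly the point you flag and then wave away. The handshake count you set up only works if every face has an even number of edges, and that is false: by Lemma~\ref{lem 32}(a) the \emph{white} regions are even, but the shaded regions are not --- every boundary shaded region is a triangle, and an interior shaded region has as many edges as the corresponding chordless cycle, which may be odd (indeed all of them are triangles in the case of the main theorem). Carrying the handshake out correctly on the closed region $R$ on one side of $\rho(x)$ gives, mod $2$, $\#\{\text{odd shaded faces in }R\}\equiv m_R+\ell$, where $m_R$ is the number of boundary edges of $\cals$ in $R$ and $\ell=d+1$ is the number of edges of $\rho(x)$ ($d$ the number of arrows at $x$). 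So the desired conclusion ``$m_R$ is odd'' is \emph{equivalent} to the relative parity statement $\#\{\text{odd shaded faces in }R\}\equiv d \pmod 2$, and nothing you have invoked proves it: evenness of the faces of $\widetilde G$ controls only the white regions, and the global count (the total number of odd shaded regions of $\cals$ is even) only says the two sides of $\rho(x)$ have the same parity, not which one. Your phrase ``the shaded regions pair up consistently along $\rho(x)$'' is not a proof and is not even local in nature: at each interior vertex of $\rho(x)$ the two shaded faces through that arrow lie on \emph{opposite} sides of $\rho(x)$, and, more seriously, odd chordless cycles far away from $x$ (not adjacent to $\rho(x)$ at all) enter the count, so no bookkeeping along $\rho(x)$ alone can close the argument. (A smaller omission: a 2-diagonal must also cut off two polygons with at least $4$ vertices, i.e.\ $\rho(x)$ must not be homotopic to a boundary edge; parity alone does not exclude a bigon.)

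For contrast, the paper does not attempt a direct count at all: it proves the lemma by induction on the number of vertices of $Q$, using two explicit surgeries --- contracting a boundary arrow lying in a chordless cycle of length at least $4$ whose endpoints carry no interior arrows, or deleting a vertex of a $3$-cycle having two boundary arrows --- and tracking precisely how $\widetilde G$ and $\cals$ change (the side of $\rho(x)$ under consideration loses $0$ or $2$ boundary edges), so that parity is inherited from the smaller polygon together with Lemma~\ref{lem 34}. The relative parity fact your plan needs is essentially equivalent to the lemma itself and would most naturally be proved by just such an induction over the tree $G$; as written, your proposal assumes it rather than proves it.
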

\begin{proof}
 We proceed by induction on $n$, the number of vertices in $Q$.
 If $n=3$, we are done by Example~\ref{ex A3}. Suppose that $n\ge 4$. We consider two cases. 
 
 (a) Suppose first there exists a boundary arrow $\xymatrix{i\ar[r]^\zb&j}$ and such that the unique chordless cycle containing $\zb$ is of length at least 4 and there are no interior arrows at $i$ or $j$. 
 In this situation, we define a new quiver $Q'$ by contracting the arrow $\zb$, meaning that we identify the vertices $i$ and $j$ and remove $\zb$, see below. 
 
 \[
Q \qquad \vcenter{\xymatrix@R10pt{ i\ar[dd]_\zb&h\ar[l]_\za \ar@{.}[r]&\  
 \\ \\
 j\ar[r]_\zg&k\ar@{.}[r]&\ } } \qquad \longrightarrow \qquad 
 Q'\qquad \vcenter{\xymatrix@R10pt{ &h\ar[ld]_\za\ar@{.}[r]&\ 
 \\ 
 ij \ar[rd]_\zg\\ 
 &k\ar@{.}[r]&\ } }
 \]
 
At the level of $\widetilde G$, the edge $\zb$ bounds two faces $F_1, F_2$. Since $\za$ and $\zg$ are boundary arrows in $Q$, both $F_i$ have 4 vertices one of which, say $v_i$, is a completion vertex, $i=1,2$. 
 The graph $\widetilde{G'}$ can be obtained from $\widetilde G$ by removing the edge $\zb$ together with the two boundary edges incident to it, and identifying the vertices $v_1$ and $v_2$, see below.
 \[\widetilde G\qquad \vcenter{ \xymatrix{v_1 \ar@{-}[r] \ar@{-}[d] &\bullet \ar@{-}[d]^\za\ar@{.}[r]&\ 
 \\
 \bullet\ar@{-}[r]^\zb\ar@{-}[d]&\bullet \ar@{-}[d] ^\zg\ar@{.}[r]&\ 
\\
v_2\ar@{-}[r]&\bullet \ar@{.}[r]&\ }}
 \qquad \longrightarrow \qquad 
 \widetilde{G'} \qquad \vcenter{
 \xymatrix{  &\bullet \ar@{-}[d]^\za\ar@{.}[r]&\ 
 \\
v_1v_2\ar@{-}[rd]\ar@{-}[ru]&\bullet \ar@{-}[d] ^\zg\ar@{.}[r]&\ 
\\
&\bullet \ar@{.}[r]&\ }
 }
 \]
 At the level of $\cals$, the vertex $\zb$ is the crossing point of the radical lines $\rho(i)$ and $\rho(j)$. The two shaded regions at this crossing are a triangular boundary region on one side and an interior region with at least 4 sides corresponding to the chordless cycle in $Q$ that contains the arrow $\zb$.
 The two white regions at the crossing are quadrilaterals coming from the faces $F_1,F_2$ of $\widetilde G$.  The vertex $\zb$ is connected to the two vertices $\za$ and $\zg$ because there are no interior arrows at $i$ and $j$. Moreover $\za$ and $\zg$ are each a vertex of a triangular boundary region, because they are boundary arrows in $Q$. 
 The polygon $\cals'$ can be obtained from $\cals$ by removing the shaded boundary region at $\zb$ together with the two adjacent boundary edges, and replacing the two edges $\zb$---$\za$, $\zb$---$\zg$  with one edge $\za$---$\zg$ and closing the white region with one edge on the boundary, see below. 
 
\begin{center}
\small 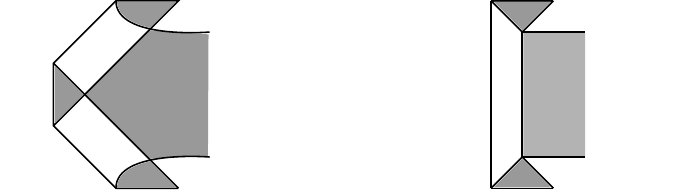  
\end{center}
Now consider the two polygons obtained by cutting $\cals$ along the radical line $\rho(x)$. If $i=x$ or $i=y$ then $\rho(x)$ is clearly a 2-diagonal and we are done.  If $x\ne i,j$, then one of the two polygons is also obtained by cutting $\cals'$ along $\rho(x)$.
By induction, this polygon has an even number of boundary edges, and by Lemma \ref{lem 34}, the other polygon as well. This completes the proof in case (a). 

(b) Suppose we are not in the situation of case (a). Then there exists a boundary arrow $\za\colon i\to j$ that lies in a unique 3-cycle 
$C=\xymatrix{i\ar[r]^\za&j\ar[r]^\zb&k\ar@/^10pt/[ll]^\zg}$ such that $\zb$ is also a boundary arrow, and $\zg$ is an interior arrow. 
We define a new quiver $Q'$ by removing the vertex $j$ and its adjacent arrows $\za,\zb$, see below.
\[Q\qquad \vcenter{\xymatrix@R10pt{&i\ar[ld]_\za\ar@{.}[r]&\ 
\\
j\ar[rd]_\zb\\
&k\ar[uu]_\zg\ar@{.}[r]&\ 
}}
\qquad \longrightarrow \qquad
Q'\qquad
\vcenter{\xymatrix@R10pt{i\ar@{.}[r]&\ 
\\
\\
k\ar[uu]_\zg\ar@{.}[r]&\ 
}}
\]
At the level of $\widetilde G$, the cycle $C$ is a vertex with 3 edges $\za,\zb,\zg$. The two edges $\za$ and $\zb$ connect $C$ to the boundary vertices $\za$ and $\zb$, forming a quadrilateral with vertices $C,\za,\zb$ and a completion vertex $w$. The face containing the edges $\za$ and $\zg$ has vertices $C,\za,v_1,v_2,\ldots$, where $v_1$ may be a completion vertex. Similarly, the face containing the edges $\zb$ and $\zg$ has vertices $C,\zb,u_1,u_2,\ldots$ where $u_1$ may be a completion vertex. In order to illustrate both cases, assume that $v_1$ is not a completion vertex and $u_1$ is a completion vertex. Label the two edges at $u_1$ by $e_1$ and $e_2$.  

$\widetilde {G'} $ is obtained from $\widetilde G$ by removing the vertices $w, \zb$ and their adjacent edges, as well as identifying the vertex $C$ with the vertex $u_1$, see below. Note that 
 in $\widetilde {G'}$, the vertex $\za$ is a completion vertex and the vertex $u_2$ is not. 
\[\widetilde G\qquad \vcenter{\xymatrix@R10pt@C10pt{ 
&&v_1\ar@{-}[r]\ar@{-}[ld] &v_2\ar@{.}[r]&\ 
\\
&\za\ar@{-}[ld]\ar@{-}[rd]^\za\\
w\ar@{-}[rd]&&C\ar@{-}[ld]^\zb\ar@{-}[r]^\zg &\bullet\ar@{.}[r]&\  \\
&\zb\ar@{-}[rd]_{e_1}\\
&&u_1\ar@{-}[r]_{e_2} &u_2\ar@{.}[r] &\ 
}}
\qquad \longrightarrow \qquad
\widetilde {G'}\qquad \vcenter{\xymatrix@R10pt@C10pt{ 
&&v_1\ar@{-}[r]\ar@{-}[ld] &v_2\ar@{.}[r]&\ 
\\
&\za\ar@{-}[rd]^\za\\
&&C\ \ar@{-}[r]^\zg &\bullet\ar@{.}[r]&\  \\
&\\
&& &u_2\ar@{-}[luu]^{e_2}\ar@{.}[r] &\ 
}}
\]
 
Now consider $\cals$. The cycle $C$ becomes a triangular shaded region with sides labeled $i,j,k$ and vertices $\za,\zb,\zg$. The adjacent white region towards the boundary is a quadrilateral with edge labels $i,j,k$ and a boundary edge that corresponds to the completion vertex $w$ in $\widetilde G$. The adjacent triangular shaded boundary regions in $\cals$ correspond to  the arrows $\za$ and $\zb$, and the edge labels are shown in the figure below. The vertices $v_1$ and $u_2$ are not  completion vertices in $\widetilde G$ and therefore  give rise to  shaded regions in $\cals$. On the other hand, the vertex $u_1$ is a completion vertex in $\widetilde G$ and it does not yield a shaded region in $\cals$.

The polygon $\cals'$ can be obtained from $\cals$ by removing the three regions to the left of the radical line $\rho(j)$ together with the two edges adjacent to the vertex $e_1$, as well as identifying the vertices $\zb$ and $e_2$, see below.
  
\begin{center}
\small 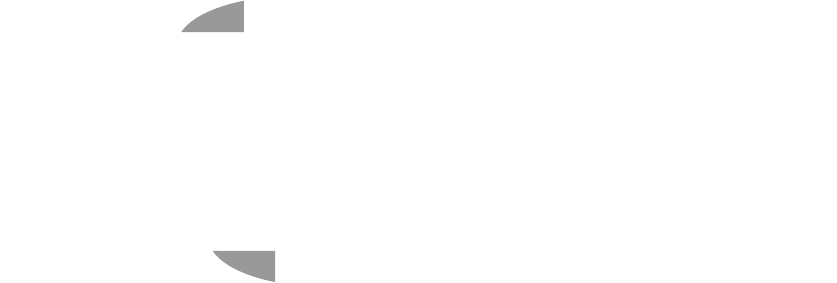  
\end{center}

Now consider the two polygons obtained by cutting $\cals$ along the radical line $\rho(x)$ with $x\ne j$. 
If $x\ne i, k$ then one of these two polygons is also obtained by cutting $\cals'$ along $\rho(x)$.  Using induction this completes the proof as in case (a).

Now suppose $x=i$ or $x=k$. Let $A$ and $B$ denote the two polygons in $\cals$ obtained by cutting along $\rho(x)$ such that the shaded triangle $C$ lies inside $B$. Similarly, let $A'$ and $B'$ denote the two polygons in $\cals'$ obtained by cutting along $\rho(x)$ such that the shaded triangle $C$ lies inside $B'$. Then, if $x=i$, the polygon $A'$ is obtained from $A$ by removing the triangular shaded region on the boundary of $\cals$ with labels $\za, i, j$. In particular, $A$ and $A'$ have the same number of boundary edges. 
On the other hand, if $x=k$ then $A'$ is obtained from $A$ by removing the triangular region at the boundary of $\cals$ with labels $\zb, k, j $ together with the  two edges incident to $e_1$ and identifying the vertices $\zb$ and $e_2$. In particular, $A$ has exactly two more boundary edges than $A'$, so both have the same parity.
By induction, the result follows as in case (a).
Finally, if $x=j$ the result is obvious.
\end{proof}

\subsubsection{Orientation of 2-diagonals} We now define an orientation on each 2-diagonal in the polygon $\cals$, which will allow us later to give a direction to the crossing of two 2-diagonals.
 
The polygon $\cals$ has an even number of boundary vertices, by Lemma \ref{lem 34}. We give a sign to each boundary vertex in such a way that the signs alternate along the boundary of $\cals$. There are exactly two ways of doing this, and it does not matter which one we choose. 

Then a 2-diagonal in $\cals$ is the homotopy class of a line segment that connects two boundary vertices of opposite sign.  We orient each 2-diagonal in the direction from $-$ to $+$. In particular, this defines an orientation on the radical lines $\rho(i)$ in the checkerboard pattern of $\cals$. 

\begin{lemma}
 \label{lem orientation}
 Each  region in the checkerboard pattern in $\cals$ is bounded by a sequence of segments of oriented radical lines, and possibly one boundary segment. 
  For a shaded region these oriented segments form an oriented path bounding the region. For a white region, the orientation of the segments is alternating around the region.
\end{lemma}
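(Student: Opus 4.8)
The plan is to separate the two assertions. The first—that each region is cut out by radical-line segments together with at most one boundary segment—I would read off directly from the construction in Section~\ref{sect 3.1}: every interior edge of $\cals$ carries a label $i\in Q_0$ and hence lies on $\rho(i)$; an interior shaded region has no boundary edge, a boundary shaded region has exactly one, and a white region has at most one by Lemma~\ref{lem 32}(a). The substance is the orientation claim, and for it I would first fix language. The $2N$ boundary vertices carry alternating signs, and each $2$-diagonal—in particular each $\rho(i)$, a $2$-diagonal by Lemma~\ref{lem 36}—is oriented from its $-$ to its $+$ endpoint. For a region $R$ without a boundary edge, traverse $\partial R$ counterclockwise and put $\sigma(g)=+1$ on an edge $g$ of $R$ if the radical line through $g$ runs with the traversal along $g$, and $\sigma(g)=-1$ otherwise. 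I will call $R$ \emph{aligned} at an interior vertex $v$ of $R$ if its two edges at $v$ receive the same sign, and \emph{alternating} at $v$ otherwise; then $R$ is aligned at $v$ precisely when its two radical segments line up into a directed arc through $v$, and the number of vertices where $R$ is alternating equals the number of sign changes of the cyclic word $\sigma$, hence is \emph{even}. With this language, the orientation assertion is exactly: every shaded region is aligned at all of its interior vertices (so its segments form a directed path, interrupted by the boundary edge when there is one), and every white region is alternating at all of its interior vertices.

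The first real step is a local analysis at an interior vertex $v$. By Lemma~\ref{lem 31} the four edges at $v$, read cyclically, carry labels $i,j,i,j$, so $\rho(i)$ and $\rho(j)$ cross at $v$, each entering one of its two (opposite) edges at $v$ and leaving the other. Hence exactly two of the four edges point toward $v$ and two point away, and—because the two edges of one radical line are cyclically opposite—the two inward edges are cyclically \emph{adjacent}, as are the two outward ones. Consequently, among the four regions at $v$, the two opposite ones flanked by a like-directed pair of edges are alternating at $v$, while the two opposite ones flanked by one inward and one outward edge are aligned at $v$. Since opposite regions have the same checkerboard colour, at $v$ one colour is aligned and the other alternating; in particular the two shaded regions at $v$ behave alike, and oppositely to the two white ones. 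So the whole lemma reduces to a single claim: \emph{at every interior vertex the shaded regions are aligned}.

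I would establish this claim first at the interior vertex $v_\za$ of a boundary shaded region $T_\za$, for a boundary arrow $\za\colon i\to j$. Here $T_\za$ is bounded by one boundary edge $pq$ and two radical segments, on $\rho(i)$ from $p$ to $v_\za$ and on $\rho(j)$ from $v_\za$ to $q$, with $p$ an endpoint of $\rho(i)$ and $q$ an endpoint of $\rho(j)$ (neither line can run on past a boundary vertex); since $p$ and $q$ are consecutive on $\partial\cals$ they have opposite signs, so exactly one of the two segments points toward $v_\za$, i.e.\ $T_\za$ is aligned at $v_\za$. To propagate to all interior vertices I would root the tree on the chordless cycles of $Q$ (the trunk of $G$) and induct from the leaves: assume that for every child of a cycle $C$ the associated shaded region is aligned at each of its vertices, and consider the shaded region $S_C$. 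Every arrow of $C$ is either a boundary arrow—at whose vertex $S_C$ shares the vertex with some $T_\za$ and is therefore aligned—or an interior arrow shared with a neighbouring cycle, at most one of which is the parent of $C$ (Proposition~\ref{prop Q}(4)); at arrows shared with children, $S_C$ is aligned because the child's region is, by induction. Thus $S_C$ is aligned at every one of its vertices except possibly the one coming from the parent arrow; since its number of alternating vertices is even and at most $1$, it is $0$, so $S_C$ is aligned everywhere and the induction goes through. Finally every interior vertex of $\cals$ lies on some $S_C$, hence is aligned, and then by the local step every white region is alternating at each of its interior vertices, which is the remaining assertion.

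The main obstacle, to my mind, is the local dichotomy at $v$—and within it the observation that the two inward edges are always cyclically adjacent, which is precisely what forces the two colours onto opposite behaviours. Once that is in hand, the combinatorial bound "an even number of bad vertices, and at most one of them, hence none" closes the induction with no further global input, so I do not expect real difficulty there. (One could instead build a $\mathbb{Z}$-valued height function on the regions that changes by $\pm1$ across each radical line, whose parity recovers the checkerboard colour, but this does not seem to shorten the argument.)
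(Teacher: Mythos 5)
Your proof is correct, and its core coincides with the paper's: the same local dichotomy at a crossing (by Lemma~\ref{lem 31} the two edges of each radical line at a crossing are opposite, so the two incoming edges are cyclically adjacent, hence opposite regions behave alike and the two colours behave oppositely), and the same base case (a boundary shaded region is directed because its two boundary vertices are consecutive on $\partial\cals$, hence of opposite sign). Where you differ is in what comes after: the paper compresses the rest into ``thus it suffices to show that shaded regions at the boundary of $\cals$ satisfy the statement,'' which directly handles only the crossings lying on boundary shaded regions, i.e.\ those corresponding to boundary arrows of $Q$; crossings coming from interior arrows are left implicit. You supply exactly that missing propagation: root the trunk of the dual graph, induct from the leaves, and combine the parity of the number of sign changes around an interior shaded region (which has no boundary edge) with the fact that at most one corner --- the one at the parent arrow --- is undetermined, so ``even and at most one'' forces zero; condition (Q1) then guarantees every interior vertex is reached. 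This is a strengthening of the written argument rather than a different strategy, and it is a sound one; note that both your base case and the paper's quietly use that a radical line terminates at, rather than passes through, a boundary vertex, which is indeed guaranteed by the distinctness of edge labels around a white region (Lemma~\ref{lem 32}(a)) and is consistent with Lemma~\ref{lem 36}. The only cost is length; what it buys is an explicit treatment of the interior crossings that the paper's ``it suffices'' step glosses over.
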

\begin{proof}
 Every crossing point between two radical lines is incident to four regions and, locally, the opposite regions have the same type of orientation (same direction or alternating) on their boundary segments. Opposite regions also have the same color. Thus it suffices to show that shaded regions at the boundary of $\cals$ satisfy the statement. This however, is easy to see, since such a region has exactly three vertices, one that is interior and two on the boundary of $\cals$, and the two boundary vertices have opposite signs. This implies that the two segments of the radical lines that bound the region form an oriented path. 
\end{proof}
\begin{remark}
 The orientation of the radical lines in $\cals$ is not related to and should not be confused with the orientation of the arrows in the quiver. Every arrow in $Q$ corresponds to an interior vertex in the checkerboard pattern of $\cals$ which is incident to exactly two shaded regions. The segments of the radical lines go clockwise around  one of these regions and counterclockwise around the other. 
 Similarly, the segments around a white region $W$ have alternating orientation, while the path $\mathfrak{c}(W)$ associated to $W$ in Lemma  \ref{lem 33} is an oriented path in $Q$.
\end{remark}

The orientation of the 2-diagonals allows us to define the degree of a crossing between a 2-diagonal and a radical line as follows. 
\begin{definition}
 \label{def degree}
 Let $\rho(i)$ be a radical line in the checkerboard polygon $\cals$,  and let $\zg$ be  an arbitrary 2-diagonal  that has a crossing with $\rho(i)$. We define the \emph{degree} of the crossing to be
 \[ 
\begin{array}
 {ll}
 0&\textup{if $\rho(i)$ crosses $\zg$ from right to left};\\
 1&\textup{if $\rho(i)$ crosses $\zg$ from left to right}.
\end{array}\]
\end{definition}

\medskip
\subsection{Properties of dimer tree algebras}\label{sect 3.7}
Let $B$ be a dimer tree algebra.  In this subsection, we give some basic properties of the algebra. Throughout we use the notation $w\sim w'$ to indicate that two parallel paths $w, w'$ in $Q$  are equal in $B$.

We start with a property of the quiver. Recall that $Q$ comes with a fixed embedding in the plane. A vertex of $Q$ is called a \emph{boundary vertex} if it is incident to the unique unbounded face of $Q$. 

\begin{lemma}
 \label{lem 36a}
All vertices of $Q$ are boundary vertices.
\end{lemma}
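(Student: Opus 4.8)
The plan is to combine the explicit planar embedding of $Q$ coming from Proposition~\ref{prop Q}(2) with the tree hypothesis (Q2), via the observation that chordless cycles ``wrapping around'' an interior vertex would force a cycle in the dual graph $G$. The first thing I would do is record what the reconstruction in the proof of Proposition~\ref{prop Q}(2) says about the faces of $Q$: the bounded faces are exactly the regions enclosed by the chordless cycles, so that $C\mapsto F_C$ is a bijection between chordless cycles of $Q$ and bounded faces of $Q$, with the boundary of $F_C$ being the (simple) cycle $C$. Indeed, in that reconstruction the faces of $Q$ correspond to the vertices of $G$ together with the point at infinity: each trunk vertex of $G$---that is, each chordless cycle $C$---sits enclosed by the $G$-edges at $C$, which correspond precisely to the arrows of $C$, whereas the leaf vertices of $G$ (the boundary arrows) and the point at infinity all lie in the single unbounded face. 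If preferred, this can be stated as a short observation right after Proposition~\ref{prop Q}.

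With this in hand, I would argue by contradiction. Suppose $i\in Q_0$ is incident only to bounded faces. List the arrows $e_1,\dots,e_d$ incident to $i$ in the cyclic order in which they occur around $i$ in the embedding; since $Q$ has no loops, the face in the sector between consecutive arrows $e_j$ and $e_{j+1}$ (indices mod $d$) is bounded, hence equals $F_{C^{(j)}}$ for a chordless cycle $C^{(j)}$ in which $e_j$ and $e_{j+1}$ are consecutive at $i$. A short case check deals with small $d$: $d=1$ is impossible ($Q$ has no loops), and if $d=2$ then $C^{(1)}$ and $C^{(2)}$ would each contain both $e_1$ and $e_2$, contradicting Proposition~\ref{prop Q}(4); so $d\ge 3$. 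Since each $C^{(j)}$ is a simple cycle and passes through $i$ only once, the $d$ faces around $i$ are pairwise distinct and hence so are the cycles $C^{(1)},\dots,C^{(d)}$. Now $C^{(j)}$ and $C^{(j+1)}$ both contain $e_{j+1}$ and are distinct, so by Proposition~\ref{prop Q}(3) the arrow $e_{j+1}$ lies in exactly these two chordless cycles, and therefore $G$ contains a trunk edge joining $C^{(j)}$ and $C^{(j+1)}$. Thus $C^{(1)}-C^{(2)}-\cdots-C^{(d)}-C^{(1)}$ is a cycle of length $d\ge 3$ with distinct vertices in $G$, contradicting (Q2); hence no such $i$ exists and every vertex of $Q$ is a boundary vertex.

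The step I expect to be the only real obstacle is the first one---making precise that, in the embedding built from $G$, every bounded face of $Q$ is bounded by a chordless cycle, and that distinct faces correspond to distinct cycles. Once that is pinned down, the rest is routine bookkeeping with Proposition~\ref{prop Q}(3),(4) and the tree property. An alternative that sidesteps talking about faces of $Q$ altogether: in the reconstruction each vertex $v$ of $Q$ lies inside one of the unbounded regions obtained by extending the leaves of $G$ to infinity, and the finitely many arrows at $v$ are compact arcs inside that region; hence, far out, the region lies in a single sector at $v$, and that sector---being unbounded---is contained in the unbounded face of $Q$, so $v$ is a boundary vertex. I would choose whichever of these two is cleaner to write within the paper's conventions, most likely the first.
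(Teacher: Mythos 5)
Your proof is correct and follows essentially the same route as the paper: assume a vertex incident only to bounded faces, identify those faces with chordless cycles, and observe that consecutive faces around the vertex share an arrow, producing a cycle in the dual graph $G$ that contradicts (Q2). The only difference is that you spell out details the paper leaves implicit (the bounded-face/chordless-cycle identification from Proposition~\ref{prop Q}(2) and the small cases $d=1,2$ via the no-loops assumption and Proposition~\ref{prop Q}(4)), which is fine but not a different argument.
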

\begin{proof}
 Suppose $x\in Q_0$ is a vertex that is not a boundary vertex. Denote by $C_1,C_2,\ldots,C_s$ the faces of $Q$ that are incident to $x$ in clockwise order. Then every $C_i$ is a bounded face, hence a chordless cycle in $Q$. Moreover every two consecutive faces $C_i,C_{i+1}$ share an arrow $\za_i$ that is incident to $x$. This configuration gives rise to a cycle $\xymatrix@C10pt{C_1\ar@{-}[r]^{\za_1} &C_2\ar@{-}[r]^{\za_2} & \cdots \ar@{-}[r]^{\za_{s-1}}&C_s\ar@{-}[r]^{\za_s} &C_1}$  in the dual graph $G$, which is a contradiction to condition (Q2).
\end{proof}

\begin{lemma}
 \label{lem 37} 
(a) If $\za_1$ is a boundary arrow and $\za_1\za_2\cdots\za_s$ is the unique chordless cycle containing $\za_1$ then the path $\za_2\cdots\za_s$ is zero in $B$.

(b) If $\za_1$ is an interior arrow and $\za_1\za_2\cdots\za_s$ and $\za_1\za'_2\cdots\za'_t$ are the two chordless cycles containing $\za_1$ then the paths $\za_2\cdots\za_s$ and $\za'_2\cdots\za'_t$ are equal in $B$.
\end{lemma}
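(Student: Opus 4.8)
The statement is the central finiteness/coherence fact about a dimer tree algebra $B$: the Jacobian relations coming from the potential $W = \sum_{i=0}^t (-1)^{d(i)} C_i$ force the ``leftover'' path of each chordless cycle to vanish (in the boundary case) or two such leftover paths to coincide (in the interior case). So the plan is to compute the partial derivatives of $W$ with respect to each arrow and read off exactly these relations.

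First I would recall the definition of the Jacobian ideal: $B = \kb Q / \langle \partial_\za W : \za \in Q_1\rangle$, where for a cyclic path $C = \za\, u$ (with $u$ the path obtained by deleting the first arrow $\za$) one has $\partial_\za(C) = \sum$ over all ways of writing $C$ as $\gamma_1 \za \gamma_2$ of the path $\gamma_2\gamma_1$. The key input from the hypotheses is Proposition~\ref{prop Q}(3)--(4): every arrow lies in either exactly one chordless cycle (boundary arrow) or exactly two (interior arrow), and two chordless cycles share at most one arrow. Consequently, when we differentiate $W$ with respect to $\za_1$, only the one or two cycles through $\za_1$ contribute, and within each such cycle $\za_1$ occurs exactly once (a chordless cycle has no repeated vertices, hence no repeated arrows). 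This is what makes the derivative a single term per cycle rather than a sum.

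For part (a): if $\za_1$ is a boundary arrow lying in the unique chordless cycle $C = \za_1\za_2\cdots\za_s$, then $C$ appears in $W$ with sign $(-1)^{d}$ for the appropriate $d$, and no other summand of $W$ involves $\za_1$. Hence $\partial_{\za_1} W = \pm\, \za_2\cdots\za_s$, which is therefore zero in $B$. For part (b): if $\za_1$ is an interior arrow lying in the two chordless cycles $C = \za_1\za_2\cdots\za_s = \za_1\,u$ and $C' = \za_1\za_2'\cdots\za_t' = \za_1\,u'$, these appear in $W$ with signs $(-1)^{d(C)}$ and $(-1)^{d(C')}$. The crucial sign computation is that $C$ and $C'$ are \emph{adjacent} vertices in the trunk of the dual graph $G$ (they share the arrow $\za_1$, which corresponds to a trunk edge), so the distances $d(C)$ and $d(C')$ from the root $C_0$ differ by exactly $1$; therefore $(-1)^{d(C)} = -(-1)^{d(C')}$, i.e.\ $C$ and $C'$ enter $W$ with opposite signs. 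Differentiating: $\partial_{\za_1} W = (-1)^{d(C)}\, u + (-1)^{d(C')}\, u' = \pm(u - u')$, and so $u = \za_2\cdots\za_s$ and $u' = \za_2'\cdots\za_t'$ are equal in $B$.

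The main obstacle — really the only nonroutine point — is justifying the sign claim $d(C) \ne d(C') \pmod 2$ and the fact that \emph{no other term} of $W$ contributes to $\partial_{\za_1}W$. The second is handled by Proposition~\ref{prop Q}(3): if a third cycle contained $\za_1$, the dual graph would have a cycle, contradicting (Q2). The first is handled by observing that the shared arrow $\za_1$ of two chordless cycles $C, C'$ is precisely a trunk edge $\xymatrix@C10pt{C\ar@{-}[r]&C'}$ of $G$ (by the definition of the dual graph and Proposition~\ref{prop Q}(4), which guarantees this edge is unique), so one of $C, C'$ is the parent of the other along the unique path from $C_0$, giving $|d(C) - d(C')| = 1$; and it is exactly to make this sign work that the potential was defined with the factors $(-1)^{d(i)}$ in the first place. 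One should also note in part (a) that the sign $(-1)^{d}$ is a unit, so ``$\pm\,(\text{path}) = 0$'' genuinely forces the path to be zero, and in part (b) similarly that the overall unit $(-1)^{d(C)}$ can be divided out. After these points, the rest is the bookkeeping of the cyclic derivative, which I would state but not belabor.
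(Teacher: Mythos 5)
Your proposal is correct and follows essentially the same route as the paper: the lemma is read off directly from the Jacobian relations $\partial_{\za_1}W$, using that a boundary (resp.\ interior) arrow lies in exactly one (resp.\ two) summands of $W$. Your explicit verification that the two cycles through an interior arrow are adjacent in the dual tree, so their signs $(-1)^{d}$ in $W$ are opposite and the derivative is a genuine difference of the two complementary paths, is a point the paper leaves implicit, but it is the same argument.
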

\begin{proof}
 This follows directly from the definition of the Jacobian ideal.  Indeed, in case (a), the arrow $\za_1$ lies in a unique chordless cycle, hence in a unique summand of the potential $W$, and thus the partial derivative 
 $\partial_{\za_1} W$ is exactly equal to the path $\za_2\cdots\za_s$.
 In case (b),  the arrow $\za_1$ lies in exactly two summands of $W$, and thus 
 $\partial_{\za_1} W$ is the difference of the two paths $\za_2\cdots\za_s$ and $ \za'_2\cdots\za'_t$.
\end{proof}

\begin{prop} 
 \label{prop 38}
Every cyclic path in $Q$ is zero in $B$.
\end{prop}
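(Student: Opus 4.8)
The plan is to use the two kinds of relations recorded in Lemma~\ref{lem 37} as rewriting rules on paths. By Lemma~\ref{lem 37}(a), if a path contains, as a subpath, the complement $\mathfrak{c}_\delta$ of a boundary arrow $\delta$ in its (unique) chordless cycle, then that path is zero in $B$. By Lemma~\ref{lem 37}(b), if $\alpha$ is an interior arrow lying in the chordless cycles $C$ and $C'$, then a path containing the complement $\mathfrak{c}_\alpha$ of $\alpha$ in $C$ equals, in $B$, the path obtained by replacing that subpath with the complement $\mathfrak{c}'_\alpha$ of $\alpha$ in $C'$. So it suffices to show that, starting from a cyclic path $p$ and applying replacements of the second kind, one can always reach a path containing a subpath of the first kind.

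To organize this I would work in the planar picture. By Proposition~\ref{prop Q} the quiver $Q$ is planar, and by Lemma~\ref{lem 36a} all of its vertices lie on the unbounded face; consequently the bounded faces of $Q$ are exactly the chordless cycles $C_0,\dots,C_t$, each arrow being either interior (bordering two chordless cycles) or a boundary arrow (bordering one chordless cycle and the unbounded face). Viewing a cyclic path $p$ as a closed curve, write $p=\sum_i w_i(p)\,[C_i]$ in the first homology of the plane, so that $w_i(p)\in\mathbb{Z}$ is the winding number of $p$ about an interior point of the face $C_i$; since a nontrivial cyclic path is a nonzero $1$-cycle, some $w_i(p)\neq 0$. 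A replacement of the second kind at an interior arrow $\alpha$ lying in $C,C'$ changes only $w_C(p)$ and $w_{C'}(p)$, transferring one unit of winding number between these two adjacent faces. The strategy is to use such transfers to move the winding number, along the dual tree $G$, onto chordless cycles that contain a boundary arrow. Once $p$ (after rewriting) has $w_C(p)\neq 0$ for a chordless cycle $C$ possessing a boundary arrow, a local planarity argument finishes it: a vertex lying between two consecutive boundary arrows of a common chordless cycle has degree $2$, its only incident faces being that cycle and the unbounded face, so $p$ is forced to traverse the whole run of boundary arrows of $C$ consecutively, and this run contains the complement of a boundary arrow of $C$; hence $p=0$ by Lemma~\ref{lem 37}(a). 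The base case is $Q$ a single chordless cycle, where every cyclic path is a power of that cycle and visibly contains such a complement.

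The main obstacle is organizing the rewriting so that it terminates and actually delivers a subpath of the first kind rather than cycling. This is exactly where the hypothesis that $G$ is a tree is essential: it is what prevents the winding number from circulating indefinitely and allows one to concentrate it toward the leaves of $G$, each leaf of the trunk being a chordless cycle with a boundary arrow. Making the descent precise requires careful bookkeeping of how the integers $w_i(p)$ change under the allowable moves — choosing, among the faces with nonzero winding number, one that is a leaf of the subtree of $G$ they span, together with an arrow of its chordless cycle that borders a region of winding number $0$ — and then combining this with the degree-$2$ fact above. That bookkeeping, and checking that the chosen move genuinely makes progress, is the part of the argument I expect to be genuinely delicate; the rest is routine.
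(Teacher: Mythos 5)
Your overall strategy --- treating Lemma~\ref{lem 37} as rewriting rules and pushing the obstruction along the dual tree toward chordless cycles with boundary arrows --- is in the same spirit as the paper's argument, but as written there are two genuine gaps, and the second is exactly the step you defer. The concluding local step is not correct as stated: if $C$ is a chordless cycle with a boundary arrow and $w_C(p)\neq 0$, nonzero winding does force $p$ to traverse every boundary arrow of $C$, and your degree-$2$ observation forces $p$ to traverse each maximal run of consecutive boundary arrows in one go; but such a run is (at best, when $C$ is a leaf of the trunk) the complement of the \emph{interior} arrow of $C$, not the complement of a \emph{boundary} arrow. Lemma~\ref{lem 37}(a) only annihilates complements of boundary arrows, while the complement of an interior arrow is in general nonzero --- by Lemma~\ref{lem 37}(b) it merely equals the complementary path in the neighbouring cycle. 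So to conclude you would still need either that $p$ also picks up the adjacent interior arrow, or a further rewrite followed by an argument that the new path dies; that extra argument is precisely what is missing.

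The descent itself is also unproven, and it is the heart of the matter: a type-(b) replacement at an interior arrow $\alpha$ can only be performed if $p$ actually contains the corresponding complement as a subpath, and a nonzero winding-number (difference) across $\alpha$ does not guarantee this --- the path may wind around a region without hugging either of the two cycles meeting along $\alpha$. You acknowledge that the bookkeeping needed to make the moves applicable, make progress, and terminate is ``delicate'' and leave it out. The paper resolves both difficulties at once by inducting on the length of the cyclic path: after reducing to a cycle $w$ with no proper subcycle, it passes to the full subquiver $R$ on the vertices of $w$, chooses a chordless cycle $C$ that is a leaf of the trunk of the dual tree of $R$ (so the complement of its unique interior-in-$R$ arrow is forced to be a subpath of $w$), rewrites it via Lemma~\ref{lem 37}(b), and observes that the rewritten path must revisit a vertex, hence contains a strictly shorter cyclic subpath, which vanishes by induction. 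Some such induction (or another genuine termination argument at the level of paths, not just homology classes) is required; the winding numbers alone do not supply it.
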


\begin{proof} 
 Let $w$ be a cyclic path in $Q$. We proceed by induction on the length $\ell(w)$ of $w$. Since $Q$ is without loops and 2-cycles, the base case is $\ell(w)=3$. Thus suppose $w=\za\zb\zg$ is a 3-cycle at vertex $x$ in $Q$. If $\za $ is a boundary arrow then Lemma \ref{lem 37} yields $\zb\zg=0$ and thus $w=0$. Otherwise, $\za$ is contained in  a unique other 3-cycle $w'=\za\zb'\zg'$. Again using Lemma \ref{lem 37}, we have $\zb\zg\sim\zb'\zg'$ and thus $w\sim w'$. 
Moreover, $\zg\ne\zg'$, because $x$ is a boundary vertex, by Lemma~\ref{lem 36a}. 
Similarly, if $\zg'$ is a boundary arrow, then $\za\zb'=0$ and hence $w\sim w'=0$ in $B$. Otherwise, $\zg'$ is contained in a unique other 3-cycle $w''=\za'\zb''\zg'$ and Lemma \ref{lem 37} implies $w''\sim w'\sim w$. 
Moreover $\za'\ne \za$ because $x$ is a boundary vertex of $Q$.

Continuing this way, we either obtain a 3-cycle $w^{(t)}\sim w$ such that $w^{(t)}$  contains a boundary arrow, and in this case $w=0$, or we obtain an unbounded number of arrows incident to the vertex $x$, which is impossible, since $Q$ is finite. This completes the proof for 3-cycles.

Now let $w$ be a cycle of length $\ell(w)$ greater than 3 and denote its start and terminal vertex by $x$. We may assume that $w$ does not contain a proper subcycle, that is,  $w$ does not visit the same vertex twice except for $x$. In other words, the set of all vertices $V(w)$ that lie on $w$ has cardinality $\ell(w)$. 

Let $G$ be the dual graph of $Q$, and $G^\circ$ be the graph obtained from $G$ by removing the leaf vertices (and their edges). Thus $G^\circ=(G_0^\circ,G_1^\circ)$ with $G_0^\circ = \{\textup{chordless cycles in $Q$}\}$ and $G_1^\circ=\{ \xymatrix@C15pt{C\ar@{-}[r]^\za&C'} \mid \textup{the chordless cycles $C,C'$ share the arrow $\za$}\}$. 

Let $R$ be the full subquiver of $Q$ whose vertex set is $V(w)$. In particular $w$ is a cyclic path consisting of boundary arrows in $R$. 
Note however, since we do not assume that $w$ is chordless, the quiver $R$ may have arrows that do not lie on $w$.
Let $H^\circ$ be the  subgraph of $G^\circ$ associated to $R$. Since $G$ is a tree, $H^\circ$ is a tree as well, and we may choose a leaf vertex  $C$ in $H^\circ$. Thus $C=\za_1\za_2\cdots\za_t$ is a chordless cycle in $R$ which has at most one interior arrow, say $\za_1$, in $R$. We may choose $C$ such that the starting point $x$ of $w$ does not lie on the path $\za_3\cdots\za_{t-1}$.  
Then the path $\za_2\cdots\za_t$ must be a subpath of $w$. Since $\za_1 $ is an interior arrow in $R$, there exists a unique other chordless cycle  $C'=\za_1\za_2'\cdots\za'_s$ in $R$ containing $\za_1$, and we have 
$\za_2\cdots\za_t \sim \za_2'\cdots\za'_s$. 
Let us point out here that the two paths have the same starting vertex and the same terminal vertex, but they do not have any other vertex in common. 

Now define $w'$ to be the path obtained from $w$ by replacing $\za_2\cdots\za_t $ with $\za_2'\cdots\za'_s$, and let $V(w')$ be the set of all vertices on $w'$. 
Let $y=t(\za_2')\in V(w')$. 
Since $w'$ is a path in $R$, and $R$ is the subquiver of $Q$ whose vertices are $V(w)$, we must have $V(w')\subset V(w)$. In particular, the vertex $y$ is also a vertex on the path $w$. Therefore, the path $w'$ must visit the vertex $y$ at least twice, and thus $w'$ contains a proper subcycle $w''$ which must be zero by induction. Hence $w\sim w'=0$, and the proof is complete. 
\end{proof}

\begin{prop}
 \label{prop 39} 
 Any two nonzero parallel paths in $Q$ are equal in $B$. 
 
 \end{prop}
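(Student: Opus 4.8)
The plan is to prove the stronger statement that $e_uBe_v$ has dimension at most one, by showing that any two nonzero parallel paths $p,q\colon u\to v$ can be transformed into one another by a sequence of elementary moves. I will use throughout that a path which visits a vertex twice contains a proper subcycle, hence is zero in $B$ by Proposition~\ref{prop 38}; in particular nonzero paths are simple, and if $u=v$ then $p,q$ are cyclic, hence zero, so I may assume $u\neq v$. The geometric input is that $Q$ is planar (Proposition~\ref{prop Q}(2)) and that its bounded faces are exactly the chordless cycles of $Q$ — this is what the reconstruction of $Q$ from its dual tree $G$ in the proof of Proposition~\ref{prop Q}(2) gives, the bounded faces corresponding to the non-leaf vertices of $G$. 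The only algebraic move comes from Lemma~\ref{lem 37}(b): for an interior arrow $\gamma$ lying in the two chordless cycles $\gamma\delta$ and $\gamma\delta'$ one has $\delta\sim\delta'$ in $B$.

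The moves will be organized by induction on the pair $\bigl(A(p,q),\,\ell(p)+\ell(q)\bigr)$, ordered lexicographically. To define $A(p,q)$, split $p$ and $q$ at all of their common vertices into matched pairs $(p^{(i)},q^{(i)})$ of subpaths sharing only their two endpoints; whenever $p^{(i)}\neq q^{(i)}$ the loop $p^{(i)}(q^{(i)})^{-1}$ is a simple closed curve (its endpoints are distinct since $p^{(i)},q^{(i)}$ are nonzero), hence bounds a disc tiled by chordless cycles of $Q$, and $A(p,q)$ is the total number of these chordless cycles over all $i$. Then $A(p,q)=0$ exactly when $p=q$, which is the base of the induction. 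If $p\neq q$ and $p,q$ have a common vertex $w\notin\{u,v\}$, I split $p=p_1p_2$ and $q=q_1q_2$ at $w$: the four pieces are nonzero subpaths, $p_1,q_1$ and $p_2,q_2$ are parallel, $A(p_1,q_1)+A(p_2,q_2)=A(p,q)$, and $\ell(p_i)+\ell(q_i)<\ell(p)+\ell(q)$ since $w\neq u,v$; the induction hypothesis then gives $p_1\sim q_1$ and $p_2\sim q_2$, hence $p\sim q$.

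The remaining case is $p\neq q$ with $p,q$ meeting only in $u\neq v$, so that $pq^{-1}$ bounds a disc $D$ tiled by chordless cycles $F_1,\dots,F_m$ with $A(p,q)=m$. Then $m\geq2$: if $m=1$ then $\partial D$ would be a single directed cycle, which is impossible because along $\partial D$ exactly one of $p,q$ is traversed against its orientation. The face--adjacency graph of $D$ is a connected subgraph of the tree obtained from $G$ by deleting its leaves, hence is itself a tree, and having $m\geq2$ vertices it has a leaf $F$. Let $\gamma$ be the unique edge $F$ shares with its neighbour $F'$ inside $D$ (uniqueness by Proposition~\ref{prop Q}(4)); writing the chordless cycle $F$ starting with $\gamma$ as $F=\gamma\delta$, the directed path $\delta$ has length $\geq2$, goes from $t(\gamma)$ to $s(\gamma)$ along edges of $\partial D$, and, being directed of positive length, cannot pass through $u$ or $v$; hence its edges lie entirely on $p$ or entirely on $q$ and, $p,q$ being simple, form a contiguous subpath, say $p=p_L\,\delta\,p_R$. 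Now $\gamma$ lies in the two distinct chordless cycles $F$ and $F'$, so it is an interior arrow whose only chordless cycles are these two (Proposition~\ref{prop Q}(3)); writing $F'=\gamma\delta'$, Lemma~\ref{lem 37}(b) gives $\delta\sim\delta'$. Setting $p':=p_L\,\delta'\,p_R$, a path $u\to v$ parallel to $q$ with $p'\sim p$ (so $p'\neq 0$), the region swept from $p$ to $p'$ is the one bounded by $\delta$ and $\delta'$, which is exactly $F\cup F'$; hence $A(p',q)=A(p,q)-2$, the induction hypothesis gives $p'\sim q$, and therefore $p\sim q$. As each nontrivial case invokes a move that strictly decreases $(A,\ell)$ lexicographically, the induction terminates.

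The step I expect to be the main obstacle — and where the hypotheses on $Q$ are genuinely used — is the push move of the last paragraph. One cannot homotope a directed path across a \emph{single} chordless cycle, since a chordless cycle is a directed cycle and its complementary side is anti-directed; the move must cross two adjacent chordless cycles at once, through their common interior arrow, and this is exactly the configuration that Lemma~\ref{lem 37}(b) handles. A secondary difficulty is selecting an induction quantity compatible with both moves; the lexicographic pair $(A,\ell)$ is designed for this, using that a cut never raises the enclosed face count while strictly shortening the paths, and a push lowers it by exactly two. The few planar--topology facts used above — that $D$ is tiled by chordless cycles, that the face--adjacency graph of $D$ embeds in the tree $G$ with its leaves removed, and that $\delta$ is a contiguous subpath — are routine given the construction of $\cals$ and $Q$ in Section~\ref{sect 3.1}.
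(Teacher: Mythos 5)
Your strategy---peeling the disc bounded by $pq^{-1}$ two faces at a time through the interior arrow shared by a leaf face and its neighbour, with Lemma~\ref{lem 37}(b) supplying the algebraic move---is genuinely different from the paper's argument, which inducts on $\max\{\ell(p),\ell(q)\}$ and instead shows that, inside the full subquiver spanned by the two paths, the starting vertex lies on exactly two chordless cycles; this forces the complementary path $C_2'$ there to be a single arrow, so every rewriting step in the paper strictly shortens the paths. The geometric core of your argument (that $m\ge 2$, that a leaf face of the adjacency tree has a unique interior edge, that $\delta$ is a contiguous subpath of $p$ or of $q$) is sound, up to tightening the reason $\delta$ avoids $u,v$ (no arrow of $\partial D$ points into $u$ or out of $v$).

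The genuine gap is the invariant $A(p,q)$ itself. Its definition presupposes that the common vertices of $p$ and $q$ occur in the same order along both paths (your ``matched pairs''); for two arbitrary nonzero parallel paths this is precisely the assertion that they do not cross, i.e.\ that one cannot simultaneously have a nonzero subpath $w_1\leadsto w_2$ of $p$ and a nonzero subpath $w_2\leadsto w_1$ of $q$, and you never prove it---it is uncomfortably close to the proposition being proved. The paper sidesteps this because its induction is on length alone and never lengthens a path, so a crossing pair may simply be replaced by shorter parallel subpaths; your push move can lengthen $p$ (whenever $\ell(\delta')>\ell(\delta)$), so you genuinely need $A$ to be well defined, additive under splitting, and strictly decreasing under the push, for every pair the recursion produces. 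In particular the one-clause claim $A(p',q)=A(p,q)-2$ requires real work: $\delta'$ may meet $q$ in several vertices and even share edges with it, so you must show these contacts occur in the same order along $\delta'$ and along $q$ (this can be extracted from a laminarity-plus-orientation argument, using that directed faces adjacent to $p$-edges and to $q$-edges have opposite rotational sense, so no face of $D$ has edges on both $p$ and $q$) and that the faces enclosed by the resulting matched pairs are exactly those of $D\setminus(F\cup F')$. These verifications look feasible, but as written the induction is not anchored on a provably well-defined, decreasing quantity, so the proof is incomplete.
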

\begin{proof}
Let $x,y\in Q_0$ and let $v=\za_1\za_2\cdots\za_s$, $w=\zb_1\zb_2\cdots\zb_t$ be two nonzero paths from $x$ to $y$. 
Without loss of generality, we may assume that $v$ and $w$ do not cross each other, because otherwise we would work with the shorter parallel  subpaths instead. We proceed by induction on $\max\{\ell(v),\ell(w)\}=\max\{s,t\}$. Since there are no parallel arrows in $Q$, the base case for the induction is when $s=t=2$. Then there exists a path $u$ from $y$ to $x$ such that $uv$ is a chordless cycle. Observe that $Q$ has no interior vertices, by Lemma~\ref{lem 36a}, thus using condition (Q3) of Definition~\ref{def Q} the path $u$ must be a single arrow. Similarly, there exists an arrow  $u'$ from $y$ to $x$ such that $uw$ is a chordless cycle, and since $Q$ has no parallel arrows we must have $u=u'$. Therefore $v=w$ by Lemma \ref{lem 37}.

Now suppose that one of $s,t$ is at least 3. Let $R$ be the full subquiver of $Q$ whose vertex set is the set of vertices visited by $v$ or $w$. From our conditions of $Q$, we conclude that $R$ is a planar graph, and each of its faces is a chordless cycle. Moreover, since the paths $v,w$ do not share a vertex other than $x$ and $y$, the arrows in the two paths $v$ or $w$ are boundary arrows in $R$ and all other arrows in $R$ are interior arrows. Moreover, all vertices of $R$ are boundary vertices.
In particular, the vertex $x$ is the starting point of two boundary arrows $\za_1$ and $\zb_1$. 

We are going to show first that the number of chordless cycles in $R$ that contain the vertex $x$ is exactly two. 
Indeed, since $\za_1$ and $\zb_1 $ both start at $x$, there must be at least two such chordless cycles. Suppose there are more than two. Then there must be at least four and we denote the first three of them by
\[C_1=\zb_1 C_1' \zg_1,\qquad C_2=\ze_1 C_2' \zg_1 \qquad C_3=\ze_1 C_3' \zg_2 
\]
see Figure \ref{fig 32}.
Let $z_1=s(\zg_1)$, $z_2=t(\ze_1)$ and $z_3=s(\zg_2)$.
Then all three vertices $z_1,z_2,z_3 $ are vertices of the subquiver $R$ and thus must lie on one of the two paths $v,w$. There are two possibilities; either all three vertices lie on one of the paths or two vertices lie on one path and the third lies on the other. 
To reach a contradiction, it suffices to show that neither $v$ nor $w$ can contain two of the vertices $z_1,z_2,z_3$, and, by symmetry and because the paths $v,w$ do not cross each other, it suffices to show that $w$ cannot contain $z_1$ and $z_2$. 

Suppose the contrary. Then $w$ must visit $z_1$ before $z_2$, because otherwise $w$ would cross itself. Let $w_1$ be the subpath of $w$ from $t(\zb_1)$ to $z_1$, let $w_2$ be the subpath from $z_1$ to $z_2$ 
and $w_3$ the subpath from $z_2$ to $y$, see the left picture in Figure \ref{fig 32}. 
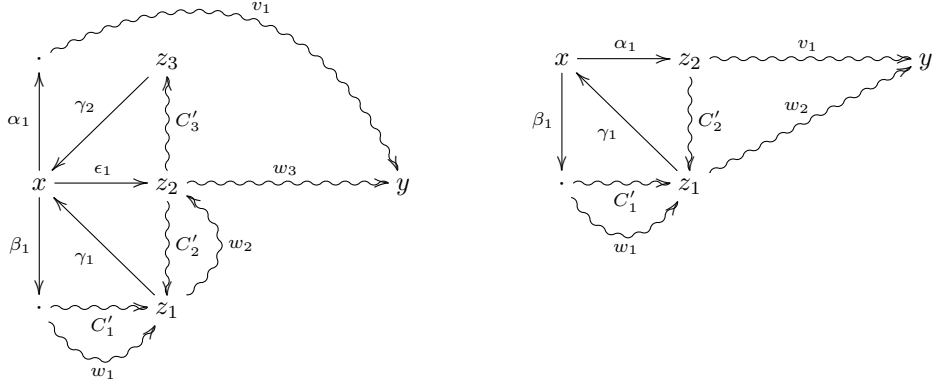
\begin{figure}[htbp]
\begin{center}

\[\xymatrix@R35pt@C35pt{
\cdot 
\ar@/^40pt/@{~>}[rrrd] ^{v_1}
& z_3 \ar[dl]_{\zg_2} 
\\
x \ar[u] ^{\za_1} \ar[r]^{\ze_1} \ar[d]_{\zb_1}
&z_2
\ar@{~>}[u] _{C_3'}
\ar@{~>}[d] ^{C_2'}
\ar@/^20pt/@{<~}[d] ^{w_2}
\ar@{~>}[rr]^{w_3} && y
\\
\cdot 
\ar@{~>}[r] _{C_1'}
\ar@/_20pt/@{~>}[r] _{w_1}
&z_1\ar[ul]^{\zg_1}
}
\qquad 
\qquad 
\xymatrix@R35pt@C35pt{
x \ar[r] ^{\za_1}  \ar[d]_{\zb_1}
&z_2 \ar@{~>}[rr]^{v_1}
\ar@{~>}[d] ^{C_2'} &&y
\\
\cdot 
\ar@{~>}[r] _{C_1'}
\ar@/_20pt/@{~>}[r] _{w_1}
&z_1\ar[ul]^{\zg_1}
\ar@{~>}[rru] ^{w_2}
}
\]
\caption{Proof of Proposition \ref{prop 39}}
\label{fig 32}
\end{center}
\end{figure}

Then the path $C_2'$ must be an arrow, because otherwise it would run through an interior vertex, which is impossible. In particular, the path $\ze_1C_2'$ is shorter than $w$, and, by induction, we conclude that $\ze_1C_2'\sim \zb_1 w_1 $, since both paths are from $x$ to $z_1$. Therefore we have $w\sim  \ze_1C_2'w_2w_3 = 0$, by Proposition~\ref{prop 38}, since $C_2'w_2$ is a cyclic path. This contradiction shows that  the number of chordless cycles that contain the vertex $x$ is exactly two. 

We now denote these two chordless cycles by
\[C_1=\xymatrix{x\ar[r]^{\zb_1} &\cdot\ar@{~>}[r] ^{C_1'}
&z_1 \ar[r]^{\zg_1}& x} 
\qquad\textup{and} \qquad C_2=
\xymatrix{x\ar[r]^{\za_1} &\cdot\ar@{~>}[r] ^{C_2'}
&z_1 \ar[r]^{\zg_1}& x}, \] 
see the right hand side of Figure \ref{fig 32}.
The vertex $z_1$ must lie on one of the paths $v$ or $w$. Without loss of generality, we assume that $z_1$ lies on $w$, and we write 
\[ w=
{\xymatrix{x\ar[r]^{\zb_1} &\cdot\ar@{~>}[r] ^{w_1}
&z_1 \ar@{~>}[r]^{w_2}& y} } 
\qquad\textup{and} \qquad 
v=
{\xymatrix{x\ar[r]^{\za_1} &\cdot\ar@{~>}[r] ^{v_1}
& y} } .
\]
Then the path $C_2'$ must be an arrow, because otherwise it would go through an interior vertex, which is impossible. In particular, we have the following inequalities on the lengths of subpaths
$\ell(\za_1C_2')<\ell(w)$, $ \ell(C_2'w_2)<\ell(w)$ , $\ell(v_1)<\ell(v)$, 
and, by induction, we conclude 
$\za_1C_2'\sim\zb_1w_1$ and $C_2'w_2\sim v_1.$
Consequently,
$v=\za_1v_1\sim\za_1C_2'w_2\sim \zb_1w_1w_2=w$.
\end{proof}

An algebra is called \emph{schurian} if $\dim\Hom(P(i),P(j))\le 1$ for all vertices $i,j$ of $Q$. With this terminology, we have the following reformulation of Proposition~\ref{prop 39}.

\begin{corollary}
 Every dimer tree algebra is schurian.
\end{corollary}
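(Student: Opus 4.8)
The plan is to deduce the corollary directly from Proposition~\ref{prop 39} together with the standard description of morphisms between projective modules. First I would recall that for a path algebra modulo relations $B=\kb Q/I$, the space $\Hom_B(P(i),P(j))$ has a basis given by the images in $B$ of the paths from $j$ to $i$ in $Q$ (with the usual convention for right modules): every homomorphism $P(i)\to P(j)$ is right multiplication by an element of $e_j B e_i$, and $e_j B e_i$ is spanned by the residue classes of paths from $j$ to $i$. Thus $\dim_\kb \Hom_B(P(i),P(j)) = \dim_\kb e_jBe_i$ equals the number of linearly independent classes of paths from $j$ to $i$ in $B$.

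Next I would invoke Proposition~\ref{prop 39}, which asserts that any two nonzero parallel paths in $Q$ are equal in $B$. Two paths from $j$ to $i$ are parallel by definition, so this says that all nonzero paths from $j$ to $i$ represent the \emph{same} element of $B$. Hence the subspace $e_jBe_i$ is spanned by a single element (the common class of all nonzero paths from $j$ to $i$, or $0$ if every such path vanishes in $B$), so $\dim_\kb e_jBe_i \le 1$. Combining with the previous paragraph gives $\dim_\kb\Hom_B(P(i),P(j))\le 1$ for all $i,j\in Q_0$, which is exactly the schurian condition.

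There is essentially no obstacle here: the corollary is a reformulation, and the only thing to be careful about is the bookkeeping of left versus right multiplication and the direction of paths, i.e.\ making sure that the identification $\Hom_B(P(i),P(j))\cong e_jBe_i$ is stated with the convention matching ``right $B$-modules'' as fixed in Section~\ref{sect 2}. One should also note the edge case $i=j$: the paths from $i$ to $i$ include the trivial path $e_i$, which is nonzero, so $\dim e_iBe_i\ge 1$, and Proposition~\ref{prop 39} (together with Proposition~\ref{prop 38}, which kills all cyclic paths) forces $e_iBe_i=\kb e_i$; in particular the bound $\le 1$ still holds. This completes the argument.

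\begin{proof}
Since $B=\kb Q/I$ is a bound quiver algebra and we work with right modules, for any vertices $i,j\in Q_0$ we have a natural isomorphism $\Hom_B(P(i),P(j))\cong e_jBe_i$, where a morphism corresponds to left multiplication by the appropriate element. The space $e_jBe_i$ is spanned by the residue classes in $B$ of all paths in $Q$ from $j$ to $i$. Any two such paths are parallel, so by Proposition~\ref{prop 39} all nonzero classes among them coincide. Hence $e_jBe_i$ is spanned by at most one element, so $\dim_\kb\Hom_B(P(i),P(j))=\dim_\kb e_jBe_i\le 1$. Therefore $B$ is schurian.
\end{proof}
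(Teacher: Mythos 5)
Your proof is correct and follows exactly the route the paper intends: the paper gives no separate argument, calling the corollary a reformulation of Proposition~\ref{prop 39}, and your identification $\Hom_B(P(i),P(j))\cong e_jBe_i$ together with that proposition is precisely the spelled-out version of this. Nothing to add.
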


\medskip
\subsection{Some results toward the main conjecture}

In this section we show a few results that hold in the setting of Conjecture \ref{main conj}. We start by showing that two radical lines cross if and only if there is an extension between the corresponding radicals.

\begin{prop}
 \label{prop rad}
 Let $\rho(i)$ and $\rho(j)$ be two radical lines in $\cals$. Then 
 the following are equivalent.
\begin{enumerate}
\item [(a)]  $\rho(i)$ and $\rho(j)$ cross;
\item [(b)]  there is an arrow $i\to j$ or and arrow $j\to i $ in the quiver $Q$;
\item [(c)] $\Ext^1(\rad P(i),\rad P(j))\oplus \Ext^1(\rad P(j),\rad P(i)) \ne 0.$
\end{enumerate} 
\end{prop}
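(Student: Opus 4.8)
The plan is to establish the cycle of implications (a)$\Rightarrow$(b)$\Rightarrow$(c)$\Rightarrow$(a), using Lemma~\ref{lem 35} for the equivalence of (a) and (b), so that the real content is the equivalence of (b) and (c). First I would recall that, by the structure results of subsection~\ref{sect 3.7} (Propositions~\ref{prop 38} and~\ref{prop 39}), the algebra $B$ is schurian and every cyclic path vanishes, so each indecomposable projective $P(i)$ has a transparent description: its radical $\rad P(i)$ is the module whose top is $\oplus_{\za\colon i\to k} S(k)$ and whose structure is dictated by the chordless cycles through $i$. In particular, by Lemma~\ref{lem 37}, if $\za\colon i\to j$ lies in the chordless cycle $C=\za\za_2\cdots\za_s$, then the path $\za_2\cdots\za_s$ is either zero (boundary arrow) or equals the complementary path around the other cycle containing $\za$ (interior arrow); this pins down exactly which composition factors appear below the vertex $j$ in $\rad P(i)$.

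For (b)$\Rightarrow$(c): suppose there is an arrow $\za\colon i\to j$ in $Q$. I would write down the short exact sequence $0\to \rad P(j)\to P(j)\to S(j)\to 0$ and apply $\Hom_B(\rad P(i),-)$. Since $\rad P(i)$ has $S(j)$ in its top (the arrow $\za$ contributes a summand $S(j)$ to $\rad P(i)/\rad^2 P(i)$), there is a nonzero map $\rad P(i)\to S(j)$; the obstruction to lifting it through $P(j)\to S(j)$ is precisely a class in $\Ext^1_B(\rad P(i),\rad P(j))$. The map lifts to $P(j)$ if and only if that class vanishes, so I need to argue that it does not lift — equivalently, that $\rad P(i)$ has no quotient isomorphic to $P(j)$. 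This is where schurian-ness and the ``every cycle is zero'' property enter: a copy of $P(j)$ inside a quotient of $\rad P(i)$ would force a nonzero path in $Q$ from $i$ through $j$ and all the way around a cycle at $j$, contradicting Proposition~\ref{prop 38}. Hence $\Ext^1_B(\rad P(i),\rad P(j))\neq 0$ and (c) holds. (If instead the arrow points $j\to i$, the same argument gives the nonzero $\Ext^1$ in the other order.)

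For (c)$\Rightarrow$(b), I would argue contrapositively: if there is no arrow between $i$ and $j$, then $\rho(i)$ and $\rho(j)$ do not cross by Lemma~\ref{lem 35}, and I must show $\Ext^1_B(\rad P(i),\rad P(j))=0=\Ext^1_B(\rad P(j),\rad P(i))$. Using the projective presentation $P''\to P(i)\to \rad P(i)$ (where $P''=\oplus_{\za\colon i\to k}P(k)$, which follows from the fact that $B$ is Gorenstein of dimension~$\le 1$, Theorem~\ref{thmKRGorenstein}, together with schurian-ness), computing $\Ext^1_B(\rad P(i),\rad P(j))$ reduces to a cokernel of a $\Hom$-map between $\Hom(P(k),\rad P(j))$'s, and one checks that the absence of an arrow $i$–$j$ means $j$ does not appear in any ``critical spot'' of $\rad P(i)$ forcing a non-split extension; more precisely, any extension of $\rad P(i)$ by $\rad P(j)$ would be a module supported in a way that reconstructs a chordless cycle through both $i$ and $j$, forcing an arrow between them in $Q$. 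Since no such arrow exists, the extension groups vanish.

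The main obstacle I expect is making the ``no lift'' / ``extension forces an arrow'' arguments precise. The cleanest route is probably not to manipulate projective resolutions directly but to exploit the 2-Calabi-Yau structure: since $\scmp\,B$ is $3$-Calabi-Yau (Keller--Reiten) and $\rad P(i)=\zO^{-1}$ of something in $\scmp\,B$ whenever it is Cohen--Macaulay, one can transfer the $\Ext^1$ computation into the triangulated category, where crossings of $2$-diagonals control extensions (as in Theorem~\ref{thm BM} and the results of \cite{BM}). Alternatively — and this is likely the argument the paper uses — one verifies directly from the explicit module structure of $\rad P(i)$ and $\rad P(j)$ (available thanks to Propositions~\ref{prop 38} and~\ref{prop 39}) that a nonsplit extension between them exists precisely when their supports overlap along a chordless cycle, which by definition of the dual graph happens exactly when there is an arrow $i\to j$ or $j\to i$. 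I would carry out the bookkeeping of composition factors along the chordless cycles through $i$ and $j$ to nail this down.
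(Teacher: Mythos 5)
Your skeleton matches the paper's ((a)$\Leftrightarrow$(b) is exactly Lemma~\ref{lem 35}), and your idea for (b)$\Rightarrow$(c) -- apply $\Hom(\rad P(i),-)$ to $0\to\rad P(j)\to P(j)\to S(j)\to 0$ and show the nonzero map $\rad P(i)\to S(j)$ does not lift -- is a reasonable variant of the paper's construction (the paper instead runs a horseshoe-type diagram over $0\to S(i)\to{\begin{smallmatrix}j\\i\end{smallmatrix}}\to S(j)\to 0$ and deduces non-splitness of the induced sequence of radicals by a Krull--Schmidt comparison, its middle term being $P(i)\oplus\ker g_j$). The gap is in your justification of the non-lifting step. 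Non-lifting is equivalent to saying $P(j)$ is not a quotient (hence, $P(j)$ being projective, not a direct summand) of $\rad P(i)$, and you rule this out by claiming it would force a nonzero cyclic path, contradicting Proposition~\ref{prop 38}. That mechanism only bites if the complementary path of $\za\colon i\to j$ around a chordless cycle is nonzero in $B$; when $\za$ is a \emph{boundary} arrow that complementary path is zero by Lemma~\ref{lem 37}(a), and multiplication by $\za$ can genuinely be injective. Concretely, in the introductory example (Figure~\ref{fig:intro-ex}) the boundary arrow $3\to 5$ gives an embedding $P(5)\cong(3\to 5)B\hookrightarrow\rad P(3)$, so no cyclic-path contradiction is available; the surjection $\rad P(3)\to P(5)$ is instead excluded by the commutativity relation identifying $3\to5\to2$ with $3\to4\to1\to2$, which glues the two branches of $\rad P(3)$ along their socle. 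So the statement you need is true, but not for the reason you give, and some finer argument of this type (or the paper's comparison of middle terms) has to be supplied.

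The direction (c)$\Rightarrow$(b) is the more serious gap: your contrapositive plan (``no arrow implies both $\Ext^1$-groups vanish'') is only a sketch -- the claim that an extension of $\rad P(i)$ by $\rad P(j)$ would ``reconstruct a chordless cycle through both $i$ and $j$'' is not substantiated, and the presentation $P''\to P(i)\to\rad P(i)$ you invoke is not correct as written, nor does Gorensteinness hand you its shape. The paper's argument is different and crucially uses the Cohen--Macaulay property you never invoke: since $\rad P(j)\in\cmp B$ one has $\Ext^1_B(\rad P(j),P(i))=0$, so applying $\Hom(\rad P(j),-)$ to $0\to\rad P(i)\to P(i)\to S(i)\to 0$ turns a nonzero class in $\Ext^1(\rad P(j),\rad P(i))$ into a nonzero morphism $\rad P(j)\to S(i)$, and then uniqueness of parallel paths (Proposition~\ref{prop 39}) forces the resulting path $j\leadsto i$ to be a single arrow. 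Finally, the alternative you float -- transferring the $\Ext$-computation to the $2$-cluster category via Theorem~\ref{thm BM} -- is not available here: identifying crossings of $2$-diagonals with extensions in $\scmp B$ is the content of the main theorem, proved much later and only under the all-$3$-cycles hypothesis, whereas Proposition~\ref{prop rad} is stated, and needed, for arbitrary dimer tree algebras.
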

\begin{proof} The equivalence of (a) and (b) was proved in Lemma \ref{lem 35}. 
Suppose now (b) holds and let us assume without loss of generality the arrow is  $j\to i$. 
 Then we have the following commutative diagram with exact rows and columns,
 \[\xymatrix{
 &0\ar[d]&0\ar[d]&0\ar[d]&\\
 0\ar[r]&\rad P(i)\ar[r]\ar[d]&X\ar[r]\ar[d]&\rad P(j)\ar[r]\ar[d]&0\\
 0\ar[r]&P(i)\ar[r]\ar[d]^f&P(i)\oplus P(j)\ar[r]\ar[d]^{\left[\begin{smallmatrix} g_i&g_j\end{smallmatrix}\right]}& P(j)\ar[r]\ar[d]^h&0\\
 0\ar[r]&S(i)\ar[r]\ar[d]&
{\begin{smallmatrix}
 j\\i
\end{smallmatrix}}
 \ar[r]\ar[d]&
 S(j)\ar[r]\ar[d]&
 0\\
 &0&0&0 }
 \]
 where $f,g_j$ and $h$ are projective covers, the morphism $\left[\begin{smallmatrix} g_i&g_j\end{smallmatrix}\right]$ is given by the horseshoe lemma  and $X=\ker{\left[\begin{smallmatrix} g_i&g_j\end{smallmatrix}\right]}$. Since $g_j$ is a projective cover, we have $X\cong P(i)\oplus \ker g_j$. Therefore the top row of the diagram is a non-split short exact sequence, and thus  
 $\Ext^1(\rad P(j),\rad P(i)) $ is nonzero. Hence (c) holds.

Conversely, suppose that (c) holds and assume without loss of generality that the nonzero space is $\Ext^1(\rad P(j),\rad P(i))$. Applying the functor $\Hom(\rad P(j), -)$ to the short exact sequence
\[ \xymatrix{0\ar[r]&\rad P(i)\ar[r]& P(i)\ar[r]&S(i)\ar[r]&0}\] 
yields an exact sequence
\[\xymatrix{
\Hom(\rad P(j), S(i))\ar[r]&
\Ext^1(\rad P(j), \rad P(i)) \ar[r] &
\Ext^1(\rad P(j),  P(i))=0
}\]
where the last term is zero, since $\rad P(j)$ is a Cohen-Macaulay module. By assumption, the middle term is nonzero, and we can conclude the existence of a nonzero morphism $f\in \Hom(\rad P(j), S(i))$. 
It follows that $P(j)$ is supported at vertex $i$, thus there is a path  $j=x_0\to x_1\to \ldots\to x_s=i$, and this path is unique in $B$, by Proposition \ref{prop 39}.
 If $s>1$ then the morphism $f$ would produce the  following commutative diagram
 \[\xymatrix{k=\rad P(j)_{x_{s-1}}\ar[r]\ar[d]_{f_{x_{s-1}}}& \rad P(j)_i=k\ar[d]^{f_i\ne 0}\\
 0=S(i)_{x_{s-1}}\ar[r]&S(i)_i=k
 }\]
 which is impossible. Thus $s=1$ and our path is an arrow $j\to i$ and (b) holds.
\end{proof}
\begin{prop}\label{lem:cross}
Let $\zg, R(\zg)$ be 2-diagonals in $\cals$.  Then 
\begin{itemize}
\item[(a)] $\textup{add}\,P_0(\zg)\cap \textup{add}\,P_1(\zg) = \{0\}$ and $\textup{add}\,P_0(R(\zg))\cap \textup{add}\,P_1(R(\zg)) = \{0\}$.
\item[(b)] $P_1(\zg) \cong P_0(R(\zg))$.
\end{itemize}
\end{prop}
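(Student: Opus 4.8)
The key observation is that both statements are purely combinatorial facts about how a 2-diagonal $\zg$ and its rotation $R(\zg)$ interact with the radical lines $\rho(i)$ of the checkerboard polygon, together with the sign/degree conventions of Definition~\ref{def degree}. I would set up the proof by fixing the alternating sign assignment on the boundary vertices of $\cals$ (Lemma~\ref{lem 34} guarantees there are $2N$ of them) and recalling that the orientation of each 2-diagonal runs from its $-$ endpoint to its $+$ endpoint, while each radical line $\rho(i)$ inherits its own orientation the same way. The degree of a crossing of $\rho(i)$ with $\zg$ is then $0$ or $1$ according to whether $\rho(i)$ crosses from right to left or left to right relative to the orientation of $\zg$.

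For part (a), I would argue by contradiction: suppose some indecomposable projective $P(i)$ is a summand of both $P_0(\zg)$ and $P_1(\zg)$. By definition this means the single radical line $\rho(i)$ crosses $\zg$ in degree $0$ \emph{and} in degree $1$, i.e.\ $\rho(i)$ meets $\zg$ in (at least) two distinct interior vertices of $\cals$, with opposite degrees. I would show this is impossible using the geometry: $\rho(i)$ is itself a 2-diagonal (Lemma~\ref{lem 36}), both $\rho(i)$ and $\zg$ are straight (or may be drawn straight) chords of a convex polygon, and two chords of a convex polygon cross in at most one point. Hence $\rho(i)$ crosses $\zg$ at most once, so it cannot contribute to both $P_0(\zg)$ and $P_1(\zg)$. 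The same argument applies verbatim to $R(\zg)$, since $R(\zg)$ is again a 2-diagonal and the radical lines are unchanged by passing to $R(\zg)$. This gives $\add P_0(\zg)\cap\add P_1(\zg)=\{0\}$ and likewise for $R(\zg)$.

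For part (b), the statement $P_1(\zg)\cong P_0(R(\zg))$ amounts to: a radical line $\rho(j)$ crosses $\zg$ in degree $1$ if and only if it crosses $R(\zg)$ in degree $0$. Here I would use that $R$ is the clockwise rotation by $\pi/N$, which is a \emph{single step} rotation on the $2N$-gon, hence it reverses the sign of every boundary vertex (an odd shift along an alternating sequence of signs). Consequently, if $\zg$ is the chord from $a$ (sign $-$) to $x$ (sign $+$), then $R(\zg)$ is the chord from $R(a)$ (sign $+$) to $R(x)$ (sign $-$), so that as an oriented chord $R(\zg)$ runs from $R(x)$ to $R(a)$ — i.e.\ its orientation is, roughly, ``opposite'' to that of $\zg$ after accounting for the rotation. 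I would then track a fixed radical line $\rho(j)$ crossing $\zg$: applying $R$ moves the crossing point along but, because $R$ is an isometry of $\cals$ that flips the global sign pattern, it swaps the notions of ``right to left'' and ``left to right'' for the crossing of $\rho(j)$ with the rotated diagonal. One has to be careful that $\rho(j)$ itself is \emph{not} rotated — only $\zg$ is — so the cleanest way is: crossings of $\rho(j)$ with $R(\zg)$ correspond bijectively (via $R^{-1}$) to crossings of $R^{-1}(\rho(j))$ with $\zg$, and one computes the degree change under this correspondence from how $R^{-1}$ acts on signs. Combining the sign flip on $\rho(j)$'s endpoints with the sign flip on $\zg$'s endpoints shows the degree is preserved under $\zg\mapsto R(\zg)$ relative to the correspondence but \emph{inverted} relative to the fixed line $\rho(j)$; I would verify this on the local picture of a single crossing (four polygon regions around a crossing point, as in Lemma~\ref{lem orientation}) to pin down the sign bookkeeping.

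The main obstacle I anticipate is exactly this sign bookkeeping in part (b): getting the parity right for ``single-step rotation flips all boundary signs'' and then correctly propagating that flip through the definition of degree, which is phrased in terms of the orientation of $\zg$ rather than of $\rho(j)$. I would handle it by reducing to a completely local statement at one crossing point — drawing the four incident regions, labelling the two boundary segments of the relevant shaded regions with their signs, and checking directly that rotating $\zg$ by one step toggles the ``right/left'' side on which $\rho(j)$ exits — and then noting that part (a) guarantees there is at most one such crossing per radical line, so no global consistency issue arises. Part (a) itself is the easy half and follows immediately from convexity of $\cals$ plus Lemma~\ref{lem 36}.
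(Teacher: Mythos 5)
Your part (a) is fine and is essentially the paper's argument: since $\zg$ (and likewise $R(\zg)$) is taken in minimal position, it meets each radical line at most once, and the single crossing has a well-defined degree, so no $P(i)$ can occur in both $P_0$ and $P_1$. (The paper phrases this via minimality of crossings of the chosen representative rather than convexity; note that $\rho(i)$ is a piecewise linear curve and need not be drawable as a straight chord, so the honest justification is the minimal intersection number of two boundary-to-boundary arcs in a disc, which is $0$ or $1$.)

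Part (b), however, has a genuine gap. The isomorphism $P_1(\zg)\cong P_0(R(\zg))$ is an equality of two \emph{sets} of radical lines, so you must prove two things: (i) if $\rho(j)$ crosses both $\zg$ and $R(\zg)$, the degrees of the two crossings are opposite; and (ii) $\rho(j)$ crosses $\zg$ in degree $1$ only if it actually crosses $R(\zg)$, and $\rho(j)$ crosses $R(\zg)$ in degree $0$ only if it actually crosses $\zg$. Your sign bookkeeping only addresses (i), and even there the proposed reduction is off target: comparing crossings of $\rho(j)$ with $R(\zg)$ to crossings of $R^{-1}(\rho(j))$ with $\zg$ says nothing about $\rho(j)$ itself, because $R^{-1}(\rho(j))$ is in general not a radical line (the checkerboard pattern is not rotation invariant), whereas the definition of $P_0,P_1$ refers to the fixed radical lines. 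For (i) it is cleaner to argue, as the paper does, that the endpoints of $\zg$ and $R(\zg)$ are one boundary step apart, so by the alternating sign convention the two arcs carry opposite orientations, and hence a radical line crossing both crosses one from left to right and the other from right to left.

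The missing point (ii) is exactly where the paper has to work: since the endpoints of $\zg$ and $R(\zg)$ differ by one vertex, a radical line $\rho(j)$ can cross $\zg$ while merely sharing a boundary endpoint with $R(\zg)$, hence not crossing it at all. The paper rules this out by an orientation analysis at that shared endpoint: if $\rho(j)$ crosses $\zg$ in degree $1$ but ends at an endpoint of $R(\zg)$, then (both arcs ending at that vertex) $\rho(j)$ would also have to cross $\zg$ from right to left, i.e.\ in degree $0$, so $P(j)\in\add P_0(\zg)\cap\add P_1(\zg)$, contradicting part (a); the reverse inclusion is handled symmetrically. Without an argument of this kind your proof does not exclude that $P_1(\zg)$ and $P_0(R(\zg))$ differ by such a summand, so the claimed bijection of supports is unproved.
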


\begin{proof}
Part (a) follows from the assumption that $\zg, R(\zg)$ are 2-diagonals in $\cals$, so they have a minimal number of crossings with radical lines of $\cals$.  Then each of them crosses a radical line of $\cals$ at most once, and the direction of the crossing then determines whether this radical line gives rise to a summand of $P_0$ or $P_1$.  

To show part (b), suppose $P(j)\in \text{add}\, P_1(\zg)$.  Then the radical line $\rho(j)$ crosses $\zg$ from left to right, see Figure~\ref{fig:cross}.  Because the endpoints of $\zg$ and $R(\zg)$ are one apart, it follows that the arcs have opposite directions.   Then if $\rho(j)$ also crosses $R(\zg)$ then it crosses $R(\zg)$ from right to left and $P(j)\in \text{add}\, P_0(R(\zg))$.   Now, suppose $\rho(j)$ does not cross $R(\zg)$.  Then $\rho(j)$ has a common endpoint with $R(\zg)$, which we call $x$.   Without loss of generality suppose the direction of $R(\zg)$ and $\rho(j)$ is such that they both end in $x$.   Then $\rho(j)$ crosses $\zg$ from right to left, so $P(j)\in\text{add}\,P_0(\zg)$.  Then $P(j)\in \textup{add}\,P_0(\zg)\cap \textup{add}\,P_1(\zg)$, which contradicts part (a) of the proposition.  This shows that $P(j)\in \text{add}\, P_0(R(\zg))$, and more generally that $\text{add}\,P_1(\zg) \subset \text{add}\, P_0(R(\zg))$.  The reverse inclusion follows similarly.  
\end{proof}

\begin{remark}
In particular, the composition $P_1(R (\zg)) \xrightarrow{f_{R(\zg)} } P_1(\zg)\xrightarrow{f_{\zg}} P_0(\zg)$ is defined.  We will show in Proposition~\ref{big-lemma} that up to an automorphism of $P_1(\zg)$ this sequence is exact in the case where every chordless cycle in $Q$ has length three. 
\end{remark}

\begin{figure}
\begin{center}
{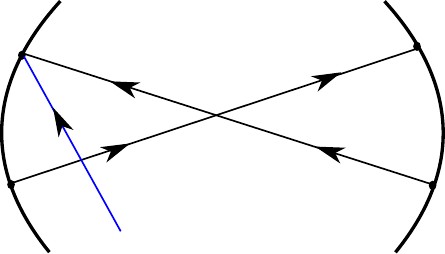}
\caption{Proof of Proposition~\ref{lem:cross}.}
\label{fig:cross}
\end{center}
\end{figure}
 
 \medskip 
 
\section{Objects of the syzygy category} \label{sect 5}
For the remainder of the paper, we 
let $Q$ be a quiver satisfying Definition~\ref{def Q}  and such that every chordless cycle in $Q$ has length three, and let $B$ denote the corresponding dimer tree algebra.  In this section we define and study the map $f_\zg: P_1(\zg)\to P_0(\zg)$ for each 2-diagonal $\zg$ in the associated checkerboard polygon $\cals$ of size $2N$.    The main result of this section says that the cokernel of $f_{\zg}$ is an indecomposable syzygy in $\text{mod}\,B$, see Corollary~\ref{cor:536}.

Below we provide an overview of this section.  

In Section~\ref{sect 5.1} we introduce the necessary terminology and then present the map $f_{\zg}$ in Definition~\ref{def:map}.   Then we  compare the two maps $f_{\zg}$ and $f_{R(\zg)}$, where $R$ denotes the clockwise rotation of the arc $\zg$ by $180/N$ degrees.  The main result of Section~\ref{sect 5.3} is Proposition~\ref{big-lemma} which shows that the sequence 
\[P_1(R (\zg)) \xrightarrow{\bar{f}_{R(\zg)} } P_1(\zg)\xrightarrow{f_{\zg}} P_0(\zg)\]
 is exact, where $\bar{f}_{R(\zg)}$ is obtained from $f_{R(\zg)}$ by applying certain isomorphisms.  Afterwards we study the properties of the cokernel of $f_{\zg}$ which we denote by $M_{\zg}$.  By definition, the map $f_{\zg}$ depends on the choice of a representative in the homotopy class of the arc $\zg$, and in Proposition~\ref{homotopy} of Section~\ref{sect 5.4} we show that the associated cokernel $M_{\zg}$ is independent of the choice of $\zg$.  This gives a well-defined map from $\diag$ to $\scmp\,B$.   The goal of the next subsection is to prove Proposition~\ref{prop:ind} that $M_{\zg}$ is an indecomposable module.  Finally, in the last subsection Section~\ref{sect 5.6} we combine all these results to conclude that $M_{\zg}$, the cokernel of $f_{\zg}$, is an indecomposable object in $\scmp\,B$ whose syzygy $\Omega M_{\zg}$ is isomorphic to $M_{R(\zg)}$, see Corollary~\ref{cor:536} and Theorem~\ref{thm:omega}.

\medskip
\subsection{Definition of $f_\zg$}\label{sect 5.1}

Let $\zg$ be a fixed representation of a 2-diagonal in $\diag$ such that it has a minimal number of crossings with radical lines in $\cals$ and except for the endpoints $\zg$ crosses one radical line at a time.  Then as we move along $\zg$, it crosses a subset of the radical lines in $\cals$ in a certain order.   Between any two consecutive crossings of $\zg$ with radical lines $\rho(i), \rho(j)$ the arc $\zg$ traverses either a shaded region or a white region in $\cals$ bounded by   $\rho(i), \rho(j)$.
A pair  $(i,j)$ is called a \emph{crossing pair} for  $\gamma$ if $\gamma$ crosses a shaded region and it crosses two radical lies $\rho(i), \rho(j)$ that bound this region consecutively.   Moreover, we always use the letter $i$ for the degree 0 crossing and letter $j$ for the degree 1 crossing. Thus, $P(i)\in \text{add}\, P_0(\zg)$ and $P(j)\in \text{add}\, P_1(\zg)$.  

Note that every shaded region in $\cals$ is a triangle, since by assumption every chordless cycles in $Q$ has length three.  This implies that $\rho(i), \rho(j)$ always cross $\zg$ in opposite directions so $P(i)$ and $P(j)$ cannot both belong to $P_0(\gamma)$ or $P_1(\gamma)$.  In addition, the vertices $i,j$ in a crossing pair $(i,j)$ belong to the same 3-cycle in $Q$, so they are connected by an arrow $i\to j$ or $j\to i$ which we call $\alpha$.  In this case, we will sometimes use $\xymatrix{(i \ar@{-}[r] ^{\alpha} & j)}$ to denote the crossing pair $(i,j)$.   

Choosing a representative of $\zg$ determines the corresponding order on the crossing pairs.  At the endpoints $\zg$ may cross only one side of a shaded region which lies on the boundary of $\cals$.   In what follows, this crossing will simply be denoted by $(i)$ or $(j)$ depending on whether it gives a summand of $P_0(\zg)$ or $P_1(\zg)$ respectively.

Given $\zg$ we define a \emph{crossing sequence} for $\zg$ 
\[ (i_0)\, \text{ or }\, (j_0), (i_1, j_1), (i_2, j_2), \dots, (i_n, j_n),  (i_{n+1})\, \text{ or }\, (j_{n+1})\]
to be a complete ordered set of crossing pairs for $\zg$ for some $n\geq 0$ and where $i_k$ denotes the degree 0 crossing and $j_k$ denotes the degree 1 crossing of the pair $(i_k, j_k)$.  Note that depending on the end behavior of $\zg$ the elements $i_0, j_0, i_{n+1}, j_{n+1}$ may or may not appear in the crossing sequence for $\zg$.  Here, we depict the most general case.   

\begin{remark}
The crossing sequence for a fixed $\zg$ is unique up to reversing the order.  However, different representatives $\zg, \zg'$ of a 2-diagonal in $\diag$ can give different crossing sequences. 
\end{remark}

Next we introduce two important notions needed to define the map $f_{\zg}$.

\begin{definition}
We say that a path $w$ in the quiver $Q$ is {\it valid} if no two arrows in $w$ lie in the same 3-cycle.  
\end{definition}

\begin{remark}
A valid path in $Q$ is nonzero in the algebra $B$ because such a path does not contain any relations.  Therefore, there is at most one valid path between any two vertices of $Q$.   
\end{remark}

We will often use $x\leadsto y$ to denote a path in the quiver starting at vertex $x$ and ending in vertex $y$. 

\begin{definition}\label{def:forward}
Let the crossing sequence for $\zg$ be as follows 
\[ (i_0)\, \text{ or }\, (j_0), (i_1, j_1), (i_2, j_2), \dots, (i_n, j_n),  (i_{n+1})\, \text{ or }\, (j_{n+1})\]
and let $s, t \in \{0, \dots, n+1\}$.
For $| s-t| = 1$, we say the step from $s$ to $t$ in the crossing sequence is \emph{forward} if there exists a valid path $i_s \leadsto j_t$ or a valid path $j_s \leadsto i_t$.  For general $s, t \in \{0, \dots, n+1\}$ and $s\not=t$, the subsequence of the crossing sequence from $s$ to $t$ is \emph{forward} if each of its steps is forward.   
\end{definition}

\begin{definition}\label{def:map}
Let $\zg$ be a fixed representative of a 2-diagonal in $\diag$ with crossing sequence 
\[ (i_0)\, \text{ or }\, (j_0), (i_1, j_1), (i_2, j_2), \dots, (i_n, j_n),  (i_{n+1})\, \text{ or }\, (j_{n+1}).\]
Let $P_1(\zg)= \bigoplus_l P(j_l)$ and $P_0(\zg)= \bigoplus_l P(i_l)$ and define
\[f_\zg\colon P_1(\zg) \longrightarrow P_0(\zg)\] 
by setting
\[  f_{{\gamma}_{s,t}}\colon P(j_t) \longrightarrow P(i_s)\] 
the multiplication by the path $w: i_s \leadsto j_t$ in $Q$ unless $s\not=t$ and 
\begin{itemize}
\item[(i)] the subsequence of the crossing sequence from $s$ to $t$ is not forward, or
\item[(ii)] the path $w$ is not valid
\end{itemize}
and in these cases we define $f_{\zg_{i_s, j_t}}=0$. 
\end{definition}

\begin{remark}
In the definition of the map $f_{\zg}$ condition (i) does not imply condition (ii).  For example, suppose that there is a 2-diagonal $\zg$ such that  $(i_s, j_s), \dots, (i_{s+3}, j_{s+3})$ is a subsequence of its crossing sequence and that these vertices form a full subquiver of $Q$ given below. 

\[\xymatrix@R=10pt@C=10pt{&j_{s+3}\ar[r] & i_{s+3}\ar[dl]\\
j_s\ar[d] & \ar[l] i_{s+1} \ar[d]\ar[r] \ar[u]& j_{s+2}\ar[d] \ar[u] \\
i_s\ar[ur] & j_{s+1}\ar[l]\ar[r] & i_{s+2}\ar[ul]}
\]

Then the step from $s$ to $s+1$ is not forward, so the map $f_{\zg}$ will not contain the path $i_s\to i_{s+1}\to  j_{s+3}$ even though this path is valid.  
\end{remark}

Note that, for $s\not=t$, if $f_{\zg_{i_s, j_t}}\not=0$ then the path $w$ is valid and all steps from $s$ to $t$ are forward.  Also, if $s=t$ then $i_s, j_s$ are connected by an arrow and lie in a common 3-cycle, so there is a path $w: i_s\leadsto  j_s$ in $Q$. If there is an arrow $\alpha: i_s\to j_s$ in $Q$ then $w=\alpha$, and if there is an arrow $\alpha: j_s \to i_s$ then the path is given the composition of two other arrows $i_s \to \bullet \to j_s$ in this 3-cycle.  

\begin{remark}
By Proposition~\ref{prop 39} if there is a path from $i_s$ to $j_t$ in $Q$ then it is unique in the algebra $B$, so the map $f_\zg$ is well-defined.   It is not a generic map in general, as there may be nonzero paths in $Q$ from $i_s$ to $j_t$ that contain two arrows in the same 3-cycle.  We also note that $f_\zg$ depends on the choice of the representative $\zg$ of the 2-diagonal, however we will show later in Proposition~\ref{homotopy} that the cokernel of $f_\zg$ is independent of the representative and gives an indecomposable object in   $\scmp\, B$.
\end{remark}
%

\medskip
\subsection{Exactness}\label{sect 5.3}

In this section,  given two 2-diagonals in $\cals$ related by a clockwise rotation $R$, we define particular representatives $\zg, R(\zg)$ of these 2-diagonals that satisfy certain compatibility criteria.  Then we show that the corresponding maps $f_{\zg}, f_{R(\zg)}$ are related as follows $\text{ker}\, f_{\zg}= \text{Im}\, \bar{f}_{R(\zg)}$, where $\bar{f}_{R(\zg)}$ is obtained from ${f}_{R(\zg)}$ by introducing some negative signs.  As a corollary we conclude that the syzygy functor $\Omega$ in $\scmp\,B$ is given by the clockwise rotation $R$ in $\cals$.

Let $\zg$ be a representative of a 2-diagonal in $\cals$, and let $f_{\zg}: P_1(\zg)\to P_0(\zg)$ be the corresponding map given in Definition~\ref{def:map}.
For a positive inter $k$, let $J_k = \begin{bsmallmatrix}1 &  && &\\ & -1 &&   \\ && 1 && \text{\Large 0} \\
\text{\Large 0}&&& \ddots \\ &&&& (-1)^{k+1} \end{bsmallmatrix}$ denote a $k\times k$ diagonal matrix with diagonal entries alternating between $1$ and $-1$. 
Given a 2-diagonal represented by an arc $\zg$ we define a new map $\bar{f}_{\zg}: P_1(\zg)\to P_0(\zg)$ by $\bar{f}_{\zg}:=J_{|P_0(\zg)|} \, f_{\zg} \, J_{|P_1(\zg)|}$.

\begin{remark} \label{rem:bar}

\begin{itemize}
\item[(a)] The signs in $\bar{f}_{\zg}$ are $\begin{bsmallmatrix}+ & - & + & \cdots \\ - & + & - & \cdots \\ + & - & + & \cdots \\  \vdots & \vdots & \vdots & \ddots  \end{bsmallmatrix}$.
\item[(b)] The cokernels of $f_{\zg}$ and  $\bar{f}_{\zg}$ are isomorphic.  Indeed, this follows from the 5-Lemma applied to the following commutative diagram with exact rows.
\[ \xymatrix@R=20pt@C=30pt{P_1(\zg)\ar[r]^{f_{\zg}} \ar[d]^{J_{|P_1(\zg)|}}_{\cong}  & P_0(\zg)\ar[r] \ar[d]^{J_{|P_0(\zg)|}}_{\cong}  & \text{coker}\, {f}_{\zg} \ar[d]_{\cong} \ar[r] & 0 \ar[d]_{\cong} \ar[r] &0  \ar[d]_{\cong} \\
P_1(\zg)\ar[r]^{\bar{f}_{\zg}}   & P_0(\zg)\ar[r]   & \text{coker}\, \bar{f}_{\zg} \ar[r] & 0  \ar[r] &0} \] 
\end{itemize}
\end{remark}


\begin{definition}\label{def:compatible} Let $\zg, \zg'$ be representatives of 2-diagonals. 
We say that $\zg$ and $\zg'$ are \emph{compatible} if
\begin{itemize}
\item[(a)] whenever $(i,j)$ is a crossing pair for $\zg$, and $\zg'$ crosses both $\rho(i)$ and $\rho(j)$ then either $(i,j)$ or $(j,i)$ is a crossing pair for $\zg'$, or
\item[(b)] $\zg'=R\zg$ is a representative of the rotation of $\zg$ and both $\zg, \zg'$ are radical lines. 
 
\end{itemize}
\end{definition}

\begin{remark}
 \begin{itemize}
\item[(a)] In part (a) of the definition the order of vertices in a pair changes whenever the degree of the crossings of $\zg$ and $\rho(i),\rho(j)$ is opposite to the degree of the crossings between $\zg'$ and $\rho(i),\rho(j)$. 
\item[(b)] A radical line has essentially two representatives, one running parallel on the left of the radical line and the other running parallel on the right. These two representatives will have different crossing sequences. However, as we shall see in Proposition~\ref{homotopy},   both representatives will induce the same morphism on projectives.
\end{itemize}
\end{remark}

In this section, we will only be interested in the compatibility of the representatives $\zg, R(\zg)$.  The notion of compatibility means that that the two arcs $\zg, R(\zg)$ follow each other closely and have the same crossing sequences except possibly at the ends.

\begin{remark}
For every pair of 2-diagonals in $\cals$ related by the rotation $R$ there exist representatives $\zg, R(\zg)$ that are compatible.  
\end{remark}


Now we take a pair of compatible 2-diagonals $\zg, R(\zg)$ and consider the sequence of morphisms 
\[\xymatrix@R=10pt@C=30pt{P_1(R(\zg))\ar[r]^-{\bar{f}_{R(\zg)}} & P_0(R(\zg)) = P_1(\zg)\ar[r]^-{f_{\zg}} & P_0(\zg)  }\]
where the equality in the middle follows from Proposition~\ref{lem:cross} (b).  

\begin{prop}\label{big-lemma}
Let $\zg, R(\zg)$ be a pair of compatible arcs in $\cals$.  Then $\textup{ker}\,f_{\zg} = \textup{Im}\,\bar{f}_{R(\zg)}$. 
\end{prop}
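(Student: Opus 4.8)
The plan is to establish the two inclusions $\textup{Im}\,\bar f_{R(\zg)} \subseteq \textup{ker}\,f_\zg$ and $\textup{ker}\,f_\zg \subseteq \textup{Im}\,\bar f_{R(\zg)}$ by working with the explicit combinatorial description of the maps in terms of valid paths in $Q$. First I would fix compatible representatives $\zg, R(\zg)$ as in Definition~\ref{def:compatible}, and write out their crossing sequences side by side; since the arcs follow each other closely, the crossing pairs of $R(\zg)$ are, up to order reversal inside a pair, a sub- or super-sequence of those of $\zg$, differing only near the two endpoints. Label the projective summands of $P_1(\zg)=P_0(R(\zg))$ consistently so that the matrix entries of $f_\zg$ and of $\bar f_{R(\zg)}$ are both indexed by the crossing pairs of $\zg$ and of $R(\zg)$ respectively; each nonzero entry is multiplication by the unique valid path in $Q$ connecting the relevant vertices (Proposition~\ref{prop 39} guarantees uniqueness in $B$).

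For the inclusion $\textup{Im}\,\bar f_{R(\zg)} \subseteq \textup{ker}\,f_\zg$, i.e. $f_\zg\circ\bar f_{R(\zg)}=0$, I would compute the $(s,t)$-entry of the composite $P(j'_t)\to P(j_l)=P(i'_l)\to P(i_s)$. Each term is a product of two valid paths $i_s\leadsto j_l$ and $i'_l = j_l\leadsto j'_t$; the key observation is that whenever two consecutive crossing pairs share a shaded triangle, the two valid paths concatenate into a path that traverses a $3$-cycle, hence is zero in $B$ by Lemma~\ref{lem 37}(a) and Proposition~\ref{prop 38}. The remaining contributions cancel in pairs because of the alternating signs built into $\bar f_{R(\zg)}$ (Remark~\ref{rem:bar}(a)): for a fixed entry there are generically exactly two intermediate indices $l$ giving a nonzero product, and the two sign patterns $J$ are arranged precisely so that these two terms appear with opposite signs. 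This is a bookkeeping argument: I would organize it by the local picture of three consecutive radical lines crossing $\zg$, handle the endpoint cases (where $i_0$, $j_0$, $i_{n+1}$, or $j_{n+1}$ may be absent) separately, and use the forwardness condition in Definition~\ref{def:map} to control which entries of each map are actually nonzero.

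The harder inclusion is $\textup{ker}\,f_\zg \subseteq \textup{Im}\,\bar f_{R(\zg)}$, which is really a rank (or dimension-count) statement once the composite is known to vanish. Here I would argue that $\bar f_{R(\zg)}$ maps $P_1(R(\zg))$ \emph{onto} $\textup{ker}\,f_\zg$ by exhibiting enough of the structure: projecting onto the first summand $P(i'_1)$ of $P_0(R(\zg))=P_1(\zg)$ and using that the leading entry of $\bar f_{R(\zg)}$ is multiplication by an arrow (a valid path of length one from the relevant endpoint crossing), I would set up a filtration/induction on the number of crossings $n$, peeling off one crossing pair at a time. Equivalently, one can compare dimensions: $\dim\textup{ker}\,f_\zg = \dim P_1(\zg) - \dim\textup{Im}\,f_\zg = \dim P_1(\zg) - (\dim P_0(\zg) - \dim\coker f_\zg)$, and one needs to match this against $\dim\textup{Im}\,\bar f_{R(\zg)}$; since $P_1(R(\zg))\cong P_0(R^2(\zg))$ this ties the claim to the periodicity of the would-be projective resolution. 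I expect the main obstacle to be precisely this surjectivity/rank step: showing that there are no ``unexpected'' kernel elements of $f_\zg$ not hit by $\bar f_{R(\zg)}$, which requires a careful analysis of which paths in $Q$ can occur as entries (the warning in the remark after Definition~\ref{def:map} that $f_\zg$ is not generic means one cannot simply invoke a genericity argument) together with the schurian property (Corollary after Proposition~\ref{prop 39}) that each $\Hom(P(i),P(j))$ is at most one-dimensional, so the linear algebra of the composite is governed entirely by the combinatorics of valid paths and $3$-cycles in the checkerboard polygon.
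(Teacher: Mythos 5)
Your plan for the inclusion $\textup{Im}\,\bar f_{R(\zg)}\subseteq\ker f_\zg$ is essentially the paper's argument (Lemma~\ref{lem:kernel}): one computes the composite entrywise and sees that, for a fixed entry, the contributing products are equal parallel paths appearing with opposite signs by construction of $\bar f_{R(\zg)}$, hence cancel by Proposition~\ref{prop 39}, while the residual diagonal-type terms contain a cyclic subpath and vanish by Proposition~\ref{prop 38}; only in special boundary configurations does a genuine zero relation from a boundary arrow of a $3$-cycle do the work. (Be careful here: the generic mechanism is sign-cancellation of equal parallel paths, not vanishing of each individual product, which is how you phrase it.) The real issue is the reverse inclusion, and there your proposal has a genuine gap. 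The dimension-count variant is circular: $\dim\coker f_\zg$, the periodicity of the projective resolution, and the identification $P_1(R(\zg))\cong P_0(R^2(\zg))$ as part of a resolution are all \emph{consequences} of this proposition (Theorem~\ref{thm:omega}, Corollary~\ref{cor:536}), so they cannot be fed back in as an input to compute the rank of $\bar f_{R(\zg)}$.

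The induction "peeling off one crossing pair at a time" is the right general shape — the paper's Lemma~\ref{lem:ker-im} inducts on the position $k$ of the last nonzero entry of a kernel element — but your sketch is missing the mechanism that makes each peeling step possible. The key missing ingredient is the factorization statement (Lemma~\ref{lem:factor}): if $x\in\ker f_\zg$ has last nonzero entry a path $w$ starting at $j_k$, then $w$ must factor through $i_k$ or $i_{k+1}$; this is a nontrivial quiver-theoretic fact proved via the structure of zero relations (Lemma~\ref{zero-relations-lemma}) and commutativity (Lemma~\ref{lem:comm}), not something that follows from schurianness or from the leading entry of $\bar f_{R(\zg)}$ being an arrow. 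Granted that factorization $w=w'u$ with $u:i_k\leadsto c$, one then constructs an explicit element of $\textup{Im}\,\bar f_{R(\zg)}$ — a telescoping vector whose entries are the valid paths $j_h\leadsto i_k$ composed with the common tail $u$, with alternating signs, obtained by applying $\bar f_{R(\zg)}$ to the vector supported at the column of paths ending in $i_k$ — and checks via parallel-path and no-cycle relations that it lies in $\ker f_\zg$ and strips off the offending term of $x$. In addition, the base of the induction and the endpoint behaviour (where $\zg$ meets the boundary and crossing pairs degenerate) require a separate case analysis (Lemma~\ref{lem:base}, eight configurations, plus the non-forward and mixed-direction crossing sequences at the end of Lemma~\ref{lem:ker-im}); these are not optional boundary remarks but where several of the subtle configurations live. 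Without the factorization lemma and the explicit preimage construction, the surjectivity step you flag as "the main obstacle" remains unproved.
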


\begin{proof}
See section \ref{Aexactness}.\end{proof}

\medskip
\subsection{Homotopy}\label{sect 5.4}

Recall that the map $f_{\zg}$ depends on the crossing sequence of a particular representative $\zg$ of a given 2-diagonal.  In this subsection, we show  that the cokernel of $f_\zg$ is independent of the choice of the representative. 

Let $\zg$ be a representative of a 2-diagonal in $\cals$. We shall say that a homotopy of $\zg$ is \emph{trivial} if it fixes the crossing sequence of $\zg$, and it is \emph{elementary} if it moves $\zg$ across the meeting point $p$ of two shaded triangles of which $\zg$ crosses all four sides and is trivial elsewhere. For example, the two arcs $\zg_1$ and $\zg_2$ in Figure~\ref{fig:47} differ by an elementary homotopy whose point $p$ is the crossing points of the radical lines labeled 1 and 3.

We define the \emph{degree} of an elementary homotopy to be the degree of the crossing between $\zg$ and any of the two radical lines that meet at the point $p$. We remark that the degree is well-defined because both radical lines at $p$ cross $\zg$ in the same direction. Indeed, this follows immediately from the fact that the directions of the radical lines alternate along white regions.

An elementary homotopy of $\zg$ acts on the crossing sequence of $\zg$ by replacing two consecutive pairs 
\[\left\{\begin{array}{ll} (x,w),(y,z) \textup{ by }(y,w),(x,z), &\textup{if the degree is zero;}\\
(w,x),(z,y) \textup{ by }(w,y),(z,x), &\textup{if the degree is one.}\end{array}\right.
\]
\smallskip

We are now ready for the main result of this subsection.

\begin{prop}\label{homotopy}
Let $\zg_1, \zg_2$ be two representatives of the same 2-diagonal in $\cals$.  Then $\textup{coker}\,f_{\zg_1}\cong \textup{coker}\,f_{\zg_2}$. 
Moreover, if the 2-diagonal is a radical line then $f_{\zg_1}=f_{\zg_2}.$
\end{prop}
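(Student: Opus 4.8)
The plan is to reduce the general statement to the case of a single elementary homotopy and the case of a trivial homotopy, and then handle each of these by an explicit analysis of the crossing sequences. First I would observe that any two representatives $\zg_1,\zg_2$ of the same $2$-diagonal are related by a finite sequence of homotopies, each of which is either trivial (fixing the crossing sequence) or elementary (moving $\zg$ across a single meeting point $p$ of two shaded triangles). This is a standard fact about homotopies of arcs in a disc: since the $2$-diagonal cuts $\cals$ into two sub-polygons, the only moves available are pushing the representative across the interior vertices of the checkerboard pattern, and each such push across a single vertex $p$ (with $\zg$ crossing all four sides of the two triangles meeting at $p$) is exactly an elementary homotopy. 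So it suffices to prove the claim when $\zg_1$ and $\zg_2$ differ by a single trivial or elementary homotopy.

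The trivial case is immediate: if the crossing sequence is unchanged, then $P_0(\zg_1)=P_0(\zg_2)$, $P_1(\zg_1)=P_1(\zg_2)$, and by Definition~\ref{def:map} all matrix entries $f_{\zg_{i_s,j_t}}$ are determined by the crossing sequence together with the quiver $Q$ (via valid paths and the forward condition), so $f_{\zg_1}=f_{\zg_2}$ on the nose, hence certainly $\coker f_{\zg_1}\cong\coker f_{\zg_2}$. This also disposes of the ``moreover'' statement in the trivial case. For the elementary case, the degree-zero homotopy swaps two consecutive crossing pairs $(x,w),(y,z)$ for $(y,w),(x,z)$, and the degree-one homotopy swaps $(w,x),(z,y)$ for $(w,y),(z,x)$. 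In both cases $P_0(\zg)$ and $P_1(\zg)$ are literally the same module (only the indexing/ordering of the summands $P(x),P(y),P(w),P(z)$ changes), so $f_{\zg_1}$ and $f_{\zg_2}$ are two maps $P_1(\zg)\to P_0(\zg)$ between the same modules. I would then compare their matrix entries directly: the only entries that can possibly change are those involving the four vertices $x,y,z,w$ sitting at $p$, and I would show that the new map $f_{\zg_2}$ is obtained from $f_{\zg_1}$ by composing with an automorphism of $P_1(\zg)$ (or of $P_0(\zg)$) of the form ``identity plus a nilpotent block supported on the two swapped summands.'' Concretely, the two radical lines meeting at $p$ correspond to an arrow of $Q$, and there is a short valid path between the relevant vertices; pre- or post-composing with the elementary matrix $\begin{bsmallmatrix}1&0\\ u&1\end{bsmallmatrix}$, where $u$ is multiplication by that path, is an isomorphism of projectives and transforms one map into the other (up to signs, which are absorbed as in Remark~\ref{rem:bar}(b)). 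Since isomorphic maps have isomorphic cokernels, $\coker f_{\zg_1}\cong\coker f_{\zg_2}$.

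Finally, for the ``moreover'' part, I must treat a radical line $\rho(i)$ separately, because a radical line has two natural representatives (running parallel on the left and on the right), and these are related not by elementary homotopies of the generic type but by the degenerate situation of Definition~\ref{def:compatible}(b)/Remark after it. Here I would argue that for a radical line the crossing sequence is forced: $\rho(i)$ crosses exactly the radical lines $\rho(j)$ with an arrow between $i$ and $j$ (Lemma~\ref{lem 35}), the degrees of all these crossings are determined by the checkerboard pattern and are the same for both representatives, and there is no room for nontrivial reordering because the shaded regions along $\rho(i)$ are traversed in a fixed cyclic order around vertex $i$. Consequently both representatives produce literally the same $P_1$, the same $P_0$, and the same matrix entries, so $f_{\zg_1}=f_{\zg_2}$.

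The main obstacle I anticipate is the elementary-homotopy case: one must check carefully that, after the swap of crossing pairs, the ``forward'' condition and the ``valid path'' condition in Definition~\ref{def:map} change exactly in the way needed so that the new matrix differs from the old one only by an elementary (unipotent) change of basis on the swapped block. This requires a local case analysis at the meeting point $p$ — distinguishing whether each of the relevant paths $x\leadsto w$, $y\leadsto w$, $x\leadsto z$, $y\leadsto z$ is valid and forward in $\zg_1$ versus $\zg_2$ — and verifying that the discrepancy is always absorbed by multiplication by the single arrow/short path at $p$. Getting the signs right (so that the unipotent matrix is genuinely an automorphism and not just an isomorphism up to sign) is the bookkeeping part, handled via the $J_k$ matrices of Remark~\ref{rem:bar}.
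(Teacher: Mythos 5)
Your reduction to trivial and elementary homotopies, and your treatment of the elementary case, is essentially the paper's own argument: the paper also exhibits an explicit automorphism $\varphi$ of $P_0$ with $\varphi f_{\zg_1}=f_{\zg_2}$ and concludes with the 5-Lemma, and the local case analysis you defer to (forward versus non-forward steps on either side of the swapped pairs, together with the vanishing of entries such as the one in position $(1,3)$ that connect the swapped summands to the far-away blocks) is exactly the four-case verification carried out there. One small calibration: the intertwining block is not literally ``identity plus nilpotent'' on the two swapped summands; in the paper it is $\left[\begin{smallmatrix}1\to 3 & -1_{P(1)}\\ 1_{P(3)} & 0\end{smallmatrix}\right]$, i.e.\ a swap composed with a triangular matrix whose diagonal entries are $1$ and $-1$. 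Your ``up to signs, absorbed as in Remark~\ref{rem:bar}'' caveat covers this, so this is only a matter of bookkeeping, not a gap.

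The genuine problem is in your argument for the ``moreover'' statement. You correctly isolate the radical line as the case not reachable by elementary homotopies, but you then assert that its crossing sequence is forced, so that both representatives yield ``literally the same matrix entries.'' That premise is false: the two representatives of $\rho(k)$ (running to the left and to the right of it) have genuinely different crossing sequences, namely $(i_1,j_1),(i_2,j_2),\dots,(i_t,j_t)$ for one and $(i_1),(i_2,j_1),\dots,(i_t,j_{t-1}),(j_t)$ for the other — the same linear order of crossings but a shifted pairing, because the two arcs traverse different shaded regions between consecutive crossings. Since $f_\zg$ is defined from the crossing pairs, their degrees, and the forward/valid conditions, equality of the two maps is not automatic from ``the same crossings in the same order.'' Indeed, for one representative no step is forward while for the other every step is forward, and one must compute both matrices explicitly (both turn out to be the bidiagonal matrix whose entries are the arrows $i_s\to j_s$ and $i_{s+1}\to j_s$) to see that they coincide; this computation is the content of the paper's proof of the ``moreover'' part, and it is missing from your argument. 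The statement you want is true, but as written your justification would not survive scrutiny: you need to write down the two crossing sequences, observe that in either case the only nonzero entries of $f$ are the arrows on the two central diagonals (all longer paths being invalid or excluded by the forward condition), and conclude equality from that.
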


\begin{proof}
Suppose first that the arc $\zg_1$ is not a radical line.  It suffices to show that the cokernel does not change under an elementary homotopy. 
 So suppose $\zg_1, \zg_2$ are as in Figure~\ref{fig:47}.   

\begin{figure}
\centerline{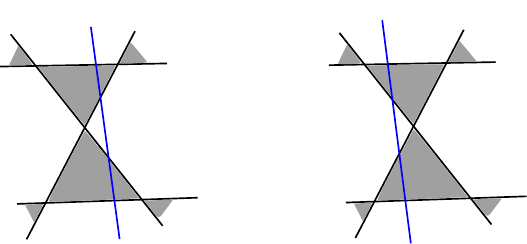}
\caption{Proof of Proposition~\ref{homotopy}. The arc $\zg_2$ is obtained from the arc $\zg_1$ by an elementary homotopy.}
\label{fig:47}
\end{figure}

The crossing sequences for $\zg_1, \zg_2$ only differ by two consecutive crossing pairs $\{2,3\}, \{1,4\}$ for $\zg_1$ and $\{1,2\}, \{4,3\}$ for $\zg_2$. 
Note that the step between these pairs is forward for $\zg_1$ but it is not forward for $\zg_2$, because by Remark~\ref{rem:forward} forward steps correspond to moving counterclockwise around the boundary of a white region.  Moreover, 1 and 3 cross $\zg_1, \zg_2$ in the same degree because the direction of the boundary edges around a white region is alternating.  The degree of the crossing of $2$ and $4$ is opposite to that of $1$ and $3$ because the orientation is directed along the boundary edges of a shaded region.  We suppose that the direction of $\zg_1, \zg_2$ is such that $1,3$ are the degree 0 crossings and $2,4$ are the degree 1 crossings.  The case when $1,3$ are of degree 1 and $2,4$ are of degree 0 follows in the same way.
Then let 
\[ (i_0)\, \text{ or }\, (j_0), (i_1, j_1), \dots, (i_t, j_t), (3,2), (1,4), (i_{t+3}, j_{t+3}), \dots, (i_n, j_n),  (i_{n+1})\, \text{ or }\, (j_{n+1})\]
\[ (i_0)\, \text{ or }\, (j_0), (i_1, j_1), \dots, (i_t, j_t), (1,2), (3,4), (i_{t+3}, j_{t+3}), \dots, (i_n, j_n),  (i_{n+1})\, \text{ or }\, (j_{n+1})\]
be the crossing sequences for $\zg_1, \zg_2$ respectively.  

Define 
\[P_a=\bigoplus_{h=0}^t P(j_h) \;\;\;\;\; P_a'=\bigoplus_{h=0}^t P(i_h)\;\;\;\;\; P_b=\bigoplus_{h=t+3}^{n+1} P(j_h)\;\;\;\;\; P_b' = \bigoplus_{h=t+3}^{n+1} P(i_h)\]
Then 
\[P_0(\zg_1)=P_0(\zg_2)=P_a'\oplus P(1)\oplus P(3)\oplus P_b' \;\;\;\;\;\;P_1(\zg_1)=P_1(\zg_2)=P_a\oplus P(2)\oplus P(4)\oplus P_b\]

We claim that there exists an isomorphism $\varphi$ such that the following diagram commutes. 

\begin{equation}
\label{diag:573}
\xymatrix{P_a\oplus P(2)\oplus P(4)\oplus P_b \ar[r]^{f_{\zg_1}} \ar[d]^{1} & P_a'\oplus P(3)\oplus P(1)\oplus P_b'\ar[d]^{\cong}_{\varphi}\\
P_a\oplus P(2)\oplus P(4)\oplus P_b\ar[r]^{f_{\zg_2}} & P_a'\oplus P(1)\oplus P(3)\oplus P_b'
}
\end{equation}

To prove the claim there are four cases to consider depending on the direction of the step from $t$ to $t+1$ and the step $t+2$ to $t+3$.  We show the claim in two cases only, since the other two are dual. 

Suppose first that both steps are forward. Then the quiver for $Q(\zg_1)=Q(\zg_2)$ is as follows, where the quivers appear in Definition~\ref{def:5.8}. 


\[\xymatrix@C=10pt@R=10pt{i_{t+3} & \ar[l] \cdots & 4\ar[l] \ar[r] & 1 \ar[dl] & \cdots\ar[l] & j_t\ar[l]\\
j_{t+3}& \ar[l] \cdots & 3\ar[u]\ar[l] \ar[r] & 2\ar[u]  & \cdots\ar[l] & i_t\ar[l]
}
\]

The maps $f_{\zg_1}, f_{\zg_2}$ are given in matrix form as in the equation below, which also shows the isomorphism $\varphi$.

\[
\varphi f_{\zg_1}=\begin{bsmallmatrix}1_{P_a'} & 0 & 0 & 0 \\
0 & 1\to 3 & -1_{ P(1)} & 0 \\
0 & 1_{P(3)} & 0 & 0 \\
0 &  0 & 0 & 1_{P_b'}
\end{bsmallmatrix} \begin{bsmallmatrix}f_{a,a} & f_{a,2} & 0 & 0 \\
0 & 3\to 2 & 3\to 4 & f_{3,b} \\
0 & 0 & 1\leadsto 4 & f_{1,b} \\
0 &  0 & 0 & f_{b,b}
\end{bsmallmatrix} = 
\begin{bsmallmatrix}f_{a,a} & f_{a,2} & 0 & 0 \\
0 & 1\leadsto 2 & 0 & 0 \\
0 & 3\to 2 & 3\to 4 & f_{3,b} \\
0 &  0 & 0 & f_{b,b}
\end{bsmallmatrix} 
=f_{\zg_2}
\]

Indeed, for $f_{\zg_1}$ we have an upper triangular matrix, because all steps from $t$ to $t+3$ are forward.  The position $(1,3)$ is zero, because any path from $i_h$ to $4$ with $h\leq t$ factors through the path $1\to 3\to 4$ and therefore is not valid. By Lemma~\ref{rect-trap-lemma1}(3), position $(1,3)$ being zero implies that position $(1,4)$ is zero as well.   For $f_{\zg_2}$ the entries $(2,1), (3,1), (4,1)$  are zero because the step from $t$ to $t+1$ is forward, the entries $(1,3), (2,3)$ are zero because the step from $t+1$ to $t+2$ is not forward, and the entry $(4,3)$ is zero because the step from $t+2$ to $t+3$ is forward.  Finally the entries $(1,4), (2,4)$ are zero because entries $(1,3), (2,3)$ are zero, and entry $(4,2)$ is zero because $(4,3)$ is also zero.  The map $\varphi$ is clearly an isomorphism, and the equality above holds.  This shows the claim that the diagram commutes in this case.

Suppose now that the step from $t$ to $t+1$ is forward and the step from $t+2$ to $t+3$ is not.  The quiver in this case is as follows. 

\[\xymatrix@C=10pt@R=10pt{
3\ar[r]\ar[d] & 4 \ar[d] & \cdots\ar[l] & i_{t+3}\ar[l]\\
2\ar[r] & 1\ar[ul] & \cdots \ar[l] & j_{t+1}\ar[l]\\
&\ddots\ar[ul]&\ddots\ar[ul]\\
&&i_t\ar[ul]&j_t\ar[ul]
}
\]

The maps $f_{\zg_1}, f_{\zg_2}$ are given in matrix form as in the equation below, which also shows the same isomorphism $\varphi$ as before.   

\[
\varphi f_{\zg_1}=\begin{bsmallmatrix}1_{P_a'} & 0 & 0 & 0 \\
0 & 1\to 3 & -1_{ P(1)} & 0 \\
0 & 1_{P(3)} & 0 & 0 \\
0 &  0 & 0 & 1_{P_b'}
\end{bsmallmatrix} \begin{bsmallmatrix}f_{a,a} & f_{a,2} & 0 & 0 \\
0 & 3\to 2 & 3\to 4 & 0 \\
0 & 0 & 1\leadsto 4 & 0 \\
0 &  0 & f_{b,4} & f_{b,b}
\end{bsmallmatrix} = 
\begin{bsmallmatrix}f_{a,a} & f_{2,a} & 0 & 0 \\
0 & 1\leadsto 2 & 0 & 0 \\
0 & 3\to 2 & 3\to 4 & 0 \\
0 &  0 & f_{b,4} & f_{b,b}
\end{bsmallmatrix} 
=f_{\zg_2}
\]

Indeed, for $f_{\zg_1}$ we have five zeros below the diagonal because steps from $t$ to $t+2$ are forward.  The entries $(1,4), (2,4), (3,4)$ are zero because the step from $t+2$ to $t+3$ is not forward.  Then entry $(1,3)$ is again zero by the same reasoning as in the previous case.  For $f_{\zg_2}$ the entries $(2,1), (3,1), (4,1)$ are zero because the step from $t$ to $t+1$ is forward, and we have the five zeros above the diagonal because the two steps from $t+1$ to $t+3$ are not forward.  The entry $(4,2)$ is zero because every path from $i_h$ to $2$ with $h\geq t+3$ factors through the path $1\to 3\to 2$ and therefore is not valid.  Again we see that $\varphi$ is an isomorphism and the equation above holds.  This proves the claim that the diagram (\ref{diag:573}) of 2-term complexes commutes.  The 5-Lemma implies that $f_{\zg_1}, f_{\zg_2}$ have isomorphic cokernels and thus the lemma holds in the case when $\zg_1, \zg_2$ are not homotopic to a radical line in $\cals$. 

Now, suppose that $\zg_1, \zg_2$ are homotopic to a radical line $\rho(k)$.  Then $\zg_1, \zg_2$ cross the same set of radical lines as $\rho(k)$.  Note that $\zg_1, \zg_2$ do not cross $\rho(k)$.  By Lemma~\ref{lem 35} we have $\rho(k)$ crosses $\rho(h)$ if and only if there is an arrow between $k$ and $h$ in the quiver $Q$.  Moreover, $\zg_1, \zg_2$ have the same orientation as $\rho(k)$, which means that if there is an arrow $k\to i$ in $Q$ then $P(i) \in \text{add}\,P_0(\zg_1)$ and if there is an arrow $j\to k$ then $P(j)\in  \text{add}\,P_1(\zg_1)$.

Label the two boundary arrows in $Q$ adjacent to $k$ by $\partial_1, \partial_2$.  The subquiver of $Q$ determined by $k$ and the vertices adjacent to $k$ is as follows, when one of the boundary arrows starts in $k$ and another ends in $k$.  The cases when both boundary arrows start in $k$ or both boundary arrows end in $k$ follow in the exact same way.  Thus, for simplicity we may assume that we have the following situation. 

\[
\xymatrix@C=10pt@R=10pt{
&j_t\ar[dr]^{\partial_2}\\
i_t\ar[ur]&&k\ar[rr]^{\partial_1} \ar[ll]\ar[d]&& i_1\ar[dl]\\
&j_2\ar[ur]\ar@{.}[ul]&i_2\ar[l]\ar[r]&j_1\ar[ul]\\
}
\]

Because the only arrows between these vertices are $i_{s+1}\to j_{s}\leftarrow i_{s}$  for $s\in \{1, \dots, t-1\}$, it follows that in the crossing sequence for $\zg_1, \zg_2$ the vertex $j_s$ is in a pair with $i_s$ or $i_{s+1}$.  Moreover, one crossing pair determines the pairing for the remaining vertices in the crossing sequence uniquely.  This means that there are two possible crossing sequences 
\[ (i_1, j_1), (i_2, j_2), \dots, (i_t, j_t) \hspace{.2cm} \text{or}  \hspace{.2cm} (i_1), (i_2, j_1), \dots, (i_t, j_{t-1}), (j_t)\]
for any diagonal homotopic to $\rho(k)$.  Let the crossing sequences for $\zg_1, \zg_2$ be as above.  For $\zg_1$ no step in its crossing sequence is forward and all steps are trapezoidal with the path from $i_{l+3}$ to $j_l$ for all $l=1, \dots, t-3$ being invalid.  Hence $f_{\zg}$ is zero except on the main diagonal and the diagonal below the main diagonal.  For $\zg_2$ all steps in its crossing sequence are forward, and all of them except for the first and the last are trapezoidal with the path from $i_t$ to $j_1$ being invalid.  Then the two maps $f_{\zg_1}, f_{\zg_2}$ remain the same. 

\[f_{\zg_1}= f_{\zg_2}=\begin{bsmallmatrix}  
i_1\to j_1 & 0 & \cdots & \cdots & \cdots \\
i_2\to j_1 & i_2\to j_2 & 0 & \cdots & \cdots  \\
0 & i_3\to j_2 & i_3 \to j_3 & 0 & \dots \\
\vdots & 0 & \ddots & \ddots & \vdots
\end{bsmallmatrix}
\]

This shows that the lemma holds in the case when $\zg_1, \zg_2$ are homotopic to a radical line, and the proof is complete. 
\end{proof}

The following corollary, which will be important in the next sections, is a direct consequence of the above proof. 

\begin{corollary}
 \label{cor:comp}
 Let $\zg_1,\zg_2$ be two representatives of the same 2-diagonal $\zg$ and ${f_{\zg_1} },{f_{\zg_2} }$ the corresponding morphisms. Then there is a commutative diagram
 \[ \xymatrix{P_1(\zg)\ar[d]_{\varphi_1} \ar[r]^{f_{\zg_1} }& P_0(\zg) \ar[d]^{\varphi_0}\\
P_1(\zg) \ar[r]^{f_{\zg_2}} & P_0(\zg) 
 }
 \]
with $\varphi_0,\varphi_1$ the isomorphisms induced by the homotopy between $\zg_1$ and $\zg_2$. \qed
\end{corollary}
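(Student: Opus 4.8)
The plan is to read this off directly from the proof of Proposition~\ref{homotopy}. First I would recall the fact used there: any two representatives $\zg_1,\zg_2$ of a fixed 2-diagonal $\zg$ are joined by a finite chain of homotopies $\zg_1=\eta_0,\eta_1,\dots,\eta_k=\zg_2$ in which each step $\eta_{l-1}\to\eta_l$ is either trivial (it does not change the crossing sequence) or elementary. For a trivial step the crossing sequences of $\eta_{l-1}$ and $\eta_l$ coincide, so $f_{\eta_{l-1}}=f_{\eta_l}$ by Definition~\ref{def:map}, and one may take the step-isomorphisms $\varphi_0^{(l)},\varphi_1^{(l)}$ to be identities. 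For an elementary step, the proof of Proposition~\ref{homotopy} produces isomorphisms $\varphi_0^{(l)}\colon P_0(\zg)\to P_0(\zg)$ and $\varphi_1^{(l)}\colon P_1(\zg)\to P_1(\zg)$ with $\varphi_0^{(l)}f_{\eta_{l-1}}=f_{\eta_l}\varphi_1^{(l)}$: when $\zg$ is not a radical line these are the identity on $P_1(\zg)$ together with the block isomorphism $\varphi$ exhibited there (or the analogous isomorphism in the two dual cases), and when $\zg$ is a radical line even $f_{\eta_{l-1}}=f_{\eta_l}$, so both can be taken to be identities.

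Then I would simply compose: set $\varphi_0:=\varphi_0^{(k)}\cdots\varphi_0^{(1)}$ and $\varphi_1:=\varphi_1^{(k)}\cdots\varphi_1^{(1)}$, which are by definition the isomorphisms induced by the chosen homotopy, and verify $\varphi_0\,f_{\zg_1}=f_{\zg_2}\,\varphi_1$ by an immediate induction on $k$ using the commutativity of each step-square. This gives the commutative diagram in the statement.

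The only point needing attention — and the closest thing to an obstacle, though it is really just bookkeeping — is that an elementary homotopy permutes two summands inside the \emph{ordered} direct sums $\bigoplus_l P(i_l)$ and $\bigoplus_l P(j_l)$ attached to the crossing sequences of $\eta_{l-1}$ and $\eta_l$. One should therefore fix at the outset the identifications of these ordered sums with the single modules $P_0(\zg)=\bigoplus_i P(i)$ and $P_1(\zg)=\bigoplus_j P(j)$ of the statement — these depend only on the 2-diagonal, since for a minimal representative the set of radical lines met by $\zg$, together with the degree of each crossing, is a homotopy invariant — and then check that every $\varphi_0^{(l)},\varphi_1^{(l)}$ respects these identifications, which is exactly what writing $\varphi$ in block form in the proof of Proposition~\ref{homotopy} accomplishes. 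With this in hand there is nothing further to prove; the corollary is purely a matter of assembling what the previous proof already established.
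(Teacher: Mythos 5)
Your proposal is correct and is essentially the paper's own argument: the paper states Corollary~\ref{cor:comp} as a direct consequence of the proof of Proposition~\ref{homotopy}, which is exactly what you do — for each elementary homotopy take $\varphi_1=\mathrm{id}$ and $\varphi_0$ the block isomorphism $\varphi$ exhibited there (identities in the trivial and radical-line cases, where $f_{\zg_1}=f_{\zg_2}$), and compose along the chain of homotopies. The extra bookkeeping you note about identifying the ordered sums with $P_0(\zg)$ and $P_1(\zg)$ is exactly what the block form of $\varphi$ in that proof handles, so nothing further is needed.
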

\begin{definition}
 \label{def:automorphism of homotopy}
 We call the pair $(\varphi_1,\varphi_0)\in \Aut(P_1(\zg))\oplus\Aut(P_0(\zg))$ the \emph{automorphism of the homotopy} that transforms $\zg_1$ into $\zg_2$, and we say that 
 \[ f_{\zg_2}=\varphi_0\,f_{\zg_1}\,\varphi_1^{-1}
 \]
 is obtained from $f_{\zg_1}$ by \emph{conjugation with the homotopy automorphism.}
\end{definition}

\medskip
\subsection{Indecomposibility}\label{sect 5.5}

In this section we establish that the cokernel of $f_{\zg}$ is indecomposable.  First we need a few preparatory lemmas. 
\begin{lemma}\label{lem:ab}
Let $\zg$ be a representative of a 2-diagonal in $\cals$, and let $f_{\zg}: \bigoplus_{h=1}^{n} P(j_h) \to  \bigoplus_{h=1}^m P(i_h)$ be the associated morphism.
Suppose we have a commutative diagram 
\[\xymatrix{
 \bigoplus_{h=1}^n P(j_h) \ar[r]^{f_{\zg}} \ar[d]^{g_j}&  \bigoplus_{h=1}^m  P(i_h) \ar[d]^{g_i} \\
 \bigoplus_{h=1} ^n P(j_h) \ar[r]^{f_{\zg}} &  \bigoplus_{h=1}^ m P(i_h)
}\]
for some matrices $g_j =(a_{h,h'})$ and $g_i = (b_{h,h'})$. Then $f_{\zg_{s,t}}=0$ or $b_{s,s}=a_{t,t}$ for all $s,t$.
\end{lemma}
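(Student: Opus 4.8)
The statement is a compatibility assertion about the diagonal entries of any pair of matrices $(g_j, g_i)$ intertwining $f_\zg$ with itself. The plan is to exploit the very rigid structure of $f_\zg$: its entries are multiplications by valid paths, and by Proposition~\ref{prop 39} there is at most one nonzero path between any two vertices of $Q$, so each nonzero block $f_{\zg_{s,t}}\colon P(j_t)\to P(i_s)$ is multiplication by a \emph{single} path $w\colon i_s\leadsto j_t$ in $Q$. Fix $s,t$ with $f_{\zg_{s,t}}\neq 0$, so that $w$ is a valid path and in particular nonzero in $B$. Reading off the $(s,t)$-entry of the matrix equation $g_i f_\zg = f_\zg g_j$ gives
\[
\sum_{h} b_{s,h}\, f_{\zg_{h,t}} \;=\; \sum_{h} f_{\zg_{s,h}}\, a_{h,t},
\]
an equality of morphisms $P(j_t)\to P(i_s)$, i.e.\ of elements of $e_{i_s} B e_{j_t}$, the span of paths from $i_s$ to $j_t$ in $B$.

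The key step is a \emph{support/degree argument} on these path algebra elements. Since $B$ is schurian (Corollary after Proposition~\ref{prop 39}), $e_{i_s}Be_{j_t}$ is at most one-dimensional, spanned by the class of $w$. I would first isolate the ``shortest-path'' contributions: the term $b_{s,s} f_{\zg_{s,t}}$ on the left and $f_{\zg_{s,t}} a_{t,t}$ on the right are both scalar multiples of $w$ (namely $b_{s,s}\cdot w$ and $a_{t,t}\cdot w$, viewing $b_{s,s},a_{t,t}\in\kb$ as the diagonal scalars of the endomorphisms of the indecomposable projectives $P(i_s),P(j_t)$). Every other term $b_{s,h}f_{\zg_{h,t}}$ with $h\neq s$ is a product of a path $i_s\leadsto i_h$ (of length $\geq 1$, since $h\neq s$) with a path $i_h\leadsto j_t$, hence a path from $i_s$ to $j_t$ that factors through $i_h$; similarly every term $f_{\zg_{s,h}}a_{h,t}$ with $h\neq t$ factors through $j_h$. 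The claim is that all such ``longer'' terms must cancel among themselves, leaving $b_{s,s}w = a_{t,t}w$ in $e_{i_s}Be_{j_t}$, whence $b_{s,s}=a_{t,t}$ because $w\neq 0$ in $B$.

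To make the cancellation argument precise, the main obstacle — and the technical heart — is controlling \emph{which} off-diagonal products $b_{s,h}f_{\zg_{h,t}}$ and $f_{\zg_{s,h}}a_{h,t}$ are nonzero and showing their sum vanishes. I expect two routes. The cleaner one uses an induction or a minimality hypothesis: the matrices $g_j, g_i$ may be taken, after conjugating by homotopy automorphisms (Corollary~\ref{cor:comp}, Definition~\ref{def:automorphism of homotopy}) and using that $\coker f_\zg$ is what we care about, in a normal form where $f_\zg$ is ``triangular'' with respect to the order on the crossing sequence (as exploited throughout Proposition~\ref{homotopy}); then the only nonzero off-diagonal entries of $g_j$, $g_i$ compatible with the commuting relation lie strictly on one side of the diagonal, and a straightforward induction on $|s-t|$ (or on position in the crossing sequence) peels off the diagonal entry. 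The second, more hands-on route: since each nonzero $f_{\zg_{h,t}}$ with $h\neq s$ corresponds to a valid path through a shaded-region crossing, and valid paths are determined by their endpoints, one checks directly that the off-diagonal terms on the two sides of $\sum_h b_{s,h}f_{\zg_{h,t}} = \sum_h f_{\zg_{s,h}}a_{h,t}$ pair up: a term routed through $i_h$ on the left matches a term routed through $j_h$ on the right because both equal (a scalar times) the unique path $i_s\leadsto j_t$ in $B$, forcing equality of the corresponding scalar combinations and, after subtracting, $b_{s,s}=a_{t,t}$. Either way, the crux is the uniqueness of paths in $B$ (Proposition~\ref{prop 39}) together with the description of $f_\zg$ via valid paths in Definition~\ref{def:map}; the bookkeeping of which terms survive is the part that requires care rather than any deep new idea.
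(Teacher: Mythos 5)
Your setup is right: reading off the $(s,t)$-entry of $g_i f_\zg = f_\zg g_j$ and using that $e_{i_s}Be_{j_t}$ is spanned by the single path $w$ is exactly how the paper begins. But the heart of the lemma is the claim you state and do not prove, namely that the off-diagonal contributions ``cancel among themselves, leaving $b_{s,s}w=a_{t,t}w$.'' Neither of your two suggested routes establishes this. Route 2 is logically flawed: since every term on both sides is a scalar multiple of the same path $w$, one-dimensionality of $\Hom(P(j_t),P(i_s))$ gives only the equality of \emph{total} scalars, $\bigl(b_{s,s}+\lambda\bigr)w=\bigl(a_{t,t}+\mu\bigr)w$ with $\lambda,\mu$ the sums of the off-diagonal coefficients; there is no a priori bijection pairing a left term routed through $i_h$ with a right term routed through $j_{h'}$, so ``both equal a scalar times the unique path'' proves nothing about the individual scalars. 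Route 1 (conjugating into a triangular normal form and inducting on $|s-t|$) does not address the actual difficulty either: $g_j,g_i$ are arbitrary lifts of an arbitrary endomorphism of $\coker f_\zg$, their off-diagonal entries are genuine paths $i_s\leadsto i_h$, $j_{h'}\leadsto j_t$, and these can compose with entries of $f_\zg$ to give honest nonzero multiples of $w$ — this happens precisely when $w$ factors through some other $i_h$ or $j_{h'}$ of the crossing sequence, e.g.\ when there is an arrow $j_t\to i_s$ and $w$ is the two-arrow path through the third vertex of that 3-cycle.

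The paper's proof has to do real work exactly at this point, and that work cannot be avoided by bookkeeping at the single position $(s,t)$. First it shows (using Lemma~\ref{lem:ii-paths}, Lemma~\ref{lem:forward-arrow} and, crucially, Lemma~\ref{lem:circ} on the ``flower'' subquivers $Q(\zg,[i_s])$, $Q(\zg,[j_s])$) that the only vertices of the crossing sequence through which $w$ can factor are very constrained (a neighbour $k_s$ or $k_t$ reached by a single arrow, with $h,h'\notin\{s,\dots,t\}$), which forces the adjacent steps to be forward and pins down the local quiver. Then, to kill the surviving off-diagonal coefficients $b_{s,h}$ (and dually $a_{h',t}$), it does \emph{not} argue at position $(s,t)$ at all: it writes down the chain of equations at the positions $(s,s-1),(s,s-2),\dots,(s,s-l)$ and propagates zeros up this chain, with a separate argument (passing to position $(s,s-l-1)$ and using that $j_{s-l-1}$ is forced to be crossed by $\zg$) when the relevant composite $i_s\to i_p\to j_{s-l}$ is killed by a boundary arrow and the naive step fails. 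None of this is recoverable from uniqueness of paths plus the definition of $f_\zg$ alone, so as it stands your proposal has a genuine gap rather than a complete alternative argument.
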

\begin{proof}
 This is proved in section \ref{Asect 5.5}.\end{proof}

\begin{lemma}\label{lem:arrow}
Let $\zg$ be a representative of a 2-diagonal in $\cals$.  Suppose that the two steps from $s-1$ to $s+1$ are forward in the crossing sequence for $\zg$.  Then there is a valid path $i_{s-1}\leadsto j_{s+1}$ if and only if the arrow $\alpha_s$ is oriented $\alpha_s: j_s \to i_s$.  In particular, at least one of $f_{{\zg}_{s,s}}, f_{{\zg}_{s-1, s+1}}$ is nonzero.  
\end{lemma}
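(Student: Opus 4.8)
The plan is to establish the equivalence by a purely local analysis near the shaded triangle $C_s$, and then to deduce the final sentence in two lines. First I would fix the local picture: the pairs at positions $s-1,s,s+1$ lie in shaded triangles $C_{s-1},C_s,C_{s+1}$, each a $3$-cycle by the standing hypothesis on $Q$, and $\zg$ passes through the two white regions $W_1$ (between $C_{s-1}$ and $C_s$) and $W_2$ (between $C_s$ and $C_{s+1}$). Writing $\alpha_s$ for the corner of $C_s$ that $\zg$ cuts off at position $s$, the degree-$4$ structure of interior vertices (Lemma~\ref{lem 31}) shows that $\alpha_s$ is a corner of both $W_1$ and $W_2$ and that the two edges of $W_1$, resp.\ of $W_2$, at $\alpha_s$ are labelled $i_s$ and $j_s$ (the radical segments crossed by $\zg$ at position $s$). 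Since $C_s$ has length $3$ it has a unique third vertex $k_s$, and $C_s$ is either $i_s\xrightarrow{\alpha_s}j_s\to k_s\to i_s$ or $j_s\xrightarrow{\alpha_s}i_s\to k_s\to j_s$: this is exactly the dichotomy in the statement.

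Next I would exploit that the two steps are forward. By Remark~\ref{rem:forward} this means $\zg$ runs counterclockwise along $\partial W_1$ and $\partial W_2$, so by the cycle/valid-path description of white regions (Lemma~\ref{lem 33}) together with the orientation of radical lines (Lemma~\ref{lem orientation}, Definition~\ref{def degree}) and the rectangle/trapezoid dichotomy of Lemma~\ref{rect-trap-lemma1}, the full subquiver of $Q$ on $\{i_{s-1},j_{s-1},i_s,j_s,k_s,i_{s+1},j_{s+1}\}$ must be one of a short, explicit list of "ladder" shapes, together with valid paths $p_1$ (witnessing the step $s-1\to s$, with target $j_s$ or $i_s$) and $p_2$ (witnessing the step $s\to s+1$, with source $i_s$ or $j_s$), the relevant alternatives being pinned down by the crossing degrees. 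Drawing this list is the bulk of the argument.

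Then I would check the equivalence on each shape. When $\alpha_s\colon j_s\to i_s$, so $C_s=j_s\to i_s\xrightarrow{\beta}k_s\xrightarrow{\gamma}j_s$, I would exhibit a valid path $i_{s-1}\leadsto j_{s+1}$ obtained by splicing $p_1$ and $p_2$ and, if needed, inserting at most one of $\beta,\gamma$; validity at $C_s$ is automatic because only one arrow of $C_s$ is used, and validity elsewhere follows from the forwardness of the two steps and from $G$ being a tree, so that no two arrows of the spliced path lie in a common $3$-cycle. When $\alpha_s\colon i_s\to j_s$, so $C_s=i_s\xrightarrow{\alpha_s}j_s\to k_s\to i_s$, I would show that every path $i_{s-1}\leadsto j_{s+1}$ in $Q$ factors through one of the length-$2$ subpaths $i_s\to j_s\to k_s$ or $k_s\to i_s\to j_s$ of $C_s$, hence is not valid; by Proposition~\ref{prop 39} it suffices to produce one such factoring, and the local shape — which forces any route into $j_{s+1}$ coming from the $W_1$-side of $\zg$ to enter $C_s$ through the corner $\alpha_s$, i.e.\ to cross two edges of $C_s$ — together with the planarity of $Q$ (Proposition~\ref{prop Q}) provides it. This proves the equivalence.

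Finally, the last sentence: if $\alpha_s\colon i_s\to j_s$ then $f_{\zg_{s,s}}$ is multiplication by the arrow $\alpha_s$, which is nonzero in $B$; if $\alpha_s\colon j_s\to i_s$, then by the equivalence there is a valid path $i_{s-1}\leadsto j_{s+1}$, and since the subsequence from $s-1$ to $s+1$ is forward by hypothesis, Definition~\ref{def:map} makes $f_{\zg_{s-1,s+1}}$ multiplication by that valid path, which is nonzero. Either way, one of $f_{\zg_{s,s}},f_{\zg_{s-1,s+1}}$ is nonzero. The step I expect to be the main obstacle is the converse half of the equivalence when $\alpha_s\colon i_s\to j_s$: proving that no valid path $i_{s-1}\leadsto j_{s+1}$ exists means ruling out detours that leave the immediate neighbourhood of $C_s$, and making the "funnelling through the corner $\alpha_s$" rigorous is precisely where the planarity of $Q$ and the tree structure of $G$ have to be used.
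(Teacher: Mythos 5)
Your overall strategy matches the paper's: the paper proves this lemma simply by drawing the local subquiver forced by the two forward steps and reading off both directions and the nonvanishing statement from the two pictures, and your concluding paragraph (arrow $i_s\to j_s$ gives $f_{\zg_{s,s}}\neq 0$; otherwise the valid path gives $f_{\zg_{s-1,s+1}}\neq 0$) is fine. However, the construction carrying the main implication has a genuine gap. When $\alpha_s\colon j_s\to i_s$, the path you propose -- splicing the full witnessing paths $p_1\colon i_{s-1}\leadsto j_s$ and $p_2\colon i_s\leadsto j_{s+1}$ with one inserted arrow -- is never valid. Because both steps are forward, the two ladders $Q(W_1,\zg)$ and $Q(W_2,\zg)$ share a $3$-cycle containing $\alpha_s$ (the bottom cases of Figure~\ref{fig:crossing-pairs}); if $k$ denotes its third vertex, then the last arrow of $p_1$ is $k\to j_s$ and the first arrow of $p_2$ is $i_s\to k$, so your spliced path uses \emph{all three} arrows of that $3$-cycle, contains the oriented cycle $k\to j_s\to i_s\to k$, and is therefore zero in $B$ by Proposition~\ref{prop 38} -- in particular not valid, contrary to your claim that ``only one arrow of $C_s$ is used.'' The correct valid path avoids $i_s$ and $j_s$ altogether: truncate $p_1$ before its last arrow and $p_2$ after its first arrow and splice them at $k$; this is exactly what the second picture in the paper's proof exhibits.

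A related problem is that you pin the analysis to the wrong triangle. You take $C_s$ to be the shaded triangle traversed by $\zg$ at position $s$ (third vertex $k_s$), but that vertex does not in general lie in $Q(W_1,\zg)\cup Q(W_2,\zg)$ at all; the triangle governing both the construction above and the obstruction in the case $\alpha_s\colon i_s\to j_s$ is the \emph{other} $3$-cycle through $\alpha_s$, the one whose third vertex labels an edge of each of the two white regions. Your ``funnelling through the corner'' argument for the converse can be salvaged, but it must be run in that $3$-cycle, where it becomes immediate from the ladder picture; alternatively, once the forward implication is repaired, the converse follows at once from Lemma~\ref{valid-paths-lem}, since for pairs two steps apart exactly one of $i_{s-1}\leadsto j_{s+1}$, $j_{s-1}\leadsto i_{s+1}$ is valid, and the mirror of the corrected construction produces the latter path when $\alpha_s\colon i_s\to j_s$.
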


\begin{proof}
This follows from the two pictures below, where we depict the two cases depending on the orientation of the arrow $\alpha_s$.  

\[
\xymatrix@R=10pt@C=10pt{
&&i_{s+1}&j_{s+1} &&& j_{s-1}\ar[r] & \cdots \ar[r]  & i_s\ar[d] \\
&&\raisebox{0pt}[0.9\height][0.3\height]{ $\vdots$ } \ar[u] &\raisebox{0pt}[0.9\height][0.3\height]{ $\vdots$ } \ar[u] &&& i_{s-1}\ar[r] & \cdots \ar[r] & \bullet \ar[r]\ar[d]&j_s\ar[ul]_{\alpha_s} \ar[d]\\
j_{s-1}\ar[r] & \cdots \ar[r] & \bullet \ar[r] \ar[u]& i_s\ar[d]^{\alpha_s} \ar[u]&&& &&\raisebox{0pt}[0.9\height][0.3\height]{ $\vdots$ } \ar[d] &\raisebox{0pt}[0.9\height][0.3\height]{ $\vdots$ } \ar[d] \\
i_{s-1}\ar[r] & \cdots & \cdots\ar[r]&j_s\ar[ul] &&& &&j_{s+1}&i_{s+1}
}
\]

Moreover, either there is an arrow $j_s\to i_s$ and $f_{{\zg}_{s-1, s+1}}$ is a nonzero valid path $i_{s-1}\leadsto j_{s+1}$ or there is an arrow $i_s\to j_s$ which gives the nonzero entry $f_{{\zg}_{s,s}}$.
\end{proof}

\begin{prop}\label{prop:ind}
Let $\zg$ be a representative of a 2-diagonal in $\cals$, then $\textup{coker}\, f_{\zg}$ is indecomposable. 
\end{prop}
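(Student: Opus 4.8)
The plan is to show that $\textup{coker}\,f_{\zg}$ is indecomposable by showing that any idempotent endomorphism of $M_\zg := \textup{coker}\,f_\zg$ is either $0$ or the identity; equivalently, that the endomorphism ring $\End_B(M_\zg)$ is local, i.e. has no nontrivial idempotents. Since $f_\zg\colon P_1(\zg)\to P_0(\zg)$ is a projective presentation of $M_\zg$, every endomorphism $\psi$ of $M_\zg$ lifts to a commutative square with vertical maps $g_1\in\End_B(P_1(\zg))$ and $g_0\in\End_B(P_0(\zg))$. By Proposition~\ref{homotopy} (and Corollary~\ref{cor:comp}) we may freely choose a convenient representative $\zg$; I would pick one whose crossing sequence is \emph{maximally forward}, so that the matrix of $f_\zg$ is ``upper triangular'' with as many nonzero entries above and on the diagonal as possible. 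The diagonal entries $f_{\zg_{s,s}}$ are then the arrows (or length-two paths) coming from each crossing pair $(i_s,j_s)$.

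The key step is to analyze the diagonal of the lift. Writing $g_1=(a_{h,h'})$ and $g_0=(b_{h,h'})$ as matrices over the (schurian) algebra $B$, Lemma~\ref{lem:ab} gives $f_{\zg_{s,t}}=0$ or $b_{s,s}=a_{t,t}$ for all $s,t$. In particular, on every diagonal slot where $f_{\zg_{s,s}}\ne 0$ we get $b_{s,s}=a_{s,s}$, and on every off-diagonal slot $f_{\zg_{s-1,s+1}}\ne 0$ we get $b_{s-1,s-1}=a_{s+1,s+1}$. Lemma~\ref{lem:arrow} is exactly what is needed to propagate this: for each interior crossing pair, at least one of $f_{\zg_{s,s}}$, $f_{\zg_{s-1,s+1}}$ is nonzero, so that the scalars $a_{s,s}$ and $b_{s,s}$ (which, since the $P(i)$ are indecomposable and $B$ is local over each vertex, lie in $\kb$ modulo radical) are forced to be equal to one another and to propagate along the crossing sequence. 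I would run this propagation along the full crossing sequence — from one end to the other — to conclude that all the diagonal scalars $a_{h,h}$ and $b_{h,h}$ are equal to a single common scalar $\lambda\in\kb$. One has to be a little careful at the two endpoints of $\zg$, where only one of $(i_0)$, $(j_0)$ (resp. $(i_{n+1})$, $(j_{n+1})$) appears; here the end behavior of the crossing sequence, together with the fact that these endpoint crossings still lie in a common 3-cycle with their neighbor, lets the propagation reach every summand.

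Having shown $g_1\equiv \lambda\,\id \pmod{\rad}$ and $g_0\equiv\lambda\,\id\pmod{\rad}$, the induced endomorphism $\psi$ of $M_\zg$ satisfies $\psi\equiv\lambda\,\id\pmod{\rad\End_B(M_\zg)}$ — because the radical of $\End_B(P_i(\zg))$ descends to (a subset of) the radical of $\End_B(M_\zg)$ under the natural algebra map. Hence the composite $\End_B(M_\zg)\to\kb$, $\psi\mapsto\lambda$, is a well-defined surjective algebra homomorphism whose kernel consists of nilpotent elements (being inside the radical of a finite-dimensional algebra), so $\End_B(M_\zg)$ is local. Therefore $M_\zg$ has no nontrivial idempotent endomorphisms and is indecomposable.

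The main obstacle I expect is the bookkeeping in the propagation step: showing that the combination of Lemma~\ref{lem:ab} and Lemma~\ref{lem:arrow} genuinely links \emph{every} consecutive pair of summands along the crossing sequence, with no ``gap'' where both $f_{\zg_{s,s}}$ and $f_{\zg_{s-1,s+1}}$ vanish — this is precisely why choosing the maximally-forward representative matters, and why Lemma~\ref{lem:arrow} is stated with the ``at least one of $f_{\zg_{s,s}},f_{\zg_{s-1,s+1}}$ is nonzero'' conclusion. A secondary subtlety is handling the two ends of the arc and the case where $\zg$ is homotopic to a radical line (where $f_\zg$ is only bidiagonal): there the bidiagonal structure must still be enough to chain all diagonal scalars together, which follows since consecutive diagonal and subdiagonal entries $i_h\to j_h$ and $i_{h+1}\to j_h$ are both nonzero arrows.
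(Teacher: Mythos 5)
Your proposal follows the paper's own argument in all essentials: lift an endomorphism of $\textup{coker}\,f_\zg$ to a pair $(g_1,g_0)$ on the projective presentation, apply Lemma~\ref{lem:ab} to get $b_{s,s}=a_{t,t}$ whenever $f_{\zg_{s,t}}\ne 0$, use Lemma~\ref{lem:arrow} to chain these equalities along the crossing sequence, and conclude that every endomorphism is $\lambda\cdot\textup{Id}$ plus a nilpotent, so that $\End(M_\zg)$ is local and $M_\zg$ is indecomposable.

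The one point where you deviate is also where your argument has a gap: the propagation across a change of direction in the crossing sequence. Lemma~\ref{lem:arrow} is stated (and is only true) under the hypothesis that the two steps from $s-1$ to $s+1$ are forward, and your remedy --- choosing a ``maximally forward'' representative of $\zg$ --- does not repair this. Non-forward steps are in general unavoidable: they occur whenever the connecting arrow between two consecutive white regions traversed by $\zg$ is an interior arrow of $Q(\zg)$, and no homotopy of the representative removes such a configuration (only for a radical line do the two representatives realize the all-forward/all-backward extremes), so you have given no argument that your chosen representative avoids a spot where both $f_{\zg_{s,s}}$ and $f_{\zg_{s-1,s+1}}$ vanish. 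The correct repair requires no change of representative at all: if the steps on the two sides of $(i_s,j_s)$ point in opposite directions, then the arrow $\alpha_s$ joining $i_s$ and $j_s$ cannot be a boundary arrow, hence $f_{\zg_{s,s}}\ne 0$, and on the non-forward side the entry $f_{\zg_{s,s-1}}$ (resp.\ $f_{\zg_{s,s+1}}$) is nonzero; Lemma~\ref{lem:ab} applied to these entries yields $b_{s,s}=a_{s,s}$ together with $b_{s,s}=a_{s-1,s-1}$ (resp.\ $a_{s+1,s+1}$), which is exactly the link needed to continue the chain. This is how the paper's case analysis closes the argument; with that substitution for your ``maximally forward'' device, the remainder of your proof (endpoints, the bidiagonal radical-line case, and the passage from constant diagonal to a local endomorphism ring) is sound and coincides with the paper's.
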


\begin{proof}
Let $f=f_{\zg}: \bigoplus_{h=1}^{n} P(j_h) \to  \bigoplus_{h=1}^m P(i_h)$.   Suppose we have a commutative diagram 

\[\xymatrix{
 \bigoplus_{h=1}^n P(j_h) \ar[r]^{f_{\zg}} \ar[d]^{g_j}&  \bigoplus_{h=1}^m  P(i_h) \ar[d]^{g_i} \\
 \bigoplus_{h=1} ^n P(j_h) \ar[r]^{f_{\zg}} &  \bigoplus_{h=1}^ m P(i_h)
}\]

for some matrices $g_j =(a_{h,h'})$ and $g_i = (b_{h,h'})$. By Lemma~\ref{lem:ab} we have $f_{s,t}=0$ or $b_{s,s}=a_{t,t}$ for all $s,t$.

Consider the entry $f_{s,s+1}$ of the matrix $f$. Recall that it corresponds to a path from $i_s$ to $j_{s+1}$.  Suppose now without loss of generality that the step from $s$ to $s+1$ is forward. Then, by Definition~\ref{def:forward}, there is a valid path $i_s$ to $j_{s+1}$ or a valid path $j_s$ to $i_{s+1}$.  Note that if the latter path exists then Lemma~\ref{lem:one-step} implies that both paths are valid.    Thus $f_{s,s+1}\not=0$ and the above equation with $t=s+1$ yields

\begin{equation}\label{eq5.1}
b_{s,s}=a_{s+1,s+1}.
\end{equation}

If the step from $s-1$ to $s$ is forward too, then $f_{s-1, s}\not=0$ and hence 

\begin{equation}\label{eq5.2}
b_{s-1,s-1}=a_{s,s}.
\end{equation}

By Lemma~\ref{lem:arrow}, we have $f_{s,s}\not=0$ or $f_{s-1,s+1}\not=0$.  Suppose first $f_{s,s}\not=0$.  Then $b_{s,s}=a_{s,s}$ and thus 

\begin{equation}\label{eq5.3}
b_{s-1,s-1}=a_{s,s}=b_{s,s}=a_{s+1,s+1}
\end{equation}

where the first equality follows from (\ref{eq5.2}) and the last equality follows from (\ref{eq5.1}). 

Suppose now $f_{s-1, s+1}\not=0$.  Then $b_{s-1, s-1}=a_{s+1,s+1}$ and thus 

\[a_{s,s}=b_{s-1,s-1}=a_{s+1,s+1}=b_{s,s}\]

where again the first equality follows from (\ref{eq5.2}) and the last equality follows from (\ref{eq5.1}).  This is the same equation as (\ref{eq5.3}).  

If the step from $s-1$ to $s$ is not forward but the step from $s$ to $s+1$ is still forward, then $f_{s,s-1}\not=0$, and thus 

\begin{equation}\label{eq5.4}
b_{s,s}=a_{s-1,s-1}.
\end{equation}

Moreover, since the step from $s-1$ to $s$ and the step from $s$ to $s+1$ are in opposite direction, the arrow $\alpha_s: \xymatrix{i_s \ar@{-}[r]&j_s}$ is not a boundary arrow and therefore $f_{s,s}\not=0$.   Thus $b_{s,s}=a_{s,s}$ and thus equations (\ref{eq5.1}) and (\ref{eq5.4}) imply 

\[
b_{s,s}=a_{s-1,s-1}=a_{s,s}=a_{s+1,s+1}.
\]

Similar, if both steps from $s-1$ to $s+1$ are not forward we get 

\[
a_{s-1,s-1}=b_{s,s}=a_{s,s}=b_{s+1,s+1}
\]

and if the first step is forward but the second is not we get 

\[
a_{s,s}=b_{s-1,s-1}=b_{s,s}=b_{s+1,s+1}.
\]


This implies that in all possible orientations of the steps we obtain $a_{s,s}=a_{t,t}=b_{s,s}=b_{t,t}$ for all $s$ and $t$.  In other words, all diagonal entries in the matrices $g_j,  g_i$ are equal.  

Thus, $g_j = \lambda I + g_j'$ and $g_i = \lambda I + g_i'$ where $g_j'$ and $g_i'$ are nilpotent.  Thus, every endomorphism of the two term complex $f_{\zg}$ is of the form $\lambda\cdot \text{Id} + N'$ with $\lambda\in \kb$ and $N'$ nilpotent.  In particular, every endomorphism of $\text{coker}\,f_{\zg}$ is of the form $\lambda\cdot \text{Id}_{\text{coker}\,f_{\zg}} + N$, where $N$ is nilpotent, and thus the endomorphism ring of $\text{coker}\,f_{\zg}$ is local.   Therefore, $\text{coker}\,f_{\zg}$ is indecomposable. 
\end{proof}



\medskip
\subsection{Syzygy $M_{\zg}$}\label{sect 5.6}

In light of the results we obtained in the previous sections, we can finally make the following definition.

\begin{definition}
Given a 2-diagonal $\zg\in\diag$ define $M_{\zg}:= \text{coker}\,f_{\zg}$. 
\end{definition}

Note that the definition of the map $f_{\zg}$ depends on the representative of the 2-diagonal, however by Proposition~\ref{homotopy} its cokernel does not.  This implies that $M_{\zg}$ is well-defined.  

Next, we obtain a number of important results that summarize the key properties of $M_{\zg}$.  

\begin{thm}\label{thm:omega}
If $\zg\in\diag$ then $\Omega \,M_{\zg}\cong M_{R(\zg)}$.  
\end{thm}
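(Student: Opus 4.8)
The plan is to deduce Theorem~\ref{thm:omega} directly from the exactness result of Proposition~\ref{big-lemma} together with the standard characterization of syzygies via projective presentations. Recall that if $M = \coker f$ for a projective presentation $P_1 \xrightarrow{f} P_0 \to M \to 0$, then $\Omega M = \ker(P_0 \to M) = \im f$, provided the presentation is minimal; and in the stable category $\scmp\,B$ it suffices to have any presentation with $P_0 \to M$ a projective cover — more precisely, $\Omega M$ is well-defined up to projective summands, and if $f$ has no nonzero "trivial" (split) summands then $\im f$ represents $\Omega M$ on the nose. So the proof strategy is: first fix a pair of compatible representatives $\zg, R(\zg)$, which exists by the Remark following Definition~\ref{def:compatible}. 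By Proposition~\ref{big-lemma}, $\ker f_\zg = \im \bar f_{R(\zg)}$, where $\bar f_{R(\zg)}\colon P_1(R(\zg)) \to P_0(R(\zg)) = P_1(\zg)$ and the last equality is Proposition~\ref{lem:cross}(b). Since $\bar f_{R(\zg)}$ and $f_{R(\zg)}$ have the same image (they differ only by the sign isomorphisms $J_k$, see Remark~\ref{rem:bar}), we get $\ker f_\zg = \im f_{R(\zg)} \cong \coker\!\big(P_1(R^2(\zg)) \to P_1(R(\zg))\big)$... but more simply, $\im f_{R(\zg)} = \coker\big(\ker f_{R(\zg)} \hookrightarrow P_1(R(\zg))\big)$ — no, the cleanest route is: $\im f_{R(\zg)} = P_1(R(\zg)) / \ker f_{R(\zg)}$, and this is exactly $M_{R(\zg)} = \coker f_{R(\zg)}$ only if the map $P_1(R(\zg)) \to P_0(R(\zg))$ is surjective, which it is not in general. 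Let me restate correctly.

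The correct identification is: $\Omega M_\zg = \ker(P_0(\zg) \to M_\zg) = \im f_\zg$. By Proposition~\ref{big-lemma}, $\ker f_\zg = \im \bar f_{R(\zg)}$, so we have a short exact sequence
\[
0 \longrightarrow \ker f_{R(\zg)} \longrightarrow P_1(R(\zg)) \xrightarrow{\ \bar f_{R(\zg)}\ } \ker f_\zg \longrightarrow 0,
\]
where I am using $\im \bar f_{R(\zg)} = \ker f_\zg$ together with $P_1(R(\zg)) = P_0(R(\zg))$. But $\ker f_{R(\zg)}$ is precisely $\im \bar f_{R^2(\zg)}$, again by Proposition~\ref{big-lemma} applied to the compatible pair $R(\zg), R^2(\zg)$. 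Hence the sequence
\[
P_1(R(\zg)) \xrightarrow{\ \bar f_{R(\zg)}\ } P_1(\zg) \xrightarrow{\ f_\zg\ } P_0(\zg) \longrightarrow M_\zg \longrightarrow 0
\]
is exact, which exhibits $\ker f_\zg$ as the cokernel of the map $\bar f_{R(\zg)}$ followed by the inclusion into $P_1(\zg)$... I need to be careful: what I actually want is that $\im f_\zg$, sitting as a submodule of $P_0(\zg)$, is isomorphic to $M_{R(\zg)} = \coker f_{R(\zg)}$. Since $\im f_\zg = P_1(\zg)/\ker f_\zg = P_1(\zg)/\im \bar f_{R(\zg)} = \coker \bar f_{R(\zg)} \cong \coker f_{R(\zg)} = M_{R(\zg)}$, where the middle isomorphism is Remark~\ref{rem:bar}(b) and we use $P_0(R(\zg)) = P_1(\zg)$. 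This gives $\Omega M_\zg \cong M_{R(\zg)}$ once we know $\Omega M_\zg = \im f_\zg$ in $\scmp\,B$.

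The remaining point — and the one I expect to require genuine care rather than being purely formal — is justifying that $\Omega M_\zg \cong \im f_\zg$ in the stable category $\scmp\,B$, i.e.\ that $P_0(\zg) \to M_\zg$ is a projective cover up to projective summands, or equivalently that $f_\zg$ has no split direct summand of the form $P \xrightarrow{\id} P$. One should argue this using Proposition~\ref{prop:ind}: since $M_\zg = \coker f_\zg$ is indecomposable and nonprojective (the nonprojectivity needs a separate remark — if it were projective, $\zg$ being a genuine $2$-diagonal forces $f_\zg$ nonzero, hence $\coker f_\zg$ has strictly smaller dimension than $P_0(\zg)$, and one checks $M_\zg$ is not projective, perhaps because a split summand would contradict the indecomposability analysis or the minimality built into the crossing sequence), the presentation $f_\zg$ can be reduced to a minimal one by cancelling split summands; but cancelling a split summand $P \xrightarrow{\id} P$ changes neither $\coker f_\zg$ nor $\im f_\zg$ up to projectives, and since we are working in $\scmp\,B$ where projectives are zero, $\Omega M_\zg$ computed from either presentation agrees. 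Thus $\Omega M_\zg \cong \im f_\zg \cong M_{R(\zg)}$ in $\scmp\,B$, completing the proof. I would organize the writeup as: (1) cite Proposition~\ref{big-lemma} twice to get the four-term exact sequence; (2) use Remark~\ref{rem:bar}(b) and Proposition~\ref{lem:cross}(b) to identify $\im f_\zg \cong M_{R(\zg)}$ as modules; (3) invoke Proposition~\ref{prop:ind} and the definition of $\Omega$ in $\scmp\,B$ (together with Theorem~\ref{thmKRGorenstein}/Theorem~\ref{gesthm} to know $M_\zg \in \scmp\,B$) to conclude the isomorphism holds in the stable category.
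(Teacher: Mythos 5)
Your chain of identifications up to the module isomorphism $\im f_\zg\cong M_{R(\zg)}$ is exactly the paper's route: exactness of $P_1(R(\zg))\xrightarrow{\bar f_{R(\zg)}}P_1(\zg)\xrightarrow{f_\zg}P_0(\zg)$ from Proposition~\ref{big-lemma} applied to a compatible pair, together with Proposition~\ref{lem:cross}(b) and Remark~\ref{rem:bar}(b) to identify $\coker\bar f_{R(\zg)}\cong M_{R(\zg)}$. The gap is in your last step, where you need $\Omega M_\zg=\im f_\zg$ and not merely $\Omega M_\zg\cong\im f_\zg$ up to projective summands. Your proposed justification — that $M_\zg$ is indecomposable and non-projective, so split summands of $f_\zg$ can be cancelled harmlessly — does not establish minimality: a map of the form $\mathrm{id}_{P}\oplus f'$, with $f'$ a minimal presentation of an indecomposable non-projective module, has indecomposable non-projective cokernel but is not minimal, and there $\im f$ differs from the syzygy by a genuine projective summand. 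So indecomposability of the cokernel cannot rule out split summands, and your Schanuel-type fallback only yields the isomorphism in $\scmp\,B$, which is weaker than the statement as used later (the explicit minimal projective resolutions, the periodicity $M_\zg\cong\Omega^{N}M_\zg$ or $\Omega^{2N}M_\zg$ in Corollary~\ref{cor:536}, and the claim that $M_\zg$ is literally a non-projective syzygy in $\textup{mod}\,B$).

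The missing ingredient is Proposition~\ref{lem:cross}(a), which you never invoke: $\add P_0(\zg)\cap\add P_1(\zg)=\{0\}$ and $\add P_0(R(\zg))\cap\add P_1(R(\zg))=\{0\}$. If $\im f_\zg$ were not contained in $\rad P_0(\zg)$, some indecomposable summand $P(i)$ of $P_0(\zg)$ would be split off by $f_\zg$, forcing $P(i)$ to be a common summand of $P_1(\zg)$ and $P_0(\zg)$, a contradiction; hence $P_0(\zg)\to M_\zg$ is a projective cover and $\Omega M_\zg=\im f_\zg\cong\coker\bar f_{R(\zg)}\cong M_{R(\zg)}$ in $\textup{mod}\,B$. (Applying the same reasoning to $\bar f_{R(\zg)}$, whose target is $P_0(R(\zg))=P_1(\zg)$, shows $\ker f_\zg\subseteq\rad P_1(\zg)$, so the displayed sequence is in fact the beginning of a minimal projective resolution of $M_\zg$ — this is exactly how the paper argues, and it is what later sections rely on.) With that one observation inserted in place of your indecomposability argument, your proof is complete and coincides with the paper's.
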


\begin{proof}
By definition of $M_{\zg}$ and Proposition~\ref{big-lemma} we have an exact sequence in $\text{mod}\,B$ as follows.

\[\xymatrix{
P_1(R(\zg))\ar[r]^{\bar{f}_{R(\zg)}} & P_1(\zg)\ar[r]^{f_{\zg}} & P_0(\zg) \ar[r] & M_{\zg}\ar[r] & 0
}\]

By Remark~\ref{rem:bar}(b) and the definition of $M_{R(\zg)}$ we have $\text{coker}\, \bar{f}_{R(\zg)} \cong \text{coker}\, {f}_{R(\zg)} \cong M_{R(\zg)}$.   Since modules $P_1(\zg), P_0(\zg)$ and $P_1(\zg), P_1(R(\zg))$ are nonzero modules that have no summands in common by Proposition~\ref{lem:cross}(a), we conclude that the sequence above is the beginning of a minimal projective resolution of $M_{\zg}$.  This implies that $\Omega \,M_{\zg}\cong M_{R(\zg)}$.  
\end{proof}

\begin{corollary}\label{cor:536}
Let $\cals$ be a checkerboard polygon of size $2N$, then $M_{\zg}$ is a nonzero indecomposable non-projective syzygy in $\textup{mod}\,B$ and 

\[M_{\zg}\cong 
\begin{cases}
\Omega^{N} M_{\zg} & \text{if }\,\zg \,\text{ is a diameter}\\
\Omega^{2N} M_{\zg} & \text{otherwise.}
\end{cases}\]
\end{corollary}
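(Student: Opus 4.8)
The statement combines several facts already proved in the section, so the plan is largely to assemble them. First, by Proposition~\ref{prop:ind} the module $M_{\zg}=\coker f_{\zg}$ is indecomposable, and by Proposition~\ref{lem:cross}(a) the modules $P_0(\zg)$ and $P_1(\zg)$ have no common summands, so $f_\zg$ has no trivial direct summand of the form $P\xrightarrow{1}P$; combined with Proposition~\ref{big-lemma} this shows the two-term complex $P_1(R(\zg))\xrightarrow{\bar f_{R(\zg)}}P_1(\zg)\xrightarrow{f_\zg}P_0(\zg)$ is the start of a \emph{minimal} projective presentation of $M_\zg$, hence $M_\zg$ is a syzygy; it is non-projective precisely because $P_1(\zg)\neq 0$ and the presentation is minimal. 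This is exactly Theorem~\ref{thm:omega}, which gives $\zO M_\zg\cong M_{R(\zg)}$. Iterating, $\zO^k M_\zg\cong M_{R^k(\zg)}$ for all $k\ge 0$.

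Next I would address periodicity. Since $R$ has order $2N$ on the set of 2-diagonals of $\cals$, we get $R^{2N}(\zg)=\zg$ and therefore $\zO^{2N}M_\zg\cong M_{R^{2N}(\zg)}=M_\zg$, which already establishes the ``otherwise'' case and shows that the projective resolution is periodic with period dividing $2N$. The remaining point is to identify when the period drops to $N$, i.e.\ when $R^N(\zg)$ represents the same 2-diagonal as $\zg$. Geometrically, $R^N$ is the rotation by $\pi$, i.e.\ the antipodal map on the $2N$-gon; a 2-diagonal is fixed by the antipodal map if and only if it is a diameter (a 2-diagonal joining antipodal vertices). So $R^N(\zg)=\zg$ as an arc exactly when $\zg$ is a diameter, in which case $\zO^N M_\zg\cong M_{R^N(\zg)}=M_\zg$. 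Conversely, if $\zg$ is not a diameter then $R^N(\zg)\neq\zg$ and $R^k(\zg)\neq\zg$ for $0<k<2N$, so the orbit of $\zg$ under $R$ has exact size $2N$; combined with injectivity of $\zg\mapsto M_\zg$ on 2-diagonals (which follows from the fact that $\diag$ embeds faithfully into $\scmp\,B$, established via the AR-structure later, or more elementarily from the fact that distinct 2-diagonals cross distinct multisets of radical lines and hence give non-isomorphic $P_0,P_1$), this forces the minimal period to be exactly $2N$ rather than a proper divisor.

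The main obstacle is precisely this last injectivity/minimality argument: to conclude the period is \emph{exactly} $2N$ (not merely a divisor of $2N$) when $\zg$ is not a diameter, I need that $M_{R^k(\zg)}\not\cong M_\zg$ for $0<k<2N$. The quickest route is to note that the pair $(P_0(\zg),P_1(\zg))$ is a derived (in fact projective-presentation) invariant of $M_\zg$ once we know the presentation is minimal, and that the multiset of radical lines crossed by a 2-diagonal, together with the crossing degrees, is recovered from $(P_0(\zg),P_1(\zg))$; since $R$ shifts this data nontrivially unless $\zg$ is a diameter, no two modules in a non-diameter orbit can coincide. One should be slightly careful that a \emph{2-diagonal} (homotopy class of arc) is what is fixed by $R^N$, not a particular representative; but Proposition~\ref{homotopy} guarantees $M_\zg$ depends only on the homotopy class, so this causes no trouble. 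Finally, the phrase ``period $N$ or $2N$'' is justified because these are the only divisors of $2N$ that can occur as the minimal period of the $R$-orbit of a 2-diagonal: a 2-diagonal cutting $\cals$ into two even polygons cannot be fixed by $R^k$ for any $k$ properly dividing $N$, since such a rotation would have to fix the arc's midpoint and endpoints, forcing the endpoints to be antipodal, i.e.\ $k=N$. I would spell out this elementary rotation-orbit computation as the one genuinely new (if easy) ingredient.
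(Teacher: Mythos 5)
Your argument is correct and follows essentially the same route as the paper: iterate Theorem~\ref{thm:omega} to get $\zO^k M_\zg\cong M_{R^k(\zg)}$, use that $R^{2N}=\mathrm{id}$ and that $R^N$ fixes a 2-diagonal exactly when it is a diameter, and invoke Proposition~\ref{prop:ind} together with the fact that $P_0(\zg),P_1(\zg)$ are nonzero with no common summands for indecomposability, nonvanishing and non-projectivity. Note that the corollary as stated only asserts the isomorphisms $M_\zg\cong\zO^N M_\zg$ (diameter case) or $M_\zg\cong\zO^{2N}M_\zg$, so your additional discussion of the \emph{minimal} period and the injectivity of $\zg\mapsto M_\zg$ is not needed here (and is the only place where your sketch leans on facts established later).
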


\begin{proof}
By definition of $R$, we have that $\zg$ equals $R^{2N}(\zg)$ or $R^{N}(\zg)$ if $\zg$ is a diameter of $\cals$.   Successive applications of Theorem~\ref{thm:omega} imply that 
\[\Omega^k M_{\zg}=\Omega^{k-1}\Omega\, M_{\zg}\cong \Omega^{k-1}M_{R(\zg)} \cong \dots \cong M_{R^k(\zg)}\]
for all integers $k\geq 0$.  Hence, $M_{\zg}$ is a non-projective syzygy which is also indecomposable by Proposition~\ref{prop:ind}.  Finally, $M_{\zg}$ is nonzero because $P_0(\zg), P_1(\zg)$ are nonzero projectives that have no summands in common.  
\end{proof}

The next proposition shows that radical lines in $\cals$ correspond to radicals of projective $B$-modules. 

\begin{prop}\label{prop:radicals correspond}
For a radical line $\rho(k)$ in $\cals$ the syzygy $M_{\rho(k)}$ is isomorphic to $\textup{rad}\,P(k)$. 
\end{prop}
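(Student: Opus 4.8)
The plan is to compute the map $f_{\rho(k)}$ directly from the crossing sequence of a representative $\zg_1$ of $\rho(k)$ and identify its cokernel with $\rad P(k)$. Recall from the proof of Proposition~\ref{homotopy} that the last portion of that argument already analyzed exactly this situation: a 2-diagonal homotopic to a radical line $\rho(k)$ has a crossing sequence of one of two shapes, $(i_1,j_1),\dots,(i_t,j_t)$ or $(i_1),(i_2,j_1),\dots,(i_t,j_{t-1}),(j_t)$, and in either case the map $f_{\rho(k)}$ is supported only on the main diagonal and the subdiagonal, with entries the arrows $i_h\to j_h$ (when they exist) and the arrows or length-two paths $i_{h+1}\leadsto j_h$. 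Moreover, by Proposition~\ref{prop:radicals correspond}'s hypothesis combined with Lemma~\ref{lem 35}, the radical lines crossing $\rho(k)$ are exactly the $\rho(h)$ with $h$ adjacent to $k$ in $Q$, and the orientation of $\rho(k)$ matches that of $\zg_1$, so that $P(i)\in\add P_0(\rho(k))$ precisely when there is an arrow $k\to i$ in $Q$ and $P(j)\in\add P_1(\rho(k))$ precisely when there is an arrow $j\to k$.

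First I would set up the projective presentation of $\rad P(k)$ itself. Since $B$ is schurian (Corollary after Proposition~\ref{prop 39}) and every chordless cycle has length three, one has the short exact sequence $0\to\rad P(k)\to P(k)\to S(k)\to 0$, and a projective cover of $\rad P(k)$ is $\bigoplus_{\alpha\colon k\to i}P(i)$. The syzygy of $\rad P(k)$ is then $\Omega^2 P(k)$-type data; more useful is to iterate: by Theorem~\ref{thm:omega} we have $\Omega M_{\rho(k)}\cong M_{R(\rho(k))}$, and the first few terms of the projective resolution of $M_{\rho(k)}$ have $P_0(\rho(k))=\bigoplus_{k\to i}P(i)$ and $P_1(\rho(k))=\bigoplus_{j\to k}P(j)$, which matches the projective cover of $\rad P(k)$ on the $P_0$ side. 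So the claim reduces to showing that the specific matrix $f_{\rho(k)}$ described above has cokernel with the correct composition factors, i.e. that $\coker f_{\rho(k)}$ is the submodule $\rad P(k)\subseteq P(k)$.

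The key step is to exhibit an explicit isomorphism $\coker f_{\rho(k)}\xrightarrow{\sim}\rad P(k)$. I would do this by constructing a surjection $g\colon P_0(\rho(k))=\bigoplus_{k\to i}P(i)\twoheadrightarrow \rad P(k)$ whose kernel is precisely the image of $f_{\rho(k)}$. The natural candidate for $g$ on the summand $P(i)$ (with arrow $\alpha\colon k\to i$) is right multiplication by $\alpha$, viewed as a map $P(i)=e_iB\to e_kB$ landing in $\rad P(k)=(\rad e_k)B$; the sum of these is surjective onto $\rad P(k)$ since $\rad P(k)$ is generated by the arrows out of $k$. It then remains to check that $\ker g=\im f_{\rho(k)}$: the composite $g\circ f_{\rho(k)}$ must vanish, which amounts to verifying that for each crossing pair the relevant two-arrow or three-arrow paths in $Q$ are zero in $B$ — this is exactly where Proposition~\ref{prop 38} (every cyclic path is zero) and Lemma~\ref{lem 37} (the Jacobian relations) come in, since the entries of $f_{\rho(k)}$ are built from the 3-cycles through $k$ and its neighbors. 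Then a dimension count, using schurianness to compute $\dim_\kb\Hom(P(i),P(j))\le 1$ and comparing $\dim\coker f_{\rho(k)}$ with $\dim\rad P(k)=\dim P(k)-1$, forces $g$ to induce an isomorphism on cokernels.

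The main obstacle I anticipate is the bookkeeping in the kernel/cokernel computation: one must track, for the quiver $Q(\rho(k))$ consisting of $k$ together with its neighbors and the 3-cycles among them, exactly which composites of the diagonal and subdiagonal entries of $f_{\rho(k)}$ are forced to be zero in $B$, and confirm that the remaining relations among the generators of $\rad P(k)$ are precisely those imposed by $\im f_{\rho(k)}$. This is delicate because the structure of $\rad P(k)$ depends on how the 3-cycles through $k$ are glued (encoded by whether adjacent boundary arrows at $k$ start or end at $k$), and one should either handle the generic configuration and remark that the boundary cases are analogous — as was done in Proposition~\ref{homotopy} — or invoke Proposition~\ref{big-lemma} to identify $\Omega M_{\rho(k)}$ with $M_{R(\rho(k))}$ and instead verify the isomorphism $\rad P(k)\cong M_{\rho(k)}$ by matching their images in the projective cover $P_0(\rho(k))$, which sidesteps a direct composition-factor analysis. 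I would pursue the latter route: show $f_{\rho(k)}$ and the natural presentation map of $\rad P(k)$ have the same image inside $\bigoplus_{k\to i}P(i)$, using that both are determined by the valid paths into the neighbors of $k$ and that $B$ is schurian so each such image is one-dimensional in each relevant graded piece.
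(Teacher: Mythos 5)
Your overall scaffolding is the same as the paper's: compute $f_{\rho(k)}$ from the crossing sequence (as in the end of the proof of Proposition~\ref{homotopy}), take the surjection $g\colon P_0(\rho(k))=\bigoplus_{k\to i}P(i)\twoheadrightarrow\rad P(k)$ given by the arrows out of $k$, and show $\ker g=\im f_{\rho(k)}$. But the crucial half of that equality is exactly where your argument has a gap. Showing $g\circ f_{\rho(k)}=0$ only gives $\im f_{\rho(k)}\subseteq\ker g$; the reverse inclusion is the real content, and neither of your proposed devices delivers it. The ``dimension count'' is not available: schurianness bounds $\dim\Hom(P(i),P(j))$ but does not compute $\dim\coker f_{\rho(k)}$ — from $g\circ f_{\rho(k)}=0$ and surjectivity of $g$ you only get $\dim\coker f_{\rho(k)}\ge\dim\rad P(k)$, and the missing upper bound is equivalent to $\ker g\subseteq\im f_{\rho(k)}$, i.e.\ to the thing being proved. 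The fallback of ``matching images'' fares no better: invoking $\Omega M_{\rho(k)}\cong M_{R(\rho(k))}$ says nothing about $\rad P(k)$, and ``both maps are determined by valid paths into the neighbors of $k$'' is not an argument that every element of $\ker g$ lies in $\im f_{\rho(k)}$. What the paper actually does at this point is compute $\ker\pi$ explicitly: it shows its top is $\bigoplus_s S(j_s)$ (distinguishing whether the arrow $j_s\to k$ lies in one or two $3$-cycles), and that every generating path $i_s\leadsto x$ of the kernel — one with $k\to i_s\leadsto x=0$ or with $k\to i_s\leadsto x=k\to i_t\leadsto x$ — factors through $j_s$ or $j_{s-1}$, using that $Q$ has no interior vertices (Lemma~\ref{lem 36a}). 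Some argument of this kind is indispensable and is absent from your proposal.

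A secondary inaccuracy: your justification that $g\circ f_{\rho(k)}=0$ ``amounts to verifying that the relevant two- or three-arrow paths are zero in $B$'' is wrong as stated. When $j_s\to k$ is an interior arrow, the composites $k\to i_s\to j_s$ and $k\to i_{s+1}\to j_s$ are nonzero and \emph{equal} in $B$ (Lemma~\ref{lem 37}(b), not Proposition~\ref{prop 38}); with your all-positive choice of $g$ the column of $g\circ f_{\rho(k)}$ indexed by $P(j_s)$ is then $2(k\to i_s\to j_s)\ne 0$. One must introduce alternating signs on the summands (equivalently, conjugate by the diagonal sign matrix as in Remark~\ref{rem:bar}, which does not change the cokernel) before the composite vanishes. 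This is fixable bookkeeping, but as written the claimed vanishing fails, and together with the missing kernel computation it means the proof is not complete.
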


\begin{proof}
By construction $P_0(\rho(k)), P_1(\rho(k))$ consist of all projective summands $P(i), P(j)$ such that in the quiver $Q$ there is an arrow $k\to i, j\to k$ respectively.  Then $Q(\rho(i))$ consists of $k$ and all vertices connected to $k$ by an arrow.  As in the proof of Proposition~\ref{homotopy} there are two possible crossing sequences for $\rho(i)$, and we have the following configuration of $i$'s and $j$'s.

\[\xymatrix@R=10pt@C=10pt{
&&i_{s-1}\ar[r] \ar@{.}[l]&j_{s-1}\ar[dll]\\
&k\ar[rr] \ar[ur] \ar[dr]  \ar@{.}[u] \ar@{.}[dl]&& i_s\ar[u]\ar[d]\\
&j_{s+1}\ar[u] \ar@{.}[l]&i_{s+1} \ar[r]\ar[l]& j_s \ar[ull]\\
}
\]

Here none of the steps are forward and all paths in 
\[f_{\rho(k)}= \begin{bsmallmatrix} \ddots \\  \cdots & 0 & i_{s-1}\to j_{s-2} & i_{s-1}\to j_{s-1} & 0 & \cdots & \cdots & \cdots \\
\cdots & \cdots & 0 & i_{s}\to j_{s-1} & i_s\to j_s & 0 & \cdots  & \cdots \\
\cdots & \cdots & \cdots & 0 & i_{s+1}\to j_s & i_{s+1}\to j_{s+1} & 0 & \cdots \\
&&&&& \ddots
\end{bsmallmatrix}\] 
are given by arrows. 

Now, we construct a minimal projective presentation of $\text{rad}\,P(k)$ and show that it is the same as $f_{\rho(k)}$.  First, we observe that the top of $\text{rad}\,P(k)$ consists of all simple modules $S(i)$ such that there is an arrow $k\to i$.  Hence, the projective cover of $\text{rad}\,P(k)$ is $P_0(\rho(k))$, and let $\pi: P_0(\rho(k)) \to \text{rad}\,P(k)$ denote the canonical surjection, where the top of $P(i_s)$, a summand of $P_0(\rho(k))$, maps to the top of $\text{rad}\,P(k)$ via the identity map.  

Next, we compute the projective cover of $\text{ker}\,\pi$.  First, suppose that the arrow $j_s\to k$ lies in two 3-cycles.  From the quiver, we see that the simple module $S(j_s)$ that lies in the top of $\text{rad}^2\,P(k)$ is covered twice by $\pi$ as exactly two summands $P({i_s}), P(i_{s+1})$ of $P_0(\rho(k))$ map to it.   If the arrow $j_s\to k$ lies in a single 3-cycle, then the path $k\leadsto j_s$ factors through exactly one of $i_s, i_{s+1}$.  In this case $\text{rad}\,P(k)$ is not supported at $j_s$, but the respective summand $P(i_s), P(i_{s+1})$ contains $S(j_s)$ in the top of its radical that maps to zero under $\pi$.  Therefore, in all cases we obtain that $P_1(\rho(k))$ is a summand of the projective cover of $\text{ker}\,\pi$.   Moreover, the kernel of $\pi$ is generated by all nonzero paths $i_s\leadsto x$ ending in some vertex $x$ such that  $k\to i_s\leadsto x=0$ or $k\to i_s\leadsto x = k\to i_t \leadsto x$ for some path $i_t\leadsto x$ where $P(i_t)\in\text{add}\,P_0(\rho(k))$.  If $k\to i_s\leadsto x=0$ then up to commutativity the first arrow in the path $i_s\leadsto x$ lies in the same 3-cycle as $k$, so $i_s\leadsto x$ factors through one of $j_s, j_{s-1}$.  If $k\to i_s\leadsto x = k\to i_t \leadsto x$ for some path $i_t\leadsto x$ then since $Q$ has no interior vertices, we must have $|s-t|=1$ and $i_s\leadsto x$ factors through one of $j_s, j_{s-1}$.  This shows that the top of $\text{ker}\,\pi$ equals $\bigoplus_s S(j_s)$.   Thus, the projective cover of $\text{ker}\,\pi$ is isomorphic to $P_1(\rho(k))$.   Moreover, we can take the associated composition $P_1(\rho(k))\to \text{ker}\,\pi \to P_0(\rho(k))$ to be given by all arrows $i_s\to j_s, i_s\to j_{s-1}$.  This gives precisely the same map as $f_{\rho(k)}$ and proves  that $M_{\rho(k)}\cong \textup{rad}\,P(k)$.
\end{proof}

\section{Morphisms of the syzygy category} \label{sect 6}
In this section, we study the morphisms in the stable syzygy category $\scmp\,B$ in terms of the morphisms in the category of 2-diagonals $\diag$. The main results of this section are the following. We define the morphisms on the degree 0 and 1 terms of the projective resolutions of the syzygies and prove in 
Theorem~\ref{lem 63}
that this construction induces a morphism on the syzygies. 

The irreducible morphisms in $\diag$ are given by the 2-pivots introduced in Definition \ref{def 2pivot}.
Our first goal is to associate a morphism in $\scmp \,B$ to each 2-pivot. We need a preparatory lemma.
\begin{lemma}\label{lem 62}
 Let $\zg,\zg'$ be 2-diagonals in $\cals$ such that $\zg'$ is obtained from $\zg$ by a 2-pivot. Suppose there exist vertices $i,i' $ in $Q$ such that 
 \begin{enumerate}
\item[(a)] $\zg$ crosses the radical line $\rho(i)$ and $\zg'$ crosses the radical line $\rho(i')$ and these crossings have the same degree,
\item[(b)] $\zg$ does not cross $\rho(i')$ and $\zg' $ does not cross $\rho(i)$.
\end{enumerate}
Then there is an arrow $i'\to i $ in $Q$.
\end{lemma}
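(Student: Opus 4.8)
\textbf{Proof plan for Lemma~\ref{lem 62}.}

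The plan is to analyze the 2-pivot move at the level of the checkerboard polygon $\cals$ and to translate the hypotheses into a statement about which radical lines separate certain boundary vertices. Recall from Definition~\ref{def 2pivot} that a 2-pivot fixes one endpoint of $\zg$ and moves the other endpoint two boundary steps clockwise; say $\zg$ has endpoints $a,x$ and $\zg'$ has endpoints $a,z$ where $z$ is obtained from $x$ by moving clockwise past the vertices $y$ (clockwise neighbor of $x$) and $z$ (clockwise neighbor of $y$). First I would use the fact that $\zg$ and $\zg'$ agree near the fixed endpoint $a$ and differ only in the small region swept out between $x$ and $z$; hence the set of radical lines crossed by $\zg$ but not by $\zg'$, and vice versa, is controlled by the radical lines incident to the boundary vertices $x,y,z$ and the shaded region(s) between them.

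Next I would exploit the hypotheses: $\rho(i)$ is crossed by $\zg$ but not by $\zg'$, and $\rho(i')$ is crossed by $\zg'$ but not by $\zg$. By the previous observation, both $\rho(i)$ and $\rho(i')$ must pass through the region swept by the pivot, so they are among the radical lines bounding the shaded triangles adjacent to the boundary edges $xy$ and $yz$ (using that every chordless cycle has length three, so every shaded region is a triangle, as in Section~\ref{sect 5.1}). Concretely, $\rho(i)$ ends at the boundary vertex $x$ (the endpoint that $\zg'$ pivots away from) and $\rho(i')$ ends at $z$ (the endpoint of $\zg'$); the third radical line at the vertex $y$ is the common one crossed by neither or both. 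I would verify this by a short case analysis of which shaded triangles sit between $x,y,z$ and which of their sides are radical lines $\rho(i),\rho(i')$, using Lemma~\ref{lem 32} (every boundary vertex is incident to one or two radical lines) to control the local picture.

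Then, with $\rho(i)$ and $\rho(i')$ identified as two radical lines that meet at the interior vertex of a common shaded triangle of $\cals$ (the triangle adjacent to the boundary edge between the relevant boundary vertices), I would invoke Lemma~\ref{lem 35} (equivalently the crossing criterion): radical lines $\rho(i),\rho(i')$ cross precisely when there is an arrow between $i$ and $i'$ in $Q$. The degree hypothesis (a) — that $\zg$ crosses $\rho(i)$ and $\zg'$ crosses $\rho(i')$ in the \emph{same} degree — together with the orientation rules on radical lines around a shaded triangle (Lemma~\ref{lem orientation}: the oriented segments form an oriented path around a shaded region) will pin down the \emph{direction} of that arrow to be $i'\to i$ rather than $i\to i'$. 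This last orientation bookkeeping is where I expect the main obstacle to lie: one has to track the alternation of degrees/orientations along the boundary of the white region between $x,y,z$ and along the boundary of the shaded triangle, and check carefully that the "same degree" hypothesis forces the arrow to point from the vertex associated to $\zg'$'s crossing toward the vertex associated to $\zg$'s crossing. Everything else is a finite, essentially pictorial case analysis of the 2-pivot move, so the proof should be short once the right local picture is drawn.
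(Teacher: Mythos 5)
Your overall strategy is the same as the paper's (localize the change of crossings to the pivot region, pin down the boundary endpoints of $\rho(i)$ and $\rho(i')$ among $x,y,z$, produce a crossing between the two radical lines, convert it to an arrow via Lemma~\ref{lem 35}, and read off the direction from the orientation rule of Lemma~\ref{lem orientation}), but the key localization step is carried out incorrectly. Since $x$ is an endpoint of $\zg$ itself, a radical line ending at $x$ cannot cross $\zg$; likewise a radical line ending at $z$ cannot cross $\zg'$. So your assignment is exactly reversed: $\rho(i)$ (crossed by $\zg$, not by $\zg'$) must have an endpoint at $y$ or $z$, and $\rho(i')$ (crossed by $\zg'$, not by $\zg$) must have an endpoint at $x$ or $y$. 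Because of this swap, your further claim that $\rho(i)$ and $\rho(i')$ are two sides of a shaded triangle adjacent to one of the boundary edges $xy$, $yz$ is unsupported, and it is not true in general: in the correct configuration the two radical lines cross at an interior vertex $\za_0$ of the checkerboard pattern which may lie far from the pivot region; nothing forces the relevant shaded triangles to touch the boundary segment between $x$ and $z$. Also, ``the third radical line at the vertex $y$'' conflicts with Lemma~\ref{lem 32}(c), which allows at most two radical lines at a boundary vertex.

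The second gap is that you never establish that $\rho(i)$ and $\rho(i')$ cross at all, and this is precisely where hypothesis (a) does its work. With the corrected endpoint constraints there are four configurations; in two of them (both radical lines ending at $y$, or $\rho(i)$ ending at $z$ and $\rho(i')$ ending at $x$) the orientation of the radical lines forces the two crossings of hypothesis (a) to have \emph{opposite} degrees, so these cases are excluded by the same-degree assumption. Only in the surviving configurations (e.g.\ $\rho(i)$ starting at $y$ and $\rho(i')$ ending at $x$) do the two radical lines necessarily intersect at an interior vertex, giving an arrow between $i$ and $i'$. The direction of that arrow is then determined not by the degree hypothesis, as you suggest, but by the shading and orientation pattern at the crossing point: by Lemma~\ref{lem orientation} the radical segments form an oriented path around each shaded region, and traversing either adjacent shaded region counterclockwise one meets $\rho(i')$ before $\rho(i)$, which yields $\za\colon i'\to i$. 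So your plan needs (1) the endpoint identification corrected, (2) the explicit case analysis using the same-degree hypothesis to force the crossing, and (3) the direction argument rerouted through the local shading at $\za_0$ rather than through the degrees.
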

\begin{proof}
We use the notation in Figure \ref{fig 2-pivots}. In particular the common endpoint of $\zg$ and $\zg'$ is the vertex $a$ and the other endpoints are $x$ and $z$. 
Without loss of generality, we may assume that $\zg$ is oriented from $a$ towards $x$. Then $\zg'$ is oriented from $a$ towards $z$. 

Since $\zg$ crosses $\rho(i)$, but $\zg'$ doesn't, it follows that one endpoint of $\rho(i)$ is $y$ or $z$ and the other lies to the left of $\zg$. Similarly, one endpoint of $\rho(i')$ is $x$ or $y$ and the other lies to the right of $\zg'$.

Suppose first that $\rho(i)$ and $\rho(i')$ both have endpoint $y$. Then both $\rho(i),\rho(i')$ are oriented from $y$ towards their other endpoint. Thus $\rho (i)$ crosses $\zg$ in degree 0, but $\rho(i')$ crosses $\zg'$ in degree 1. This implies that the crossings have different degrees, a contradiction to our assumption.

If $\rho(i)$ has endpoint $z$ and $\rho(i')$ has endpoint $x$ we also obtain this contradiction.

Now suppose $\rho(i)$ starts at vertex $y$ and $\rho(i')$ ends at vertex $x$, see Figure \ref{figlem 62}. Then $\rho(i)$ and $\rho(i')$ cross at an interior point $\za_0$ of the checkerboard pattern of $\cals$. This interior vertex corresponds to an arrow $\za$ that connects $i$ and $i'$ in $Q$.  Moreover, the 4  regions adjacent to $\za_0$
are either shaded or not, and the shading is determined by the orientation of the radical lines $\rho(i)$ and $\rho(i')$ according to Lemma \ref{lem orientation}. Namely, the segments of the radical lines form an oriented path around the shaded regions and they are alternating around the white regions. Thus the shading at $\za_0$ must be as in Figure \ref{figlem 62}.
Now, the direction of the arrow $\za$ is determined 
by going around the shaded regions in counterclockwise direction.
When we do this in either of the two shaded region, we encounter $\rho(i')$ first and then $\rho(i)$. Therefore the arrow $\za$ is oriented accordingly $\za\colon i'\to i$, and the proof is complete in this case. 

The only remaining case has $\rho(i)$ ending at $z$ and $\rho(i')$ ending at $y$, and its proof is  similar.
\end{proof}
\begin{figure}
\begin{center}
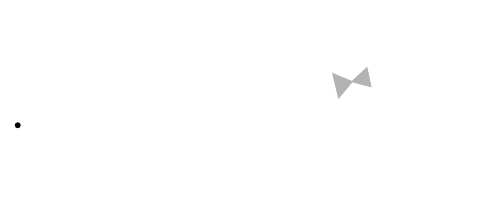
\caption{Proof or Lemma \ref{lem 62}}
\label{figlem 62}
\end{center}
\end{figure}

Next, we define maps $g_0, g_1$ from the summands of a projective resolution of a 2-diagonal to its 2-pivot.   

\begin{definition}\label{def:pivot-maps}
Let $\zg,\zg'$ be 2-diagonals in $\cals$ such that $\zg'$ is obtained from $\zg$ by a 2-pivot. Let $f_\zg\colon P_1\to P_0$ and $f_{\zg'}\colon P'_1\to P'_0$ be the morphisms associated to $\zg$ and $\zg'$ in Definition~\ref{def:map}.  We define morphisms $g_1\colon P_1\to P_1'$ and $g_0\colon P_0\to P_0'$ on the indecomposable summands of $P_1$ and $P_0$ as follows. 

If $P(i)$ is an indecomposable summand in both $P_0$ and $P_0'$ we let the component $P(i)\to P(i)$  of $g_0$ be the identity map and the components  $P(i)\to P(j) $  and  $P(j)\to P(i) $ be 
the zero map, for all $j\ne i$. 
If $P(i)$ is an indecomposable summand of $P_0$ but not of $P_0'$ and $P(i')$ is an indecomposable summand of $P_0'$ but not of $P_0$, we let the component $P(i)\to P(i')$ of $g_0$  be the map given by composing with the arrow $i'\to i$ given by Lemma \ref{lem 62}. 
We define the components $P(j)\to P(j') $ to be the zero map, for any other $j,j'$. 

The morphism $g_1\colon P_1\to P_1'$ is defined in the same way. 
\end{definition}

Given a representative of a 2-diagonal $\zg$ and its 2-pivot $\zg'$, recall Definition~\ref{def:compatible}(a) of compatibility of $\zg, \zg'$.  In particular, this means that the two crossing sequences for a compatible pair of arcs agree except at the very end.   

The next lemma gives a precise description of the matrices for the maps $g_0, g_1$. 

\begin{lemma}\label{lem:M}
Let $\zg, \zg'$ be compatible 2-diagonals in $\cals$ such that $\zg'$ is obtained from $\zg$ by a 2-pivot, and let $g_0, g_1$ be the maps given in Definition~\ref{def:pivot-maps}.  Then, up to reversing the direction of the crossing sequences, the matrices for $g_0, g_1$ are of the form $I_n$ or $\begin{bsmallmatrix} I_n & 0 \end{bsmallmatrix}$ or $\begin{bsmallmatrix} I_n \\ 0 \end{bsmallmatrix}$ or $\begin{bsmallmatrix} I_n & 0 \\ 0 & M\end{bsmallmatrix}$, where $I_n$ is the identity matrix with $n\geq 0$ and $M$ equals one of the following $\begin{bsmallmatrix} i\to h \end{bsmallmatrix}$, $\begin{bsmallmatrix} i\to h& i\to h' \end{bsmallmatrix}$, $\begin{bsmallmatrix} i\to h'\\ i'\to h \end{bsmallmatrix}$.  Moreover, the number of arrows in $g_0$ and $g_1$ combined is at most three. 
\end{lemma}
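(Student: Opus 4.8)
The statement is about the shape of the two maps $g_0\colon P_0\to P_0'$ and $g_1\colon P_1\to P_1'$ attached to a 2-pivot $\zg\to\zg'$, under the assumption that the chosen representatives are compatible. The plan is to read everything off the combinatorics of the crossing sequences. First I would recall that a 2-pivot $\zg'$ of $\zg$ fixes one endpoint $a$ and moves the other endpoint from $x$ to $z$, where $z$ is two steps clockwise from $x$ along the boundary (Definition~\ref{def 2pivot}). Because $\zg$ and $\zg'$ agree away from a neighbourhood of the moving endpoint, and because compatibility (Definition~\ref{def:compatible}(a)) forces the two crossing sequences to coincide except near that end, the bulk of $g_0$ and $g_1$ is the identity: every radical line $\rho(i)$ crossed by both $\zg$ and $\zg'$ in the same degree contributes an identity block, which by Definition~\ref{def:pivot-maps} is exactly the $I_n$ appearing in the statement. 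So the first step is to make this precise: list the crossing pairs for $\zg$ as $(i_1,j_1),\dots,(i_n,j_n)$ plus possibly a leading or trailing singleton, and argue that the crossing sequence for $\zg'$ agrees with this through index $n$ (or $n-1$), with only the tail differing.

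Second, I would enumerate the possibilities for the tail. Near the moved endpoint, $\zg$ and $\zg'$ traverse at most one or two extra shaded triangles; since each shaded region of $\cals$ is a triangle of size three (the running assumption of Section~\ref{sect 5}), the number of radical lines involved in the discrepancy is small --- at most two on each of the degree $0$ and degree $1$ sides. Concretely, the tail of $\zg$ involves a set of radical lines $\{\rho(i),\dots\}$ crossed by $\zg$ but not $\zg'$, and symmetrically a set $\{\rho(i'),\dots\}$ crossed by $\zg'$ but not $\zg$; Lemma~\ref{lem 62} provides, for each such matched pair $(i,i')$ of the same degree, an arrow $i'\to i$ in $Q$, which is precisely the entry of the matrix $M$. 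Carrying out the endpoint case analysis --- which of $x,y,z$ are endpoints of the relevant radical lines, whether the singleton terms $(i_0)/(j_0)$ and $(i_{n+1})/(j_{n+1})$ appear, and whether the 2-pivot adds zero, one, or two new crossings at the moved end --- yields exactly the four shapes $I_n$, $\begin{bsmallmatrix} I_n & 0\end{bsmallmatrix}$, $\begin{bsmallmatrix} I_n\\ 0\end{bsmallmatrix}$, $\begin{bsmallmatrix} I_n & 0\\ 0 & M\end{bsmallmatrix}$, with $M$ being $1\times 1$, $1\times 2$, or $2\times 1$ as listed. I expect this to be a finite but somewhat tedious table of cases, organized by the six configurations of endpoints $a,b,c,x,y,z$ that Definition~\ref{def 2pivot} already distinguishes.

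Third, for the final clause --- that the total number of arrows appearing in $g_0$ and $g_1$ combined is at most three --- I would argue that in each configuration at most one of $g_0,g_1$ has a nontrivial block $M$, except possibly in the one configuration where the moved endpoint creates one new shaded-triangle crossing on each side, in which case $g_0$ contributes at most two arrows (the $1\times 2$ case $\begin{bsmallmatrix}i\to h & i\to h'\end{bsmallmatrix}$) and $g_1$ contributes at most one (the $1\times 1$ case), or vice versa. Tracking the degrees (degree $0$ crossings feed $P_0$, degree $1$ feed $P_1$) and using that a single 2-pivot shifts the endpoint by exactly two boundary positions bounds the discrepancy tightly; summing the arrow counts over the handful of cases never exceeds three.

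\textbf{Main obstacle.} The genuine difficulty is not any single clever idea but bookkeeping: correctly tracking, through every endpoint configuration of the 2-pivot, which radical lines get added or dropped, which degrees they carry, and hence which of the four matrix shapes occurs for each of $g_0$ and $g_1$ simultaneously --- and then checking the arrow count is $\le 3$ uniformly. Getting the orientations of the new arrows right (so that $M$ really has the stated entries $i\to h$, etc., rather than transposed or reversed) is where Lemma~\ref{lem 62} must be invoked with care, since its conclusion $i'\to i$ is asymmetric in the two endpoints. I would therefore organize the proof around a single figure showing the two shaded triangles near the moved endpoint, and verify the claim configuration by configuration against that picture.
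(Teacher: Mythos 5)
Your overall strategy --- an identity block coming from the crossings common to $\zg$ and $\zg'$, arrows supplied by Lemma~\ref{lem 62} for the crossings that differ, and a case analysis near the moved endpoint --- is the same skeleton as the paper's proof, but as written it leaves the two genuinely hard points unproved. First, you treat the block decomposition $\begin{bsmallmatrix} I_n & 0\\ 0 & M\end{bsmallmatrix}$ as an automatic consequence of compatibility (``the crossing sequences coincide except near the moving end''). Definition~\ref{def:compatible}(a) only says that a crossing pair of $\zg$ both of whose radical lines are crossed by $\zg'$ is again a crossing pair of $\zg'$; it says nothing about the \emph{order} of crossings, so a priori a radical line crossed by both arcs could occur after a discrepancy line in one of the sequences, producing an identity entry inside $M$, or $M$ could contain a zero row or column. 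Ruling this out is where most of the paper's proof lives: it shows $M$ has no zero rows or columns, that its top-left entry is an arrow, and that $M$ contains no entry of the form $1_{P(h'')}$, each time by combining compatibility with the structure of $Q$ (for instance, arrows $j\to i\to h''$ together with $j\to h''$ cannot coexist, since that would force interior vertices of $Q$). Your plan needs these arguments, or a substitute for them, before the ``four shapes'' conclusion can be drawn.

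Second, your bound on the number of arrows is attributed to the wrong mechanism. That every shaded region is a triangle, or that the endpoint moves only two boundary steps, does not by itself prevent $M$ from being a $2\times 2$ block containing four arrows, nor both $g_0$ and $g_1$ from carrying two arrows each. The paper's bound rests on Lemma~\ref{lem 32}(c) --- every boundary vertex of $\cals$ meets at most two radical lines, applied at $x$, $y$, $z$ --- together with the absence of interior vertices in $Q$: if two radical lines ending at $x$ crossed $\zg'$ while two radical lines at $y$ crossed $\zg$, or if $g_0$ and $g_1$ each had two arrows (which saturates the vertex $y$ and forces two lines at $x$ and two at $z$), one writes down an explicit subquiver that would require interior vertices, a contradiction. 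Without identifying this argument, ``summing the arrow counts over the handful of cases never exceeds three'' is an assertion rather than a proof. The orientation bookkeeping via Lemma~\ref{lem 62} that you single out as the main obstacle is, by comparison, the easy part.
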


\begin{proof}
Construct the crossing sequences for $\zg, \zg'$ by starting at the common endpoint of the arcs and then moving towards the other endpoints.  Then the two crossing sequences for $\zg, \zg'$ start with a common subsequence of degree zero crossings $i_1, \dots, i_n$ in that order.  Moreover, we assume that this subsequence is maximal, meaning that $\zg,\zg'$ have different crossings in degree zero after $i_n$.  By definition of $g_0$ we conclude that its matrix contains a block $I_n$ in its top left corner.  If there are no other arrows in $g_0$, then its matrix is one of $I_n$, $\begin{bsmallmatrix} I_n & 0 \end{bsmallmatrix}$, or $\begin{bsmallmatrix} I_n \\ 0 \end{bsmallmatrix}$ and the lemma follows. 

For the remainder of this proof, we suppose that there is at least one arrow in $g_0$.  By definition of $g_0$ its matrix is of the form $\begin{bsmallmatrix} I_n & 0 \\ 0 & M\end{bsmallmatrix}$ for some matrix $M$, because any row and column with entry $1_{P(i_k)}$ for some $k$ has all other entries being zero.   

First, we claim that $g_0$ contains at most two arrows.   By definition, an arrow $i\to h$ in $g_0$ comes from a pair of crossing radical lines $\rho(i), \rho(h)$ in $\cals$ such that $\rho(h)$ starts at $y$ and crosses $\zg$ while $\rho(i)$ crosses $\zg'$ and ends in $x$, see Figure~\ref{fig:pivot-map} on the left.  There are at most two radical lines at every boundary vertex of $\cals$, by Lemma~\ref{lem 32}(c).  If there are also $\rho(h'), \rho(i')$ with endpoints $y, x$ and crossing $\zg, \zg'$ respectively, then we obtain at least four crossings, where $\rho(i), \rho(i')$ each cross both of $\rho(h),\rho(h')$.  Then $Q$ admits a subquiver that is given below on the left. 

\[\xymatrix@R=15pt@C=15pt{i\ar[r] \ar[d]& h &&&& k\ar[r] \ar[d] & p\ar[d]\ar[r] & i'\ar[d]\\
 h' & i'\ar[u]\ar[l] &&&& p'\ar[r] \ar@/^45pt/[urr]& i\ar[r] & h}
\]

This yields a contradiction, because there should be a sequence of 3-cycles in $Q$ between the arrows $i\to h$ and $i\to h'$, which means that $Q$ contains interior vertices.
This shows that either $\rho(i')$ or $\rho(h')$ or both are not in $\cals$, so $g_0$ contains an arrow $i\to h$ and at most one of $i\to h', i'\to h$.   This shows the claim that $g_0$ contains at most two arrows. 

Now we show that the total number of arrows in $g_0$ and $g_1$ is at most three.
Suppose on the contrary that both $g_0, g_1$ contain two arrows each.  An arrow $k\to p$ in $g_1$ comes from a pair of crossing radical lines $\rho(k), \rho(p)$ in $\cals$ such that $\rho(k)$ starts at $y$ and crosses $\zg'$ while $\rho(p)$ crosses $\zg$ and ends in $z$.   An arrow $i\to h$ in $g_0$ yields another pair of radical lines with endpoints $x, y$.  In particular, there are two radical lines $\rho(k), \rho(h)$ at vertex $y$, so there cannot be any additional radical lines at this vertex.  This implies that if $g_0, g_1$ contain two arrows each then there are two radical lines $i, i'$ ending in $x$ and crossing $\zg'$, and two radical lines $p, p'$ crossing $\zg$ and ending in $z$, see Figure~\ref{fig:pivot-map} on the left.  In this case we  obtain the subquiver of $Q$ given above on the right, which contains interior vertices.  This yields a contradiction, and shows that the total number of arrows in $g_0$ and $g_1$ is at most three.

\begin{figure}  
\centerline{\scalebox{.9}{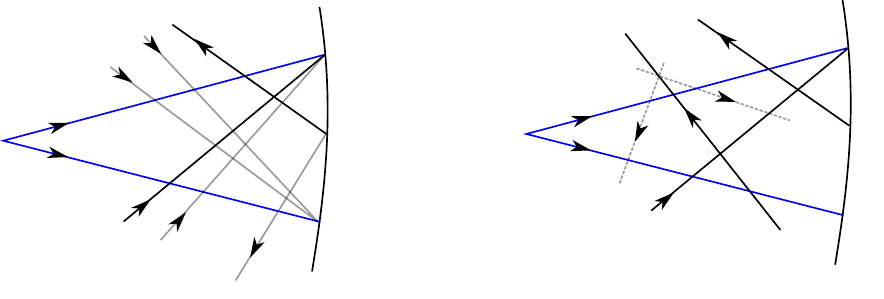}}
\caption{The proof of Lemma~\ref{lem:M}. Here the two dashed lines in the right figure illustrate the two possibilities for the arc $j$.}
\label{fig:pivot-map}
\end{figure}



Now we study the structure of $M$.  In the case when $M$ contains two arrows we label the radical lines so that $\zg$ crosses $h$ before $h'$ and similarly $\zg'$ crosses $i$ before $i'$. 

Now suppose that $M$ contains a row of zeros, which correspond to entries in $M$ that are paths starting at some fixed vertex $k$.  Then there is a 2-diagonal $k$ that crosses $\zg'$ in degree zero but it does not cross $\zg$.  Then $k$ crosses $\zg'$ and ends in $x$, so in particular it crosses $h$, as we assume $g_0$ contains at least one arrow $i\to h$.  But then, by definition of $g_0$, the matrix $M$ would contain an entry $k\to h$ and not a row of zeros as we assumed above.  This gives a contradiction.  We obtain a similar contradiction to $M$ containing a column of zeros, which shows that $M$ cannot contain a row or a column of zeros.   


Now we show that the top left entry of $M$ is an arrow $i\to h$.   By definition every entry of $M$ is either zero, $1_{P(h'')}$, or an arrow.   If the top left entry of $M$ is $1_{P(h'')}$, then we obtain a contradiction to $i_1, \dots, i_n$ being the maximal initial subsequence of the degree zero crossings that $\zg, \zg'$ have in common.  Hence, suppose that the top left entry of $M$ is zero.  By the above, $M$ does not contain any row or  column of zeros, so suppose that the first row of $M$ contains an arrow $i\to h$ and then the first column of $M$ must contain $1_{P(h'')}$ for some vertex $h''$.   Then $h''$ is a common crossing of $\zg, \zg'$ in degree zero such that $\zg'$ first crosses $i$ and then $h''$, while $\zg$ first crosses $h''$ and then $h$.  Thus, we are in the situation of Figure~\ref{fig:pivot-map} on the right.  Since $h''$ is not the last crossing of $\zg$ in degree zero, it follows that $(h'', j)$ is a crossing pair for $\zg$ for some vertex $j$.  If in addition $j$ crosses $\zg'$, then by definition of compatibility between $\zg, \zg'$ we conclude that $(h'', j)$ is also a crossing pair for $\zg'$.  Then $\zg'$ crosses $h'', j$ after crossing $i$.  In particular, $j$ and $i$ must cross, and we obtain arrows $j\to i\to h''$ and $j\to h''$, which is not a possible subquiver of $Q$.  Thus, we obtain a contradiction in the case when $j$ also crosses $\zg'$.  If $j$ does not cross $\zg'$, then it would end at vertex $z$.  But then we still obtain that $j$ must cross $i$ and we arrive at the same contradiction.  This shows that it is not possible for the first row of $M$ to contain an arrow $i\to h$ and for the first column to contain   $1_{P(h'')}$.  If $M$ contains an arrow $i\to h$ in the first column and $1_{P(h'')}$ in the first row then we also obtain a contradiction in a similar way.  This shows that the top left entry of $M$ is an arrow $i\to h$.  

Now suppose that $M$ contains an entry $1_{P(h'')}$.  Since $i\to h$ lies in the top left corner of $M$, it follows that $1_{P(h'')}$ cannot be in the first row or column of $M$.  Then $\zg, \zg'$ cross $h''$ after their crossings with $h, i$ respectively.  Then it follows that $h''$ crosses both $h$ and $i$, and in particular we obtain arrows $i\to h$ and $i\to h''\to h$ in the quiver.  This is not a possible configuration, so we obtain a contradiction.  This shows that the matrix $M$ cannot contain any entry of the form $1_{P(h'')}$.

Thus, we conclude that $M$ cannot contain a row or a column of zeros, or any entry of the form $1_{P(h'')}$, moreover we know that $M$ contains at most two arrows.  This implies that for the map $g_0$ the matrix $M$ must be of the form given in the statement of the lemma. 

The proof in the case of $g_1$ follows similarly.  
\end{proof}

The above lemma implies that in a given row or column of $g_i$ with $i=0,1$, there are at most two nonzero entries.  If $g_i$ contains two arrows in the same row, then let $g_i^r$ denote the map obtained from $g_i$ by changing the sign of the arrow in the last column of the matrix, and otherwise let $g_i^r=g_i$.  Similarly, if $g_i$ contains two arrows in the same column, then let $g_i^c$ denote the map obtained from $g_i$ by changing the sign of the arrow in the last row, and otherwise let $g_i^c=g_i$.

\begin{definition}\label{def pivot}
 The morphism $(g^r_0,g^c_1)$ defined above is called the \emph{pivot morphism} associated to the 2-pivot $\zg\mapsto \zg'$.
\end{definition}
\begin{thm}
 \label{lem 63} Let $\zg,\zg'$ be compatible 2-diagonals in $\cals$ such that $\zg'$ is obtained from $\zg$ by a 2-pivot and let $(g_0^r,g_1^c)$ be the morphism defined above. Then we have the following commutative diagram with exact rows
 \begin{equation}\label{eq 62}\xymatrix@C50pt{ P_1\ar[r]^{f_\zg} \ar[d]_{g_1^c}&P_0\ar[d]^{g_0^r}
\ar[r]^{\pi_\zg}&M_\zg\ar[r]\ar[d]^g&0 \\
P_1'\ar[r]^{f_{\zg'}}&P_0'\ar[r]^{\pi_{\zg'}}&M_{\zg'}\ar[r]&0 \\
}
 \end{equation}
where $g$ is the induced morphism on the cokernels.
%
 In particular, $g$ is a morphism of syzygies in $\cmp\,B$.
\end{thm}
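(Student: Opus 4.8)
The plan is to verify that the left square of diagram \eqref{eq 62} commutes, $g_0^r\, f_\zg = f_{\zg'}\, g_1^c$, since then the universal property of the cokernel produces the induced map $g\colon M_\zg\to M_{\zg'}$ making the right square commute, and the exactness of the rows is automatic from the definition $M_\zg=\coker f_\zg$, $M_{\zg'}=\coker f_{\zg'}$. The final sentence — that $g$ is a morphism of syzygies in $\cmp\,B$ — then requires only the observation that $M_\zg, M_{\zg'}\in\cmp\,B$, which is Corollary~\ref{cor:536}, together with the fact that any $B$-module homomorphism between objects of $\cmp\,B$ is a morphism in $\cmp\,B$ since $\cmp\,B$ is a full subcategory of $\textup{mod}\,B$.

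First I would reduce to a convenient pair of representatives: by Corollary~\ref{cor:comp} (conjugation with the homotopy automorphism) changing the representatives of $\zg$ and $\zg'$ only conjugates $f_\zg, f_{\zg'}$ by isomorphisms, and — after checking that the pivot maps $g_0,g_1$ transform compatibly, which follows from their combinatorial definition via Lemma~\ref{lem 62} — it suffices to prove commutativity for a single compatible choice. So I would take $\zg,\zg'$ as in Definition~\ref{def:compatible}(a): their crossing sequences agree on a common initial segment and differ only near the pivoting endpoint. Then by Lemma~\ref{lem:M}, up to reversing the crossing sequences, each of $g_0, g_1$ has block form $\begin{bsmallmatrix} I_n & 0\\ 0 & M\end{bsmallmatrix}$ with $M$ one of the three small explicit matrices listed there (or the even simpler forms $I_n$, $[I_n\ 0]$, $[I_n\ 0]^{\mathsf T}$), and the total number of arrows appearing in $g_0$ and $g_1$ is at most three. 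Writing $f_\zg, f_{\zg'}$ in the matching block form — with a common $(n\times n)$ block governing the shared initial crossings, off-diagonal blocks recording the valid paths, and a small block near the pivot — the identity $g_0^r f_\zg = f_{\zg'} g_1^c$ becomes a finite check on the block entries.

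The main obstacle, and where the real work lies, is the entry-by-entry verification of this matrix identity in the handful of cases cut out by Lemma~\ref{lem:M}: one must case on whether $M$ has one or two arrows, on the orientation of the relevant arrows $\alpha_s$ (equivalently, on whether the steps near the pivot are forward), and correspondingly on the shape of the affected block of $f_\zg$ versus $f_{\zg'}$. The key facts that make each case go through are: (i) composing with the arrow $i'\to i$ of Lemma~\ref{lem 62} turns a valid path $i\leadsto j_t$ into the valid path $i'\leadsto j_t$, while composites through the pivot that would fail to be valid are exactly the ones killed by a vanishing entry on the other side; (ii) the sign corrections $g_0^r, g_1^c$ (flipping the sign of an arrow in the last column, resp. last row, when a row, resp. column, contains two arrows) are precisely what is needed to cancel the two contributions that arise when a crossing radical line meets both branches of the pivot, which is the same bookkeeping already carried out in the proof of Proposition~\ref{homotopy}; and (iii) wherever a naive composite $g_0 f_\zg$ would produce a path containing two arrows of one $3$-cycle, the corresponding matrix entry of $f_{\zg'}$ (or of $f_\zg$) is $0$ by Definition~\ref{def:map}, so both sides vanish there — this is where the hypothesis that every chordless cycle has length three is used, via the ``no interior vertices'' arguments of Lemma~\ref{lem:M}. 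Once all cases close, commutativity of the left square is established, $g$ exists, diagram \eqref{eq 62} commutes with exact rows, and since $M_\zg, M_{\zg'}\in\cmp\,B$ the morphism $g$ is a morphism in $\cmp\,B$, as claimed.
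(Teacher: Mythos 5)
Your overall strategy is the same as the paper's: prove the single identity $g_0^r f_\zg = f_{\zg'} g_1^c$, let $g$ be the map induced on cokernels, observe that exactness of the rows is automatic from $M_\zg=\coker f_\zg$, and invoke Corollary~\ref{cor:536} for the last sentence. (The opening reduction via Corollary~\ref{cor:comp} is superfluous, since the theorem already assumes $\zg,\zg'$ compatible, but it is harmless.) The problem is that the verification of this identity is exactly where the content of the theorem lies, and your proposal defers it to ``a finite check on the block entries'' organized by the matrix shapes of Lemma~\ref{lem:M}. That organization is too coarse: the entries of $f_\zg$, $f_{\zg'}$, $g_0^r$, $g_1^c$ are paths in a quiver with relations, and whether two composites agree, cancel, or vanish is not determined by the block shape but by the local checkerboard configuration at the three boundary vertices involved in the pivot (which radical lines end at the middle vertex, whether certain arrows are boundary arrows, whether an auxiliary radical line $\rho(\ell)$ closing a white region coincides with $\rho(j)$ or not, etc.). In the paper this is a genuine case analysis — roughly, whether $P(j)$ is a summand of $P_1'$ and $P(i)$ of $P_0$, then subcases on the radical lines at the middle vertex — and each case is settled by a different mechanism: sometimes a cancellation of two parallel paths (Proposition~\ref{prop 39}), sometimes a zero relation forced by a boundary arrow (Lemma~\ref{lem 37}), sometimes both sides vanish because no valid path exists; the existence and position of the auxiliary radical lines needed for these arguments is itself deduced from the fact that white regions have an even number of sides and at least one boundary vertex (Lemma~\ref{lem 32}). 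None of this is carried out in your proposal.

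Moreover, your key fact (i) is not correct as stated: composing the arrow $i'\to i$ of Lemma~\ref{lem 62} with a valid path $i\leadsto j_t$ need not produce a valid path, since that arrow may lie in the same $3$-cycle as the first arrow of the path; and even when the composite is nonzero in $B$, identifying it with the corresponding entry of $f_{\zg'}$ (a valid path $i'\leadsto j_t$) requires an argument. The paper does exactly this work: it shows, using the white region adjacent to the crossing of $\rho(i)$ with the relevant radical line, that the composite $\za\zb$ is not a subpath of a chordless cycle, hence every valid path $i\leadsto j$ factors through $\za$, and only then concludes the two matrix entries agree via Proposition~\ref{prop 39}. So the mechanisms you list in (i)--(iii) are the right ones, but they are assertions that must be established configuration by configuration in the polygon; as written, the proposal is an outline of the proof rather than a proof, and its central step would not go through without the geometric case analysis the paper supplies.
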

\begin{proof} This is proved in section \ref{A2pivots} of the appendix. \end{proof}
\begin{definition}
 \label{def pivot morphism}
  \label{prop 65} Let $\zg,\zg'$ be 2-diagonals in $\cals$  and let $M_\zg,M_{\zg'}$ be the corresponding syzygies in $\scmp\,B$. 
If $\zg'$ is obtained from $\zg$ by a 2-pivot then the corresponding morphism $g\colon M_\zg\to M_{\zg'}$ in $\scmp\,B$ is called  {\em pivot morphism}.
\end{definition}

\section{Auslander-Reiten triangles of the syzygy category}\label{sect 7}
In this section, we give a combinatorial description of the Auslander-Reiten triangles in $\scmp\,B$ in terms of the 2-pivots in the category of 2-diagonals $\diag$. In subsection \ref{sect 7.2}, we show that the indecomposable objects in $\scmp\,B$ do not admit any nonzero nilpotent endomorphisms. As a consequence we see that the dimension of $\Ext^1$ between an indecomposable $M$ and its AR-translate is equal to one. In subsection~\ref{sect 6.2}, we show that the Auslander-Reiten triangles of  $\diag$ give rise to commutative diagrams in $\scmp \,B$. We use these results in subsection 
\ref{sect 7.4}, where we construct the short exact sequences in $\textup{mod}\,B$ that induce the AR-triangles  $\scmp\,B$. 
\medskip
\subsection{Nilpotent endomorphisms of $M_\zg$}\label{sect 7.2} 
 In this subsection, we show that the syzygy $M_\zg$ admits no nonzero nilpotent endomorphisms in $\scmp\,B$.

Let $\zg$ be a 2-diagonal in $\cals$ and $f_\zg\colon P_1(\zg)\to P_0(\zg)$ be the morphism defined in Definition~\ref{def:map}. 
Let $g\colon M_\zg\to M_\zg $ be a nilpotent endomorphism, and let $g_0\colon P_0(\zg)\to P_0(\zg)$, 
$g_1\colon P_1(\zg)\to P_1(\zg)$ be the induced endomorphisms of the projectives. Thus we have the following commutative diagram with exact rows in $\textup{mod}\,B$.
\begin{equation}
\label{diagram 72}
 \xymatrix{ P_1(\zg)\ar[r]^{f_\zg}\ar[d]_{g_1} &P_0(\zg)\ar[d]^{g_0}\ar[r]^\pi &M_\zg\ar[d]^g\ar[r]&0 \\ P_1(\zg)\ar[r]^{f_\zg}&P_0(\zg)\ar[r]^\pi&M_\zg\ar[r]&0}
\end{equation}

The main result of this subsection is Theorem~\ref{thm nilpotent endo} which states  that $g$ is zero in the stable category $\scmp\,B$. The proof of this theorem requires a detailed analysis of the entries of the matrix of $g_0$. We partition the matrix twice into column blocks and row blocks and then reduce $g_0$ successively by removing one block at a time. 

As a direct consequence, we show in Corollary~\ref{cor:ext} that in $\scmp\, B$ the dimension of the first extension group between $\zO^2\,M_\zg$ and $M_\zg$ is equal to 1.
The following is proved in the appendix as Theorem~\ref{Athm nilpotent endo}.
\begin{thm} 
 \label{thm nilpotent endo}
 Let $\zg$ be a 2-diagonal and $M_\zg$ the associated indecomposable syzygy over $B$. Then $M_\zg$ does not admit any nonzero nilpotent endomorphisms in $\scmp\,B$.
\end{thm}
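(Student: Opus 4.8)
The plan is to analyze a nilpotent endomorphism $g\colon M_\zg\to M_\zg$ via its lifts $g_0\colon P_0(\zg)\to P_0(\zg)$ and $g_1\colon P_1(\zg)\to P_1(\zg)$ to the projective presentation, as in diagram~\eqref{diagram 72}, and to show that $g$ factors through the projective cover $\pi\colon P_0(\zg)\to M_\zg$, hence is zero in $\scmp\,B$. From the indecomposability argument of Proposition~\ref{prop:ind} we already know that every endomorphism of the two-term complex $f_\zg$ is of the form $\lambda\cdot\mathrm{Id}+N'$ with $N'$ strictly upper-triangular (after choosing the order of the crossing sequence so that the forward steps point in the correct direction). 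Since $g$ is nilpotent, its lift satisfies $\lambda=0$, so $g_0$ and $g_1$ are strictly upper-triangular with respect to the ordering of the summands $P(i_1),\dots,P(i_m)$ and $P(j_1),\dots,P(j_n)$ coming from the crossing sequence. The task is then to show that such a lift of a nilpotent $g$ can be modified, by adding a homotopy $h\colon P_0(\zg)\to P_1(\zg)$ with $g_0=f_\zg h$ (possibly after first adjusting $g_0,g_1$ by a genuine homotopy so that the new $g_0$ lands entirely in the image of $f_\zg$), so that $g$ itself becomes null-homotopic.

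The key steps, in order: First, normalize $g_0$. Using Lemma~\ref{lem:ab}, the matrix entries $b_{s,t}$ of $g_0$ and $a_{s,t}$ of $g_1$ satisfy strong constraints whenever $f_{\zg_{s,t}}\ne 0$, which forces many off-diagonal entries to be determined by paths compatible with $f_\zg$. Second, set up a descending induction (or a double block decomposition as the authors indicate — partition the matrix of $g_0$ into column blocks indexed by the "direction change" positions of the crossing sequence, then further into row blocks) and reduce $g_0$ one block at a time: at each stage one produces a partial homotopy $h$ killing the current block, checks that the correction $g_0-f_\zg h$ remains a valid lift of an endomorphism of $M_\zg$ (i.e.\ the compatibility of Lemma~\ref{lem:ab} is preserved and the new $g_1$ is still admissible), and recurses on the smaller matrix. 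The forward/trapezoidal dichotomy for the steps of the crossing sequence, together with validity of paths, is what guarantees that the needed path $P(i_s)\to P(j_t)$ defining an entry of $h$ actually exists in $Q$ and is nonzero in $B$ (Proposition~\ref{prop 38}, Proposition~\ref{prop 39}, and the no-interior-vertices Lemma~\ref{lem 36a}). Third, once $g_0$ has been reduced to $0$ modulo $\mathrm{Im}\,f_\zg$, the induced map $g$ on cokernels is $0$ in $\scmp\,B$ by definition of the stable category.

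I expect the main obstacle to be the bookkeeping in the block reduction: one must show that each elementary move (subtracting $f_\zg h$ for a well-chosen homotopy entry) does not re-introduce nonzero entries in blocks already cleared, and that it respects the strict upper-triangular shape and the constraints of Lemma~\ref{lem:ab}. Concretely, the subtle point is that $f_\zg$ is \emph{not} a generic map — some nonzero paths $i_s\leadsto j_t$ in $Q$ are set to $0$ in $f_\zg$ because they are not valid or the subsequence is not forward — so the image of $f_\zg$ is smaller than one might naively expect, and one has to verify that the particular entries of $g_0$ that need to be killed really do lie in $\mathrm{Im}\,f_\zg$. This is where the detailed analysis of which paths are valid, and the geometry of how radical lines meet $\zg$ and white regions (Remark~\ref{rem:forward}, Lemma~\ref{lem:arrow}), is essential; the argument will mirror the case analysis in the proof of Proposition~\ref{homotopy} but carried out globally rather than for a single elementary homotopy. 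Since the excerpt defers the full argument to the appendix (Theorem~\ref{Athm nilpotent endo}), I would present the reduction as a sequence of lemmas isolating (i) the shape of $g_0,g_1$, (ii) the block decomposition, (iii) the single-block reduction step, and (iv) the conclusion, rather than attempting a monolithic computation.

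Once Theorem~\ref{thm nilpotent endo} is in hand, the promised Corollary~\ref{cor:ext} follows quickly: $\scmp\,B$ is $3$-Calabi--Yau (Keller--Reiten) so $\Ext^1_{\scmp}(\zO^2 M_\zg, M_\zg)\cong D\,\Ext^1_{\scmp}(M_\zg,\zO^2 M_\zg)\cong D\,\overline{\Hom}(M_\zg,\zO^3 M_\zg)$, and by Theorem~\ref{gesthm}(c) together with $\tau\cong\zO^{-2}$ on $\scmp\,B$ one identifies $\zO^2 M_\zg$ with $\tau^{-1}M_\zg$; then the Auslander--Reiten formula plus the absence of nonzero nilpotent (hence the absence of non-isomorphism radical) endomorphisms of the indecomposable $M_\zg$ forces $\dim_\kb\Ext^1_{\scmp}(\tau^{-1}M_\zg,M_\zg)=\dim_\kb\End_{\scmp}(M_\zg)/\rad=1$, giving the claim.
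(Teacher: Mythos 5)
Your overall plan --- lift $g$ to a pair $(g_1,g_0)$ on the presentation, exploit commutativity with $f_\zg$, and clear the matrix of $g_0$ block by block until $g$ is seen to vanish in $\scmp\,B$ --- is the same general strategy as the paper's, but two of your concrete claims are wrong or incomplete, and they sit exactly where the difficulty lies. First, your normalization step fails: Proposition \ref{prop:ind} (and Lemma \ref{lem:ab}) only forces the \emph{diagonal} scalars of $g_0,g_1$ to agree, hence to vanish when $g$ is nilpotent; it does not make the lifts strictly upper-triangular in the crossing-sequence order. The off-diagonal entries of $g_0$ are paths $i_s\leadsto i_t$ in $Q$ and occur on both sides of the diagonal: in the paper's decomposition the $A$-blocks of Definition \ref{def blocks of g0} lie strictly below the diagonal (the bullet entries in Figure \ref{fig staircase}), and when the crossing sequence changes direction even $f_\zg$ itself is not triangular. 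So the descending induction you describe, which is supposed to preserve a strictly upper-triangular shape, starts from a false premise.

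Second, your vanishing criterion is too narrow. You aim to show that, after adjusting the lift by a homotopy, $g_0=f_\zg h$; but a change of lift alters $g_0$ precisely by terms of the form $f_\zg h$, so this amounts to claiming that $g_0$ factors through $f_\zg$, which is false in general. The correct criterion (Lemma \ref{lem:cmp-morphism}) is $g_0=f_\zg h_1+g'$ with $g'f_\zg=0$, and the paper's argument genuinely uses both summands: pieces of $g_0$ that are constant on a column block are killed by an explicitly constructed homotopy (Lemma \ref{lem 713}), while pieces that are alternating on a row block are killed because they annihilate $\im\,f_\zg$ (Lemma \ref{lem 714}); the latter are in general not of the form $f_\zg h$. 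Moreover, the engine that produces this constant/alternating dichotomy is the entrywise computation of $g_0f_\zg$ and $f_\zg g_1$ (Lemmas \ref{lem 7.1}--\ref{lem 7.5}), the staircase shape of $f_\zg$ coming from the rectangular/trapezoidal steps, and a telescoping argument on the scalar coefficients; none of this is supplied in your sketch, so your single-block reduction step remains an unproved assertion rather than a lemma. As written, the proposal is a plausible plan containing one false structural claim and missing the second vanishing mechanism, not a proof.
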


Recall that $\scmp\,B$ is a triangulated category with inverse shift given by $\Omega$.  It has almost-split triangles where $\tau^{-1}$ is given by $\Omega^2$, so in particular   $\textup{dim Ext}^1_{\scmp\,B}(\Omega^2 M, M) \geq 1$.  Now we show that the dimension of this space is actually equal to 1. 

\begin{corollary}\label{cor:ext}
Let $\zg$ be a 2-diagonal in $\mathcal{S}$ and $M=M_{\zg}$ be the associated indecomposable syzygy in $\scmp\,B$. Then 
\[\textup{dim Ext}^1_{\scmp\,B}(\Omega^2 M, M) = 1.\]
\end{corollary}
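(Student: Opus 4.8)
The plan is to combine Theorem~\ref{thm nilpotent endo} with the 3-Calabi-Yau property of $\scmp\,B$ and the existence of almost-split triangles. Since $\scmp\,B$ is a triangulated category with Auslander-Reiten triangles in which $\tau^{-1}=\Omega^2$ (Theorem~\ref{gesthm} together with the results of Keller-Reiten), there is an almost-split triangle
\[ M \longrightarrow E \longrightarrow \Omega^2 M \longrightarrow \Omega^{-1} M \]
and hence a nonzero element of $\Ext^1_{\scmp\,B}(\Omega^2 M, M)$, so the dimension is at least $1$. It remains to prove it is at most $1$.

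First I would use the 3-Calabi-Yau duality $D\,\Ext^1_{\scmp\,B}(\Omega^2 M, M) \cong \Ext^2_{\scmp\,B}(M, \Omega^2 M)$, and then rewrite $\Ext^2_{\scmp\,B}(M,\Omega^2 M)\cong \Hom_{\scmp\,B}(\Omega^2 M, \Omega^2 M) = \End_{\scmp\,B}(M)$ using that $\Omega$ is the inverse shift in the triangulated category $\scmp\,B$ (so $\Ext^2(X,Y)=\Hom(X,\Omega^{-2}Y)$ and $\Omega^2$ is an autoequivalence). Thus
\[ \dim \Ext^1_{\scmp\,B}(\Omega^2 M, M) = \dim \End_{\scmp\,B}(M). \]
So the statement reduces to showing $\End_{\scmp\,B}(M)$ is one-dimensional. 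By Proposition~\ref{prop:ind}, $M=M_\zg$ is indecomposable, so its stable endomorphism ring is local; write any endomorphism as $\lambda\cdot\textup{id}_M + g$ with $g$ in the radical, hence nilpotent. By Theorem~\ref{thm nilpotent endo}, $g=0$ in $\scmp\,B$. Therefore $\End_{\scmp\,B}(M) = \kb\cdot\textup{id}_M$ is one-dimensional, and the corollary follows.

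The only subtlety — and the step I expect to require the most care — is the bookkeeping in the Calabi-Yau shift: one must check that the isomorphism $\Ext^2_{\scmp\,B}(M,\Omega^2 M)\cong \End_{\scmp\,B}(M)$ is correct with the convention that $\Omega$ is the \emph{inverse} shift, i.e.\ that the composite of the $3$-CY duality with the appropriate power of the shift autoequivalence lands exactly on $\End_{\scmp\,B}(M)$ and not on $\Hom_{\scmp\,B}(M,\Omega M)$ or a similar neighbour, which would be zero or of a different dimension. Concretely, with $[1]=\Omega^{-1}$ one has $\Ext^1_{\scmp\,B}(\Omega^2 M,M)=\Hom_{\scmp\,B}(\Omega^2 M, M[1])=\Hom_{\scmp\,B}(\Omega^2 M,\Omega^{-1}M)$; applying the 3-CY isomorphism $D\,\Hom(X,Y)\cong\Hom(Y,X[3])=\Hom(Y,\Omega^{-3}X)$ with $X=\Omega^2 M$, $Y=\Omega^{-1}M$ gives $\Hom_{\scmp\,B}(\Omega^{-1}M,\Omega^{-3}\Omega^2 M)=\Hom_{\scmp\,B}(\Omega^{-1}M,\Omega^{-1}M)\cong\End_{\scmp\,B}(M)$, as claimed. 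Once this identification is in place, the corollary is immediate from the two cited results.

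\begin{proof}
Since $\scmp\,B$ is a $\kb$-linear triangulated category with finite-dimensional Hom-spaces and Auslander-Reiten triangles, in which the shift is $\Omega^{-1}$ and the Auslander-Reiten translation is $\tau^{-1}=\Omega^2$, there is an almost-split triangle ending in $\Omega^2 M$, so $\Ext^1_{\scmp\,B}(\Omega^2 M, M)\ne 0$. It remains to bound the dimension by $1$.

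By Keller and Reiten, the category $\scmp\,B$ is $3$-Calabi-Yau, so for all objects $X,Y$ we have a functorial isomorphism $D\,\Hom_{\scmp\,B}(X,Y)\cong \Hom_{\scmp\,B}(Y,X[3])$, where $[1]$ denotes the shift and $[1]=\Omega^{-1}$. Taking $X=\Omega^2 M$ and $Y=\Omega^{-1}M=M[1]$ and using that $\Omega$ is an autoequivalence we obtain
\[ D\,\Ext^1_{\scmp\,B}(\Omega^2 M, M) = D\,\Hom_{\scmp\,B}(\Omega^2 M, M[1]) \cong \Hom_{\scmp\,B}(M[1], (\Omega^2 M)[3]) \cong \Hom_{\scmp\,B}(M,M)=\End_{\scmp\,B}(M). \]
Thus $\dim \Ext^1_{\scmp\,B}(\Omega^2 M, M) = \dim \End_{\scmp\,B}(M)$.

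By Proposition~\ref{prop:ind}, $M=M_\zg$ is indecomposable, so every endomorphism $h$ of $M$ in $\scmp\,B$ can be written uniquely as $h=\lambda\cdot \textup{id}_M + g$ with $\lambda\in\kb$ and $g$ a nilpotent endomorphism of $M$ in $\scmp\,B$. By Theorem~\ref{thm nilpotent endo}, $g=0$ in $\scmp\,B$. Hence $\End_{\scmp\,B}(M)=\kb\cdot\textup{id}_M$ is one-dimensional, and therefore $\dim \Ext^1_{\scmp\,B}(\Omega^2 M, M)=1$.
\end{proof}
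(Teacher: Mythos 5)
Your proof is correct and follows essentially the same route as the paper: the Serre-duality chain identifying $D\,\Ext^1_{\scmp\,B}(\Omega^2 M,M)$ with $\End_{\scmp\,B}(M)$ via the $3$-Calabi-Yau property and $\Omega=[-1]$, followed by Proposition~\ref{prop:ind} and Theorem~\ref{thm nilpotent endo} to get $\End_{\scmp\,B}(M)=\kb\cdot\textup{id}_M$. The preliminary almost-split-triangle lower bound is harmless but redundant, since the equality $\dim\Ext^1=\dim\End_{\scmp\,B}(M)=1$ already gives the result (the paper mentions the same lower bound only in the surrounding discussion).
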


\begin{proof}
Consider the following sequence of isomorphisms 
\[ \textup{Hom}_{\scmp\,B} (M,M) \cong \textup{Ext}^1_{\scmp\,B}(M, \Omega\,M) \cong D \textup{Ext}^2_{\scmp \,B}(\Omega\,M,M)\cong D \textup{Ext}^1_{\scmp\, B}(\Omega^2M,M) \]
where the first and the last step follow because $\Omega$ is the inverse shift in $\scmp\,B$ and the second step follows because $\scmp\,B$ is a 3-Calabi-Yau.  
By Theorem~\ref{thm nilpotent endo} there are no nilpotent endomorphisms of $M$ in $\scmp\,B$, and since $M$ is indecomposable, by Proposition~\ref{prop:ind}, \cite[Corollary 4.20]{S2} implies 
\[\text{dim}\, \textup{Hom}_{\scmp\,B} (M,M) = 1.\]
Then the result follows. 
\end{proof}

\medskip
\subsection{Mesh relations}
\label{sect 6.2} The main result 
of this subsection, Proposition~\ref{prop mesh}, shows that the Auslander-Reiten triangles of the category of 2-diagonals $\diag$ give rise to commutative diagrams in the stable category $\scmp \,B$. In subsection \ref{sect 7.4}, we will show that these commutative diagrams actually are Auslander-Reiten triangles in $\scmp \,B$.

Let $\zg$ be a 2-diagonal in $\cals$. We say that $\zg$ is \emph{short} if one of the two polygons obtained by cutting $\cals$ along $\zg$ is a quadrilateral.  In other words, the short 2-diagonals are those that are the start of a unique 2-pivot, and the non-short 2-diagonals are those that are the start of exactly two 2-pivots.

If $\zg$ is not short, let $\zg',\zg''$ denote the 2-diagonals that are obtained from $\zg$ by a 2-pivot and let $R^2\zg$ be the 2-diagonal obtained from $\zg$ by applying the rotation $R$ twice, see Figure~\ref{fig mesh}. Thus $R^2\zg$ can be obtained from $\zg$ in two ways by a sequence of two 2-pivots either passing through $\zg'$ or $\zg''$. In the category $\diag$ this corresponds to the AR triangle
\[ \zg\to\zg'\oplus\zg''\to R^2 \zg\to R\zg\] and a mesh in the AR quiver.
\begin{figure}
\begin{center}
\scalebox{0.8}{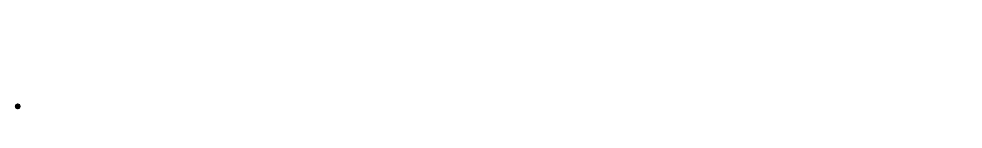}
\caption{The left picture shoes the 2-diagonals corresponding to the mesh $\zg\to\zg'\oplus\zg''\to R^2 \zg$ in the case where the 2-diagonal is not short. The right picture shows deformations of these 2-diagonals that are pairwise compatible, where we assume that none of $\zg,R\zg,R^2\zg$ is a radical line. }
\label{fig mesh}
\end{center}
\end{figure}

In the case where $\zg$ is short, only one of the two diagonals $\zg',\zg''$, say $\zg'$, exists. In this case the AR triangle is of the form 
\[ \zg\to\zg'\to R^2 \zg\to R\zg.\] 

We  denote the corresponding pivot morphisms $g',g'',h',h''$ in $\scmp\,B$ as illustrated in the following diagrams

\begin{equation}
 \label{eqmesh}
 \xymatrix{&M_{\zg'}\ar[rd]^{h'}\\M_\zg\ar[ru]^{g'}\ar[rd]_{g''}&&M_{R^2\zg}\\
&M_{\zg''}\ar[ru]_{h''}}
\qquad\qquad
\xymatrix{&M_{\zg'}\ar[rd]^{h'}\\M_\zg\ar[ru]^{g'}&&M_{R^2\zg}}
\end{equation}
where the diagram on the left corresponds to the case where the 2-diagonal $\zg$ is not short and the diagram on the right to the case where $\zg$ is short.

\begin{prop}
 \label{prop mesh}  Let $\zg$ be a 2-diagonal in $\cals$  such that $R\zg$ is not a radical line. 
With the notation of diagram \textup{(\ref{eqmesh})}, we have the following identities in $\scmp\,B$.
\begin{itemize}
\item [(a)]  If $\zg$ is not short  then $h'g'=h''g''$. 
\item[(b)] if $\zg $ is short then $h'g'=0$. 
\end{itemize}
\end{prop}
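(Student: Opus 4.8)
The statement asserts a mesh relation in the stable syzygy category, matching the mesh structure in $\diag$. The strategy is to reduce the identity to a computation at the level of the two-term complexes of projectives, using the explicit matrix descriptions of the pivot morphisms from Lemma~\ref{lem:M} together with the exactness results of Section~\ref{sect 5.3} and Theorem~\ref{lem 63}. First I would fix compatible representatives of the four arcs $\zg,\zg',\zg'',R^2\zg$ (and $R\zg$) simultaneously; such a choice exists since the arcs in a mesh can be deformed to run closely parallel to each other, as illustrated in the right-hand picture of Figure~\ref{fig mesh}. With compatible representatives in hand, Theorem~\ref{lem 63} gives, for each of the four pivots, a commutative diagram of two-term complexes lifting the pivot morphism, where the lifting maps $g_0^r, g_1^c$ and $h_0^r, h_1^c$ have the explicit block form of Lemma~\ref{lem:M}.

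\textbf{Reduction to projectives.} Composing the lifts around the two sides of the mesh, I obtain two morphisms of two-term complexes $P_\bullet(\zg)\to P_\bullet(R^2\zg)$, namely $(h'{}^r_0 g'{}^r_0,\, h'{}^c_1 g'{}^c_1)$ and $(h''{}^r_0 g''{}^r_0,\, h''{}^c_1 g''{}^c_1)$, each inducing $h'g'$, resp.\ $h''g''$, on cokernels. It suffices to show these two morphisms of complexes are \emph{homotopic}: that is, that the difference $(h'{}^r_0 g'{}^r_0 - h''{}^r_0 g''{}^r_0,\ h'{}^c_1 g'{}^c_1 - h''{}^c_1 g''{}^c_1)$ factors through a homotopy $s\colon P_0(\zg)\to P_1(R^2\zg)$ with $f_{R^2\zg}\, s = h'{}^r_0 g'{}^r_0 - h''{}^r_0 g''{}^r_0$ on the degree-$0$ part and $s\, f_\zg$ accounting for the degree-$1$ part. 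Actually the cleanest route is to observe that $P_1(R^2\zg)\cong P_0(R\zg)$ by Proposition~\ref{lem:cross}(b), and that $\bar f_{R\zg}\colon P_1(R\zg)\to P_0(R\zg)$ together with the identification of $R^2\zg$-data via $\zO$ (Theorem~\ref{thm:omega}) exhibits exactly the room needed: the difference of the two composite chain maps vanishes after passing to $\scmp\,B$ precisely when it factors through a projective, and the explicit entries from Lemma~\ref{lem:M} let me exhibit the factoring map. Since by Lemma~\ref{lem:M} the total number of arrows appearing in $g_0,g_1$ (resp.\ $h_0,h_1$) is at most three, the relevant products are sparse matrices whose nonzero entries are short paths in $Q$, and the mesh identity becomes a finite check that two such matrix products agree up to a boundary term; Proposition~\ref{prop 39} (schurianness) guarantees that any two parallel paths that occur are equal in $B$, which is what makes the bookkeeping close.

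\textbf{The short case.} When $\zg$ is short, only $\zg'$ exists and I must show $h'g'=0$ in $\scmp\,B$. Here the composite chain map $P_\bullet(\zg)\to P_\bullet(R^2\zg)$ should factor through the identity component in a way that makes it null-homotopic: concretely, $\zg$ short means one side of the cut is a quadrilateral, so $\zg$ is the start of a \emph{unique} $2$-pivot and the crossing sequences of $\zg$ and $R^2\zg$ overlap in a controlled way. I expect the composite $h'{}^r_0 g'{}^r_0$ to land in the image of $f_{R^2\zg}$ (equivalently, the composite of lifts is a boundary), so the induced map on cokernels is zero. This should follow by the same matrix analysis: the arrows produced by the two consecutive pivots compose to a path of length two sitting inside a single $3$-cycle of $Q$, hence a relation, or else telescope against $f_{R^2\zg}$.

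\textbf{Main obstacle.} The genuine difficulty is not the conceptual framework but keeping precise track of the signs and the block decompositions: $g_i^r$ and $g_i^c$ differ from $g_i$ by sign changes designed exactly so that the squares in Theorem~\ref{lem 63} commute on the nose rather than up to sign, and when one composes two such sign-adjusted maps along the two branches of the mesh one must verify that the signs conspire so that $h'g' - h''g''$ (not $h'g'+h''g''$) is the boundary term — this is the analogue of the mesh relation $m_x=\sum \sigma(\za)\za$ in the mesh category $\diag$, and matching it requires care in how the polarization $R^{-2}$ interacts with the orientation convention on $2$-diagonals from Section~\ref{sect 3.3}. I would organize this by enumerating the cases according to which of the maps $g_0,g_1,h_0,h_1$ contain one versus two arrows (Lemma~\ref{lem:M} bounds the total at three, so only a handful of configurations arise), and in each configuration write the $2\times 2$ or $3\times 3$ block products explicitly, using $w\sim w'$ from Section~\ref{sect 3.7} to collapse parallel paths; the homotopy $s$ is then read off as the "missing" entry that equalizes the two products. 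I would defer the full verification to an appendix section parallel to the ones already cited for Theorem~\ref{lem 63} and Theorem~\ref{thm nilpotent endo}.
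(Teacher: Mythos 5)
There is a genuine gap, and it sits exactly where the paper has to work hardest. Your plan begins by fixing compatible representatives of all four arcs $\zg,\zg',\zg'',R^2\zg$ simultaneously, citing the right-hand picture of Figure~\ref{fig mesh}; but that picture (and the simultaneous choice) is only available when none of $\zg, R\zg, R^2\zg$ is a radical line. The hypothesis of the proposition excludes only $R\zg$, so the case $\zg=\rho(i)$ (or $R^2\zg$ a radical line) is allowed, and in that case a radical line has just two essentially different representatives and one cannot make $\zg',\zg'',R^2\zg$ all compatible with the same one. The paper's proof spends most of its effort here: it chooses representatives $\zg_1,\zg_2$ on the two sides of $\rho(i)$, and corrects the failure of compatibility by conjugating $f_{R^2\zg}$ and $h''_0$ with an explicitly computed homotopy automorphism $\varphi_0$ (a product of elementary moves that inserts arrow entries $i\to k_j$ and signs). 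Without this correction the squares you compose in Theorem~\ref{lem 63} simply do not commute, so the matrix products you propose to compare are not lifts of $h'g'$ and $h''g''$.

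The second problem is the mechanism you propose for stable vanishing. Producing a homotopy $s$ with $f_{R^2\zg}\,s = h'{}_0 g'{}_0 - h''{}_0 g''{}_0$ would force the induced map on cokernels to be zero already in $\textup{mod}\,B$, and that is false in general: when $\zg=\rho(i)$ the difference $a=h'g'-h''g''$ (and in the short case the composite $h'g'$) is typically a nonzero morphism $\rad P(i)\to M_{R^2\zg}$. What the paper shows instead is that $a$ factors through the projective $P(i)$: one writes $a=vu$ where $u\colon M_\zg=\rad P(i)\hookrightarrow P(i)$ is the inclusion (lifted by a signed map $u_0$ built from the arrows $i\to k$) and $v\colon P(i)\to M_{R^2\zg}$ is induced by the summand $P(i)$ of $P_0(R^2\zg)$, which exists precisely because $R^2\zg$ crosses $\rho(i)$; in the short case the same device is used after augmenting the middle presentation by $P(i)$. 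Your proposal mentions "factors through a projective" in passing, but never identifies which projective or how the factorization is assembled, and your main route (null-homotopy) proves a stronger statement that fails. In the remaining case, where neither $\zg$ nor $R^2\zg$ is a radical line, the paper shows the difference is zero on the nose, component by component, by analyzing which radical lines end at the pivot vertices (yielding cancellations $\za-\za=0$ or $0-0=0$); your sparse-matrix bookkeeping could plausibly recover that part, but it is the radical-line and short cases that carry the content of the proposition, and they require the $\varphi_0$-correction and the factorization through $P(i)$ that are absent from your outline.
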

\begin{proof}
 This result is proved in section \ref{Asect 6.2} of the appendix.
\end{proof}

\medskip
\subsection{Auslander-Reiten triangles for $M_{\zg}$}\label{sect 7.4}
The main result of this subsection, Theorem~\ref{thm:ARtriangles} shows that the mesh relations given by 2-pivots and rotation $R^2$ in $\diag$ correspond to AR-triangles in $\scmp\,B$. 
To prove this result, we first show that in each rotation orbit in $\diag$ there is a mesh relation that induces a short exact sequence in $\cmp B$ between $M_\zg$ and $\zO^2 M_\zg$. Then using that $\dim \Ext^1_{\scmp\,B} (\zO^2 M_\zg,M_\zg)=1$, proved in Corollary~\ref{cor:ext}, we see that this sequence induces an AR-triangle in $\scmp\,B$. Finally, the proof of the main result then follows from the correspondence $R\sim \zO$.

Before stating the first result, recall the definition of $\zg', \zg''$ for a 2-diagonal $\zg$ which give rise to the corresponding sequence pivot morphisms $M_{\zg}\to M_{\zg'}\oplus M_{\zg''} \to M_{R^2\zg}$, see diagram~\eqref{eqmesh}.    Let $\{R^i \zg\}$ denote the orbit of a 2-diagonal $\zg$ under the action of $R$.

\begin{prop}\label{prop:AR}
Suppose $\zg$ is a 2-diagonal in $\mathcal{S}$ such that $R\zg$ is not a radical line.  
\begin{itemize}
\item[(a)] If $\zg$ 
is a radical line
$\rho(i)$ is not short then there exists a short exact sequence in $\textup{mod}\,B$ 
\[0\to M_{\zg}\to M_{\zg'}\oplus M_{\zg''} \oplus P(i) \to M_{R^2\zg}\to 0.\]
\item[(b)] If $\zg$
is a radical line 
$\rho(i)$ is short then there exists a short exact sequence in $\textup{mod}\,B$ 
\[0\to M_{\zg}\to M_{\zg'} \oplus P(i) \to M_{R^2\zg}\to 0.\]
\item[(c)] If $\{R^i \zg\}$ does not contain any radical lines then $\zg$ is not short and there exists a short exact sequence in $\textup{mod}\,B$ 
\[0\to M_{\zg}\to M_{\zg'}\oplus M_{\zg''} \to M_{R^2\zg}\to 0.\]
\end{itemize}
\end{prop}

\begin{proof}
 For a proof see section \ref{Asect 7.4} of the appendix.
\end{proof}

  The next lemma says that except for the case of the hexagon $\{R^i \zg\}$ cannot consist entirely of radical lines.

\begin{lemma}\label{lem:orbit}
Let $\mathcal{S}$ be a checkerboard polygon of size at least 8, and let $\zg$ be a 2-diagonal in $\cals$ that is not short.   For every orbit $\{R^i \zg\}$ there exists some $\zg'\in \{R^i \zg\}$ such that $R(\zg')$ is not a radical line in $\mathcal{S}$. 
\end{lemma}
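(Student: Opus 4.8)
The plan is to reduce the statement to the assertion that the $R$-orbit $\{R^i\zg\}$ cannot consist entirely of radical lines, and then to contradict the planarity of $Q$. The reduction is immediate: the orbit is closed under $R$, so if some $\eta\in\{R^i\zg\}$ is not a radical line, then $\zg':=R^{-1}\eta$ lies in the orbit and $R(\zg')=\eta$ is not a radical line; conversely, if every element of the orbit is a radical line, then no $\zg'$ with the required property exists. So I would assume for contradiction that every element of $\{R^i\zg\}$ is a radical line.

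The radical lines $\rho(i)$, $i\in Q_0$, are pairwise distinct $2$-diagonals, and by Lemma~\ref{lem 35} two of them, $\rho(i)$ and $\rho(j)$, cross if and only if there is an arrow between $i$ and $j$ in $Q$; moreover there is at most one such arrow, since $Q$ has no parallel arrows by Proposition~\ref{prop Q}(1) and no $2$-cycles. Hence the orbit $\{R^i\zg\}$ singles out a set $V$ of $|\{R^i\zg\}|$ distinct vertices of $Q$, two of which are joined by an arrow precisely when the corresponding two $2$-diagonals cross. Next I would set up coordinates: label the vertices of $\cals$ by $v_0,\dots,v_{2N-1}$ and write $\zg=\{v_0,v_m\}$ with $m$ odd and $3\le m\le 2N-3$. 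Since $\zg$ is not short we have $m\neq 3$ and $m\neq 2N-3$, and since the orbit is symmetric under $R$ we may take $m\le N$; thus $5\le m\le N$, and in particular $2N\ge 10$. A routine application of the interleaving criterion for chords of a polygon then shows that $R^d\zg$ and $R^{d'}\zg$ cross if and only if the circular distance between $d$ and $d'$ in $\mathbb{Z}/2N$ lies in $\{1,\dots,m-1\}$.

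I would then split into two cases. If $m<N$, then $\zg$ is fixed by no nontrivial rotation, so $|\{R^i\zg\}|=2N$; write $i_d\in V$ for the vertex corresponding to $R^d\zg$. Since $m-1\ge 4$, the five vertices $i_0,i_1,i_2,i_3,i_4$ are pairwise at circular distance at most $4\le m-1$, hence pairwise joined by an arrow, so $Q$ contains $K_5$ as a subgraph, contradicting Proposition~\ref{prop Q}(2). If $m=N$, then $\zg$ is a diameter, $N=m\ge 5$ is odd, and $|\{R^i\zg\}|=N$; any two distinct diameters of $\cals$ interleave and hence cross, so $V$ consists of $N\ge 5$ pairwise adjacent vertices, giving a copy of $K_N$ in $Q$ and again contradicting planarity.

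The only delicate point is the bookkeeping: pinning down the exact crossing criterion for $R^d\zg$ and $R^{d'}\zg$ in terms of circular distance, and correctly computing the orbit size in the diameter versus non-diameter cases. Once these are in place, both cases collapse to the non-planarity of $K_5$, and no finer information about $Q$ --- such as the number of its arrows, the weights of its boundary arrows, or the tree structure of its dual graph --- is needed.
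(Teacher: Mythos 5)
Your proof is correct, and its overall strategy coincides with the paper's: assume for contradiction that every element of the orbit is a radical line, use non-shortness to force several consecutive rotations of $\gamma$ to cross pairwise, convert crossings into arrows via Lemma~\ref{lem 35}, and contradict the planar structure of $Q$. The difference lies in the endgame. The paper takes only the four diagonals $\gamma, R\gamma, R^2\gamma, R^3\gamma$ (pairwise crossing since $m-1\ge 4$, in your notation), obtains an oriented $K_4$, and contradicts Lemma~\ref{lem 36a} (every vertex of $Q$ is a boundary vertex): in any planar embedding of $K_4$ one vertex is enclosed by a triangle, hence interior. You instead take five consecutive rotations (or all $N$ diameters in the diameter case), obtain $K_5$, and contradict only planarity, Proposition~\ref{prop Q}(2); this needs the pairwise-crossing criterion up to circular distance $4\le m-1$, which non-shortness ($m\ge 5$, $m$ odd) provides, and your interleaving computation and orbit-size bookkeeping are correct. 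What each buys: your route avoids the interior-vertex lemma and any reference to arrow orientations, at the cost of one extra rotation, an explicit crossing criterion, and a separate diameter case (which the paper's four-diagonal argument handles uniformly); note that both final contradictions are ultimately consequences of condition (Q2), since planarity and the absence of interior vertices are each derived from the dual graph being a tree.
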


\begin{proof}
Suppose on the contrary that every 2-diagonal in $\{R^i \zg\}$ is a radical line in $\mathcal{S}$.  Then in particular, $\zg, R\zg, R^2\zg, R^3\zg$ are four distinct radical lines in $\mathcal{S}$ which we can label $\rho(1), \rho(2), \rho(3), \rho(4)$ respectively.  Each pair of these arcs intersects, because $\zg$ is not short, and we obtain the following subquiver. 
\[\xymatrix@C=15pt@R=15pt{1\ar[r] \ar@/^8pt/[rr]& 2 \ar[d] & 4\ar[l]\\ & 3\ar[ur]\ar[ul]}
\]
This contradicts the assumptions on the quiver that every vertex is boundary, see Lemma~\ref{lem 36}.
\end{proof}

The main theorem of this subsection says that the exact sequences in $\text{mod}\,B$ discussed above give rise to Auslander-Reiten triangles in $\scmp\,B$. 

\begin{thm}\label{thm:ARtriangles}
Let $\zg$ be a 2-diagonal in $\mathcal{S}$.  
\begin{itemize}
\item[(a)] If $\zg$ is not short then  
\[M_{\zg}\to M_{\zg'}\oplus M_{\zg''} \to M_{R^2\zg}\to M_{\zg}[1]\]
is an Auslander-Reiten triangle in $\scmp\,B$.
\item[(b)] If $\zg$ is short then 
\[M_{\zg}\to M_{\zg''} \to M_{R^2\zg}\to M_{\zg}[1]\]
is an Auslander-Reiten triangle in $\scmp\,B$.
\end{itemize}
\end{thm}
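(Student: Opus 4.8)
The plan is to deduce the Auslander--Reiten triangle statement from the ingredients already assembled, most notably Proposition~\ref{prop:AR} (short exact sequences realizing the meshes), Corollary~\ref{cor:ext} ($\dim\Ext^1_{\scmp\,B}(\zO^2 M_\zg,M_\zg)=1$), Theorem~\ref{thm:omega} ($\zO M_\zg\cong M_{R\zg}$, so that $M_{R^2\zg}\cong \zO^{-2}M_\zg$ in $\scmp\,B$), and Lemma~\ref{lem:orbit} together with Proposition~\ref{prop mesh}. The overall strategy is: first produce, for a well-chosen representative of the rotation orbit, an honest short exact sequence in $\textup{mod}\,B$ whose outer terms are $M_\zg$ and $M_{R^2\zg}$; argue that the connecting triangle it induces in $\scmp\,B$ is an AR-triangle because the relevant $\Ext^1$ is one-dimensional and the middle map is ``big enough'' to be irreducible; and then transport this AR-triangle around the rotation orbit using the equivalence $R\sim\zO$ and the mesh commutativity of Proposition~\ref{prop mesh}.

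First I would fix a 2-diagonal $\zg$ and consider its orbit $\{R^i\zg\}$. If $\cals$ is the hexagon the statement can be checked directly (the syzygy category is then the 2-cluster category of type $\mathbb A_1$, a single mesh), so assume $2N\ge 8$. By Lemma~\ref{lem:orbit} there is some $i$ with $R(R^i\zg)$ not a radical line, and by Proposition~\ref{prop:AR}(c) (or (a)/(b) if a radical line occurs in the orbit, after splitting off the projective summand $P(i)$, which becomes zero in $\scmp\,B$) we get a short exact sequence $0\to M_{R^i\zg}\to M_{(R^i\zg)'}\oplus M_{(R^i\zg)''}\to M_{R^{i+2}\zg}\to 0$ in $\textup{mod}\,B$, hence a triangle $M_{R^i\zg}\to M_{(R^i\zg)'}\oplus M_{(R^i\zg)''}\to M_{R^{i+2}\zg}\to M_{R^i\zg}[1]$ in $\scmp\,B$. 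Since $\scmp\,B$ is $3$-Calabi--Yau with $\tau^{-1}=\zO^2$ and, by Theorem~\ref{thm:omega}, $M_{R^{i+2}\zg}\cong\zO^{-2}M_{R^i\zg}=\tau^{-1}M_{R^i\zg}$, this triangle lies in $\Ext^1_{\scmp\,B}(\tau^{-1}M_{R^i\zg},M_{R^i\zg})$, a space of dimension exactly $1$ by Corollary~\ref{cor:ext}. Now I would invoke the standard fact that over a $1$-dimensional $\Ext^1(\tau^{-1}M,M)$ every nonzero element is the class of an AR-triangle (equivalently, the AR-triangle is the unique non-split one): the sequence from Proposition~\ref{prop:AR} is non-split because $M_{R^i\zg}$ is indecomposable (Proposition~\ref{prop:ind}) and not a summand of the middle term — the middle term being a direct sum of the distinct indecomposables $M_{(R^i\zg)'},M_{(R^i\zg)''}$, none isomorphic to $M_{R^i\zg}$ since $(R^i\zg)',(R^i\zg)''$ are genuinely different 2-diagonals. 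Hence this triangle is an AR-triangle ending in $M_{R^i\zg}$.

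Next I would propagate around the orbit. Applying $\zO$ to an AR-triangle gives an AR-triangle, and $\zO$ acts on objects as $M_\beta\mapsto M_{R\beta}$ by Theorem~\ref{thm:omega}; the $2$-pivot maps are compatible with this because the pivot morphisms $g',g'',h',h''$ satisfy the mesh relations $h'g'=h''g''$ (non-short case) and $h'g'=0$ (short case) in $\scmp\,B$ by Proposition~\ref{prop mesh}, so the middle map of the shifted triangle is, up to the prescribed signs, the pivot map $M_{R^{i+1}\zg}\to M_{(R^{i+1}\zg)'}\oplus M_{(R^{i+1}\zg)''}$. Iterating, every $R^j\zg$ in the orbit is the start of an AR-triangle of the asserted shape; and since every 2-diagonal lies in some rotation orbit, this covers all $\zg$. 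For the short case (b), the orbit of a short 2-diagonal consists of short 2-diagonals, each the start of a single 2-pivot, and the same argument using Proposition~\ref{prop:AR}(b)/(c) (with the middle term $M_{\zg''}$ only) and Proposition~\ref{prop mesh}(b) produces the AR-triangle $M_\zg\to M_{\zg''}\to M_{R^2\zg}\to M_\zg[1]$. One bookkeeping point I would be careful about: when a radical line $\rho(i)$ occurs in the orbit, Proposition~\ref{prop:AR}(a)/(b) inserts a projective summand $P(i)$ in the middle term of the exact sequence in $\textup{mod}\,B$; this summand is zero in $\scmp\,B$, so it does not appear in the AR-triangle, but I must confirm the induced triangle is still the AR-triangle — again this follows from $1$-dimensionality of the $\Ext^1$ and non-splitness.

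The main obstacle I anticipate is not any single step but the verification that the triangle produced from Proposition~\ref{prop:AR} is genuinely an \emph{Auslander--Reiten} triangle rather than merely a non-split one — i.e., that its middle map is right almost split. In a general Krull--Schmidt triangulated category a non-split triangle in a one-dimensional $\Ext^1(\tau^{-1}M,M)$ with $M$ indecomposable is automatically the AR-triangle, and $\scmp\,B$ does have AR-triangles (Keller--Reiten, since it is Hom-finite $2$-CY hence has a Serre functor), so the cleanest route is to cite: the AR-triangle ending at $M$ exists, lies in $\Ext^1(\tau^{-1}M,M)$, is non-split, and $\Ext^1(\tau^{-1}M,M)$ is one-dimensional, whence our non-split triangle is a scalar multiple of it and therefore isomorphic to it as a triangle. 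I would spell this out carefully, as it is the logical heart of the theorem; everything else is transport of structure via $\zO$ and the mesh relations already established.
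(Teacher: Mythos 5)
Your proposal is correct and follows essentially the same route as the paper's proof: start from the short exact sequences of Proposition~\ref{prop:AR} at an orbit representative chosen via Lemma~\ref{lem:orbit}, pass to a non-split triangle in $\scmp\,B$, use $\Omega^2=\tau^{-1}$, Theorem~\ref{thm:omega} and the one-dimensionality of $\Ext^1$ from Corollary~\ref{cor:ext} to identify it with the Auslander--Reiten triangle, and then transport around the orbit by applying $\Omega$. The only cosmetic difference is that the paper establishes non-splitness by a cone computation on the connecting map rather than by your (equivalent) direct-summand argument.
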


\begin{proof}
First suppose that $\zg$ is not short.  Moreover, suppose that $\zg=\rho(i)$ such that $R(\zg)$ is not a radical line or $\{R^i \zg\}$ does not contain any radical lines, then we obtain a short exact sequence in $\text{CMP}\,B$ as in Proposition~\ref{prop:AR} (a) or (c) respectively. This sequence reduces to a triangle 
\begin{equation}\label{eq1} M_{\zg}\to M_{\zg'}\oplus M_{\zg''} \to M_{R^2\zg}\to M_{\zg}[1]
\end{equation}
by passing to $\scmp \,B$.  If the connecting map $M_{R^2\zg}\to M_{\zg}[1]$ where to be zero then taking its cone and completing it to a triangle we obtain the following sequence.
 \[M_{\zg}\to M_{\zg'}\oplus M_{\zg''} \to M_{R^2\zg}\xrightarrow{0} M_{\zg}[1] \to M_{R^2\zg}[1]\oplus M_{\zg}[1]\] 
Then $( M_{\zg'}\oplus M_{\zg''})[1] \cong M_{R^2\zg}[1]\oplus M_{\zg}[1]$ which is a contradiction, since all of these modules are distinct.  Therefore the connecting map $M_{R^2\zg}\to M_{\zg}[1]$ is nonzero, which means that the triangle in (\ref{eq1}) gives a nonzero element of $\text{Ext}^1_{\scmp \,B}(M_{R^2\zg}, M_{\zg})$.   By Theorem~\ref{thm:omega}, we have $\Omega^2 M_{\zg}\cong M_{R^2\zg}$, so Corollary~\ref{cor:ext} implies that the triangle in (\ref{eq1}) generates $\text{Ext}^1_{\scmp \,B}(M_{R^2\zg}, M_{\zg})$. Since $\text{CMP}\,B$ is a triangulated category with inverse Auslander-Reiten translation given by $\Omega^2$ it follows that this triangle is an Auslander-Reiten triangle in $\scmp \,B$.  Hence part (a) holds whenever $\zg=\rho(i)$ such that $R \zg$ is not a radical line or whenever $\{R^i \zg\}$ does not contain any radical lines.   

Recall that the rotation $R$ corresponds to $\Omega$, which is the same as the inverse shift in $\scmp \,B$.   Then an Auslander-Reiten triangle starting in $M_{R^i\zg}$ is obtained from the Auslander-Reiten triangle starting with $M_{\zg}$ given in (\ref{eq1}) by applying $\Omega^i$.  This completes the proof of part (a), since every 2-diagonal $\zg'$ in $\mathcal{S}$ either lies in the same orbit under $R$ as some radical line $\rho(i)$, and by Lemma~\ref{lem:orbit} we can always choose $\rho(i)$ whose rotation is not a radical line, or its orbit $\{R^i \zg'\}$ does not contain any radical lines.  

The proof of part (b) follows from in the same way as above starting from a short exact sequence in Proposition~\ref{prop:AR}(b).   
\end{proof}

\section{Main Result}\label{sect main}
We are now ready for the main result of the paper. It proves Conjecture \ref{main conj} in a special case.

\begin{thm}
 \label{main thm}
 Let $B$ be a dimer tree algebra given by a quiver $Q$ with potential that satisfies Definition~\ref{def Q} and such that every chordless cycle has length three. Let $\cals$ be the polygon constructed in subsection \ref{sect 3.1} and, for every 2-diagonal $\zg$ in $\cals$, denote by  $f_\zg$ the morphism defined in subsection \ref{sect 5.1}. Then the mapping $\zg\mapsto \coker f_\zg$ induces an equivalence of categories
 \[F\colon \diag \to \scmp \,B\]
such that
\begin{itemize}
\item [(a)]
the radical line $\rho(i)$ corresponds to the radical of the indecomposable projective $P(i)$, for all $i\in Q_0$,
\[ F(\rho(i))=\rad P(i).\]
\item [(b)] The clockwise rotation $R$ of $\cals$ corresponds to the inverse shift $\zO$  and the counterclockwise rotation $R^{-1}$ to the shift $\zO^{-1}$ in $\scmp \,B$,
\[ F(R(\zg))= \zO\,F(\zg) \qquad \textup{and}\qquad  F(R^{-1}(\zg))= \zO^{-1}\,F(\zg).\]

\item [(c)] The square of the rotations  corresponds to the   Auslander-Reiten translations in $\scmp \,B$, 
\[ F(R^2(\zg))= \tau^{-1}\,F(\zg) \qquad \textup{and}\qquad 
F(R^{-2}(\zg))= \tau^{}\,F(\zg).\]

\item [(d)] $F$ induces a bijection between the 2-pivots in $\diag$ and  the irreducible morphisms between indecomposables in $\scmp\, B$.

\item [(e)]  $F$ induces an isomorphism of Auslander-Reiten quivers
\[F\colon \zG_\diag \to\zG_{\scmp\,B}.\]

\end{itemize}
\end{thm}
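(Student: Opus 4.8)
The plan is to assemble the equivalence $F\colon \diag\to\scmp\,B$ from the pieces established in the previous sections, and then bootstrap all of (a)--(e) from the fact that $\diag$ is the mesh category of a translation quiver whose underlying combinatorics we fully control.

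\textbf{Construction of $F$ and items (a)--(c).} On objects, $F$ sends a 2-diagonal $\zg$ to $M_\zg=\coker f_\zg$; this is well-defined by Proposition~\ref{homotopy} (independence of the representative) and lands in $\mathrm{ind}\,\scmp\,B$ by Corollary~\ref{cor:536}. On morphisms, $F$ sends a 2-pivot to the pivot morphism $g$ of Definition~\ref{def pivot morphism}, which is a morphism in $\scmp\,B$ by Theorem~\ref{lem 63}; since $\diag$ is the mesh category of $(\zG,R^{-2})$, to extend $F$ to a functor it suffices to check that the images of the mesh relations vanish in $\scmp\,B$, and this is exactly Proposition~\ref{prop mesh} (the short case (b) handling the boundary of the AR-quiver). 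Item (a) is Proposition~\ref{prop:radicals correspond}, item (b) is Theorem~\ref{thm:omega} together with $R\,\zg=\zg$ only when forced, and item (c) follows from (b) since $\tau^{-1}=\zO^2$ on $\scmp\,B$ by Theorem~\ref{gesthm}(c) and $R^2$ squares the rotation; the statement for $R^{-1},R^{-2}$ is obtained by applying the same identities to $R^{-1}\zg$.

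\textbf{Item (e) and the dense/faithful/full argument.} The cleanest route is to prove (e) first and deduce fullness, faithfulness and density from it. By Theorem~\ref{thm:ARtriangles}, $F$ sends each mesh of $\zG_\diag$ to an Auslander--Reiten triangle of $\scmp\,B$; in particular $F$ sends 2-pivots to irreducible morphisms and is compatible with the translations $R^{-2}$ and $\tau$. Now $\zG_\diag$ is a connected component of its own AR-quiver (it is the whole AR-quiver of the 2-cluster category of type $\mathbb{A}_{N-2}$ by Theorem~\ref{thm BM}, which is a finite connected stable translation quiver). Because $\scmp\,B$ has AR-triangles (it is $3$-CY, hence in particular has Serre duality and AR-triangles) and $F$ maps a full AR-mesh onto an AR-mesh with $F(\tau^{-1} x)\cong\tau^{-1}F(x)$, the image of $F$ is closed under the AR-translation and under predecessors/successors along irreducible maps; hence $F(\zG_\diag)$ is a union of connected components of $\zG_{\scmp\,B}$. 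By Theorem~\ref{gesthm}, every indecomposable of $\scmp\,B$ is of the form occurring in such a component, and the point is that \emph{every} component of $\zG_{\scmp\,B}$ is hit: any indecomposable syzygy $M$ satisfies $\zO^2\tau M\cong M$, so its component is stable under $\zO^2\tau=\mathrm{id}$, i.e.\ finite and periodic, and one shows it must coincide with one of the $F$-image components by matching AR-triangles starting at a radical $\rad P(i)=F(\rho(i))$ --- every projective $P(i)$ has $\rad P(i)$ indecomposable non-projective by the structure of $B$, and these already exhaust, up to the component, all syzygies. This gives that $F$ induces an isomorphism $\zG_\diag\to\zG_{\scmp\,B}$, which is item (e), and simultaneously that $F$ is dense and bijective on objects.

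\textbf{Items (d), fullness and faithfulness.} Item (d) is immediate from (e): an isomorphism of AR-quivers takes arrows to arrows, i.e.\ 2-pivots to irreducible morphisms, bijectively. For fullness and faithfulness, one uses that both categories are mesh categories: $\diag$ is the mesh category of $(\zG_\diag,R^{-2})$ by definition, and $\scmp\,B$ is the mesh category of $(\zG_{\scmp\,B},\tau^{-1})$ because it is a finite-dimensional $3$-CY triangulated category all of whose indecomposables have local endomorphism ring (Theorem~\ref{thm nilpotent endo}: no nonzero nilpotent endomorphisms, so $\mathrm{End}=\kb$) and which is standard --- equivalently, its AR-quiver is a stable translation quiver of type $\mathbb{Z}\mathbb{A}_{N-2}/\langle\text{shift}\rangle$ and Happel--Riedtmann--type reconstruction applies. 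Since $F$ is an isomorphism on AR-quivers compatible with translation and polarization, it induces an isomorphism of mesh categories, hence is full and faithful. The main obstacle is precisely this last point: showing $\scmp\,B$ is \emph{standard}, i.e.\ equivalent to the mesh category of its AR-quiver, rather than merely having the same AR-quiver as $\diag$. I expect to handle this by transporting the known standardness of the 2-cluster category of type $\mathbb{A}$ (Theorem~\ref{thm BM}, \cite{BM}) across $F$: one checks that the AR-triangles of $\scmp\,B$ produced in Theorem~\ref{thm:ARtriangles} satisfy the mesh relations on the nose (which is what Proposition~\ref{prop mesh} gives after choosing compatible representatives), so that the composite functor from the mesh category factors through $F$; combined with density and the $\mathrm{End}=\kb$ computation this forces $F$ to be an equivalence. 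Assembling (a)--(e) then completes the proof, and together with Theorem~\ref{thm:ARtriangles} it also yields the statement that meshes go to AR-triangles, finishing Conjecture~\ref{main conj} in the length-three case.
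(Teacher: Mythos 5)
Your overall skeleton matches the paper's proof (assemble $F$ from Propositions~\ref{homotopy}, \ref{prop:ind}, Theorem~\ref{lem 63}, Proposition~\ref{prop mesh}; get (a)--(c) from Proposition~\ref{prop:radicals correspond}, Theorem~\ref{thm:omega} and $\tau^{-1}=\zO^2$; use Theorem~\ref{thm:ARtriangles} to see that the image of $\zG_\diag$ is a union of components of $\zG_{\scmp\,B}$), but the decisive step --- showing that there is \emph{no other} component, i.e.\ density --- has a genuine gap. Your argument that ``$\zO^2\tau M\cong M$, so its component is stable under $\zO^2\tau=\mathrm{id}$, i.e.\ finite and periodic'' proves nothing: $\zO^2\tau\cong\mathrm{id}$ holds on all of $\scmp\,B$ (Theorem~\ref{gesthm}), so it gives no finiteness constraint on a hypothetical extra component; and ``one shows it must coincide with one of the $F$-image components by matching AR-triangles starting at $\rad P(i)$'' is circular, because all the modules $\rad P(i)$ already lie in the image component, so a putative second component contains none of them and there is nothing to match --- the claim that these ``already exhaust, up to the component, all syzygies'' is exactly the density statement you are trying to prove. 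The paper closes this gap with a different argument: first, $\Hom$ from the image component $\zG$ to any other component $\zG'$ vanishes, because a non-irreducible map out of $\zG$ factors through arbitrarily long paths of irreducible maps inside $\zG$, and $\zG$ is finite with $\Hom$-spaces of dimension at most $1$, so long compositions are zero; second, for $M'$ indecomposable in $\zG'$, take an indecomposable summand $P(i)$ of its projective cover: since $P(i)$ is projective-injective in $\cmp\,B$, the cover map factors through $\tau^{-1}\rad P(i)$, which lies in $\zG$, producing a nonzero morphism $\zG\to\zG'$ --- a contradiction. Without this (or an equivalent) argument your proof of (e), and hence of density and of the equivalence, does not go through.

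A secondary problem is your treatment of fullness via ``standardness'' of $\scmp\,B$: you propose to transport standardness of the 2-cluster category across $F$, but that presupposes $F$ is an equivalence, which is the conclusion. The paper avoids this: once the AR-quiver of $\scmp\,B$ is identified with the single finite component $\zG\cong\zG_\diag$, morphisms between indecomposables are generated by irreducible ones subject to the mesh relations coming from the AR-triangles of Theorem~\ref{thm:ARtriangles}, and faithfulness uses only that $\dim\Hom_\diag(\zg,\zd)\le 1$ together with Theorem~\ref{thm nilpotent endo}; no external standardness input is needed.
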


\begin{proof}

 On indecomposable objects, the functor $F$ is defined by $F(\zg)=\coker f_\zg$. It is well-defined on objects because the cokernel of $f_\zg$ is a syzygy by Proposition \ref{big-lemma} and it is indecomposable by Proposition \ref{prop:ind}. 
The morphisms in $\diag$ are given by compositions of 2-pivots modulo mesh relations. The functor $F$ is defined on a 2-pivot by mapping it to the pivot morphism introduced in Definition \ref{def pivot}. On compositions of 2-pivots, $F$ is defined as the composition of the images. This is well-defined, because, as shown in subsection \ref{sect 6.2}, $F$ respects the mesh relations in $\diag$. Moreover, the dimension of $\Hom_{\diag}(\zg,\zd)$ is either 0 or 1 for any pair of 2-diagonals $\zg,\zd$, and this shows that $F$ is faithful.

The case where the polygon $\cals$ has 6 sides is illustrated in Example \ref{ex A3}. In this case, the theorem follows by inspection. Indeed, the category $\diag$ has three indecomposable objects, the three 2-diagonals in $\cals$, and the category $\scmp\,B$ also has three indecomposables, the three simple modules.  The Auslander-Reiten quiver of each category consists of three vertices and no arrows. 

Suppose now that $\cals$ has at least 8 sides.

Part (a) of the theorem is proved in Proposition \ref{prop:radicals correspond} and part (b) in Theorem \ref{thm:omega}. Part~(c) follows from (b) and the fact that $\Omega^2=\tau^{-1}$ in $\scmp\,B$. 

The Auslander-Reiten quiver of $\diag$ consists of a single connected component which is finite. 
Theorem~\ref{thm:ARtriangles} shows that the functor $F$ maps the Auslander-Reiten triangles in $\diag $ to Auslander-Reiten triangles in $\scmp \,B$, and 2-pivots in $\diag $ to irreducible morphisms in $\scmp\,B$. Therefore, the image of the Auslander-Reiten quiver of $\diag$ is a connected component $\zG$ of the Auslander-Reiten quiver of $\scmp\,B$. 
We will show that this is the only component of the Auslander-Reiten quiver of $\scmp\,B$. Suppose there is another component $\zG'$. Suppose first that there exists a nonzero morphism $f\colon M\to M'$ with $M\in \zG, M'\in \zG'$ indecomposable. Since $f$ is not irreducible it must factor through one of the irreducible morphisms (hence an arrow in $\zG$) $g_1\colon M\to M_1$ starting at $M$, thus $f=f_1 g_1$  for some morphism $f_1\colon M_1\to M'$. Repeating the argument with $f_1$ we get a factorization $f=f_2g_2g_1$, where $g_2g_1$ is the composition of two arrows in $\zG$. Continuing this way we obtain arbitrary long factorizations
$f=f_tg_t\cdots g_2g_1$, where $g_t\cdots g_2g_1$ is a path in $\zG$. However, $\zG$ is a finite component and  the dimension of $\Hom(M,N)$ between two indecomposable objects $M,N$ in $\zG$ is at most 1. Thus any composition of a large enough number of arrows in $\zG$ is zero. 
This implies that $f=0$.

Therefore there is no nonzero morphism from $\zG$ to $\zG'$.  Let $M'$ be an indecomposable object in $\zG'$ and let $f\colon P\to M'$ be its projective cover. Let $P(i)$ be an indecomposable summand of $P$. 
Then $\rad P(i)$ is the image of the radical line $\rho(i)$ under our functor $F$. In particular, $\rad P(i)$ lies in $\zG$.  
Now, since $P(i)$ is projective-injective in $\cmp\,B$,  the morphism $f$ factors through $\tau^{-1}\rad P(i)$. Thus there exists a nonzero morphism $\tau^{-1}\rad P(i)\to M'$ from and object in $\zG$ to an object in $\zG'$, a contradiction.

This shows that the Auslander-Reiten quiver of $\scmp \,B$ consists of a single finite connected component that, under  the functor $F$, is isomorphic to the Auslander-Reiten quiver of $\diag$. This shows part (e) of the theorem and proves that the functor $F$ is an equivalence.
 \end{proof}

\section{Consequences of the main result}\label{sect consequences}
In this section, we give some immediate corollaries of Theorem \ref{main thm}. 
Throughout the section, we let $B$ be a dimer tree algebra for which all chordless cycles are of length three.

\medskip
\subsection{Rigidity and $\tau$-rigidity of syzygies}
Recall that a $B$-module $M$ is said to be \emph{rigid} if $\Ext_B^1(M,M)=0$.  
\begin{corollary}
 \label{cor rigid}
 The  indecomposable syzygies over $B$ are rigid $B$-modules.
\end{corollary}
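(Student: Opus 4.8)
The plan is to deduce the rigidity of indecomposable syzygies from the structure of the stable syzygy category $\scmp\,B$ already established. Recall that by Theorem~\ref{main thm} the functor $F\colon\diag\to\scmp\,B$ is an equivalence of triangulated categories identifying the rotation orbit combinatorics of the checkerboard polygon with the shift and Auslander--Reiten structure. The key point to exploit is Corollary~\ref{lem rigid}: every $2$-diagonal is a rigid object in $\diag$, that is $\Ext^1_{\diag}(\zg,\zg)=0$ for every $2$-diagonal $\zg$. Since $F$ is a triangle equivalence, $\Ext^1_{\diag}(\zg,\zg)\cong\Ext^1_{\scmp\,B}(M_\zg,M_\zg)$, where $\Ext^1$ in the triangulated category $\scmp\,B$ means $\Hom_{\scmp\,B}(M_\zg,\zO^{-1}M_\zg)$ (equivalently $\Hom_{\scmp\,B}(\zO M_\zg,M_\zg)$ under the convention that $\zO$ is the inverse shift). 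Hence $M_\zg$ is rigid as an object of $\scmp\,B$ for every $2$-diagonal $\zg$.

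It remains to transfer rigidity from the stable category $\scmp\,B$ to the module category $\textup{mod}\,B$. By Theorem~\ref{gesthm} the indecomposable objects of $\scmp\,B$ are precisely the indecomposable non-projective syzygies, so every indecomposable syzygy $M$ over $B$ that is not projective is isomorphic to some $M_\zg$. The indecomposable projective-injective objects of $\cmp\,B$ are the projective $B$-modules, and these are trivially rigid (they have no self-extensions since $\Ext^1_B(P,P)=0$ for projective $P$). For the non-projective case, I would argue as follows. First, $M=M_\zg$ lies in $\cmp\,B$, which is closed under extensions by Proposition~\ref{prop extclosed}, and in fact $\cmp\,B$ is a Frobenius exact category with projective-injective objects the projective $B$-modules. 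By Corollary~\ref{cor 82}, $M$ is rigid in $\cmp\,B$ if and only if it is rigid in $\textup{mod}\,B$; so it suffices to show $\Ext^1_{\cmp\,B}(M,M)=0$, where this $\Ext^1$ is computed in the Frobenius category $\cmp\,B$. But for a Frobenius category, $\Ext^1$ in the exact structure coincides with $\Hom$ in the stable category shifted by one: $\Ext^1_{\cmp\,B}(M,M)\cong\Hom_{\scmp\,B}(M,\zO^{-1}M)=\Ext^1_{\scmp\,B}(M,M)$. We just showed the right-hand side vanishes, so $M$ is rigid in $\cmp\,B$, hence in $\textup{mod}\,B$ by Corollary~\ref{cor 82}.

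Putting these together: any indecomposable syzygy over $B$ is either a projective module (rigid trivially) or is isomorphic to $M_\zg$ for some $2$-diagonal $\zg$ (rigid by the chain of isomorphisms above), so all indecomposable syzygies are rigid $B$-modules.

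\textbf{Main obstacle.} The one point requiring care is the precise bookkeeping of the $\Ext^1$/$\Hom$ identifications and the shift conventions: one must be consistent about whether $\zO$ is the shift or the inverse shift in $\scmp\,B$ (the paper uses $\zO$ as the inverse shift), and one must invoke correctly the standard fact that in a Frobenius exact category $\Ext^1_{\mathcal{E}}(X,Y)\cong\underline{\Hom}(X,\zO^{-1}Y)$. These are routine facts about Frobenius categories and triangle equivalences, but they are the crux of transferring the combinatorial rigidity of Corollary~\ref{lem rigid} across $F$ and down to $\textup{mod}\,B$; no genuinely new computation is needed beyond what Theorem~\ref{main thm} and Corollary~\ref{cor 82} already provide.
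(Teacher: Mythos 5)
Your proposal is correct and follows essentially the same route as the paper: rigidity of $2$-diagonals (Corollary~\ref{lem rigid}), transfer along the equivalence of Theorem~\ref{main thm} to $\scmp\,B$, and then Corollary~\ref{cor 82} to pass to $\textup{mod}\,B$. The only difference is that you spell out the standard Frobenius-category identification $\Ext^1_{\cmp\,B}(M,M)\cong\Hom_{\scmp\,B}(M,\zO^{-1}M)$ and the trivial projective case, both of which the paper leaves implicit.
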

\begin{proof}
Let $M$ be an indecomposable syzygy over $B$. Under the equivalence of categories in Theorem~\ref{main thm}, $M$ corresponds to a 2-diagonal in $\diag$ which, because of Lemma \ref{lem rigid}, is a rigid object in $\diag$. Thus $M$ is rigid in $\scmp\,B$. Now the result follows from Corollary~\ref{cor 82}.
\end{proof}

\begin{corollary}
 \label{cor ext}
 Let $M,N$ be indecomposable syzygies over $B$. Then the dimension of $\Ext^1_B(M,N)\oplus \Ext^1_B(N,M)$ is equal to the number of crossing points between the corresponding 2-diagonals. In particular, the dimension is either 1 or 0.
\end{corollary}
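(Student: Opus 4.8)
The plan is to transport the computation through the equivalence $F\colon\diag\to\scmp\,B$ of Theorem~\ref{main thm} and reduce it to the combinatorics of crossings in the Baur--Marsh model. Write $M=M_\zg$ and $N=M_\zd$ for the $2$-diagonals $\zg,\zd$ corresponding to $M,N$ under $F$; one may assume neither $M$ nor $N$ is projective, since a projective $B$-module is projective-injective in the extension-closed category $\cmp\,B$ and therefore has vanishing $\Ext^1_B$ with any object of $\cmp\,B$ on either side, making both Ext-groups zero in that case. Since two distinct diagonals of a convex polygon meet in at most one interior point, the crossing number of $\zg$ and $\zd$ is $0$ or $1$; thus the ``in particular'' clause will follow once we establish $\dim\Ext^1_B(M,N)+\dim\Ext^1_B(N,M)=\#(\zg\cap\zd)$, which is the real content.

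First I would move to the stable category. Because $\cmp\,B$ is closed under extensions in $\textup{mod}\,B$ (Proposition~\ref{prop extclosed}), every extension of $M$ by $N$ has middle term in $\cmp\,B$, whence $\Ext^1_B(M,N)=\Ext^1_{\cmp\,B}(M,N)$ as Yoneda groups of the exact category $\cmp\,B$. Since $\cmp\,B$ is Frobenius with projective-injectives the projective $B$-modules and stable category $\scmp\,B$, the standard Frobenius-category formula gives $\Ext^1_{\cmp\,B}(M,N)\cong\underline{\Hom}_{\scmp\,B}(M,N[1])$, where $[1]=\zO^{-1}$ is the suspension of $\scmp\,B$. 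Next I would invoke that $\scmp\,B$ is $3$-Calabi-Yau (Keller--Reiten): the ensuing Serre duality $D\,\underline{\Hom}(X,Y)\cong\underline{\Hom}(Y,X[3])$, applied with $X=M$ and $Y=N[1]$, gives $\dim\Ext^1_B(M,N)=\dim\underline{\Hom}_{\scmp\,B}(N,M[2])$. Hence
\[\dim\Ext^1_B(M,N)+\dim\Ext^1_B(N,M)=\dim\underline{\Hom}_{\scmp\,B}(N,M[1])+\dim\underline{\Hom}_{\scmp\,B}(N,M[2]).\]
Using $F(R^{-i}\zg)=\zO^{-i}M_\zg=M[i]$ from Theorem~\ref{main thm}(b) and the full faithfulness of $F$, the right-hand side equals $\dim\Hom_{\diag}(\zd,R^{-1}\zg)+\dim\Hom_{\diag}(\zd,R^{-2}\zg)$.

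It then remains to identify this with $\#(\zg\cap\zd)$. By Theorem~\ref{thm BM}, $\diag$ is the $2$-cluster category $\calc^2$ of type $\mathbb{A}_{N-2}$, and the rotation $R^{-1}$ is its suspension; so the two terms above are $\dim\Ext^1_{\calc^2}(\zd,\zg)$ and $\dim\Ext^2_{\calc^2}(\zd,\zg)$, and the $3$-Calabi-Yau duality in $\calc^2$ rewrites the second as $\dim\Ext^1_{\calc^2}(\zg,\zd)$, so the sum is symmetric in $\zg,\zd$ and equals the total extension dimension $\sum_{i=1}^{2}\dim\Ext^i_{\calc^2}(\zd,\zg)$. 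By the Baur--Marsh description recalled after Theorem~\ref{thm BM}, there is a nontrivial extension in $\calc^2$ between the indecomposables attached to $\zg$ and $\zd$ exactly when the $2$-diagonals cross, and in that case the extension space is one-dimensional; therefore the displayed sum is $\#(\zg\cap\zd)\in\{0,1\}$, as claimed.

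The Frobenius/suspension identity and the Calabi-Yau rewriting are routine. The step that needs care is the last one: matching the triangulated structure of $\diag$ with the rotation of $\cals$ precisely enough to see that $R^{-1}$ is the suspension of $\calc^2$, and extracting from the Baur--Marsh model the exact value --- not merely the non-vanishing --- of the total extension dimension (equivalently, that $\Ext^1_{\calc^2}(\zd,\zg)$ and $\Ext^2_{\calc^2}(\zd,\zg)$ are never simultaneously nonzero). Should a self-contained argument be preferred, the same quantities $\dim\underline{\Hom}_{\scmp\,B}(N,M[i])$ can instead be read off the Auslander-Reiten quiver $\zG_{\scmp\,B}\cong\zG_{\diag}$ of Theorem~\ref{main thm}(e) together with $\dim\Ext^1_{\scmp\,B}(\zO^2M,M)=1$ from Corollary~\ref{cor:ext}.
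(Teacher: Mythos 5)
Your argument is essentially the paper's own proof: the paper disposes of this corollary in one line by citing exactly the three ingredients you use --- the Baur--Marsh equivalence of Theorem~\ref{thm BM} (together with the remark following it that nontrivial extensions in $\calc^2$ correspond to crossings of $2$-diagonals), the equivalence $F$ of Theorem~\ref{main thm}, and the extension-closedness of $\cmp\,B$ from Proposition~\ref{prop extclosed}. You have merely written out the routine Frobenius and $3$-Calabi--Yau bookkeeping that the paper leaves implicit, so no further comparison is needed.
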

\begin{proof}
 This follows  from the equivalence of categories of Baur and Marsh in Theorem~\ref{thm BM} together with Theorem~\ref{main thm} and Proposition~\ref{prop extclosed}.
\end{proof}

The question of $\tau$-rigidity is much more subtle. This is because the Auslander-Reiten translation $\tau_{\scmp\,B}$ in the syzygy category is different from the Auslander-Reiten translation $\tau_B$ in the module category $\textup{mod}\,B$.

 Recall that a $B$-module is said to be $\tau$-rigid if $\Hom_B(M,\tau_B M)=0$, see \cite{AIR}. Since $B$ is a 2CY-tilted algebra, there is a bijection between indecomposable $\tau$-rigid $B$-modules and indecomposable rigid objects in the cluster category of $B$, and hence with the cluster variables in the corresponding cluster algebra.
 
  In general, the indecomposable syzygies over a 2CY-tilted algebra are not $\tau$-rigid, see the example below. For the 2CY-tilted algebras $B$ considered here however, we conjecture the following.

\begin{conjecture} Let $B$ be a 2-Calabi-Yau tilted algebra whose quiver satisfies Definition~\ref{def Q}. Then
 the indecomposable syzygies are $\tau$-rigid in $\textup{mod}\,B$. 
\end{conjecture}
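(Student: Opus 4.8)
This is a genuine conjecture; the plan I would follow is a two-step reduction, followed by a computation in the generalized cluster category $\mathcal{C}_B$ of $B$. Fix an indecomposable syzygy $M$, so that the goal is $\Hom_B(M,\tau_B M)=0$ with $\tau_B$ the Auslander--Reiten translation of $\mathrm{mod}\,B$. First, by Corollary~\ref{cor rigid}, $M$ is rigid, hence $\Ext^1_B(M,M)=0$, and the Auslander--Reiten formula gives $\overline{\Hom}_B(M,\tau_B M)\cong D\,\Ext^1_B(M,M)=0$, where $\overline{\Hom}$ is $\Hom$ modulo morphisms that factor through an injective $B$-module. Thus every morphism $M\to\tau_B M$ factors through an injective, and --- replacing the target by the injective envelope $M\hookrightarrow I(M)$ --- the conjecture reduces to showing that the restriction map $\Hom_B(I(M),\tau_B M)\to\Hom_B(M,\tau_B M)$ vanishes. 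There is a dual reformulation worth recording: writing $M=M_\gamma$ and using that $R^{-2}$ is the Auslander--Reiten translation of the $3$-Calabi--Yau category $\diag$, Serre duality yields $\underline{\Hom}_{\scmp\,B}(M_\gamma,M_{R^{-2}\gamma})\cong\Hom_{\diag}(\gamma,R^{-2}\gamma)\cong D\,\Ext^1_{\diag}(\gamma,\gamma)=0$ by Corollary~\ref{lem rigid}; so, once $\tau_B M_\gamma$ is known to be Cohen--Macaulay (which I expect for dimer tree algebras, where the same checkerboard polygon carries both $\scmp\,B$ and the stable cosyzygy category, cf.\ Corollary~\ref{cor cmpcmi}), every morphism $M_\gamma\to\tau_B M_\gamma$ in fact factors through a \emph{projective}.

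Second, I would pass to $\mathcal{C}_B$. Write $B=\End_{\mathcal{C}_B}T$ for the canonical cluster-tilting object $T$, so $\mathcal{C}_B$ is $2$-Calabi--Yau and $\Hom_{\mathcal{C}_B}(T,-)$ induces an equivalence $\mathcal{C}_B/\add(T[1])\simeq\mathrm{mod}\,B$, under which $M$ has a unique indecomposable lift $\widetilde M\in\mathcal{C}_B$ with no summand in $\add(T[1])$. By the bijection recalled just before the conjecture, $M$ is $\tau$-rigid in $\mathrm{mod}\,B$ if and only if $\widetilde M$ is a rigid object of $\mathcal{C}_B$. So the conjecture becomes: \emph{the lift to $\mathcal{C}_B$ of every indecomposable syzygy over $B$ has no self-extensions.} This is not a short cut --- the standard exact sequence relating $\Ext^1_{\mathcal{C}_B}(\widetilde M,\widetilde M)$, $\Ext^1_B(M,M)$ and $\Hom_B(M,\tau_B M)$ makes the two vanishings equivalent once $\Ext^1_B(M,M)=0$ --- so $\Ext^1_{\mathcal{C}_B}(\widetilde M,\widetilde M)=0$ has to be established inside $\mathcal{C}_B$, which is where the dimer tree hypothesis must enter.

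Third, to prove rigidity in $\mathcal{C}_B$ I would use the checkerboard model. Assume first every chordless cycle of $Q$ has length three, so Theorem~\ref{main thm} applies and $M=M_\gamma$ for a $2$-diagonal $\gamma$ of $\cals$, with the explicit projective presentation $f_\gamma$ of Section~\ref{sect 5}. Extend $\gamma$ to a quadrangulation $\Delta=\{\gamma=\gamma_0,\gamma_1,\dots,\gamma_{N-3}\}$ of $\cals$; by Corollary~\ref{cor ext} the modules $M_{\gamma_k}$ are pairwise $\Ext^1$-orthogonal, so $\bigoplus_k M_{\gamma_k}$ is rigid in $\scmp\,B$, in fact cluster-tilting there under $\scmp\,B\simeq\diag$. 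The plan is then to lift this family to $\mathcal{C}_B$ and adjoin a suitable set of indecomposable summands of $T$ --- these correspond to the indecomposable projectives, equivalently to the radical lines $\rho(i)$, by Proposition~\ref{prop:radicals correspond} --- to obtain a basic rigid object $\widetilde T$ of $\mathcal{C}_B$; counting its non-isomorphic indecomposable summands should force $\widetilde T$ to be cluster-tilting, whence $\widetilde M=\widetilde M_\gamma$ is a summand of $\widetilde T$ and is rigid. An attractive alternative is to exploit Example~\ref{ex dimer}: embed $\cals$ in the alternating strand diagram of the associated dimer model and realize each $M_\gamma$ among the cluster monomials of the positroid cluster structure on the relevant Grassmannian, where rigidity is automatic. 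When chordless cycles of length greater than three occur the equivalence $F$ is not yet available; one first invokes Conjecture~\ref{main conj}, proved in \cite{SS4}, and the rest is unchanged.

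The hard part is this lifting/completion step in $\mathcal{C}_B$: promoting a rigid family of syzygies in the $3$-Calabi--Yau category $\scmp\,B$ to a cluster-tilting object of the $2$-Calabi--Yau category $\mathcal{C}_B$ --- equivalently, that the lifts $\widetilde M_\gamma$ are reachable from $T$ by mutation, equivalently that the restriction map of the first step vanishes. This is exactly the step that fails over a general $2$-Calabi--Yau-tilted algebra, where a rigid syzygy can fail to be $\tau$-rigid, so it cannot be purely formal; I expect it to require a careful analysis of the Hom-spaces $\Hom_B(M_\gamma,P(i))$ and $\Hom_B(P(i),M_\delta)$ in terms of valid paths, in the spirit of the computations carried out in Sections~\ref{sect 5} and~\ref{sect 7}.
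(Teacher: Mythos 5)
The paper contains no proof of this statement: it is posed as an open conjecture, and the authors themselves explain why it does not follow formally from what they prove (the Auslander--Reiten translation of $\scmp\,B$ is not $\tau_B$, and they exhibit a 2-Calabi--Yau tilted algebra outside their class whose indecomposable syzygies are rigid but not $\tau$-rigid). Your text is likewise a programme rather than a proof, as you acknowledge, so the comparison can only be an assessment of the plan. Your first two reductions are sound: Corollary~\ref{cor rigid} together with the Auslander--Reiten formula does show that every morphism $M\to\tau_B M$ factors through an injective (hence through $I(M)$), and the Adachi--Iyama--Reiten correspondence does translate the conjecture into rigidity in $\mathcal{C}_B$ of the canonical lift $\widetilde M$ of $M$.

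The gap is the third step, and as formulated it is circular rather than merely hard. $\Ext^1_B$-orthogonality of the modules $M_{\gamma_k}$ attached to a quadrangulation (Corollary~\ref{cor ext}) does not imply $\Ext^1_{\mathcal{C}_B}$-orthogonality of their lifts: by the standard comparison of extensions in $\mathcal{C}_B$ with the module category, vanishing of $\Ext^1_{\mathcal{C}_B}(\widetilde M_{\gamma_k},\widetilde M_{\gamma_l})$ requires in addition $\Hom_B(M_{\gamma_k},\tau_B M_{\gamma_l})=0=\Hom_B(M_{\gamma_l},\tau_B M_{\gamma_k})$, which for $k=l$ is exactly the conjecture and for $k\neq l$ a strengthening of it; so the rigid object $\widetilde T$ you want to complete to a cluster-tilting object cannot be assembled without already knowing the statement (and ``counting summands forces cluster-tilting'' would also need the maximal-rigid versus cluster-tilting issue addressed). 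Two further inaccuracies: the expectation that $\tau_B M_\gamma$ is again Cohen--Macaulay conflicts with \eqref{eq cmpcmi} and Corollary~\ref{cor cmpcmi}, which place $\tau_B M_\gamma$ in the cosyzygy category $\underline{\textup{CMI}}\,B$, not in $\cmp\,B$, so the ``factors through a projective'' conclusion you draw from Serre duality in $\scmp\,B$ does not follow as stated; and the appeal to the dimer/Grassmannian picture of Example~\ref{ex dimer} would need an actual additive categorification identifying the $M_\gamma$ with rigid objects there, which is not available in the paper. A genuine proof would have to extract new vanishing of $\Hom_B(-,\tau_B-)$ from the combinatorics (for instance from the valid-path description of $f_\gamma$ and its Nakayama dual $f_\gamma^*$), and neither the paper nor your proposal supplies that step.
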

If the conjecture holds then the 2-triangulations of the checkerboard polygon $\cals$ correspond to (non-maximal) sets of compatible cluster variables in the cluster algebra of $Q$. We think it would be interesting to study the completions of these partial clusters in the cluster algebra as well as in $\textup{mod}\,B$.

We give an example of a 2CY-tilted algebra that does not satisfy Definition~\ref{def Q} for which the conjecture fails. 
\begin{example} 
 Let $B$ be the 2-Calabi-Yau tilted algebra of type $\widetilde{A}_{2,1}$ with quiver
\[\xymatrix@R10pt@C40pt{&2\ar[rd]^\zb\\1\ar[ru] ^\za&&3\ar@<2pt>[ll]^\zs\ar@<-2pt>[ll]_\zg
}\]
bound by the relations $\za\zb=\zb\zg=\zg\za=0$. Then  $\rad P(2)=\begin{smallmatrix}3\\1\\2\end{smallmatrix}$ is an indecomposable syzygy which has a nonzero morphism into $\tau\, \rad P(2) =
\begin{smallmatrix}
 2\\3\\1
\end{smallmatrix}$.
Note that the algebra $B$ does not satisfy the condition (Q1) because the quiver has no chordless cycles. On the other hand, the syzygy category of $B$ is finite.
\end{example}

\medskip
\subsection{Syzygies and cosyzygies}
Recall from Section \ref{sect CM} that the Auslander-Reiten translations induce equivalences of categories
\begin{equation}
\tau\colon \scmp\,B \to \underline{\textup{CMI}}\,B
\qquad 
\tau^{-1}\colon
\underline{\textup{CMI}}\,B
\to \scmp\,B.
\end{equation}

 Thus it follows immediately from Theorem \ref{main thm} that $\underline{\textup{CMI}}\,B$ is also equivalent to the category of 2-diagonals $\diag$, 
 Thus every 2-diagonal $\zg$ defines a morphism $f^*_\zg\colon I_0(\zg)\to I_1(\zg)$ between injective modules such that $\ker f^*_\zg\in\underline{\textup{CMI}}\,B$ is a cosyzygy. 
In particular, the indecomposable projective $P(j)$ is a direct summand of $P_1(\zg)$ if and only if the indecomposable injective $I(j)$ is a direct summand of $I_0(\zg)$, while 
the indecomposable projective $P(i)$ is a direct summand of $P_0(\zg)$ if and only if the indecomposable injective $I(i)$ is a direct summand of $I_1(\zg)$. Therefore the Nakayama functor $\nu=D\Hom_B(-,B)$ gives the following commutative diagram with exact rows
\[\xymatrix{&&P_1(\zg)\ar[r]^{f_\zg}\ar[d]^\nu&P_0(\zg)\ar[r]\ar[d]^\nu&M_\zg\ar[r]&0\\
0\ar[r]&\tau_B M_\zg\ar[r]&I_0(\zg)\ar[r]^{f_\zg^*} & I_1(\zg)
}
\]
It follows that the syzygy $M_\zg$ in $\scmp\,B$ and its Auslander-Reiten translation $\tau_B M_\zg$ in $\underline{\textup{CMI}}\,B$ are both represented by the same 2-diagonal $\zg$ in $\diag$. We have proved the following result.
\begin{corollary}
 \label{cor cmpcmi} 
The following diagram commutes. 
\[\xymatrix@C40pt{
\scmp\,B\ar[r]^{\tau}&\underline{\textup{CMI}}\,B\\
\diag\ar[u]^{\textup{cok}\,f_\zg} \ar[r]^{1}
&\diag \ar[u]_{\textup{ker}\,f_\zg^*}  
}
\]
\end{corollary}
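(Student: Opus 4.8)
The plan is to derive the corollary from Theorem~\ref{main thm} together with the equivalence $\tau\colon\scmp\,B\to\underline{\textup{CMI}}\,B$ of \eqref{eq cmpcmi}, via a computation with the Nakayama functor $\nu=D\Hom_B(-,B)$. First I would record that Theorem~\ref{main thm} gives an equivalence $F\colon\diag\to\scmp\,B$ with $F(\zg)=M_\zg=\coker f_\zg$, so that $\tau\circ F\colon\diag\to\underline{\textup{CMI}}\,B$ is again an equivalence by \cite[Chapter X]{BR}; in particular $\diag$ already models the stable cosyzygy category, and the content of the corollary is that under this model the object attached to $\zg$ is $\ker f^*_\zg$, where $f^*_\zg:=\nu f_\zg$.

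The core step is the Nakayama computation for a fixed 2-diagonal $\zg$. By Corollary~\ref{cor:536} (together with Theorem~\ref{thm:omega}) the sequence $P_1(\zg)\xrightarrow{f_\zg}P_0(\zg)\to M_\zg\to 0$ is the beginning of a \emph{minimal} projective resolution, and by Proposition~\ref{lem:cross}(a) the modules $P_0(\zg),P_1(\zg)$ share no indecomposable summand. Applying $\Hom_B(-,B)$ and then $D$ to this presentation, and using the identity $\tau_BM=D\operatorname{Tr}M$, yields an exact sequence $0\to\tau_BM_\zg\to\nu P_1(\zg)\xrightarrow{\nu f_\zg}\nu P_0(\zg)$ which, because minimality of the projective presentation passes to a minimal injective copresentation under $\nu$, has $\ker(\nu f_\zg)=\tau_BM_\zg$. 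Since $\nu P(k)\cong I(k)$ for every $k\in Q_0$, the summand matching is governed by the crossing degrees: setting $I_0(\zg):=\nu P_1(\zg)=\bigoplus_jI(j)$ and $I_1(\zg):=\nu P_0(\zg)=\bigoplus_iI(i)$ gives exactly the commutative ladder displayed before the statement, and hence $\tau_BM_\zg\cong\ker f^*_\zg$ with $f^*_\zg=\nu f_\zg\colon I_0(\zg)\to I_1(\zg)$.

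It then remains to assemble the square. On objects, the identification $\tau(F(\zg))=\tau_BM_\zg\cong\ker f^*_\zg$ is precisely the previous step, using that the equivalence $\tau$ of \eqref{eq cmpcmi} is the one induced by the module-category translate $\tau_B$. For morphisms, since $\nu$ is a functor the assignment $\zg\mapsto\ker f^*_\zg=\ker(\nu f_\zg)$ is functorial on the 2-pivots, and as $\diag$ has at most one-dimensional Hom spaces between indecomposables it extends uniquely to a functor; being equal on objects to the equivalence $\tau\circ F$, it must coincide with $\tau\circ F$. This gives the commutativity of the diagram.

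I expect the only real obstacle to be the bookkeeping in the middle step: verifying that $\nu$ sends the minimal projective presentation of $M_\zg$ to a minimal injective \emph{co}presentation of $\tau_BM_\zg$ — so that $\ker f^*_\zg$ is genuinely the indecomposable non-injective cosyzygy representing $\zg$ in $\underline{\textup{CMI}}\,B$, with nothing lost or gained — and that the resulting matching of summands is the expected $P(j)\leftrightarrow I(j)$, $P(i)\leftrightarrow I(i)$ dictated by degree $1$ versus degree $0$ crossings. Everything else is a formal consequence of Theorem~\ref{main thm} and \cite{BR}.
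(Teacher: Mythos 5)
Your argument is correct and follows essentially the same route as the paper: apply the Nakayama functor $\nu$ to the (minimal) projective presentation $P_1(\zg)\xrightarrow{f_\zg}P_0(\zg)\to M_\zg\to 0$, identify $\ker(\nu f_\zg)=\ker f^*_\zg$ with $\tau_B M_\zg$, and conclude via Theorem~\ref{main thm} and the Beligiannis--Reiten equivalence \eqref{eq cmpcmi} that the same 2-diagonal $\zg$ represents both $M_\zg$ in $\scmp\,B$ and $\tau_B M_\zg$ in $\underline{\textup{CMI}}\,B$. The paper's proof is exactly this Nakayama-functor ladder (stated at the level of objects), with minimality coming from Proposition~\ref{lem:cross}(a) as in your proposal, so no further commentary is needed.
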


\medskip
\section{Examples}

\begin{figure}[h]
\begin{center}
\scalebox{0.6}{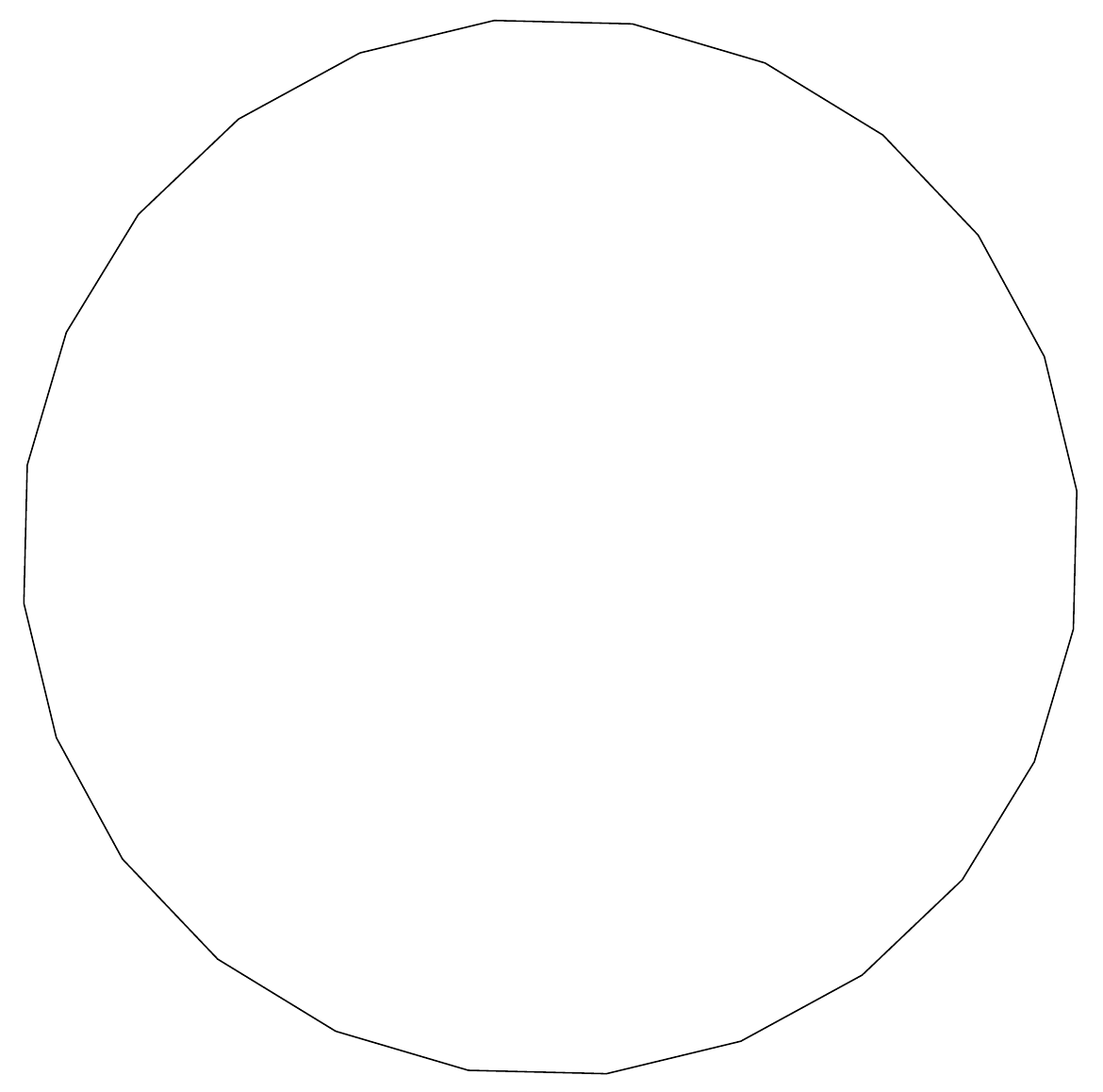}
\caption{Checkerboard polygon for the quiver in Example \ref{ex 11.1}.}
\label{fig ex 11.1}
\end{center}
\end{figure}
In this section we give several examples. 
\begin{example}
 \label{ex 11.1}
Let $Q$ be the quiver
\[\xymatrix{
&&\dreizehn\ar[r]&\elf\ar[r]\ar[ld]&\zwoelf\ar[ld]
\\
&\neun\ar[rd]&\sieben\ar[u]\ar[r]&\acht\ar[u]\ar[ld]
\\
&\eins\ar[u]\ar[d]&\drei \ar[l]\ar[u]\ar[r]\ar[d]&\fuenf\ar[u]\ar[d]
\\
10\ar[ru]&2\ar[l]\ar[ru]&4\ar[l]\ar[r]&6\ar[lu]
}
\]
The corresponding checkerboard polygon has 24 vertices and is depicted in Figure \ref{fig ex 11.1}.
The 2-diagonal $\zg$ that runs from the boundary vertex 18 to the boundary vertex 9 is drawn in red. Its crossing sequence is
$(\zwei,\eins),(\drei,\vier),(\sechs,\fuenf),(\acht,\sieben)$. The corresponding syzygy $M_\zg$ is the cokernel of the associated morphism 
\[f_\zg\colon P(\eins)\oplus P(\vier)\oplus P(\fuenf)\oplus P(\sieben) \longrightarrow P(\zwei)\oplus P(\drei)\oplus P(\sechs)\oplus P(\acht).\]
The matrix of $f_\zg$ is 
\[ 
\begin{bmatrix}
 \zwei\leadsto \eins & 0&0&0
 \\
 \drei\to\eins&\drei\to \vier &\drei\to \fuenf& \drei\to \sieben
 \\
 0&0&\sechs\leadsto\fuenf&\sechs\leadsto\sieben
 \\
 0&0&0&\acht\leadsto\sieben
\end{bmatrix}
\]
The rotation $R(\zg)$ is the 2-diagonal from boundary vertex 10 to boundary vertex 19. This is the radical line $\rho(3)$. The corresponding syzygy is the radical of $P(3)$ which is also given by the cokernel of the morphism
\[f_{R(\zg)}\colon P(\neun)\oplus P(\zwei)\oplus P(\sechs)\oplus P(\acht) \longrightarrow 
 P(\eins)\oplus P(\vier)\oplus P(\fuenf)\oplus P(\sieben)
\]
whose matrix is equal to 
\[
\begin{bmatrix}
 \eins\to \neun &\eins\to \zwei&0&0
 \\
 0&\vier\to \zwei&\vier\to\sechs&0
 \\
 0&0&\fuenf\to\sechs&\fuenf\to\acht
 \\
 0&0&0&\sieben\to \acht
\end{bmatrix} 
\]
We have the following projective resolution.
\[\xymatrix{\cdots\ar[r]&P_2(\zg)\ar[r]^{\overline{f}_{R(\zg)}}&P_1(\zg)\ar[r]^{f_\zg}&P_0(\zg)\ar[r]&M_\zg\ar[r]&0}\]
\end{example}


\begin{example}\label{exbysize} In this example we classify the quivers whose checkerboard polygon has 6,8 or 10 vertices. The quivers are listed below and the polygons are given in the same order in Figure \ref{fig ex size}. 
\bigskip

\underline{2 Hexagons} \\
$\xymatrix{1\ar[d]\\2\ar[r]&3\ar[lu]} \qquad
\xymatrix{1\ar[r] \ar[d]&3\ar[d]\\2\ar[r]&4\ar[lu]}$
\\
\bigskip

\hrule

\bigskip 

\underline{5 Octagons}
\\
$\xymatrix{1\ar[r]&2\ar[d]\\4\ar[u]&3\ar[l]}\qquad
\xymatrix{1\ar[r]&2\ar[d]\\4\ar[u]&3\ar[l]\ar[r]&5\ar[lu]}\qquad
\xymatrix{1\ar[r] \ar[d]&2\ar[d]\\3\ar[r]&4\ar[lu]\ar[r]&5\ar[lu]}
$\\
\\
$\xymatrix{&1\ar[ld]\ar[r]&2\ar[d]\\
3\ar[r]&4\ar[u]&5\ar[l]\ar[r]&6\ar[lu]}
\qquad
\xymatrix{1\ar[r]&2\ar[d]&3\ar@{<-}[l]\\
4\ar@{<-}[u]&5\ar@{<-}[l]\ar[lu]\ar[r]&6\ar@{<-}[u]\ar[ul]}
$
\\

\bigskip

\hrule

\bigskip

\underline{17 Decagons}
\\
\scalebox{1}{\begin{tabular}
{cccc} 
$ \xymatrix@R15pt@C15pt{1\ar[r]&2\ar[rd]\\3\ar[u]&4\ar[l]&5\ar[l]}
$
&
$ \xymatrix@R15pt@C15pt{1\ar[r]&2\ar@{<-}[r]\ar[rd] &3\\4\ar[u]&5\ar[l]\ar[r]&6\ar[u]}$
&
$
\xymatrix@R15pt@C15pt{1\ar[r]&2\ar[d]&3\ar[l]\\
4\ar[u]&5\ar[l]\ar[r]&6\ar[u]}
$
&$
\xymatrix@R15pt@C15pt{1\ar[r]&2\ar[d]&3\ar[l]\\
4\ar@{<-}[u]&5\ar@{<-}[l]\ar[lu]\ar[r]&6\ar[u]}
$
\\
\\
$
\xymatrix@R15pt@C15pt{1\ar[d]\ar[r]&2\ar[r]&3\ar[ld]\\
4\ar[r]&5\ar[r]\ar[lu]&6\ar[u]}
$
&
$
\xymatrix@R15pt@C15pt{1\ar[r]&2\ar[d]&3\ar[l]\\
4\ar@{<-}[u]&5\ar@{<-}[l]\ar[lu]\ar[ru]\ar@{<-}[r]&6\ar@{<-}[u]}
$
&$
\xymatrix@R15pt@C12pt{&1\ar[r]&2\ar[rd]&3\ar[l]\\
4\ar[r]\ar@{<-}[ru]&5\ar[u]&6\ar[l]\ar@{<-}[r]&7\ar[u]&
}
$
&
$
\xymatrix@R15pt@C12pt{1\ar[r]&2\ar[d]&3\ar[dr]\ar[l]\\
4\ar[u]&5\ar[l]\ar[r]&6\ar[u]&7\ar[l]}
$

\\
\\
$
\xymatrix@R15pt@C12pt{1\ar[r]&2\ar[r]\ar[d]&3\ar[d]\\
4\ar[u]&5\ar[l]\ar[r]&6\ar[lu]\ar[r]&7\ar[lu]
}
$&$
\xymatrix@R15pt@C12pt{&1\ar[r]\ar[ld]&2\ar[r]\ar[d]&3\ar[d]\\
4\ar[r]&5\ar[u]&6\ar[l]\ar[r]&7\ar[lu]
}
$
&
$
\xymatrix@R15pt@C12pt{1\ar[d]&2\ar[l]\ar@{<-}[r]\ar[dr]&3\ar@{<-}[d]\\
4\ar[ru]&5\ar[l]&6\ar[l]\ar@{<-}[r]&7\ar@{<-}[lu]} 
$
&
$\xymatrix@R15pt@C12pt{1\ar[r]&2\ar[d]&3\ar[l]\\
4\ar@{<-}[u]&5\ar@{<-}[l]\ar[lu]\ar[ru]\ar@{<-}[r]&6\ar@{<-}[u]\ar[r]&7\ar[lu]}
$
\\
\\
$
\xymatrix@R15pt@C12pt{1\ar[r]&2\ar[d]&3\ar@{<-}[l]\\
4\ar@{<-}[u]&5\ar@{<-}[l]\ar[lu]\ar[r]&6\ar[lu]\ar@{<-}[u]\ar[r]&7\ar[lu]}
$&$
\xymatrix@R15pt@C12pt{&1\ar[ld]\ar[r]&2\ar[d]&3\ar[dr]\ar[l]\\
4\ar[r]&5\ar[u]&6\ar[l]\ar[r]&7\ar[u]&8\ar[l]}
$&
$
\xymatrix@R15pt@C12pt{&1\ar[ld]\ar[r]&2\ar[d]&3\ar@{<-}[dr]\ar[l]\\
4\ar[r]&5\ar[u]&6\ar[l]\ar[r]&7\ar[lu]\ar@{<-}[u]&8\ar[l]}
$
&
$
\xymatrix@R15pt@C12pt{
&1\ar[r]\ar[d]&2\ar[d]&3\ar[l]\ar[d]\\
4\ar[ru]&5\ar[l]\ar[r]&6\ar[lu]\ar[ru]&7\ar[l]\ar[r]&8\ar[lu]}
$
\\
\\
$
\xymatrix@R15pt@C12pt{1\ar[r]&2\ar[d]&3\ar@{<-}[l]\ar[r]
&4\ar[d]\\
5\ar@{<-}[u]&6\ar@{<-}[l]\ar[lu]\ar[r]&7\ar[lu]\ar@{<-}[u]\ar[r]&8\ar[lu]}
$&
\end{tabular}
}

\begin{figure}
\begin{center}
\scalebox{0.6}{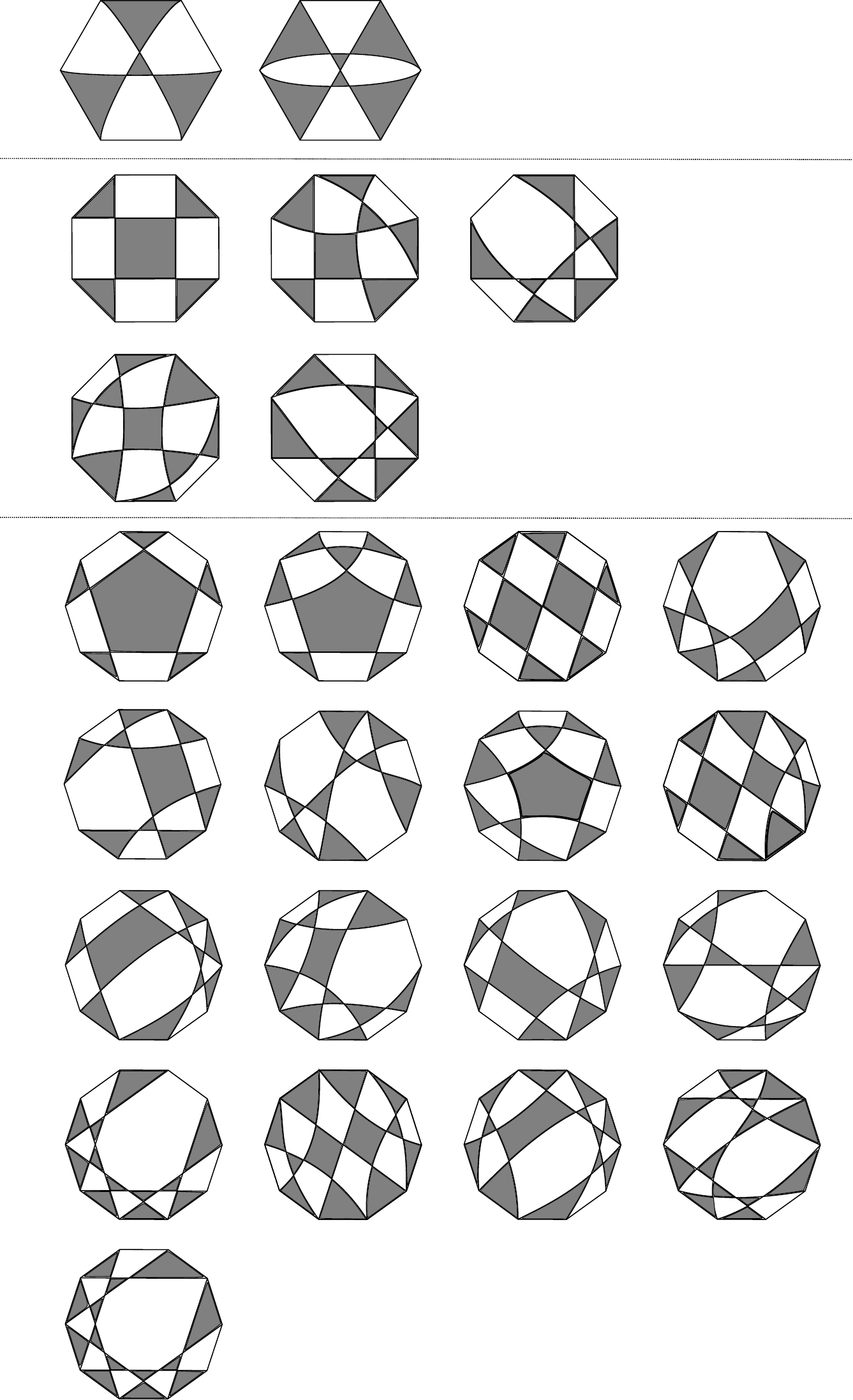}
\caption{All checkerboard polygons up to 10 vertices.}
\label{fig ex size}
\end{center}
\end{figure}

\end{example}

%
%
%
\pagebreak
\begin{example}
  Figure \ref{fig example2} shows three checkerboard polygons $\cals$ together with their respective quiver $Q$. The three quivers have 8 vertices, while the three polygons respectively have 10, 12 and 14 vertices. The two quivers on the left are of Dynkin type $\mathbb{E}_8$ or Grassmannian type $\textup{Gr}(3,8)$. The quiver on the right is of affine type $\widetilde{\mathbb{E}}_7$.
\end{example}

\begin{figure}
\begin{center}
\scalebox{0.5}{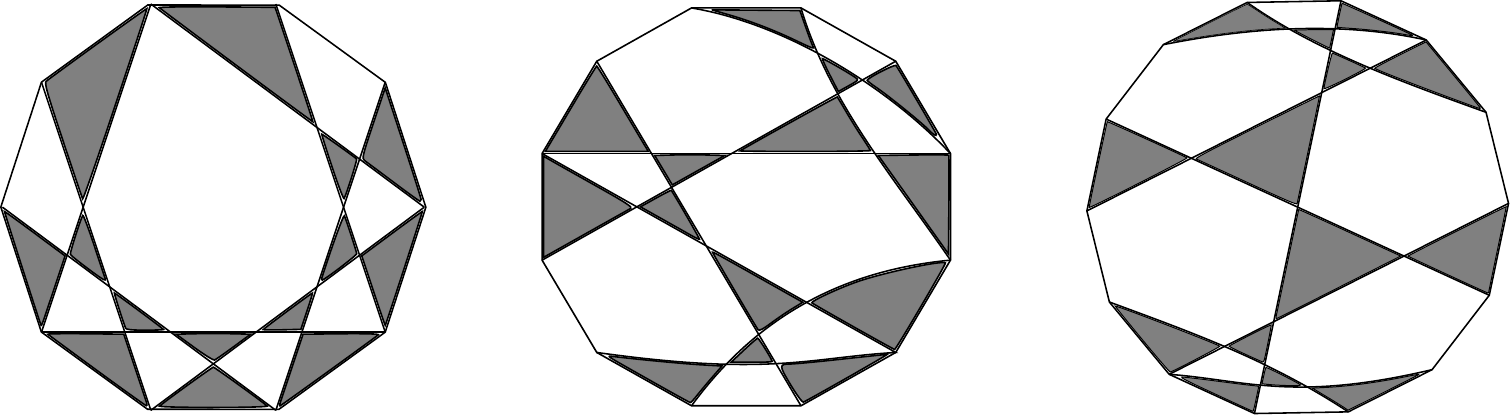}
\[\xymatrix{
\cdot\ar[r] &\cdot\ar[r]\ar[ld] &\cdot\ar[r]\ar[ld]  &\cdot\ar[ld]
\\
\cdot\ar[r]\ar[u] &\cdot\ar[r]\ar[u] &\cdot\ar[r] \ar[u]&\cdot\ar[u]
}\qquad 
\xymatrix{
\cdot\ar[r] &\cdot\ar[ld]\ar[rd] &\cdot\ar[r]\ar[l] &\cdot\ar[ld]
\\
\cdot\ar[r]\ar[u] &\cdot\ar[u] &\cdot\ar[l]\ar[u]\ar[r]&\cdot\ar[u]
}
\qquad \xymatrix@C20pt{\cdot\ar[rr]&&\cdot \ar[lld]\ar[d]\ar[rrd] &&\cdot\ar[ll]\\
\cdot\ar[r]\ar[u]&\cdot\ar[ru]&\cdot\ar[l]\ar[r]&\cdot\ar[lu]&\cdot\ar[l]\ar[u]
}
\]
\caption{Three checkerboard polygons together with their quiver.}
\label{fig example2}
\end{center}
\end{figure}

 \pagebreak
 \appendix
 
 {\setcounter{tocdepth}{1}}
 \section{Proofs of section \ref{sect 5}}
This is the first of three appendices which provide detailed proofs for sections \ref{sect 5}, \ref{sect 6}, and \ref{sect 7}.
\subsection{Properties of $f_\zg$}\label{sect 5.2}
In this section we establish certain properties of the map $f_\zg$, and in particular of valid paths in the quiver $Q$.   These results will be important for many proofs.

\subsubsection{Construction of $Q(\zg)$}\label{sec:quiver}
Here we define a full subquiver $Q(\zg)$ of $Q$ determined by the crossing sequence for $\zg$.  This quiver will be instrumental during the remainder of the paper.  We begin with considering the portion of the subquiver coming from two consecutive crossing pairs. 

Let $W$ be a white region in $\cals$ with bounding radical lines labeled $w_1, w_2, \dots, w_m$ in the clockwise orientation such that $w_1$ and $w_m$  are attached at the boundary of $\cals$.  Recall that by Lemma~\ref{lem 32}(a) a white region has either an edge with two vertices or a single vertex on the boundary of $\cals$.  Suppose that an arc $\zg$ enters $W$ by crossing two of its boundary edges $w_x, w_{x+1}$ either in the order $w_x, w_{x+1}$ or $w_{x+1}, w_{x}$.  Similarly, suppose $\zg$ exits $W$ by crossing two of its boundary edges $w_y, w_{y+1}$ either in this order or the opposite order.  For an example see Figure~\ref{fig:W-region}.  Let $Q(W)$ be the full subquiver of $Q$ on vertices $w_1, \dots, w_m$.   Note that three consecutive vertices $w_l, w_{l+1}, w_{l+2}$ for $l=1, \dots, m-2$ form a 3-cycle in $Q$ with arrows $w_l\to w_{l+1}\to w_{l+2}\to w_{l}$.  Also, let $Q(W, \zg)$ be the full subquiver of $Q(W)$ with vertices $w_x, w_{x+1}, \dots, w_y, w_{y+1}$ if $x<y$ or with vertices $w_{x+1}, w_{x}, \dots, w_{y+1}, w_{y}$ if $x>y$.  Then there are four possibilities for $Q(W, \zg)$ depending on the relationship between $x$ and $y$, and whether there is an even or odd number of vertices in $Q(W, \zg)$. See Figure~\ref{fig:quiver-pairs}.

\begin{figure}
    \centering
    \begin{minipage}{.5\textwidth}
       {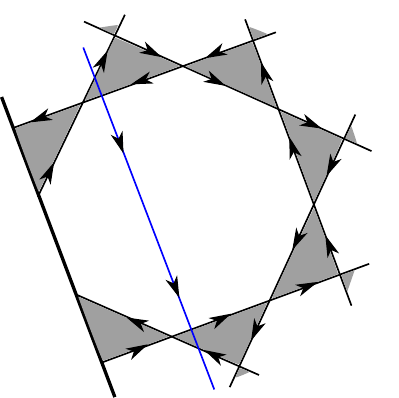}
    \end{minipage}%
    \begin{minipage}{0.5\textwidth}
       
        \hspace{.5cm} $\xymatrix@R=20pt@C=20pt{i_x=w_1\ar[d] & w_3\ar[l]\ar[d] & w_5 \ar[d]\ar[l] & w_7=j_y\ar[l]\\
j_x=w_2\ar[ur] & w_4\ar[l]\ar[ur] & w_6=i_y\ar[l]\ar[ur]}$
    \end{minipage}
    \caption{An arc $\zg$ traverses a white region $W$ and contains two pairs $(i_x, j_x), (i_y, j_y)$ in its crossing sequence.  In this case $Q(W)=Q(W,\zg)$. }
    \label{fig:W-region}
\end{figure}

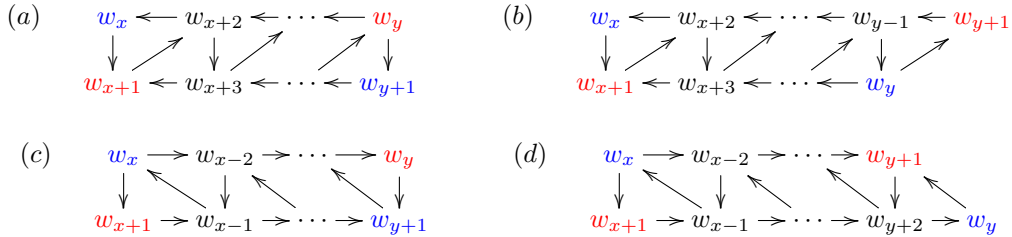
\begin{figure}[h]
\[\xymatrix@R=10pt@C=10pt{(a)&{\color{blue} w_x} \ar[d] & \ar[l] w_{x+2} \ar[d] & \ar[l] \cdots & \ar[l] {\color{red} w_y}\ar[d]  &&(b)&  {\color{blue} w_x} \ar[d] & \ar[l] w_{x+2} \ar[d] & \ar[l] \cdots & \ar[l] w_{y-1}\ar[d] & {\color{red} w_{y+1}}\ar[l]\\
&{\color{red} w_{x+1}}  \ar[ur]& \ar[l] w_{x+3}\ar[ur]  & \ar[l] \cdots \ar[ur]& \ar[l] {\color{blue} w_{y+1}} &&& {\color{red} w_{x+1}  }\ar[ur]& \ar[l] w_{x+3}\ar[ur]  & \ar[l] \cdots \ar[ur]& \ar[l] {\color{blue} w_{y}} \ar[ur]  } \]

\[\xymatrix@R=10pt@C=10pt{(c)&{\color{blue} w_x} \ar[d] \ar[r] &  w_{x-2} \ar[d] \ar[r] & \ar[r] \cdots  &  {\color{red} w_y}\ar[d]  &&(d)&  {\color{blue} w_x} \ar[d] \ar[r] &  w_{x-2} \ar[d] \ar[r] & \ar[r] \cdots  &  {\color{red} w_{y+1}\ar[d]}\\
&{\color{red} w_{x+1}}   \ar[r]& \ar[r] w_{x-1}\ar[ul]  & \ar[r] \ar[ul]\cdots &  {\color{blue} w_{y+1}} \ar[ul]&&& {\color{red} w_{x+1}}   \ar[r]& \ar[r] w_{x-1}\ar[ul]  & \ar[r] \ar[ul]\cdots &  w_{y+2} \ar[ul]  \ar[r] & {\color{blue} w_{y}}\ar[ul]} \]
\caption{The four possibilities for $Q(W, \zg)$. Cases (a) and (b) occur when $x<y$, and cases (c) and (d) occur when $x>y$. The step from $x$ to $y$ is rectangular in cases (a) and (c) and it is trapezoidal in cases (b) and (d). Vertices of the same color represent the same degree of crossings between the corresponding edges and $\zg$.}
\label{fig:quiver-pairs}
\end{figure}

We say that a step from $x$ to $y$ in the crossing sequence for $\zg$ is {\it rectangular} if $Q(W, \zg)$ has an even number of vertices and the step from $x$ to $y$ is {\it trapezoidal} if $Q(W, \zg)$ has an odd number of vertices.

Now, we analyze the subquiver of $Q$ determined by three consecutive crossing pairs in the crossing sequence for $\zg$.   By the above we obtain a subquiver determined by two consecutive pairs and we want to analyze how to piece these subquivers together.  For this construction it will not be important whether each of the steps is rectangular or trapezoidal, but the key distinction will be whether these steps are forward or not.   Let $W, W'$ be two white regions that are traversed consecutively by $\zg$.  Let $w_1, \dots, w_m$ and $w_1', \dots, w_{m'}'$ be the bounding radical lines of $W$ and $W'$ respectively, labeled clockwise around the white region such that $w_1, w_m, w_1', w_{m'}'$ meet the  boundary of $\cals$ in $W, W'$.  Then $\zg$ crosses $w_x, w_{x+1}, w_y, w_{y+1}$ in $W$ and $\zg$ crosses $w'_{x'}, w_{{x'+1}}', w_{y'}', w_{{y'+1}}'$ in $W'$, where  $w_{x'}'=w_y$ and $w_{y+1}=w_{x'+1}'$ is the unique pair of edges in $W\cap W'$ that is crossed by $\zg$.  Moreover, there is an arrow between $w_y$ and $w_{y+1}$ which we call the {\it connecting arrow}, see Figure~\ref{fig:WW}. The subquivers $Q(W, \zg)$ and $Q(W', \zg)$ share this connecting arrow. There are four cases coming from ($x<y$ or $x>y$) and ($x'<y'$ or $x'>y'$), see Figure~\ref{fig:crossing-pairs}.  The connecting arrow is labeled $\alpha$.   These quivers are fundamentally different in the following sense.

\begin{itemize}
\item $Q(W, \zg)$ and $Q(W', \zg)$ share no other vertex besides the two endpoints of the connecting arrow if ($x<y$ and $x'>y'$) or ($x>y$ and $x'<y'$).  In this case the connecting arrow is an interior arrow of the  quiver on vertices $Q(W, \zg)_0\cup Q(W', \zg)_0$.   The subsequences of the crossing sequence for $\zg$  from $x$ to $y'$ and from $y'$ to $x$ are not forward. 

 \item $Q(W, \zg)$ and $Q(W', \zg)$ share a 3-cycle containing the connecting arrow if ($x<y$ and $x'<y'$) or ($x>y$ and $x'>y'$). In this case the connecting arrow is a boundary arrow of the quiver on vertices $Q(W, \zg)_0\cup Q(W', \zg)_0$.  The subsequence of the crossing sequence for $\zg$ from $x$ to $y'$ is forward if ($x>y$ and $x'>y'$) and from $y'$ to $x$ is forward if ($x<y$ and $x'<y'$). 
 \end{itemize}
 
 \begin{figure}
\begin{center}
{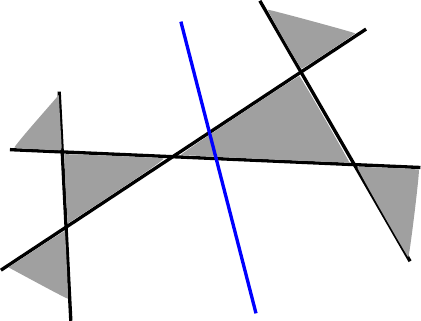}
\caption{An arc $\zg$ traverses a pair of consecutive white regions $W, W'$.}
\label{fig:WW}
\end{center}
\end{figure}

\begin{figure}
\[ \scalebox{.8}{\xymatrix@R=10pt@C=10pt{ {\begin{matrix}x<y\\x'>y'\end{matrix}}  &{\color{blue} w_x} \ar[d] & \ar[l] w_{x+2}  & \ar[l] \cdots & \ar[l] {\color{red} w_y=w'_{x'}} \ar[d]^{\alpha}  \ar[r] &\cdots\ar[r] &  \ar[r] w'_{y'+2}& \ar[d] {\color{blue} w'_{y'}} \\
&{\color{red} w_{x+1}}  \ar[ur]& \ar[l] \cdots  & \ar[l]  w_{y-1}\ar[ur]& \ar[l] {\color{blue} w_{y+1}=w'_{x'+1}}  \ar[r] & w'_{x'-1}\ar[ul] \ar[r] & \cdots \ar[r] & \ar[ul] {\color{red} w'_{y'+1}}}} \]

\[ \scalebox{.8}
{\xymatrix@R=10pt@C=10pt{{\begin{matrix}x>y\\x'<y'\end{matrix}} &{\color{blue} w_x} \ar[d] \ar[r] &  w_{x-2} \ar[d] \ar[r] & \ar[r] \cdots  &  {\color{red} w_y=w'_{x'}}\ar[d]^{\alpha} &\ar[l] w'_{x'+2} & \cdots \ar[l] & \ar[l] \ar[d]{\color{blue} w'_{y'}}\\
&{\color{red} w_{x+1}}   \ar[r]& \ar[r] w_{x-1}\ar[ul]  & \ar[r] \ar[ul]\cdots &  {\color{blue} w_{y+1}=w'_{x'+1}}\ar[ur] \ar[ul] &\cdots \ar[l] & \ar[l]\ar[ur] w'_{y'-1}& \ar[l] {\color{red} w'_{y'+1}}}}
\]

\[ \scalebox{.8}{\xymatrix@R=10pt@C=10pt{ {\color{blue} w_x} \ar[d] & \ar[l] w_{x+2}  & \ar[l] \cdots & \ar[l] {\color{red} w_y = w'_{x'}} \ar@<-3ex>[d]^{\alpha} &&  &\hspace{6ex}{\color{red} w'_{y'}} \ar[r] & {\color{blue} w'_{y'+1}} \ar[dl] \\
{\color{red} w_{x+1}}  \ar[ur]& \ar[l] \cdots & w_{y-1} =w'_{x'+2} \ar[dr]\ar[l]\ar[ur]&   \ar@{}[l]^(.25){}="c"^(.6){}="d" \ar "c";"d" \hspace{3ex}{ \color{blue} w_{y+1}=w'_{x'+1}} & &{\begin{matrix}x>y\\x'>y'\end{matrix}}&  \hspace{6ex}\vdots \ar@<-3ex>[u] & \hspace{-6ex}\vdots \ar@<3ex>[u]\\
&& \hspace{8ex}w'_{x'+4} \ar@<-4ex>[u]& w'_{x'+3} \ar@<3ex>[u]\ar[l]  &  && \hspace{6ex}w'_{x'-3} \ar[r] \ar@<-3ex>[u]& w'_{x'-2} \ar[dl] \ar@<3ex>[u]\\
&{\begin{matrix}x<y\\x'<y'\end{matrix}} & \hspace{8ex}\vdots \ar@<-4ex>[u]  \ar@{}[dr]^(.25){}="a"^(.85){}="b" \ar "a";"b"   & \ar@<3ex>[u] \hspace{-6ex}\vdots   &  {\color{blue} w_x} \ar[d] \ar[r] &  \cdots  \ar[r] & \ar[r]   w_{y+2} =w'_{x'-1} \ar@<-3ex>[u]&  {\color{red} w_y=w'_{x'}} \ar@<-3ex>[d]^{\alpha} \ar@<3ex>[u]\\
&&  \hspace{8ex}{\color{blue} w'_{y'+1}} \ar@<-4ex>[u] &\ar[l]\ar@<3ex>[u] {\color{red} w'_{y'}} \,\,\,\,&  {\color{red} w_{x+1}}   \ar[r]& \ar[r] w_{x-1}\ar[ul]  & \ar[r] \cdots &  {\color{blue} w_{y+1} =w'_{x'+1} \ar[ul]}}} \]
\caption{The subquivers of $Q$ on vertices $Q(W, \zg)_0\cup Q(W', \zg)_0$.  Vertices of the same color represent the same degree of crossings between the corresponding edges and $\zg$.}
\label{fig:crossing-pairs}
\end{figure}
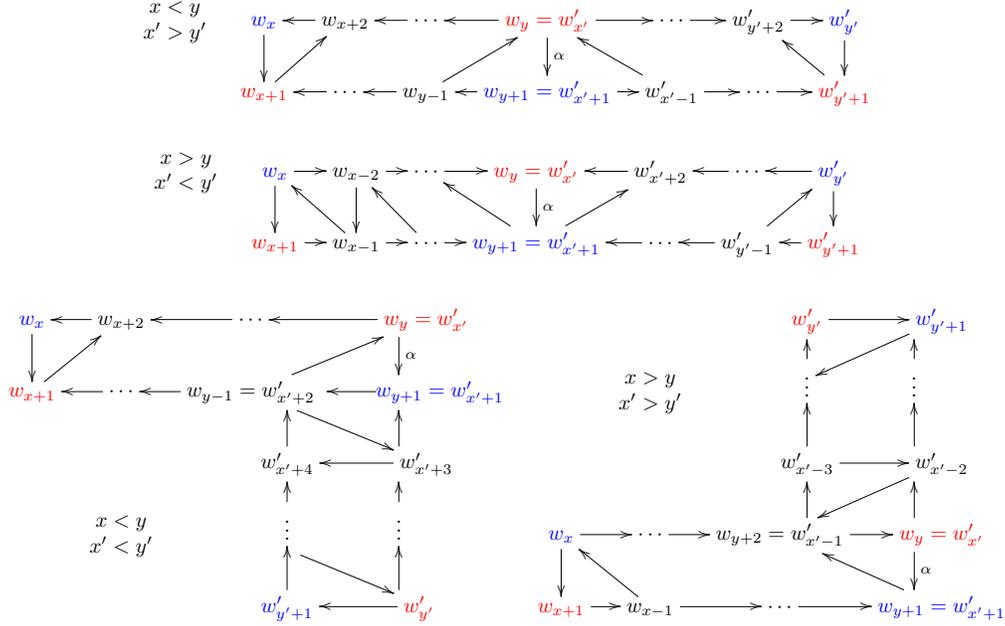

Given any subsequence of the crossing sequence for $\zg$ made up of consecutive crossing pairs we can use the above construction recursively to construct a subquiver $Q(\zg)$ of $Q$.   In order to define the quiver $Q(\zg)$ for the entire crossing sequence we first consider the case when $\zg$ crosses only one side of a shaded region $S$.  This occurs only at the endpoints of $\zg$ when $S$ lies on the boundary of $\cals$.  Then $\zg$ starts at the boundary of $\cals$ traverses $S$ and crosses a radical line $w_x$ as it exits the shaded region and enters a white region $W$.  As $\zg$ traverses $W$ and exits the white region it crosses another set of radical lines $w_y, w_{y+1}$ or a single line $w_y$.   Label the boundary of $W$ by $w_1, \dots, w_m$ in the clockwise direction as before.  Since $S$ lies on the boundary of $\cals$ it follows that $w_x$ is either the first or the last edge in $W$, so $x=1$ or $x=m$.  If $\zg$ crosses only a single radical line $w_y$ as it exits $W$ then $y=m$ or $y=1$ respectively.  Moreover, the crossing sequence for $\zg$ is $(w_x), (w_y)$, and it does not contain any crossing pairs. In this case we define $Q(\zg)=Q(W)$ to be the full subquiver of $Q$ on the vertices $w_1, \dots, w_m$.   If $\zg$ crosses two radical lines $w_y, w_{y+1}$ as it exits $W$ then the crossing sequence for $\zg$ starts with $(w_x), (w_y, w_{y+1})$ if $w_y$ is of degree 0 or $(w_x), (w_{y+1}, w_{y})$ if $w_{y+1}$ is of degree 0.. We define $Q(W, \zg)$ to be the full subquiver of $Q(W)$ on vertices $w_1, \dots, w_y, w_{y+1}$ if $x=1$ or the full subquiver of $Q(W)$ on vertices $w_y, w_{y+1}, \dots, w_m$ if $x=m$.   We define $Q(W,\zg)$ similarly if $\zg$ crosses a single edge of a shaded region as it exits $W$.  

\begin{definition}\label{def:5.8}
Let $\zg$ be a representative of a 2-diagonal.  The crossing sequence for $\zg$ determines a collection of subquivers $Q(W_1, \zg), \dots, Q(W_t, \zg)$ for some $t\geq 1$, and define $Q(\zg)$ to be the full subquiver of $Q$ on vertices $Q(W_1, \zg)_0 \cup \dots \cup Q(W_t, \zg)_0$. 
\end{definition}

In particular, $Q(\zg)$ is obtained by piecing together $Q(W_i, \zg)$ according to the construction in Figure~\ref{fig:crossing-pairs}.

\subsubsection{Valid paths}
Now we study properties of valid paths appearing in the definition of the map $f_{\zg}$.

Suppose $\zg$ crosses pairs of radical lines $w_x, w_{x+1}$ and then $w_y, w_{y+1}$.  Then there are two consecutive crossing pairs $(i_x, j_x), (i_y, j_y)$ in the crossing sequence for $\zg$ where the following sets are equal  $\{w_x, w_{x+1}\}= \{i_x, j_x\}$ and $\{w_y, w_{y+1}\}= \{i_y, j_y\}$.  
The next lemma is formulated in terms of valid paths, but it says that $i_x$ and $i_y$ (respectively $j_x$ and $j_y$) appear diagonally opposite from each other in $Q(W, \zg)$.  First, we make the following observation. 

\begin{remark}\label{rem:parity}
Consider the quiver determined by two consecutive crossing pairs $w_x, w_{x+1}$ and $w_y, w_{y+1}$ given in Figure~\ref{fig:quiver-pairs}.  We observe that if $a>b$ and $a$ and $b$ have the same parity then there is a valid path from $w_a$ to $w_b$ in $Q$ and there is no valid path from $w_b$ to $w_a$. 
\end{remark}

\begin{lemma}\label{lem:one-step}
If $(i_x, j_x), (i_y, j_y)$ are two consecutive crossing pairs for a 2-diagonal $\zg$ then exactly one of the following is true: 
\begin{itemize}
\item[(i)] there is a valid path from $i_x$ to $j_y$ and a valid path from $j_x$ to $i_y$ in $Q$; 
\item[(ii)] there is a valid path from $i_y$ to $j_x$ and a valid path from $j_y$ to $i_x$ in $Q$.
\end{itemize}
\end{lemma}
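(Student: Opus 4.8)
The statement is purely about the combinatorics of the subquiver $Q(W,\zg)$ attached to two consecutive crossing pairs, so the plan is to reduce everything to the explicit pictures in Figure~\ref{fig:quiver-pairs} and the parity observation in Remark~\ref{rem:parity}. First I would recall the setup: the two consecutive crossing pairs $(i_x,j_x),(i_y,j_y)$ arise because $\zg$ enters a white region $W$ crossing two bounding radical lines $w_x,w_{x+1}$ and exits crossing $w_y,w_{y+1}$, and the pair $\{i_x,j_x\}$ equals $\{w_x,w_{x+1}\}$ while $\{i_y,j_y\}$ equals $\{w_y,w_{y+1}\}$, with the letter $i$ always denoting the degree~$0$ crossing and $j$ the degree~$1$ crossing. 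The key point established in Section~\ref{sec:quiver} is that along the boundary of $W$ the degrees of crossings \emph{alternate}: consecutive radical lines $w_k,w_{k+1}$ cross $\zg$ in opposite directions. Hence whether $i_x=w_x$ or $i_x=w_{x+1}$ is governed by the parity of $x$ (relative to a fixed reference), and similarly for $i_y$ versus $j_y$; this is exactly the colouring shown in Figure~\ref{fig:quiver-pairs}, where vertices of the same colour carry the same crossing degree.

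Next I would go through the four cases of Figure~\ref{fig:quiver-pairs} one at a time. In cases (a) and (c) ($x<y$ rectangular, and $x>y$ rectangular) the subquiver $Q(W,\zg)$ is a "ladder" of $3$-cycles with an even number of vertices; in cases (b) and (d) (trapezoidal) it has an odd number of vertices. In each picture the four distinguished vertices $i_x,j_x,i_y,j_y$ sit at the four "corners" of the ladder, and the blue/red colouring tells us that $i_x$ and $i_y$ occupy diagonally opposite corners, as do $j_x$ and $j_y$. Now I apply Remark~\ref{rem:parity}: reading the ladder with the clockwise-around-$W$ indexing $w_1,\dots,w_m$, there is a valid path from $w_a$ to $w_b$ precisely when $a>b$ and $a\equiv b \pmod 2$, and no valid path in the reverse direction. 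Comparing the index of $i_x$ with that of $j_y$ and the index of $j_x$ with that of $i_y$, one checks in each of the four cases that either both inequalities $(\mathrm{index\ of\ }i_x) > (\mathrm{index\ of\ }j_y)$ and $(\mathrm{index\ of\ }j_x) > (\mathrm{index\ of\ }i_y)$ hold (giving alternative (i)), or both reverse inequalities hold (giving alternative (ii)); and these two possibilities are mutually exclusive since a valid path, being nonzero in $B$ with a unique representative, cannot exist in both directions between the same ordered pair of distinct vertices. The parity condition is automatically satisfied because $i_x,j_y$ (resp. $j_x,i_y$) are diagonally opposite in a ladder whose consecutive vertices have opposite colour, so they differ by an odd shift along one "rail" plus a step across, which works out to matching parity — this is the one bookkeeping computation I would actually carry out carefully, splitting into the rectangular and trapezoidal subcases.

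Finally I would observe that exactly one alternative holds and not both: the two alternatives are visibly incompatible because (i) requires $i_x$ to the left of (larger index than) $j_y$ while (ii) requires the opposite, and the white region $W$ is fixed so the relative positions are determined. I expect the \textbf{main obstacle} to be purely organizational rather than mathematical: making sure the correspondence $\{i,j\}\leftrightarrow\{w,w+1\}$ is tracked consistently with the degree convention through all four cases, and in particular handling the orientation of the crossing sequence (which pair is "first") — but since the statement is symmetric under reversing the crossing sequence (which swaps the roles of $x$ and $y$, hence swaps (i) and (ii)), it suffices to treat, say, the two cases $x<y$ and deduce the cases $x>y$ by this symmetry. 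No new tools are needed beyond Figure~\ref{fig:quiver-pairs}, Remark~\ref{rem:parity}, and the uniqueness of valid paths (Remark following Definition of valid path and Proposition~\ref{prop 39}).
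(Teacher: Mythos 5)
Your proposal is correct and follows essentially the same route as the paper: reduce the statement to a parity claim about the indices of $i_x$ and $j_y$ (resp.\ $j_x$ and $i_y$) via Remark~\ref{rem:parity}, and establish that parity claim from the alternation of the radical-line orientations (equivalently, of the crossing degrees) along the boundary of the white region, as guaranteed by Lemma~\ref{lem orientation}. The paper's write-up simply phrases this parity argument once, uniformly, rather than running through the four cases of Figure~\ref{fig:quiver-pairs}, but the content is the same.
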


\begin{proof}
By Remark~\ref{rem:parity}, in order to prove the lemma it suffices to show that if $i_x=w_a$ for $a\in \{x, x+1\}$ then $j_y=w_b$ for $b\in \{y, y+1\}$ such that $a$ and $b$ have the same parity.  Now, we consider the orientation of the radical lines in the surface $\cals$ and the orientation of the arc $\zg$.  By Lemma~\ref{lem orientation} the radical lines along a white region alternate in direction.  Hence, if $\zg$ crosses $w_a$ and $w_b$, such that $a$ and $b$ have the same parity, then $\zg$ crosses the two radical lines in opposite directions.  That is, if $w_a$ crosses $\zg$ from left to right then $w_b$ crosses $\zg$ from right to left and vice versa.  This completes the proof. 
\end{proof}

\begin{remark}\label{rem:forward-steps}
The above lemma implies that in the crossing sequence for $\zg$ either the step from $x$ to $y$ is forward or the step from $y$ to $x$ is forward but not both, where $x$ and $y$ are consecutive.   For example, in Figure~\ref{fig:quiver-pairs} the step from $x$ to $y$ is forward in cases (c) and (d), and the step $y$ to $x$ is forward in cases (a) and (b).  
\end{remark}

\begin{remark}\label{rem:forward}
Let $(i_x, j_x), (i_y, j_y)$ be two consecutive crossing pairs for $\zg$ contained in a white region $W$ traversed by $\zg$. Then valid paths from $i_x\leadsto j_y$ or $j_x\leadsto i_y$ are subpaths of maximal valid paths of $W$, see Definition~\ref{def:whitepath}.  In particular, these paths correspond to moving counterclockwise around the boundary edges of $W$.  
\end{remark}

This completely describes valid paths coming from two consecutive crossing pairs.  Next, we consider valid paths determined by three or more consecutive crossing pairs.

\begin{lemma}\label{valid-paths-lem}
Let $(i_s, j_s), (i_{s+1}, j_{s+1}), \dots, (i_t, j_t)$ be a forward subsequence of the crossing sequence for $\zg\in \diag$.  If $t-s\geq 2$ then at most one of the paths $i_s\leadsto j_t$, $j_s\leadsto i_t$ is valid, and if $t-s=2$ then exactly one of these paths is valid. 
\end{lemma}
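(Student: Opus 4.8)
The plan is to work entirely inside the quiver $Q(\zg)$ associated to the forward subsequence (Definition~\ref{def:5.8}) and to proceed by induction on $t-s$. The base case $t-s=2$ is the decisive one: here $Q(\zg)$ is built from two consecutive white-region quivers $Q(W,\zg)$ and $Q(W',\zg)$ glued along a connecting arrow $\alpha$, and since the subsequence from $s$ to $t$ is forward, Figure~\ref{fig:crossing-pairs} tells us we are in one of the two ``shared $3$-cycle'' configurations, i.e. $\alpha$ is a boundary arrow of the quiver on $Q(W,\zg)_0\cup Q(W',\zg)_0$. First I would record the combinatorial shape of this quiver: it is a ``double trapezoid/rectangle'' with the two long sides consisting of the degree-$0$ vertices $i_s,\dots,i_t$ on one side and the degree-$1$ vertices $j_s,\dots,j_t$ on the other, all $3$-cycles being length-three. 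By Remark~\ref{rem:forward} any valid path $i_s\leadsto j_t$ or $j_s\leadsto i_t$ is a subpath of a maximal valid path of one of the white regions, hence corresponds to travelling counterclockwise along the boundary edges; together with Lemma~\ref{lem:one-step} applied twice (to $(i_s,j_s),(i_{s+1},j_{s+1})$ and to $(i_{s+1},j_{s+1}),(i_{s+2},j_{s+2})$), exactly one of the two ``diagonal'' vertices at each step is reachable by a valid path. Chaining these two valid one-step paths gives a valid path from $i_s$ (or $j_s$) all the way to the opposite corner; what has to be checked is that the two one-step valid paths compose to a \emph{valid} path, i.e. that no $3$-cycle is repeated. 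This is exactly where the ``forward'' hypothesis is used: a forward step corresponds to moving counterclockwise around $W$, and two consecutive forward steps move counterclockwise around $W$ and then around $W'$, so the only $3$-cycle they could share is the one containing the connecting arrow $\alpha$; since $\alpha$ is a boundary arrow of the combined quiver, it lies in a unique $3$-cycle there, and one checks directly from Figure~\ref{fig:crossing-pairs} that the composed path uses at most one arrow of that $3$-cycle. Hence exactly one of $i_s\leadsto j_t$, $j_s\leadsto i_t$ is valid when $t-s=2$.

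For the inductive step $t-s\geq 3$, I would split the subsequence at an intermediate index, say $r$ with $s<r<t$, writing any candidate valid path $i_s\leadsto j_t$ as a concatenation of a path $i_s\leadsto ?$ through the first part and a path $?\leadsto j_t$ through the second part, where $?$ is one of $i_r,j_r$. By the inductive hypothesis applied to $(i_s,j_s),\dots,(i_r,j_r)$, at most one of $i_s\leadsto j_r$, $j_s\leadsto i_r$ is valid; similarly for the tail. The point is that the parity/direction bookkeeping from Lemma~\ref{lem:one-step} and Remark~\ref{rem:parity} forces the ``inner'' vertex to match up: if a valid path $i_s\leadsto j_t$ exists, then at $i_r$ or $j_r$ it must agree with what the two halves allow, and the two possibilities $i_s\leadsto j_t$ and $j_s\leadsto i_t$ require \emph{opposite} choices of inner vertex, so they cannot both occur. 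Concretely, since $Q(\zg)$ has all $3$-cycles of length three and no interior arrows appear along a forward run, the full subquiver on the degree-$0$ and degree-$1$ crossing vertices is a ``ladder'' of triangles, and a valid path through it is determined by its endpoints; one then simply counts parities of indices to see that $i_s$ and $j_t$ sit in positions of opposite parity in at most one of the two ladders, giving the ``at most one'' conclusion.

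I would expect the main obstacle to be the careful case analysis needed to verify that composing forward one-step valid paths never repeats a $3$-cycle, including at the junctions where a white region is entered and exited (the $x<y$ versus $x>y$ alternatives of Figure~\ref{fig:quiver-pairs}) and at the boundary-shaded-region endpoints where the crossing sequence has a solitary $(i_0)$ or $(j_0)$. A secondary subtlety is making the induction split clean: one has to ensure that truncating a forward subsequence at $r$ still yields a forward subsequence and that the associated truncated quiver $Q(\zg)$ is literally the full subquiver on the corresponding vertices, which follows from Definition~\ref{def:5.8} but should be stated. Once these bookkeeping points are pinned down, the statement follows mechanically from Lemma~\ref{lem:one-step}, Remark~\ref{rem:parity}, Remark~\ref{rem:forward}, and the standing hypothesis that every chordless cycle has length three.
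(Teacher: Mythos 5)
There is a genuine gap, and it sits at the heart of both halves of your argument: you assume that a valid path $i_s\leadsto j_t$ (or $j_s\leadsto i_t$) can be analyzed by where it meets the intermediate crossing-pair vertices. In your inductive step you split at an index $r$ and claim any candidate valid path factors through $i_r$ or $j_r$; this is false. In the forward regime the valid path of the surviving type travels through the \emph{third} vertices $k_l$ of the $3$-cycles containing $(i_l,j_l)$ (and through unnamed vertices on the white-region boundaries), and in general it avoids the intermediate $i_r,j_r$ entirely — see the spiral picture in the proof of Lemma~\ref{rect-trap-lemma1}(2). The same problem undermines your base case: composing the one-step valid paths supplied by Lemma~\ref{lem:one-step} does not produce the path you want, because $i_s\leadsto j_{s+1}$ can only be continued by $j_{s+1}\leadsto i_{s+2}$, giving an $i\leadsto i$ path (and such paths are never valid, cf. Lemma~\ref{lem:ii-paths}); the genuine valid path $i_s\leadsto j_{s+2}$ goes around through $k_{s+1}$ and does not factor through $i_{s+1}$ or $j_{s+1}$ at all. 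So neither the existence half of the $t-s=2$ case nor the exclusion half for $t-s\geq 3$ is actually established by your outline, and the ``parity of positions in a ladder'' heuristic does not apply once a trapezoidal step makes the quiver spiral rather than form a ladder.

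The repair is also the reason no induction is needed: localize the obstruction at $k_{s+1}$. In each of the four configurations of $Q(\zg)$ determined by the first three pairs (direction of the arrow between $i_s$ and $j_s$, and rectangular versus trapezoidal first step, Figure~\ref{fig:valid-paths}), every path from $i_s$ to any $j_t$ with $t-s\geq 2$ (respectively from $j_s$ to any $i_t$, depending on the configuration) is forced to pass through $k_{s+1}$, and already the subpath $i_s\leadsto k_{s+1}$ (respectively $j_s\leadsto k_{s+1}$) uses two arrows of one $3$-cycle, hence is not valid. This kills one of the two directions simultaneously for all $t-s\geq 2$, which is the ``at most one'' statement, and when $t-s=2$ the other direction is visibly realized by the path that runs through $k_{s+1}$ to the opposite corner, giving ``exactly one.'' Your instinct to use the local structure of three consecutive pairs is right, but the bookkeeping has to be done at the $k$-vertices, not at the crossing-pair vertices.
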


\begin{proof}
We consider the portion of the quiver $Q(\zg)$ coming from the first three crossing pairs $(i_s, j_s)$, $(i_{s+1}, j_{s+1})$, $(i_{s+2}, j_{s+2})$.  Since the steps from $s$ to $s+2$ are forward, $Q(\zg)$ will look like the last quiver in Figure~\ref{fig:crossing-pairs}. There are four possibilities depending on the direction of the arrow between $i_s$ and $j_s$ and whether the step from $s$ to $s+1$ is rectangular or trapezoidal,  see Figure~\ref{fig:valid-paths}.  By Lemma~\ref{lem:one-step} there are valid paths $i_s\leadsto j_{s+1}$ and $j_{s+1}\leadsto i_{s+2}$, which says that in the figure $j_{s+1}$ is directly horizontally from $i_s$ and $i_{s+2}$ is directly above or below $j_{s+1}$.  This justifies the particular labeling of $i,j$ vertices in the figure with subscripts $s+1$ and $s+2$.  Moreover, we label the remaining vertex of the 3-cycle in $Q(\zg)$ containing $i_l, j_l$ by $k_l$ for $l=s, s+1, \dots, t$.   

In cases (ii) and (iii) of Figure~\ref{fig:crossing-pairs} any path starting in $i_s$ and ending in $j_t$ with $t-s\geq 2$ passes through vertex $k_{s+1}$.  In particular, such a path is not valid because the path $i_s\leadsto k_{s+1}$ is not valid.  Similarly, in cases (i) and (iv) any path starting in $j_s$ and ending in $i_t$ with $t-s\geq 2$ passes through $k_{s+1}$ and it is not valid.   Moreover, if $t-s=2$ we see that in cases (ii) and (iii) there is a valid path $j_s\leadsto i_{s+2}$ and in cases (i) and (iv) there is a valid path $i_s\leadsto j_{s+2}$.  This shows the lemma.
\end{proof}

\begin{figure}
\[ \xymatrix@R=10pt@C=10pt{&(i)& j_{s+2}& i_{s+2} &&(ii)& i_s\ar[d] \ar[r] & \cdots & \cdots \ar[r] & j_{s+1} \ar[d]\ar[d]\\
&& \raisebox{0pt}[0.9\height][0.3\height]{ $\vdots$ } \ar[u]& \raisebox{0pt}[0.9\height][0.3\height]{ $\vdots$ } \ar[u] &&& j_s\ar[r]& k_s \ar[ul] \ar[r] & \cdots \ar[r] & k_{s+1}\ar[d]\ar[r] & i_{s+1} \ar[ul]\ar[d]\\
i_s\ar[d] \ar[r] & \cdots \ar[r] & k_{s+1} \ar[r]\ar[u]& j_{s+1} \ar[d]\ar[u] &&& &&&  \raisebox{0pt}[0.9\height][0.3\height]{ $\vdots$ } \ar[d]& \raisebox{0pt}[0.9\height][0.3\height]{ $\vdots$ } \ar[d]\\
j_s\ar[r]& k_s \ar[ul] \ar[r] & \cdots \ar[r] & i_{s+1}\ar[ul] &&& &&& i_{s+2}& j_{s+2} } \]

\[ \xymatrix@R=10pt@C=10pt{ &(iii) & i_{s+2}& j_{s+2} &&(iv) & j_s\ar[d] \ar[r] & \cdots & \cdots \ar[r] & i_{s+1} \ar[d]\ar[d]\\
&& \raisebox{0pt}[0.9\height][0.3\height]{ $\vdots$ } \ar[u]& \raisebox{0pt}[0.9\height][0.3\height]{ $\vdots$ } \ar[u] &&& i_s\ar[r]& k_s \ar[ul] \ar[r] & \cdots \ar[r] & k_{s+1}\ar[d]\ar[r] & j_{s+1} \ar[ul]\ar[d]\\
j_s\ar[d] \ar[r] & \cdots \ar[r] & k_{s+1} \ar[r]\ar[u]& i_{s+1} \ar[d]\ar[u] &&& &&&  \raisebox{0pt}[0.9\height][0.3\height]{ $\vdots$ } \ar[d]& \raisebox{0pt}[0.9\height][0.3\height]{ $\vdots$ } \ar[d]\\
i_s\ar[r]& k_s \ar[ul] \ar[r] & \cdots \ar[r] & j_{s+1}\ar[ul] &&& &&& j_{s+2}& i_{s+2} } \] 
\caption{The quiver $Q(\zg)$ in the proof of Lemma \ref{valid-paths-lem}.}
\label{fig:valid-paths}
\end{figure}
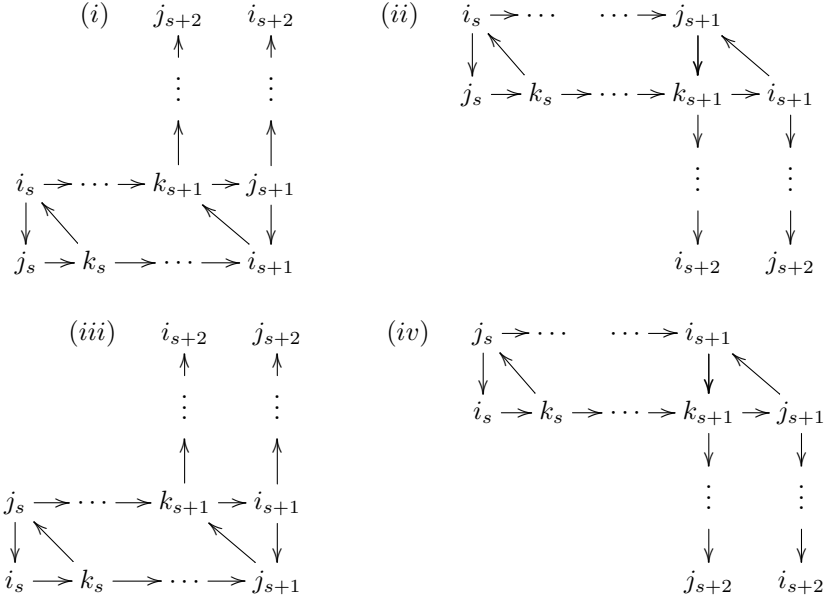

The following lemma describes the relationship between rectangular and trapezoidal steps and valid paths.  

\begin{lemma}(Rectangle-Trapezoid Lemma) \label{rect-trap-lemma1}
Let $(i_s, j_s), (i_{s+1}, j_{s+1}), \dots, (i_t, j_t)$ be a forward subsequence of the crossing sequence for some $\zg$.  Then the following properties hold.  
\begin{itemize}

\item[(1a)] If the step from $l+1$ to $l+2$ is rectangular for $l = s, \dots, t-3$ then either there exists a pair of valid paths $w_{l}:i_{l} \leadsto j_{l+2}, w_{l+1}: j_{l+1}\leadsto i_{l+3}$ or there exists a pair of valid paths $w_l:j_l\leadsto i_{l+2}, w_{l+1}: i_{l+1}\leadsto j_{l+3}$, and there is no valid path from $i_l$ to $j_{l+3}$ or from $j_l$ to $i_{l+3}$.

\item[(1b)] If the step from $l+1$ to $l+2$ is trapezoidal for $l = s, \dots, t-3$ then there is exactly one valid path either $i_l\leadsto j_{l+3}$ or $j_l\leadsto i_{l+3}$. 

\item[(2)] There is a valid path $i_s\leadsto j_t$ or a valid path $j_s\leadsto i_t$ if and only if all the steps from $s+1$ to $t-1$ are trapezoidal.  

\item[(3)] If there is a valid path $i_s\leadsto j_t$ then there are valid paths $i_{l} \leadsto j_{l'}$ for all $l, l' = s, s+1, \dots, t$ with $l<l'$. 

\item[(4)] If there exist valid paths $i_s\leadsto j_u$ and $i_t\leadsto j_w$ with $s<t<u<w$ then there exists a valid path $i_s\leadsto j_w$.  

\end{itemize}
\end{lemma}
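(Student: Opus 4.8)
\textbf{Proof proposal for the Rectangle-Trapezoid Lemma (Lemma~\ref{rect-trap-lemma1}).}

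The plan is to analyze everything locally inside the subquiver $Q(\zg)$ attached to the relevant forward subsequence of the crossing sequence, using the explicit shape of $Q(\zg)$ described in Section~\ref{sec:quiver} (Figures~\ref{fig:quiver-pairs} and~\ref{fig:crossing-pairs}). The key structural facts I would invoke are: (i) along a white region the radical lines alternate in orientation, so by Lemma~\ref{lem:one-step} the vertices of consecutive crossing pairs sit diagonally opposite in the local grid; (ii) each triple $w_l, w_{l+1}, w_{l+2}$ of consecutive bounding radical lines of a white region forms a $3$-cycle, and a path is valid iff it never traverses two arrows of a common $3$-cycle, equivalently (by the planarity/tree hypothesis on $Q$) iff it moves monotonically ``counterclockwise'' around the boundary edges of the white regions it visits without backtracking through a $3$-cycle vertex $k_l$. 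Throughout I would label, as in Figure~\ref{fig:valid-paths}, the third vertex of the $3$-cycle containing $(i_l,j_l)$ by $k_l$; the whole proof then reduces to tracking, for each configuration, whether a putative path is forced through some $k_l$.

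For (1a) and (1b) I would restrict to the four crossing pairs indexed $l, l+1, l+2, l+3$ and draw $Q(\zg)$ on these twelve vertices using the recursive gluing of Figure~\ref{fig:crossing-pairs} (all steps forward). Whether the middle step $l+1\to l+2$ is rectangular or trapezoidal changes the parity of how the two local grids are stitched: in the rectangular case the vertices $i_l$ and $i_{l+3}$ land on the ``wrong'' diagonal, so any path $i_l\leadsto j_{l+3}$ must pass through $k_{l+1}$ or $k_{l+2}$ (whichever is forced by the gluing) and hence is invalid, while the two length-two valid paths $i_l\leadsto j_{l+2}$, $j_{l+1}\leadsto i_{l+3}$ (or the mirror pair) survive by Lemma~\ref{valid-paths-lem}; in the trapezoidal case the parity works out so that exactly one of $i_l\leadsto j_{l+3}$, $j_l\leadsto i_{l+3}$ threads the grid without hitting any $k$-vertex. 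This is a finite case check — essentially eight small pictures — and the main care is in keeping the four sub-cases of Figure~\ref{fig:valid-paths} straight, which I expect to be the most error-prone bookkeeping.

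Parts (2), (3), (4) I would then derive by induction on $t-s$ from (1a), (1b) and Lemma~\ref{valid-paths-lem}. For (2): a valid path $i_s\leadsto j_t$ exists iff it avoids every $k_l$ with $s<l<t$, and by the local analysis above this avoidance at the step $l-1\to l\to l+1$ is possible precisely when the step $l\to l+1$ (equivalently, each interior step) is trapezoidal — the rectangular case is exactly the obstruction identified in (1a). For (3), given a valid path $i_s\leadsto j_t$, part (2) forces all interior steps trapezoidal, hence any sub-stretch $s\le l<l'\le t$ also has all its interior steps trapezoidal, and (2) applied to that sub-stretch plus the parity tracking gives the valid path $i_l\leadsto j_{l'}$ (the fact that it is the $i\leadsto j$ one rather than $j\leadsto i$ follows by matching orientations at the endpoint $i_s$). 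For (4), from $i_s\leadsto j_u$ and $i_t\leadsto j_w$ with $s<t<u<w$, part (2) makes the interior steps of $[s,u]$ and of $[t,w]$ all trapezoidal; since these ranges overlap in $[t,u]$, all interior steps of $[s,w]$ are trapezoidal, and one more application of (2) produces $i_s\leadsto j_w$. The only subtlety here is checking the endpoint bookkeeping so that we land on $j_w$ and not $i_w$, which again is settled by the alternation of crossing degrees along white regions (Lemma~\ref{lem orientation}) together with the given data $i_s\leadsto j_u$. I expect no serious obstacle beyond the disciplined enumeration of local pictures in (1a)--(1b); once those are pinned down, (2)--(4) are short inductions.
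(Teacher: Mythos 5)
Your overall route is the same as the paper's: parts (1a) and (1b) by a finite check of the local pictures of $Q(\zg)$ glued from Figure~\ref{fig:crossing-pairs}, part (2) forward via the rectangular obstruction and backward by exhibiting a path in the all-trapezoidal quiver, and (3), (4) by combining (1a), (1b), (2) with Lemma~\ref{valid-paths-lem}. However, your argument for (4) has a genuine gap. Part (2) applied to the valid path $i_s\leadsto j_u$ only controls the steps between pairs $s+1$ and $u-1$, and applied to $i_t\leadsto j_w$ only those between $t+1$ and $w-1$. When $u=t+1$ the single step from $t$ to $u$ lies in neither range, even though it is an interior step of $[s,w]$, so your claim that ``all interior steps of $[s,w]$ are trapezoidal'' does not follow from the union of the two ranges. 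That step can a priori be rectangular, and ruling this out needs a separate argument: if it were rectangular, then (1a) together with (3) and the validity of $i_s\leadsto j_u$ forces a valid path $j_t\leadsto i_{u+1}$, so by Lemma~\ref{valid-paths-lem} there is no valid path $i_t\leadsto j_{u+1}$, and then (3) contradicts the assumed validity of $i_t\leadsto j_w$. This is exactly the extra case the paper's proof of (4) spends a paragraph on (``$s,t,u$ consecutive'', etc.), and your sketch skips it.

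A secondary, but real, imprecision: in your proof of (2) you assert that a valid path $i_s\leadsto j_t$ exists iff it avoids every $k_l$. That is not the right invariant: in the all-trapezoidal case the valid path the paper constructs passes through every one of $k_{s+1},\dots,k_{t-1}$; validity only forbids using two arrows of a common $3$-cycle, not visiting the vertex $k_l$. With your avoidance criterion the existence direction of (2) would fail, since in general there is no path from $i_s$ to $j_t$ missing all the $k_l$. For the existence direction you must actually display (or argue the concatenation of the local trapezoidal continuations yields) a globally valid path, as the paper does with its spiraling picture; please replace the avoidance criterion by the correct ``no two arrows in one $3$-cycle'' bookkeeping when you write this up.
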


\begin{proof}

Parts (1a) and (1b) of the lemma follow from the structure of quivers below.  Here we only depict the case when there is an arrow $j_{l+1}\to i_{l+1}$, and the quivers in the case when there is an arrow $i_{l+1}\to j_{l+1}$ are obtained from these by interchanging the labels of all $i$'s and $j$'s. 

\[\xymatrix@R=10pt@C=10pt{ && i_{l+3} & j_{l+3}\\
\text{(1a)}&& \raisebox{0pt}[0.9\height][0.3\height]{ $\vdots$ }  \ar[u]& \raisebox{0pt}[0.9\height][0.3\height]{ $\vdots$ } \ar[u] &&\text{(1b)}& j_{l+1}\ar[r] \ar[d] &\cdots & \cdots \ar[r] & i_{l+2} \ar[d]\\
j_{l+1} \ar[r]\ar[d] & \cdots\ar[r] & k_{l+2} \ar[r] \ar[u] & i_{l+2} \ar[d] \ar[u] &&& i_{l+1} \ar[r]  & k_{l+1} \ar[ul]\ar[r]  & \cdots \ar[r] & k_{l+2} \ar[r] \ar[d]& j_{l+2} \ar[ul]\ar[d]\\
i_{l+1}\ar[r] & k_{l+1}\ar[ul] \ar[r] & \cdots \ar[r] & j_{l+2}\ar[ul] &&& \raisebox{0pt}[0.9\height][0.3\height]{ $\vdots$ } \ar[u]& \raisebox{0pt}[0.9\height][0.3\height]{ $\vdots$ } \ar[u]  && \raisebox{0pt}[0.9\height][0.3\height]{ $\vdots$ } \ar[d]& \raisebox{0pt}[0.9\height][0.3\height]{ $\vdots$ }\ar[d]\\
\raisebox{0pt}[0.9\height][0.3\height]{ $\vdots$ } \ar[u] & \raisebox{0pt}[0.9\height][0.3\height]{ $\vdots$ }  \ar[u] &&&  && j_l\ar[u] & i_l\ar[u] && j_{l+3}& i_{l+3}\\
j_l\ar[u] & i_l\ar[u]}
\]

To show part (2) first suppose that there exists some $l$ with $s \leq l\leq t-3$ such that the step from $l+1$ to $l+2$ is rectangular.  Then $Q(\zg)$ contains as a full subquiver the quiver given (1a) above, where the labels of all $i$'s and $j$'s may be interchanged.  Then a path $w$ from $i_s$ to $j_t$ passes through some vertices in the rectangular portion of the quiver determined by $i_{l+1}, j_{l+1}, i_{l+2}, j_{l+2}$.  In particular, $w$ enters this rectangle at vertex $k_{l+1}$ or $i_{l+1}$ and exit the rectangle at vertex $k_{l+2}$ or $i_{l+2}$.  Then $w$ contains two arrows in the same 3-cycle, which shows that it is not a valid path.   Now, suppose that all steps from $s+1$ to $t-1$ are trapezoidal.  Then we have the following subquiver of $Q(\zg)$ if there is an arrow $j_{s+1}\to i_{s+1}$, and the labels of all $i$'s and $j$'s interchange if instead there is an arrow $i_{s+1}\to j_{s+1}$.

\[\xymatrix@R=10pt@C=10pt{(2)&i_{s+3}\ar[dr] &\bullet \ar[l]&\cdots\ar[l]& \cdots &\cdots&\ar[l] j_{s+2}\ar[r] & i_{s+2} \ar[dl] \\
&j_{s+3}\ar[u] \ar[d]& k_{s+3}\ar[l] \ar[d]\ar[u]&\cdots\ar[l]&\cdots&\cdots & \ar[l] k_{s+2} \ar[r] \ar[u] & \bullet \ar[u]\\
&\raisebox{0pt}[0.9\height][0.3\height]{ $\vdots$ } &\raisebox{0pt}[0.9\height][0.3\height]{ $\vdots$ } &&&& \raisebox{0pt}[0.9\height][0.3\height]{ $\vdots$ }  \ar[u]&\raisebox{0pt}[0.9\height][0.3\height]{ $\vdots$ }  \ar[u]\\
&\vdots&\vdots&&i_s \ar[r] & \cdots \ar[r] & k_{s+1} \ar[r] \ar[u]& j_{s+1}\ar[u] \ar[dl] & j_t&i_t\\
&\vdots\ar[d]&\vdots \ar[d]&&j_s\ar[r] & \cdots \ar[r] & i_{s+1}\ar[u] &&\vdots \ar[u] & \vdots\ar[u]\\
&\bullet\ar[d] & k_{s+4}\ar[d] \ar[l]\ar[r]&\cdots & \cdots &\cdots  & \cdots &\cdots \ar[r] & k_{t-1}\ar[r] \ar[u]\ar[d]& j_{t-1}\ar[d]\ar[u]\\
&i_{s+4}\ar[ur] & j_{s+4}\ar[l] \ar[r]&\cdots & \cdots &\cdots & \cdots &\cdots\ar[r] &\bullet \ar[r]& i_{t-1}\ar[ul]
}\]

We remark that the number of vertices in the quiver between two consecutive crossing pairs does not necessarily increase as the figure might suggest, this is only a convenient way to draw the quiver, which looks like it is spiraling out from the center as we move from $s$ to $t$.  
Here we see that there is a valid path from $i_s$ to $j_t$ that passes through vertices $k_{s+1}, k_{s+2}, \dots, k_{t-1}$ in this order.  Moreover, there is no valid path from $j_s$ to $i_t$ in this case.  This shows part (2) of the lemma. 

To show part (3), suppose that there is a valid path $i_s\leadsto  j_t$.  If $t=s+1$, then the statement holds trivially, and if $t=s+2$ then the statement follows from Lemma~\ref{lem:one-step}.  Finally, if $t\geq s+3$, then part (2) of this lemma implies that all steps from $s+1$ to $t-1$ are trapezoidal.  Moreover, since $i_s\leadsto j_t$ is valid, rather than $j_s\leadsto i_t$ being valid, then there is an arrow $j_l \to i_l$ for all $l=s+1, \dots, t-1$ as in the quiver above.  Because all the steps from $s+1$ to $t-1$ are trapezoidal, then again part (2) of this lemma implies that that there are valid paths $i_l \leadsto j_{l'}$ for all $l, l' = s, \dots, t$ and $l<l'$.  This shows part (3) of the lemma. 

Now we show the last part of the lemma.  If the sequence $s, t, u , w$ is consecutive, then the step from $t$ to $u$ must be trapezoidal since we have a pair of valid paths $i_s\leadsto j_u$, $i_t\leadsto j_w$, see parts (1a) and (1b) of the lemma.  Thus part (1b) of the lemma implies that there is a valid path $i_s\leadsto j_w$ and completes the proof in this case. 

If the sequence $s, t, u$ is consecutive and $w>u+1$ then all the step from $u$ to $w-1$ are trapezoidal by part (2) of the lemma.  Moreover, if the step form $t$ to $u$ is rectangular then (1a) together with the existence of a valid path $i_s\leadsto j_u$ implies that there is a valid path $j_t\leadsto i_{u+1}$.   Then Lemma~\ref{valid-paths-lem} implies that there is no valid path $i_t\leadsto j_{u+1}$.  Thus by part (3) of the lemma there is no valid path $i_t\leadsto j_w$, which is a contradiction.   Thus the step from $t$ to $u$ is trapezoidal.  In particular, all steps from $t$ to $w-1$ are trapezoidal and (2) yields a valid path $i_s\leadsto j_w$.  

The case when $t, u, w$ is consecutive and $s<t-1$ is similar.  It remains to consider the case when $s, t, u$ is not consecutive and $t, u, w$ is not consecutive.  In this situation all steps from $s+1$ to $w-1$ are trapezoidal by (2), and again by (2) there is a valid path $i_s\leadsto j_w$.  This completes the proof of (4). 
\end{proof}

\begin{lemma}\label{lem:ii-paths}
Let $\zg$ be a representative of a 2-diagonal in $\cals$.  If in the crossing sequence for $\zg$ all steps from $s$ to $t$ are forward then there are no valid paths in $Q$ from $i_s$ to $i_t$ or from $j_s$ to $j_t$.    
\end{lemma}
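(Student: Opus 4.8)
\textbf{Proof plan for Lemma~\ref{lem:ii-paths}.}

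The plan is to argue by contradiction using the structure of the quiver $Q(\zg)$ built up in Section~\ref{sec:quiver}, together with the valid-path analysis in Lemmas~\ref{lem:one-step}, \ref{valid-paths-lem}, and the Rectangle-Trapezoid Lemma~\ref{rect-trap-lemma1}. First I would reduce to the case of a minimal counterexample: suppose there is a valid path $w\colon i_s\leadsto i_t$ (the case $j_s\leadsto j_t$ is completely symmetric, interchanging the roles of the $i$'s and $j$'s, so I would only write up one). Among all such pairs $(s,t)$ with all steps from $s$ to $t$ forward, choose one with $t-s$ minimal. The base cases $t-s=1$ and $t-s=2$ are handled directly: for $t=s+1$, Lemma~\ref{lem:one-step} says the valid paths between consecutive pairs go $i_s\leadsto j_{s+1}$ and $j_s\leadsto i_{s+1}$ (or the reverse), so neither $i_s\leadsto i_{s+1}$ nor $j_s\leadsto j_{s+1}$ is valid; and for $t=s+2$, Lemma~\ref{valid-paths-lem} gives that exactly one of $i_s\leadsto j_{s+2}$, $j_s\leadsto i_{s+2}$ is valid, again ruling out $i_s\leadsto i_t$.

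For the inductive step $t-s\ge 3$, I would use the explicit spiral picture of $Q(\zg)$ from the proof of Lemma~\ref{rect-trap-lemma1}(2): since all steps from $s$ to $t$ are forward, $Q(\zg)$ restricted to these crossing pairs looks like the spiraling quiver drawn there, with the 3-cycle at each index $l$ having third vertex $k_l$. The key observation is that a valid path cannot pass through $k_{l}$ immediately after using the arrow out of $i_l$ lying in the $l$-th 3-cycle, and more generally a valid path traversing this spiral must alternate sides in a constrained way. Concretely, a valid path starting at $i_s$ must first move to $j_{s+1}$ (using Lemma~\ref{lem:one-step} and the fact that it cannot revisit the $s$-th 3-cycle), i.e.\ $i_s\leadsto j_{s+1}$ is valid; by Lemma~\ref{rect-trap-lemma1}(3) this forces all intermediate steps to be trapezoidal and produces valid paths $i_l\leadsto j_{l'}$ for all $s\le l<l'\le t$, in particular $i_s\leadsto j_t$ is valid. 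But then, if $i_s\leadsto i_t$ were also valid, $Q(\zg)$ would contain two distinct valid (hence nonzero in $B$) parallel paths from $i_s$ to the $t$-th 3-cycle, one ending at $j_t$ and one ending at $i_t$; tracing these back and appending the unique arrow of the $t$-th 3-cycle, we get two parallel nonzero paths $i_s\leadsto i_t$ that differ, which contradicts Proposition~\ref{prop 39} (schurianness). Alternatively, and more cleanly, I would note that a valid path $i_s\leadsto i_t$ would have to enter the $t$-th 3-cycle, and entering a 3-cycle $\{i_t,j_t,k_t\}$ at the vertex $i_t$ along the spiral means the last arrow used lies in that same 3-cycle as the first arrow continuing toward $i_t$ — a direct violation of validity. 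I expect the bookkeeping of exactly which arrow of the $t$-th 3-cycle is ``incoming'' versus ``outgoing'' along the spiral to be the main obstacle, and I would resolve it by carefully reading off the orientation from the two sub-cases (arrow $j_{s+1}\to i_{s+1}$ versus $i_{s+1}\to j_{s+1}$) of the quiver pictures in Lemma~\ref{rect-trap-lemma1}, each of which determines the orientation all the way along by the forward/trapezoidal condition.

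Finally, I would remark that the statement also follows from Lemma~\ref{rect-trap-lemma1}(2)--(3) packaged together: forwardness of all steps from $s$ to $t$ forces (via the dichotomy in Lemma~\ref{lem:one-step} applied at the first step) that one of $i_s\leadsto j_t$ or $j_s\leadsto i_t$ is valid and that all intermediate steps are trapezoidal, after which the spiral quiver in the proof of \ref{rect-trap-lemma1}(2) shows explicitly that the only valid path from the $s$-th pair to the $t$-th pair is the one switching parity (i.e.\ $i\leadsto j$ or $j\leadsto i$), never the parity-preserving one. I would present whichever of these two write-ups is shorter once the orientation bookkeeping is pinned down.
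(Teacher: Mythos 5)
Your base cases ($t-s=1,2$) are essentially the paper's argument, but your treatment of $t-s\ge 3$ has a genuine gap: you claim that forwardness of all steps, together with validity of $i_s\leadsto j_{s+1}$, "forces all intermediate steps to be trapezoidal" and hence that $i_s\leadsto j_t$ is valid. This is false. A step can be forward and rectangular (this is exactly cases $(R_0),(R_1)$ of the setup), and Lemma~\ref{rect-trap-lemma1}(2) says that a valid path $i_s\leadsto j_t$ exists \emph{if and only if} all intermediate steps are trapezoidal; part (3) only goes from a long valid path to short ones, never the other way. So when some intermediate step is rectangular your whole chain of deductions collapses, and this is precisely the case the paper treats separately: there, the quiver of picture (1a) in the proof of Lemma~\ref{rect-trap-lemma1} shows that any path from the $s$-th pair to the $t$-th pair must traverse the rectangle on $i_{l+1},j_{l+1},i_{l+2},j_{l+2}$, hence uses two arrows of one 3-cycle and cannot be valid. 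The remaining (all-trapezoidal) case is then settled by direct inspection of the spiral quiver of picture (2), which is what the paper does.

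Two further points. First, your "contradiction with Proposition~\ref{prop 39}" in the trapezoidal case does not work: that proposition says two nonzero parallel paths are \emph{equal} in $B$, so producing two parallel nonzero paths $i_s\leadsto i_t$ yields no contradiction at all; you would instead need to argue directly from the spiral quiver that no valid path $i_s\leadsto i_t$ exists (the endpoint $i_t$ is only reachable by re-entering a 3-cycle already used, or the path is forced through the center vertices $k_l$ in a way that violates validity). Second, even in the base case $t-s=1$, the mere existence of the parity-switching valid paths from Lemma~\ref{lem:one-step} does not by itself exclude a parity-preserving valid path; you need the quiver picture (Figure~\ref{fig:quiver-pairs}): $i_s$ and $i_t$ sit diagonally opposite in the rectangle or trapezoid, so any path between them uses two arrows in the same 3-cycle. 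Your write-up gestures at this but should make the structural argument explicit rather than deducing it from the dichotomy alone.
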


\begin{proof}
If $|s-t|=1$ then the statement follows from the structure of the quiver $Q(\zg)$ depicted in Figure~\ref{fig:quiver-pairs}.  Indeed, $i_s, i_t$ are vertices of a rectangle or a trapezoid that are diagonally opposite from each other by Lemma~\ref{lem:one-step}, so any path $i_s\leadsto i_t$ uses two arrows in the same 3-cycle and hence it is not valid.  Similar statement holds for $j_s, j_t$.  

If $|s-t|=2$, then lemma follows from the quivers in Figure~\ref{fig:valid-paths}.  Now, suppose that $|s-t|>2$ and without loss of generality assume $t>s$. If there is a rectangular step from $l+1$ to $l+2$ for some $l\in \{s, \dots, t-3\}$, then the quiver $Q(\zg)$, up to interchanging all $i$'s and $j$'s, is given in picture (1a) in the proof of Lemma~\ref{rect-trap-lemma1}.  We see that any path $i_s\leadsto i_t$ or $j_s\leadsto j_t$ passes through the rectangle determined by $i_{l+1}, i_{l+2}, j_{l+1}, j_{l+2}$. In particular, it cannot be valid.  If all the steps from $s+1$ to $t-1$ are trapezoidal, then the quiver $Q(\zg)$, up to interchanging all $i$'s and $j$'s,  is given in picture (2) in the proof of the same lemma.  Again we observe that there are no valid paths from $i_s$ to $i_t$ or from $j_s$ to $j_t$.    
\end{proof}

\subsubsection{Relations in $B$}
In this section we prove two lemmas describing the structure of certain paths in $Q$ that compose to zero or commute in the algebra $B$.  

Recall from Section~\ref{sect 3.1.1}  that the trunk of the dual graph of $Q$ consists of chordless cycles connected by interior arrows. 

\begin{lemma}\label{zero-relations-lemma}
Let $\eta$ be a nonzero path in the algebra $B$ such that the composition $\alpha\eta=0$ for some arrow $\alpha$.  Then up to commutativity $\eta$ starts with $\eta'$ where
\begin{itemize}
\item[(a)] $\eta'=\beta_1\delta_1$ such that $\alpha, \beta_1, \delta_1$ are arrows in the same 3-cycle, or   
\item[(b)] $\eta'=\beta_1\dots\beta_k$  such that $\alpha, \beta_1$ are arrows in the same 3-cycle,  $\beta_1, \dots, \beta_k$ are arrows that lie in distinct 3-cycles $C_1, \dots, C_k$ such that the path $\xymatrix@C=5pt{C_1\ar@{-}[r] & \bullet \ar@{-}[r] & C_2 \ar@{-}[r] & \bullet \ar@{-}[r] & \dots \ar@{-}[r] & \bullet \ar@{-}[r] & C_k}$ is in trunk of the dual graph of $Q$ where $C_k$ is a leaf.  In particular, there is the following subquiver of $Q$. 
\end{itemize}

\[
\xymatrix@R=15pt@C=20pt{
\bullet\ar[r]^{\beta_1}& \bullet \ar[r]^{\beta_2} \ar[dl]_{C_1}& \bullet \ar[r] \ar[dl]_{C_2}& \cdots \ar[r] & \bullet \ar[r]^{\beta_{k-1}} & \bullet\ar[r]^{\beta_k} \ar[dl]& \bullet \ar[dl]^{\delta_k}_{C_k}\\
\bullet\ar[r]\ar[u]^{\alpha}& \bullet \ar[r]\ar[u]& \bullet \ar[r] \ar[u]& \cdots \ar[r] & \bullet \ar[r] \ar[u]& \bullet \ar[u]
}
\]
\end{lemma}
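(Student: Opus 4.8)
\textbf{Proof plan for Lemma~\ref{zero-relations-lemma}.}

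The plan is to analyze the path $\eta$ arrow by arrow, using repeatedly the fact that the only relations in $B$ come from the Jacobian ideal of a potential whose summands are the chordless $3$-cycles (here all chordless cycles have length three), together with Lemma~\ref{lem 37}. First I would write $\eta = \beta_1\beta_2\cdots\beta_r$ as a product of arrows and set $\alpha\colon x\to s(\beta_1)$. Since $\eta\ne 0$ in $B$ but $\alpha\eta=0$, the composite $\alpha\beta_1$ must itself be nonzero (otherwise $\alpha\beta_1$ would already be a relation, but since every arrow lies in a chordless $3$-cycle and $Q$ has no $2$-cycles, $\alpha\beta_1=0$ forces, via Lemma~\ref{lem 37}(a) or (b), that $\alpha$ and $\beta_1$ are the first two arrows of a $3$-cycle $\alpha\beta_1\gamma$, so that $\alpha\beta_1 = -\partial_\gamma W$ plus possibly another $3$-cycle term $\alpha'\beta_1'$; in any case $\alpha\beta_1$ is nonzero unless it is the full relation coming from a single boundary arrow — I would handle this degenerate normalization carefully). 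The main point is: because $\alpha\beta_1\cdots\beta_r = 0$ while $\alpha\beta_1\ne 0$, there must be a first index at which ``enough'' of the path has been traversed to close up a $3$-cycle containing $\alpha$ up to commutativity. The engine for making this precise is Proposition~\ref{prop 39} (any two nonzero parallel paths are equal in $B$) together with the description of relations in Lemma~\ref{lem 37}.

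Concretely, I would argue as follows. Let $C_1 = \alpha\beta_1\gamma_1$ be a chordless $3$-cycle containing $\alpha$; such exists by (Q1). By Lemma~\ref{lem 37}, $\partial_\alpha W$ is either $\beta_1\gamma_1$ (if $\alpha$ is a boundary arrow) or $\beta_1\gamma_1 - \beta_1'\gamma_1'$ (if $\alpha$ is interior and lies in the second $3$-cycle $\alpha\beta_1'\gamma_1'$). Suppose first $\beta_1\ne$ the $\beta_1'$ case, i.e. the next arrow $\beta_2$ of $\eta$ equals $\gamma_1$: then $\eta$ starts with $\beta_1\gamma_1$ and we are in case (a) with $\eta' = \beta_1\delta_1$, $\delta_1=\gamma_1$ — done. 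Otherwise $\beta_2\ne\gamma_1$, so $\beta_1\beta_2$ is a nonzero length-two path that is \emph{not} a subpath of $C_1$; since $\beta_1$ lies in at most two $3$-cycles and one of them is $C_1$, the pair $\beta_1\beta_2$ must be a subpath of the \emph{other} $3$-cycle $C_2$ containing $\beta_1$ (if $\beta_1\beta_2$ were a subpath of no chordless cycle it would be a relation by Lemma~\ref{prop 38}/the Jacobian relations, contradicting $\beta_1\beta_2\ne 0$; more carefully, one shows via Proposition~\ref{prop 39} that a nonzero length-two path lies in a chordless $3$-cycle). Thus $\beta_1$ is an interior arrow lying in $C_1$ and $C_2$, and in the dual graph there is an edge $C_1\!-\!C_2$. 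Now iterate: either $\beta_2\beta_3$ is a subpath of the $3$-cycle $C_2 = \beta_1\beta_2\delta$, in which case $\beta_3=\delta$, $\eta$ starts with $\beta_1\beta_2\delta$, and after using the commutativity relation $\beta_1\gamma_1 \sim \beta_1\beta_2\delta$-type identities (Lemma~\ref{lem 37}(b)) plus $\alpha\beta_1\gamma_1 = 0$-or-$\partial$-considerations, one reduces to case (b) with $k=2$; or $\beta_3$ forces $\beta_2$ into a new $3$-cycle $C_3$, extending the trunk path $C_1\!-\!C_2\!-\!C_3$ in the dual graph. Because the dual graph is a finite tree (condition (Q2)), this chain $C_1\!-\!C_2\!-\!\cdots$ cannot revisit a vertex and must terminate; it terminates precisely when we reach a $3$-cycle $C_k$ such that $\beta_k$ together with the next arrow closes $C_k$, which (since $\beta_k\cdots$ must still annihilate from the left) happens when $C_k$ is a \emph{leaf} of the trunk — i.e. $\beta_k$ is a boundary arrow of the subconfiguration — giving exactly the form $\eta' = \beta_1\cdots\beta_k$ (with the terminal arrow $\delta_k$ of $C_k$ appearing as the reason $\alpha\eta'$ vanishes) and the displayed subquiver. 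The phrase ``up to commutativity'' is handled by invoking Proposition~\ref{prop 39}: at each step the two parallel paths around a $3$-cycle are equal in $B$, so we are free to replace $\eta$ by a parallel path of the asserted shape without changing its class.

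I expect the main obstacle to be the bookkeeping at the \emph{termination} step: making rigorous the claim that the chain of $3$-cycles $C_1\!-\!C_2\!-\!\cdots$ in the trunk must end at a leaf, and that ``ending at a leaf'' is equivalent to the left-annihilation $\alpha\eta'=0$ actually being realized by $\eta'$ (rather than the path continuing further). The key tool here is Lemma~\ref{lem 37} together with the combinatorics of the dual tree from Proposition~\ref{prop Q}: an interior arrow $\beta_i$ sits in exactly two $3$-cycles, a boundary arrow in exactly one, so the induction on the distance from $C_1$ in the tree is finite and the only way for the chain not to extend is that $\beta_k$ is a boundary arrow of some $3$-cycle $C_k$ that is a leaf of the relevant subtree. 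A secondary, more routine, obstacle is disposing of the degenerate cases where $\alpha$ itself is a boundary arrow (so $\partial_\alpha W = \beta_1\gamma_1$ is a single monomial and $\alpha\beta_1\gamma_1=0$ directly gives case (a) immediately with $k=1$), and where $\eta$ happens to equal one of these relation paths exactly — these should be read off quickly from Lemma~\ref{lem 37} and do not require the induction.
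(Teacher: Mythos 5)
There is a genuine error at the heart of your inductive step. You claim that, because $\beta_1\beta_2\ne 0$, ``the pair $\beta_1\beta_2$ must be a subpath of the other $3$-cycle containing $\beta_1$,'' justified by the assertion that a nonzero length-two path lies in a chordless $3$-cycle. That assertion is false, and your appeal to Proposition~\ref{prop 38} is backwards: the Jacobian relations are the paths \emph{complementary} to an arrow inside a $3$-cycle (or differences of two such), so a length-two path that lies in \emph{no} cycle is generically nonzero, not zero. The configuration in the conclusion of the lemma is itself a counterexample: in the ladder subquiver (a strip of triangles, which does occur in dimer tree algebras), the arrows $\beta_1,\beta_2$ along the top row are each boundary arrows lying in a single triangle, there is no arrow $t(\beta_2)\to s(\beta_1)$, and yet $\beta_1\beta_2\ne 0$ in $B$. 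In fact the paper's proof establishes the exact opposite of your claim: it shows, via the no-interior-vertices argument, that $\beta_1\beta_2$ \emph{cannot} lie in a $3$-cycle, since that would force $\eta'=0$.

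Because of this, the chain you build (consecutive arrows of $\eta$ sharing $3$-cycles, giving dual-graph edges $C_1$---$C_2$---$\cdots$) is not the configuration asserted in case (b), where consecutive $C_i$'s are at distance two in the dual tree and are glued along the vertical and diagonal arrows of the displayed ladder. The correct propagation mechanism, which your plan never introduces, is to use the commutativity relation at the interior arrow $\delta_1$ (Lemma~\ref{lem 37}(b)) to replace $\alpha\beta_1$ by the parallel pair $\alpha'\beta_1'$; the vanishing of $\alpha\eta'$ then forces $\beta_1'\beta_2$ (not $\beta_1\beta_2$) to lie in a $3$-cycle $C_2$, and iterating this produces precisely the bottom row and vertical arrows of the subquiver in the statement --- a part of the conclusion your argument cannot reach, since it never constructs these parallel arrows. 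Relatedly, your termination criterion is misplaced: the process stops when the closing arrow $\delta_k$ of the last cycle is a boundary arrow (equivalently $C_k$ is a leaf), not when $\beta_k$ is boundary --- in the ladder \emph{all} the $\beta_i$ are boundary arrows. Finally, the degenerate-case remark is also off: case (a) requires $\alpha$ to be an \emph{interior} arrow (otherwise $\eta'=\beta_1\delta_1=\partial_\alpha W=0$, contradicting $\eta\ne 0$); the genuine degenerate case is $\delta_1$ boundary, which gives $\alpha\beta_1=0$ and $\eta'=\beta_1$.
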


\begin{proof}
Let $\eta'$ be a minimal nonzero subpath of $\eta$ appearing at the start of $\eta$ such that $\alpha\eta'=0$.  Since $\alpha\eta'=0$ we may choose (up to commutativity) the first arrow in $\eta'$ to be $\beta_1$ such that $\alpha, \beta_1$ lie in a common 3-cycle $C_1$.  Let $\delta_1$ be the third arrow in $C_1$.  Moreover, $\alpha\beta_1$ must lie in a relation that propagates further via commutativity and zero relations in $B$ that results in $\alpha\eta'$ being zero.   

If $\eta'=\beta_1$ then $\delta_1$ is a boundary arrow, and we obtain case (a) of the lemma.  

Suppose $\eta'$ consists of at least two arrows $\beta_1\beta_2$.   By minimality of $\eta'$ the composition $\alpha\beta_1\not=0$.  Then the arrow $\delta_1$ is not boundary, and there exist arrows $\alpha', \beta'_1$ such that $\alpha \beta_1=\alpha' \beta_1'$, see the quiver below on the left. 
\[
\xymatrix@R=20pt@C=20pt{
&&  &&&   \bullet \ar[r]^{\epsilon_2} & \bullet\ar[dl]_{\delta_2}\\  
\bullet\ar[r]^{\beta_1}& \bullet \ar[r]^{\beta_2} \ar[dl]_{\delta_1}& \bullet  &&&   \bullet\ar[r]^{\beta_1}\ar[u]^{\epsilon_1}& \bullet \ar[dl]_{\delta_1}  \ar[u]_{\beta_2}\\
\bullet\ar[r]_{\alpha'} \ar[u]^{\alpha}& \bullet \ar[u]_{\beta_1'}& &&& \bullet\ar[r]_{\alpha'} \ar[u]^{\alpha}& \bullet \ar[u]_{\beta_1'}
}
\]  

If $\beta_2=\delta_1$, then $\alpha_1\beta_1\delta_1=0$ and $\eta'=\beta_1\delta_1$ by the minimality of $\eta'$.  This gives case (a) of the lemma.   If the second arrow in $\eta'$ is $\beta_2\not=\delta_1$,  then $\alpha\eta'$ starts with $\alpha\beta_1\beta_2=\alpha'\beta_1'\beta_2$.  Because $\eta'\not=0$ but $\alpha\eta'=0$ then $\beta_1\beta_2$ or $\beta_1'\beta_2$ lie in a relation.   Thus, one of the paths $\beta_1\beta_2$ or $\beta_1'\beta_2$ lies in a 3-cycle.  We claim that the former case is not possible. 

Suppose on the contrary that $\beta_1\beta_2$ lies in a 3-cycle, and let $\delta_2$ be the third arrow in this 3-cycle.  Because $\beta_1\beta_2\not=0$ it follows that $\delta_2$ is not a boundary arrow.  Then there exists another 3-cycle containing $\delta_2$ and two other arrows $\epsilon_1, \epsilon_2$, see the quiver above on the right.   Note 
\[\alpha\beta_1\beta_2=\alpha \epsilon_1\epsilon_2=\alpha'\beta_1'\beta_2\not=0.\]   
The arrows $\alpha, \epsilon_1$ cannot belong to a common 3-cycle, because $t(\alpha)$ would be an interior vertex of $Q$, which is a contradiction.  Thus there is no relation on the path $\alpha\epsilon_1$.  Then $0=\alpha\eta'=\alpha\epsilon_1\epsilon_2\eta''$ implies $0=\epsilon_1\epsilon_2\eta''=\eta'$, a contradiction. This shows the claim that $\beta_1\beta_2$ cannot lie in a 3-cycle.

Then $\beta_2, \beta_1'$ lie a common 3-cycle $C_2$.  Let $\delta_2$ be the third arrow in $C_2$.  If $\delta_2$ is a boundary arrow, then $\alpha\beta_1\beta_2=0$ and $\eta'=\beta_1\beta_2$, and case (b) of the lemma is satisfied. If $\delta_2$ is not a boundary arrow then $\alpha\eta'$ is equivalent to a path starting with $\alpha'\beta_2''\beta_2'$, where $\delta_2, \beta_2'', \beta_2'$ form another 3-cycle containing $\delta_2$.   Note that $\alpha', \beta_2''$ cannot lie in a common 3-cycle, because $t(\alpha')$ would be an interior vertex of $Q$.  Thus, there are no more commutativity moves possible that involve only the first three arrows in $\alpha\eta'$.  

\[
\xymatrix@R=20pt@C=20pt{
\bullet\ar[r]^{\beta_1}& \bullet \ar[r]^{\beta_2} \ar[dl]^{\delta_1}& \bullet  \ar[dl]^{\delta_2}\\
\bullet\ar[r]_{\alpha'} \ar[u]^{\alpha}& \bullet \ar[u]_{\beta_1'} \ar[r]_{\beta_2''} & \bullet \ar[u]_{\beta_2'}
}
\]  


Next we can continue in the same way considering the next arrow in $\eta'$.  Eventually, we obtain $\eta'=\beta_1\dots\beta_k$ for some $k$ as in the quiver in the statement of the lemma where the arrow $\delta_k$ is boundary.  
\end{proof}

\begin{lemma}\label{lem:comm}
Let $\alpha$ be an arrow and $\eta = \beta_1\dots \beta_k$ be a path in the quiver $Q$ that lie in the full subquiver of $Q$ shown below.  Suppose $\alpha\eta' =  \eta \nu$ is a nonzero path in $B$ for some paths $\eta', \nu$.  Then up to commutativity $\nu$ starts with the arrow $\alpha'$.  

\[
\xymatrix@R=10pt@C=10pt
{\bullet \ar[r] & \bullet \ar[r] \ar[dl]& \cdots \ar[r] & \bullet \ar[r] & \ar[dl]\bullet\\
\bullet\ar[u]^{\alpha}\ar[r]_{\beta_1} & \bullet \ar[r]_{\beta_2} \ar[u]& \cdots\ar[r] & \bullet \ar[r]_{\beta_k} \ar[u]& \bullet\ar[u]_{\alpha'} }
\]

\end{lemma}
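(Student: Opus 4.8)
The plan is to prove this by an induction on the length $k$ of the path $\eta$, exploiting the local structure of the displayed subquiver, which is a ``ladder'' of $k$ three-cycles glued along the arrows $\beta_1,\dots,\beta_k$ on the bottom row. Because every chordless cycle in $Q$ has length three, each square of the ladder is one of these $3$-cycles, and the only relations in $B$ available among these arrows are the commutativity relations coming from interior arrows (Lemma~\ref{lem 37}(b)) and the zero relations coming from boundary arrows (Lemma~\ref{lem 37}(a)). First I would set up the base case $k=1$: here $\eta=\beta_1$, the arrows $\alpha,\beta_1$ lie in a common $3$-cycle together with the third arrow of that cycle, and $\alpha$ is the top-left arrow while $\beta_1$ is the bottom arrow; the vertical arrow on the right of this single square is $\alpha'$. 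Then $\alpha\eta'=\beta_1\nu$ nonzero in $B$ forces, by the commutativity relation $\alpha\cdot(\text{third arrow})=\beta_1\cdot\alpha'$ in the $3$-cycle (or rather the relation governing how a path through $\alpha$ can be rewritten to start with $\beta_1$), that $\nu$ begins, up to commutativity, with $\alpha'$. More precisely, $\alpha\eta'$ is a nonzero path starting with $\alpha$; since it also equals $\beta_1\nu$, the first arrow of $\eta'$ cannot be arbitrary — it must be the arrow completing $\alpha\beta_1^{?}$ into the $3$-cycle so that the commutativity move $\alpha\cdot(\cdot)=\beta_1\cdot\alpha'$ applies, and this pins down $\nu$'s first arrow as $\alpha'$ up to a further commutativity move (which does not change the first arrow up to equivalence in $B$, by Proposition~\ref{prop 39} / schurianness).

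For the inductive step, assume the result for ladders of length $k-1$ and consider a ladder of length $k$ with bottom row $\beta_1\dots\beta_k$ and left arrow $\alpha$. I would look at the leftmost square: it is a $3$-cycle on $\alpha$, $\beta_1$, and the vertical arrow — call it $\alpha_1'$ — on the right of the first square, which is simultaneously an interior arrow of $Q$ (since the ladder continues) so it is the ``$\alpha$'' for the sub-ladder of length $k-1$ formed by $\beta_2\dots\beta_k$. The nonzero path $\alpha\eta' = \eta\nu = \beta_1\dots\beta_k\nu$ starts with $\alpha$; applying the base-case argument to the first square, $\alpha\eta'$ can be rewritten as $\beta_1\cdot(\alpha_1'\,\eta'')$ for some path $\eta''$, and this rewritten path equals $\beta_1(\beta_2\dots\beta_k\nu)$. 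Cancelling $\beta_1$ on the left (legitimate since the algebra is schurian, Proposition~\ref{prop 39}, so two parallel nonzero paths starting with $\beta_1$ agree iff their tails agree), we obtain $\alpha_1'\,\eta'' = \beta_2\dots\beta_k\,\nu$ as a nonzero path in $B$, lying in the sub-ladder of length $k-1$. By the induction hypothesis applied with $\alpha_1'$ in the role of $\alpha$ and $\beta_2\dots\beta_k$ in the role of $\eta$, we conclude that $\nu$ starts, up to commutativity, with the right arrow of the last square, which is $\alpha'$. This completes the induction.

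The main obstacle I anticipate is making the ``cancel $\beta_1$ on the left and rewrite via the first $3$-cycle'' step fully rigorous in $B$ rather than merely in the path algebra — one has to be careful that the rewriting $\alpha\eta' \sim \beta_1\alpha_1'\eta''$ really is available, i.e.\ that $\eta'$ is long enough and that no zero relation has been triggered prematurely. This is exactly the kind of bookkeeping carried out in the proof of Lemma~\ref{zero-relations-lemma}, and I would lean on the same local analysis: since $\alpha\eta'=\eta\nu$ is \emph{nonzero}, none of the involved compositions $\alpha\beta_1$, $\beta_1\beta_2$, \dots\ can be zero, so each vertical arrow $\alpha_1',\alpha_2',\dots$ of the ladder is genuinely an interior (non-boundary) arrow, and the chain of commutativity moves propagating $\alpha$ rightward across the ladder is exactly the one pictured in Lemma~\ref{zero-relations-lemma}'s proof but with the roles of ``zero'' and ``nonzero'' swapped at the far right. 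A secondary, more routine point is the uniqueness ``up to commutativity'' clause: I would note that any two expressions for the same nonzero path in $B$ differ by a sequence of commutativity moves across $3$-cycles, and such a move across the \emph{last} square of the ladder can only change whether $\nu$ is written as starting with $\alpha'$ versus as starting with $\beta_k$ followed by the relation $\beta_k\alpha' = (\text{something})$ — but in all cases the class of $\nu$ restricted to this subquiver has $\alpha'$ as a legitimate first arrow, which is all the statement asserts.
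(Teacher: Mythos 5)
There is a genuine gap, and it sits exactly where your proof does all its work by assertion: the claim that $\eta'$ must begin with the top horizontal arrow of the first square (so that the commutativity move $\alpha\alpha_1=\beta_1\epsilon_2$ from Lemma~\ref{lem 37}(b) can be applied). Nothing in the hypotheses forces this locally. By Proposition~\ref{prop 39} \emph{any} two nonzero parallel paths are equal in $B$, so the equation $\alpha\eta'=\eta\nu$ carries no information beyond ``both paths are nonzero and share endpoints''; in particular it does not license a rewriting of $\alpha\eta'$ that stays inside the displayed ladder, and a priori $\eta'$ could leave the subquiver entirely after $\alpha$. Your base case argument (``the first arrow of $\eta'$ must be the arrow completing the $3$-cycle so that the commutativity move applies'') is therefore circular, and since the inductive step is just the base case applied square by square, the whole induction has no starting point. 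Leaning on Lemma~\ref{zero-relations-lemma} does not repair this: that lemma analyzes how a composition becomes \emph{zero}, whereas here both sides are nonzero and the question is where the path starting with $\alpha$ is forced to go.

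The missing ingredient is global, not local: the paper pins down the first arrow of $\eta'$ by a planarity argument. The two equal nonzero paths from $y_1$ to the common endpoint $c$ cannot self-intersect (Proposition~\ref{prop 38}), so together they bound a region of the planar quiver; if $\eta'$ avoided the vertex $x_2$, then either $x_2$ or $y_1$ would be trapped in the interior of that region, i.e.\ would be an interior vertex of $Q$, contradicting Lemma~\ref{lem 36a}. Once $\eta'=\alpha_1\eta''$ is known, the rest is exactly your iteration: $\alpha\alpha_1\eta''=\beta_1\epsilon_2\eta''$, both $\epsilon_2\eta''$ and $\beta_2\cdots\beta_k\nu$ are nonzero and parallel (no ``cancellation'' subtlety --- a right factor of a nonzero path is nonzero, and Proposition~\ref{prop 39} gives equality), and one repeats across the ladder until $\beta_k\nu=\beta_k\alpha'\eta^{(k+1)}$, which is the conclusion up to commutativity. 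So your induction skeleton coincides with the paper's pass across the squares, but without the no-interior-vertex argument at each step the proof does not go through. (Two smaller inaccuracies: each ``square'' of the ladder is two chordless $3$-cycles glued along a diagonal arrow, not one, and the relevant relation is the commutativity relation at that shared diagonal, so the $k=1$ case already involves the arrows $\alpha_1$ and the diagonal $x_2\to y_1$, not a single $3$-cycle containing $\alpha,\beta_1,\alpha'$.)
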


\begin{proof}
By the assumptions of the lemma, we have the full subquiver of $Q$ as in Figure~\ref{fig:lem-comm}, where we label the vertices $x_1, \dots, x_{k+1}$ and $y_1, \dots, y_{k+1}$.  Moreover, since $\alpha\eta',   \eta \nu$ are two equal nonzero path in $B$, then they end in some common vertex $c$.  Since there are no cycles in $B$ by Proposition~\ref{prop 38}, then the two paths do not self-intersect.  

We claim that the path $\eta'$ starts with the arrow $\alpha_1$.  Suppose not.  Then $\eta'$ is a path starting in $x_1$ and ending in $c$ that does not pass through $x_2$. We show the two possibilities for $\eta'$ in Figure~\ref{fig:lem-comm}.   If $\eta'$ is the upper path then $x_2$ is an internal vertex, and if $\eta'$ is the lower path then $y_1$ is an internal vertex.  This shows the claim that $\eta'$ starts with the arrow $\alpha_1$.  

\begin{figure}
\[
\hspace{2cm}*{\xymatrix@R=15pt@C=15pt
{x_1 \ar[r]^{\alpha_1} \ar@/^60pt/@{~>}@[red][rrrrrrdd]^{\eta'}   \ar@/_20pt/@{~>}@[blue][dd]& x_2 \ar[r]^{\alpha_2} \ar[dl]& \cdots \ar[r] & x_{k} \ar[r]^{
\alpha_{k}} & \ar[dl] x_{k+1} &&\\
y_1\ar[u]^{\alpha}\ar[r]_{\beta_1} & y_2 \ar[r]_{\beta_2} \ar[u]_{\epsilon_2}& \cdots\ar[r] & y_{k} \ar[r]_{\beta_k} \ar[u]^{\epsilon_k}& y_{k+1}\ar[u]_{\alpha'}\ar@{~>}[rrd]_{\nu}&&\\
\ar@/_20pt/@{~>}@[blue][rrrrrr]^{\eta'}&&&&&&c&&&&
}}
\]
\caption{Proof of Lemma~\ref{lem:comm}.}
\label{fig:lem-comm}
\end{figure}

Now we have $\eta'=\alpha_1\eta''$ for some path $\eta''$ and we obtain the following equalities of nonzero paths.
\[\alpha\eta'=\alpha\alpha_1\eta''=\beta_1\epsilon_2\eta''=\beta_1\beta_2\dots\beta_k\nu = \eta \nu \]
In particular, $\epsilon_2\eta'' = \beta_2, \dots, \beta_k \nu$ are nonzero paths in $B$.   By the same reasoning as before we can show that that first arrow in $\eta''$ is $\alpha_2$.  Continuing in this way we obtain the equality 
\[\epsilon_k\alpha_k\eta^{(k+1)}=\beta_k\alpha'\eta^{(k+1)}=\beta_k\nu\]
for some path  $\eta^{(k+1)}$ starting in $x_{k+1}$ and ending in $c$. Therefore, $\alpha'\eta^{(k+1)}=\nu$, and we obtain that up to commutativity $\nu$ starts with the arrow $\alpha'$.  This completes the proof of the lemma. 
\end{proof}

\subsection{Exactness}\label{Aexactness}
Here we prove  Proposition~\ref{big-lemma}, see  Proposition~\ref{Abig-lemma}
at the end of this subsection.
\begin{lemma}\label{lem:kernel}
Let $\zg, R(\zg)$ be a pair of compatible arcs in $\cals$.  Then $f_{\zg} \circ \bar{f}_{R(\zg)} = 0$. 
\end{lemma}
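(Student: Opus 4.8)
The goal is to show that the composition $f_{\zg}\circ\bar f_{R(\zg)}$ is the zero map. Since $\bar f_{R(\zg)}=J_{|P_0(R(\zg))|}\,f_{R(\zg)}\,J_{|P_1(R(\zg))|}$ differs from $f_{R(\zg)}$ only by the diagonal sign matrices $J_k$, and these are invertible, it suffices to compute the entries of the ordinary composition $f_{\zg}\circ f_{R(\zg)}$ and check that each entry vanishes; the signs introduced by the $J_k$'s will then make the total composition vanish (and, as the later exactness proof shows, the signs are exactly what makes the image land in the kernel, but for this lemma we only need that all entries are individually zero up to sign, i.e.\ that each matrix product entry is $0$). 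Concretely, fix compatible representatives $\zg$ and $R(\zg)$ with crossing sequences that agree except at the two ends, as guaranteed by the remark following Definition~\ref{def:compatible}. Write the crossing sequence of $\zg$ as $\dots,(i_s,j_s),\dots$ and use Proposition~\ref{lem:cross}(b) to identify $P_0(R(\zg))=P_1(\zg)$, so that the shared indexing set of summands $P(j_s)$ is the degree-$1$ crossings of $\zg$, which are simultaneously the degree-$0$ crossings of $R(\zg)$.

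First I would set up the bookkeeping: label the summands of $P_1(R(\zg))$ by the degree-$1$ crossings $j'_u$ of $R(\zg)$, the shared middle term $P_1(\zg)=P_0(R(\zg))$ by the common vertices $j_s$, and the summands of $P_0(\zg)$ by the degree-$0$ crossings $i_t$ of $\zg$. An $(i_t,j'_u)$-entry of $f_{\zg}\circ f_{R(\zg)}$ is a sum over the middle index $j_s$ of (valid path $i_t\leadsto j_s$)$\cdot$(valid path $j_s\leadsto j'_u$), where by Definition~\ref{def:map} the summand is present only if the relevant subsequences of crossing sequences are forward and the paths are valid. The key point is that each such product is a \emph{path} in $Q$ passing through the vertex $j_s$ and, by construction of $f_{\zg}$ and $f_{R(\zg)}$, it either contains two arrows in a common $3$-cycle (hence is zero in $B$ by the validity remarks and Lemma~\ref{lem 37}), or it factors through a relation coming from a boundary arrow (again zero in $B$), or it is cancelled against exactly one other term in the sum over $j_s$ because of the sign pattern. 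I would organize the verification by the local picture of the two compatible arcs $\zg,R(\zg)$ inside a white region, as in Figures~\ref{fig:cross} and \ref{fig:crossing-pairs}: since $\zg$ and $R(\zg)$ traverse essentially the same white regions with opposite orientations, a degree-$1$ crossing $j_s$ of $\zg$ sits diagonally opposite in the quiver $Q(\zg)$ from the corresponding vertices, and the two composed paths through $j_s$ close up a sub-rectangle of $Q(\zg)$, forcing a commutator relation. This is precisely where Lemma~\ref{lem:comm} and Lemma~\ref{zero-relations-lemma} do the work: any nonzero length-$\ge 2$ path of this shape must, up to the commutativity relations in $B$, factor through a boundary arrow and hence vanish, unless it is matched by a sibling term of opposite sign.

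The main obstacle, and the step I expect to require the most care, is the combinatorial case analysis at the ends of the crossing sequences, where $\zg$ and $R(\zg)$ differ. There the identification $P_0(R(\zg))=P_1(\zg)$ is still valid by Proposition~\ref{lem:cross}(b), but the ``forward'' conditions defining which matrix entries are nonzero can flip, so one must check separately the cases where the last crossing pair of $R(\zg)$ is a single endpoint crossing $(i)$ or $(j)$ versus a full pair, and similarly at the initial end of $\zg$. I would handle this by invoking Lemma~\ref{lem:one-step} and the Rectangle--Trapezoid Lemma (Lemma~\ref{rect-trap-lemma1}, parts (2)--(4)) to control precisely which of the paths $i_t\leadsto j_s$ and $j_s\leadsto j'_u$ are valid, then Lemma~\ref{valid-paths-lem} to rule out the ``both valid'' scenario that would otherwise produce an uncancelled term. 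Once every $(i_t,j'_u)$-entry is shown to be zero in $B$, the lemma follows; the sign matrices $J_k$ do not affect the vanishing, they only prepare the ground for the stronger statement $\ker f_{\zg}=\operatorname{Im}\bar f_{R(\zg)}$ proved afterwards in Proposition~\ref{Abig-lemma}.
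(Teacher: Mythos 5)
There is a genuine gap, and it sits at the very first step of your plan. You claim that ``it suffices to compute the entries of the ordinary composition $f_{\zg}\circ f_{R(\zg)}$ and check that each entry vanishes,'' with the signs from the $J_k$'s being irrelevant to the vanishing. This is false: the entries of the unsigned composition do \emph{not} vanish in general. In the paper's argument, for a fixed column $t$ the generic entry of $f_{\zg}\circ\bar f_{R(\zg)}$ is a \emph{difference} of two composites $i_s\leadsto j_l\leadsto i_t$ and $i_s\leadsto j_{l+1}\leadsto i_t$; each composite is a nonzero path, the two are parallel and hence equal in $B$ by Proposition~\ref{prop 39}, and they cancel only because the alternating signs of $\bar f_{R(\zg)}$ (Remark~\ref{rem:bar}(a)) give them opposite coefficients. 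Without the signs you would get $2w\neq 0$. So the reduction you propose discards exactly the mechanism that makes the lemma true, and your later hedge (``\ldots or it is cancelled against exactly one other term in the sum over $j_s$ because of the sign pattern'') contradicts the reduction rather than repairing it; you never reconcile the two, nor do you identify which pairs of middle indices cancel and why their composites are equal (it is the adjacency $j_l,j_{l+1}$ within one column, plus uniqueness of nonzero parallel paths).

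Beyond that, the uncancelled terms are not all handled by your stated mechanisms. The diagonal-type term is a single composite $i_t\leadsto j_t\leadsto i_t$, which vanishes because it is a cyclic path (Proposition~\ref{prop 38}), not because of a boundary-arrow relation or invalidity; and the boundary cases (e.g.\ when the crossing sequence of $\zg$ begins with a single crossing $(i_1)$ and there is no valid path $i_1\leadsto j_3$) require the explicit quiver pictures the paper draws, where the leftover composite $i_1\leadsto j_2\leadsto i_t$ is killed by a boundary arrow. Your plan does name the right tools (Lemma~\ref{rect-trap-lemma1}, Lemma~\ref{valid-paths-lem}, the structure of compatible representatives), but the organizing step of the paper's proof --- fixing a column $t$, locating the minimal $l$ with a valid entry $b_{l,t}$, and using the triangular-region analysis to determine exactly which entries of $f_{\zg}$ vanish so that precisely two signed equal terms, or one cyclic term, survive in each entry of the product --- is absent, and without it (and without the correct role of the signs) the argument does not go through.
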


\begin{proof}
Let $A=f_\zg, B=\bar{f}_{R(\zg)}$ be two matrices with entries $A=(a_{h,l}), B=(b_{h,l})$.  To prove the lemma we need to show that the product $AB$ is zero.  The $(s,t)$-entry of the matrix $AB$ is the following sum of products of paths 
\[\sum_l  f_{{\zg}_{i_s, j_l}} \bar{f}_{{R(\zg)}_{j_l, i_t}}\]
from $i_s$ in the crossing sequence for $\zg$ to $i_t$ in the crossing sequence for $R(\zg)$. 
This is trivially zero if for all $l$ in the crossing sequence for $\zg$ the step from $s$ to $l$ is not forward or if in the crossing sequence for $R(\zg)$ the step from $l$ to $t$ is not forward.  Now, we fix $t$ such that there exist $l$ and $s$ such that all steps from $s$ to $l$ are forward and all steps from $l$ to $t$ are forward.  We may index the crossing pairs in the crossing sequences so that $s\leq l\leq t$.  Let $B_t$ be the $t$-th column of $B$ and let $l\leq t-1$ be least integer such that $b_{l,t}$ is a valid path.  Then either $l=1$ or $b_{l-1,l+1}$ is not a valid path in $B$, because by Lemma~\ref{valid-paths-lem}(4) it would yield a valid path $b_{l-1, t}$ contradicting minimality of $l$.    

Next, we study the structure of valid paths in the matrix $A$ given below, where valid paths are denoted by $\star$.  Let $\Delta(l,t)$ denote the triangular region in $A$ or $B$ whose vertices are at positions $(l,t), (l,l+2), (t-2, t)$.  Since $b_{l,t}$ up to sign is a valid path $j_l\leadsto i_t$ then every path in $\Delta(l,t)$ in $B$ is also valid by Lemma~\ref{rect-trap-lemma1}(3).  By Lemma~\ref{valid-paths-lem} each entry of $A$ in $\Delta(l,t)$ is zero, and thus again by Lemma~\ref{rect-trap-lemma1}(3) each entry of $A$ in the rectangular region above $\Delta(l,t)$ is zero.  The entry $a_{l-1, l+1}$ in $A$ is a valid path, because otherwise the entry $b_{l-1,l+1}$ in $B$ would be a valid path by Lemma~\ref{valid-paths-lem} and we have seen above that $b_{l-1,l+1}$ is not valid.

\[{\begin{smallmatrix}
 & & k &&&&&&l & l+1 & l+2 &&& t\\
 &\ddots &  &&&&&& &  &  &&& \\
k& & \star & \star & 0 & 0 & \cdots & \cdots &\cdots   & 0 & 0 & \cdots & \cdots & 0 \\
k+1&& 0 & \star & \star & \star & \star & \cdots &\cdots   & \star & 0& \cdots & \cdots & 0 \\
&&&0 &\star & \star & \star & \cdots & \cdots &  \star & 0& \cdots & \cdots & 0 \\
&&& & 0 & \ddots & \ddots &&&& \vdots &&& \vdots \\
&&&&&0& \star & \star & \star & \star & 0 & \cdots & \cdots & 0\\
l-1&&&&&& 0 & \star & \star & \star & 0 & \cdots & \cdots& 0\\
{\color{blue} l}&&&&&&& {\color{blue} 0} & {\color{blue} \star} & {\color{blue} \star} &{\color{blue} 0} & {\color{blue} \cdots} & {\color{blue} \cdots}&  {\color{blue} 0}\\
l+1&&&&&&&& 0 & \star & \star &0 & \cdots &   0\\
&&&&&&&&& 0 & \ddots & \ddots & \ddots & \vdots\\
t-2&&&&&&&&&& 0 & \star & \star & 0 \\
t-1&&&&&&&&&&& 0 & \star & \star \\
t&&&&&&&&&&&& 0 & \star \\
&&&&&&&&&&&& & & \ddots \end{smallmatrix} }\]

Let $k$ be the least positive integer such that $a_{k+1, l+1}$ is valid, or, if no such $k$ exists, we let $k=1$.  Then every entry in $A$ in the triangle $\Delta(k+1, l+1)$ is a valid path by Lemma~\ref{rect-trap-lemma1}(3), and $a_{k, l+1} =0$ by minimality of $l$.  Therefore, 
\[a_{k, k+2} = a_{k, k+3} = \dots = a_{k, l+1}=0\]
by Lemma~\ref{rect-trap-lemma1}(4).  Then Lemma~\ref{rect-trap-lemma1}(3) implies that every entry $a_{h, k+2},\dots, a_{h, l+1}$ with $h\leq k$ is zero. 

If $l>1$, then multiplying $AB_t$ yields a vector whose first $k$ entries are zero.  The $s$-th entry of $AB_t$ for $k+1<s < t-1$ is a difference of two paths $i_s\leadsto j_l\leadsto i_t, i_s\leadsto j_{l+1} \leadsto i_t$, which is zero, because all parallel paths in $Q$ are equal in $B$ by Proposition~\ref{prop 39}.  The $t$-th entry of $AB_t$ is a single path $i_t\leadsto j_t \leadsto i_t$, which is zero, because there are no cycles in $B$ by Proposition~\ref{prop 38}.   

If $l=1$ then $k=1$.  In addition, if the first row of $A$ contains two nonzero entries then the same argument as above implies that $AB_t$ yields a vector whose first $t$ entries are zero. Now suppose $l=1$, but the first row of $A$ contains only a single entry.  Then the crossing sequence for $\zg$ starts with $(i_1), (i_2, j_2)$ and the first row of $A$ contains a single path $i_1\leadsto j_2$.  In particular, there is no valid path from $i_1$ to $j_3$.    It suffices to show that the first entry in $AB_t$ is zero. Since the crossing sequence for $\zg$ starts with $(i_1), (i_2, j_2)$, then we are in the situation of Figure~\ref{fig:34} on the left, where the arrow $j_1\to i_1$ is a boundary arrow.  Moreover, since there is no valid path $i_1\to j_3$, then we have the corresponding quiver as in Figure~\ref{fig:34} on the right.

\begin{figure}

 \begin{minipage}{.5\textwidth}
      \centerline{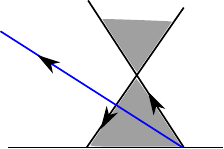}
    \end{minipage}%
    \begin{minipage}{0.5\textwidth}
       \[
\xymatrix@C=10pt@R=10pt{
i_1\ar[r] & \bullet\ar[dl] \ar[r] & \cdots \ar[r] & j_2\ar[dl]\\
j_1\ar[u] \ar[r] & \cdots \ar[r] & k\ar[r] \ar[d]& i_2\ar[u]\ar[d]\\
&&\vdots \ar[d]& \vdots\ar[d]\\
&&i_3 & j_3
}
\]
\end{minipage}

\caption{Proof of Lemma~\ref{lem:kernel}.}
\label{fig:34}
\end{figure}

The first entry in $AB_t$ is a composition $i_1\leadsto j_2 \leadsto i_t$, for some $t\geq 2$.   From the quiver we see that this path is zero, because it would start with the path $i_1\leadsto k$, which is zero, because $j_1\to i_1$ is a boundary arrow.   This shows that in all cases $AB_t$ is zero in the first $t$ entries.

Finally, if the step from $t$ to $t+1$ is forward, then remaining entries $t+1, t+2, \dots $ are zero  since $b_{h,t}=0$ for all $h\geq t+1$.  If the step from $t$ to $t+1$ is not forward, then we let $l'\geq t+1$ be the largest integer such that $b_{l',t}\not=0$, and we let $k'$ be the largest positive integer such that $a_{k'-1, l'-1}$ is valid.  Then we consider the structure of $A$ in rows $t, t+1, \dots, k'$, which is analogous to the one we have above but instead $A$ is lower triangular in that region instead of being upper triangular. Then a similar argument as before shows that $AB_t$ is zero in positions greater than $t$.   
Because $t$ was arbitrary this shows that $AB=0$. 
\end{proof}

The above lemma says that in particular $\text{Im}\,\bar{f}_{R(\zg)} \subset \text{ker} f_{\zg}$.  The rest of this section is dedicated to showing the reverse inclusion.   We shall need three preparatory lemmas. In the first, we consider special cases which we treat separately.

\begin{lemma}\label{lem:base}
Let $\zg$ be an arc in $\cals$ such that the crossing sequence for $\zg$ consists of $n-1$ steps that go forward from $1$ to $n$.  Let $x = \begin{bsmallmatrix}x_1 & 0 & \cdots & 0\end{bsmallmatrix}^T \in \textup{ker}\, f_{\zg}$, where $x_1$ is a nonzero path in $Q$.  Then $x\in \textup{Im}\,\bar{f}_{R(\zg)}$.  
\end{lemma}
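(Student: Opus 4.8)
The statement is the base case for the backward inclusion $\ker f_\zg \subset \operatorname{Im}\bar f_{R(\zg)}$, so the goal is to solve the equation $\bar f_{R(\zg)}\, y = x$ for some $y \in P_1(R(\zg))$, given that $x$ is supported only in the first coordinate of $P_0(R(\zg)) = P_1(\zg)$ and lies in $\ker f_\zg$. The first thing I would do is write down explicitly the matrix of $f_\zg$ under the hypothesis that all $n-1$ steps from $1$ to $n$ are forward: by Lemma~\ref{rect-trap-lemma1}(2) and (3) this matrix is upper triangular with respect to the crossing order, with diagonal entries the arrows (or length-two paths) $\alpha_s\colon i_s \leadsto j_s$, and off-diagonal entries the valid paths $i_s \leadsto j_t$ whenever they exist. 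I would similarly write $\bar f_{R(\zg)}$, using Proposition~\ref{lem:cross}(b) to identify $P_0(R(\zg))$ with $P_1(\zg)$, and the fact (from the compatibility of $\zg$ and $R(\zg)$, Definition~\ref{def:compatible}) that the crossing sequence of $R(\zg)$ agrees with that of $\zg$ except at the ends.

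\textbf{Key steps.} The condition $f_\zg\, x = 0$ with $x = [x_1, 0, \dots, 0]^T$ reads $f_{\zg_{s,1}}\, x_1 = 0$ for every $s$; since $f_{\zg_{1,1}} = \alpha_1$ is the diagonal arrow, this forces $\alpha_1 x_1 = 0$. Now I would apply Lemma~\ref{zero-relations-lemma} to the path $\alpha_1 x_1$: up to commutativity, $x_1$ begins with a subpath $\eta'$ of one of the two types described there — either $\eta' = \beta_1\delta_1$ with $\alpha_1,\beta_1,\delta_1$ in a common $3$-cycle, or $\eta' = \beta_1\cdots\beta_k$ travelling along a path in the trunk of the dual graph ending at a leaf $3$-cycle. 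In the checkerboard picture, the arrow $\alpha_1$ corresponds to the first crossing pair $(i_1,j_1)$, and the $3$-cycle containing $\alpha_1$ is a shaded triangle at the start of $\zg$; the vertex $j_1$ of $P_1(R(\zg))$ that ``detects'' $x_1$ is precisely the one produced by this relation, because the first column of $\bar f_{R(\zg)}$ records the arrow $i_1 \to (\text{neighbour})$ that starts the valid path going the other way. Concretely, I expect to show that $x_1$ factors as $x_1 = w_1\, x_1'$ where $w_1$ is the arrow appearing in the first column of $\bar f_{R(\zg)}$ (the entry $\bar f_{R(\zg)_{1,1}}$ up to sign), and then set the first coordinate of $y$ to be $\pm x_1'$. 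Having done this, one subtracts the first column of $\bar f_{R(\zg)}$ times $y_1$ from $x$; by the commutativity relations of $B$ (Proposition~\ref{prop 39}) and by the explicit triangular shape of both matrices, the difference is again a vector supported in coordinates beyond the first, still in $\ker f_\zg$, and one repeats. I would organize this as an induction on $n$ (the number of crossing pairs), peeling off one coordinate at a time; the matrix calculation mirroring the one in the proof of Lemma~\ref{lem:kernel}, using Lemma~\ref{rect-trap-lemma1}(3),(4) to control which entries are valid paths.

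\textbf{Main obstacle.} The delicate point is the bookkeeping in the ``long'' case of Lemma~\ref{zero-relations-lemma}, where $x_1$ travels along a chain of $3$-cycles $C_1,\dots,C_k$ in the trunk before hitting a boundary arrow $\delta_k$. Here one has to check that the vertex reached, which is an endpoint of $\beta_k$, really is a summand $P(j_\ell)$ of $P_1(R(\zg))$ for the appropriate $\ell$, i.e.\ that the chain of $3$-cycles one walks through in $Q$ matches the geometry of $R(\zg)$ near its starting endpoint in $\cals$; this is where the compatibility hypothesis and the identification $P_1(\zg) = P_0(R(\zg))$ of Proposition~\ref{lem:cross} do the real work, since $R(\zg)$ is the rotation of $\zg$ by one step and the shaded triangles it crosses at the start are shifted accordingly. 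I would also need to verify that after subtracting $\bar f_{R(\zg)_{\,\cdot\,,1}}\, y_1$, the resulting vector genuinely has zero first coordinate (not merely a coordinate lying in the radical), which again follows from Lemma~\ref{zero-relations-lemma} telling us the \emph{exact} initial subpath, together with the uniqueness of parallel paths in $B$ (Proposition~\ref{prop 39}). The rest — the inductive step — is then essentially the same matrix manipulation already carried out in Lemma~\ref{lem:kernel}, and I expect it to go through routinely.
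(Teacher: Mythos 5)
Your overall strategy (use the relation $\alpha_1 x_1=0$ together with Lemma~\ref{zero-relations-lemma} to factor $x_1$, then exhibit a preimage $y$ with a single nonzero entry) is indeed the spirit of the paper's argument, but the way you close the argument has a genuine gap. You propose to set $y_1=\pm x_1'$, subtract $y_1$ times the relevant column of $\bar f_{R(\zg)}$ from $x$, observe that the difference is supported in coordinates beyond the first and still lies in $\ker f_\zg$, and ``repeat.'' Inside the base case this repetition is not available: a vector supported in a single \emph{later} coordinate is no longer an instance of Lemma~\ref{lem:base}, and handling it is exactly the content of the general inductive argument (Lemma~\ref{lem:factor} and the induction in Lemma~\ref{lem:ker-im}) for which this lemma is supposed to be the base. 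So your plan is either circular or leaves the residual coordinates unaccounted for. What actually happens, and what your write-up must establish, is that no residual vector appears at all: the column of $\bar f_{R(\zg)}$ you use has (at most) one other nonzero entry, and its composition with the chosen path $x_1'$ is \emph{zero} in $B$, because that composition runs through a $3$-cycle whose third arrow is a boundary arrow (e.g.\ the vanishing of $j_2\leadsto i_2\leadsto z$ in the configuration where $x_1$ leaves $Q(\zg)$ through a boundary arrow $\partial_2$). Hence $\bar f_{R(\zg)}\,y=x$ exactly, in one step.

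Establishing this vanishing, and even identifying \emph{which} column of $\bar f_{R(\zg)}$ and which factorization of $x_1$ to use, requires a case analysis of the start configurations of $\zg$ and $R(\zg)$ that your proposal compresses into a single picture. The crossing sequence of $\zg$ may begin with a lone $(i_0)$ or $(j_0)$ or directly with a pair; the diagonal entry $f_{\zg_{1,1}}$ need not be a single arrow (it is a length-two path when the arrow points $j_1\to i_1$); and the boundary edge clockwise from the starting vertex of $\zg$ may lie in a white or a shaded region, which changes how $R(\zg)$ meets the first radical lines. In some of these configurations the path $x_1$ is forced to factor through a vertex $k'$ of the boundary triangle, or through $i_2$ rather than $i_1$, so the nonzero entry of $y$ does not even sit in the first coordinate, and the vanishing of the companion entry uses a different boundary arrow. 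Compatibility of $\zg$ with $R(\zg)$ and Proposition~\ref{lem:cross}(b) alone do not deliver these facts; they come from inspecting each of the finitely many local pictures at the shared endpoint. So to repair the proof you should replace the ``subtract and repeat'' step by this finite case analysis, in each case verifying both that $x_1$ factors through the asserted vertex and that the extra entry of the chosen column of $\bar f_{R(\zg)}$ kills the remaining subpath of $x_1$ in $B$.
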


\begin{proof}
We need to consider the following cases separately, see Figure~\ref{fig:cases12}.  These cases represent all possible starting configurations for $\zg$ and $R(\zg)$.  The starting point $a$ of $\zg$ is a boundary point of the checkerboard polygon $\cals$, and the boundary edge incident to it in clockwise direction is either in a white region $W_2$ (case 1) or in a shaded region (case 2).  The terminal point $b$ of $R(\zg)$ is the other endpoint of this edge.  

\begin{figure}
\centerline{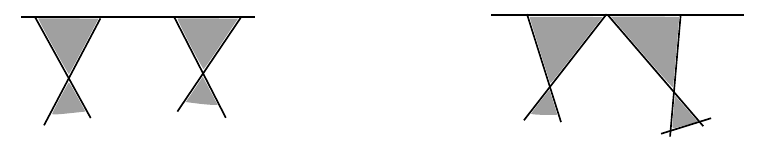}
\caption{Proof of Lemma~\ref{lem:base}.}
\label{fig:cases12}
\end{figure}

Recall that an edge between $i,j$ in a crossing pair represents an arrow $i\to j$ or $j\to i$.  The crossing sequence for $\zg$ will start in one of the following ways. 

\begin{itemize}
\item[] $(\xymatrix@C=10pt{i_1\ar@{-}[r] &j_1}), (\xymatrix@C=10pt{i_2\ar@{-}[r]& j_2}), \dots$ or  
\item[]  $(i_0), (\xymatrix@C=10pt{i_1\ar@{-}[r] &j_1}), (\xymatrix@C=10pt{i_2\ar@{-}[r] &j_2}), \dots$ or 
\item[]  $(j_0), (\xymatrix@C=10pt{i_1\ar@{-}[r] & j_1}), (\xymatrix@C=10pt{i_2\ar@{-}[r] & j_2}), \dots$
\end{itemize}

The path $x_1$ is an element of the first indecomposable summand $P(j')$ of $P_1(\zg)$.  Thus $s(x_1)=j'=j_1$ in the first two cases and $s(x_1)=j'=j_0$ in the third case. 

Now we distinguish eight difference cases in the crossing sequences for $\zg$ and $R(\zg)$, where the first three come from case 1 and the remaining 
five come from case 2 as in Figure~\ref{fig:cases12}.   Here recall that, because the direction of $\zg$ and $R(\zg)$ changes, it follows that if $(i,j)$ is a crossing pair for $\zg$, and $R(\zg)$ also crosses $i,j$, then $(j,i)$ is a crossing pair of $R(\zg)$. 

\begin{itemize}
\item[{\bf Cases}]  The crossing sequences start with 
\item[(1.1)] $(i'\to j')$ for $\zg$ and $(j')$ for $R(\zg)$, where $i'\to j'$ is a boundary arrow.
\item[(1.2)] $(i)$ for $\zg$ and $(i\to j)$ for $R(\zg)$, where $i\to j$ is a boundary arrow.
\item[(1.3)] $(\xymatrix@C=10pt{i_1\ar@{-}[r] & j_1})$ for $\zg$ and $(\xymatrix@C=10pt{j_1\ar@{-}[r] & i_1})$ for $R(\zg)$, where $\xymatrix@C=10pt{i_1\ar@{-}[r] & j_1}$ is an interior arrow coming from $W_2$.
\item[(2.1)] $(i)$ for $\zg$ and $(j'), (i\to j)$ for $R(\zg)$, where $i\to j$ is a boundary arrow.
\item[(2.2)] $(\xymatrix@C=10pt{i_1\ar@{-}[r] & j_1})$ for $\zg$ and $(j'), (\xymatrix@C=10pt{j_1\ar@{-}[r] & i_1})$ for $R(\zg)$, where $\xymatrix@C=10pt{i_1\ar@{-}[r] & j_1}$ is an interior arrow coming from $W_2$. 
\item[(2.3)] $(i'\to k')$ for $\zg$ and $(k' \to j')$ for $R(\zg)$, where there is a 3-cycle $\xymatrix@C=10pt{i'\ar[r] & k'\ar[r] & j' \ar@/^1pc/[ll]}$ with $j'\to i'$ a boundary arrow. 
\item[(2.4)] $(i'), (\xymatrix@C=10pt{i_1\ar@{-}[r] & j_1})$ for $\zg$ and $(\xymatrix@C=10pt{j_1\ar@{-}[r] & i_1})$ for $R(\zg)$, where $\xymatrix@C=10pt{i_1\ar@{-}[r] & j_1}$ is either an interior arrow coming from $W_3$ or the other boundary arrow coming from $W_3$, provided $W_3$ has a boundary edge. 
\item[(2.5)] $(i'), (\xymatrix@C=10pt{i_1\ar@{-}[r] & j_1})$ for $\zg$ and $(j_1)$ for $R(\zg)$, where $\xymatrix@C=10pt{i_1\ar@{-}[r] & j_1}$ is the other boundary arrow from $W_3$, provided $W_3$ has no boundary edge. 
\end{itemize}

We only provide the details of the proof in the cases (1.1), (2.3), and (2.4), because the other cases are proved in a similar way. 

\begin{figure}
\centerline{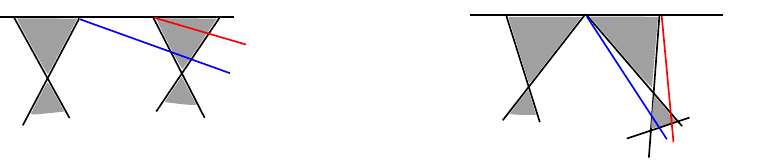}
\caption{Cases (1.1) and (2.3) in the proof of Lemma~\ref{lem:base}.}
\label{fig:12cases}
\end{figure}

{\bf Case (1.1)}  In this case the arc $\zg$ starts in vertex $a$, enters the white region $W_2$, then crosses $i', j'$ and enters the white region $W_3$.  Then the corresponding $R(\zg)$ starts in vertex $b$, crosses only $j'$ and enters region $W_3$, see Figure~\ref{fig:12cases}.   

By assumption $0=f_{\zg}(x)$, which means that the composition of the valid path $i'\to j'$  with the path $x_1$ is zero in $B$.  Hence $x_1$ is a path $j'\leadsto c$ starting in $j'$ and ending in some vertex $c$.  Since $x_1\not=0$, Lemma~\ref{zero-relations-lemma}(b) shows that the path $j'\leadsto c$ starts with a path $j'\leadsto  i_2\leadsto z$, and there is a boundary arrow $\partial_2$ starting at $z$ as in the quiver below.  The black portion of the quiver lies in $Q(\zg)$ and the red portion does not.  Moreover, $\partial_1$ is a boundary arrow. 

\[\xymatrix@C=10pt@R=10pt{
i'\ar[r] \ar[d]_{\partial_1}& \bullet \ar[r] \ar[d]& \cdots \ar[r] & j_2 \ar@[red][r] & \cdots \ar@[red][r] & \bullet\ar@[red][d]\\
j'\ar[r] & \bullet \ar[r] \ar[ul]& \cdots \ar[r] & i_2 \ar@[red][r] & \cdots \ar@[red][r] & \bullet\ar@[red][r] & z \ar@[red][r] \ar@[red][ul]_{\partial_2}& \cdots \ar@[red][r] & c
}
\]

 The arc $R(\zg)$ does not cross $i'$, so the map $\bar{f}_{R(\zg)} = \begin{bsmallmatrix}j'\leadsto i_2 & \cdots \\ -(j_2\leadsto i_2) & \cdots \\ 0 & \cdots \\ \vdots & \vdots\end{bsmallmatrix}$ has only two valid paths in the first column as all steps in $\zg$ are assumed to be forward.   Thus 

\[ \bar{f}_{R(\zg)} \begin{bsmallmatrix}i_2\leadsto z \leadsto c\\0\\ \vdots \\ 0\end{bsmallmatrix}=\begin{bsmallmatrix}j'\leadsto i_2\leadsto z \leadsto c\\ -(j_2 \leadsto i_2\leadsto z\ \leadsto c)\\0\\ \vdots \\ 0\end{bsmallmatrix}=x,\]
because the path $j_2\leadsto i_2\leadsto z$ is zero as $\partial_2$ is a boundary arrow.  This shows that $x\in \textup{Im}\,\bar{f}_{R(\zg)}$ and proves the lemma in this case. 

{\bf Case (2.3)} This case is special, since it is the only one that involves the diagonal $k'$, see Figure~\ref{fig:12cases}. 
By assumption $0=f_{\zg}(x)$, which means that the composition of the valid path $i'\to k'$ with $x_1$ is zero in $B$.  Hence $x_1$ is a path $k'\leadsto c$ starting in $k'$ and ending in some vertex $c$.  Consider the following subquiver of $Q$, where $\partial_1, \partial_2$ are boundary arrows. The black portion lies in $Q(\zg)$ and the red portion does not. Then by Lemma~\ref{zero-relations-lemma}, either $k'\leadsto c$ factors through $i_2$ and $z$, and then we proceed as in case (1.1), or $k'\leadsto c$ factors through $j'$.   

\[\xymatrix@C=10pt@R=10pt{
j'\ar[r]^{\partial_1}&  i'\ar[r]  \ar[d]& \bullet \ar[r] \ar[d]& \cdots \ar[r] & j_2 \ar@[red][r] & \cdots \ar@[red][r] & \bullet\ar@[red][d]\\
&k'\ar[r] \ar[ul]& \bullet \ar[r] \ar[ul]& \cdots \ar[r] & i_2 \ar@[red][r] & \cdots \ar@[red][r] & \bullet\ar@[red][r] & z \ar@[red][r] \ar@[red][ul]_{\partial_2}& \cdots \ar@[red][r] & c
}
\]

  The arc $R(\zg)$ crosses $j', k'$ but not $i'$, so the map $\bar{f}_{R(\zg)} = \begin{bsmallmatrix}k'\leadsto j' & \cdots \\ 0 & \cdots \\ \vdots & \vdots\end{bsmallmatrix}$ has only one valid path in the first column.  Then $\bar{f}_{R(\zg)} \begin{bsmallmatrix}j' \leadsto c\\0\\ \vdots \\ 0\end{bsmallmatrix}= x$, which shows that $x\in \textup{Im}\,\bar{f}_{R(\zg)}$ and proves the lemma in this case. 

\begin{figure}
\centerline{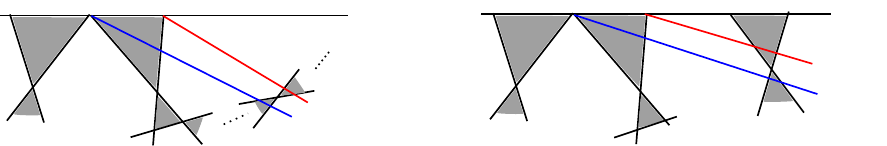}
\caption{Case (2.4) in the proof of Lemma~\ref{lem:base}.}
\label{fig:3case}
\end{figure}

{\bf Case (2.4)}  In this case, the arc $\zg$ starts in vertex $a$ crosses $i'$ and moves into region $W_3$, where it crosses a pair $i_1, j_1$.  Then $R(\zg)$ starts in vertex $b$ and then crosses $i_1, j_1$ in $W_3$, see Figure~\ref{fig:3case}.  On the left, we show the case when the arrow between $i_1, j_1$ is an interior arrow of $W_3$, and on the right, we show the case when the arrow between $i_1,j_1$ is a boundary arrow of $W_3$.   In the latter case, we must have $j_1\to i_1$, because white regions have an even number of sides, by Lemma~\ref{lem orientation}, and $i', j_1$ must have the same parity along the edges of $W_3$, by Remark~\ref{rem:parity}. In the former case, the arrow between $i_1$ and $j_1$ may be in either direction.  We have the following subquiver of $Q$, where $\partial_1$ is a boundary arrow.

\[\xymatrix@C=10pt@R=10pt{
j'\ar[r]\ar[d]_{\partial_1} & \bullet\ar[r] \ar[d]& \cdots \ar[r] & i_1\\
i'\ar[r] & \bullet\ar[ul] \ar[r] & \cdots \ar[r] & j_1
}
\]

Then $x_1$ is a path $j_1 \leadsto c$ and by assumption $0=f_{\zg}(x) = \begin{bsmallmatrix} i'\leadsto j_1 \leadsto c\\ i_1 \leadsto j_1 \leadsto c \\ 0 \\ \vdots \\ 0\end{bsmallmatrix}$.  We only prove the case when there is an arrow $i_1\xrightarrow{\alpha} j_1$, since the other case when there is an arrow $j_1\to i_1$ follows similarly.   Then $i_1\to j_1$ is not a boundary arrow, and by Lemma~\ref{zero-relations-lemma} the vanishing of $i_1\to j_1\leadsto c$ implies that $j_1\leadsto c$ factors through $i_1$ or $z$, where $z$ is either above or to the right of $j_1$ as in the quiver below.   However, the latter case is not possible because $f_{\zg}(x)=0$ implies that the valid path $i'\leadsto j_1$ composed with $x_1$ is also zero.  

\[\xymatrix@C=10pt@R=10pt{
&&& c \\
&&& \raisebox{0pt}[0.9\height][0.3\height]{ $\vdots$ } \ar@[red][u]\\
&&& z \ar@[red][r]^{\partial_2} \ar@[red][u] & \bullet\ar@[red][dl]\\
&&& \raisebox{0pt}[0.9\height][0.3\height]{ $\vdots$ }   \ar@[red][u]&\raisebox{0pt}[0.9\height][0.3\height]{ $\vdots$ }  \ar@[red][u]\\
&&&   i_2 \ar@[red][u]&  j_2 \ar@[red][u]\\
&&& \raisebox{0pt}[0.9\height][0.3\height]{ $\vdots$ }   \ar[u]& \raisebox{0pt}[0.9\height][0.3\height]{ $\vdots$ } \ar[dl] \ar[u]\\
j'\ar[r]\ar[d]_{\partial_1} & \bullet\ar[r] \ar[d]& \cdots \ar[r] &\bullet\ar[r] \ar[u] &  i_1\ar[d] \ar[u] \ar@[blue][r] & \cdots \ar@[blue][r] & \bullet \ar@[blue][d]\\
i'\ar[r] & \bullet\ar[ul] \ar[r] & \cdots \ar[r] & \bullet\ar[r]& j_1\ar[ul] \ar@[blue][r] & \cdots \ar@[blue][r] & \bullet  \ar@[blue][r] & z  \ar@[blue][r]   \ar@[blue][ul]_{\partial_2} & \cdots  \ar@[blue][r] & c
}
\]

We have $\bar{f}_{R(\zg)}= \begin{bsmallmatrix} j_1\leadsto i_1 & -(j_1\leadsto i_2) & \cdots \\ 0 & j_2 \leadsto i_2 & \cdots \\
\vdots & 0 & \cdots \\ 
\vdots & \vdots & \ddots \end{bsmallmatrix}$.
If $j_1\leadsto c$ factors through $i_1$ then $\bar{f}_{R(\zg)} \begin{bsmallmatrix} i_1 \leadsto c \\ 0 \\ \vdots \\0 \end{bsmallmatrix}=x$.  If  $j_1\leadsto c$ factors through $i_2$ then $\bar{f}_{R(\zg)} \begin{bsmallmatrix} 0 \\ i_2 \leadsto c \\ 0 \\ \vdots \\0 \end{bsmallmatrix}=x$, because $j_2\leadsto i_2 \leadsto c$ is zero as $\partial_2$ is a boundary arrow.   This completes the proof in case (2.4). 
\end{proof}

\begin{lemma}\label{lem:factor}
Let $\zg$ be an arc in $\cals$ such that the crossing sequence for $\zg$ consists of $n-1$ steps that go forward from $1$ to $n$.  Let $x = \begin{bsmallmatrix}x_1& \cdots & x_k &  0 & \cdots & 0\end{bsmallmatrix}^T \in \textup{ker}\, f_{\zg}$, where $x_k$ is a nonzero path in $Q$ starting in $j_k$, for $k\geq 2$.  Then $x_k$ factors through $i_k$ or $i_{k+1}$, if $k\not=n$. 
\end{lemma}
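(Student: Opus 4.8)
The plan is to use the hypothesis $x\in\textup{ker}\,f_\zg$ together with a careful analysis of which entries of the matrix of $f_\zg$ are nonzero, exactly as was done in the proof of Lemma~\ref{lem:kernel}. Since all steps from $1$ to $n$ are forward, the matrix of $f_\zg$ is upper triangular after suitable indexing, and its entries above the diagonal are either $0$ or valid paths governed by the Rectangle--Trapezoid Lemma~\ref{rect-trap-lemma1}. The $k$-th entry of $f_\zg(x)$ is then a sum of terms $f_{\zg_{i_s,j_t}}\,x_t$ over indices $t$; I would isolate the constraint that this sum vanishes and extract information about $x_k$.

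First I would consider the row of $f_\zg$ indexed by $i_k$ (or, more precisely, by the degree-zero vertex in the crossing pair containing $k$). Because $x$ has $x_{k+1}=\cdots=x_n=0$, the vanishing of the $i_k$-component of $f_\zg(x)$ reads
\[
f_{\zg_{i_k,j_k}}\,x_k + \sum_{t<k} f_{\zg_{i_k,j_t}}\,x_t = 0.
\]
By Lemma~\ref{valid-paths-lem} and Lemma~\ref{rect-trap-lemma1}(3), among the entries $f_{\zg_{i_k,j_t}}$ with $t\le k$ only those with $t$ in a short range near $k$ can be nonzero; in particular there is at least one nonzero entry on or just below the diagonal since the step from $k-1$ to $k$ (and from $k$ to $k+1$, if $k\ne n$) is forward, so by Lemma~\ref{lem:arrow} at least one of $f_{\zg_{k,k}}$, $f_{\zg_{k-1,k+1}}$ is nonzero. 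The key point is that $f_{\zg_{i_k,j_k}}$ is the path $i_k\leadsto j_k$: when the arrow $\alpha_k$ is oriented $i_k\to j_k$ this entry is $\alpha_k$ itself, and when it is oriented $j_k\to i_k$ it is the length-two path $i_k\to\bullet\to j_k$ in the $3$-cycle. In either case $f_{\zg_{i_k,j_k}}$ is a path of length $\le 2$ starting at $i_k$. I would then argue, using Lemma~\ref{zero-relations-lemma} and Lemma~\ref{lem:comm} applied to the relation forced by the displayed equation, that $x_k$ must begin (up to commutativity in $B$) with an arrow of the $3$-cycle at the pair $(i_k,j_k)$, hence factors through $i_k$, unless the obstruction comes from a neighboring pair, in which case the contribution $f_{\zg_{i_k,j_{k-1}}}$ or $f_{\zg_{i_{k-1},j_{k+1}}}$ forces $x_k$ to factor through $i_{k+1}$ instead (this uses that $k\ne n$, so the pair $(i_{k+1},j_{k+1})$ exists and the step $k\to k+1$ is forward).

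More concretely, I would split into cases on the orientation of $\alpha_k$ and on whether $x_k$, viewed as a path out of $j_k$ in $Q(\zg)$, is killed upon precomposition by $f_{\zg_{i_k,j_k}}$ or survives it. If $f_{\zg_{i_k,j_k}}\,x_k\ne 0$ then the vanishing of the $i_k$-component shows it is cancelled by $\sum_{t<k} f_{\zg_{i_k,j_t}}\,x_t$, and since all these are parallel paths in $Q$ they are equal in $B$ (Proposition~\ref{prop 39}); tracing this equality back through the connecting arrows of $Q(\zg)$ via Lemma~\ref{lem:comm} shows $x_k$ factors through $i_k$ or $i_{k+1}$. If instead $f_{\zg_{i_k,j_k}}\,x_k=0$, then by Lemma~\ref{zero-relations-lemma} the path $x_k$ starts with $\beta_1\cdots\beta_\ell$ where $\beta_1$ lies in the $3$-cycle of $(i_k,j_k)$; in $Q(\zg)$ this first arrow points from $j_k$ towards either $i_k$ or $i_{k+1}$ (the two arrows of that $3$-cycle out of $j_k$, according to the orientation of $\alpha_k$ as depicted in Figure~\ref{fig:valid-paths}), which is exactly the claimed factorization.

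\textbf{Main obstacle.} The delicate part is the bookkeeping of \emph{which} of the two conclusions ($i_k$ versus $i_{k+1}$) holds, and ensuring the argument does not secretly require $k\ne n$ in the ``factors through $i_k$'' branch while it is genuinely needed in the ``factors through $i_{k+1}$'' branch. Making this rigorous requires a clean description of the local subquiver of $Q(\zg)$ around the vertices $i_{k-1},j_{k-1},i_k,j_k,i_{k+1},j_{k+1}$ together with the orientation of $\alpha_k$ and of the connecting arrows, i.e. precisely the pictures of Figures~\ref{fig:quiver-pairs},~\ref{fig:crossing-pairs} and~\ref{fig:valid-paths}, and then verifying in each of the (few) orientation cases that the relation extracted from $\textup{ker}\,f_\zg$ propagates via Lemma~\ref{zero-relations-lemma}/Lemma~\ref{lem:comm} to the asserted factorization of $x_k$. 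I expect this to be a finite but somewhat intricate case check rather than a conceptual difficulty.
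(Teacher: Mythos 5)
Your overall strategy (exploit the kernel condition row by row, then feed the resulting zero or parallel-path relations into Lemma~\ref{zero-relations-lemma}, Lemma~\ref{lem:comm} and Proposition~\ref{prop 39}) is the same as the paper's, but as written there are two genuine gaps. First, you have the triangular structure backwards: since all steps go forward from $1$ to $n$, the entries $f_{\zg_{i_k,j_t}}$ of row $k$ with $t<k$ are all zero, so your displayed equation collapses to $f_{\zg_{i_k,j_k}}\,x_k=0$ and there is nothing in row $k$ for a nonzero $f_{\zg_{i_k,j_k}}x_k$ to ``cancel against.'' The cancellation you want to invoke does exist, but it lives in row $k-1$, namely $f_{\zg_{i_{k-1},j_{k-1}}}x_{k-1}+f_{\zg_{i_{k-1},j_k}}x_k=0$; this is exactly the relation the paper extracts and then analyses with Proposition~\ref{prop 39} and Lemma~\ref{lem:comm}. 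Relatedly, you never treat the case where $f_{\zg_{i_k,j_k}}=0$ (the arrow $j_k\to i_k$ is a boundary arrow, so the length-two path $i_k\to\bullet\to j_k$ vanishes in $B$); there row $k$ gives no information at all and the entire argument must be run from row $k-1$.

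Second, and more seriously, your final step asserts that once $f_{\zg_{i_k,j_k}}x_k=0$, Lemma~\ref{zero-relations-lemma} forces the first arrow of $x_k$ to point towards $i_k$ or $i_{k+1}$. That is not what the lemma gives: its case (b) allows $x_k$ to begin with the arrow $\delta$ leaving $Q(\zg)$ on the other side of $\alpha_k$ and to run along a chain of $3$-cycles outside $Q(\zg)$, ending at a vertex $z$ with a boundary arrow, so that $x_k$ factors only through this external $z$ and through neither $i_k$ nor $i_{k+1}$. Excluding this possibility is the actual content of the lemma: the paper does it by observing that the row-$(k-1)$ relation would then force a nonzero path $x'\colon j_{k-1}\leadsto c$ parallel to $f_{\zg_{i_{k-1},j_k}}x_k$, and any such $x'$ necessarily contains the composition $\beta_1\beta_2$ through the boundary arrow at $z$, hence is zero in $B$, a contradiction. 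Your proposal takes the conclusion of this step for granted, so the hardest part of the proof is missing; the orientation case check you flag as the ``main obstacle'' is comparatively routine once this external-vertex branch has been eliminated.
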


\begin{proof}
Let $f_{\zg}$ be given by a matrix $A = (a_{h,l})$.  Let $A_l$ denote the $l$-th row of $A$. Because all the steps in the crossing sequence for $\zg$ are forward $a_{h,l}=0$ for $h>l$.   In particular, in $A_k$ the first $k-1$ entries  $a_{k, 1}, a_{k, 2}, \dots, a_{k, k-1}$ are zero.  Then $A_k x= a_{k,k} x_k$, where $a_{k,k}$ is a path $i_k\leadsto j_k$.   We have $a_{k,k} x_k=0$, because $x\in  \text{ker}\, f_{\zg}$ by assumption.   

The path $x_k: j_k \leadsto c$ is ending in some vertex $c$ in $Q$ and $a_{k,k} x_k$ is a path $i_k\leadsto j_k \leadsto c$ in $Q$ that is zero in the algebra $B$. We consider two cases depending on whether $a_{k,k}$ is given by a single arrow or a composition of two arrows. 

(i) Let $a_{k,k} = i_k \xrightarrow{\alpha} j_k$ be an arrow.  Because $k\not=1,n$, and the steps in the crossing sequence for $\zg$ are all forward, we obtain the following full subquiver of $Q(\zg)$ represented by black arrows.  Next, we explain the remaining colored part of this quiver.  

\[\xymatrix@R=10pt@C=10pt{
&&i_{k+1}& j_{k+1}\\
&&\raisebox{0pt}[0.9\height][0.3\height]{ $\vdots$ }  \ar[u]& \raisebox{0pt}[0.9\height][0.3\height]{ $\vdots$ }  \ar[u]\\
&&\bullet\ar[r] \ar[u]& \bullet\ar[dl] \ar[u]\\
j_{k-1} \ar[r] & \cdots \ar[r]& \bullet \ar[d] \ar[r] \ar[u]& i_k \ar[u]\ar[d]_{\alpha} \ar@[red][r] & \bullet \ar@[red][r] \ar@[red][d]& \cdots \ar@[red][r] & \bullet \ar@[red][d]_{\beta_1}\\
i_{k-1} \ar[r] & \cdots \ar[r]& \bullet \ar[r] & j_k\ar[ul]^{}\ar@[red][r]_{\delta} & \bullet\ar@[red][r]\ar@[red][ul] & \cdots \ar@[red][r] & \bullet\ar@[red][r]_{\beta_2} & z\ar@[red][ul]_{\partial} \ar@[red][r] & \cdots \ar@[red][r] & c}\] 

By assumption the path $\alpha x_k= i_k \xrightarrow{\alpha}j_k \leadsto c$ is zero in $B$, where $x_k$ is nonzero. By Lemma~\ref{zero-relations-lemma}(a), $x_k$ factors through $i_k$ or, by Lemma~\ref{zero-relations-lemma}(b), $x_k$ factors through $i_{k+1}$ or through some vertex $z$ outside of $Q(\zg)$, where $\partial$ is a boundary arrow in $Q$ as in the figure above.   We must show that the latter is not possible.  Suppose on the contrary that 
\[x_k: j_k\xrightarrow{\delta} \bullet \to \cdots \to \bullet \xrightarrow{\beta_2} z \to \cdots \to c\]
as in the quiver above.

Now consider the product $A_{k-1}x=a_{k-1,k}x_k + a_{k-1, k-1} x_{k-1}=0$, a sum of paths starting at $i_{k-1}$.  Note that $x_{k-1}$ is a linear combination of paths starting in $j_{k-1}$, and  the path  $a_{k-1, k} x_k: i_{k-1}\leadsto j_k \leadsto c$ is nonzero, see the figure.  This means that $x_{k-1}$ contains $-x'$ as a summand, where $x'$ is a nonzero path $x': j_{k-1} \leadsto c$, such that in the sum $A_{k-1}x$ the two paths below add to zero.

\begin{equation}\label{eq:star}
\xymatrix{i_{k-1} \ar@{~>}[r]^-{a_{k-1, k}} & j_k\ar@{~>}[r]^-{x_k} & c \,\,\,- \,\,\, i_{k-1} \ar@{~>}[r]^-{a_{k-1, k-1}} & j_{k-1}\ar@{~>}[r]^-{x'} & c \,\, =\,\,0}  
\end{equation}

We will show that no such path $x'$ can exist.  Indeed, it is clear from the quiver that $x'$ is equivalent to a path that contains $\beta_1\beta_2$ as a subpath.   Because $\partial$ is a boundary arrow, we have $\beta_1\beta_2=0$, hence $x'=0$.  This is a contradiction, and proves the lemma in case (i). 

(ii)  Suppose $a_{k,k}=i_k \to \bullet \to j_k$ is a path of length two that forms a 3-cycle together with the arrow $j_k \xrightarrow{\alpha} i_k$.  Because the crossing sequence for $\zg$ is forward we have the following quiver.  

\[\xymatrix@C=10pt@R=10pt{
j_{k-1} \ar[r] & \cdots \ar[r]& i_k \ar[d] \ar@[red][r] & \bullet \ar@[red][d] \ar@[red][r] & \bullet \ar@[red][r] \ar@[red][d]& \cdots \ar@[red][r] & \bullet \ar@[red][d]_{\beta_1}\\
i_{k-1} \ar[r] & \cdots \ar[r]& \bullet \ar[r]^{\rho_1} \ar[d]& j_k \ar[d]^{\rho_2 }\ar[ul]_{\alpha}\ar@[red][r]_{\delta} & \bullet\ar@[red][r]\ar@[red][ul] & \cdots \ar@[red][r] & \bullet\ar@[red][r]_{\beta_2} & z\ar@[red][ul]_{\partial} \ar@[red][r] & \cdots \ar@[red][r] & c\\
&&\bullet\ar[d]\ar[r]&\bullet\ar[d]\ar[ul]\\
&& \raisebox{0pt}[0.9\height][0.3\height]{ $\vdots$ }   \ar[d] &\raisebox{0pt}[0.9\height][0.3\height]{ $\vdots$ }  \ar[d] \\
&&j_{k+1}&i_{k+1}
}\]

If $\alpha$ is not a boundary arrow, then $a_{k,k}\not=0$, but $a_{k,k}x_k=0$, and we proceed in the same way as in case (i). 
If $\alpha$ is a boundary arrow, then $a_{k,k}=0$.  Thus $A_k x = a_{k,k}x_k=0$, so the product of the $k$-th row of $A$ and $x$ does not provide any information, because it is zero for any $x_k$.  However, the $(k-1)$-st row of $A$ still gives equation~(\ref{eq:star}) for some summand $x'$ of $x_{k-1}$, provided that $a_{k-1,k}x_k$ is nonzero.    
Then we have the quiver as above without the part to the right of the arrow $\alpha$.  The arrow $\rho_1$ is the last arrow in $a_{k-1,k}$. If $x_k$ factors through $i_k$ or $i_{k+1}$ then the lemma follows, so suppose not.

Suppose first that $a_{k-1,k}x_k=0$.
Then the last arrow $\rho_1$ in $a_{k-1,k}$ and the first arrow in $x_k$ lie in a relation, and thus the first arrow in $x_k$ is $\alpha$ or $\rho_2$.  If the first arrow in $x_k$ is $\alpha$ then $x_k$ factors through $i_k$ contrary to our assumption above. Then the first arrow in $x_k$ is $\rho_2$ as in the picture. 
 By Lemma~\ref{zero-relations-lemma}, for $a_{k-1,k}x_k$ to be zero, the path $x_k$ must go through a 3-cycle that contains a boundary arrow that is not on $x_k$.   Thus, the quiver implies that $x_k$ must factor through $i_{k+1}$, a contradiction.   

Thus $a_{k-1,k}x_k\not=0$ and $A_{k-1}x$ is given by equation~(\ref{eq:star}).  By Lemma~\ref{lem:comm}, the path $x_k$ starts with the arrow $\alpha$, which contradicts the assumption that $x_k$ does not factor through $i_k$.  This proves case (ii). 
\end{proof}

\begin{lemma}\label{white-regions}
Let $W$ be a white region.  Label its edges $w_1, \dots, w_{2m}$ in clockwise order such that $w_1$ and $w_2$ meet in the interior of the polygon $\cals$ and $w_{2m}, w_1$ meet at the boundary vertex.  In particular, if $W$ has a boundary edge then it is the edge $w_{2m}$.  Let $\zg$ be an arc with endpoint in $W$.  
\begin{itemize}
\item[(1a)] If the endpoint of $\zg$ is incident to the edge $w_1$, then any crossing of $\zg$ with an even-indexed edge is of degree 0, and any crossing with an odd-indexed edge is of degree 1.
\item[(1b)]  If the endpoint of $\zg$ is not incident to the edge $w_1$, then $W$ has a boundary edge $w_{2m}$, any crossing of $\zg$ with an even-indexed edge is of degree 1, and any crossing with an odd-indexed edge is of degree 0. 
\item[(2)] The full subquiver $Q(W)$ of $Q$ is given below, where $\partial_1, \partial_2$ denote boundary arrows of $Q$ and depending on whether $W$ has a boundary edge (right) or not (left). 

\[\xymatrix@C=10pt@R=10pt{
w_1\ar[d]_{\partial_1} & w_3\ar[l]\ar[d] & \cdots \ar[l] & w_{2m-1}\ar[l]\ar[d]^{\partial_2} &&& w_1\ar[d]_{\partial_1} & w_3\ar[l]\ar[d] & \cdots \ar[l] & w_{2m-3}\ar[l]\ar[d] & \ar[l]w_{2m-1} \\
w_2\ar[ur] & w_4\ar[l] & \cdots \ar[l]\ar[ur]& w_{2m} \ar[l] &&& w_2\ar[ur] & w_4\ar[l] & \cdots \ar[l]\ar[ur]& w_{2m-2} \ar[l] \ar[ur]_{\partial_2}
}\] 
\end{itemize}
\end{lemma}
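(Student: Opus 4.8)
The statement to prove is Lemma~\ref{white-regions}, which describes (1) the degrees of crossings between an arc $\zg$ with an endpoint in a white region $W$ and the boundary edges of $W$, according to parity, and (2) the structure of the full subquiver $Q(W)$ of $Q$, distinguishing the case where $W$ has a boundary edge from the case where it does not. The plan is to organize the argument around two independent ingredients: the alternating orientation of radical lines around a white region (Lemma~\ref{lem orientation}), which controls the degrees, and the combinatorics already established in Lemmas~\ref{lem 32} and \ref{lem 33}, which control the quiver structure.

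For parts (1a) and (1b), I would first recall from Lemma~\ref{lem orientation} that the segments of radical lines bounding $W$ have alternating orientation as one goes around $W$. Since the endpoint of $\zg$ lies in $W$, the arc $\zg$ emanates from a boundary vertex of $\cals$ on the boundary of $W$ and crosses a subset of the edges $w_1, \dots, w_{2m}$ consecutively. Because consecutive edges $w_l, w_{l+1}$ carry radical line segments of opposite orientation, $\zg$ crosses them in opposite directions, so the degree of the crossing with $w_l$ depends only on the parity of $l$ once we fix the degree of one crossing. Thus it remains to pin down that initial degree. In case (1a) the endpoint of $\zg$ is incident to $w_1$, so I would trace the local picture at that boundary vertex: $\zg$ runs parallel to (just off) the edge $w_1$ and then crosses $w_2$; using the convention from Definition~\ref{def degree} and the fixed alternating sign pattern on boundary vertices of $\cals$ from the subsection on orientation of 2-diagonals, the first genuine crossing (with $w_2$) is forced to be of degree $0$. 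This gives that even-indexed edges are crossed in degree $0$ and odd-indexed in degree $1$. In case (1b), the endpoint of $\zg$ is \emph{not} incident to $w_1$; by Lemma~\ref{lem 32}(a) a white region has either exactly one boundary vertex or two boundary vertices joined by a boundary edge, and since $\zg$'s endpoint is a boundary vertex of $W$ different from the one incident to $w_1$, the region $W$ must have two boundary vertices, hence a boundary edge, which (with the chosen labeling) is $w_{2m}$. The same alternation argument then applies starting from the other end, flipping the parity: the crossing with $w_{2m-1}$ (the edge met first after leaving the shaded region near $w_{2m}$) is forced to be degree $0$, giving the stated assignment.

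For part (2), I would invoke Lemma~\ref{lem 33} and the construction of $Q(W)$ in subsection~\ref{sec:quiver}: three consecutive bounding radical lines $w_l, w_{l+1}, w_{l+2}$ of $W$ form a $3$-cycle $w_l \to w_{l+1} \to w_{l+2} \to w_l$ in $Q$ (using here the standing assumption that every chordless cycle has length three), and $Q(W)$ is the full subquiver on $w_1, \dots, w_{2m}$, which is precisely the union of these overlapping $3$-cycles arranged in the "ladder" shape shown. The only remaining point is to identify which arrows are boundary arrows. By Proposition~\ref{prop white region} and Lemma~\ref{lem 32}, the edges $w_1$ and $w_{2m}$ are the ones meeting the boundary of $\cals$; the arrow between $w_1$ and $w_2$ (call it $\partial_1$) lies in only one chordless cycle of $Q$ — the one corresponding to the shaded triangle adjacent to $W$ at that boundary vertex — hence is a boundary arrow, and similarly the arrow adjacent to $w_{2m}$ (or to $w_{2m-1}$, depending on whether $W$ has a boundary edge). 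If $W$ has no boundary edge there is a single boundary vertex and the ladder closes up with $\partial_1$ on one end and $\partial_2$ on the arrow out of $w_{2m-1}$; if $W$ has a boundary edge, the ladder is "open" at the $w_{2m-1}, w_{2m-2}$ end with $\partial_2$ there. Matching this with the two displayed quivers in the statement finishes (2).

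The main obstacle I anticipate is not any of the combinatorics of $Q(W)$ — that is essentially bookkeeping from Lemma~\ref{lem 33} — but rather nailing down the \emph{absolute} degree (i.e. the "$0$ versus $1$" label) of the first crossing in parts (1a) and (1b), as opposed to merely its alternation. This requires carefully reconciling three conventions: the alternating sign assignment on the boundary vertices of $\cals$, the induced orientation "from $-$ to $+$" on each $2$-diagonal (and on each radical line) from the subsection on orientation of $2$-diagonals, and the left/right convention in Definition~\ref{def degree}. I would handle this by drawing the local picture at the boundary vertex where $\zg$ starts, explicitly orienting the two radical-line segments of the adjacent shaded triangle using Lemma~\ref{lem orientation} (they form an oriented path around the shaded triangle), and reading off the degree directly; once the base case is fixed, the alternation from Lemma~\ref{lem orientation} propagates it, and the parity statements follow. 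Since "it does not matter which" global sign convention we chose, I expect the argument to be robust, but it will need to be written out with care to avoid an off-by-one in the parity.
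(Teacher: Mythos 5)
Your proposal is correct and takes essentially the same route as the paper, which likewise deduces (1a) and (1b) from the alternating orientations of the radical-line segments around $W$ (Lemma~\ref{lem orientation}) and obtains (2) directly from the construction of $Q(W)$, i.e.\ the consecutive 3-cycles of Lemma~\ref{lem 33} and Section~\ref{sec:quiver}. One small point of phrasing: the base case of the parity argument should be anchored in the orientation of the segment $w_2$ (resp.\ $w_{2m-1}$), which is determined by the sign of the shared boundary endpoint, rather than in a ``first crossing'' of $\zg$ with that edge --- in (1b) the arc $\zg$ shares an endpoint with the radical line through $w_{2m-1}$ and so never crosses it, and in (1a) $\zg$ need not cross $w_2$ at all; with this adjustment your alternation argument is exactly the paper's.
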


\begin{proof}
Parts (1a) and (1b) follow from the fact that the direction of the edges alternates around the white region, by Lemma~\ref{lem orientation}.  Part (2) follows from the construction, see for example Figure~\ref{fig:W-region}.
\end{proof}

We are ready to prove the complementary statement to Lemma~\ref{lem:kernel}.

\begin{lemma}\label{lem:ker-im}
Let $\zg, R(\zg)$ be a pair of compatible arcs in $\cals$.  Then $\textup{ker}\,f_{\zg} \subset \textup{Im}\,\bar{f}_{R(\zg)}$. 
\end{lemma}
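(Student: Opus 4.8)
The plan is to prove the inclusion $\textup{ker}\,f_{\zg}\subset\textup{Im}\,\bar f_{R(\zg)}$ by reducing to the ``forward'' case treated in Lemmas~\ref{lem:base} and \ref{lem:factor}, using a peeling argument on the entries of a kernel element together with the homotopy invariance established in Proposition~\ref{homotopy} and Corollary~\ref{cor:comp}.

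\textbf{Step 1: Reduce to the case where all steps are forward.} Given a general pair of compatible arcs $\zg,R(\zg)$, the crossing sequence for $\zg$ need not consist only of forward steps. However, by Proposition~\ref{homotopy} and Corollary~\ref{cor:comp}, the cokernel of $f_\zg$ (and the exactness at $P_1(\zg)$) are invariant under passing to another representative of the same $2$-diagonal, up to conjugation by the homotopy automorphisms $(\varphi_1,\varphi_0)$; and by Remark~\ref{rem:bar}(b) the same holds for $\bar f_{R(\zg)}$. The first move is therefore to choose particular compatible representatives $\zg,R(\zg)$ for which the crossing sequence for $\zg$ is \emph{monotone}, i.e.\ consists of a (possibly empty) block of forward steps followed by a block of non-forward steps, or can be split at its ``turning point'' into two maximal forward runs meeting the two ends of $\zg$. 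Such representatives exist: one pushes $\zg$ to hug the boundary of the white regions it traverses so that, reading from each endpoint, the steps are forward until one reaches the ``far'' white region. Restricting attention to a kernel element $x$, one may then treat separately the block of coordinates supported on each maximal forward run; by Lemma~\ref{white-regions} and the orientation analysis of Lemma~\ref{lem orientation}, the $i/j$ bookkeeping at the turning point is consistent with what is needed to glue the two contributions.

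\textbf{Step 2: Peel off the leading coordinate.} Assume now the crossing sequence for $\zg$ is a single forward run of $n-1$ steps, and let $x=\begin{bsmallmatrix}x_1&\cdots&x_n\end{bsmallmatrix}^T\in\textup{ker}\,f_\zg$. Let $k$ be the smallest index with $x_k\neq 0$. If $k=1$, then $x$ has the shape $\begin{bsmallmatrix}x_1&0&\cdots&0\end{bsmallmatrix}^T$ only if additionally $x_2=\cdots=0$; in general, applying the rows of $A=f_\zg$ successively — rows $1,2,\dots$ are the nonzero equations — together with Lemma~\ref{lem:factor} shows that each nonzero coordinate $x_k$ (for $k\neq n$) factors as $x_k=\alpha_k\,x_k'$ through $i_k$ or $i_{k+1}$. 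The key point is then: subtract from $x$ an element of $\textup{Im}\,\bar f_{R(\zg)}$ that kills the coordinate $x_k$ without disturbing the earlier zero coordinates. Concretely, using $P_0(R(\zg))=P_1(\zg)$ from Proposition~\ref{lem:cross}(b), the map $\bar f_{R(\zg)}$ has a column whose leading nonzero entry is the valid path from $j_k$ (or $j_{k+1}$) realizing the factorization; applying $\bar f_{R(\zg)}$ to the appropriate basis-aligned vector built from $x_k'$ produces an element of $\textup{Im}\,\bar f_{R(\zg)}$ agreeing with $x$ in coordinate $k$ and vanishing in coordinates $<k$ (here one uses, as in the proof of Lemma~\ref{lem:base}, that the ``spurious'' second entry of the relevant column of $\bar f_{R(\zg)}$ becomes a path through a boundary arrow, hence zero in $B$, by Lemma~\ref{zero-relations-lemma}). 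Replacing $x$ by $x$ minus this element strictly increases the index of the first nonzero coordinate.

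\textbf{Step 3: Induction and the endpoints.} Iterating Step~2 we reduce to the case where the only possibly nonzero coordinate is the last, $x=\begin{bsmallmatrix}0&\cdots&0&x_n\end{bsmallmatrix}^T$; by symmetry (reversing the crossing sequence and using Lemma~\ref{lem:base} applied at the opposite end, which handles exactly the configurations enumerated as Cases (1.1)--(2.5) there) this too lies in $\textup{Im}\,\bar f_{R(\zg)}$. The base cases $n$ small, and the degenerate end-behaviours where $\zg$ crosses only one side of a boundary shaded region, are precisely what Lemma~\ref{lem:base} was set up to cover. Combining with Lemma~\ref{lem:kernel} (which gives $\textup{Im}\,\bar f_{R(\zg)}\subset\textup{ker}\,f_\zg$) this proves Proposition~\ref{big-lemma}.

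\textbf{Main obstacle.} The delicate point is Step~2: showing that the element of $\textup{Im}\,\bar f_{R(\zg)}$ one subtracts really only affects the coordinate $x_k$ and leaves all earlier coordinates zero. This is exactly where the asymmetry between ``valid path'' and ``nonzero path in $B$'' bites — the column of $\bar f_{R(\zg)}$ used may contain a second nonzero entry (a path $j_{k}\leadsto i_{k-1}$ or similar), and one must verify via Lemma~\ref{zero-relations-lemma} and Lemma~\ref{lem:comm} that composing it with $x_k'$ gives a path factoring through a boundary arrow and hence $0$ in $B$. Handling the two ways $x_k$ can factor (through $i_k$ versus $i_{k+1}$), and the two orientations of the arrow $\alpha_k$, is the bulk of the bookkeeping; this mirrors the case analysis already carried out in Lemma~\ref{lem:base} and Lemma~\ref{lem:factor}, which is why those lemmas are isolated as the technical heart of the argument.
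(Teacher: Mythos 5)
There is a genuine gap, in two places. First, your Step 1 reduction is unjustified: you cannot in general replace $\zg$ by a homotopic representative whose crossing sequence is ``monotone'' (one forward run followed by one non-forward run). The forward/non-forward pattern is dictated by the geometry of the white regions the arc traverses, elementary homotopies only flip the status of isolated steps, and nothing in the paper (or in your sketch) shows that all but one direction change can be removed. Moreover, even granting such a representative, transferring the statement $\textup{ker}\,f_{\zg}\subset\textup{Im}\,\bar f_{R(\zg)}$ across a homotopy requires the automorphism $\varphi_1$ of $P_1(\zg)$ coming from Corollary~\ref{cor:comp} for $\zg$ to agree with the automorphism of $P_0(R(\zg))=P_1(\zg)$ coming from the corresponding change of representative of $R(\zg)$; this ``double compatibility'' is exactly the delicate point the paper later has to manage with the homotopy automorphisms in Proposition~\ref{prop:projective resolution}, and you do not address it. The paper's proof avoids this entirely: it treats the non-forward case by a dual argument and the mixed case directly, exploiting that at a junction $t$ where a forward run meets a non-forward run the pair $(i_t,j_t)$ is a sink of $Q(\zg)$, so $f_\zg$ is block upper/lower triangular and the two blocks can be handled independently.

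Second, your peeling step (Step 2) would fail as described. In the interior of a forward run the column of $\bar f_{R(\zg)}$ whose entries end at $i_k$ has a whole block of nonzero valid paths $j_{s+1}\leadsto i_k,\dots,j_{k}\leadsto i_k$ (by the trapezoid analysis of Lemma~\ref{rect-trap-lemma1}); there is no image element supported essentially only in coordinate $k$, and the extra entries are genuine valid paths, not paths through boundary arrows, so Lemma~\ref{zero-relations-lemma} does not make them vanish — that mechanism only applies in the end configurations of Lemma~\ref{lem:base}. The paper's actual construction is different: working with the \emph{largest} nonzero position $k$ (not the smallest), it factors one path $w=w'u$ of $x_k$ through $i_k$ or $i_{k+1}$ via Lemma~\ref{lem:factor}, locates the index $s$ from the staircase structure of $f_\zg$, and builds an alternating-sign vector $x'$ supported in positions $s+1,\dots,k$ whose entries all end in the common tail $u$; one checks $x'\in\textup{ker}\,f_\zg$ using that parallel paths are equal and cycles vanish (Propositions~\ref{prop 39} and~\ref{prop 38}), and that $x'=\pm\bar f_{R(\zg)}(y')$ for a single-entry vector $y'$. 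Your proposal also glosses over the boundary cases the paper must treat separately (the case $k=n$ with $w$ not factoring through $i_n$, which needs the geometric argument that $R(\zg)$ crosses the extra radical line $z$, and the case where $\zg$ does not cross $j_1$). So while your overall architecture (combine with Lemma~\ref{lem:kernel}, induct by subtracting kernel-and-image elements, lean on Lemmas~\ref{lem:base} and~\ref{lem:factor}) points in the right direction, the two load-bearing steps — the monotonicity reduction and the shape of the correction vector — do not hold up and must be replaced by the mixed-case block analysis and the alternating telescoping construction.
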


\begin{proof}
Let $x\in \text{ker}\, f_{\zg}$.   First we prove the lemma in the case when all steps in the crossing sequence for $\zg$ are forward.  Let $k$ be the largest integer such that the $k$-th position in $x$ is nonzero.  We proceed by induction on $k$, and we outline the main steps of the proof below. 

{\flushleft Induction on $k$:
\begin{itemize}
\item [$k=1$] Apply Lemma~\ref{lem:base} to conclude the base case. 
\item[$k>1$]
{\begin{itemize}
\item[(a)] Write $x=x'+x_1$ with 
\begin{itemize}
\item[\small $\bullet$]$x', x_1\in \text{ker}\,f_{\zg}$, and 
\item[\small $\bullet$] $x_1$ has one less term in position $k$ than $x$ and is zero in positions greater then $k$, and
\item[\small $\bullet$] $x'\in \text{Im}\, \bar{f}_{R(\zg)}$.
\end{itemize}
\item[(b)] Repeat with $x_1$, where $x_1=x_1'+x_2$, and continue until we obtain $x_p\in \text{ker}\, f_{\zg}$ such that $x_p$ is zero in positions greater or equal to $k$.  By induction $x_p \in   \text{Im}\, \bar{f}_{R(\zg)}$, so $x=x_p+x_1' + x_2' + \dots + x_{p-1}' \in \text{Im}\, \bar{f}_{R(\zg)}$.
\end{itemize}}
\end{itemize}
}

Therefore, we need to show the base case $k=1$ and part (a) of the inductive step.  Then applying the argument in part (b) completes the proof.  

If $k=1$ then $x$ is nonzero only in the first position.  The first entry in $x$ is $\sum_{c\in Q_0} \lambda_c (j \leadsto c)$, a linear combination of paths starting in vertex $j$, where $j$ is the first vertex of degree 1 in the crossing sequence for $\zg$.  Moreover, we may assume that the sum runs over distinct vertices $c\in Q_0$, because parallel paths are equal in $B$, by Proposition~\ref{prop 39}.  Since $x\in \text{ker}\,f_{\zg}$, we obtain that the vector $\begin{bsmallmatrix}\lambda_c (j\leadsto c) & 0 & \cdots & 0\end{bsmallmatrix}^T \in \text{ker}\,f_{\zg}$, for all $c$.  By Lemma~\ref{lem:base}, every such vector is contained in  $\text{Im}\, \bar{f}_{R(\zg)}$, so by linearity we obtain that $x\in \text{Im}\, \bar{f}_{R(\zg)}$.  This completes the proof in the case $k=1$.  

Part (a).  Suppose $k>1$.  Similarly to the base case, the $k$-th entry of $x$ is a linear combination of paths starting in $j_k$, because the $k$-th column of $f_{\zg}$ consists of paths ending in $j_k$.   Moreover, we may assume again that the sum runs over paths ending in distinct vertices of $Q$.  Let $w: j_k \leadsto c$ be one of the nonzero paths in the position $k$ of $x$. By Lemma~\ref{lem:factor}, the path $w$ factors through $i_k$ or $i_{k+1}$, if $k\not=n$.   We will prove the case when $k=n$ and $w$ does not factor through $i_n$ later (call this case (c)), and now we assume that $w$ factors through $i_k$.  The case when $w$ factors through $i_{k+1}$ is similar. 

Then $w=w'u: \xymatrix{j_k \ar@{~>}[r]^{w'} & i_k \ar@{~>}[r]^{u} & c}$.  Let $s<k$ be the largest integer such that there is a valid path from $i_s$ to $j_{s+2}$ and, if no such $s$ exists, we let $s=1$.  This choice of $s$ can be seen in the matrix of $f_{\zg}$ as the largest row index for which there exists a non-zero entry on the third diagonal.  Let $r\leq s$ be the largest row index in $f_{\zg}$ such that there exists a valid path $i_r\leadsto j_{s+2}$.  Assuming $\zg$ crosses $j_1$, the matrix of $f_{\zg}$ is as follows, where the entry in position $(p,q)$ is labeled $\star$ if there is a valid path from $i_p$ to $j_q$ in that position and $p\not=q$.  For $p=q$ the symbol $\star$ denotes a path that is either a single arrow or a composition of two arrows in the same 3-cycle.  If $\zg$ does not cross $j_1$ then one needs to remove the first column of this matrix;  we call this case (d) and deal with it later.  

\[
\begin{smallmatrix}
&1 &&& r && s &  & s+2 && k-2 &  & k\\
1&\star & \star & \cdots & \cdots & 0 &\cdots & 0 & 0 & \cdots & 0 & 0 & 0\\
&0&\ddots & \ddots & \cdots & \vdots &&& \vdots & \vdots  & \vdots& \vdots& \vdots\\
r-1&&0 & \star & \star & 0 & 0 &\cdots & 0 & 0 & \vdots& \vdots& \vdots\\
{\color{blue} r} &&&{\color{blue} 0} &{\color{blue} \star} & {\color{blue} \star} & {\color{blue}\star} &{\color{blue} \cdots} & {\color{blue} \star} & {\color{blue} 0} & {\color{blue} \vdots} & {\color{blue}\vdots} & {\color{blue} \vdots}\\
&&&&\ddots & \ddots &\ddots & \vdots & \vdots & \vdots & \vdots& \vdots& \vdots\\
{\color{blue} s}&&&&&{\color{blue} 0}& {\color{blue} \star} & {\color{blue} \star} & {\color{blue} \star} & {\color{blue} 0} & {\color{blue} \vdots}& {\color{blue} \vdots}& {\color{blue}\vdots}\\ 
s+1&&&&&&0 & \star & \star & 0 & \vdots& \vdots & \vdots\\
s+2&&&&&&&0 & \star & \star & 0 & \vdots  & \vdots\\
&&&&&&&& \ddots & \ddots & \ddots & \ddots & \vdots\\
k-2&&&&&&&&&0 & \star & \star & 0 \\
k-1&&&&&&&&&&0 & \star & \star \\
{\color{blue} k}&&&&&&&&&&&\color{blue} 0& \color{blue} \star  \\  
&&&&&&&&&&&&&\ddots
\end{smallmatrix}
\]

Moreover, for each $h \in \{s+1, \dots, k-3\}$ the step in the crossing sequence for $\zg$ from $h+1$ to $h+2$ cannot be rectangular, because in that case, Lemma~\ref{rect-trap-lemma1}(1a) would imply that there is a valid path $i_h \leadsto j_{h+2}$ or $i_{h+1}\leadsto j_{h+3}$.  Then all these steps from $s+2$ to $k-1$ must be trapezoidal, and Lemma~\ref{rect-trap-lemma1}(2) implies that there is a valid path $j_{s+1}\leadsto i_k$, so part (3) of the same lemma yields valid paths $w_h': j_h \leadsto i_k$ for all $h\in \{ s+1, \dots, k-1\}$. 

Define 
\[x' = \begin{bsmallmatrix}0\\ \vdots \\ 0 \\ (-1)^{k-s-1} w_{s+1} \\ \vdots \\w_{k-2}\\ -w_{k-1}\\ w_{k} \\ 0 \\ \vdots \\ 0 \end{bsmallmatrix}\]
where $w_k$ is our path $w: \xymatrix{j_k \ar@{~>}[r]^{w'} & i_k \ar@{~>}[r]^{u} & c}$ and $w_h:= w_h' u = \xymatrix{j_h \ar@{~>}[r]^{w'_h} & i_k \ar@{~>}[r]^{u} & c}$ for $h\in \{s+1, \dots, k\}$.  In particular, all these paths end in the same subpath $u$.

Let the entries in the matrix $f_{\zg}$ be denoted by $a_{i,j}$.  Then by construction 
\[f_{\zg}(x')=\begin{bsmallmatrix} 0\\ \vdots \\0\\ (-1)^{k-s-1}(a_{r,s+1}w_{s+1}-a_{r,s+2}w_{s+2})\\
\vdots\\
(-1)^{k-s-1}(a_{s+1,s+1}w_{s+1}-a_{s+1,s+2}w_{s+2})\\
(-1)^{k-s-2}(a_{s+2, s+2}w_{s+2}-  a_{s+2,s+3}w_{s+3})\\
\vdots\\
-(a_{k-1,k-1}w_{k-1}-a_{k-1,k}w_k)\\
a_{k,k}w_k\\
0\\
\vdots\\
0 \end{bsmallmatrix}\]
where $a_{k,k}w_k: i_k\leadsto j_k \leadsto i_k \leadsto c$ is a path that includes an oriented cycle from $i_k$ to $i_k$ and hence it is zero by Proposition~\ref{prop 38}.   All other entries are differences of parallel paths and hence zero by Proposition~\ref{prop 39}.  Thus $x'\in \text{ker}\,f_{\zg}$. 

Let $x_1 = x-x'$, then $x_1\in \text{ker}\,f_{\zg}$, since both $x,x'\in \text{ker}\,f_{\zg}$.  Moreover, $x_1$ has one less term in position $k$ than $x$ and is zero in positions greater than $k$. We will show that $x'\in \text{Im}\, \bar{f}_{R(\zg)}$.  Let $y' = \begin{bsmallmatrix}0 & \cdots & 0 & u & 0 & \cdots & 0\end{bsmallmatrix}^T$, where $u$ is our path $u: i_k \leadsto c$ and it sits in the $l$-th row in $y'$ where $l$ is such that the $l$-th column of $\bar{f}_{R(\zg)}$ consists of valid paths ending in $i_k$.   Thus, the $l$-th column of $\bar{f}_{R(\zg)}$ is 

\[\begin{bsmallmatrix} 0 \\ \vdots \\ 0\\ 
\pm w'_{s+1}\\
\vdots\\
\pm w'_{k-1}\\
\mp w'_k\\
0\\
\vdots\\
0
\end{bsmallmatrix}\]
where the first $s$ entries are zero, because from the definition of $s$ we know there is a valid path $i_s\leadsto j_{s+2}$, thus there is no valid path $j_s\leadsto i_{s+2}$, by Lemma~\ref{valid-paths-lem}, and thus there is no valid path $j_s\leadsto i_k$, by Lemma~\ref{rect-trap-lemma1}(3).  Moreover, the signs in the column alternate, by definition of $\bar{f}_{R(\zg)}$.  Therefore, $\bar{f}_{R(\zg)}(y') = \pm x'$, so $x'\in \text{Im}\, \bar{f}_{R(\zg)}$.  This shows that $x=x'+x_1$ where $x', x_1$ satisfy the three conditions of part (a) in the outline of the proof.  This completes the proof of part (a) excluding the case when $k=n$ and $w$ does not factor through $i_n$ (case (c)) and excluding the case when $\zg$ does not cross $j_1$ (case (d)).  

Case (d).  Suppose that $\zg$ does not cross $j_1$. The map $f_{\zg}$ has the same structure as before, except we remove the first column.  If $s>1$, or $s=1$ and there is a valid path $i_1\leadsto j_3$ then the same argument as above goes through, so suppose $s=1$ and there is no valid path from $i_1$ to $j_3$.  Then the crossing sequence for $\zg$ starts with $(i_1), (i_2, j_2), \dots$, so $\zg$ starts in a shaded boundary region given by the diagonals $i_1, j_1$, then crosses the diagonal $i_1$, thereby entering a white region $W$, and then $\zg$ crosses $i_2, j_2$ exiting $W$, see Figure~\ref{fig:42}.  Note that if instead we drew $\zg$ starting at the endpoint of $i_1$, then moving to the right and crossing $j_1$, then $j_1$ would cross $\zg$ from left to right.  In particular, $P(j_1)$ would be a summand of $P_1(\zg)$, contrary to the assumption that the crossing sequence of $\zg$ starts with a vertex corresponding to a summand of $P_0(\zg)$.  Hence, Figure~\ref{fig:42} illustrates the only possibility for $\zg$ under the given assumptions. 

\begin{figure}
\centerline{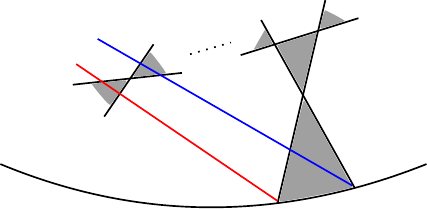}
\caption{Case (d) in the proof of Lemma~\ref{lem:ker-im}.}
\label{fig:42}
\end{figure}

The quiver $Q(W, \zg)$ is the full subquiver of $Q$ whose vertices correspond to the edges of the white region $W$ moving from $i_1$ to $i_2, j_2$ in the counterclockwise direction.  Since $i_1$ is the first edge in $W$ it has precisely two neighbors $j_1$ and $h$ in $Q(W, \zg)$, whereas $j_1$ is the second edge in $W$ and therefore it has precisely three neighbors in $Q(W, \zg)$.   Since $s=1$ and there is no valid path from $i_1$ to $j_3$, we have the following subquiver of $Q(\zg)$, where $\partial$ is a boundary arrow.

\[
\xymatrix@R=10pt@C=10pt{
&&& i_3 & j_3\\
&&&\raisebox{0pt}[0.9\height][0.3\height]{ $\vdots$ } \ar[r]\ar[u]& \raisebox{0pt}[0.9\height][0.3\height]{ $\vdots$ } \ar[u]\ar[dl]\\
j_1\ar[d]_{\partial} \ar[r] & \bullet\ar[r]\ar[d]&\cdots \ar[r] & p \ar[r] \ar[d]\ar[u]& i_2 \ar[d]\ar[u]\\
i_1\ar[r] & h \ar[r] \ar[ul]& \cdots  \ar[r] & \bullet \ar[r] & j_2\ar[ul]
}
\]

Define $x' = \begin{bsmallmatrix}\pm w_2 \\ \vdots \\ -w_{k-1}\\ w_k\\ 0\\ \vdots \\ 0 \end{bsmallmatrix}$, where $w_k$ are as before.  Then $f_{\zg}(x')= \begin{bsmallmatrix}\pm (i_1\leadsto j_2) w_2 \\ \pm(i_2\leadsto j_2)w_2 \mp (i_2\leadsto j_3)w_3 \\\vdots \\ -(i_{k-1}\leadsto j_{k-1})w_{k-1} + (i_{k-1}\leadsto j_k) w_k\\ (i_k\leadsto j_k) w_k\\ 0\\ \vdots \\ 0 \end{bsmallmatrix}$.   

The first entry is zero, because the path $w_2=j_2\to p \to i_2\leadsto c$, and the path from $i_1$ to $p$ is zero, since $\partial$ is a boundary arrow.  The proof that the other entries are zero is similar to the previous case.  Moreover, $x'=f_{R(\zg)}(y')$ with $y'$ as before.  This completes the proof in case (d).  

Case (c).  It remains to consider the case where $k=n$ and $w$ does not factor though $i_n$.  There are two possibilities depending on whether $\zg$ crosses $i_n$ or not. 

First, suppose that $\zg$ crosses $i_n$.   By the same argument as in the proof of Lemma~\ref{lem:factor}, we are in one of the following situations, because $w:\xymatrix{j_n \ar@{~>}[r]^{\bar{w}} & z \ar@{~>}[r]^{\bar{u}} & c}$ does not factor through $i_n$.   The two situations below depend on the direction of the arrow $\alpha_n$ between $i_n$ and $j_n$.  Here $\partial$ denotes a boundary arrow.

\[\xymatrix@C=10pt@R=10pt{
&& c &   &&&&  j_{n-1} \ar[r] & \cdots \ar[r] & i_n \ar[d]\\
&& \raisebox{0pt}[0.9\height][0.3\height]{ $\vdots$ } \ar@[red][u]   &&&&& i_{n-1}\ar[r] & \cdots \ar[r] & p\ar@[red][d] \ar[r] & j_n \ar[ul]_{\alpha_n}\ar@[red][d]\\
&& z \ar@[red][r]^{\partial} \ar@[red][u] & \bullet\ar@[red][dl]  && &&&&  \raisebox{0pt}[0.9\height][0.3\height]{ $\vdots$ } \ar@[red][d]&  \raisebox{0pt}[0.9\height][0.3\height]{ $\vdots$ }\ar@[red][d]  \\
&& \raisebox{0pt}[0.9\height][0.3\height]{ $\vdots$ }   \ar@[red][u]&\raisebox{0pt}[0.9\height][0.3\height]{ $\vdots$ }  \ar@[red][u]   && &&&& \bullet \ar@[red][r]&\bullet\ar@[red][ul]\ar@[red][d]\\
j_{n-1}\ar[r] & \cdots \ar[r] &p \ar[r] \ar@[red][u] &  i_n\ar[d]^{\alpha_n} \ar@[red][u]   && &&&& & z\ar@[red][d]\ar@[red][ul]^{\partial} \\
i_{n-1} \ar[r] & \cdots \ar[r] & \bullet\ar[r]& j_n\ar[ul] && &&&&&  \raisebox{0pt}[0.9\height][0.3\height]{ $\vdots$ }\ar@[red][d]\\
&&&&&&&&&& c
}\]

It suffices to show that $R(\zg)$ crosses $z$, because then $\bar{f}_{R(\zg)} \begin{bsmallmatrix} 0\\ \vdots \\0\\ \bar{u} \end{bsmallmatrix} = x'$.    Suppose not.    Then $\zg$ crosses $i_n, j_n$, enters a white region $W$ and ends on the boundary of $W$, because $(i_n, j_n)$ is the last pair in the crossing sequence for $\zg$.  Label the edges of $W$ as in the statement of Lemma~\ref{white-regions}.  Because $R(\zg)$ does not cross $z$, then $w_{2m}$ is a boundary segment, the endpoint of $\zg$ is incident to the edges $w_{2m}$ and $w_{2m-1}$, and $R(\zg)$ has endpoint incident to the edges $w_{2m}$ and $w_1$.    By Lemma~\ref{white-regions}(1b), $j_n$ is an edge of $W$ with even index, say $j_n=w_{2h}$, and $i_n$ is an edges  with odd index.  If $\alpha_n: i_n\to j_n$ then $i_n =w_{2h-1}$ and if $\alpha_n: j_n\to i_n$ then $i_n=w_{2h+1}$, by Lemma~\ref{white-regions}(2).   Moreover, since $z$ is a source of a boundary arrow, we have $z=w_1$, and thus $z$ lies on the same side as $i_n$ in $Q(W)$, meaning there is a valid path $i_n  = w_{2h+1}\to w_{2h-1}\to \dots \to w_3\to w_1 = z$.  This contradicts the two quivers above and completes the proof of part (c) when $\zg$ crosses $i_n$. 

To show the remaining part of (c), suppose $\zg$ does not cross $i_n$ and we still have that $k=n$.   Then $\zg$ ends in a shaded boundary region formed by the diagonals $i_n, j_n$.  Since $\zg$ and $R(\zg)$ both cross $j_n$, by Proposition~\ref{lem:cross}(b), we have the left picture and not the middle one in Figure~\ref{fig:44}.  In particular, $R(\zg)$ crosses $i_n$.

\begin{figure}
\centerline{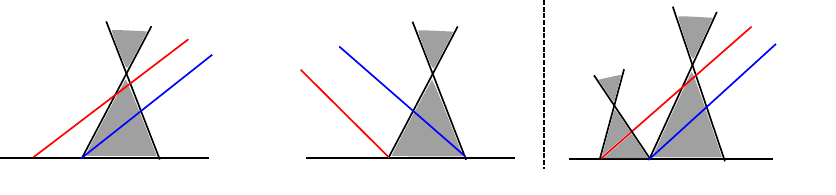}
\caption{Case (c) in the proof of Lemma~\ref{lem:ker-im}.}
\label{fig:44}
\end{figure}

Then the arrow $\alpha_n$ is oriented $\alpha_n: j_n\to i_n$, and the quiver is shown above on the right.   Then 

\[ f_{\zg} = \begin{bsmallmatrix} \ddots & \vdots & \vdots & \vdots \\
0 & i_{n-2}\leadsto j_{n-2} & i_{n-2}\leadsto j_{n-1} & \star\\
0 & 0 & i_{n-1}\leadsto j_{n-1} & i_{n-1} \leadsto j_n
\end{bsmallmatrix}\] 
where $\star$ is either 0 or a valid path $i_{n-2}\leadsto j_n$ if such a path exists.  The last row of $x$ contains our  nonzero path $w: j_n \leadsto c$.   So looking at the last row of $f_{\zg}$, either 

\begin{equation}\label{eq:1}
(i_{n-1}\leadsto j_n)w=0
\end{equation}
or there exists a path $w_{n-1}: j_{n-1}\leadsto c$ in the $(n-1)$-st row of $x$ such that 

\begin{equation}\label{eq:2}
(i_{n-1}\leadsto j_{n-1})w_{n-1}=(i_{n-1}\leadsto j_n)w.
\end{equation}

If $w$ factors through $i_n$, then the same argument as in the general case applies, since $R(\zg)$ crosses $i_n$.  If equation~(\ref{eq:2}) is satisfied then, by Lemma~\ref{lem:comm}, the path $w$ factors through $i_n$ and we are done.  Otherwise, equation~(\ref{eq:1}) is satisfied, and $w$ does not factor through $i_n$. 
Then $w$ factors through $z$, $w = \xymatrix{j_n \ar@{~>}[r]^{\bar{w}} & z \ar@{~>}[r]^{\bar{u}} & c}$ as in the previous quiver on the right, where $\partial$ is a boundary arrow.

Let $x'=\begin{bsmallmatrix}0\\\vdots\\0\\w\end{bsmallmatrix}$, then $f_{\zg}(x')=\begin{bsmallmatrix} \vdots \\ (i_{n-2}\leadsto j_n)w\\ (i_{n-1}\leadsto j_n)w\end{bsmallmatrix}=0$, 
because all paths $i_h\leadsto j_n$ factor through $p$ and $(p\to j_n)w=0$.   We will show that $R(\zg)$ crosses $z$.  Indeed, $z$ is the starting point of a boundary arrow $\partial$, and the number of 3-cycles between the two boundary arrows $\partial$ and $\alpha_n$ is even.  Therefore, the corresponding white region $W$ has no boundary edge, by Lemma~\ref{white-regions}(2).  Thus, we can complete our previous picture as in Figure~\ref{fig:44} on the right.  


In particular, $R(\zg)$ crosses $z=i_{n+1}$.  Then $\pm x'=\bar{f}_{R(\zg)} \begin{bsmallmatrix}0\\ \vdots \\ 0 \\ \bar{u}\end{bsmallmatrix} \in \text{Im}\,\bar{f}_{R(\zg)}$, as the last column of $\bar{f}_{R(\zg)}$ contains only one nonzero entry $\pm (j_n\to i_{n+1})$ in the last position.  This completes the proof in case (c).  

This finishes the argument of the inductive step and completes the proof of the lemma in the case when the crossing sequence for $\zg$ consists of all forward steps.  On the other hand, if none of the steps are forward then the proof is very similar using a dual argument working with the first nonzero entry of $x\in \text{ker}\, f_{\zg}$ instead of the last nonzero entry. 

Finally, if some of the steps are forward and some are not, we can start at a position  $t$ where the step from $t-1$ to $t$ is forward followed by a non-forward step from $t$ to $t+1$.   The matrix of $f_{\zg}$ at row $t$ contains a single valid path $i_t\leadsto j_t$ and otherwise has the following shape, where all entries not marked by stars are zero. 

\[\begin{bsmallmatrix}
\ddots \\
& \star & \star & \cdots & \star \\
&  & \star & \cdots & \star  & \\
&&  \text{\large 0}& \ddots & \vdots & \text{\large 0}\\
{\color{blue} } && & & {\color{blue} \star} \\
&&&\text{\large 0}&\vdots & \ddots & \text{\large 0}\\
&&&& \star & \cdots & \star &\\
&&&& \star & \cdots & \star & \star\\
&&&&&&&&\ddots
\end{bsmallmatrix} = 
\begin{bsmallmatrix}\ddots \\
&A_t^- &\vdots & \text{\large 0}\\
&\cdots &{\color{blue} \star }& \cdots \\
& \text{\large 0}&\vdots& A_t^+\\
&&&&\ddots 
\end{bsmallmatrix}\]

It is upper triangular in the block corresponding to the forward steps and lower triangular in the block corresponding to the non-forward steps. Then we can work with the $t$-th row of $x\in \text{ker}\, f_{\zg}$, where $t$ is the last position for the forward step $t-1$ to $t$ and the first position for the non-forward step $t$ to $t+1$, working in both directions at the same time.  In the quiver, the pair $(i_t, j_t)$ is a sink for the crossing sequence 

\[\xymatrix@R=10pt@C=10pt{
j_{t-1} \ar[r] & \cdots \ar[r] & i_t \ar[d] & \cdots \ar[l] & j_{t+1}\ar[l]\\
i_{t-1}\ar[r] & \cdots \ar[r]& j_t \ar[ul]\ar[ur] & \cdots \ar[l] & i_{t+1}\ar[l]
}
\]
and so the paths arriving at $i_t, j_t$ from $i_h, j_h$ with $h<t$ do not interact with those from $i_h, j_h$ with $h>t$.  In particular, we can decompose $x=x'+x_1$ using the same algorithm as before at row $t$ treating $t$ as the last row of the forward block $\begin{bsmallmatrix}A_t^- & \vdots \\ 0 & {\color{blue} \star }\end{bsmallmatrix}$ and the first 
row of the non-forward block  $\begin{bsmallmatrix}{\color{blue} \star} & 0 \\ \vdots & A_t^+ \end{bsmallmatrix}$.
The case when a non-forward step from $s-1$ to $s$ is followed by a forward step from $s$ to $s+1$ is dual.  This way we reduce to the case when $x$ is zero in all rows where a forward step meets a non-forward step, and we can continue as before.  This completes the proof of the lemma. 
\end{proof}

We are now able to prove Proposition~\ref{big-lemma}.
\begin{prop}\label{Abig-lemma}
Let $\zg, R(\zg)$ be a pair of compatible arcs in $\cals$.  Then $\textup{ker}\,f_{\zg} = \textup{Im}\,\bar{f}_{R(\zg)}$. 
\end{prop}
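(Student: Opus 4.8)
The statement $\textup{ker}\,f_{\zg} = \textup{Im}\,\bar{f}_{R(\zg)}$ is an equality of two submodules of $P_1(\zg) = P_0(R(\zg))$, so the natural strategy is to prove the two inclusions separately and invoke the lemmas established above. The plan is to argue: (1) the inclusion $\textup{Im}\,\bar{f}_{R(\zg)} \subseteq \textup{ker}\,f_{\zg}$ follows immediately from Lemma~\ref{lem:kernel}, which shows $f_{\zg}\circ\bar{f}_{R(\zg)} = 0$; (2) the reverse inclusion $\textup{ker}\,f_{\zg} \subseteq \textup{Im}\,\bar{f}_{R(\zg)}$ is precisely Lemma~\ref{lem:ker-im}. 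Since both ingredients are already in hand, the proof of the proposition itself is a two-line assembly.

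\textbf{Key steps, in order.} First I would recall that $\bar{f}_{R(\zg)}\colon P_1(R(\zg))\to P_0(R(\zg))$ and that $P_0(R(\zg)) = P_1(\zg)$ by Proposition~\ref{lem:cross}(b), so the composition $f_{\zg}\circ\bar{f}_{R(\zg)}$ is well-defined as a map $P_1(R(\zg))\to P_0(\zg)$. Then Lemma~\ref{lem:kernel} gives $f_{\zg}\circ\bar{f}_{R(\zg)} = 0$, which is exactly the assertion $\textup{Im}\,\bar{f}_{R(\zg)}\subseteq\textup{ker}\,f_{\zg}$. For the converse, I would cite Lemma~\ref{lem:ker-im} verbatim: it states $\textup{ker}\,f_{\zg}\subseteq\textup{Im}\,\bar{f}_{R(\zg)}$ for any pair of compatible arcs $\zg, R(\zg)$. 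Combining the two inclusions yields the desired equality.

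\textbf{Main obstacle.} At the level of this proposition there is essentially no obstacle — all the work has been pushed into the two supporting lemmas. The genuinely hard part, already carried out above, is Lemma~\ref{lem:ker-im}: it requires the delicate induction on the number of nonzero entries of an element $x\in\textup{ker}\,f_{\zg}$, the splitting $x = x' + x_1$ with $x'\in\textup{Im}\,\bar{f}_{R(\zg)}$ produced explicitly via the Rectangle-Trapezoid Lemma (Lemma~\ref{rect-trap-lemma1}) and the valid-path lemmas (Lemmas~\ref{lem:one-step}, \ref{valid-paths-lem}, \ref{lem:ii-paths}), together with the relation lemmas (Lemmas~\ref{zero-relations-lemma}, \ref{lem:comm}) and the base case Lemma~\ref{lem:base}, whose proof in turn splits into the eight configurations (1.1)--(2.5) at the endpoints of $\zg$ and $R(\zg)$. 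The only subtlety in the proposition's own proof is to make sure the domain/codomain identifications (via Proposition~\ref{lem:cross}) are stated so that the composition in Lemma~\ref{lem:kernel} is literally the one appearing here; once that bookkeeping is recorded, the proof is:

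\begin{proof}
By Proposition~\ref{lem:cross}(b) we have $P_0(R(\zg)) = P_1(\zg)$, so the composition
\[
\xymatrix@C=30pt{P_1(R(\zg))\ar[r]^-{\bar{f}_{R(\zg)}} & P_1(\zg)\ar[r]^-{f_{\zg}} & P_0(\zg)}
\]
is defined. By Lemma~\ref{lem:kernel} this composition is zero, so $\textup{Im}\,\bar{f}_{R(\zg)}\subseteq\textup{ker}\,f_{\zg}$. The reverse inclusion $\textup{ker}\,f_{\zg}\subseteq\textup{Im}\,\bar{f}_{R(\zg)}$ is Lemma~\ref{lem:ker-im}. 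Hence $\textup{ker}\,f_{\zg} = \textup{Im}\,\bar{f}_{R(\zg)}$.
\end{proof}
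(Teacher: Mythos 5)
Your proof is correct and follows exactly the paper's own argument: the paper also proves this proposition by combining Lemma~\ref{lem:kernel} (giving $\textup{Im}\,\bar{f}_{R(\zg)}\subseteq\textup{ker}\,f_{\zg}$) with Lemma~\ref{lem:ker-im} (giving the reverse inclusion). Your additional remark about the identification $P_0(R(\zg))=P_1(\zg)$ via Proposition~\ref{lem:cross}(b) is sound bookkeeping, consistent with how the paper sets up the composition.
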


\begin{proof}
This follows directly from Lemma \ref{lem:ker-im} and Lemma~\ref{lem:kernel}.
\end{proof}

\subsection{Indecomposibility}\label{Asect 5.5}

In this section we prove Lemma~\ref{lem:ab}, see  Lemma~\ref{Alem:ab} at the end of this subsection, required to establish that the cokernel of $f_{\zg}$ is indecomposable.  First we need a few preparatory lemmas. 

\begin{lemma}\label{lem:forward-arrow}
Suppose $i_s, j_t$ appear in the crossing sequence for $\zg$ with $s\not=t$. If there is an arrow $i_s\to j_t$ in $Q$ then the steps from $s$ to $t$ are forward, and if there exists an arrow $j_t\to i_s$ then the steps from $t$ to $s$ are forward. 
\end{lemma}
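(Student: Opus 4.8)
The statement to prove is Lemma~\ref{lem:forward-arrow}: if $i_s, j_t$ appear in the crossing sequence for $\zg$ with $s \neq t$, then an arrow $i_s \to j_t$ in $Q$ forces all steps from $s$ to $t$ to be forward, and an arrow $j_t \to i_s$ forces all steps from $t$ to $s$ to be forward.

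\textbf{The plan.} First I would reduce to the case of a single step, i.e. $|s-t| = 1$, and then bootstrap to the general case. The key structural input is the explicit description of the subquiver $Q(\zg)$ built from the crossing sequence, recorded in Figures~\ref{fig:quiver-pairs} and~\ref{fig:crossing-pairs} and in the Rectangle-Trapezoid Lemma~\ref{rect-trap-lemma1}. Concretely, for two consecutive crossing pairs, Lemma~\ref{lem:one-step} together with Remark~\ref{rem:parity} pins down the positions of $i_s, j_s, i_t, j_t$ in the rectangular or trapezoidal local picture: $i_s$ and $j_t$ (resp.\ $j_s$ and $i_t$) sit on diagonally opposite corners. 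The plan is to observe that in Figure~\ref{fig:quiver-pairs}, cases (a) and (b) — which occur exactly when the step from $s$ to $t$ is \emph{not} forward (Remark~\ref{rem:forward-steps}) — have the property that the only arrows between $\{i_s, j_s\}$-side vertices and $\{i_t, j_t\}$-side vertices go from the $t$-end toward the $s$-end (the arrows point "inward" and then "back"); there is no arrow $i_s \to j_t$. Conversely, in cases (c) and (d), where the step from $s$ to $t$ is forward, one does find an arrow $i_s \to j_t$ precisely when $|s-t| = 1$ and the step is trapezoidal, or more generally the vertex $j_t$ is reachable. So the one-step statement is a direct inspection of the four local quivers, using the fact that $Q$ has no parallel arrows and no interior vertices (Lemma~\ref{lem 36a}, Proposition~\ref{prop Q}), so that any arrow between the two sides is one of the explicitly drawn arrows.

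\textbf{Bootstrapping to general $s,t$.} Suppose $|s - t| \geq 2$ and there is an arrow $i_s \to j_t$ in $Q$. In particular this arrow is a nonzero (hence valid, being a single arrow) path $i_s \leadsto j_t$. I would argue by contradiction: if some intermediate step, say from $l$ to $l+1$ with $s \le l < l+1 \le t$ (after reindexing so $s < t$), is not forward, then by Remark~\ref{rem:forward-steps} the step from $l+1$ to $l$ is forward. Then I want to show no path from $i_s$ to $j_t$ can exist, contradicting the presence of the arrow. The cleanest route: since the arrow $i_s\to j_t$ is a valid path $i_s\leadsto j_t$, apply Lemma~\ref{rect-trap-lemma1}(3) backwards or use Lemma~\ref{valid-paths-lem} — but those are about forward subsequences, so I should instead reason directly from the structure of $Q(\zg)$ when a non-forward step is present. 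When the step from $l$ to $l+1$ is not forward (case (a) or (b) of Figure~\ref{fig:quiver-pairs}, read in the appropriate orientation), the subquiver $Q(\zg)$ restricted to the vertices near $l, l+1$ forms a "cap" so that $Q(W,\zg)$ for that white region is a rectangle/trapezoid whose only arrows crossing from the $s$-side to the $t$-side point the wrong way; any path from $i_s$ to $j_t$ would have to traverse this cap and thus would have to use two arrows of a common $3$-cycle, making it zero in $B$ — contradicting that the arrow $i_s\to j_t$ is a nonzero path. Hence every step from $s$ to $t$ is forward. The case of an arrow $j_t \to i_s$ is entirely symmetric: reverse the roles, so the path $j_t \leadsto i_s$ exists and forces the steps from $t$ to $s$ to be forward by the same argument.

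\textbf{Main obstacle.} The main obstacle is making the "wrong way cap" argument precise: one must verify, across all four local configurations of Figure~\ref{fig:quiver-pairs} and their gluings in Figure~\ref{fig:crossing-pairs}, that the existence of a non-forward step genuinely obstructs \emph{any} path (not just valid ones) from $i_s$ to $j_t$ in the full quiver $Q$ — using that $Q$ has no interior vertices, so that the subquiver on $Q(\zg)_0$ already contains all relevant arrows, and that cycles vanish in $B$ (Proposition~\ref{prop 38}) while any path through a non-forward cap must either leave $Q(\zg)$ through a boundary arrow (giving a zero path) or loop back. I expect this to be a short but slightly fiddly case analysis; the payoff is that the lemma then follows by combining the one-step inspection with the contradiction argument above.
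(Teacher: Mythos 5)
Your one-step case is fine: within a single white region $W$ the quiver $Q(W,\zg)$ really is the full subquiver (it is a union of $3$-cycles, Lemma~\ref{lem 33}), so Figure~\ref{fig:quiver-pairs} can be read off literally. The genuine gap is in the bootstrap. When $|s-t|\ge 2$, the hypothesized arrow $i_s\to j_t$ is exactly the kind of arrow that the glued pictures in Figure~\ref{fig:crossing-pairs} do \emph{not} display: those figures only record the arrows produced by the white regions that $\zg$ traverses, whereas an arrow of $Q$ between two distant vertices of the crossing sequence corresponds, via Lemma~\ref{lem 35}, to a crossing of the radical lines $\rho(i_s)$ and $\rho(j_t)$ that may sit far away from $\zg$ in the polygon. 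So you cannot invoke the local ``cap'' structure at a non-forward step to forbid it; the statement you are trying to prove is precisely that such long-distance arrows are incompatible with a non-forward step, and that is not a consequence of the local gluing data. Your contradiction mechanism also misfires on two counts: the arrow $i_s\to j_t$ is a path of length one, so it does not ``traverse'' the cap at all and no two-arrows-in-a-$3$-cycle argument can apply to it; and even for longer paths, failing to be valid does not make a path zero in $B$ --- the relations are commutativities on interior arrows (Lemma~\ref{lem 37}(b)), and the paper explicitly notes after Definition~\ref{def:map} that there may be nonzero paths from $i_s$ to $j_t$ containing two arrows of the same $3$-cycle.

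The paper avoids this by arguing directly in the checkerboard polygon rather than in the quiver: the arrow is the crossing point of $\rho(i_s)$ and $\rho(j_t)$, both of which cross $\zg$ (in degrees $0$ and $1$), and a planarity argument shows that the boundary vertices of the intermediate white regions $W_s,\dots,W_{t-1}$ are trapped on the boundary arc between the endpoints $y$ of $i_s$ and $z$ of $j_t$ lying to the left of $\zg$. Consequently, in each $W_k$ one can walk counterclockwise from the edge $i_k$, taking every other edge, and reach $j_{k+1}$ without ever meeting a boundary vertex; by Lemma~\ref{lem 33} this is a valid path $i_k\leadsto j_{k+1}$, so every step from $s$ to $t$ is forward. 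Note this is a direct construction of the witnessing valid paths, with no reduction to the one-step case and no contradiction. If you want to salvage your approach, you would have to control where the $3$-cycle containing the arrow $i_s\to j_t$ can sit relative to the strip of white regions swept by $\zg$, using planarity and the tree condition on the dual graph --- at which point you are effectively reproducing the paper's geometric argument.
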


\begin{proof}
Suppose there exists an arrow $i_s\to j_t$ with $s\not=t$, and $\zg$ crosses both $i_s$ and $j_t$.   The other case follows similarly.  Moreover, we can suppose that $\zg$ crosse $i_s$ first and then $j_t$.   Then we are in the situation of Figure~\ref{fig:60}.  

\begin{figure}
\centerline{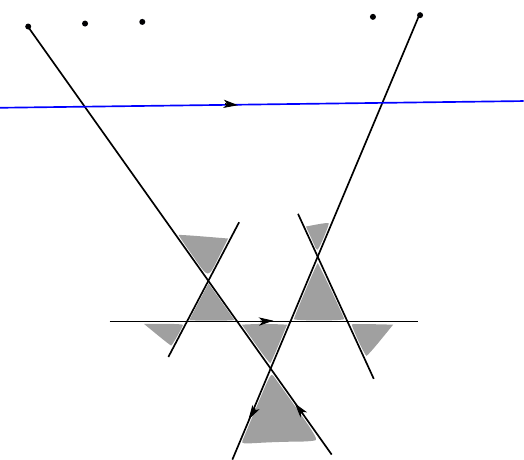}
\caption{Proof of Lemma~\ref{lem:forward-arrow}.}
\label{fig:60}
\end{figure}

Then $t\geq s$, and let $y,z$ denote the endpoints of $i_s, j_t$ respectively that lie to the left of $\zg$.  The arc $\zg$ crosses $i_s$, then traverses some white regions $W_s, \dots, W_{t-1}$ and some shaded regions, and then crosses $j_t$.  This gives a subsequence $(i_s), (i_{s+1}, j_{s+1}), \dots, (i_{t-1}, j_{t-1}), (j_t)$ of the crossing sequence for $\zg$ such that $\zg$ crosses $i_k, j_k$, traverses $W_k$, and crosses $i_{k+1}, j_{k+1}$ as it exits $W_k$ for $k\in\{s, \dots, t-1\}$.   Each white region $W_k$ has a vertex $x_k$ on the boundary of $\cals$.  These vertices must lie between $y$ and $z$ in order as in the picture.  In particular, we can start with an edge $i_k$ in $W_k$ and move counterclockwise around the edges of $W_k$ taking every other edge until we reach $j_{k+1}$.  In this way we never pass through a vertex of $W_k$ that lies on the boundary of $\cals$.  Therefore,  this sequence of edges gives a valid path $i_k\leadsto j_{k+1}$, see Lemma~\ref{lem 33}.  This means that the step from $k$ to $k+1$ is forward for all $k\in\{s, \dots, t-1\}$ and this completes the proof. The case when there is an arrow $j_t\to i_s$ follows in the same way.
\end{proof}

Suppose $\zg$ crosses $i$, and let $Q(\zg, [i])$ denote 
the full subquiver of $Q(\zg)$ consisting of all  3-cycles such that each 3-cycle contains the vertex $i$ and each of the  other two vertices is an $i$ or a $j$ vertex of the crossing sequence of $\zg$. 

\begin{lemma}\label{lem:circ}
Suppose $(i_s, j_s)$ is a crossing pair for $\zg$ such that there is an arrow $j_s\to i_s$.  If the path $i_s\leadsto j_s$ factors through some other $i$, or some other $j$ respectively, in the crossing sequence for $\zg$, then there exist $l, l'$ and $p,p'$ such that their quivers $Q(\zg, [i_s])$, or  $Q(\zg, [j_s])$ respectively, are as follows. 

\[
\xymatrix@C=10pt@R=10pt{
j_{s-l}\ar[drr]&i_p\ar[r]\ar[l]&j_{s-l+1}\ar[d]&i_{s-2}\ar[r]\ar@{.}[l]&j_{s-1}\ar[dll] && i_{s-l}  \ar[r]&j_p  \ar[dr]&i_{s-l+1}  \ar[l]&j_{s-2}  \ar[dl]\ar@{.}[l]&i_{s-1}  \ar[d]\ar[l]  \\
&&i_s\ar[rr]\ar[dr]\ar[dl]\ar[ur]\ar[ul]&&i_{s-1} \ar[u]\ar[d]&& &&j_s  \ar[ull]\ar[u]\ar[urr]\ar[drr]\ar[d]\ar[dll]&&j_{s-1} \ar[ll]\\
j_{s+l'}\ar[urr]&i_{p'}\ar[r]\ar[l]&j_{s+l'-1}\ar[u]\ar@{.}[r]&i_{s+1}\ar[r]&j_s\ar[ull] && i_{s+l'}\ar[r]&j_{p'}  \ar[ur]&i_{s+l'-1} \ar[l]\ar@{.}[r]&j_{s+1}\ar[ul]&i_s\ar[u]\ar[l]\\
}
\]

In particular, the quivers $Q(\zg, [i_s]), Q(\zg, [j_s])$ have connected dual graphs and consist of an even number of 3-cycles with $i$'s and $j$'s alternating around the boundary of the quivers.   Moreover, $p\leq s-l$ is the least positive integer such that there is an arrow $i_s\to i_p$ or $j_p\to j_s$ and  $p'\geq s+l'$ is the largest integer such that there is an arrow $i_s\to i_{p'}$ or $j_{p'}\to j_s$ respectively.
\end{lemma}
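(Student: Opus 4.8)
The plan is to analyze the local structure of $Q(\zg)$ around the vertex $i_s$ (the case of $j_s$ being dual), using the combinatorial results already established about valid paths and the subquivers $Q(W,\zg)$. Since $(i_s,j_s)$ is a crossing pair with an arrow $j_s\to i_s$, the path $i_s\leadsto j_s$ in $Q$ is the composition of the two other arrows in that 3-cycle, say $i_s\to k_s\to j_s$. The hypothesis is that this path factors through some other $i$-vertex or some other $j$-vertex of the crossing sequence. First I would observe that $k_s$ cannot itself be a crossing vertex of $\zg$ (it is an interior vertex of a white region traversed by $\zg$ and lies ``between'' the two sides $i_s,j_s$, so $\zg$ does not cross it), hence ``factoring through another $i$ or $j$'' forces the factorization to proceed beyond $k_s$ by a sequence of commutativity relations in $B$, exactly as in Lemma~\ref{lem:comm}. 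This is where the chain of 3-cycles gets produced: each commutativity step $i_s\to k_s\to\cdots$ replaces one 3-cycle containing $i_s$ with an adjacent one, and the shared vertex is again $i_s$, so all these 3-cycles meet at $i_s$.

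The key steps, in order, would be: (1) Show that every 3-cycle appearing in such a factorization contains $i_s$; this is immediate from the structure of commutativity in $B$ together with Lemma~\ref{lem 36a} (no interior vertices), since if two consecutive 3-cycles in the chain shared an arrow not at $i_s$ the common endpoint would be interior. (2) Identify which of the remaining two vertices of each such 3-cycle are crossing vertices of $\zg$: here I would invoke Lemma~\ref{lem:circ} — wait, that is the very statement — so instead I invoke Lemma~\ref{lem:forward-arrow}, which says an arrow $i_s\to i_p$ (or $j_p\to j_s$) forces the steps between $s$ and $p$ in the crossing sequence to be forward, and Remark~\ref{rem:forward}, which identifies forward steps with counterclockwise motion around a white region. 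Combining these, the 3-cycles through $i_s$ that meet the crossing sequence are precisely those lying in the white region(s) $W$ traversed by $\zg$ near $i_s$, and by Lemma~\ref{white-regions}(2) the boundary vertices of $Q(W)$ alternate between odd-indexed and even-indexed edges — i.e., between $i$'s and $j$'s by Lemma~\ref{lem:one-step} and Remark~\ref{rem:parity}. (3) Assemble the two ``fans'' of 3-cycles, one running counterclockwise to some extremal $i_p$ with $p\le s-l$ and one running clockwise to some extremal $i_{p'}$ with $p'\ge s+l'$, and check that the full subquiver $Q(\zg,[i_s])$ is exactly the union of these 3-cycles, drawn as in the displayed picture, with an even total count because the alternation of $i$'s and $j$'s around the boundary must close up consistently (an $i$ is followed by a $j$ is followed by an $i$, and the cycle has even length). (4) Extract the extremality statements for $p,p'$: if there were an arrow $i_s\to i_q$ with $q<p$ not yet accounted for, the forwardness forced by Lemma~\ref{lem:forward-arrow} would extend the fan, contradicting minimality.

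The main obstacle I expect is step (2)–(3): pinning down exactly which $i$- and $j$-vertices appear and in what cyclic order around $i_s$, and verifying that the picture genuinely closes up into the quiver shown (with the $i_{s-1},j_{s-1}$ on the right and the alternation forced). The subtlety is that $\zg$ may traverse \emph{two} distinct white regions adjacent to $i_s$ (one on each side, since $i_s$ as an interior vertex of $\cals$ is incident to two shaded triangles and two white regions), so the ``fan'' is really the union of two partial fans meeting along the 3-cycle $i_s\to k_s\to j_s\to i_s$; one has to check these glue to give an even number of 3-cycles with the alternating $i/j$ pattern and that the dual graph is a path (connectedness). This uses that the dual graph $G$ of $Q$ is a tree (condition (Q2)), so the 3-cycles through $i_s$ form a path in the trunk $G^\circ$, never a cycle, which is what makes the local picture a linear fan rather than something more complicated. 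Once that combinatorial bookkeeping is done, the factorization hypothesis simply says the relevant fan is nonempty in the stated direction, and the extremal indices $p,p'$ are read off as the ends of the fan.

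Throughout, I would keep the $j_s$ case entirely parallel: an arrow $i_s\to j_s$ is impossible by the parity/orientation constraint (it would be a boundary arrow and then $i_s\leadsto j_s$ would not factor through anything), so in the $j_s$ case the arrow is $i_s\to j_s$ reversed, i.e. $j_s$ plays the role of $i_s$ in the dual quiver, and every invocation of Lemma~\ref{lem:forward-arrow}, Lemma~\ref{white-regions}, and Remark~\ref{rem:parity} applies verbatim with $i$'s and $j$'s interchanged and arrows reversed, yielding the second displayed quiver.
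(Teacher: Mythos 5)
Your proposal goes off the rails at the very first step. You claim that the third vertex $k_s$ of the $3$-cycle containing the arrow $j_s\to i_s$ ``cannot itself be a crossing vertex of $\zg$,'' and from this you conclude that ``factors through another $i$ or $j$'' must be read as an equality of paths in $B$ obtained by commutativity relations (hence your appeal to Lemma~\ref{lem:comm} and a chain of $3$-cycles produced by relation moves). This is a misreading of the hypothesis and the claim is false: since all chordless cycles have length three, the path $i_s\leadsto j_s$ is the length-two path $i_s\to k_s\to j_s$, and the hypothesis of the lemma is precisely that its middle vertex $k_s$ \emph{is} another crossing vertex $i_r$ (or $j_r$) of $\zg$. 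Geometrically nothing prevents this: $\zg$ avoids the radical line $\rho(k_s)$ only inside the shaded triangle it traverses between $\rho(i_s)$ and $\rho(j_s)$, but $\rho(k_s)$ continues across the polygon and $\zg$ can perfectly well cross it elsewhere --- this is exactly the configuration the paper analyzes (it first shows the crossing with $\rho(i_r)$ must occur \emph{before} the pair $(i_s,j_s)$, by a degree argument, and that $i_s\to i_r$ is not a boundary arrow). So the entire algebraic reinterpretation via $B$-relations is answering a different question from the one the lemma asks.

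Beyond that, the substantive content of the lemma is not reached by your outline. What has to be proved is that the neighboring crossing vertex in the fan is exactly $j_{s-1}$ (and, when further arrows $i_s\to q$ exist, that $i_r=i_{s-1}$ and that the next $3$-cycle at $i_s$ again hits a crossing vertex $i_{r_1}$ with $r_1<s-1$), that the indices on the two sides of the fan are consecutive, that $i$'s and $j$'s alternate around the boundary, and that $p,p'$ are extremal. The paper gets this by genuinely geometric arguments in $\cals$ --- locating endpoints of radical lines, ruling out interior white regions, and, crucially, performing an elementary homotopy of $\zg$ that replaces the crossing pairs $(i_s,j_s),(i_{s-1},j_{s-1})$ by $(i_{s-1},j_s),(i_s,j_{s-1})$ so the argument can be iterated along the fan. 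Your steps (2)--(3) gesture at Lemma~\ref{lem:forward-arrow}, Remark~\ref{rem:forward}, Lemma~\ref{white-regions} and Remark~\ref{rem:parity}, but these only give alternation along a single white region; they do not identify the neighbors, do not handle the passage from one white region at $i_s$ to the next, and you yourself flag this bookkeeping as the unresolved obstacle. Finally, your treatment of the ``$j_s$ case'' is also off: the hypothesis in both cases is the same arrow $j_s\to i_s$, and the dichotomy is whether the middle vertex of $i_s\to k_s\to j_s$ is an $i$ or a $j$ of the crossing sequence, not a reversal of the arrow between $i_s$ and $j_s$.
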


\begin{proof}
Suppose $(i_s, j_s)$ is a crossing pair for $\zg$ such that there is an arrow $j_s\to i_s$, and the path $i_s\leadsto j_s$ factors through some $i_r$.   Then there is a 3-cycle  $\xymatrix@R=10pt@C=8pt{i_s\ar[r] & i_r \ar[r] & j_s \ar@/_10pt/[ll] }$
in $Q$. Without loss of generality we may assume $r<s$.  This situation is illustrated in the left picture in Figure~\ref{fig:51}, where by assumption $\zg$ crosses $i_s, j_s, i_r$. 

\begin{figure}
\centerline{\scalebox{.8}{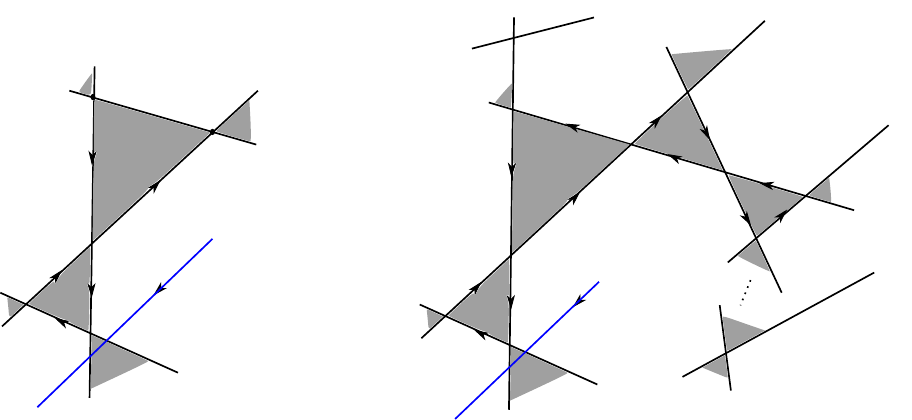}}
\caption{Proof of Lemma~\ref{lem:circ}.}
\label{fig:51}
\end{figure}

As we move in the direction of $\zg$, if the crossing of $\zg$ with $i_r$ occurs after the crossing of $\zg$ with $i_s, j_s$ then $i_r$ crosses $\zg$ from left to right.  This means that $i_r$ is a degree 1 crossing for $\zg$, which is a contradiction.  Therefore, the crossing of $\zg$ and $i_r$ occurs prior to the crossing of $\zg$ with $i_s, j_s$.  

Now, we claim that the arrow $i_s\to i_r$ is not a boundary arrow.   If $i_s \to i_r$ were a boundary arrow, then the segment with endpoints $x_1, x_2$, as in Figure~\ref{fig:51}, would be a boundary segment of $\cals$.  Since $\zg$ crosses $i_r$ before crossing $i_s, j_s$, it would follow that $\zg$ starts in $x_1$ or some other point counterclockwise from $x_1$.  If $\zg$ starts in $x_1$ then it has the same starting point as $i_s$, so $i_s$ cannot be in the crossing sequence for $\zg$.  If $\zg$ starts in another point counterclockwise from $x_1$ then $\zg$ crosses $i_s$ twice, which is also a contradiction.  This shows that $i_s\to i_r$ is not a boundary arrow. 

Then we have the following subquiver of $Q$. 

\[\xymatrix@C=10pt@R=10pt{&k\ar[dl]\\i_s\ar[rr] && i_p\ar[ul] \ar[dl]\\& j_s\ar[ul]}\]

We claim that $k=j_{s-1}$.  This case is illustrated in Figure~\ref{fig:51} on the right.


Let $W$ denote the white region containing $j_s, i_s, i_r, k$ on the boundary.  If $\zg$ crosses $k$ as it enters $W$ then we obtain $k=j_{s-1}$ as claimed.  Otherwise, $\zg$ crosses some other pair $i_{s-1}, j_{s-1}$ that lie clockwise from $k$ along the boundary of $W$. For example, $i_{s-1}, j_{s-1}$ may equal $k_1, k_2$ respectively.  In the figure we label the arcs for convenience so that there is an arrow $i_{s-1}\to j_{s-1}$, but this will not be important for the proof if we interchange the labels of these arcs.   Let $W'$ denote the white region containing $k_2, k, k_1, i_r$.  As we move along $j_{s-1}$ in the direction of its orientation we note that if $j_{s-1}$ crosses $i_r$ then we obtain that $W'$ is an interior white region, which is not possible.  Therefore, $j_{s-1}$ and $i_r$ do not cross.  Let $z, y$ denote the endpoints of $i_r, j_{s-1}$ respectively.  Then $y$ is to the right of $i_r$.  Moreover, since $\zg$ crosses $i_{s-1}, j_{s-1}$, then $\zg$ is to the right of $i_r$ so it cannot cross $i_r$, which is a contradiction.  This shows the claim that $k=j_{s-1}$.   

If there are no other arrows $i_s \to i_{s'}, i_s\to j_{s'}$ with $s'<s$ and $i_{s'}, j_{s'}$ in the crossing sequence of $\zg$, then $l=1, r=p$ and the lemma holds.   Note that the steps from $r$ to $s$ are forward by Lemma~\ref{lem:forward-arrow} as there is an arrow $i_r\to j_s$.  

Now, suppose that there is some other arrow $i_s\to q$ with $q\in\{i_{s'}, j_{s'}\}$ for $s'<s$.  Then $\zg$ crosses $q$ prior to crossing $i_{r}$ and $j_{s-1}$.  We claim that in this case $i_r=i_{s-1}$.  Suppose to the contrary $i_r\not=i_{s-1}$.  We have $j_{s-1}=k$ and $\zg$ crosses $(k_1, k)$ as it enters the white region $W$, see the picture above.   Moreover, we observe that $k_1$ and $q$ cannot cross, otherwise the white region $W''$ with edges $k_1, i_r, k, i_s$ would be interior.  But this means that $\zg$ would have to cross $k_1$ again before it can cross $q$, a contradiction.  This shows that $i_r=i_{s-1}$. 

Now, given that there is some other arrow $i_s\to q$ with $q\in\{i_{s'}, j_{s'}\}$ for $s'<s$, we claim that there exists $i_{r_1}$ in the crossing sequence for $\zg$ with $r_1<s-1$ and such that $i_{r_1}, i_s, j_{s-1}$ are in the same 3-cycle.  That is, we want to show that we have the following quiver.

\[\xymatrix@C=10pt@R=10pt{i_{r_1}\ar[r]&k=j_{s-1}\ar[dl]\\i_s\ar[rr] \ar[u]&& i_r=i_{s-1}\ar[ul] \ar[dl]\\& j_s\ar[ul]}\]

Consider Figure~\ref{fig:55}.  We see that in order for $\zg$ to cross $q$, it would have to cross $h$ as well, otherwise the white region $W'''$ would have to be interior.   Moreover, $h$ is of degree 0 in the crossing sequence for $\zg$  so $h=i_{r_1}$ for some $r_1<s-1$.   This shows the claim that $i_{r_1}, i_s, j_{s-1}$ are in the crossing sequence for $\zg$ and form a 3-cycle.

\begin{figure}
\centerline{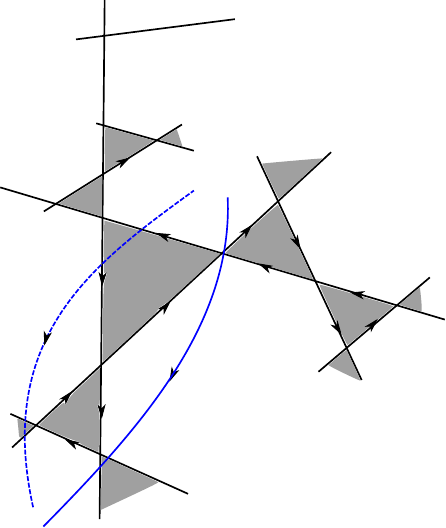}
\caption{Proof of Lemma~\ref{lem:circ}.}
\label{fig:55}
\end{figure}

Finally, we can homotope $\zg$ to obtain a new arc $\zg'$ that has the same crossing sequence as $\zg$ except we replace the crossing pairs $(i_s, j_s), (i_{s-1}, j_{s-1})$ in $\zg$ by $(i_{s-1}, j_s), (i_s, j_{s-1})$ in $\zg'$.  Now we can continue the same argument as before replacing $\zg$ with $\zg'$ and $i_s, j_s$ with $i_s, j_{s-1}$.  In this way we obtain the desired part of the quiver $Q(\zg, [i_s])$ coming from the crossings of $\zg$ before the pair $(i_s, j_s)$.  Moreover, if $i_s\leadsto j_s$ factors through another $i_{r'}$ with $i_{r'}\not=i_r$ and $r'>s$, then we can proceed in the same way as above analyzing the crossings of $\zg$ after it crosses $(i_s, j_s)$.  This shows the lemma. 
\end{proof}

\begin{lemma}\label{Alem:ab}
Let $\zg$ be a representative of a 2-diagonal in $\cals$, and let $f_{\zg}: \bigoplus_{h=1}^{n} P(j_h) \to  \bigoplus_{h=1}^m P(i_h)$ be the associated morphism.
Suppose we have a commutative diagram 
\[\xymatrix{
 \bigoplus_{h=1}^n P(j_h) \ar[r]^{f_{\zg}} \ar[d]^{g_j}&  \bigoplus_{h=1}^m  P(i_h) \ar[d]^{g_i} \\
 \bigoplus_{h=1} ^n P(j_h) \ar[r]^{f_{\zg}} &  \bigoplus_{h=1}^ m P(i_h)
}\]
for some matrices $g_j =(a_{h,h'})$ and $g_i = (b_{h,h'})$. Then $f_{\zg_{s,t}}=0$ or $b_{s,s}=a_{t,t}$ for all $s,t$.
\end{lemma}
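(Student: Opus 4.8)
The plan is to distill a single scalar identity from the commuting square and then show that, once $f_{\zg_{s,t}}\neq 0$, the only surviving contributions to that identity are $a_{t,t}$ on one side and $b_{s,s}$ on the other. First I would record the two standing facts about $B$ that make the entries of $f_\zg$, $g_j$ and $g_i$ almost scalars: by Proposition~\ref{prop 38} there are no oriented cycles in $B$, so $\End_B P(v)=\kb\,\id$ for every vertex $v$; and $B$ is schurian by the corollary to Proposition~\ref{prop 39}, so $\Hom_B(P(v),P(v'))$ is at most one-dimensional and, when nonzero, is spanned by (the class of) the unique path $v'\leadsto v$ in $B$. In particular $a_{t,t}$ and $b_{s,s}$ are scalars, and every $(s,t)$-entry of $f_\zg g_j$ and of $g_i f_\zg$, lying in the fixed space $\Hom_B(P(j_t),P(i_s))$, is a scalar multiple of the generator of that space.

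Fix $s,t$ with $w:=f_{\zg_{s,t}}\neq 0$, so that $\Hom_B(P(j_t),P(i_s))=\kb\,w$. Comparing the $(s,t)$-entries of the two sides of $f_\zg g_j=g_i f_\zg$ gives, inside $\kb\,w$, the identity
\[\sum_l f_{\zg_{s,l}}\,a_{l,t}\;=\;\sum_l b_{s,l}\,f_{\zg_{l,t}},\]
i.e. $\sum_l\mu_l=\sum_l\nu_l$ where $f_{\zg_{s,l}}a_{l,t}=\mu_l\,w$ and $b_{s,l}f_{\zg_{l,t}}=\nu_l\,w$. The term $l=t$ on the left is $f_{\zg_{s,t}}\,a_{t,t}=a_{t,t}\,w$, so $\mu_t=a_{t,t}$; the term $l=s$ on the right is $b_{s,s}\,f_{\zg_{s,t}}=b_{s,s}\,w$, so $\nu_s=b_{s,s}$. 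Hence the lemma follows once we show the \emph{cross terms} contribute nothing: $\mu_l=0$ for $l\neq t$ and $\nu_l=0$ for $l\neq s$, for then $a_{t,t}=\mu_t=\sum_l\mu_l=\sum_l\nu_l=\nu_s=b_{s,s}$.

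The vanishing of the cross terms is the crux, and this is where the structural results of this subsection are used. Each $\mu_l$ with $l\neq t$ is the class of a concatenation of the path $j_t\leadsto j_l$ defining $a_{l,t}$ with the valid path $i_s\leadsto j_l$ defining $f_{\zg_{s,l}}$ — which is nonzero only when the subsequence of the crossing sequence from $s$ to $l$ is forward and that path is valid — and dually $\nu_l$ passes through the vertex $i_l$. Assuming $s\le t$, the hypothesis $f_{\zg_{s,t}}\neq 0$ forces, by Lemma~\ref{rect-trap-lemma1} parts (2) and (3), all steps from $s+1$ to $t-1$ to be trapezoidal and the window $[s,t]$ of $Q(\zg)$ to have the explicit spiral shape drawn in the proof of that lemma, in which $w$ runs through the ``central'' vertices $k_{s+1},\dots,k_{t-1}$ and avoids every $i_l,j_l$ with $s<l<t$. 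Combined with Lemma~\ref{lem:ii-paths} (no valid path between two $j$-vertices, resp. two $i$-vertices, of a forward subsequence), Lemma~\ref{lem:forward-arrow} (an arrow between an $i$- and a $j$-vertex of the crossing sequence forces a forward subsequence), and Lemma~\ref{lem:circ} (controlling when the path $i_s\leadsto j_s$ factors through another $i$ or $j$), one checks case by case that the relevant concatenation is either declared zero already by the entry definition of $f_\zg$ (the subsequence not being forward), or is zero in $B$ by the Zero-Relations Lemma~\ref{zero-relations-lemma} and the commutativity Lemma~\ref{lem:comm}, unless $l=t$, resp.\ $l=s$. The degenerate case $s=t$, where $w$ is an arrow $i_s\to j_s$ or a length-two path inside a $3$-cycle, is treated separately in the same spirit, with Lemma~\ref{lem:circ} supplying exactly the list of other $a_{l,s}$ and $b_{s,l}$ that could otherwise feed a nonzero term into the $(s,s)$-identity.

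The hard part will be this last step: organizing the finitely many shapes of a putative nonzero cross term — a nonzero path $j_t\leadsto j_l\leadsto i_s$ of the same length as $w$, whose two halves are forced to come respectively from a column of $g_j$ and from a forward piece of $f_\zg$ — and showing in each shape that either the corresponding entry of $f_\zg$ vanishes for the definitional reason, or the concatenation composes to zero in $B$ through a relation, or the intermediate crossing $j_l$ (resp.\ $i_l$) simply cannot be reached compatibly with the spiral structure of $Q(\zg)$. This is bookkeeping rather than a conceptual difficulty, but it is the bulk of the argument, and it is essentially the reason Lemmas~\ref{lem:forward-arrow} and~\ref{lem:circ} were proved first.
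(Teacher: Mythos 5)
Your reduction to the single $(s,t)$-entry of the commuting square is fine, and the easy cases (where $f_{\zg_{s,t}}$ is an arrow, or the valid path $i_s\leadsto j_t$ does not factor through any other $i_h$ or $j_{h'}$ of the crossing sequence) come out exactly as in the paper. But your plan for the crux is wrong: you claim the cross terms vanish because the corresponding concatenations are zero in $B$ (via Lemma~\ref{zero-relations-lemma}, Lemma~\ref{lem:comm}, etc.) or because the relevant entry of $f_\zg$ is definitionally zero. In the genuinely hard case this is false. If the valid path $w\colon i_s\leadsto j_t$ has length at least two and its first arrow is $i_s\to i_h$ with $i_h$ in the crossing sequence (precisely the situation Lemma~\ref{lem:circ} is set up to analyze, e.g.\ $i_h=i_{s-1}$), then $f_{h,t}\neq 0$ and the product $b_{s,h}f_{h,t}$ corresponds to the path $i_s\to i_h\leadsto j_t$, which by Proposition~\ref{prop 39} is \emph{equal} to $w\neq 0$ in $B$; dually for $a_{h',t}f_{s,h'}$ when the last arrow of $w$ is $j_{h'}\to j_t$. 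So the $(s,t)$-identity only gives something of the form $b_{s,s}+b_{s,h}=a_{t,t}+a_{h',t}$, and no amount of path-vanishing inside $\Hom_B(P(j_t),P(i_s))$ will remove the scalars $b_{s,h}$ and $a_{h',t}$.

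What is actually needed — and what the paper does — is to prove that these off-diagonal scalars are zero, and this cannot be extracted from the single $(s,t)$-entry: the paper runs a cascade through \emph{other} entries of the matrix equation, namely positions $(s,s-1),(s,s-2),\dots,(s,s-l)$ (and the dual ones for $a_{h',t}$), using the flower-shaped subquiver $Q(\zg,[i_s])$ supplied by Lemma~\ref{lem:circ} to know exactly which $f_{p,q}$ in those rows are nonzero, solving $b_{s,p}=0$, then $b_{s,s-l+1}=0,\dots$, up to $b_{s,s-1}=0$; there is moreover a separate subcase (the arrow $\beta$ being a boundary arrow) where one must first show that $\zg$ crosses $j_{s-l-1}$ and use the additional position $(s,s-l-1)$. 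None of this is reachable by your stated strategy, so as written the proposal has a genuine gap at its central step; to repair it you would have to replace the ``concatenations are zero in $B$'' argument by this multi-entry elimination of the off-diagonal coefficients of $g_i$ and $g_j$.
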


\begin{proof}
By commutativity we have $g_i f_{\zg} = f_{\zg} g_j$ as matrices.  The diagonal entries $a_{t,t}, b_{s,s}$ in $g_j, g_i$ are elements in the field $\kb$ and off diagonal entries are non-constant paths $j_h\leadsto j_{h'}$ or  $i_h\leadsto i_{h'}$.  Note that the statement of the lemma is about the diagonal entries only.  Denote $f_{\zg}$ simply by $f$, and consider position $(s,t)$ in this composition  

\[ \sum_h b_{s,h} f_{h,t} = \sum_h  f_{s,h} a_{h,t}\]
where we index the position $(s,t)$ in $f$ so that it corresponds to a path from $i_s$ to $j_t$.  Then we get  

\begin{equation}\label{eq:11}
 b_{s,s}f_{s,t} + \sum_{h\not=s} b_{s,h} f_{h,t} = f_{s,t} a_{t,t} + \sum_{h\not=t} f_{s,h} a_{h,t}.
\end{equation}

If $f_{s,t}=0$ then the conclusion holds, thus suppose $f_{s,t}\not=0$.   Then by definition of the map $f=f_{\zg}$ all steps from $s$ to $t$ are forward and there is a valid path $i_s\leadsto j_t$, or $s=t$ and there is a nonzero path $i_s\leadsto j_s$.  Without loss of generality we can assume $t\geq s$.   

If the path $i_s\leadsto j_t$ does not factor through any other $i_h, j_{h'}$ then equation~(\ref{eq:11}) becomes $b_{s,s}f_{s,t}=f_{s,t}a_{t,t}$ with $f_{s,t}\not=0$ and the lemma holds.  In particular, if the path $i_s\leadsto j_t$ is just a single arrow then it is valid, so $f_{s,t}\not=0$, and it does not factor nontrivially through any other $i_h, j_{h'}$ and the lemma holds.   Therefore, we can assume that the path $i_s\leadsto j_t$ is a composition of two or more arrows. 

Now, suppose $f_{s,t}\not=0$ and the path  $i_s\leadsto j_t$ factors through some $i_h$ or $j_{h'}$ or both with $h\not=s, h'\not=t$.  Then we claim that $i_s\to i_h$ or $j_{h'}\to j_t$ or both of these paths consist of a single arrow.  If $s=t$ then the claim clearly holds as $f_{s,s}$ consists of at most two arrows.  If $s\not=t$, then $f_{s,t}$ is a valid path.    If $h, h' \in \{s+1, \dots, t-1\}$ then $i_s\leadsto i_h \leadsto j_t$ or $i_s\leadsto j_{h'}\leadsto j_t$ is not valid because $i_s\leadsto i_h, j_{h'}\leadsto j_t$ is not valid by Lemma~\ref{lem:ii-paths}.   Therefore, $h,h'\not\in \{s, \dots, t\}$.    Let $Q(\zg, [p,q])$ denote the subquiver of $Q(\zg)$ determined by the subsequence $(i_p, j_p), \dots, (i_q, j_q)$ of the crossing sequence for $\zg$. 
We know that step from $s$ to $s+1$ is forward, and if the step from $s-1$ to $s$ is not forward, then the two subquivers $Q(\zg, [1, s]), Q(\zg, [s, t])$  of $Q(\zg)$ only have two vertices $i_s, j_s$ connected by a single arrow in common.  In particular, in this case the path $i_s\leadsto j_t$ cannot factor through any $i_h, j_{h'}$ for $h, h' <s$.  This is a contradiction, so the step from $s-1$ to $s$ must be forward.  A similar argument implies that the step from $t$ to $t+1$ must also be forward.  Then we have the following possibilities for $Q(\zg)$,
where in the first picture there are arrows $i_s\to j_s, i_t \to j_t$ and in the second one the arrows go in the opposite direction.  

\[
\xymatrix@C=10pt@R=10pt{
i_s\ar[r] \ar[d]& k_s\ar[r] \ar[d]& \cdots \ar[r] & k_t\ar[r]  &j_t \ar[dl] &&& j_{s-1} \ar[d] & i_{s-1}\ar[d]&& j_{t+1}&i_{t+1}\\
j_s\ar[r] & \bullet \ar[r]\ar[ul] & \cdots \ar[r] & \bullet \ar[r] \ar[u]\ar[d]& i_t \ar[d]\ar[u] &&& \raisebox{0pt}[0.9\height][0.3\height]{ $\vdots$ }\ar[d] & \raisebox{0pt}[0.9\height][0.3\height]{ $\vdots$ } \ar[d]&&\raisebox{0pt}[0.9\height][0.3\height]{ $\vdots$ }\ar[u] &\raisebox{0pt}[0.9\height][0.3\height]{ $\vdots$ }\ar[u]\\
\raisebox{0pt}[0.9\height][0.3\height]{ $\vdots$ }\ar[u] & \raisebox{0pt}[0.9\height][0.3\height]{ $\vdots$ } \ar[u]&&\raisebox{0pt}[0.9\height][0.3\height]{ $\vdots$ }\ar[d] &\raisebox{0pt}[0.9\height][0.3\height]{ $\vdots$ }\ar[d]&&& i_s\ar[r] & k_s\ar[r]  \ar[dl]& \cdots \ar[r] & k_t\ar[r] \ar[u] \ar[d]&j_t \ar[d] \ar[u]\\
i_{s-1} \ar[u] & j_{s-1}\ar[u]&& i_{t+1}&j_{t+1}&&& j_s\ar[r] \ar[u]& \bullet \ar[r] \ar[u]& \cdots \ar[r] & \bullet \ar[r]  & i_t  \ar[ul]\\
}
\]

In the first case, we see that $i_s\leadsto j_t$ does not pass through any vertex outside of $Q(\zg, [s,t])$.  Therefore, this path cannot factor though any other $i_h, j_{h'}$ with $h,h'<s$ or $h,h'>t$, a contradiction.  Then we must be in the situation as in the quiver on the right.   Here, the path $i_s\leadsto j_t$ factors through $k_s, k_t$ which are the only vertices of $Q(\zg, [s,t])$  that can also be part of $Q(\zg, [1, s])$ or $Q(\zg, [t, \text{max}(m,n)])$ apart from $i_s, j_s$ or $i_t, j_t$ respectively, and where $m,n$ are as in the statement of the lemma.  

The mixed cases when there are arrows $i_s\to j_s, j_t\to i_t$ or $j_s\to i_s, i_t\to j_t$ follow in the same way, and here the path $i_s\leadsto j_t$ factors through one of $k_t$ or $k_s$ respectively. 
This proves the claim that if $i_s\leadsto j_t$ factors through $i_h$ or $j_{h'}$ or both, then $i_s\to i_h$ or $j_{h'}\to j_t$ or both of these paths respectively consist of a single arrow.

We are now ready to prove the lemma.  
Suppose first $s\not=t$ and $i_s\leadsto j_t$ is a valid path that factors through both $k_s, k_t$ such that $\zg$ crosses $k_s, k_t$.   In addition, we can take $k_s\not=k_t$.  Thus, here we consider the most general situation, and when $\zg$ crosses only one of $k_s, k_t$ then it becomes a special case of the one we have.  By the above we also know that all steps from $s-1$ to $t+1$ must be forward.  Note that if $k_s$ is a $j_{h'}$ for $h'<s$ in the crossing sequence for $\zg$, then the arrow $i_s\to j_{h'}$ implies that the steps from $s$ to $h'$ are forward by Lemma~\ref{lem:forward-arrow}.  Since $k_s\not=k_t$ this is a contradiction, so the crossing between $k_s$ and $\zg$ is in degree zero. 
We consider the quiver $Q(\zg,[i_s])$. By Lemma~\ref{lem:circ}, we have the following situation, where $l\geq 1$ is such that the path $i_s\leadsto j_{s-l}$ factors through only one other vertex $i_p$ in the crossing sequence of $\zg$,  $p\leq s-l$, and all steps from $p$ to $s$ are forward.  Moreover, $k_s=i_{s-1}$ if $l>2$ and $k_s=i_p$ for some $p\leq s-1$ if $l=1$, and $k_t=j_{h'}$ for some $h'>t$.  

\[
\xymatrix@C=10pt@R=10pt{
&j_{s-2}\ar@{.}[d]\ar[dr]&i_{s-2}\ar[r]\ar[l]&j_{s-1}\ar[dl]\\
&j_{s-l+1}\ar[r]&i_s\ar[r] \ar[dl]\ar[u]&i_{s-1}\ar[u] \ar[r] \ar[dl]& \cdots \ar[r] & j_{h'} \ar[r] & j_t\ar[d] \\
&i_p\ar[u]\ar[d]&j_s\ar[u] \ar[r] & \bullet \ar[r] & \cdots \ar[r] &\ar[r] \bullet & i_t\ar[ul]\\
&j_{s-l}\ar[uur]_{\beta}
 }
\]

The position $(s,t)$ in the matrix equation $g_i f_{\zg} = f_{\zg} g_j$ yields the following equation.

\medskip
$ \begin{array}{lllll}
(s,t) &&&& b_{s,s}f_{s,t}+b_{s,s-1}f_{s-1,t}=f_{s,t}a_{t,t}+f_{s,h'}a_{h',t}
\end{array}$
\medskip

In order to prove the lemma it suffices to show that $b_{s,s-1}=a_{h',t}=0$.  We will show that $b_{s,s-1}=0$ and the remaining part $a_{h',t}=0$ follows dually.  

We also have the following sets of equations coming from the other positions in the matrix equation~(\ref{eq:11}). They correspond to paths in the quiver $Q(\zg,[i_s])$ starting at $i_s$ and ending at $j_s, j_{s-1}, \dots, j_{s-l}$ respectively.  Except for the first and the last path here all of them factor through exactly two other $i$'s in the crossing sequence for $\zg$.  For example, there are exactly two different paths in $Q$ starting at $i_s$ and ending at $j_{s-1}$, which are $i_s\to i_{s-1}\to j_{s-1}$ and $i_s\to i_{s-2}\to j_{s-1}$.  This means that in the equation $(s, s-1)$ there are four terms because a path $i_s\leadsto j_{s-1}$ factors through four vertices that are in the crossing sequence for $\zg$, namely $i_s, i_{s-1}, i_{s-2}, j_{s-1}$. 

\[\begin{array}{ll}
(s,s) & b_{s,s}f_{s,s}+b_{s,s-1}f_{s-1,s}=f_{s,s}a_{s,s}\\
(s, s-1) & b_{s,s} f_{s,s-1} + b_{s,s-1}f_{s-1,s-1} + b_{s,s-2}f_{s-2, s-1}=f_{s,s-1}a_{s-1,s-1}\\
\vdots & \vdots\\
(s,s-l+1) & b_{s,s}f_{s,s-l+1} + b_{s,s-l+1}f_{s-l+1, s-l+1} + b_{s,p}f_{p,s-l+1}=f_{s,s-l+1}a_{s-l+1,s-l+1}\\
(s,s-l) & b_{s,s}f_{s,s-l} + b_{s, p} f_{p, s-l} = f_{s,s-l}a_{s-l, s-l}
\end{array}\]

In equation $(s,s-l)$ we have $f_{s,s-l}=0$ because $p\leq s-l<s$ and the steps from $p$ to $s$ are forward and $f_{p,s-l}\not=0$ because there exists an arrow $i_p\to j_{s-l}$.  If $\beta$ is not a boundary arrow, then the path $i_s\to i_p\to j_{s-l}$ is nonzero in the algebra $B$, and we conclude that $b_{s,p}=0$.   Now, consider equation $(s,s-l+1)$.  We have that $f_{s,s-l+1}=0$ because the steps from $s-l+1$ to $s$ are forward.  Since $b_{s,p}=0$ we conclude that $b_{s,s-l+1}=0$.  Continuing in this way we obtain that $b_{s,s-2}=0$ from equation $(s,s-2)$.  Then equation $(s,s-1)$ implies the desired claim that $b_{s,s-1}=0$.  This completes the proof of the lemma in the case when $\beta$ is not a boundary arrow. 

Now, suppose $\beta$ is a boundary arrow. First, we want to show that $b_{s,s-l}=0$.   Since the step from $s-l$ to $s-l+1$ is forward  and $\beta$ is boundary we must have that $i_p=i_{s-l}$.  We want to show that $\zg$ crosses $j_{s-l-1}$.  Suppose not.  Then $(i_{s-l}, j_{s-l})$ is the first crossing pair for $\zg$ and we are in the situation of Figure~\ref{fig:65}.    

\begin{figure}
\centerline{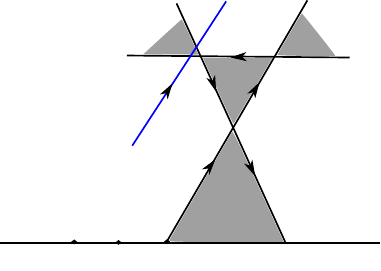}
\caption{Proof of Lemma~\ref{lem:ab}.}
\label{fig:65}
\end{figure}

Let $W$ be the white region that $\zg$ traverses before crossing $(i_{s-l}, j_{s-l})$.  Let $x$ be a boundary point of the polygon $\cals$ in $W$ where $i_s$ starts, and let $y,z$ be neighboring boundary points of $\cals$ clockwise from $x$ as in the picture.  Since $\zg$ also crosse $i_s$, it cannot start at $x$.   Because $j_{s-l}$ crosses $\zg$ from left to right, it follows that $\zg$ cannot start at the point $y$.  Therefore, $\zg$ starts in $z$ or at some point clockwise from $z$.   If $\zg$ starts at some other point clockwise from $z$ then it crosses another pair of arcs in $W$ before crossing $(i_{s-l}, j_{s-l})$, which is a contradiction to $(i_{s-l}, j_{s-l})$ being the first crossing pair for $\zg$.   Then $\zg$ starts in $z$.  If the white region $W$ has no edges on the boundary of $\cals$, then the boundary edge with endpoints $y,z$ lies in a shaded region.  In particular, $\zg$ crosses another pair of arcs before crossing $are $, which is again a contradiction.   Then $W$ has an edge on the boundary with endpoints $x,y$, and the edge with endpoints $z,y$ bounds a triangular shaded region.  Moreover,  the arc attached to $y$ crosses $\zg$ from left to right, which means that this arc is $j_{s-l-1}$ in the crossing sequence for $\zg$.   This shows the claim.  

Then we have the following quiver, which is an extension of the quiver $Q(\zg,[i_s])$ given earlier.  Note that it may happen that the path $i_{s-l}\leadsto j_{s-l-1}$ factors through some other $j_q$ with $q<s-l-1$, and here we depict the most general situation.

\[
\xymatrix@C=10pt@R=10pt{
&&&&&i_s\ar[dll]\\
j_{s-l-1}\ar[d]&\ar[l]j_q&\cdots\ar[l]&i_p=i_{s-l}\ar[d]\ar[l]\\
\bullet\ar[ur]&\ar[l]\cdots&\cdots\ar[l]&j_{s-l}\ar[uurr]_{\beta}\ar[l]\\
}
\]

The equation coming from position $(s, s-l-1)$ corresponds to paths in the quiver starting in $i_s$ and ending in $j_{s-l-1}$.  Because there is a valid path from $i_s$ to $j_{s-l-1}$ then it does not pass through any relations, and so it is the unique  nonzero path in $B$ between these vertices.  The path $i_s\leadsto j_{s-l-1}$ factors through exactly four vertices $i_s, i_{s-l}, j_q, j_{s-l-1}$ that are in the crossing sequence for $\zg$.  This means that the equation $(s, s-l-1)$ contains four terms as shown below. 

\[\begin{array}{ll}
(s,s-l-1) & b_{s,s}f_{s,s-l-1}+b_{s,s-l}f_{s-l,s-l-1}=f_{s,s-l-1}a_{s-l-1,s-l-1}+f_{s,q}a_{q,s-l-1}
\end{array}\]

Here $f_{s,s-l-1}=f_{s,q}=0$ because the steps from $s$ to $s-l$ are not forward. Then we obtain $b_{s,s-l}f_{s-l,s-l-1}=0$, where $f_{s-l,s-l-1}\not=0$ because it is a valid path and the step from $s-l$ to $s-l-1$ is forward.  Therefore, we still obtain that $b_{s,s-l}=0$ when $\beta$ is a boundary arrow.  Then we can continue as before to conclude that $b_{s,s-1}=0$ from equations $(s,s-l), \dots, (s, s-1)$.  This proves the lemma in the case $s\not=t$.    

Now, suppose $s=t$, $f_{s,s}\not=0$ and the path $i_s\leadsto j_s$ factors through some other $i$ or $j$.  If the path $i_s\leadsto j_s$ factors through one or two $i$'s, say $i_{p}, i_{p'}$ the we obtain the following equation. 

\[\begin{array}{ll}
(s,s) & b_{s,s}f_{s,s}+b_{s,p}f_{p,s}+b_{s,p'}f_{p',s}=f_{s,s}a_{s,s}
\end{array}\]

We then conclude that $b_{s,p}, b_{s,p'}$ are zero in the same manner as in the case $s\not=t$ above. First, we construct the quiver $Q(\zg, [i_s])$, see below.  Then we use equations $(s, s-1) \dots, (s,s-l)$ and equations $(s, s+1), \dots, (s, s+l')$ to show that $b_{s,p}, b_{s,p'}$ are zero respectively. 

\[
\xymatrix@C=10pt@R=10pt{
&&j_{s-l}\ar[d]&i_{s-2}\ar[r]\ar@{.}[l]&j_{s-1}\ar[dll]\\
j_{s-l'}\ar[rr]\ar@{.}[d]&&i_s\ar[rr]\ar[dll]\ar[ur]&&i_p\ar[dll]\ar[u]\\
i_{p'}\ar[rr]&&j_s\ar[u]
}
\]

If instead the path $i_s\leadsto j_s$ factors through one or two $j$'s then again we obtain the desired result by considering the quiver $Q(\zg, [j_s])$ and applying the dual argument.  

Finally, if $i_s\leadsto j_s$ factors through $i_p$ and $j_q$ then to show that $b_{s, p}=0$ we consider the subquiver $Q(\zg, [i_s])$.   Then to show that $a_{q,s}=0$ we consider the subquiver $Q(\zg, [j_s])$.  This completes the proof of the lemma in all cases. 
\end{proof}

The following statement is a direct consequence of the proof of Lemma~\ref{lem:ab} above, which we will need later in Section~\ref{sec:blocks}.  Note that here we assume that the diagonal entries of the matrices $g_i, g_j$ are zero.    

\begin{corollary}\label{cor:534}
Let $\zg$ be a representative of a 2-diagonal in $\cals$, and let $f_{\zg}: \bigoplus_{h=1}^{n} P(j_h) \to  \bigoplus_{h=1}^m P(i_h)$ be the associated morphism.
Suppose we have a commutative diagram 
\[\xymatrix{
 \bigoplus_{h=1}^n P(j_h) \ar[r]^{f_{\zg}} \ar[d]^{g_j}&  \bigoplus_{h=1}^m  P(i_h) \ar[d]^{g_i} \\
 \bigoplus_{h=1} ^n P(j_h) \ar[r]^{f_{\zg}} &  \bigoplus_{h=1}^ m P(i_h)
}\]
for some nilpotent morphisms $g_j$ and $g_i$. Then $(g_i f_{\zg})_{s,t}=0$ for all $s,t$ such that $(f_{\zg})_{s,t}\not=0$.  
\end{corollary}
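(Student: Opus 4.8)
The plan is to obtain this directly from the proof of Lemma~\ref{lem:ab}, supplying only one extra observation: that nilpotency of $g_i$ forces the common scalar on its diagonal to vanish. First I would record that, since $B$ is schurian, each diagonal entry $b_{h,h}$ of $g_i$ lies in the one-dimensional space $\Hom_B(P(i_h),P(i_h))$ and is therefore a scalar multiple of the identity, and similarly for $g_j$. By the argument in the proof of Proposition~\ref{prop:ind}, applied verbatim to the given commutative square, all diagonal entries of $g_i$ (and of $g_j$) are equal, say all equal to $\lambda\in\kb$, so that $g_i=\lambda I+g_i'$ with $g_i'$ nilpotent. Since $\lambda I$ is central it commutes with $g_i'$, so from $g_i^{N}=0$ and $(g_i')^{M}=0$ the binomial theorem gives $(\lambda I)^{N+M}=(g_i-g_i')^{N+M}=0$ (each term in the expansion being divisible by $g_i^{N}$ or by $(g_i')^{M}$), whence $\lambda=0$. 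Thus $b_{h,h}=0$ for all $h$, and likewise $a_{h,h}=0$.

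Next I would fix a pair $(s,t)$ with $(f_\zg)_{s,t}=f_{s,t}\neq 0$ (if $f_{s,t}=0$ there is nothing to prove) and expand the $(s,t)$-entry of the matrix identity $g_if_\zg=f_\zg g_j$:
\[
b_{s,s}f_{s,t}+\sum_{h\neq s}b_{s,h}f_{h,t}\;=\;f_{s,t}a_{t,t}+\sum_{h\neq t}f_{s,h}a_{h,t}.
\]
This is exactly the equation labelled $(s,t)$ that is analysed in the proof of Lemma~\ref{lem:ab} once one knows $f_{s,t}\neq 0$: by running through the auxiliary equations $(s,s),(s,s-1),\dots,(s,s-l)$ attached to the subquiver $Q(\zg,[i_s])$, and dually the equations attached to $Q(\zg,[j_s])$, that proof shows that every off-diagonal term occurring on either side of the displayed identity is zero. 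In particular $\sum_{h\neq s}b_{s,h}f_{h,t}=0$. Combining this with the first step,
\[
(g_if_\zg)_{s,t}=\sum_h b_{s,h}f_{h,t}=b_{s,s}f_{s,t}=0,
\]
which is the assertion of the corollary.

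Since the entire combinatorial core — the vanishing of the competing off-diagonal contributions, through the rectangle/trapezoid analysis of $Q(\zg,[i_s])$ and $Q(\zg,[j_s])$ — is already contained in the proof of Lemma~\ref{lem:ab}, I do not expect a genuine obstacle here. The only two points that need care are: checking that "$g_i$ nilpotent" really does kill the common scalar $\lambda$ (the commuting-nilpotents argument of the first paragraph, using that $g_j$ is also nilpotent so the two diagonals carry the same $\lambda$), and confirming that the terms discarded in the proof of Lemma~\ref{lem:ab} are precisely the sums $\sum_{h\neq s}b_{s,h}f_{h,t}$ and $\sum_{h\neq t}f_{s,h}a_{h,t}$ and nothing more, so that what remains of the $(s,t)$-equation is literally $b_{s,s}f_{s,t}=f_{s,t}a_{t,t}$.
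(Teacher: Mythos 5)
Your argument is correct and is essentially the paper's intended proof: Corollary~\ref{cor:534} is stated as a direct consequence of the proof of Lemma~\ref{lem:ab}, namely that when $(f_\zg)_{s,t}\neq 0$ all off-diagonal contributions to the $(s,t)$-entry of $g_if_\zg=f_\zg g_j$ vanish, leaving $b_{s,s}f_{s,t}$, which is killed once the diagonal entries are zero. Your way of killing the diagonal (common scalar $\lambda$ via the argument of Proposition~\ref{prop:ind}, then the commuting-nilpotents binomial trick) is valid, though one can get it more directly by noting that the diagonal entries are the induced action of the nilpotent map $g_i$ on the tops of the pairwise non-isomorphic summands $P(i_h)$, hence nilpotent scalars and therefore zero.
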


\section{Proofs of section \ref{sect 6}}\label{A2pivots}
Here we prove Theorem \ref{lem 63}
and extend the definition of pivot morphisms to complexes of projectives in Theorem~\ref{prop:67} which will be used in later sections.

\begin{thm}[Theorem \ref{lem 63}]
 \label{Alem 63} Let $\zg,\zg'$ be compatible 2-diagonals in $\cals$ such that $\zg'$ is obtained from $\zg$ by a 2-pivot and let $(g_0^r,g_1^c)$ be the morphism defined above. Then we have the following commutative diagram with exact rows
 \begin{equation}\label{eq 62}\xymatrix@C50pt{ P_1\ar[r]^{f_\zg} \ar[d]_{g_1^c}&P_0\ar[d]^{g_0^r}
\ar[r]^{\pi_\zg}&M_\zg\ar[r]\ar[d]^g&0 \\
P_1'\ar[r]^{f_{\zg'}}&P_0'\ar[r]^{\pi_{\zg'}}&M_{\zg'}\ar[r]&0 \\
}
 \end{equation}
where $g$ is the induced morphism on the cokernels.
%
 In particular, $g$ is a morphism of syzygies in $\cmp\,B$.
\end{thm}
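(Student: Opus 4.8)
The plan is to prove the commutativity of the left-hand square in diagram (\ref{eq 62}), i.e. the identity $f_{\zg'}\,g_1^c = g_0^r\,f_\zg$ as maps $P_1\to P_0'$; once this is established, the existence of the induced map $g$ on cokernels and the exactness of both rows are formal (the rows are exact by the definition of $M_\zg=\coker f_\zg$, cf. Proposition~\ref{big-lemma} and Corollary~\ref{cor:536}, and $g$ is obtained by the universal property of cokernels).

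First I would use Lemma~\ref{lem:M} to pin down the shape of the matrices $g_0,g_1$, and hence of $g_0^r,g_1^c$: up to reversing the direction of the crossing sequences they are of the form $I_n$, $\begin{bsmallmatrix} I_n & 0\end{bsmallmatrix}$, $\begin{bsmallmatrix} I_n \\ 0\end{bsmallmatrix}$, or $\begin{bsmallmatrix} I_n & 0 \\ 0 & M\end{bsmallmatrix}$ with $M$ one of a short explicit list, and the total number of arrows occurring in $g_0$ and $g_1$ combined is at most three. Because $\zg,\zg'$ are compatible, their crossing sequences agree except at the very end; so the matrices $f_\zg$ and $f_{\zg'}$ agree in the block indexed by the common initial part of the crossing sequence, and $g_0^r,g_1^c$ are the identity on that block. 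The verification of $f_{\zg'}\,g_1^c = g_0^r\,f_\zg$ then reduces to a finite case analysis in the ``terminal window'': one must check the identity on the few rows and columns where $g_0,g_1$ are not the identity, i.e. where the arrows of $g_0^r,g_1^c$ live. I would organize this by the position of the common endpoint $a$ of $\zg$ and $\zg'$ and by whether the boundary edge clockwise from $a$ lies in a shaded or a white region (the same case split used in Lemma~\ref{lem:base}), and within each case use the description of $Q(\zg)$ near the endpoint from Section~\ref{sec:quiver} together with Lemma~\ref{lem 62} (which identifies the arrow $i'\to i$ defining the nonidentity components of $g_0,g_1$).

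The entry-by-entry check itself is a comparison of paths in $Q$: each side of $f_{\zg'}\,g_1^c = g_0^r\,f_\zg$ in a given matrix position is either zero or a single path in the quiver, and equality of two nonzero parallel paths in $B$ holds automatically by Proposition~\ref{prop 39}, while vanishing of a path follows from Proposition~\ref{prop 38} or Lemma~\ref{lem 37} (a relation) once one knows the path factors through a cyclic path or a boundary arrow. So the real content is bookkeeping: (i) matching up which columns/rows of $f_\zg$ and $f_{\zg'}$ correspond under the near-identity maps $g_0^r,g_1^c$, being careful about the sign changes introduced in passing from $g_0,g_1$ to $g_0^r,g_1^c$ (these signs are exactly what makes the square commute when $M$ has two arrows in a row or column), and (ii) checking that whenever one side has a nonzero entry, so does the other, and they are parallel paths. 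The reversal of crossing-sequence direction allowed in Lemma~\ref{lem:M} lets one always reduce to the case where the terminal window sits at the bottom-right of the matrices.

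The main obstacle I expect is the sign bookkeeping in the cases where $M$ contains two arrows (the forms $\begin{bsmallmatrix} i\to h & i\to h'\end{bsmallmatrix}$ and $\begin{bsmallmatrix} i\to h' \\ i'\to h\end{bsmallmatrix}$): there the naive maps $g_0,g_1$ do \emph{not} make the square commute, and one needs precisely the modification to $g_0^r$ (change the sign of the arrow in the last column) or $g_1^c$ (change the sign of the arrow in the last row) so that a $2\times2$ commuting-square relation of the form $i\to h \to j = i\to h' \to j$ (two parallel length-two paths through a shaded triangle, equal in $B$ by Lemma~\ref{lem 37}(b)) produces a cancellation rather than a doubling. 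Verifying that the single sign flip is simultaneously correct for every affected matrix position — and that $g_0,g_1$ never both need two arrows at once (ruled out by the ``at most three arrows'' count in Lemma~\ref{lem:M}) — is the delicate point; everything else is a routine, if lengthy, matrix computation that I would carry out in the appendix by displaying the relevant terminal blocks of $f_\zg$ and $f_{\zg'}$ explicitly in each of the finitely many configurations.
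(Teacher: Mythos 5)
Your proposal is correct and follows essentially the same route as the paper's proof in the appendix: one reduces to the commutativity of the left-hand square and verifies it by a finite case analysis of the local configuration at the pivot (using Lemma~\ref{lem:M} and Lemma~\ref{lem 62} for the shape of $g_0^r,g_1^c$, Lemma~\ref{lem 37} and Propositions~\ref{prop 38}--\ref{prop 39} for vanishing and equality of parallel paths), with the sign modifications in $g_0^r,g_1^c$ supplying exactly the cancellation of the two parallel length-two paths around a shaded triangle. The paper merely organizes the cases summand-by-summand (according to whether $P(i)$, $P(j)$ are shared by the two presentations and which radical lines meet the middle pivot vertex) rather than by your ``terminal window'' of the matrices, which is an organizational rather than a substantive difference.
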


\begin{proof}
 It suffices to show the commutativity on a pair of indecomposable summands $P(j)$ of $P_1$ and $P(i)$ of $P_0'$. Thus without loss of generality, we may assume that $P_1=P(j)$ and $P_0'=P(i)$. By definition, the pivot morphism $g^c_1$ is a column vector $g^c_1\colon P(j)\to P_1'$ whose components are either the identity morphism $1_{P(j)}$, multiplication by arrows $h\to j$ ending in $j$, or equal to zero. Moreover, if a component is $1_{P(j)}$ then all other  components  are zero, and if no  component is $1_{P(j)}$ then $P(j)$ is not a direct summand of $P_1'$, so by Lemma~\ref{lem:M}, there may be up to two arrows $h_1\to j$ and $-h_2 \to j$ in $g^c_1$ and all other  components  are zero.
 
 Similarly, the map $g_0^r$ is a row vector, $g_0^r\colon P_0\to P(i)$ whose entries are $0,1_{P(i)} $ or arrows $i\to \ell $. Moreover, if an entry is $1_{P(i)}$ then all other entries are zero, and if no entry is $1_{P(i)}$ then $P(i)$ is not a summand of $P_0$, there may
be up to two arrows $i\to\ell_1$, $-i\to \ell_2$ in $g^r_0$
and all other entries are zero.

We use the notation and orientation of Figure \ref{figlem 62}. Thus $a$ is the common endpoint of $\zg$ and $\zg'$ and the 2-pivot move is from vertex $x$ to vertex $z$, skipping vertex $y$. Recall that the vertex $y$, as any boundary vertex, is the endpoint of exactly one or two radical lines, by Lemma~\ref{lem 32}(c).

Also note that $P(j)$ is not a summand of $P_0'$, because even if the radical line $\rho(j)$ were to cross both $\zg$ and $\zg'$ then these crossings would have the same degree, and thus the projective $P(j)$ would have the same degree in either of the two-term complexes of $f_\zg$ and $f_{\zg'}$, and, in that case, $P(j)$ would be a summand of $P_1'$ but not of $P_0'$. 
A similar argument shows that $P(i)$ is not a summand of $P_1$.

We consider several cases determined by the geometric configuration in the pivot area of the polygon $\cals$. 

 (1) Suppose first that $P(j)$ is not a summand of $P_1'$ and $P(i)$ is not a summand of $P_0$. Thus the radical line $\rho(j)$ crosses $\zg$ from left to right but does not cross $\zg'$. This implies that $\rho(j)$ must end at vertex $z$. 
Similarly, the radical line $\rho(i)$ crosses $\zg'$ from right to left but does not cross $\zg$, and therefore $\rho(i)$ must end at vertex $x$.

(1a)  Suppose there are two radical lines $\rho(h_1)$ and $\rho(h_2)$ at $y$ and both of them cross  $\zg$ or both of them cross $\zg'$. We may assume without loss of generality that both cross $\zg$, see  the left picture in Figure \ref{figlem 63}. 
\begin{figure}
\begin{center}
\scalebox{0.83}{\small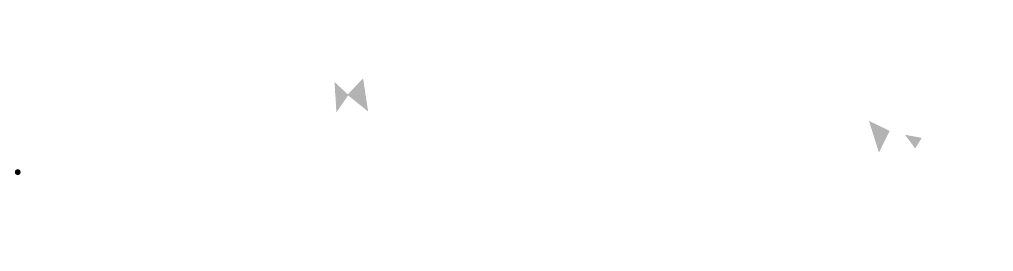}
\caption{Proof of Theorem~\ref{lem 63}.}
\label{figlem 63}
\end{center}
\end{figure}
In this case, the map $g^c_1$ is zero and $g^r_0$ is the row vector
\[ g^r_0= [0 \quad \cdots \quad 0 \quad i\to h_1\quad -i\to h_2].\]
We need to show $g^r_0f_\zg=0$.
Consider the white region $W$ incident to the crossing point of $\rho(i)$ and $\rho(h_1)$ and to the right of $\rho(i)$. Since every white region has an even number of sides, there must be some radical lines missing in the left picture in Figure \ref{figlem 63}. Moreover, an additional radical line cannot eliminate the vertex $y$ from the region $W,$ because every white region has at least one boundary vertex, by Lemma~\ref{lem 32}(a).
We therefore must have a radical line $\rho(\ell)$ that eliminates the crossing point between $\rho(i)$ and $\rho(h_2)$ from $W,$ see the right hand picture in Figure \ref{figlem 63}. Note that the orientation of $\rho(\ell)$ must be as indicated in the figure, since the orientation of the edges of $W$ must alternate around the white region, by Lemma \ref{lem orientation}. 
Furthermore, $\rho(\ell)$ must end at vertex $z$, because otherwise, it would cross $\rho(j)$ and $\zg'$ and thereby create a white region $W'$ without boundary vertex, which is impossible.
Thus $\rho(\ell)$ ends at $z$. 

(i) Suppose first that $\rho(\ell)$ is equal to $\rho(j)$. Then the white region $W$ has four sides, $\rho(h_2), \rho(j),\rho(i)$ and $\rho(h_1)$ and its cycle path is $\mathfrak{c}(W) = h_2\to j\to i\to h_1$. Therefore the quiver $Q$ contains the following subquiver 

\begin{equation}
 \label{eq quiver}
 \xymatrix{h_2\ar[r]^{\zd_2}&j\ar[ld]\\ i\ar[u]^\za \ar[r]_{\zd_1}& h_1\ar[u]_\zb}
 \end{equation}
with $\zd_1, \zd_2$ boundary arrows.
Now consider the map $f_\zg\colon P(j)\to P_0$. The subquiver above implies that the arrows  $\zd_2$ and $\zb$ are valid paths and hence nonzero components of $f_\zg$ by Lemma~\ref{lem:forward-arrow}. Recall that, on the other hand, the nonzero components of $g^r_0$ are the arrows $\za$ and $\zd_1$. Therefore, we have $ g_0^r f_\zg =\zd_1\zb-\za\zd_2 =0$. Since $g^c_1$ is zero, this completes the commutativity in this case.

(ii) Now suppose that $\rho(\ell)\ne\rho(j)$. Since the white region $W'$ must have a boundary vertex, this vertex must be $z$ and the sides of $W'$ in clockwise order are $\rho(j),\rho(i),\rho(h_2)$ and $\rho(\ell)$. Therefore its cycle path in $Q$ is 
$\mathfrak{c}(W')=\xymatrix{j\ar[r]^{\zd_3}& i\ar[r]^\za& h_2\ar[r]^{\zd_2} &\ell}$. In particular, the subpath $\zd_3\za$ lies in an oriented cycle whose third arrow we denote by $\ze$.  Thus the regions $W,W'$ give rise to the following subquiver of $Q$ 
with $\zd_1,\zd_2$ and $\zd_3$ boundary arrows.

\[\xymatrix{&h_2\ar[ld]_\ze\ar[r]^{\zd_2}&\ell\ar[ld]\\ 
j\ar[r]_{\zd_3}&i\ar[u]^\za \ar[r]_{\zd_1}& h_1\ar[u]_\zb}\]
It follows that the arrow $\ze$ is a valid path and that there is no valid path from $h_1 $ to $j$. Therefore, we have $g_0^r f_\zg= \za\ze$ which is zero since $\zd_3 $ is a boundary arrow.
This completes the proof in case (1a). 

(1b) Suppose there are two radical lines $\rho(h_1)$ and $\rho(h_2)$ at vertex $y$, one crossing $\zg$ and the other crossing $\zg'$, see Figure \ref{figlem 63b}. 
\begin{figure}
\begin{center}
\scalebox{0.75}{\small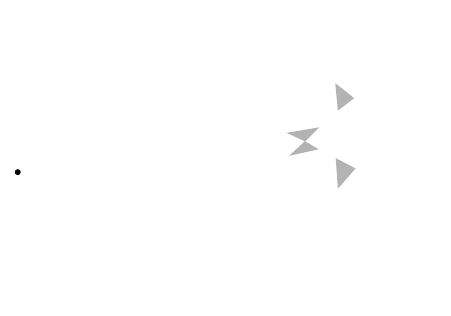}
\caption{Proof of Theorem~\ref{lem 63} case (1b).}
\label{figlem 63b}
\end{center}
\end{figure}
Then by the same argument as in case (1a), the white region $W$ has four sides $\rho(h_2),\rho(j),\rho(i)$ and $\rho(h_1)$ in clockwise order. Thus again we  have the subquiver (\ref{eq quiver})  
In this situation however, both $g^r_0$ and $g^c_1$ are nonzero.
The only nonzero component of  $g^r_0$ is the arrow $\zd_1$ and the only nonzero component of $g^c_1$ is the arrow $\zd_2$. On the other hand, the arrow $\za$ is a valid path and hence a component of $f_{\zg'}$, and the arrow $\zb$ is a component of $f_\zg$. 
Thus we have
$f_{\zg'}g^c_1=\za\zd_2$ and 
$g_0^r f_{\zg}=\zd_1\zb$. These paths are equal by Lemma \ref{lem 37}(b), and this completes the proof in case (1b).

(1c) Suppose there is exactly one radical line $\rho(h)$ at $y$. Then $\rho(h)$ must cross either $\zg$ or $\zg'$. These two cases are similar, and we only prove one of them. Suppose therefore that $\rho(h)$ crosses $\zg$, see Figure \ref{figlem 63c}. In this situation the map $g_1\colon P(j) \to P_1'$ is zero, by definition. Thus we must show $g_0f_\zg=0$.
\begin{figure}
\begin{center}
\scalebox{0.75}{\small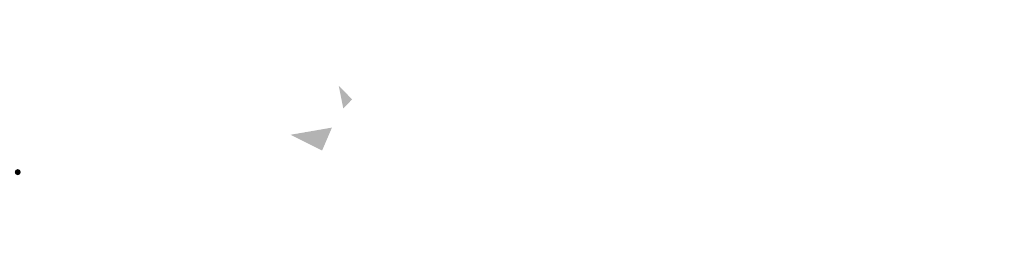}
\caption{Proof of Theorem~\ref{lem 63} case (1c).}
\label{figlem 63c}
\end{center}
\end{figure}
Let $W$ be the white region incident to the crossing point of $\rho(i)$ and $\rho(h)$ and to the right of $\rho(i)$. Since there is only one radical line at vertex $y$, the region $W$ must contain the vertex $z$, and the arrow $\zd_1\colon j\to i$ is a boundary arrow in $Q$. Thus the radical line $\rho(i)$ must start at the clockwise neighbor of $z$ on the boundary of $\cals$.

(i) Suppose first that the arrow $i\to h$ is also a boundary arrow and denote it by $\zd_2$, see the left picture in Figure \ref{figlem 63c}. Then the white region $W$ has 4 sides and hence its cycle path is $\mathfrak{c}(W)=\xymatrix{j\ar[r]^{\zd_1} &i\ar[r]^{\zd_2}&h}$, with $\zd_1,\zd_2$ boundary arrows in $Q$.
  Moreover, 
since $\mathfrak{c}(W)$ is a subpath of a chordless cycle in $Q$, there must be an arrow $\ze\colon h\to j$. Thus the radical lines $\rho(h)$ and $\rho(j)$ must cross; this crossing is not shown in Figure \ref{figlem 63c}.
Thus $Q$ contains the following subquiver
\[\xymatrix@R10pt{&i\ar[rd]^{\zd_2}\\ j\ar[ru]^{\zd_1} &&h\ar[ll]_\ze}\]

In this situation, the map $f_\zg$ is given by the arrow $\ze$ and the map $g^r_0$ by the arrow $\zd_2$. Therefore we have $g_0^r f_\zg=\zd_2\ze$, which is zero, because $\zd_1$ is a boundary arrow.

(ii) Now suppose that the arrow $i\to h$ is not a boundary arrow, see the right picture in Figure \ref{figlem 63c}.  Then there is a radical line $\rho (\ell)$ that crosses $\rho(h)$ and $\rho(i)$ forming a triangular shaded region that corresponds to a chordless cycle 
in $Q$. The white region $W$ has 4 sides and its cycle  path is $\mathfrak{c}(W)=\xymatrix{j\ar[r]^{\zd_1}& i \ar[r]^\za&h\ar[r]^{\zd_3} &\ell}$, with boundary arrows $\zd_1$ and $\zd_3$. Moreover, the paths $\zd_1\za$ and $\za\zd_3$ lie in two different chordless cycles in $Q$. Hence $Q$ has the following subquiver
\[\xymatrix{
j\ar[r]^{\zd_1}& i \ar[ld]_\za\\h\ar[u]^\ze\ar[r]_{\zd_2} &\ell\ar[u]_\zb
}
\]
In this situation, we still have  $g_0^r f_\zg=\za\ze$ which is zero because $\zd_1$ is a boundary arrow. 
This completes the proof in case (1).

(2) Suppose that $P(j)$ is a summand of $P_1'$. 
Then $\rho(j)$ crosses both $\zg$ and $\zg'$ and the map $g_1^c$ is the identity on the component $P(j)\to P(j)$ and zero on all components $P(j)\to P(\ell)$ with $\ell\ne j$. 

(2a) Suppose $P(i)$ is a summand of $P_0$. Then $g_0^r$ is the identity on the component $P(i)\to P(i)$ and zero on all components $P(\ell)\to P(i)$. Therefore
$g_0^rf_\zg\colon P(j)\to P(i)$ is equal to $f_\zg$ and 
$f_{\zg'}g_1^c\colon P(j)\to P(i)$ is equal to $f_{\zg'}$. These two maps are equal since $\zg$ and $\zg'$ are compatible and each cross both $\rho(i)$ and $\rho(j)$. 

(2b)  
Suppose $P(i)$ is not a summand of $P_0$. Then the radical line $\rho(i) $ crosses  $\zg'$ but not $\zg$, see Figure \ref{figlem 632b}.
\begin{figure}
\begin{center}
\scalebox{0.75}{\small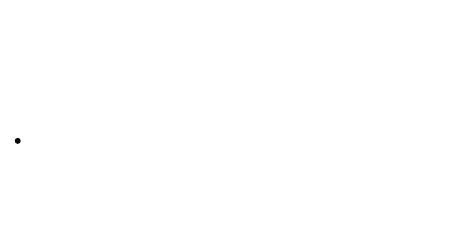}
\caption{Proof of Theorem~\ref{lem 63} case (2b).}
\label{figlem 632b}
\end{center}
\end{figure}

(i) Suppose first that there is no radical line that starts at vertex $y$ and crosses $\zg$.
Then $g_0^r\colon P_0\to P(i)$ is zero and thus $g_0^rf_\zg\colon P(j)\to P(i)$ is zero. On the other hand, $g_1^c\colon P(j)\to P_1'$ is the identity on $P(j)$ and thus  $f_{\zg'}g_1^c$ is given by the component $f_{\zg'}\colon P(j)\to P(i)$.  
So it suffices to show that there is no valid path from $i$ to $j$ in $Q$. 

 Using Lemma \ref{lem 32}(b), we let $W$ denote the unique white region containing the vertex $x$. 
 If $W $ lies to the left of $\rho(i)$, then, since there is no radical line that crosses $\zg$ and ends at $y$, 
 $\zg'$ crosses $\rho(j)$ first and runs through $W$ before crossing $\rho(i)$. Moreover, the cycle path $\mathfrak{c}(W)$ starts at $i$ and one of the maximal valid paths $\mathfrak{v}(W)$ ends at $i$, since the cycle path goes clockwise around $W$ and the maximal valid paths go counterclockwise. In particular, there
is no valid path starting at $i$ and going along the sides of $W$, because a valid path is the composition of valid paths along white regions.  Hence there is no valid path from $i$ to $j$ and we are done.

On the other hand, if $W$ lies to the right of $\rho(i)$, then there is a shaded region $S$  containing $x$ and lying to the left of $\rho(i).$ Thus $S$ is a boundary region, since it contains $x$, and therefore it must also contain the counterclockwise neighbor of $x$ on the boundary. Hence there exists a radical line $\rho(\ell)$ starting at the counterclockwise neighbor of $x$ such that $\rho(i) $ crosses $\rho(\ell)$. Let  $W'$ be the white region incident to this crossing point and to the left of $\rho(i)$. Now the result follows from the argument above replacing $W$ by $W'$. 

(ii) Now suppose there is a radical line $\rho(h)$ that starts at vertex $y$ and crosses $\zg$.  In this case, $g_0^r$ is given by the arrow $\za\colon i\to h$,  and $g_0^rf_\zg$ is a the composition of $\za$ with a valid path from $h$ to $j$. Recall that $g_1^c$ is the identity on $P(j)$, and $f_{\zg'}g_1^c$ is given by a valid path from $i $ to $j$. So we must show that every valid path from $i$ to $j$ factors through the arrow $\za$.

The radical lines $\rho(h) $ and $\rho(i)$ cross, and, from their orientations, we see that there exists a shaded region $S$ adjacent to the crossing point that lies to the left of both $\rho(h)$ and $\rho(i)$. Let $\rho(\ell)$ be the  third side of $S$. Let $W$ be the white region adjacent to $S$ via the edge $\rho(\ell)$. Then its cycle path is of the form $\mathfrak{c}(W) = \cdots\to h\to\ell\to i \to \cdots$, and the arrow $\za\colon i\to h$  is a valid path.

Now, let $u$ be a valid path from $h$ to $j$ and write $u=\zb u'$, where $\zb$ is the initial arrow of $u$ and $u'$ is the remaining subpath. Then either $\za\zb$ is a subpath of a maximal valid path of the white region $W$, or $\zg$ lies in a different white region. In both cases,  $\za\zb$ is not a subpath of a chordless cycle and hence $\za\zb u'$ is a valid path from $i$ to $j$.
By Proposition \ref{prop 39}, every valid path from $i$ to $j$ is of this form.
This completes the proof in case (2).
\end{proof}
We will also need the following result, which provides the commutative diagram analogous to the one in Theorem \ref{lem 63} when replacing the maps $f_\zg, f_{\zg'}$ by their modifications $\bar f_\zg, \bar f_{\zg'}$ introduced in Section~\ref{sect 5.3}.

\begin{corollary} \label{lem: pivot-bar} 
 Let $\zg,\zg'$ be compatible 2-diagonals in $\cals$ such that $\zg'$ is obtained from $\zg$ by a 2-pivot and let $(g_0^c,g_1^r)$ be the morphism defined above. Then the following diagram commutes.
 \[\xymatrix@C50pt{ P_1\ar[r]^{\bar{f}_\zg} \ar[d]_{g_1^r}&P_0\ar[d]^{g_0^c}\\P_1'\ar[r]^{\bar{f}_{\zg'}}&P_0'}
 \]
\end{corollary}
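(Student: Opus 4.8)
The plan is to deduce the commutativity of the $\bar f$-square directly from Theorem~\ref{lem 63} by conjugating with the sign matrices of Remark~\ref{rem:bar}. For a morphism $\phi\colon X\to Y$ between finite direct sums of indecomposable projectives (ordered, as usual, by a crossing sequence) write $\bar\phi:=J_{|Y|}\,\phi\,J_{|X|}$, where $J_k$ is the diagonal sign matrix of size $k$ and $|X|$ is the number of summands of $X$; this extends the notation $\bar f_\zg$ already in use. Since $J_k^2$ is the identity, $\phi\mapsto\bar\phi$ is an involution, and it is multiplicative on composable pairs: for $\phi\colon V\to W$ and $\psi\colon U\to V$ one has $\overline{\phi\psi}=\bar\phi\,\bar\psi$, because $J_{|W|}\,\phi\psi\,J_{|U|}=(J_{|W|}\,\phi\,J_{|V|})(J_{|V|}\,\psi\,J_{|U|})$. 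In particular $\overline{\bar f_\zg}=f_\zg$ and $\overline{\bar f_{\zg'}}=f_{\zg'}$.

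The main step is to establish the identities $\overline{g_0^r}=g_0^c$ and $\overline{g_1^r}=g_1^c$. This is a matrix verification based on the classification of the matrices of $g_0$ and $g_1$ in Lemma~\ref{lem:M}: conjugation by sign matrices multiplies the $(p,q)$-entry by $(-1)^{p+q}$, so it fixes every diagonal entry and reverses the sign of every entry immediately off the diagonal. In each form of Lemma~\ref{lem:M} the identity block $I_n$ occupies the top-left corner and the block $M$ the bottom-right corner, so all the identity entries and the leading entry of $M$ lie on the diagonal and are fixed, while the single ``extra'' entry of $M$ --- the arrow in the last column when $M=\begin{bsmallmatrix}i\to h& i\to h'\end{bsmallmatrix}$, or the arrow in the last row when $M=\begin{bsmallmatrix}i\to h'\\ i'\to h\end{bsmallmatrix}$ --- lies immediately off the diagonal, so conjugation reverses precisely its sign. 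But this is exactly the sign change distinguishing $g_i^r$ from $g_i^c$ in their definitions, so conjugation carries $g_i^r$ to $g_i^c$; and when $g_i$ has at most one arrow then $g_i^r=g_i^c=g_i$ is fixed. Here one uses that no $g_i$ can have two arrows in a common row and two in a common column at once, since $M$ is never a $2\times2$ block, and that the caveat ``up to reversing the crossing sequence'' in Lemma~\ref{lem:M} is harmless, a reversal being applied uniformly to $f_\zg$, $\bar f_\zg$, $g^r_i$, $g^c_i$ and hence cancelling.

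Granting these, the corollary follows in one line: conjugating the identity $f_{\zg'}\,g_1^c=g_0^r\,f_\zg$ of Theorem~\ref{lem 63} and using multiplicativity of the bar operation together with $\overline{g_1^c}=g_1^r$ and $\overline{g_0^r}=g_0^c$ gives $\bar f_{\zg'}\,g_1^r=g_0^c\,\bar f_\zg$, which is exactly the commutativity of the stated square.

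The only genuine work is the case analysis of the second paragraph, which is routine once Lemma~\ref{lem:M} is in hand; the point requiring care is to match the ``last column / last row'' appearing in the definitions of $g_i^r$ and $g_i^c$ with the off-diagonal position whose parity the conjugation flips.
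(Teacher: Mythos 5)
Your proof is correct and follows essentially the same route as the paper: conjugate the commutative square of Theorem~\ref{lem 63} by the sign matrices $J$, using $J^2=\mathrm{id}$, and then verify $\overline{g_0^r}=g_0^c$ and $\overline{g_1^c}=g_1^r$ from the matrix shapes in Lemma~\ref{lem:M}. The only difference is that you spell out the entry-by-entry sign check (diagonal entries fixed, the single off-diagonal arrow flipped) that the paper compresses into the remark that each of $g_0^r,g_1^c$ has at most one nonzero off-diagonal entry.
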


\begin{proof}
Recall that,  given a map $f: P_1\to P_0$, the corresponding map $\bar{f}$ is defined as $\bar{f}=J_{|P_0|}fJ_{|P_1|}$, where $J_k$ is an isomorphism that squares to the identity.  Theorem~\ref{lem 63} shows that $g_0^rf_{\zg} = f_{\zg'}g_1^c$ and we obtain,  
 \[\overline{g_0^r}\bar{f}_{\zg}=J g_0^r JJ f_{\zg} J =\overline{g_0^rf_{\zg}} = \overline{f_{\zg'}g_1^c} = J  {f_{\zg'}JJ g_1^c}J=\bar{f}_{\zg'}\overline{g_1^c}.\]
 
It therefore suffices to show that $\overline{g_1^c}=g_1^r$ and $\overline{g_0^r}=g_0^c$.   The maps $\overline{g_1^c}, \overline{g_0^r}$ are obtained from $g_1^c, g_0^r$ respectively, by introducing alternating signs in their matrices, see Remark~\ref{rem:bar}(a).  By Lemma~\ref{lem:M} each of $g_1^c, g_0^r$ has at most one nonzero off-diagonal entry, which implies  the desired result. 
\end{proof}

\subsection{The induced morphism of complexes on the projective resolutions}\label{sect 6.2new} Our next goal is to give an explicit description of the complete projective resolution and the morphism of complexes induced by the pivot maps. Recall that if two representatives $\zg_1,\zg_2$ of the same 2-diagonal $\zg$ are not compatible then they give rise to two different morphisms $f_{\zg_1},f_{\zg_2}$ in the projective presentation. Moreover, the sequence 
\begin{equation*}
 \label{eq:pr0}
\xymatrix{P_2(\zg)\ar[r]^{\overline{f}_{R\zg}} &P_1(\zg)\ar[r]^{{f}_{\zg}} &P_0(\zg)}\end{equation*}
 is exact  if the arcs $R\zg$ and $\zg$ are compatible, see Proposition~\ref{big-lemma}.
On the other hand, the commutativity of the pivot diagram in Corollary~\ref{lem: pivot-bar} relies on the compatibility of the arcs $\zg$ and its pivot $\zg'$. 
In this subsection, we will give an  explicit construction that will control this double compatibility condition. 
\smallskip

Since the syzygies in the projective resolution of $M_\zg$  correspond to the elements  $R^i\zg$ in the rotation orbit of $\zg$, we shall need the following result about the distribution of radical lines in the rotation orbit.
\begin{lemma}
\label{lem:three radical lines} Let $\za,R\za,R^2\za$ be three consecutive 2-diagonals under rotation. If $\za$ and $R^2\za$  are radical lines then $R\za$ is a radical line as well.
\end{lemma}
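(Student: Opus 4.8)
The statement concerns three consecutive $2$-diagonals $\za$, $R\za$, $R^2\za$ in the rotation orbit, where $\za$ and $R^2\za$ are radical lines, and we wish to conclude that $R\za$ is a radical line as well. Recall from Lemma~\ref{lem 35} that the radical lines $\rho(i)$ are precisely those $2$-diagonals whose crossings with all other radical lines correspond to arrows at the vertex $i$; equivalently, a radical line $\rho(i)$ has all its crossings ``focused'' on a single vertex of the quiver. The plan is to work directly with the endpoints of the three diagonals on the boundary of $\cals$ and exploit the fact that a radical line $\rho(i)$ passes through the interior vertices of $\cals$ that correspond to the arrows incident to $i$, and these form a connected path in $\cals$ by Lemma~\ref{lem 31}.

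First I would set up notation: write $\za = \rho(i)$ and $R^2\za = \rho(k)$ for vertices $i,k\in Q_0$, with endpoints $p,q$ for $\za$ and $R^2(p)=p'', R^2(q)=q''$ for $R^2\za$. The endpoints of $R\za$ are then the clockwise neighbors $R(p), R(q)$. The key geometric observation is that both $\rho(i)$ and $\rho(k)$ are unions of interior edges of $\cals$ labeled by $i$ and $k$ respectively, and each such union, by Lemma~\ref{lem 31} and connectedness (since $G$ is a tree), is a simple curve from boundary to boundary passing through exactly the interior vertices corresponding to arrows at $i$ (resp.\ $k$). I would then argue that the arc $R\za$, sitting ``between'' $\rho(i)$ and $\rho(k)$ in the rotation orbit, must also pass through interior vertices of $\cals$ only, and moreover that all these interior vertices share a common vertex label: the checkerboard structure forces the white and shaded regions traversed by $R\za$ to be pinned down once its endpoints are fixed one step from those of $\rho(i)$ and $\rho(k)$.

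The cleanest route is probably via the shaded regions adjacent to the endpoints. Since $\za=\rho(i)$ is a radical line, by Lemma~\ref{lem 32}(b) and the structure near a boundary vertex, the shaded region at endpoint $p$ is a triangular boundary region with two sides being radical lines $\rho(i)$ and $\rho(j)$ for some $j$, meeting at an interior vertex $\alpha$ corresponding to a boundary arrow between $i$ and $j$. Similarly at $q$, and at the endpoints of $R^2\za=\rho(k)$. Rotating by $R$ once, the endpoint $R(p)$ of $R\za$ sits on the boundary adjacent to the shaded triangle at $p$; I would trace which radical lines $R\za$ must then cross. The point is that $R\za$ cannot ``escape'' the neighborhood determined by $\rho(i)$ and $\rho(k)$: if $R\za$ were not a radical line, it would have to cross some radical line $\rho(\ell)$ in both degrees or traverse a white region in a way that separates it from being focused at a single vertex, and this would create a configuration of three or four pairwise-crossing radical lines, which by the argument in Lemma~\ref{lem:orbit} (or directly by Lemma~\ref{lem 36a}, since all vertices of $Q$ are boundary vertices) is impossible. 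So I would derive a contradiction from the existence of a non-radical $R\za$ by producing a forbidden subquiver.

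\textbf{Main obstacle.} The delicate part is making precise the claim that $R\za$ is ``pinned between'' two radical lines and cannot be a general $2$-diagonal. Unlike $\rho(i)$ and $\rho(k)$, which are determined by the checkerboard structure, $R\za$ is only constrained by having its endpoints one step clockwise from those of $\rho(i)$ — a priori it could cut $\cals$ into two polygons of various sizes. The resolution I anticipate: since $\rho(i)$ and $R^2\za=\rho(k)$ are obtained from each other by $R^2$, and both are radical lines, the two polygons obtained by cutting $\cals$ along $\rho(i)$ have sizes differing by the same combinatorial data as those for $\rho(k)$; tracking the endpoint shifts and using that a radical line near the boundary is ``forced'' by the triangular shaded regions (as in cases (1a)--(1c) of the proof of Theorem~\ref{lem 63}), one sees the intermediate arc $R\za$ must again be forced to be a radical line $\rho(h)$ where $h$ is the vertex such that there is an arrow between $i$ and $h$ and between $h$ and $k$ (or $i$, $h$, $k$ lie in a common $3$-cycle). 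This amounts to a careful but finite case analysis on the local configuration of at most two or three shaded triangles near the relevant boundary vertices, entirely parallel to the pivot-map analysis already carried out in the appendix, so I would structure the proof to reuse that machinery rather than redo it from scratch.
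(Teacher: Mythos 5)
There is a genuine gap: your text is a plan, not a proof, and the step you yourself flag as the "main obstacle" — showing that $R\za$ is "pinned" and forced to be a radical line — is exactly the content of the lemma, and the mechanism you propose for it does not work. You suggest that if $R\za$ were not a radical line one would obtain a forbidden configuration of three or four pairwise-crossing radical lines (as in Lemma~\ref{lem:orbit}), but that argument needs all the diagonals involved to be radical lines so that Lemma~\ref{lem 35} converts each crossing into an arrow of $Q$ and produces an interior vertex; here $R\za$ is precisely the diagonal not known to be a radical line, so its crossings give you no arrows and no subquiver. (Also, the phrase "cross some radical line $\rho(\ell)$ in both degrees" cannot occur: a $2$-diagonal in minimal position crosses each radical line at most once.) Deferring the rest to "a finite case analysis parallel to the pivot-map analysis" leaves the lemma unproved.

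The paper's argument is localized at a different place and uses the length-$3$ hypothesis in an essential way that your sketch never invokes. Since $\za=\rho(i)$ and $R^2\za=\rho(k)$ are $2$-diagonals related by $R^2$, they cross at an interior point $p$; at $p$ there are two shaded and two white regions, and by Lemma~\ref{lem orientation} (same orientation of the two radical lines) one white region sits between the incoming segments and the other between the outgoing segments. Because every chordless cycle has length three, each shaded region at $p$ is a triangle, so its third side is either a boundary edge of $\cals$ or a radical line $\rho(z)$; in the latter case $\rho(z)$ crosses both $\za$ and $R^2\za$ exactly once, hence must end at the boundary vertices $b$ and $b'$, i.e.\ it is homotopic to $R\za$, which is therefore a radical line. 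The only remaining case is when both triangles have boundary third sides, which forces $\cals$ to be a hexagon, where the claim holds by the classification in Example~\ref{exbysize}. Your endpoint-based approach never uses the crossing point $p$, never uses the triangle hypothesis there, and does not treat the hexagon degenerate case, so as written it cannot be completed along the lines you indicate without essentially rediscovering this argument.
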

\begin{proof}
Say $\za=\rho(x)$ and $R^2\za=\rho(y)$ and
 let $p$ denote their crossing point in $\cals$. Recall that there are two shaded regions $S_1,S_2$ and two white regions $W_1,W_2$ at $p$. Also recall that the orientation of the radical lines is alternating along the white regions and directed along the shaded regions, see Lemma~\ref{lem orientation}. Now, since $\za$ and $R^2\za$ have the same orientation, we see that one white region $W_1$ at $p$ lies between the two incoming segments of $\za$ and $R^2\za$ and that the other white regions $W_2$ lies between the two outgoing segments of $\za$ and $R^2\za$, see Figure~\ref{fig second rotation}.
 \begin{figure}
\begin{center}
\scalebox{.8}{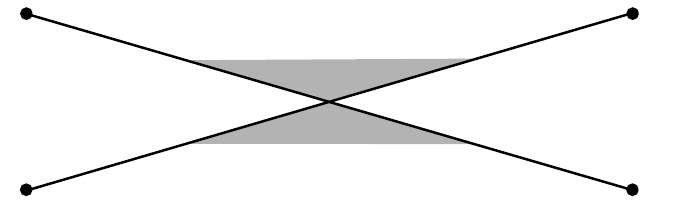}
\caption{Proof of Lemma \ref{lem:three radical lines}.
\label{fig second rotation}
}
\end{center}
\end{figure}

 Because of our assumption that every chordless cycle in $Q$ is of length 3,  the shaded region $S_1$ is a triangle. Then either the third side of $S_1$ is a boundary edge of the polygon connecting the points $c$ and $a'$, or there exists a radical line $\rho(z)$ that comes from  $W_1$, crosses $R^2\za$, then forms a boundary edge of $S_1$ and crosses $\za$, and continues in $W_2$. In particular, such a $\rho(z)$ must end at the points $b$ and $b'$, because it cannot cross $\za$ or $R^2\za$ twice. Thus $\rho(z) $ is homotopic to $R\za$ and we are done. A similar analysis for the shaded region $S_2$ shows that the only remaining case is when both $S_1$ and $S_2$ have the third side on the boundary. In that case the polygon has exactly six vertices $a,b,c,a',b',c'$, and therefore it must be one of the two shown in the first row of Figure~\ref{fig ex size}. In particular $R\za$ is a radical line.
\end{proof}

Next, we need to characterize the compatibility of the arcs in the rotation orbit. This is done in the following two lemmata.
The first lemma specifies how to choose compatible representatives of a 2-diagonal and its rotation if one of the two is a radical line. 
To illustrate the problem, consider the red arc $\zg$ in Figure~\ref{fig ex 11.1}. It is not compatible with the radical line $\rho(3)$, because the crossing sequence of $\zg$ is $(2,1),(3,4),(6,5),(8,7)$ and the one of $\rho(3)$ is either $(9),(1,2)(4,6),(5,8),(7)$ or $(1,9),(4,2),(5,6),(7,8)$, depending on whether its representative runs on the left or the right of the radical line. Neither of the two crossing sequences of $\rho(3)$ is compatible with the crossing sequence of $\zg$. 
 In order to obtain a compatible representative, we need to deform $\zg$ homotopically so that the crossing sequence becomes either $(3,1),(2,4),(6,5),(8,7)$, where the crossing with $3$ occurs at the very beginning, or $(2,1),(6,4),(8,5),(3,7)$, where the crossing with 3 occurs at the very end. We can describe the representative of the first crossing sequence by saying that $\zg$ starts at the boundary vertex labeled 18, then runs clockwise along the boundary of its first white region, crosses $\rho(3)$, then runs parallel to $\rho(3)$ until the white region that contains the other endpoint of $\zg$ and then ends by following the boundary of that white region until the boundary vertex labeled 9.
 
 \begin{lemma}
 \label{lem:homotopy radical}
 Let $\zg$ be a representative of  a 2-diagonal whose rotation $R\zg$ is a radical line  $\rho(x)$. Then $\zg$ and $R\zg$ are compatible if 
 $\zg$ starts by following the boundary of its first white region in clockwise direction, crosses $\rho(x)$ and then follows $R\zg$ up to its last white region at the other end and then ends by following the boundary of that white region in clockwise direction.
\end{lemma}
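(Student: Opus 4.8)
The plan is to verify the compatibility condition of Definition~\ref{def:compatible} directly by tracking the crossing sequences of the two arcs. Write $\zg' = R\zg = \rho(x)$. I would first set up notation for the prescribed representative: let $W_{\text{first}}$ be the white region containing the starting endpoint of $\zg$ and let $W_{\text{last}}$ be the white region containing the other endpoint. By hypothesis, $\zg$ begins by running along $\partial W_{\text{first}}$ in the clockwise direction, then crosses $\rho(x)$, then runs parallel to $\rho(x)$ (say, on one fixed side of it), and finally runs along $\partial W_{\text{last}}$ in the clockwise direction. Since a radical line is a $2$-diagonal (Lemma~\ref{lem 36}), both $\zg$ and $\rho(x)$ are genuine $2$-diagonals, and since $\zg' = R\zg$, their endpoints are one apart along the boundary; this places $\zg$ and $\rho(x)$ in exactly the geometric configuration described in Figure~\ref{fig mesh} (right) and in the proof of Proposition~\ref{homotopy}.

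The key step is to identify the crossing pairs. First I would observe that, because $\zg$ runs parallel to $\rho(x)$ on the middle segment, $\zg$ crosses precisely the radical lines $\rho(h)$ that cross $\rho(x)$, i.e.\ those with an arrow between $x$ and $h$ in $Q$ (Lemma~\ref{lem 35}), possibly together with finitely many extra radical lines that $\zg$ meets while traversing $W_{\text{first}}$ or $W_{\text{last}}$ but which $\rho(x)$ does not meet. For a radical line $\rho(h)$ crossed by both $\zg$ and $\rho(x)$, I claim that the crossing pair $(i,j)$ for $\zg$ in which $h$ appears agrees (up to the order within the pair) with the crossing pair of $\rho(x)$ in which $h$ appears. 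The point is that $\rho(x)$, being a radical line, crosses each $\rho(h)$ at the unique interior vertex $\za$ corresponding to the arrow between $x$ and $h$; the two shaded triangles at $\za$ are exactly the same two triangles that $\zg$ passes through near that crossing, because on the middle segment $\zg$ hugs $\rho(x)$ and so enters and exits each such triangle across the same pair of radical lines that bound it — namely $\rho(x)$ (replaced along $\zg$ by the segment running parallel to it) and the two neighbors of $\rho(h)$ in that triangle. Matching these triangles to the $3$-cycles of $Q$ via Lemma~\ref{lem 31} and Lemma~\ref{lem 33} gives that the vertex paired with $h$ is the same for $\zg$ as for $\rho(x)$. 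This is condition (a) of Definition~\ref{def:compatible} for all $\rho(h)$ crossed by both arcs. For the finitely many radical lines crossed by $\zg$ but not by $\rho(x)$ — those coming from $\partial W_{\text{first}}$ and $\partial W_{\text{last}}$ — there is nothing to check, since Definition~\ref{def:compatible}(a) only constrains crossings that both arcs make. The two endpoints of $\zg$ lie in $W_{\text{first}}$ and $W_{\text{last}}$; since $\zg$ leaves each of these white regions along $\partial W$ in clockwise order before meeting $\rho(x)$, the crossing pairs of $\zg$ that lie entirely inside $W_{\text{first}}$ (resp.\ $W_{\text{last}}$) involve only radical lines that $\rho(x)$ does not cross, so again no constraint is imposed. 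Hence condition (a) holds throughout, and if it happens that $\zg$ is itself a radical line then condition (b) applies and there is nothing to prove.

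The main obstacle I anticipate is the careful bookkeeping at the two transition points: where $\zg$ leaves $W_{\text{first}}$ and first crosses $\rho(x)$, and where $\zg$ leaves the parallel segment and enters $W_{\text{last}}$. At these places one must check that the ``first'' radical line $\rho(h)$ that $\zg$ crosses after entering the parallel regime, and the ``last'' one it crosses before leaving it, are paired the same way by $\zg$ and by $\rho(x)$; the potential subtlety is that, near an endpoint of $\rho(x)$, the arc $\zg$ might cross such a $\rho(h)$ inside a white region rather than inside the shaded triangle at the corresponding interior vertex, which could in principle flip the order inside the pair. I would handle this by invoking the orientation facts: by Lemma~\ref{lem orientation} the radical-line segments around a shaded region form a directed path while around a white region they alternate, and since $\zg$ and $\rho(x)$ have opposite orientations (their endpoints being one apart, as in the proof of Proposition~\ref{lem:cross}(b)), the degree of each shared crossing is forced, and this forces the correct order within each shared pair. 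Once the two transitions are pinned down, the middle segment is immediate and the argument is complete. I would keep the exposition short, referencing Figure~\ref{fig mesh} and the analogous bookkeeping already carried out in the proof of Proposition~\ref{homotopy} for the radical-line case.
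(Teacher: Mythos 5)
Your overall route is the paper's: the paper's entire proof is the observation that, since $\zg$ runs parallel to a subpath of $R\zg$, any crossing pair of $\zg$ both of whose radical lines are also crossed by $R\zg$ is automatically a crossing pair of $R\zg$, and Definition~\ref{def:compatible}(a) asks for nothing more. However, your stated key claim is strictly stronger than this and is false as written: you assert that for \emph{every} radical line $\rho(h)$ crossed by both arcs, the crossing pair of $\zg$ containing $h$ agrees (up to order) with the crossing pair of $\rho(x)$ containing $h$. This fails at the very first shared crossing, the one $\zg$ makes immediately after it crosses $\rho(x)$: there $\zg$ pairs $\rho(h)$ with $\rho(x)$ itself, whereas $R\zg$ pairs $\rho(h)$ with a different line near its endpoint. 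Concretely, in Example~\ref{ex 11.1} the prescribed representative of $\zg$ has crossing sequence $(3,1),(2,4),(6,5),(8,7)$, while the same-side representative of $R\zg=\rho(3)$ has $(1,9),(4,2),(5,6),(7,8)$: the line $\rho(1)$ is crossed by both arcs, but $\zg$ pairs $1$ with $3$ and $\rho(3)$ pairs $1$ with $9$. For the same reason your justification via ``the two shaded triangles at $\za$'' cannot be correct in general; indeed $\zg$, staying on one side of $\rho(x)$, traverses only one of the two shaded regions at each such crossing point.

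The flaw is harmless for the conclusion, but the claim must be weakened to exactly what compatibility requires: a crossing pair $(y,z)$ of $\zg$ is only constrained when $R\zg$ crosses \emph{both} $\rho(y)$ and $\rho(z)$, and every such pair is formed on the stretch where $\zg$ hugs $R\zg$, where the two arcs traverse the same shaded and white regions and hence form identical pairs (with respect to the representative of the radical line on the side that $\zg$ follows). The one pair of $\zg$ not of this form is the initial pair containing $x$ itself, which imposes no condition because $R\zg$ does not cross $\rho(x)$; likewise your worry about an order flip at the transition points is moot, since Definition~\ref{def:compatible}(a) accepts either order within a pair. With the claim restricted in this way, your argument coincides with the paper's short proof.
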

\begin{proof}
 Suppose $(y,z)$ is a crossing pair for $\zg$ and that $R\zg$ crosses both $\rho(y) $ and $\rho(z)$. Since $\zg$ runs parallel to a subpath of $R\zg$, the pair $(y,z)$ also is in the crossing sequence of $R\zg$. Thus $\zg$ and $R\zg$ are compatible.
\end{proof}

The next lemma will allow us to  fix suitable  representatives of the arcs $R^i\zg$.
\begin{lemma}
\label{lem:rotation orbit}  We can choose representatives of the elements of the rotation orbit $\{R^i \zg\}$  of $\zg$ such that for all $i$
\begin{itemize}
\item[(a)] $R^{2i+1}\zg$ is compatible with $R^{2i+2}\zg$ 
\item[(b)] There exists a homotopic deformation $(R^{2i+1}\zg)'$ of $R^{2i+1}\zg$ that is compatible with $R^{2i}\zg$.
\end{itemize}
\end{lemma}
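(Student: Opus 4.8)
The statement concerns a single rotation orbit $\{R^i\zg\}$ in the checkerboard polygon, and the goal is to pick one representative arc for each element of the orbit so that the two compatibility conditions (a) and (b) hold for every index $i$. The plan is to proceed in three stages: first dispose of the orbits that contain a radical line, then treat the orbits with no radical line, and finally verify that in all cases the chosen representatives can be made to satisfy (a) and (b) simultaneously. Since $\{R^i\zg\}$ is a finite cyclic orbit (of length $2N$, or $N$ if $\zg$ is a diameter), it suffices to make finitely many consistent choices and then check the local compatibility of consecutive arcs.

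\textbf{Stage 1: orbits containing a radical line.} Suppose some $R^{i_0}\zg$ is a radical line. By Lemma~\ref{lem:three radical lines} the radical lines in the orbit cannot be too scattered: three consecutive radical lines force the middle one to be a radical line, so the radical lines in the orbit either all occur or occur in a controlled pattern. The key point is that whenever $R^k\zg$ is a radical line $\rho(x)$, I would choose the representative of the \emph{non-radical} neighbors $R^{k\pm 1}\zg$ using Lemma~\ref{lem:homotopy radical}: the representative of $R^{k-1}\zg$ (or a homotopic deformation of it) should start by following the boundary of its first white region clockwise, cross $\rho(x)$, and then run parallel to $\rho(x)$ before ending along the boundary of the last white region. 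By Lemma~\ref{lem:homotopy radical} this makes $R^{k-1}\zg$ compatible with the radical line $R^k\zg$, and similarly on the other side. The subtlety is that the \emph{same} representative of $R^{2i+1}\zg$ must satisfy (a) with $R^{2i+2}\zg$ while a \emph{homotopic deformation} of it satisfies (b) with $R^{2i}\zg$ — this is exactly why condition (b) only requires a deformation rather than the literal arc. So I would fix the representative of each odd-indexed arc to be compatible with its even successor as in Definition~\ref{def:compatible}(a), and separately record a homotopic deformation compatible with its even predecessor. When the odd-indexed arc is itself a radical line, one invokes Definition~\ref{def:compatible}(b) for compatibility with its (radical-line) neighbor, using Lemma~\ref{lem:three radical lines} to see that adjacent arcs in the orbit are then also radical lines as needed.

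\textbf{Stage 2: orbits with no radical line, and assembling the choices.} If the orbit contains no radical line, then for every pair of consecutive arcs $(\zd, R\zd)$ in the orbit we can, by the standing remark just before Proposition~\ref{big-lemma} (``for every pair of 2-diagonals in $\cals$ related by $R$ there exist representatives $\zg, R(\zg)$ that are compatible''), choose compatible representatives. The task is to make these pairwise choices globally consistent around the cyclic orbit: I would fix the representative of each even-indexed arc once and for all, then choose the representative of each odd-indexed arc $R^{2i+1}\zg$ to be compatible with $R^{2i+2}\zg$ (giving (a)), and finally produce the homotopic deformation $(R^{2i+1}\zg)'$ compatible with $R^{2i}\zg$ (giving (b)) — the existence of such a deformation follows because both $R^{2i+1}\zg$ and any compatible representative of it relative to $R^{2i}\zg$ are representatives of the \emph{same} 2-diagonal, hence homotopic. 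The even-indexed arcs are never asked to move, so no conflict arises. In the radical-line case this same scheme applies, with Lemma~\ref{lem:homotopy radical} providing the needed representatives and deformations and Lemma~\ref{lem:three radical lines} guaranteeing that the pattern of radical lines in the orbit is compatible with the parity bookkeeping.

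\textbf{Main obstacle.} The genuine difficulty is the \emph{consistency of choices around the cyclic orbit}: compatibility is defined for an ordered pair $(\zg, R\zg)$, and we need a single representative of each orbit element that simultaneously plays the ``$R\zg$'' role relative to its predecessor and the ``$\zg$'' role relative to its successor. The device that makes this work is the asymmetry built into conditions (a) and (b): (a) is imposed on the literal odd-indexed representatives, while (b) only requires a homotopic deformation. Thus the even-indexed arcs are pinned, the odd-indexed arcs are pinned to satisfy (a), and the flexibility of homotopy absorbs all remaining constraints from (b). Verifying that the representatives produced by Lemma~\ref{lem:homotopy radical} in the radical-line case indeed have the ``start clockwise along the first white region, cross $\rho(x)$, run parallel, end clockwise along the last white region'' shape — and that this is consistent with the direction conventions on the orbit fixed by Remark~\ref{rem orientation} — is the step that requires the most care, but it is a matter of tracing the construction rather than a new idea.
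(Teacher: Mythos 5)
There is a genuine gap, and it sits exactly at the point you wave away with ``the even-indexed arcs are never asked to move, so no conflict arises.'' Compatibility in Definition~\ref{def:compatible} is a condition on a \emph{pair} of representatives, and the remark you quote only guarantees that for two 2-diagonals related by $R$ one can choose representatives of \emph{both} so that they are compatible; it does not say that an arbitrarily fixed representative of one admits a compatible representative of the other. Indeed, achieving compatibility of a pair may require adjusting both arcs: a radical line $\rho(x)$ that crosses one arc but ends at an endpoint of the other forces the first arc to cross $\rho(x)$ immediately and then follow the second, and such constraints can come from both ends, i.e.\ from radical lines attached to the endpoints of either arc. Consequently, in your scheme the fixed even representative $R^{2i}\zg$ is simultaneously constrained from two sides: by (a) at index $i-1$ it must admit a compatible representative of the odd predecessor, and by (b) at index $i$ it must admit a compatible representative of the odd successor. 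These two requirements impose shape conditions on the \emph{same} even arc, and a priori they could force it to pass on opposite sides of a crossing point of two radical lines $\rho(x),\rho(y)$, i.e.\ to differ by a non-trivial elementary homotopy, in which case no single representative works. The homotopy flexibility in (b) only absorbs the discrepancy at the odd-indexed arcs (the case of three consecutive arcs with odd middle), which you correctly identify; it does nothing for the even-indexed arcs, where the real content of the lemma lies.

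The missing idea is the argument that this conflict at an even-indexed arc cannot actually occur: in the paper one observes that a genuine conflict would require an hourglass configuration of four radical lines $\rho(w),\rho(x),\rho(y),\rho(z)$ as in Figure~\ref{fig compatible pairs}, with $\rho(x)$ and $\rho(y)$ ending at boundary vertices that are second neighbors, hence oriented in the same direction; this contradicts the cyclic orientation of the radical-line segments around a shaded region given by Lemma~\ref{lem orientation}. Without some argument of this kind (or a proof that any fixed representative admits compatible representatives of both rotated neighbors, which is essentially the same geometric statement), your construction does not go through. Your Stage~1 is also looser than needed: when radical lines occur in the orbit one has to run a short case analysis (one, two, or three radical lines among three consecutive arcs), using Lemma~\ref{lem:three radical lines} to constrain where they can sit and Lemma~\ref{lem:homotopy radical} to produce the compatible representatives; you gesture at the right ingredients, but the cases where a radical line is the even middle arc again require the simultaneity discussion above rather than a bookkeeping remark.
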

\begin{proof}
First note that if $R^j\zg,R^{j+1}\zg$ are two consecutive arcs in the rotation orbit and none of them is a radical line then we can always choose compatible representatives. Indeed, the only type of problem that we need to avoid can come from a radical line $\rho(x)$ that crosses one but not the other; say $\rho(x)$ crosses $R^{j+1}\zg$ but not $R^j\zg$.  Then $\rho(x)$ shares an endpoint $a$ with $R^j\zg$ on the boundary of $\cals$. To achieve compatibility, we only need to ensure that $R^{j+1}\zg$ crosses $\rho(x)$ as close as possible to this point $a$ and then follows $R^j\zg$.

Suppose now that we have  three consecutive arcs $R^j\zg,R^{j+1}\zg,R^{j+2}\zg$, none of which is a radical line. By the argument above, it is possible to choose  representatives such that $R^j\zg$ and $R^{j+1}\zg$ are compatible, and it is possible to choose representatives such that $ R^{j+1}\zg$ and $R^{j+2}\zg$ are compatible. If $j$ is even, this proves the statement. If $j$ is odd however, we need to make sure that we can choose the two compatible pairs simultaneously. Therefore suppose that it is possible to apply an elementary homotopy to  $R^{j+1}\zg$. Thus $R^{j+1}\zg$ crosses all four radical lines of an hourglass shape as in Figure~\ref{fig:47}. If $R^j\zg$ and $R^{j+2}\zg$ also cross all four sides of the hourglass then there is no problem. So   the only obstruction would be if there is a radical line $\rho(x)$ that crosses $R^{j+1}\zg$ but not $R^j\zg$, so that we obtain a condition on the shape of $R^{j+1}\zg$, and there is another radical line $\rho(y)$ that crosses $R^{j+1}\zg$ but not $R^{j+2}\zg$, so that we obtain a second condition on the shape of $R^{j+1}\zg$, see Figure~\ref{fig compatible pairs}. In this situation, the arc $R^{j+1}\zg$ would pass below the crossing point of $\rho(x)$ and $\rho(y)$ for compatibility with $R^{j}\zg$, and it would  pass above the crossing point for compatibility with $R^{j+2}\zg$. Moreover, this difference would be a non-trivial homotopy if and only if there are two further radical lines $\rho(w),\rho(z)$ that cross all arcs involved and make an hourglass shape as shown in the figure. This, however, is impossible, because the radical lines $\rho(x)$ and $\rho(y)$ are  oriented in the same direction, since the endpoint $c$ is the second neighbor of the endpoint $a$ on the boundary, and thus the orientation  of the boundary segments of the shaded regions in the figure is not cyclic, contradicting Lemma~\ref{lem orientation}. 
This completes the proof in the absence of radical lines in the rotation orbit.

\begin{figure}
\begin{center}
\scalebox{0.8}{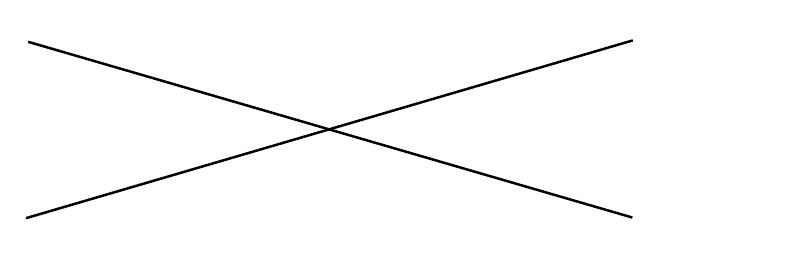}
\caption{Proof of Lemma \ref{lem:rotation orbit}}
\label{fig compatible pairs}
\end{center}
\end{figure}

Suppose now that a set of three consecutive arcs $\za,R\za,R^2\za$ contains a radical line.  If there is exactly one radical line, there are three possibilities. 
\begin{itemize}
\item[(i)] If $\za$ is the radical, we can deform $R\za$ to be compatible with $\za$ as explained in Lemma~\ref{lem:homotopy radical}. On the other hand, we can deform $R\za$ to be compatible with $R^2\za$, since none of the two is a radical line.

\item[(ii)]If $R\za$ is the radical line, we can deform both $\za$ and $R^2\za$ to be compatible with $R\za$ as explained in Lemma~\ref{lem:homotopy radical}. 

\item[(iii)]The case where $R^2\za$ is the radical line is symmetric to  case (i).

\end{itemize}
This proves the statement if the set of three arcs $R^{2i}\zg,R^{2i+1}\zg,R^{2i+2}\zg$ contains exactly one radical line.
Now suppose that there are exactly two radical lines among $\za,R\za,R^2\za$. Again there are three cases.
\begin{itemize}
\item [(iv)] If $\za$ and $R\za$ are the radical lines, then they are automatically compatible, by Definition~\ref{def:compatible}. Moreover $R^2\za$ can be deformed to be compatible with $R\za$, by Lemma~\ref{lem:homotopy radical}. 
\item[(v)] If $\za$ and $R^2\za$ are the radical lines, then Lemma~\ref{lem:homotopy radical} implies that $R\za$ is too. In this case, all three are automatically compatible.
\item[(vi)] The case where  $R\za$ and $R^2\za$ are the radical lines is symmetric to case (iv).
\end{itemize}
This proves the statement if the set of three arcs $R^{2i}\zg,R^{2i+1}\zg,R^{2i+2}\zg$ contains exactly two radical lines.
If all three arcs are radical lines, then they are automatically compatible, so there is nothing to show. 
\end{proof}


We now construct a projective resolution of $M_\zg$. We assume that the representatives $R^i\zg$ satisfy the conditions in  Lemma~\ref{lem:rotation orbit}.
 Because of condition (a), Proposition~\ref{big-lemma} implies
 \begin{equation}
\label{eq:pr1}
\im f_{R^{2i+2}\zg} =\ker \overline{f}_{R^{2i+1}\zg}, \textup{ for all $i$}.
\end{equation}
We use condition (b) in order to obtain a similar identity between the degrees $2i$ and $2i+1$ as follows. Let $(R^{2i+1}\zg)'$ be as in condition (b). From Corollary~\ref{cor:comp} we know that the homotopy gives rise to an automorphism 
\[(\varphi_{2i+1,\,1},\varphi_{2i+1,\,0})\in \Aut P_{2i+2}(\zg)\oplus \Aut P_{2i+1}(\zg)\]
such that 
\begin{equation}
\label{eq:pr1a}
 f_{(R^{2i+1}\zg)'}  =  \varphi_{2i+1,\,0}\, f_{R^{2i+1}\zg}\,\varphi_{2i+1,\,1}^{-1}
\end{equation}
is the conjugation of $ f_{R^{2i+1}\zg}$ by this automorphism.  Because ${(R^{2i+1}\zg)'}$ and 
$R^{2i}\zg$ are compatible, Proposition~\ref{big-lemma} yields
\begin{equation}
\label{eq:pr2}
\im \, \overline{ \varphi}_{2i+1,\,0}\, \overline{f}_{\!R^{2i+1}\zg}\,\overline{\varphi}_{2i+1,\,1}^{-1}
= \ker f_{\!R^{2i}\zg}, \textup{ for all $i$.}
\end{equation}
 We are now ready to write the projective resolution of $M_\zg$. 
 
\begin{prop}
 \label{prop:projective resolution}
 With the above notation, the following sequence is a projective resolution. 
 \[\begin{array}{c}\cdots\xymatrix@C45pt{ P_3(\zg)\ar[r]^{\overline{\varphi}_{1,1} f_{\!R^2\zg}} 
 &P_2(\zg)\ar[r]^{\overline{\varphi}_{1,0} \overline{f}_{\!R\zg} \overline{\varphi}_{1,1}^{-1}}
 &P_1(\zg)\ar[r]^{f_\zg}
 &P_0(\zg)\ar[r]&M_\zg\ar[r]&0}
 \\[5pt]
 \cdots\xymatrix@C85pt{
 P_{2i+2}(\zg)\ar[r]^{\overline{\varphi}_{2i+1,0}\, \overline{f}_{\!R^{2i+1}\zg} \,\overline{\varphi}_{2i+1,1}^{-1}}
&P_{2i+1}(\zg)\ar[r]^{\overline{\varphi}_{2i-1,1}\, f_{\!R^{2i}\zg}}
&P_{2i}(\zg) 
  } \cdots
  \end{array}
 \]
\end{prop}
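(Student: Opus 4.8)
The plan is to verify that the displayed sequence is a complex and that it is exact, working degree by degree, and using the results collected in Subsection~\ref{sect 6.2new} (in particular the identities \eqref{eq:pr1}, \eqref{eq:pr1a}, \eqref{eq:pr2}) together with Corollary~\ref{cor:536}. First I would observe that all the maps are well-defined: by Proposition~\ref{lem:cross}(b) we have $P_0(R^k\zg)\cong P_1(R^{k-1}\zg)$ for every $k$, so writing $P_k(\zg):=P_{k-1}(R\zg)=\dots=P_0(R^k\zg)$ gives a consistent labeling in which $f_{R^k\zg}$ (and $\overline f_{R^k\zg}$) is a map $P_{k+1}(\zg)\to P_k(\zg)$, while the automorphisms $\varphi_{2i+1,0}\in\Aut P_{2i+1}(\zg)$ and $\varphi_{2i+1,1}\in\Aut P_{2i+2}(\zg)$ of Corollary~\ref{cor:comp} (applied to the homotopy between $R^{2i+1}\zg$ and $(R^{2i+1}\zg)'$) act on the correct summands; the bars are the sign modifications of Remark~\ref{rem:bar}. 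Thus each composite in the proposed resolution is a composable pair of maps between projectives, and the choice of representatives $R^i\zg$ satisfying Lemma~\ref{lem:rotation orbit} is exactly what makes the relevant pairs of consecutive arcs compatible.

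Next I would check exactness at $P_0(\zg)$: this is the definition $M_\zg=\coker f_\zg$. Exactness at $P_{2i}(\zg)$ for $i\geq 1$: the incoming map is $\overline\varphi_{2i-1,1}\,f_{R^{2i}\zg}$ (up to the sign convention, $f_{R^{2i}\zg}$ or $\overline f_{R^{2i}\zg}$ according to parity, matching the displayed formula with $i$ replaced appropriately) and the outgoing map is $\overline\varphi_{2i-1,0}\,\overline f_{R^{2i-1}\zg}\,\overline\varphi_{2i-1,1}^{-1}$; since $\overline\varphi_{2i-1,1}$ is an automorphism, $\im(\overline\varphi_{2i-1,1} f_{R^{2i}\zg})=\overline\varphi_{2i-1,1}\,\im f_{R^{2i}\zg}$ and $\ker(\overline\varphi_{2i-1,0}\,\overline f_{R^{2i-1}\zg}\,\overline\varphi_{2i-1,1}^{-1})=\overline\varphi_{2i-1,1}\,\ker(\overline f_{R^{2i-1}\zg}\cdot(\text{stuff}))$, so the required equality reduces after cancelling the automorphism to \eqref{eq:pr1} (for the even/odd splitting relating degrees $2i+2$ and $2i+1$) or, one degree over, to \eqref{eq:pr2}; I would spell out which of \eqref{eq:pr1}, \eqref{eq:pr2} applies at degree $2i$ versus degree $2i+1$, conjugating by the appropriate $\overline\varphi$'s so that the compatibility statement of Proposition~\ref{big-lemma} is literally applicable. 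Concretely, exactness at $P_{2i+1}(\zg)$ uses that $(R^{2i+1}\zg)'$ is compatible with $R^{2i}\zg$ (Lemma~\ref{lem:rotation orbit}(b)), so Proposition~\ref{big-lemma} gives $\ker f_{R^{2i}\zg}=\im \overline f_{(R^{2i+1}\zg)'}$, and then \eqref{eq:pr1a} rewrites $\overline f_{(R^{2i+1}\zg)'}$ as the conjugated map appearing in the display; exactness at $P_{2i+2}(\zg)$ uses that $R^{2i+1}\zg$ is compatible with $R^{2i+2}\zg$ (Lemma~\ref{lem:rotation orbit}(a)), giving $\ker \overline f_{R^{2i+1}\zg}=\im f_{R^{2i+2}\zg}$, which after conjugating both sides by $\overline\varphi_{2i+1,1}$ is exactly the kernel/image match needed at that spot.

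Finally I would record that the resolution is minimal: by Proposition~\ref{lem:cross}(a) the projectives $P_0(R^k\zg)$ and $P_1(R^k\zg)$ have no common summand, i.e. $P_k(\zg)$ and $P_{k+1}(\zg)$ share no summand on which $f_{R^k\zg}$ restricts to an isomorphism, and each $f_{R^k\zg}$ has entries in the radical (they are given by non-trivial paths, never identities, by construction of Definition~\ref{def:map} once we know no summand is repeated), so no splitting occurs; this also matches the statement in Theorem~\ref{thm:omega} that the first three terms form the start of a minimal projective resolution, and periodicity (Corollary~\ref{cor:536}) guarantees the sequence is genuinely infinite and closes up after $N$ or $2N$ steps. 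I expect the main obstacle to be purely bookkeeping: matching the parity-dependent sign decorations (bars) and the indices of the homotopy automorphisms $\varphi_{2i\pm1,\bullet}$ in the display against the precise forms of \eqref{eq:pr1}, \eqref{eq:pr1a}, \eqref{eq:pr2}, and checking that conjugating an exact sequence $\im A=\ker B$ by automorphisms on the three terms preserves exactness in exactly the pattern written — a routine but easy-to-botch diagram chase rather than a conceptual difficulty.
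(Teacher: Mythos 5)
Your proof is correct and follows essentially the same route as the paper's: exactness at each spot is obtained by cancelling the automorphisms $\overline{\varphi}_{2i\pm 1,\bullet}$ (which preserve kernels and images) and then invoking the compatibility statements of Lemma~\ref{lem:rotation orbit} via equations \eqref{eq:pr1}, \eqref{eq:pr1a}, \eqref{eq:pr2}, i.e.\ Proposition~\ref{big-lemma}. The added remarks on well-definedness of the terms $P_k(\zg)$ and on minimality are consistent with Proposition~\ref{lem:cross} and Theorem~\ref{thm:omega} but are not needed for the statement itself.
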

 
\begin{proof}
 The sequence is exact at $P_1(\zg)$ by equation (\ref{eq:pr2}) with $i=0$. To show exactness at $P_{2i}(\zg)$, we note that on the one hand,
 $\im (\overline{\varphi}_{2i-1,1}\, f_{\!R^{2i}\zg}) 
 =\overline{\varphi}_{2i-1,1}(\im\, f_{\!R^{2i}})$,
 because $\overline{\varphi}_{2i-1,1}$ is an isomorphism, and on the other hand, 
 $\ker ({\overline{\varphi}_{2i-1,0}\, \overline{f}_{\!R^{2i-1}\zg} \,\overline{\varphi}_{2i-1,1}^{-1}})
 =
 \overline{\varphi}_{2i-1,1} (\ker \overline{f}_{\!R^{2i-1}\zg} )$,
 because $\overline{\varphi}_{2i-1,0}$ and $ \overline{\varphi}_{2i-1,1}^{-1}$ are isomorphisms. Now exactness follows from equation (\ref{eq:pr1}).
 
 It remains to show exactness at $P_{2i+1}(\zg)$. We have
 $\ker( \overline{\varphi}_{2i-1,1}\, f_{\!R^{2i}\zg} )=\ker f_{\!R^{2i}\zg} $, since  $\overline{\varphi}_{2i-1,1}$ is an isomorphism. Now the result follows from equation (\ref{eq:pr2}).
\end{proof}
 \smallskip
 
 We are now ready for the main result of this section. It describes the morphism of complexes induced by a 2-pivot on the projective resolutions.
 \newcommand{\newgamma}{\zd}
 \newcommand{\newphi}{\chi}
\begin{thm}\label{prop:67}
Let $\zg$ be a representative of a 2-diagonal and $\newgamma$ a representative of the 2-diagonal obtained from $\zg$ by a 2-pivot such that $\zg$ and $\newgamma$ are compatible. Choose representatives of the rotation orbits of $\zg$ and $\newgamma$ that satisfy  conditions (a) and (b) in Lemma~\ref{lem:rotation orbit}. Then the pivot morphism $g\colon M_{\zg}\to M_{\newgamma}$ induces a morphism of complexes on the projective resolutions so that the following diagram commutes.    
\[
\xymatrix@C=35pt{\  \ar@{.>}[r] 
&P_3(\zg) \ar[r]^{\overline{\varphi}_{1,1} f_{\!R^2\zg}} \ar[d]^{}
& P_2(\zg) \ar[r]^{\overline{\varphi}_{1,0} \overline{f}_{\!R\zg} \overline{\varphi}_{1,1}^{-1}} \ar[d]^{g_2^r}
& P_1 (\zg)\ar[r]^{f_{\zg}} \ar[d]^{g_1^c}& P_0(\zg) \ar[r]\ar[d]^{g_0^r} & M_{\zg} \ar[r]\ar[d]^{g} & 0
\\
\ \ar@{.>}[r]
& P_3(\newgamma) \ar[r]_{\overline{\newphi}_{1,1} f_{\!R^2\newgamma}} 
& P_2(\newgamma) \ar[r]_{\overline{\newphi}_{1,0} \overline{f}_{\!R\newgamma} \overline{\newphi}_{1,1}^{-1}} 
& P_1 (\newgamma)\ar[r]_{f_{\newgamma}} 
& P_0(\newgamma) \ar[r] & M_{\newgamma} \ar[r] & 0}
\]
\end{thm}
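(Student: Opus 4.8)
The plan is to build the lift of $g$ degree by degree out of the pivot morphisms of the \emph{rotated} $2$-pivots, and to deduce commutativity of each square from Theorem~\ref{lem 63}, Corollary~\ref{lem: pivot-bar} and Corollary~\ref{cor:comp}. First I would record two structural facts. Since the clockwise rotation $R$ is an automorphism of $\cals$, the $2$-pivot $\zg\mapsto\zd$ rotates to a $2$-pivot $R^i\zg\mapsto R^i\zd$ for every $i\ge 0$; let $g_0(R^i\zg),g_1(R^i\zg)$ and their sign-adjusted versions $g_0^r,g_0^c,g_1^r,g_1^c$ denote the associated data of Definition~\ref{def:pivot-maps} and Lemma~\ref{lem:M}. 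The second fact is a rotation-equivariance of these maps: under the identification $P_1(R^i\zg)\cong P_0(R^{i+1}\zg)$ of Proposition~\ref{lem:cross}(b), the component-wise description in Definition~\ref{def:pivot-maps} shows that $g_1(R^i\zg)$ and $g_0(R^{i+1}\zg)$ are the same matrix (the checkerboard configuration for $R^{i+1}\zg\mapsto R^{i+1}\zd$ is the one for $R^i\zg\mapsto R^i\zd$ rotated by one step, with crossing degrees consistently exchanged), and by Lemma~\ref{lem:M} this also matches the relevant $r/c$-adjusted versions up to the known sign conventions.

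Next I would define the vertical maps. Put $g_0:=g_0^r(\zg)$ and $g_1:=g_1^c(\zg)$; by the equivariance above $g_1$ is also a sign-adjusted version of $g_0(R\zg)$. For higher degrees, recall from Proposition~\ref{prop:projective resolution} that the minimal projective resolution of $M_\zg$ is assembled from the maps $f_{R^{2i}\zg}$ and $\overline f_{R^{2i+1}\zg}$, the latter conjugated by the homotopy automorphisms $(\varphi_{2i+1,1},\varphi_{2i+1,0})$ of Corollary~\ref{cor:comp}, and similarly for $M_\zd$ with automorphisms $(\chi_{2i+1,1},\chi_{2i+1,0})$. I would then set $g_{2i}$ to be the sign-adjusted pivot map of $R^{2i}\zg\mapsto R^{2i}\zd$ on the degree-$0$ parts, pre- and post-composed with the appropriate $\overline\chi$'s and $\overline\varphi^{-1}$'s, and $g_{2i+1}$ to be the sign-adjusted pivot map of $R^{2i+1}\zg\mapsto R^{2i+1}\zd$ on the degree-$1$ parts, conjugated by the corresponding homotopy automorphisms; in particular $g_2=\overline\chi_{1,1}\,g_1^r(R\zg)\,\overline\varphi_{1,1}^{-1}$.

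Finally I would check commutativity one square at a time. The rightmost square is exactly Theorem~\ref{lem 63}. For a square whose horizontal maps have the form $\overline\varphi_{2i-1,1}\,f_{R^{2i}\zg}$, I would apply Theorem~\ref{lem 63} to the pivot $R^{2i}\zg\mapsto R^{2i}\zd$ and then slide the homotopy automorphisms across the pivot maps using Corollary~\ref{cor:comp} (which identifies $f_{\zg_1}$ and $f_{\zg_2}$ via the homotopy automorphism) together with the equivariance of the pivot maps; for a square with horizontal maps $\overline\varphi_{2i+1,0}\,\overline f_{R^{2i+1}\zg}\,\overline\varphi_{2i+1,1}^{-1}$ I would argue identically, using Corollary~\ref{lem: pivot-bar} in place of Theorem~\ref{lem 63}. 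To keep the bookkeeping finite it suffices to verify the squares in degrees $0$--$1$ and $1$--$2$ only: by Theorem~\ref{thm:omega} and $\Omega^2=\tau^{-1}$, the portion of the diagram in degrees $\ge 2$ for $\zg\mapsto\zd$ coincides, under the identifications $P_{k+2}(\zg)=P_k(R^2\zg)$, with the whole diagram for the rotated pivot $R^2\zg\mapsto R^2\zd$, so an induction closes the argument; the induced map on cokernels is $g$ by Theorem~\ref{lem 63}. The compatible choices of representatives needed so that Corollary~\ref{lem: pivot-bar} applies at every degree are produced exactly as in Lemma~\ref{lem:rotation orbit} and Lemma~\ref{lem:homotopy radical}, any residual discrepancy being absorbed into the homotopy automorphisms via Corollary~\ref{cor:comp}.

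I expect the main obstacle to be this last reconciliation: three independent twists must be coordinated — the rotation $R$ (which flips crossing degrees), the homotopy automorphisms $\varphi_\bullet,\chi_\bullet$ forced by exactness in Proposition~\ref{prop:projective resolution}, and the sign conventions hidden in the $r/c$ superscripts and the matrices $J_k$ entering $\overline f$ — while still landing on representatives to which Theorem~\ref{lem 63} and Corollary~\ref{lem: pivot-bar} apply verbatim. The degree $1$--$2$ square, where $g_1^c$ (an adjusted pivot map on the degree-$1$ part of $\zg$) must be matched against $\overline f_{R\zg}$ (forcing Corollary~\ref{lem: pivot-bar} on the degree-$0$ part of $R\zg$), is where this coordination does essentially all of the work.
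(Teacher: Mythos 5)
Your overall route is the same as the paper's: commutativity of the degree $0$--$1$ square from Theorem~\ref{lem 63}, identification of the degree $2\to 1$ horizontal maps as $\overline{f}$ of representatives of $R\zg$ and $R\zd$ via equation~(\ref{eq:pr1a}), and then Corollary~\ref{lem: pivot-bar} for the degree $1$--$2$ square. But there is a genuine gap at exactly the point you flag as the crux. Corollary~\ref{lem: pivot-bar} requires the two arcs it is applied to, namely the representative $(R\zg)'$ of $R\zg$ that is compatible with $\zg$ (forced on you by exactness of the top row) and the representative $(R\zd)'$ of $R\zd$ that is compatible with $\zd$ (forced by exactness of the bottom row), to be \emph{compatible with each other}. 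Nothing you cite supplies this: Lemma~\ref{lem:rotation orbit} and Lemma~\ref{lem:homotopy radical} only arrange compatibility \emph{within} a single rotation orbit ($R^i\zg$ with $R^{i+1}\zg$), not between arcs of the two different orbits; and ``absorbing any residual discrepancy into the homotopy automorphisms via Corollary~\ref{cor:comp}'' does not work as stated, because replacing $(R\zg)'$ by a homotopic representative compatible with $(R\zd)'$ changes the horizontal map that exactness has already pinned down (or, if you conjugate, you must then re-verify that the conjugated vertical maps are still the pivot maps $g_2^r$ and $g_1^c$, which is the same problem in disguise). The paper closes this gap with a dedicated geometric argument: from the chain $(R\zd)'\sim\zd\sim\zg\sim(R\zg)'$ it deduces $(R\zg)'\sim(R\zd)'$, by analyzing a potential obstructing hourglass configuration of four radical lines and ruling it out (or showing the relevant crossing pair is shared) using the orientation of radical lines from Lemma~\ref{lem orientation}; see the argument around Figure~\ref{fig pivotcompatible}. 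Without some version of this transport-of-compatibility statement your degree $1$--$2$ square, and a fortiori your inductive extension to all higher degrees, is unproved. The same issue recurs at every later stage of your induction, since exactness keeps dictating within-orbit compatible representatives while the pivot squares demand cross-orbit compatible ones.

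Two smaller points. First, your ``rotation-equivariance'' is justified incorrectly: the checkerboard pattern is \emph{not} rotation-invariant, so the configuration for $R^{i+1}\zg\mapsto R^{i+1}\zd$ is not ``the one for $R^i\zg\mapsto R^i\zd$ rotated by one step''. The correct (and true) statement is that the degree-$1$ crossings of an arc coincide with the degree-$0$ crossings of its rotation (Proposition~\ref{lem:cross}(b)), and the arrows produced by Lemma~\ref{lem 62} depend only on the radical lines involved, so the degree-$1$ pivot matrix for $\zg\mapsto\zd$ agrees with the degree-$0$ pivot matrix for the rotated pivot up to the $r/c$ sign conventions. Second, the initial simultaneous choice of representatives (satisfying Lemma~\ref{lem:rotation orbit} \emph{and} compatibility of $\zg$ with $\zd$) needs an explicit check when $R\zg$ is a radical line, since the orbit constraints then restrict which side of $\rho(x)$ the arcs may run on; the paper handles this by making $R\zg$ run on the side nearer $\zd$ and deforming $\zg$, $\zd$ accordingly.
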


\begin{proof}
To fix notation let $a'$ be the common endpoint of $\zg$ and $\newgamma$ and let $b'$ be its clockwise neighbor on the boundary of $\cals.$ Let $a,b,c$ denote consecutive vertices on the boundary in clockwise order, such that $a$ is the second endpoint of $\zg$ and $c$ is the second endpoint of $\newgamma$. Then $R\zg$ has endpoints $b$ and $b'$, see Figure~\ref{fig pivotcompatible}.
 
First let us check that we can simultaneously choose the representatives of the arcs in the two rotation orbits such that they satisfy both  Lemma~\ref{lem:rotation orbit} as well as the condition that $\zg$ and $\newgamma$ are compatible. 
If $R\zg$ is not radical line then the lemma does not impose any restriction on $\zg$ and there is nothing to show. If $R\zg$ is a radical line $\rho(x)$, then we will choose $\zg$ and $R\zg$ according to the compatibility condition stated in Lemma~\ref{lem:homotopy radical}.
There are exactly two possibilities for $R\zg$ since it can run along either side of the radical line $\rho(x)$. We   choose $R\zg$ to run from $b$ to $b'$ on the left of $\rho(x)$ so that it is closer to $\newgamma$. Then we can choose $\zg$ to  start at $a'$, then follow the boundary of its first white region clockwise, then follow $R\zg$ until its last shaded region at $b$, then cross $R\zg=\zd(x)$ and  end by following the boundary of the white region in the counterclockwise direction until $a$. In the case where $R\newgamma$ is a radical we perform the `same' deformation to $\newgamma $ as to $\zg$ so that both are crossing $R\zg$ close to the endpoint $a'$. This choice assures that $\zg$ and $\newgamma$ are compatible. 

Therefore Theorem~\ref{lem 63} implies that 
 $g_0^r \,f_\zg = f_\newgamma\,g_1^c$ and thus the degree 0,1 square of our diagram commutes.
We also know that the rows of our diagram are exact, thanks to Proposition~\ref{prop:projective resolution}. 
Moreover, because of equation (\ref{eq:pr1a}) with $i=0$, we have 
\begin{equation}\label{eq 611} f_{(R\zg)'}  =  \varphi_{1,0}\, f_{R\zg}\,\varphi_{1,1}^{-1},\end{equation} 
thus
 the map from degree 2 to degree 1 in the first row of our diagram is equal to $\overline{f}_{(R\zg)'}$, where $(R\zg)'$ is a representative of $R\zg$ that is compatible with $\zg$. Similarly, the corresponding map in the second row is given as $\overline{f}_{(R\newgamma)'}$, where $(R\newgamma)'$ is an arc that is compatible with $\newgamma$. 
Thus we have the following chain of compatibility 
$ {(R\newgamma)'}\sim \newgamma\sim \zg \sim {(R\zg)'}$.

 We will now show that this implies that ${(R\newgamma)'}$ and ${(R\zg)'}$ are compatible as well. 
 \begin{figure}
 \begin{center}
\scalebox{0.8}{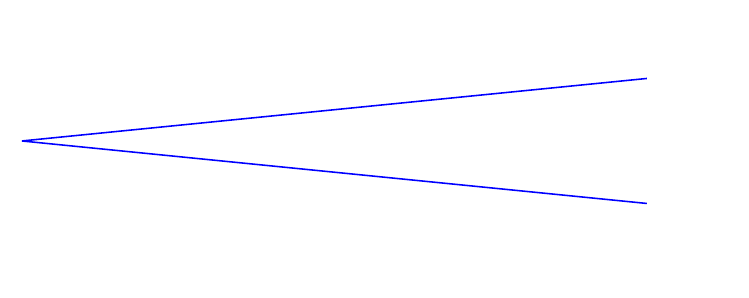}
\caption{Proof of Proposition \ref{prop:67}. In this picture , for the sake of argument, the arcs $\zg$ and $\newgamma$ are not compatible.}
\label{fig pivotcompatible}
\end{center}
\end{figure}
The relative position  of the arcs involved is illustrated in Figure~\ref{fig pivotcompatible}, however the arcs $\zg$ and $\newgamma$ are not compatible in the picture.
Suppose that $(w,x)$ is a  pair in the crossing sequence of $(R\zg)'$ such that $(R\newgamma)'$ crosses  $\rho(w) $ and $\rho(x)$. If $(w,x)$ also is a pair in the crossing sequence of $(R\newgamma)'$ we are done.
Otherwise, there exist two additional radical lines $\rho(y),\rho(z)$ that together with $\rho(w) $ and $\rho(x)$ form two triangular shaded regions meeting at the crossing point of $\rho(x)$ and $\rho(y)$, such that
none of the vertices of the shaded triangles lies on the boundary of $\cals$ and
the arcs $(R\zg)'$ and $(R\newgamma)'$
also cross $\rho(y)$ and $\rho(z)$, as shown in the figure. Then \[R\zg\sim R\zd \ssi (x,w) \textup{ is a crossing pair in both} \ssi  (y,z) \textup{ is a crossing pair in both.} \]
 This implies that both $\zg$ and $\newgamma$ cross at least three of the four radical lines and so both cross both arcs of one of the crossing pairs $(x,w)$ or $(y,z)$. Without loss of generality, we assume that they cross the pair $(x,w)$. 
Now the compatibility of $(R\zg)'$ and $\zg$ implies that $(x,w)$ is a pair in the crossing sequence of $\zg$. Therefore  we also have the pair $(x,w)$ in the crossing sequence of $\newgamma$, because $\zg$ and $\newgamma$ are compatible. But this implies that $(w,x)$ is a pair in the crossing sequence of $(R\newgamma)'$, because of the compatibility of $(R\newgamma)'$ and $\newgamma$. 
This shows that $(R\zg)'$ and $(R\newgamma)'$ are compatible.

Because of equation (\ref{eq 611}), this means that the two horizontal maps from degree 2 to degree 1 in the proposition are given by two compatible arcs $(R\zg)'$ and $(R\newgamma)'$. Therefore Corollary~\ref{lem: pivot-bar} implies that the degree 1,2 square of the diagram commutes as well, and the proof is complete.
\end{proof}
\begin{remark}
 We did not specify the vertical map in degree 3 in Proposition~\ref{prop:67}. If the  horizontal maps from degree 3 to degree 2 are also induced from a representative of a 2-diagonal, then we can iterate the argument of the proof and conclude that the vertical map in degree 3 is $g_3^c$. 
\end{remark}

\section{Proofs of section \ref{sect 7}} This section contains the proofs of several results from section~\ref{sect 7}. 
We  start by studying the matrix shape of our maps $f_\zg$ in greater detail in subsection~\ref{sect 7.1}.

\medskip
\subsection{The staircase shape of $f_\zg$}\label{sect 7.1} 
Throughout this subsection, $\zg$ is a representative of a 2-diagonal in the checkerboard polygon $\cals$. Recall that the morphism $f_\zg\colon P_1(\zg)\to P_0(\zg)$ is defined in terms of the crossing sequence of $\zg$ in Definition~\ref{def:map}.  We  assume without loss of generality that the crossing sequence is of the form $(i_1,j_1), \ldots, (i_n,j_n)$, since the cases where there is an $i_0$ (or $j_0$) that is not part of a pair, follow by the same arguments. The new key ingredient in this section is to consider the steps in the crossing sequence. Also recall that the subquiver $Q(\zg)$ of $Q$ was introduced in Definition \ref{def:5.8}.

Let $\zg$ be a 2-diagonal with crossing subsequence
 \[\xymatrix{(i_s, j_s)\ar[r]^-{\zS_s} & (i_{s+1}, j_{s+1})\ar[r]^-{\zS_{s+1}} &  \cdots\ar[r]^-{\zS_{t-1}} &  (i_t, j_t)}\]
 and suppose that every step $\zS_l$ is forward. Recall that the crossings between $\zg$ and $i_l$ are of degree zero and the crossings between $\zg$ and $j_l$ are of degree one, for $l=s, s+1, \ldots, t$.
   
 Let $\za_l$ be the arrow that connects $i_l$ and $j_l$ in the quiver $Q$. Then, 
  by condition ($Q_3$) of subsection \ref{sect 3.1.1},
  $\za_l$ lies in at most two 3-cycles in $Q$ and exactly one of them lies in the subquiver $Q(\zg)$. We denote the third vertex of that 3-cycle by $k_l$, so the cycle is $$\xymatrix@R10pt@C10pt{i_l\ar@{-}[rr]^{\za_l}&&j_l\ar@{-}[ld]\\&k_l\ar@{-}[ul]}.$$
 
 When we remove the vertex $k_l$ and the arrow $\za_l$ from the quiver $Q(\zg)$ we obtain two connected components. We say that the vertex $i_l$ is the \emph{entry} to the $l$-th pair $(i_l,j_l)$ if it belongs to the same connected component as the previous pair $(i_{l-1},j_{l-1})$, and the \emph{exit} from the $l$-th pair otherwise.

In Figure 9, the vertex $i_{s+1}$ is the entry in cases (i) and (iv) and $j_{s+1}$ is the entry in cases (ii) and (iii).

We say that the step $\zS_l$ from $l$ to $l+1$ in the crossing sequence has \emph{degree zero} if the entry to the $l$-th pair is $i_l$, and we say it has \emph{degree one} if the entry is $j_l$.

The following lemma is a reformulation of Lemma \ref{rect-trap-lemma1}.
\begin{lemma}(Rectangle-Trapezoid Lemma) \label{rect-trap-lemma}
With the notation above,   the following properties hold for all $l\ge s$.
\begin{itemize}

\item[(1a)] Suppose the step $\zS_{l+1}$ from $l+1$ to $l+2$ is rectangular. 
Then 
 there exists a pair of valid paths 
\[
\left\{
\begin{array}{ll} 
(i_{l} \leadsto j_{l+2},\  j_{l+1}\leadsto i_{l+3})
&\textup{\it if the step is of degree 0;}\\
(j_{l} \leadsto i_{l+2},\  i_{l+1}\leadsto j_{l+3})
&\textup{\it if the step is of degree 1.}
\end{array}
\right.\]
Moreover there is no valid path from $i_l$ to $j_{l+3}$ or from $j_l$ to $i_{l+3}$.

\item[(1b)] Suppose the step  $\zS_{l+1}$  from $l+1$ to $l+2$ is trapezoidal.  Then there exist a valid path 
\[\left\{\begin{array}{ll} 
i_l\leadsto j_{l+3}
&\textup{\it if the step is of degree 0;}\\
 j_l\leadsto i_{l+3}
&\textup{\it if the step is of degree 1.}\\
\end{array}\right.\]

\item[(2)] There is a valid path $i_s\leadsto j_t$ (respectively $j_s\leadsto i_t$) if and only if all the steps from $s+1$ to $t-1$ are trapezoidal and the step from $s+1$ to $s+2$ has degree 0 (respectively degree 1).  

\item[(3)] If there is a valid path $i_s\leadsto j_t$ then there are valid paths $i_{l} \leadsto j_{l'}$ for all $l, l' = s, s+1, \dots, t$ with $l<l'$. 

\item[(4)] If there exist valid paths $i_s\leadsto j_u$ and $i_t\leadsto j_w$ with $s<t<u<w$ then there exists a valid path $i_s\leadsto j_w$.  

\end{itemize}
\end{lemma}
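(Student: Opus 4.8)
The statement is Lemma~\ref{rect-trap-lemma}, which I claim is simply a reformulation of Lemma~\ref{rect-trap-lemma1} (the Rectangle-Trapezoid Lemma proved earlier in the appendix) using the new notion of the \emph{degree} of a step. So the plan is to reduce everything to Lemma~\ref{rect-trap-lemma1} by carefully matching up the two formulations.

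The plan is as follows. First I would establish the bookkeeping dictionary between the two statements. The key observation is that for a forward step $\zS_l$ from $l$ to $l+1$, there is an arrow $\za_l$ connecting $i_l$ and $j_l$, and the ``entry'' vertex (the one in the same connected component of $Q(\zg)\setminus\{k_l,\za_l\}$ as the previous pair) is exactly the vertex through which the valid path coming from $(i_{l-1},j_{l-1})$ arrives at the pair $(i_l,j_l)$. I would verify, by inspecting the four local configurations in Figure~\ref{fig:crossing-pairs} (equivalently Figure~\ref{fig:valid-paths}), that: the step $\zS_l$ has degree $0$ exactly when the arrow $\za_l$ is oriented $j_l\to i_l$, and degree $1$ exactly when $\za_l$ is oriented $i_l\to j_l$. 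Indeed, by Lemma~\ref{lem:one-step} consecutive forward steps force the $i$'s and $j$'s to sit diagonally opposite in the rectangle/trapezoid of Figure~\ref{fig:quiver-pairs}, so the entry vertex is determined by the orientation of $\za_l$. This is the main ``translation'' step and I expect it to be the one requiring the most care — one has to be disciplined about which of $i_l, j_l$ is the entry and reconcile it with the interchange of all $i$'s and $j$'s that appears throughout the proof of Lemma~\ref{rect-trap-lemma1}.

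With the dictionary in place, parts (1a) and (1b) follow directly: Lemma~\ref{rect-trap-lemma1}(1a) gives that for a rectangular step from $l+1$ to $l+2$ there is either a pair of valid paths $(i_l\leadsto j_{l+2},\ j_{l+1}\leadsto i_{l+3})$ or a pair $(j_l\leadsto i_{l+2},\ i_{l+1}\leadsto j_{l+3})$, and the dictionary tells us precisely which alternative occurs according to the degree of the step (the direction of $\za_{l+1}$ determines which way the two quivers in the proof of \ref{rect-trap-lemma1}(1a) are labelled). Likewise Lemma~\ref{rect-trap-lemma1}(1b) gives ``exactly one of $i_l\leadsto j_{l+3}$, $j_l\leadsto i_{l+3}$ is valid'' and the degree of the step pins down which one. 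For part (2), I would note that Lemma~\ref{rect-trap-lemma1}(2) says a valid path $i_s\leadsto j_t$ or $j_s\leadsto i_t$ exists iff all steps from $s+1$ to $t-1$ are trapezoidal; the extra content here is that it is $i_s\leadsto j_t$ (rather than $j_s\leadsto i_t$) precisely when the step from $s+1$ to $s+2$ has degree $0$, which again is immediate from the dictionary applied at index $s+1$ together with the quiver ``(2)'' drawn in the proof of Lemma~\ref{rect-trap-lemma1}.

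Parts (3) and (4) are identical to parts (3) and (4) of Lemma~\ref{rect-trap-lemma1}, so there is nothing to prove beyond quoting them. I would close by remarking that the statement is stated for a forward subsequence precisely so that all the structural quivers of the earlier proof are available, and no new geometric input is needed. The only genuine obstacle is the orientation/entry bookkeeping in the first paragraph; everything downstream is a direct appeal to Lemma~\ref{rect-trap-lemma1}.
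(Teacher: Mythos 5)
Your proposal is correct and takes essentially the same route as the paper: the paper's own proof of Lemma~\ref{rect-trap-lemma} is just the remark that the argument is the same as for Lemma~\ref{rect-trap-lemma1}, the refinement being exactly the bookkeeping you make explicit, namely that the degree of a step $\zS_l$ records the entry vertex of the $l$-th pair, equivalently the orientation of $\za_l$ (degree $0$ iff $\za_l\colon j_l\to i_l$, as one reads off from cases (i)--(iv) of Figure~\ref{fig:valid-paths}), which then selects which of the two symmetric labellings of the quivers in the proof of Lemma~\ref{rect-trap-lemma1} occurs in parts (1a), (1b) and (2), while (3) and (4) are quoted verbatim. So your translation is precisely the intended content of the paper's one-line proof.
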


\begin{proof}
 The proof is the same as for Lemma \ref{rect-trap-lemma1}. 
\end{proof}

 We  shall use the notation $T_0$ and $T_1$ for a trapezoidal step of degree $0$ or $1$, respectively. Similarly, $R_0$ and $R_1$ will denote a rectangular step of degree $0$ or $1$, respectively. 

\begin{corollary}
 \label{cor RTL}
 Let $\zS_{l+1}$ be a forward step in the crossing sequence of $
 \zg$ and denote the matrix of $f_\zg\colon P_1(\zg)\to P_0(\zg)$ by $f_\zg=(f_{s t})_{s,t=1,\ldots,n}$. Then 
 \begin{itemize}
\item [$(R_0$)] If the step is rectangular of degree $0$ then $f_{l(l+2)}\ne 0$ and $f_{(l+1)(l+3)}= 0$;
\item [$(R_1$)] If the step is rectangular of degree $1$ then $f_{l(l+2)}= 0$ and $f_{(l+1)(l+3)}\ne 0$;
\item [$(T_0$)] If the step is triangular of degree $0$ then  $f_{l(l+2)}\ne 0$, $f_{l(l+3)}\ne 0$, $f_{(l+1)(l+3)}\ne 0$;
\item [$(T_1$)] If the step is triangular of degree $1$ then $f_{l(l+2)} =f_{l(l+3)}=f_{(l+1)(l+3)}= 0$.
\end{itemize}
 \end{corollary}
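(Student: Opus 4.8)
The plan is to translate the four statements directly into the language of valid paths and then invoke the Rectangle--Trapezoid Lemma (Lemma~\ref{rect-trap-lemma}) and Corollary~\ref{cor RTL}'s companion facts about which matrix entries of $f_\zg$ vanish. Recall from Definition~\ref{def:map} that the entry $f_{st}$ of the matrix of $f_\zg$ is the multiplication by a valid path $i_s\leadsto j_t$ when such a path exists and all steps from $s$ to $t$ are forward, and it is $0$ otherwise. So each claim $f_{st}\ne 0$ amounts to: all intermediate steps are forward (which we are given in the hypothesis, since $\zS_{l+1}$ is a forward step sitting inside a forward subsequence) \emph{and} the relevant valid path exists; and each claim $f_{st}=0$ amounts to: the relevant valid path does not exist (or the subsequence is not forward, but here it always is).

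First I would handle the rectangular cases. For $(R_0)$: by Lemma~\ref{rect-trap-lemma}(1a) applied with the step $\zS_{l+1}$ of degree $0$, there is a valid path $i_{l}\leadsto j_{l+2}$ and there is \emph{no} valid path from $i_l$ to $j_{l+3}$ or from $j_l$ to $i_{l+3}$. The first gives $f_{l(l+2)}\ne 0$. For the vanishing $f_{(l+1)\,l\,(l+3)}=0$ in the notation of the statement --- which I read as the entry $f_{(l+1)(l+3)}$ --- I note that the absence of a valid path $i_{l+1}\leadsto j_{l+3}$ follows: indeed Lemma~\ref{rect-trap-lemma}(1a) tells us $j_{l+1}\leadsto i_{l+3}$ is valid, hence by Lemma~\ref{valid-paths-lem} applied to the triple of pairs $(i_{l+1},j_{l+1}),(i_{l+2},j_{l+2}),(i_{l+3},j_{l+3})$ the path $i_{l+1}\leadsto j_{l+3}$ cannot be valid, so $f_{(l+1)(l+3)}=0$. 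The case $(R_1)$ is the mirror image: Lemma~\ref{rect-trap-lemma}(1a) for a degree $1$ step yields a valid path $i_{l+1}\leadsto j_{l+3}$, so $f_{(l+1)(l+3)}\ne 0$, while the same lemma states there is no valid path $i_l\leadsto j_{l+3}$, and since $i_l\leadsto j_{l+2}$ would then have to be valid instead --- it is not, again by Lemma~\ref{rect-trap-lemma}(1a) which for a degree $1$ step gives the pair $(j_l\leadsto i_{l+2},\,i_{l+1}\leadsto j_{l+3})$ --- we get $f_{l(l+2)}=0$.

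Next the trapezoidal cases. For $(T_0)$: the step $\zS_{l+1}$ is trapezoidal of degree $0$. By Lemma~\ref{rect-trap-lemma}(1b) there is a valid path $i_l\leadsto j_{l+3}$, giving $f_{l(l+3)}\ne 0$. Then Lemma~\ref{rect-trap-lemma}(3) applied to this valid path $i_l\leadsto j_{l+3}$ produces valid paths $i_{l'}\leadsto j_{l''}$ for all $l\le l'<l''\le l+3$; in particular $i_l\leadsto j_{l+2}$ and $i_{l+1}\leadsto j_{l+3}$ are valid, so $f_{l(l+2)}\ne 0$ and $f_{(l+1)(l+3)}\ne 0$. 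For $(T_1)$: the step is trapezoidal of degree $1$, so Lemma~\ref{rect-trap-lemma}(1b) gives a valid path $j_l\leadsto i_{l+3}$. By Lemma~\ref{valid-paths-lem} the path $i_l\leadsto j_{l+3}$ is then \emph{not} valid, so $f_{l(l+3)}=0$. For $f_{l(l+2)}$ and $f_{(l+1)(l+3)}$: by Lemma~\ref{lem:one-step} the valid path for the single step $\zS_l$ or $\zS_{l+1}$ goes one way only; the existence of the valid path $j_l\leadsto i_{l+3}$ together with Lemma~\ref{rect-trap-lemma}(1b) (trapezoidal, degree $1$) propagates to give valid paths $j_{l'}\leadsto i_{l''}$ rather than $i_{l'}\leadsto j_{l''}$, so neither $i_l\leadsto j_{l+2}$ nor $i_{l+1}\leadsto j_{l+3}$ is valid, whence $f_{l(l+2)}=f_{(l+1)(l+3)}=0$.

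I expect the only mildly delicate point to be bookkeeping: making sure that in each case the ``all intermediate steps forward'' hypothesis of Definition~\ref{def:map} is genuinely satisfied (it is, because we are inside a forward subsequence and are only ever looking at entries $f_{st}$ with $t-s\le 3$ spanning the step $\zS_{l+1}$ and its immediate neighbors), and disentangling the indexing in the statement $f_{(l+1)\,l\,(l+3)}$, which I believe is a typo for $f_{(l+1)(l+3)}$. No genuinely new argument is needed beyond Lemmas~\ref{lem:one-step}, \ref{valid-paths-lem}, and \ref{rect-trap-lemma}; the corollary is essentially a dictionary entry reading these geometric lemmas off the matrix of $f_\zg$.
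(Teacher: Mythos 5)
Your proof is correct and follows essentially the same route as the paper's: each case is read off Lemma~\ref{rect-trap-lemma} (parts (1a), (1b), (3)) together with the exclusivity of valid paths from Lemma~\ref{lem:one-step}/Lemma~\ref{valid-paths-lem} and Definition~\ref{def:map}, with your reading of the misprinted index as $f_{(l+1)(l+3)}$ matching the paper's own argument. No gaps.
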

\begin{proof}
 ($R_0$) Part (1a) of Lemma \ref{rect-trap-lemma} implies that there exists a valid path $i_l\leadsto j_{l+2}$, thus  $f_{l(l+2)}\ne 0$. The lemma also implies that there is a valid path $j_{l+1}\leadsto i_{l+3}$,  thus there is no valid path $i_{l+1}\leadsto j_{l+3}$, and hence  $f_{(l+1)(l+3)}= 0$. This shows ($R_0$) and the proof for ($R_1$) is similar.
 
 ($T_0$) Part (1b) of the lemma implies there exists a valid path $i_l\leadsto j_{l+3}$, and therefore part (3) of the lemma guarantees the existence of valid paths $i_l\leadsto j_{l+2}$ and $i_{l+1}\leadsto j_{l+3}$. Thus all three entries  $f_{l(l+3)}, f_{l(l+2)}$ and $f_{(l+1)(l+3)}$ are nonzero.
  
 ($T_1$) In this case, part (1b) of the lemma yields a valid path $j_l\leadsto i_{l+3}$, hence there are no valid paths
 $i_l\leadsto j_{l+3}$,
  $i_l\leadsto j_{l+2}$ and $i_{l+1}\leadsto j_{l+3}$. Hence all three entries $f_{l(l+3)}, f_{l(l+2)}$ and $f_{(l+1)(l+3)}$ are zero.
\end{proof}

Next we show that rectangular steps change the degree  and trapezoidal steps preserve the degree.
\begin{lemma}
 \label{lem degree}
 Let $\xymatrix{(i_s, j_s)\ar[r]^-{\zS_l}& (i_{s+1}, j_{s+1})\ar[r]^-{\zS_{l+1}}&(i_{s+2}, j_{s+2})}$ be two consecutive forward steps in the crossing sequence for $\zg$.
 
\begin{itemize}
\item[(a)] If $\zS_l$ is rectangular then $\zS_l$ and $\zS_{l+1}$ have opposite degrees.

\item[(b)] If $\zS_l$ is trapezoidal then $\zS_l$ and $\zS_{l+1}$ have the same  degree.

\item[(c)] $f_{(s+1)(s+1)}$ is an arrow if and only if $f_{s(s+2)}=0$.
\end{itemize}

\end{lemma}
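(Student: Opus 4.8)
The statement concerns two consecutive forward steps $\zS_l$, $\zS_{l+1}$ in the crossing sequence, and it is really a bookkeeping exercise with the ``entry/exit'' vertices of each crossing pair, combined with the degree-zero/degree-one orientation data coming from Lemma~\ref{lem orientation}. The key observation is that ``degree of a step $\zS_l$'' records which of $i_l, j_l$ is the \emph{entry} to the pair $(i_l,j_l)$, while the rectangular/trapezoidal distinction records whether $Q(W,\zg)$ for the intermediate white region $W$ has an even or odd number of vertices; the exit vertex of $(i_l,j_l)$ is then forced. So the first thing I would do is make explicit, for each step, the identification: the entry to $(i_{l+1},j_{l+1})$ is the exit of $(i_l,j_l)$, and the exit of $(i_l,j_l)$ is $i_l$ or $j_l$ according to the parity of the number of vertices of the quiver between the two pairs (Figure~\ref{fig:quiver-pairs}, cases (a)--(d)). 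For a rectangular step the entry and exit of $(i_l,j_l)$ sit at positions of \emph{opposite} parity along the white region (hence opposite type, $i$ versus $j$), while for a trapezoidal step they sit at positions of the \emph{same} parity. Together with Lemma~\ref{lem:one-step} (which says $i_l$ and the entry/exit of the next pair lie diagonally opposite, i.e.\ at equal parity along the shared white region), this pins down whether the entry to $(i_{l+1},j_{l+1})$ is $i_{l+1}$ or $j_{l+1}$.

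For part (a): if $\zS_l$ is rectangular of degree $0$, the entry to $(i_l,j_l)$ is $i_l$, so the exit is the opposite-parity vertex, which by Lemma~\ref{lem:one-step} matches $j_{l+1}$ — hence $j_{l+1}$ is the entry to $(i_{l+1},j_{l+1})$ and $\zS_{l+1}$ has degree $1$. The degree-$1$ case is symmetric. For part (b): if $\zS_l$ is trapezoidal, the entry and exit of $(i_l,j_l)$ have the same parity, so the exit is again of type $i$ when the entry is (degree $0$), matching $i_{l+1}$ by Lemma~\ref{lem:one-step}, giving $\zS_{l+1}$ degree $0$; symmetric for degree $1$. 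Concretely, one reads this off the four quivers of Figure~\ref{fig:crossing-pairs} or, more cleanly, off Lemma~\ref{rect-trap-lemma}(1a)--(1b): the pair of valid paths produced there for a rectangular step switches the roles of $i$ and $j$ between step $\zS_l$ and step $\zS_{l+1}$ (e.g.\ $i_l\leadsto j_{l+2}$ together with $j_{l+1}\leadsto i_{l+3}$), whereas for a trapezoidal step the valid path $i_l\leadsto j_{l+3}$ (or $j_l\leadsto i_{l+3}$) is built by chaining valid paths of the \emph{same} type, forcing equal degrees. I would phrase the argument in exactly this ``which valid path exists'' language since that is already the language in which the later matrix-shape corollaries are stated.

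For part (c): combining Lemma~\ref{lem:arrow} with Corollary~\ref{cor RTL}. Lemma~\ref{lem:arrow} states that when the two steps $\zS_l$ (from $s$ to $s+1$) and $\zS_{l+1}$ (from $s+1$ to $s+2$) are both forward, there is a valid path $i_s\leadsto j_{s+2}$ if and only if the arrow $\za_{s+1}$ between $i_{s+1}$ and $j_{s+1}$ is oriented $j_{s+1}\to i_{s+1}$, and at least one of $f_{(s+1)(s+1)}, f_{s(s+2)}$ is nonzero. The entry $f_{(s+1)(s+1)}$ is by definition multiplication by the path $i_{s+1}\leadsto j_{s+1}$: if $\za_{s+1}\colon i_{s+1}\to j_{s+1}$ this path is the single arrow $\za_{s+1}$, which is always valid, so $f_{(s+1)(s+1)}\ne 0$; if $\za_{s+1}\colon j_{s+1}\to i_{s+1}$ this path is the length-two composition through $k_{s+1}$ in the $3$-cycle, which contains two arrows of the same $3$-cycle, hence is \emph{not} valid and $f_{(s+1)(s+1)}=0$. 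So $f_{(s+1)(s+1)}$ is an arrow (equivalently: nonzero) precisely when $\za_{s+1}\colon i_{s+1}\to j_{s+1}$, precisely when there is \emph{no} valid path $i_s\leadsto j_{s+2}$, i.e.\ $f_{s(s+2)}=0$. This is a short argument once Lemma~\ref{lem:arrow} is invoked, and it is essentially the $t-s=2$ instance of the rectangle/trapezoid dichotomy: $f_{(s+1)(s+1)}$ an arrow corresponds to $\zS_l=R_?$ hitting a rectangle, while $f_{s(s+2)}\ne0$ corresponds to $\zS_l$ being part of a trapezoidal pattern through $(i_{s+1},j_{s+1})$.

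\textbf{Main obstacle.} The real work is not any single step but keeping the orientation conventions straight: ``degree of a step'' (an entry/exit notion internal to the quiver $Q(\zg)$) must be correctly matched with ``degree of a crossing'' (the left-to-right vs.\ right-to-left notion from Definition~\ref{def degree}), and one must be careful that the alternation of radical-line directions around a white region (Lemma~\ref{lem orientation}) is being applied to the \emph{right} white region — the one between $(i_l,j_l)$ and $(i_{l+1},j_{l+1})$ for parts (a), (b), and the shaded region carrying $\za_{s+1}$ for part (c). I expect the cleanest writeup simply cites Figure~\ref{fig:quiver-pairs}/Figure~\ref{fig:crossing-pairs} for (a),(b) and Lemma~\ref{lem:arrow} plus the definition of $f_{(s+1)(s+1)}$ for (c), rather than re-deriving the parity bookkeeping from scratch.
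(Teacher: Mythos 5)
Your argument is essentially the paper's: the published proof of Lemma~\ref{lem degree} consists of a single sentence citing Figure~\ref{fig:valid-paths}, i.e.\ exactly the case-by-case reading of the local quiver for three consecutive forward crossing pairs that you carry out via Figures~\ref{fig:quiver-pairs}--\ref{fig:crossing-pairs}, Lemma~\ref{lem:one-step} and Lemma~\ref{lem:arrow}. Parts (a), (b) and the logical chain you give for (c) (namely: $f_{(s+1)(s+1)}$ is an arrow $\iff$ $\za_{s+1}\colon i_{s+1}\to j_{s+1}$ $\iff$ no valid path $i_s\leadsto j_{s+2}$ $\iff$ $f_{s(s+2)}=0$, the middle equivalence being Lemma~\ref{lem:arrow}) are correct.

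One statement in your treatment of (c) is wrong and should be removed: the claim that when $\za_{s+1}\colon j_{s+1}\to i_{s+1}$ the path $i_{s+1}\leadsto j_{s+1}$ ``is not valid and $f_{(s+1)(s+1)}=0$'', and the parenthetical ``is an arrow (equivalently: nonzero)''. In Definition~\ref{def:map} the validity clause (ii) applies only to off-diagonal entries ($s\neq t$); for $s=t$ the entry is \emph{defined} to be the path $i_s\to\bullet\to j_s$ through the third vertex of the $3$-cycle, and this length-two path is nonzero in $B$ unless the arrow $j_s\to i_s$ is a boundary arrow (Lemma~\ref{lem 37}). So a diagonal entry can very well be nonzero without being an arrow; the paper uses this elsewhere (e.g.\ the case ``$f_{tt}$ is not an arrow'' in the appendix lemmas, and the step in the proof of indecomposability where $\za_s$ interior forces $f_{ss}\neq 0$). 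Since part (c) asserts ``is an arrow'' rather than ``is nonzero'', your conclusion survives once you replace that justification by the direct observation that $f_{(s+1)(s+1)}$ is an arrow precisely when the connecting arrow is oriented $i_{s+1}\to j_{s+1}$, which is all the chain needs.
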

\begin{proof}
 This follows immediately from Figure \ref{fig:valid-paths} in Section \ref{sect 5.1}.
 \end{proof}

\begin{example}\label{ex 74}
 Suppose the crossing sequence of $\zg$ consists of the following eight steps $T_0T_0T_0R_0T_1R_1R_0R_1$.  Then the matrix $f_\zg$ is of the form
 \[
\left[\begin{matrix}
 f_{11}&f_{12} &f_{13} &f_{14} &f_{15}&0&0&0&0\\
 0&f_{22} &f_{23} &f_{24} &f_{25}&0&0&0&0\\
  0&0 &f_{33} &f_{34} &f_{35}&0&0&0&0\\
  0&0 &0 &f_{44} &f_{45}&0&0&0&0\\
  0&0 &0 &0 &f_{55}&f_{56}&0&0&0\\
  0&0 &0 &0 &0&f_{66}&f_{67}&f_{68}&0\\
  0&0 &0 &0 &0&0&f_{77}&f_{78}&0\\
  0&0 &0 &0 &0&0&0&f_{88}&f_{89}\\
  0&0 &0 &0 &0&0&0&0&f_{99}\\
\end{matrix}\right]
\]
Indeed, the two consecutive $T_0$ in steps 2 and 3 imply the nonzero entries in positions 13, 14, 15, 24, 25 and 35. The first $R_0$ yields a nonzero entry in position 35 and a zero in position 36, which then gives zeros in positions 16 and 26. The fifth step $T_1$ yields zeros in positions 46, 47 and 57. The $R_1$ in step 6 gives a zero in position 57 and a nonzero entry in position 68, and the $R_0$ in step 7 gives a nonzero entry in position 68 and a zero in position 79, which also implies that position 69 is zero. The last step $R_1$ gives a zero in position 79.

The matrix $f_\zg$ thus has staircase shape with blocks limited by the columns 5,6,8 and 9.
\end{example}

We now describe  in general the staircase shape  of $f_\zg$ observed in the example. 
Suppose the crossing sequence of $\zg$ is $\xymatrix{(i_1, j_1)\ar[r]^-{\zS_1}& (i_{2}, j_{2})\ar[r]^-{\zS_2}& \cdots \ar[r]^-{\zS_{n-1}}&(i_n, j_n)}$ and that every step is forward. We define the \emph{source} $s(\zS_l)$ and the \emph{target} $t(\zS_l)$ of the step $\zS_l$ to be the integers $s(\zS_l)=l$ and $ t(\zS_l)=l+1$.
\begin{definition}\label{def ti}
Define a sequence of integers $1=t_0<t_1<\ldots<t_p=n$ recursively by setting 
\[t_1=\min
\left\{
\begin{array}{ll}
\textup{target of the first rectangular step of degree 0 after step 1,}\\ \textup{target of the first trapezoidal step of degree 1 after step 1,}
\\
\textup{source of the first rectangular step of degree 1 after step 1}
\end{array}
\right\}
\]
and recursively, 

\[t_{l+1}=\min
\left\{
\begin{array}{ll}
\textup{target of the first rectangular step of degree 0 after step $t_l-1$,}\\ \textup{target of the first trapezoidal step of degree 1 after step $t_l-1$},
\\
\textup{source of the first rectangular step of degree 1 after step  $t_l$}
\end{array}
\right\}
\]
if it exists, and $t_{l+1}=n$ and $p=l+1$, otherwise.
This sequence is called the \emph{staircase sequence of $f_\zg$}.
\end{definition}

Note that the definition of the staircase sequence is independent of the first step in the crossing sequence.

In Example \ref{ex 74}, we have $t_1=5$, because step four is $R_0$, $t_2=6$, because step five is $T_1$, and $t_3=8$, because step seven is $R_0$. Thus the staircase sequence in this example is $1<5<6<8<9$, as observed in the matrix of Example~\ref{ex 74}.

\begin{prop}\label{prop 71}
 The matrix $f_\zg$ has an upper triangular staircase shape whose nonzero blocks are limited by the columns $t_1,t_2,\ldots, t_n$ as illustrated in the left picture in Figure \ref{fig staircase}.
 
 In other words, $f_\zg$ is nonzero on the main diagonal $(f_{ss})_{s=1,\ldots n}$ (and on the diagonal  $(f_{s(s+1)})_{s=1,\ldots n-1}$) as well as on $p$ triangular regions $J_1,J_2,\ldots,J_{p}$, where the first row of the region $J_1$ is the row $t_{l-1}$, the last column of $J_l$ is the column $t_l$ and the hypotenuse of $J_l$ is given by the second diagonal $(f_{s(s+1)})_{ t_{l-1}\le s \le t_l-1}$.
\end{prop}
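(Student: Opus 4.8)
\textbf{Proof strategy for Proposition~\ref{prop 71}.}
The plan is to prove the staircase shape by showing that, for every pair $(s,t)$ with $s<t$, the entry $f_{st}$ vanishes whenever $t$ lies strictly beyond the block containing row $s$, i.e.\ whenever there exists an index $t_l$ from the staircase sequence with $s < t_l < t$. Together with the fact that $f_{st}=0$ for $s>t$ (immediate from Definition~\ref{def:map}, since all steps are forward), and that the entries on the main diagonal and on the super-diagonal are nonzero (Lemma~\ref{lem:one-step} for $f_{s(s+1)}$, and the paragraph after Definition~\ref{def:map} together with Lemma~\ref{lem:arrow} for $f_{ss}$), this is exactly the asserted shape: the nonzero region is confined to the union of the diagonals $s=t$, $t=s+1$, and the triangles $J_l$ whose columns run from $t_{l-1}+1$ to $t_l$ and whose rows run from $t_{l-1}$ upward.

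\textbf{Key steps, in order.}
First I would record the ``block boundary'' lemma: if $f_{st}\neq 0$ with $s<t$, then there is no staircase index $t_l$ with $s<t_l<t$. The cleanest route is by contradiction. Suppose $f_{st}\neq 0$; by Definition~\ref{def:map} the subsequence from $s$ to $t$ is forward and the path $i_s\leadsto j_t$ is valid. By Lemma~\ref{rect-trap-lemma} part (2), this forces \emph{all} steps $\zS_{s+1},\dots,\zS_{t-2}$ (the interior steps of the subsequence) to be trapezoidal, and the first of them, $\zS_{s+1}$, to have degree $0$. Now apply Lemma~\ref{lem degree}(b) repeatedly: a trapezoidal step preserves degree, so every interior step $\zS_{s+1},\dots,\zS_{t-2}$ is trapezoidal of degree $0$. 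Finally I would check directly against Definition~\ref{def ti} that no staircase index can fall strictly between $s$ and $t$: a staircase index is by construction the target of a rectangular step of degree $0$, the target of a trapezoidal step of degree $1$, or the source of a rectangular step of degree $1$; none of these can occur among the steps $\zS_{s+1},\dots,\zS_{t-2}$ since those are all trapezoidal of degree $0$. One must handle the two endpoints carefully — the step $\zS_s$ entering the subsequence and the step $\zS_{t-1}$ leaving it — but for $\zS_s$ its target is $s+1$ and its source is $s$, and for $\zS_{t-1}$ its target is $t$ and source $t-1$, so neither can produce an index strictly between $s$ and $t$ unless that index equals $s+1$ from $\zS_s$ or $t-1$ from $\zS_{t-1}$; in those boundary cases one checks with Lemma~\ref{lem degree}(a) and Corollary~\ref{cor RTL} that the relevant step cannot simultaneously be of the type that produces a staircase index and be consistent with $f_{st}\neq 0$. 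Once this lemma is in hand, the complementary statement — that $f$ genuinely attains nonzero values on the hypotenuses of the triangles $J_l$, i.e.\ on $f_{s(s+1)}$ for all $s$ — is Lemma~\ref{lem:one-step}, and the description of $J_l$'s rows and columns is just bookkeeping from the definition of the $t_l$.

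\textbf{Main obstacle.}
The substantive difficulty is the boundary analysis at the two ends of each maximal trapezoidal-degree-$0$ run, because the staircase sequence is defined via a mixture of ``targets'' and ``sources'' of rectangular steps, and the degree of a rectangular step \emph{changes} across it (Lemma~\ref{lem degree}(a)). Concretely, when row $s$ sits just after a rectangular step of degree $1$ (whose source was the previous $t_l$), one must verify that the run of trapezoidal steps forced by $f_{st}\neq 0$ really does begin at degree $0$ and cannot have been ``absorbed'' into the previous block; this is where Corollary~\ref{cor RTL} cases $(R_0)$ and $(R_1)$ get used to pin down exactly which entries on the third diagonal vanish, thereby propagating zeros leftward via Lemma~\ref{rect-trap-lemma}(3). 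I expect the write-up to consist largely of a careful case split on the type of step $\zS_{t_{l-1}-1}$ and $\zS_{t_l-1}$, guided by Lemma~\ref{lem degree} and Corollary~\ref{cor RTL}; none of the individual cases is deep, but assembling them without gaps is the part that needs care. An illustrative example such as Example~\ref{ex 74} should be kept at hand as a sanity check that the indexing conventions in Definition~\ref{def ti} line up with the block boundaries in the matrix.
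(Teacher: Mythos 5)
Your vanishing argument is fine, and it uses exactly the toolkit the paper assembles for this purpose (the paper in fact states Proposition~\ref{prop 71} without a written proof, treating it as a consequence of Definition~\ref{def ti}, Lemma~\ref{lem degree} and Corollary~\ref{cor RTL}, as illustrated in Example~\ref{ex 74}). In particular: the reduction via Lemma~\ref{rect-trap-lemma}(2) to ``all interior steps $\zS_{s+1},\dots,\zS_{t-2}$ are $T_0$'', the observation that a staircase index $m$ with $s<m<t$ would force one of these steps, or one of the two boundary steps $\zS_s$, $\zS_{t-1}$, to be $R_0$, $T_1$ or $R_1$, and the elimination of the boundary cases $m=s+1$ and $m=t-1$ by Corollary~\ref{cor RTL} combined with Lemma~\ref{rect-trap-lemma}(3) all check out; and the passage from ``no $t_l$ strictly between $s$ and $t$'' to containment in some triangle $J_l$ is indeed routine.

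The gap is that you only prove half of the statement. Proposition~\ref{prop 71} asserts not merely that the nonzero entries are confined to the two diagonals and the triangles, but that $f_\zg$ \emph{is} nonzero on the triangular regions $J_l$; you verify nonvanishing only on the superdiagonal (via Lemma~\ref{lem:one-step}) and assert it on the main diagonal. The nonvanishing of the interior entries $f_{st}$ with $t_{l-1}\le s<t\le t_l$ and $t>s+1$ is genuinely part of the content and is used later (for instance in the proof of Lemma~\ref{lem 713}, where $t_{l-1}\le s\le t_l-1$ is used to conclude $f_{s(t_l-1)}\ne 0$), and it does not follow from your block-boundary lemma, whose implication runs in the opposite direction. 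The missing step is the converse observation: by the minimality in Definition~\ref{def ti}, a step $\zS_m$ with $t_{l-1}<m<t_l-1$ can be neither $R_0$ nor $T_1$ (its target $m+1<t_l$ would be a smaller candidate) nor $R_1$ (its source $m<t_l$ would be), hence is $T_0$; this is Corollary~\ref{cor difference}, and feeding it back into Lemma~\ref{rect-trap-lemma}(2) produces a valid path $i_s\leadsto j_t$ for every position inside $J_l$, so those entries are nonzero. Separately, Lemma~\ref{lem:arrow} does not give $f_{ss}\ne 0$ — it only says that one of $f_{ss}$, $f_{(s-1)(s+1)}$ is nonzero, and $f_{ss}$ can in fact vanish in $B$ when the arrow joining $i_s$ and $j_s$ is a boundary arrow oriented $j_s\to i_s$ (compare case (ii) in the proof of Lemma~\ref{lem:factor}); this caveat is shared by the statement itself, but your citation should not suggest the diagonal claim is covered by that lemma.
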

\begin{figure}
\begin{center}
\newcommand{\sto}{ \scalebox{0.5}{>}}
\scriptsize \scalebox{0.9}{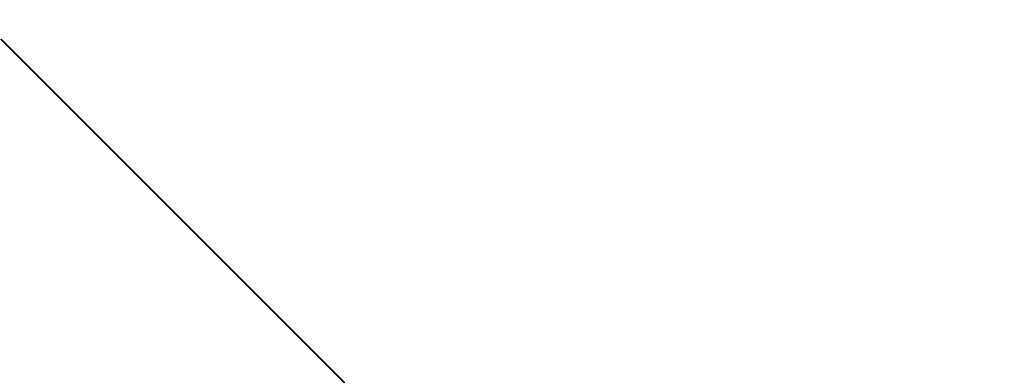}
\caption{On the left, the staircase shape of the matrix of $f_\zg$. Entries in blank positions are zero and entries in positions marked with a bullet point are given by arrows. The regions $B_l$ are framed in red. The picture on the right indicates the blocks  of the matrix $g_0$. The blocks $B_{l,u}$ are the framed columns above the diagonal; the entries in the shaded blocks are zero. The blocks $A_{l,u}$ are the framed columns with bullet points. These blocks project horizontally onto the positions of the arrows marked by bullet points in the matrix of $f_\zg$.}
\label{fig staircase}
\end{center}
\end{figure}


We now define two types of blocks $A_l$ and $B_l$, both being subsets of column $t_l$ of the matrix of $f_\zg$. 

\begin{definition} \label{def blocks of f}
\begin{itemize}
\item[(a)] Let $a_l$ be the least positive integer such that the entry in $f_\zg$  at position $(a_l,t_l)$ is an arrow. We denote by $A_l$ the following block in $f_\zg$ 
\[A_l=[f_{a_l t_l},f_{(a_l +1)t_l},\ldots,f_{t_l,t_l}]^T \] 
Then each of the positions in $A_l$ is given by an arrow in the quiver. In the example in the left picture of Figure~\ref{fig staircase}, the positions of $A_l$ are marked by the symbol $\bullet$ .

 \item[(b)] We denote by $B_l$ the last column  of the triangular region $J_l$ in the proposition. Thus 
\[B_l= 
\begin{bmatrix}
 f_{t_{l-1} t_l} & f_{(t_{l-1}+1) t_l}& \cdots & f_{(t_l-1) t_l}
\end{bmatrix}^T\]
 In the left picture of Figure~\ref{fig staircase}, these blocks are the  framed in red color.
\end{itemize}
\end{definition}

 Let $a_l$ be the least positive integer such that the entry in $f_\zg$ at position $(a_l,t_l)$ is an arrow. Then $a_l\le t_l$ and each of the positions $(a_l,t_l),(a_l+1,t_l),\ldots,(t_l,t_l)$ in $f_\zg$ is given by an arrow in the quiver. These positions are marked by a $\bullet$ in the left picture of Figure~\ref{fig staircase}.

\begin{corollary}
 \label{cor difference}
 With the notation above, 
the sequence of steps $\zS_{t_{l-1}},\zS_{t_{l-1}+1},\ldots,\zS_{t_{l}-1}$ is equal to 
 \[\left\{\begin{array}{ll} T_1  &\textup{if $t_{l}-t_{l-1}=1$};\\
 R_1\underbrace{T_0\cdots T_0}_{t_{l}-t_{l-1}-2} R_0  &\textup{if $t_{l}-t_{l-1}\ge 2$}.\\
 \end{array}\right.\]
%
In particular, $\zS_{t_{l-1}} =R_1$ unless $t_{l}-t_{l-1}=1$.
\end{corollary}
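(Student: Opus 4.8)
\textbf{Proof plan for Corollary~\ref{cor difference}.}

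The plan is to read off the claim directly from the definition of the staircase sequence $t_0<t_1<\dots<t_p$ in Definition~\ref{def ti}, interpreting it block by block. Fix an index $l$ and consider the consecutive targets $t_{l-1}<t_l$. By definition, $t_l$ is the smallest value among: (i) the target of the first $R_0$ after step $t_{l-1}-1$; (ii) the target of the first $T_1$ after step $t_{l-1}-1$; (iii) the source of the first $R_1$ after step $t_l$ --- wait, more precisely the source of the first $R_1$ strictly after the previous breakpoint. The first thing I would do is carefully unwind these three cases, keeping track of whether the minimizing step is of degree $0$ or degree $1$ and whether it is rectangular or trapezoidal. The key structural input is Lemma~\ref{lem degree}: a rectangular step flips the degree between consecutive steps, and a trapezoidal step preserves it. So once we know the degree of the step $\zS_{t_{l-1}}$, the degrees of all subsequent steps up to $\zS_{t_l-1}$ are forced by their rectangular/trapezoidal type.

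Next I would treat the two cases of the claim. If $t_l - t_{l-1} = 1$, then there is a single step $\zS_{t_{l-1}}$, and by minimality of $t_l$ this step must be the one realizing one of the three options in Definition~\ref{def ti} with target $t_{l-1}+1$; the only option whose target is one more than its source and which is itself a ``stopping'' step compatible with the previous breakpoint having been passed is $T_1$ (a trapezoidal step of degree $1$, whose target is source$+1$). One checks that $R_0$ would also have target source$+1$, but then the block would not have been a ``fresh'' block --- I need to verify that the bookkeeping in Definition~\ref{def ti} forces this first step after $t_{l-1}-1$ to be $T_1$ rather than $R_0$, using that $t_{l-1}$ was itself chosen as a target of an $R_0$ or a $T_1$ or a source of an $R_1$, which pins down the degree entering step $t_{l-1}$. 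If $t_l - t_{l-1}\geq 2$, then the block must begin with $\zS_{t_{l-1}} = R_1$ (this is exactly the third option in the recursion: $t_{l-1}$ was the \emph{source} of an $R_1$), and it must end with $\zS_{t_l - 1} = R_0$ (since $t_l$ is the \emph{target} of that $R_0$), and no $T_1$, $R_1$ (as a source), or $R_0$ (as a target) can occur strictly between them by minimality of $t_l$. Hence every intermediate step $\zS_{t_{l-1}+1},\dots,\zS_{t_l-2}$ is either $T_0$ or $R_0$ (as a non-target) or $R_1$ (as a non-source); but Lemma~\ref{lem degree} together with the fact that $\zS_{t_{l-1}}=R_1$ (degree $1$) and $\zS_{t_{l-1}+1}$ follows it forces the degree to flip to $0$ after a rectangular step, and then a second rectangular step would flip it back to $1$, making it an $R_1$ whose source lies strictly between $t_{l-1}$ and $t_l$ --- contradicting minimality. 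So all intermediate steps are trapezoidal of degree $0$, i.e. $T_0$, and the block is $R_1\, T_0\cdots T_0\, R_0$ with exactly $t_l - t_{l-1} - 2$ copies of $T_0$.

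The last sentence of the corollary, that $\zS_{t_{l-1}}=R_1$ unless $t_l - t_{l-1}=1$, is then immediate from the $t_l-t_{l-1}\geq 2$ case. I expect the main obstacle to be the careful case analysis at the \emph{left end} of each block: showing that the first step $\zS_{t_{l-1}}$ is necessarily $R_1$ (resp.\ $T_1$) and not some other type with the same source/target arithmetic. This requires tracking exactly which of the three clauses of Definition~\ref{def ti} produced the breakpoint $t_{l-1}$ and invoking Lemma~\ref{lem degree} to transport the degree information across it; there is a small amount of care needed for the very first block (the base case $t_0=1$), where the ``previous'' step does not exist, but Definition~\ref{def ti} is explicitly written to be independent of the first step, so the same argument applies verbatim starting the scan at step $1$. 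Once the left-end analysis is pinned down, the rest is a direct consequence of Corollary~\ref{cor RTL} (which tells us the nonzero-entry pattern produced by each step type) and Proposition~\ref{prop 71} (which assembles these into the staircase), so no further work is needed to match the block description with the matrix shape.
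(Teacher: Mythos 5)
Your plan is essentially the paper's own proof: a case analysis on which of the three clauses of Definition~\ref{def ti} produced the breakpoint $t_{l-1}$, Lemma~\ref{lem degree} to transport the degree of the steps across the block, and minimality of $t_l$ to force the interior steps to be $T_0$ and the final one $R_0$ (the closing appeal to Corollary~\ref{cor RTL} and Proposition~\ref{prop 71} is unnecessary for this statement but harmless). One caution: your remark that the argument applies ``verbatim'' to the first block at $t_0=1$ overstates the case, since there the degree of $\zS_1$ is not pinned down and the block need not begin with $R_1$ (the first block $T_0T_0T_0R_0$ in Example~\ref{ex 74}), but the paper's proof likewise only treats blocks whose left endpoint arises from one of the three clauses, so this does not affect the comparison.
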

\begin{proof}
Let $m=t_{l}-t_{l-1}$. 
 Since $t_{l-1}$ is in the staircase sequence  of Definition \ref{def ti}, we must have $\zS_{t_{l-1}-1}=R_0$ or $T_1$, or $\zS_{t_{l-1}}=R_1$. 
 
Suppose first that $\zS_{t_{l-1}-1}=R_0$. Then $\zS_{t_{l-1}}$ is of degree 1, by Lemma \ref{lem degree}. If $\zS_{t_{l-1}}=T_1$ then $t_{l}=t(\zS_{t_{l-1}}) =t_{l-1}+1$, which is the  case $m=1$ of the statement.
If $\zS_{t_{l-1}}=R_1$ then its terminal is not in the staircase sequence, and thus $m>1$. The following step $\zS_{t_{l-1}+1}$ is of degree 0. If it is $R_0$ then its terminal is $t_{l}$, and we have $m=2$. Otherwise,  $\zS_{t_{l-1}+1}=T_0$, and the following step  $\zS_{t_{l-1}+2}$ is of degree 0. Continuing this way, we construct the sequence in the corollary. 

If $\zS_{t_{l-1}-1}=T_1$, then again $\zS_{t_{l-1}}$ is of degree 1, by Lemma \ref{lem degree}, and the proof is the same as above. 

Suppose now that $\zS_{t_{l-1}}=R_1$. Then the degree of $\zS_{t_{l-1}+1}$ is 0. If $\zS_{t_{l-1}+1}=R_0$ then $m=2$ and the sequence is $R_1R_0$. Otherwise 
 $\zS_{t_{l-1}+1}=T_0$ and the following step $\zS_{t_{l-1}+2}$ is of degree 0. Again we obtain the sequence in the corollary by repeating this argument.
\end{proof}

 The next lemma discusses the position of arrows in $f$.  
 
 \begin{lemma}\label{dots}
 Suppose $f_{st}$ is an arrow such that $t-s\geq 2$.  Then exactly one of the following statements holds.  
 \begin{itemize}
 \item[(i)] $f_{st}, f_{(s+1)t}, \dots, f_{tt}$ are arrows and $f_{s't'}$ are not arrows for all $s'\geq s, t'<t, s'\leq t'$ and $(s',t')\not=(s,s)$.
   \item[(ii)] $f_{ss}, f_{s(s+1)}, \dots, f_{st}$ are arrows and $f_{s't'}$ are not arrows for all $s'> s, t'\leq t, s'\leq t'$ and $(s',t')\not=(t,t)$.
\end{itemize}
\end{lemma}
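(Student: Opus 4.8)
The statement is a structural claim about where \emph{arrows} (as opposed to longer valid paths or zeros) can appear in the matrix of $f_\zg$. The key input is that $f_{st}$ is an arrow with $t-s \ge 2$, which by Lemma~\ref{lem:arrow} and the analysis of Section~\ref{sect 5.1} forces a very particular local configuration: if $f_{st}$ is an arrow with $s\ne t$, then (by definition of $f_\zg$) all steps from $s$ to $t$ are forward and there is a valid path $i_s\leadsto j_t$ which happens to be a single arrow. The plan is to reduce the statement to the Rectangle--Trapezoid Lemma (Lemma~\ref{rect-trap-lemma1}, reformulated as Lemma~\ref{rect-trap-lemma}) together with Lemma~\ref{lem degree} and Corollary~\ref{cor RTL}. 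First I would invoke Lemma~\ref{rect-trap-lemma}(2): since $i_s\leadsto j_t$ is a valid path and $t-s\ge 2$, all steps from $s+1$ to $t-1$ are trapezoidal, and the step from $s+1$ to $s+2$ has some degree $d\in\{0,1\}$; by Lemma~\ref{lem degree}(b) all the intermediate steps share this degree $d$, and since $i_s\leadsto j_t$ (rather than $j_s\leadsto i_t$) is the valid path, $d=0$. So $\zS_{s+1},\dots,\zS_{t-1}$ are all $T_0$.

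\textbf{Key steps.} The next step is to pin down the shape of the quiver $Q(\zg)$ on the crossing pairs $(i_s,j_s),\dots,(i_t,j_t)$. Because all intermediate steps are trapezoidal of degree $0$, the quiver $Q(\zg)$ over this range looks exactly like picture (2) in the proof of Lemma~\ref{rect-trap-lemma1}: the $k_l$-vertices ($l=s{+}1,\dots,t{-}1$) form the ``outer boundary'' and the valid path $i_s\leadsto j_t$ runs through $k_{s+1},k_{s+2},\dots,k_{t-1}$. Now the hypothesis that $f_{st}$ is a single arrow means this whole path through the $k_l$'s collapses to length one; I would argue this can happen in exactly two ways, determined by the direction of the arrow $\alpha_{s}$ between $i_s$ and $j_s$ (equivalently $\alpha_t$ between $i_t$ and $j_t$): either $i_s$ is connected directly to $j_t$ because the ``staircase'' is entirely on one side --- corresponding to case (i), where column $t$ down to row $s$ consists of arrows (the $\bullet$-entries of Figure~\ref{fig staircase} forming a block $A_l$) --- or $j_t$ is reached from $i_s$ across the top, corresponding to case (ii), where row $s$ from column $s$ to column $t$ consists of arrows. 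In the first case, by Corollary~\ref{cor RTL} and Lemma~\ref{lem degree}(c), the entries $f_{st},f_{(s+1)t},\dots,f_{tt}$ are all arrows (this is precisely the block $A_l$ of Definition~\ref{def blocks of f}), while Lemma~\ref{valid-paths-lem} and Lemma~\ref{rect-trap-lemma}(3) force everything strictly to the left in rows $\ge s$ to be either zero or a composition of $\ge 2$ arrows (hence not an arrow), the one exception being the diagonal entry $f_{ss}$, which by Lemma~\ref{lem degree}(c) is an arrow precisely because $f_{s(s+2)}$ (being an arrow, hence nonzero, assuming $s+2\le t$) is nonzero. The second case is dual, using the same three lemmas applied along the row. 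Finally I would verify the two cases are mutually exclusive: if both held, rows $s$ through $t$ in columns $s$ through $t$ would all be arrows, which by Lemma~\ref{lem degree}(c) would require $f_{s(s+2)}=0$ for the diagonal entries to be arrows, contradicting $f_{s(s+2)}$ being an arrow when $t-s\ge 2$.

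\textbf{Main obstacle.} The delicate point is the bookkeeping that translates ``$f_{st}$ is a single arrow, not a longer valid path'' into the precise dichotomy and the non-arrow-ness of all the other entries in the relevant triangle; this requires carefully matching the combinatorics of the staircase sequence (Definition~\ref{def ti}) and the shapes in Figure~\ref{fig:valid-paths} and Figure~\ref{fig staircase} against the direction of the arrow $\alpha_s$. In particular one must rule out ``hybrid'' configurations where the valid path $i_s\leadsto j_t$ is short but some off-diagonal entry in the interior is also an arrow; this is handled by Lemma~\ref{lem:ii-paths} (no valid paths among the $i$'s or among the $j$'s) together with the observation that in picture (2) of the proof of Lemma~\ref{rect-trap-lemma1} any entry $f_{s't'}$ with $s<s'\le t'<t$ factors through a $k_l$ vertex and hence, if nonzero, has length $\ge 2$. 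I expect the write-up to be mostly a matter of carefully drawing the two quivers and reading off the matrix, rather than any genuinely new argument, since all the hard combinatorics is already encapsulated in the Rectangle--Trapezoid Lemma.
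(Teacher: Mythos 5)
There is a genuine gap, and it sits exactly at the heart of the lemma. The content of the statement is that when the valid path $i_s\leadsto j_t$ degenerates to a single arrow, the whole configuration between the pairs $s$ and $t$ collapses onto a \emph{single} hub, either $j_t$ (case (i)) or $i_s$ (case (ii)), and that no mixed configuration (some arrows $i_{s'}\to j_t$ together with some arrows $i_s\to j_{t'}$ with $s<s'$, $t'<t$) can occur; a mixed configuration would violate both (i) and (ii), so excluding it is not a formality. You assert the collapse ``can happen in exactly two ways'' and justify the exclusion of hybrids by picture (2) in the proof of Lemma~\ref{rect-trap-lemma1} together with the remark that interior entries ``factor through a $k_l$ vertex and hence have length $\geq 2$''. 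But that picture is drawn under the hypothesis that the valid path $i_s\leadsto j_t$ passes through pairwise distinct auxiliary vertices $k_{s+1},\dots,k_{t-1}$, which is precisely what fails here; whether (and through which hub) an entry such as $f_{(s+1)(s+2)}$ factors is exactly what has to be proved, and Lemma~\ref{lem:ii-paths} does not rule the mixture out. Note also that Corollary~\ref{cor RTL} and the rectangle--trapezoid results only control which entries are \emph{nonzero}, never whether they are single arrows, so even the positive assertions in (i)/(ii) are not obtained from the results you cite. The paper supplies the missing argument by an induction along the crossing pairs (Figure~\ref{fig:lem_dots}): the case split is on whether $f_{s(s+1)}$ is an arrow, and at each step one first deduces the next arrow of the flower (otherwise the neighbouring entry of $f_\zg$ would fail to be valid, hence vanish, a contradiction) and then that the adjacent off-column entry is a length-two path through the hub.

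Your proposed dichotomy criterion, ``determined by the direction of $\alpha_s$ (equivalently $\alpha_t$)'', is also unjustified and is not the invariant that governs the two cases. In case (ii) the statement does force $f_{ss}$ to be an arrow, i.e.\ $\alpha_s\colon i_s\to j_s$; but in case (i) the entry $(s,s)$ is expressly excepted, and nothing you cite (the rectangle--trapezoid machinery only pins down $\alpha_l\colon j_l\to i_l$ for $s<l<t$; the step from $s$ to $s+1$ is not forced to be trapezoidal) prevents case (i) from occurring with $\alpha_s\colon i_s\to j_s$, so this equivalence would itself require proof; the correct splitting datum, as in the paper, is whether $f_{s(s+1)}$ is an arrow. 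In addition, Lemma~\ref{lem degree}(c) is misapplied twice: it states that $f_{(s+1)(s+1)}$ is an arrow if and only if $f_{s(s+2)}=0$, so from $f_{s(s+2)}\neq 0$ one concludes that $f_{(s+1)(s+1)}$ is \emph{not} an arrow, and nothing at all about $f_{ss}$ (which need not be an arrow in case (i), and whose status the lemma deliberately leaves open); moreover in case (i) with $s+2<t$ the entry $f_{s(s+2)}$ is itself not an arrow, contrary to what you assume. Mutual exclusivity needs none of this: since $s+1<t$, statement (ii) makes $f_{s(s+1)}$ an arrow while statement (i) forbids it. The opening reductions of your plan (all intermediate steps trapezoidal of degree $0$) are fine, but filling the gaps above essentially forces the paper's step-by-step argument, so as it stands the proposal is not a complete proof.
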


\begin{proof}
Since $f_{st}$ is an arrow, it follows that the entries $f_{s't'}$ in the triangular region with $s'\geq s, t'\leq t, s'\leq t'$ are given by valid paths.  Then Lemma~\ref{lem degree}(c) implies that $f_{s's'}$ with $s<s'<t$ are not arrows.   In particular, since $t-s\geq 2$ we have that $f_{(s+1)(s+1)}$ is not an arrow. 

First, suppose that $f_{s(s+1)}$ is not an arrow, then we want to show that statement (i) of the lemma holds.   Then there is an arrow $j_t\to j_{s+1}$, because if this path were of length two or more then the path $i_{s+1}$ to $j_t$ would not be valid, hence $f_{(s+1)t}=0$, a contradiction.  Then since $f_{st}$ is an arrow, there is a path of length two $i_s\to j_t\to j_{s+1}$, see Figure~\ref{fig:lem_dots} on the left.   Note that the path from $j_s$ to $i_{s+1}$ is either a single arrow or a composition of two arrows.  If $t=s+2$ then we identify the two vertices labeled $j_t$ and $j_{s+2}$ given in the figure.   Then both $f_{(s+1)t}, f_{(s+2),t}$ are arrows and statement (i) holds.  Otherwise, if $t>s+2$ then the quiver is as in the figure.  In particular, 
$f_{(s+1)(s+2)}$ is not an arrow, so there is a path of length two $i_{s+1}\to j_t\to j_{s+2}$.  If $t=s+3$ then condition (i) holds by the same reasoning as above, and otherwise if $t>s+3$ we can continue the argument in the same way.  This shows the lemma in the case when $f_{s(s+1)}$ is not an arrow.

Now, suppose that $f_{s(s+1)}$ is an arrow, then we want to show that statement (ii) of the lemma holds.  We are in the situation of Figure~\ref{fig:lem_dots} on the right.  In particular, since $f_{s(s+1)}$ is an arrow then $f_{(s+1)(s+1)}$ is not an arrow, because otherwise there would be no valid path from $i_s$ to $j_t$ and thus $f_{st}=0$.  Then $f_{ss}$ is an arrow.  Moreover $f_{s(s+1)}$ is an arrow, because otherwise again the path from $i_s$ to $j_t$ would not be valid.  If $t=s+2$, then $f_{ss}, f_{s(s+1)}, f_{s(s+2)}$ are arrows and (ii) holds.  If $t>s+2$ then $f_{s(s+3)}$ is an arrow and we can continue the argument in the same way to conclude that condition (ii) holds. This shows the lemma in the case when $f_{s(s+1)}$ is an arrow and completes the proof. 
\end{proof}

\begin{figure}
\[\xymatrix{&j_{s+2}\ar[r] & i_{s+2}\ar[dl] &&&&j_{s+2}\ar[r] & i_{s+2}\ar[dl]\\
i_s\ar[r] & j_t \ar[r] \ar[u]& j_{s+1}\ar[d] \ar[u] &&& j_t& i_s\ar[r] \ar[l] \ar[d] \ar[u]& j_{s+1}\ar[d]\ar[u]\\
j_s \ar@{..>}[rr] && i_{s+1}\ar[ul] &&&& j_s\ar[r] & i_{s+1}\ar[ul]}\]
\caption{Proof of Lemma~\ref{dots}.}
\label{fig:lem_dots}
\end{figure}

In the example of Figure~\ref{fig staircase} we have $f_{t_4t_5}$ is an arrow and $f_{t_4(t_5-1)}$ is not, so the positions $f_{(t_4+1)t_5}, \dots, f_{t_5t_5}$ are arrows.

\subsection{Nilpotent endomorphisms of $M_\zg$}\label{Asect 7.2} 
 In this subsection, we provide the proofs for section \ref{sect 7.2}.
\subsubsection{A preparatory lemma}
We need the following lemma from homological algebra.

\begin{lemma}\label{lem:cmp-morphism}
Let $g: M\to M'$ be a morphism in $\textup{mod}\,B$ that induces maps $g_0, g_1$ on the projective presentations of $M$ and $M'$ such that the following diagram commutes. 

\[\xymatrix{
P_1 \ar[r]^{f} \ar[d]_{g_1} & P_0\ar[d]^{g_0}\ar[r]^{\pi} \ar@{-->}[dl]^{h_1}& M \ar[r] \ar[d]^g \ar@{-->}[dl]^{h}& 0 \\
P_1' \ar[r]_{f'} & P_0' \ar[r]_{\pi'} & M' \ar[r] & 0
}
\]

Then $g$ is zero in $\underline{\textup{mod}}\,B$ if and only if $g_0=f' h_1+ g'$ for some maps $h_1, g'$ such that $g'f=0$.  
\end{lemma}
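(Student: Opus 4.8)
The statement is a standard fact about the stable category, phrased in the concrete language of projective presentations, so the plan is to unwind the definition of ``$g$ is zero in $\underline{\textup{mod}}\,B$'' and match it with the factorization condition on $g_0$. Recall that $g=0$ in $\underline{\textup{mod}}\,B$ means that $g\colon M\to M'$ factors through a projective module, and since $\pi'\colon P_0'\to M'$ is a projective cover (hence an epimorphism from a projective), $g=0$ in $\underline{\textup{mod}}\,B$ is equivalent to the existence of a morphism $h\colon M\to P_0'$ with $\pi' h = g$. This is the reformulation I would establish first, citing the fact that any morphism factoring through a projective factors through $\pi'$ because $P_0'$ is projective and $\pi'$ is onto.

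\textbf{Key steps.} First I would prove the ``only if'' direction. Suppose $g = \pi' h$ for some $h\colon M\to P_0'$. Set $h_1 := h\pi\colon P_0\to P_0'$. Then $\pi' h_1 = \pi' h \pi = g\pi = \pi' g_0$, so $\pi'(g_0 - h_1) = 0$, which means $g_0 - h_1$ factors through $\ker \pi' = \operatorname{im} f'$; since $P_0$ is projective, $g_0 - h_1$ lifts through $f'$, i.e. there is $h_1'\colon P_0\to P_1'$ with $g_0 - h_1 = f' h_1'$... but wait, I need the factorization to read $g_0 = f' h_1 + g'$ with $h_1\colon P_0\to P_1'$ and $g' f = 0$. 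So more carefully: from $\pi'(g_0 - h\pi)=0$ I lift $g_0 - h\pi$ through $f'$ to get $h_1\colon P_0\to P_1'$ with $f' h_1 = g_0 - h\pi$; then set $g' := h\pi$, so $g_0 = f' h_1 + g'$, and $g' f = h\pi f = 0$ since $\pi f = 0$. That gives the required form. For the ``if'' direction, suppose $g_0 = f' h_1 + g'$ with $g' f = 0$. Then $g' f = 0$ means $g'$ kills $\operatorname{im} f = \ker\pi$, so $g'$ factors through $\pi$: there is $h\colon M\to P_0'$ with $h\pi = g'$. Now I compute $\pi' g_0 = \pi' f' h_1 + \pi' g' = 0 + \pi' h \pi = \pi' h \pi$. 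On the other hand $\pi' g_0 = g\pi$ by commutativity of the original diagram. Since $\pi$ is an epimorphism, $g\pi = \pi' h\pi$ forces $g = \pi' h$, so $g$ factors through the projective $P_0'$ and hence is zero in $\underline{\textup{mod}}\,B$.

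\textbf{Main obstacle.} There is no serious obstacle here; the only thing requiring care is bookkeeping of which lift goes where — in particular making sure the $h_1$ produced in the ``only if'' direction lands in $P_1'$ (using projectivity of $P_0$ together with surjectivity of $f'$ onto $\ker\pi'$) and that the decomposition $g_0 = f'h_1 + g'$ comes out with exactly the asserted properties. I would also note explicitly that the dashed arrows $h_1, h$ in the diagram are precisely the maps being constructed, so the diagram is really a picture of the statement rather than extra data. The whole argument is a diagram chase of at most a dozen lines, so in the write-up I would present both implications compactly, being explicit only about the two lifting steps (lifting $g_0 - h\pi$ through $f'$, and factoring $g'$ through $\pi$) and the two cancellation steps (using $\pi f = 0$ and using that $\pi$ is epi).
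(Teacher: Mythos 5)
Your argument is correct and is essentially the same as the paper's proof, which carries out the identical diagram chase (factor $g$ through $\pi'$, use $\pi'(g_0-h\pi)=0$ and projectivity of $P_0$ to lift through $f'$, and conversely use $g'f=0$ with exactness to factor $g'$ through $\pi$), just written as a single chain of equivalences rather than two separate implications.
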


\begin{proof}
The following sequence of arguments yields the desired result.  
\begin{align*}
g \text{ is zero in } \underline{\textup{mod}}\,B && \text{ iff } &&& g \text{ factors through the projective cover of } M'\\
&& \text{ iff } &&& g=\pi'h \text{ for some } h: M\to P_0'\\
&& \text{ iff } &&& g\pi = \pi' h \pi \text{ because } \pi \text{ is surjective}\\
&& \text{ iff } &&& \pi'g_0 = \pi' h \pi \text{ because the diagram commutes}\\
&& \text{ iff } &&& \pi'(g_0-h\pi)=0\\
&& \text{ iff } &&& g_0-h\pi \text{ factors through } f'\\
&& \text{ iff } &&& g_0-h\pi = f' h_1 \text{ for some } h_1: P_0\to P_1'\\
&& \text{ iff } &&& g_0=f'h_1+h\pi\\
&& \text{ iff } &&& g_0=f'h_1+g' \text{ such that } g'f=0. \qedhere
\end{align*}
\end{proof}

\subsubsection{The compositions $g_0f_{\zg}$ and $f_\zg g_1$}\label{sec:comp}
We study the composions of these morphisms as products of matrices.  We denote the entries of the matrix of $f_\zg$ by $f_{uv}$ as before, and the entries of $g_0$ by $i_{uv}$ and those of $g_1$ by $j_{uv}$. Hence  paths from $i_v$ to $i_u$ will contribute to the entry $i_{uv}$, and similarly, paths from $j_v$ to $j_u$ contribute to $j_{uv}$.

The following lemmata describe the matrix entries in the composition $g_0f_\zg$ column by column. There are four cases to consider depending on the direction of the two steps in the crossing sequence immediately before and after the crossing pair at position $t$. The first lemma deals with the case where both steps are forward. Part (a) describes the entries above the diagonal and part (b) the entries below the diagonal. 
\begin{lemma}  \label{lem 7.1}
{\rm (The case $\to (i_t,j_t) \to$)}.
 Let $
\begin{bmatrix}
 0&\ldots& 0& f_{ut} &f_{(u+1)t}& \ldots& f_{tt}& 0&\ldots& 0
\end{bmatrix}^T$ be the column $t$ of $f_\zg$ with $u<t$.
 \begin{enumerate}
\item [(a)]
 For all $s=1,2,\ldots, u-1$
 \begin{equation}\label{eqlem 7.1a} 
 (g_0\,f_\zg)_{st}=i_{su}f_{ut}+i_{s(u+1)}f_{(u+1)t} 
 \end{equation}
 Moreover, if $f_{uu}\colon i_u\to j_u$ is a single arrow and 
 the steps before and after $(i_u,j_u)$ are of the form
 $\xymatrix{(i_{u-1},j_{u-1}) &(i_u,j_u)\ar[l]_(0.4){\zS_{u}}\ar[r]^(0.4){\zS_{u+1}}& (i_{u+1},j_{u+1})
 }$
 then $i_{ s(u+1)} f_{(u+1)t }=0$, so $(g_0f_\zg)_{st}= i_{su}f_{ut}$.

\item [(b)]
For all $s=t+1,t+2,\ldots, n$
 \begin{equation}\label{eqlem 7.1b} 
 (g_0\,f_\zg)_{st}=i_{st}f_{tt}+i_{st'}f_{t't} 
 \end{equation}
 where $t'<t$ is the largest integer such that $f_{t't}$ is an arrow
 and 
 
 $\left\{\begin{array}{ll}
  i_{st}f_{tt}=0 & \textup{if $f_{tt}$ is not an arrow;}   \\
  i_{st'}f_{t't}=0 &\textup{if no such $t'$ exists.}   \\
 \end{array}
\right.$
\end{enumerate}
 \end{lemma}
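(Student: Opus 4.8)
\textbf{Proof plan for Lemma~\ref{lem 7.1}.}
The statement is a computation of the matrix entries of the composite $g_0 f_\zg$ in a single column, so the approach is to unwind the definitions of $g_0$, $f_\zg$ and the product, and then invoke the structural results about $f_\zg$ and about paths in $Q$ to identify which products of paths survive and which collapse. Throughout, $g_0$ denotes the endomorphism of $P_0(\zg)$ induced by a nilpotent endomorphism $g$ of $M_\zg$ as in diagram~(\ref{diagram 72}), so we may apply Lemma~\ref{lem:ab} (hence Corollary~\ref{cor:534}): the off-diagonal entries of $g_0$ are non-constant paths $i_h\leadsto i_{h'}$ in $Q$, and $(g_0 f_\zg)_{s,t}=0$ whenever $(f_\zg)_{s,t}\neq 0$. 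The first thing I would record is that the $t$-th column of $f_\zg$ has the staircase form given in Proposition~\ref{prop 71}: the nonzero entries in column $t$ occupy rows $u,u+1,\dots,t$ for some $u\le t$, and by Lemma~\ref{dots} the entries $f_{ut},f_{(u+1)t},\dots,f_{tt}$ in the ``arrow block'' $A_l$ (Definition~\ref{def blocks of f}) are genuine arrows of $Q$ once $t-u\ge 2$; this is exactly the hypothesis ``$u<t$''.

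For part (a), fix $s<u$. By definition of matrix multiplication, $(g_0 f_\zg)_{st}=\sum_{h}(g_0)_{sh}(f_\zg)_{ht}$, and $(f_\zg)_{ht}\neq 0$ only for $h\in\{u,u+1,\dots,t\}$; moreover for $s<u$ the diagonal entry $(g_0)_{ss}$ does not contribute because the corresponding $(f_\zg)_{st}=0$ (we are above the block). So $(g_0 f_\zg)_{st}=\sum_{h=u}^{t}i_{sh}f_{ht}$ where $i_{sh}:=(g_0)_{sh}$. The key claim is that $i_{sh}f_{ht}=0$ for all $h\ge u+2$, so that only the two terms in~(\ref{eqlem 7.1a}) survive. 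This is where Lemma~\ref{lem degree}, Lemma~\ref{dots} and Lemma~\ref{zero-relations-lemma} come in: I would argue that $i_{sh}$ is a non-constant path in $Q$ ending at $i_h$, that $f_{ht}$ (for $h$ in the interior of the arrow block) is an arrow $\za_h$ whose composite with the last arrow of any path $i_{su}\leadsto i_h$ coming through the 3-cycles of $Q(\zg)$ is forced to pass through a zero relation — precisely because the block $A_l$ sits in the quiver $Q(\zg)$ as a "staircase" of 3-cycles and the only valid way out is through the top two rows. Concretely, I expect to use Corollary~\ref{cor:534} applied at positions $(s,h)$ with $u<h\le t$ together with the fact (Lemma~\ref{valid-paths-lem}, Lemma~\ref{rect-trap-lemma1}) that a valid path $i_s\leadsto j_h$ factoring through $i_{h}$ would force $f_{s(u+1)}$ or $f_{su}$ to vanish, contradicting $s<u$. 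The supplementary assertion — that when $f_{uu}$ is a single arrow and the surrounding steps are the displayed rectangles/trapezoids, the term $i_{s(u+1)}f_{(u+1)t}$ also vanishes — is the same kind of argument one step further: by Lemma~\ref{lem degree}(c) the shape of the steps at $u$ forces $f_{s(u+1)}=0$ for $s<u$, hence any path $i_{s(u+1)}$ composed with the arrow $f_{(u+1)t}$ runs into a relation; I would cite Lemma~\ref{lem:arrow} and Lemma~\ref{zero-relations-lemma} here.

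Part (b) is the mirror image below the diagonal. Fix $s>t$. Now the row $s$ of $g_0$ can only hit column $t$ of $f_\zg$ through entries $(f_\zg)_{ht}$ with $h\le t$, i.e. $h$ in $\{u,\dots,t\}$, and $(g_0)_{sh}$ is a path $i_s\leadsto i_h$. The claim is that only $h=t$ (the diagonal term $i_{st}f_{tt}$) and $h=t'$ (the largest index below $t$ for which $f_{t't}$ is an arrow) can contribute, with the degenerate cases handled by: if $f_{tt}$ is not an arrow — i.e. it is a length-two path through the 3-cycle — then $i_{st}f_{tt}$ contains a zero relation by Lemma~\ref{zero-relations-lemma} (the path $i_s\leadsto i_t$ followed by going around the 3-cycle at $\za_t$ is zero), and if no such $t'$ exists then the column $t$ has no arrow below the diagonal apart from $f_{tt}$, so there is simply nothing to contribute. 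The intermediate terms $h$ strictly between $t'$ and $t$ vanish because $f_{ht}$ is then a non-arrow valid path, and Lemma~\ref{lem:comm} (together with Proposition~\ref{prop 38} and Proposition~\ref{prop 39}) shows $i_{sh}f_{ht}$ is either zero or gets absorbed into the $t'$ or $t$ term by commutativity of parallel paths. The main obstacle in both parts is bookkeeping: correctly translating the geometric ``staircase/rectangle/trapezoid'' picture of $Q(\zg)$ (Figures~\ref{fig:quiver-pairs},~\ref{fig:valid-paths},~\ref{fig:crossing-pairs}) into precise statements about which length-two subpaths lie in a common 3-cycle and hence vanish in $B$, and then matching these against the possible shapes of the off-diagonal entries of $g_0$ supplied by Lemma~\ref{lem:ab}. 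I would organize the proof around a single figure of the column-$t$ neighbourhood of $Q(\zg)$ and dispose of each candidate index $h$ by one of the three zero/commutation lemmas, treating the degenerate endpoint cases ($f_{tt}$ not an arrow, $t'$ nonexistent, $f_{uu}$ a single arrow) as explicit sub-cases at the end.
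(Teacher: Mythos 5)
Your outline has the right skeleton — expand $(g_0f_\zg)_{st}=\sum_{r=u}^{t} i_{sr}f_{rt}$ and kill the intermediate terms — and this matches the paper's approach in shape, but the mechanisms you propose for the killing do not work, and they are exactly the heart of the lemma. Corollary~\ref{cor:534} cannot be the engine: it controls whole entries $(g_0f_\zg)_{s,h}$ only at positions where $f_{s,h}\ne 0$, which fails for $s<u$ by the staircase shape, and in any case it can never give term-by-term vanishing of a single product $i_{sh}f_{ht}$; moreover the entry being computed in~(\ref{eqlem 7.1a}) is generally nonzero, so no global commutation constraint can isolate the two surviving terms. What actually does the work is a path argument independent of $g$: by Corollary~\ref{cor difference} the steps between $t_l=u$ and $t_{l+1}$ form the pattern $R_1T_0\cdots T_0R_0$, and in the resulting local quiver both the $g_0$-entry $i_{sr}$ (any path $i_s\leadsto i_r$ with $s<u$) and the $f$-entry $f_{rt}$ are forced through one and the same third vertex of a 3-cycle (the vertex $x$, respectively $j_t$ when $r=t=t_{l+1}$), so the composite contains an oriented cycle and vanishes by Proposition~\ref{prop 38}. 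Your phrase ``forced to pass through a zero relation'' gestures at this, but the concrete justification you give (``a valid path $i_s\leadsto j_h$ factoring through $i_h$ would force $f_{su}$ or $f_{s(u+1)}$ to vanish'') does not produce the forced revisit of a vertex, which is the only thing that makes these products zero. (Your preliminary claim that $f_{ut},\dots,f_{tt}$ are all arrows once $t-u\ge 2$ is also false — only the bottom block $A_l$ consists of arrows — though the argument does not need it.)

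The two refinements are likewise unproved as you state them. For the ``moreover'' clause of (a), deducing $i_{s(u+1)}f_{(u+1)t}=0$ from $f_{s(u+1)}=0$ is a non sequitur: entries of $f_\zg$ record only \emph{valid} paths, and the paper explicitly notes that nonzero non-valid paths between such vertices may exist, so vanishing of the valid entry says nothing about the composite of the $g_0$-path $i_s\leadsto i_{u+1}$ with $f_{(u+1)t}$ (which, moreover, need not be an arrow). The correct argument splits on whether $\zS_{u+1}$ is trapezoidal or rectangular and again exhibits a repeated vertex ($j_{u+1}$, respectively the extra 3-cycle vertex $x$), with the residual case $t>u+1$ excluded because it would force $f_{ut}=0$, contradicting the hypothesis. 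For part (b) you only discuss $r$ strictly between $t'$ and $t$ and allow terms to be ``absorbed by commutativity''; but~(\ref{eqlem 7.1b}) is an exact identity involving the actual entries $i_{st}$, $i_{st'}$ of $g_0$, so absorption would falsify it — every product $i_{sr}f_{rt}$ with $u\le r\le t$ and $r\ne t,t'$ must vanish on the nose, which again follows from the forced passage of both factors through the third vertex of the 3-cycle at $j_t$; and when no $t'$ exists there are still nonzero entries $f_{rt}$ with $r<t$ whose products must be killed, so ``nothing to contribute'' is not a proof. As written, the proposal would need these vertex-revisit arguments supplied to become a proof.
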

\begin{proof}
 Since the column $t$ of $f_\zg$ is nonzero above the diagonal and zero below the diagonal, we know that the crossing sequence at position $t$ is of the form 
 \[\xymatrix{(i_{t-1},j_{t-1}) &(i_t,j_t)\ar@{<-}[l]_(0.4){\zS_{t}}\ar[r]^(0.4){\zS_{t+1}}& (i_{t+1},j_{t+1})
 }\]
 with both steps going forward.  Therefore the staircase shape of $f_\zg$ implies that there exists a unique $l$ such that $t_l+1<t\le t_{(l+1)}$ and $u=t_l$.
 
 (a) Let $s<u$. 
 Since $f_{rt}=0$ for $r<u$ and $r>t$, we have
\begin{equation}
\label{eq gfst} 
(g_0f_\zg)_{st}=\sum_{r=u}^t i_{sr}f_{rt}.
\end{equation}
 So we must show that $i_{sr}f_{rt}=0$, for $r=u+2,u+3,\ldots,t$. For such an $r$, we would have 
$t_l=u<u+1<r\le t\le t_{(l+1)}$, and in particular $t_{(l+1)}-t_l\ge 2.$ In this situation, Corollary~\ref{cor difference} implies that the sequence of steps $(\zS_{t_l} ,\zS_{t_l+1},\ldots,\zS_{t_{(l+1)}-1})$ is equal to $(R_1, T_0,\ldots,T_0,R_0)$ with $t_{(l+1)}-t_l-2$ steps $T_0$ in the middle. 

If $t_{(l+1)}-t_l=2$, we have $r>u+1=t_l+1$, hence $r\ge t_l+2=t_{(l+1)}$. Thus $r=t=t_{(l+1)}$, and we want to show $i_{st}f_{tt}=0$, where $s<u$. The quiver is shown in the left picture of Figure~\ref{figlem 7.1}. The path $i_{st}$ runs from $i_s$ to $i_t$ and must factor through $j_t$. Therefore $i_{st}f_{tt}=0$ since it contains a cyclic subpath at $j_t$.

\begin{figure}
\begin{center}
\[\xymatrix@!@R6pt@C3pt{ 
&&j_u\ar[d]&i_u\ar[l]\ar[d]&&j_t\ar[d]&i_t\ar[l]\\
i_s\ar@{.}[r]\ar@/^10pt/@{.}[rru]&\cdot\ar[ru] &\cdot\ar[l]\ar[ru]\ar@{.}[d]&\cdot\ar[l] \ar@{.}[d]
&\cdot\ar[ru]\ar@{.}[dl]&\cdot\ar[ru]\ar@{.}[dl]\ar[l]
\\
&&\cdot\ar[d]&\cdot\ar[d]\ar[l]&\cdot\ar[l]\\
&&i_{u+1}\ar[ru]&j_{u+1}\ar[l]\ar[ru]
}
\qquad
\xymatrix@!@R6pt@C3pt{
j_{r-1}\ar[r]\ar[d] &\cdot\ar[d]\ar[r]&\cdot\ar@{.}[r] &\cdot\ar[r]&i_r\ar[d] \\
i_{r-1}\ar[r]&\cdot\ar[lu]\ar[d]\ar@{<-}[rd]\ar[r]&\cdot\ar@{.}[r]&\cdot\ar[r]&x\ar[r]\ar[d]&j_r\ar[lu]\ar[d]\\
&\cdot\ar[lu]\ar[r]&\cdot\ar@{.}[dr]&&\cdot\ar@{.}[d]\ar[r]&\cdot\ar[lu]\\
&&&i_s&j_t
}
\]
\caption{Proof of Lemma \ref{lem 7.1}(a). On the left, $s<u=t-2, t=t_{(l+1)}$. On the right, $s<u<t_{(l+1)}$ and the step $(i_{r-1},j_{r-1})\to(i_r,j_r)$ is trapezoidal of degree 0.}
\label{figlem 7.1}
\end{center}
\end{figure}
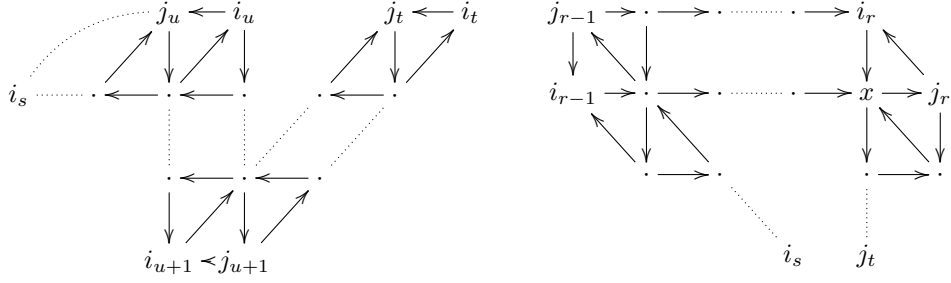

If $t_{(l+1)}-t_l>2$, then our sequence of steps contains  $t_{l+1}-t_l-2$ steps $T_0$ in the middle. The case where $r=t=t_{(l+1)}$ uses the same argument as above. In all other cases, the crossing pair $(i_r,j_r)$ is  the end of a trapezoidal step of degree 0 and at the start of some step of degree 0.  
The quiver is shown in the right picture of Figure~\ref{figlem 7.1}. 

In this case,  $i_{sr}$ is a path from $i_s$ to $i_r$ with $s<r-1$ and it must therefore factor through the vertex labeled $x$ that forms a 3-cycle with $i_{r} $ and $j_{r}$. On the other hand, $f_{rt}$ is a path from $i_r$ to $j_t$ with $t\ge r$ and this path also factors through $x$. Consequently $i_{sr}f_{rt}=0$, because it contains a cyclic subpath at $x$. This proves equation (\ref{eq gfst}).

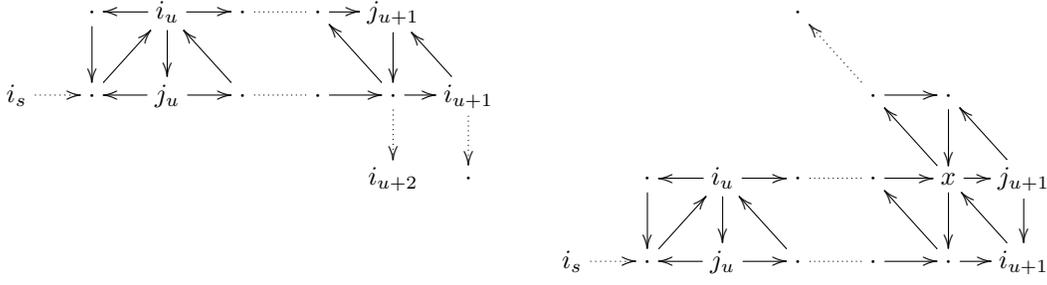
\begin{figure}
\begin{center}
\[\xymatrix@!@R6pt@C3pt{ 
 &\cdot \ar[d]&i_u\ar[l]\ar[d]\ar[r] &\cdot\ar@{.}[r] &\cdot\ar[r]& j_{u+1}\ar[d]\\
i_s \ar@{.>}[r] &\cdot \ar[ru]&j_u\ar[l]\ar[r]        &\cdot\ar[lu]\ar@{.}[r] &\cdot\ar[r]& \cdot\ar[lu]\ar[r] \ar@{.>}[d]&i_{u+1}\ar[lu]\ar@{.>}[d]\\
&                     &&&&i_{u+2} &\cdot
}
\qquad
\xymatrix@!@R6pt@C3pt{
&&&\cdot\\
&&&&\cdot\ar@{.>}[lu]\ar[r]&\cdot\ar[d]\\
&\cdot \ar[d]&i_u\ar[l]\ar[d]\ar[r] &\cdot\ar@{.}[r] &\cdot\ar[r]&x\ar[r]\ar[d]\ar[lu]& j_{u+1}\ar[d]\ar[lu]\\
i_s \ar@{.>}[r] &\cdot \ar[ru]&j_u\ar[l]\ar[r]        &\cdot\ar[lu]\ar@{.}[r] &\cdot\ar[r]& \cdot\ar[lu]\ar[r] &i_{u+1}\ar[lu]\\
}
\]
\caption{Proof of the second statement of Lemma \ref{lem 7.1} (a). On the left, the step $(i_u,j_u)\to(i_{u+1}, j_{u+1})$ is trapezoidal, and on the right it is rectangular.}
\label{figlem 7.1bis}
\end{center}
\end{figure}
To prove the moreover statement of the lemma, suppose now that $f_{uu}:i_u\to j_u$ is a single arrow and that the step $\zS_{u}$ is backward and the step $\zS_{u+1}$.  We want to show that $f_{s(u+1)}f_{(u+1)t}=0$.  The step $\zS_{u+1}$ is either trapezoidal or rectangular and both cases are illustrated in Figure~\ref{figlem 7.1bis}. If it is trapezoidal then $i_{s(u+1)} $ factors through $j_{u+1}$, as shown in the left picture of the figure. Thus if $t=u+1$ then $i_{s(u+1)}f_{(u+1) t}=0$, because it contains a cyclic subpath at $j_{u+1}$. If $t>u+1$ then $f_{ut}=0$, because every path from $i_u$ to any vertex past $j_{u+1}$ will contain two arrows of the same 3-cycle and thus cannot be valid.  This gives a contradiction to the assumption of the lemma that $f_{ut}\not=0$.  

Suppose now that $\zS_{n+1}$ is rectangular, see the right picture in Figure~\ref{figlem 7.1bis}. Then the path $i_{s(u+1)}$ factors through the vertex $x$ in the figure. The path $f_{(u+1)t}$ also factors through $x$, which implies that $i_{s(u+1)}f_{(u+1)t}=0$, since it contains a cyclic subpath at $x$. This completes the proof of part (a).

(b) Let $s>t$. 
Again we have equation (\ref{eq gfst}) and we may thus assume that $u\le r \le t$. 
Then $i_{sr}$ is a path from $i_s $ to $i_r$ that runs against the direction of the steps in the crossing sequence, and $f_{rt}$ is a path from $i_r $ to $j_t$ that follows the direction of the steps, see Figure~\ref{figlem 7.1b}.

\begin{figure}
\begin{center}
\[\xymatrix@!@R10pt@C10pt{ 
i_t\ar[d]\ar[r]&\cdot\ar[d]\ar[r] &\cdot\ar[d]
& i_s\ar@{.}[l]\\
j_t\ar[r]&y\ar[lu]\ar[d]\ar[r]&\cdot \ar[lu]
\\
&x\ar[lu]\ar[r]&\cdot \ar[lu]\\
&&i_r\ar@{.}[ul]
}
\qquad
\xymatrix@!@R10pt@C10pt{ 
j_t\ar[d]\ar[r]&\cdot\ar[d]\ar[r] &\cdot\ar[d]
& i_s\ar@{.}[l]\\
i_t\ar[r]&y\ar[lu]\ar[d]\ar[r]&\cdot \ar[lu]
\\
&\ar[lu]\ar[r]&\cdot \ar[lu]\\
&&i_r\ar@{.}[ul]
}
\]
\caption{Proof of Lemma \ref{lem 7.1} (b),  $r\le t<s$. On the left, $f_{tt}\colon i_t\to j_t$ is an arrow and on the right it is a path $i_t\to x\to j_t$.}
\label{figlem 7.1b}
\end{center}
\end{figure}

First suppose that $f_{tt}\colon i_t\to j_t$   is an arrow. This situation is shown in the left picture of the figure.

If the vertex $x\not=i_{r}$ then $i_{sr}$ factors through the vertex $x$. On the other hand, the path $f_{rt}$ from $i_r$ to $j_t$ also factors through $x$. Therefore $i_{sr}f_{rt}=0$, since it contains a cyclic subpath at $x$.

If $x= i_{r}$ then, because of the position of $x$ in the quiver, $r=t'$ is the largest integer such that $f_{t't}$ is an arrow. 
In this situation, $i_{st'}f_{t't}$ is nonzero.
Moreover, if $y=j_{t-1}$ then $t'=t-1$; otherwise $t'$ can be strictly smaller than $t-1$.

If $r=t$ then we are considering $i_{st}f_{tt}$ which is nonzero since $f_{tt}\colon i_t\to j_t$ is a single arrow.

Now suppose that $f_{tt}$ is not an arrow. Then there is an arrow $j_t\to i_t$.  This situation is shown in the right picture of Figure~\ref{figlem 7.1b}.  If the vertex $y$ is equal to some $i_{t'}$ in the crossing sequence, the argument is similar to the one above. Otherwise, the path $i_{sr}$ factors through the vertex $y$ that forms a 3-cycle with $i_t$ and $j_t$. On the other hand, $f_{rt}$ is a path from $i_r$ to $j_t$ and it also must factor through $y$. Thus if the vertex $y$ is different from $i_r$ then $i_{sr}f_{rt}=0$, since it contains a cyclic subpath at $y$. Suppose now that $y=i_r$. Then $r=t'$, where $t'$ is the largest integer such that $f_{t't}$ is an arrow. Thus, we have an arrow $i_{t'}\to j_t$ and the composition $i_{st'}f_{t't}$ is nonzero.
 \end{proof}

\begin{lemma}
 \label{lem 7.2}
{\rm (The case $\ot (i_t,j_t) \ot$)}.
 Let $
\begin{bmatrix}
 0&\ldots& 0& f_{tt}& \ldots& f_{vt}& 0&\ldots& 0
\end{bmatrix}^T$ be the column $t$ of $f_\zg$ with $t<v$.
 \begin{enumerate}
\item [(a)]
For all $s=1,2, \ldots, t-1$
 \begin{equation}\label{eqlem 7.2a} 
 (g_0\,f_\zg)_{st}=i_{st}f_{tt}+i_{st'}f_{t't} 
 \end{equation}
 where $t'>t$ is the smallest integer such that $f_{t't}$ is an arrow
 and 
 
 $\left\{\begin{array}{ll}
  i_{st}f_{tt}=0 & \textup{if $f_{tt}$ is not an arrow;}   \\
  i_{st'}f_{t't} &\textup{if no such $t'$ exists.}   \\
 \end{array}
\right.$\\

\item [(b)]
 For all $s=v+1,v+2,\ldots, n$
 \begin{equation}\label{eqlem 7.2b} 
 (g_0\,f_\zg)_{st}=i_{sv}f_{vt}+i_{s(v-1)}f_{(v-1)t} 
 \end{equation}
 Moreover, if $f_{vv}\colon i_v\to j_v$ is a single arrow and 
 the steps before and after $(i_v,j_v)$ are of the form
 $\xymatrix{(i_{v-1},j_{v-1}) &(i_v,j_v)\ar@{->}[l]_(0.4){\zS_{v}}\ar@{->}[r]^(0.4){\zS_{v+1}}& (i_{v+1},j_{v+1})
 }$
 then $i_{ s(v-1)} f_{(v-1)t }=0$, so $(g_0f_\zg)_{st}= i_{sv}f_{vt}$.
\end{enumerate} 
\end{lemma}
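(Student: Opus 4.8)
Lemma~\ref{lem 7.2} is the mirror image of Lemma~\ref{lem 7.1}, and the plan is to deduce it from Lemma~\ref{lem 7.1} by reversing the order of the crossing sequence of $\zg$. By the Remark following the definition of the crossing sequence, the crossing sequence of a fixed representative $\zg$ is determined only up to reversal, so relabelling the $n$ crossing pairs by $k\mapsto n+1-k$ produces another admissible crossing sequence for the same arc. The first step is to record the effect of this relabelling on the data in the statement. Since the conditions ``the subsequence from $s$ to $t$ is forward'' and ``the path $i_s\leadsto j_t$ is valid'' are intrinsic --- they depend only on the set of pairs traversed and on $Q$, not on the numbering --- Definition~\ref{def:map} shows that $f_\zg$ computed from the reversed sequence is the \emph{anti-transpose} of $f_\zg$ computed from the original one, i.e. its $(a,b)$-entry is the original $(n+1-a,\,n+1-b)$-entry; likewise $g_0$, whose entries are paths among the vertices $i_1,\dots,i_n$, has its rows and columns simultaneously reversed, and hence so does the product $g_0f_\zg$.

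Next I would translate the hypothesis of Lemma~\ref{lem 7.2}. A column $t$ of $f_\zg$ that is nonzero exactly in rows $t,\dots,v$ with $t<v$ becomes, after relabelling, the column $t_\star:=n+1-t$ of the reversed matrix, which is nonzero exactly in rows $n+1-v,\dots,t_\star$ with $n+1-v<t_\star$; this is precisely the hypothesis of Lemma~\ref{lem 7.1}. Moreover the configuration $\ot(i_t,j_t)\ot$ at position $t$ is turned into the configuration $\to(i_{t_\star},j_{t_\star})\to$ at position $t_\star$, so Lemma~\ref{lem 7.1} applies to the reversed data. Anti-transposing identifies the range $s=v+1,\dots,n$ of Lemma~\ref{lem 7.2}(b) with the range $s'=1,\dots,n-v$ of Lemma~\ref{lem 7.1}(a), and the range $s=1,\dots,t-1$ of Lemma~\ref{lem 7.2}(a) with the range $s'=t_\star+1,\dots,n$ of Lemma~\ref{lem 7.1}(b); substituting $s=n+1-s'$ back into the formulas of Lemma~\ref{lem 7.1}(a),(b) --- and noting that relabelling turns ``largest index $<t$'' into ``smallest index $>t$'' --- then yields exactly formulas \eqref{eqlem 7.2b} and \eqref{eqlem 7.2a} respectively. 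The ``moreover'' clause of Lemma~\ref{lem 7.1}(a), translated the same way, gives the ``moreover'' clause of Lemma~\ref{lem 7.2}(b).

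The one point that requires care --- and that I would spell out --- is that ``the step from $a$ to $a{+}1$ is forward'' is \emph{not} symmetric in the ordered pair: by Lemma~\ref{lem:one-step}, for a fixed unordered pair exactly one of the two orderings is forward. Consequently, under $k\mapsto n+1-k$ a forward step from $a$ to $a{+}1$ becomes a forward step from $n+1-a$ to $n-a$, which is exactly why a maximal run of non-forward steps turns into a maximal run of forward steps and why $\ot(i_t,j_t)\ot$ becomes $\to(i_{t_\star},j_{t_\star})\to$; one must also check along these lines that ``degree $0$'' and ``degree $1$'' are exchanged, so that Corollary~\ref{cor difference} applies to the reversed sequence in the expected way. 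This bookkeeping is the only genuine obstacle. (A self-contained alternative would be to repeat the proof of Lemma~\ref{lem 7.1} verbatim, replacing Figures~\ref{figlem 7.1}, \ref{figlem 7.1bis}, \ref{figlem 7.1b} by their mirror images and using the analogue of Corollary~\ref{cor difference} for runs of non-forward steps; the symmetry argument is preferable since it avoids redrawing those pictures.)
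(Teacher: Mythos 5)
Your proposal is correct and matches the paper's own proof, which simply states that Lemma~\ref{lem 7.2} is the dual statement to Lemma~\ref{lem 7.1}; you have merely made the underlying reindexing $k\mapsto n+1-k$ and the bookkeeping about forwardness, degrees, and the rotation of the matrices explicit. The translation of ranges, of the ``largest index $<t$'' versus ``smallest index $>t$'' clause, and of the moreover hypothesis all check out, so no gap remains.
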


\begin{proof}
 This is the dual statement to Lemma \ref{lem 7.1}.
\end{proof}

\setcounter{MaxMatrixCols}{20}

\begin{lemma}
 \label{lem 7.3}
{\rm (The case $\to (i_t,j_t) \ot$)}.
 Let $
\begin{bmatrix}
 0&\ldots& 0&f_{ut}&\ldots& f_{tt} & \ldots& f_{vt} & 0&\ldots& 0
\end{bmatrix}^T$ be the column $t$ of $f_\zg$ with $u<t<v$. Then the entry $(g_0f_\zg)_{st}$ is computed according to formula (\ref{eqlem 7.1a}), if $s<u$, and according to formula  
(\ref{eqlem 7.2a}), if $s>v$,  unless we are  in one of the following two exceptional cases.

If $u=t-1$,  $f_{tt}$ is not an arrow, the steps before and after $(i_{t-1}, j_{t-1})$ are of the form 
\[\xymatrix{(i_{t-2},j_{t-2}) &(i_{t-1},j_{t-1})\ar@{<-}[l]_-(0.4){\zS_{t-1}}\ar@{->}[r]^-(0.4){\zS_{t}}& (i_{t},j_{t})},\] and there exists $t'>t$ such that $f_{t't}\colon i_{t'}\to j_{t}$ is an arrow, then $(g_0\,f_\zg)_{st} $ has an additional term $i_{st'}f_{t't}$, for all $s<u$.  

If $v=t+1$, $f_{tt}$ is not an arrow,  the steps before and after $(i_{t+1}, j_{t+1})$ are of the form 
\[\xymatrix{(i_{t},j_{t}) &(i_{t+1},j_{t+1})\ar@{->}[l]_-(0.4){\zS_{t+1}}\ar@{<-}[r]^-(0.4){\zS_{t+2}}& (i_{t+2},j_{t+2})},\] and there exists $t'<t$ such that $f_{t't}\colon i_{t'}\to j_{t}$ is an arrow, then $(g_0\,f_\zg)_{st} $ has an additional term $i_{st'}f_{t't}$, for all $s>v$.

\end{lemma}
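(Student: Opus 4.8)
The statement to prove is Lemma \ref{lem 7.3}, which describes the entries of the composition $g_0 f_\zg$ in the mixed case where the steps immediately surrounding the crossing pair $(i_t,j_t)$ point in opposite directions: forward into $(i_t,j_t)$ and backward out of it (so the column $t$ of $f_\zg$ is nonzero both above and below the diagonal, forming a "peak" at row $t$).

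\textbf{Overall approach.} The plan is to reuse as much of Lemma \ref{lem 7.1} and Lemma \ref{lem 7.2} as possible by splitting the column $t$ of $f_\zg$ into its "upper half" (rows $u,\dots,t$) and its "lower half" (rows $t,\dots,v$), and analyzing each half with the quiver pictures already developed. Write $(g_0 f_\zg)_{st}=\sum_{r=u}^{v} i_{sr}f_{rt}$, the sum of the products of the $s$-th row of $g_0$ with the column $t$ of $f_\zg$. As in the proofs of \ref{lem 7.1} and \ref{lem 7.2}, each term $i_{sr}f_{rt}$ vanishes unless the path $i_{sr}$ (running against the crossing sequence from $i_s$ to $i_r$) and the path $f_{rt}$ (running with the crossing sequence from $i_r$ to $j_t$) can be composed without creating an oriented cycle, i.e.\ without forcing two arrows of a common $3$-cycle into the composite.

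\textbf{Main steps.} First I would handle $s<u$. For indices $r$ with $u\le r\le t$ the exact same argument as in Lemma \ref{lem 7.1}(a) applies verbatim: the column is nonzero above the diagonal there, so formula (\ref{eqlem 7.1a}) $(g_0f_\zg)_{st}=i_{su}f_{ut}+i_{s(u+1)}f_{(u+1)t}$ accounts for all surviving terms coming from the upper half, all others vanishing by the $3$-cycle/staircase argument (Corollary \ref{cor difference}). The new feature is that there are also indices $r$ with $t<r\le v$ in the lower half of the column, and I must check whether any of these contribute. Generically $i_{sr}$ with $s<u<t<r$ must factor through the vertex of the $3$-cycle at the top of the peak (the vertex $x$, or $j_t$, in the pictures of Lemma \ref{lem 7.1}(b) read "upside down"), and $f_{rt}$ also factors through that vertex, so the product is zero --- except precisely when that vertex is $i_r$ itself, which happens exactly when $f_{rt}\colon i_r\to j_t$ is a single arrow and the local configuration around $(i_{t-1},j_{t-1})$ is the exceptional one stated ($u=t-1$, $f_{tt}$ not an arrow, steps backward-then-forward at $t-1$). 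In that and only that case one extra nonzero term $i_{st'}f_{t't}$ survives, where $t'>t$ is the smallest such index; this is the first exceptional clause. I would draw the single quiver picture that makes this dichotomy transparent, analogous to Figure \ref{figlem 7.1b}. The case $s>v$ is then literally the dual: apply Lemma \ref{lem 7.2}(a) for the lower half contributing formula (\ref{eqlem 7.2a}), and run the mirror-image $3$-cycle argument on the upper half $u\le r<t$ to see it contributes nothing unless $v=t+1$ with the symmetric exceptional configuration, giving the extra term $i_{st'}f_{t't}$ with $t'<t$ largest. No claim is made for $u\le s\le v$, so nothing is needed there.

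\textbf{Expected obstacle.} The routine part is the vanishing arguments, which are cyclic-subpath bookkeeping already rehearsed several times. The delicate point is pinning down \emph{exactly} when the "cross terms" (upper-half row of $g_0$ times lower-half of column $t$, or vice versa) fail to vanish, and checking that in those exceptional configurations the surviving term is a genuine arrow $i_{t'}\to j_t$ and not something else --- this requires using that $f_{tt}$ is not an arrow (so there is an arrow $j_t\to i_t$ creating the relevant $3$-cycle at the peak) together with the precise orientations of the two flanking steps, via Lemma \ref{lem degree} and Lemma \ref{dots}. Getting the orientation hypotheses stated with exactly the right minimality/maximality of $t'$ and confirming $u=t-1$ (resp.\ $v=t+1$) is forced is where I would spend the most care; everything else should follow from the pictures.
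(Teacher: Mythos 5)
Your proposal follows essentially the same route as the paper's proof: the generic terms are handled by re-running the cyclic-subpath/3-cycle arguments of Lemmas \ref{lem 7.1} and \ref{lem 7.2} on the two halves of column $t$, and the exceptional extra term is identified from the local quiver picture at the change of direction (the paper's Figure \ref{figlem73}), where $f_{t't}\colon i_{t'}\to j_t$ is an arrow, $f_{tt}$ is not, and all three terms $i_{s(t-1)}f_{(t-1)t}+i_{st}f_{tt}+i_{st'}f_{t't}$ survive. The paper's argument is just a terser version of exactly this plan, so your approach is correct and matches it.
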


\begin{proof}
 The proof of the general case is analogous to to the relevant parts of the two previous lemmata.
The exceptional case is illustrated in Figure~\ref{figlem73}. Clearly 
 $(g_0\,f_\zg)_{st}=i_{s(t-1)}f_{(t-1)t}+i_{st}f_{tt}+i_{st'}f_{t't}$. Since $t-1=u$ and $t=u+1$, the first two terms are precisely those given by equation (\ref{eqlem 7.1a}), while the third term is the additional term given in the statement.
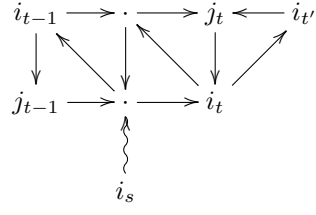
\begin{figure}
\begin{center}
\[\xymatrix@!@R10pt@C10pt{ i_{t-1}\ar[r]\ar[d]&\cdot\ar[r]\ar[d]&j_t\ar[d]&i_{t'}\ar[l]\\
j_{t-1}\ar[r]&\cdot \ar[lu]\ar[r]&i_t\ar[lu]\ar[ru]\\
&i_s\ar@{~>}[u]
}\]
\caption{The exceptional case in Lemma \ref{lem 7.3}}
\label{figlem73}
\end{center}
\end{figure}
\end{proof}
\begin{lemma}
 \label{lem 7.4}
{\rm (The case $\ot (i_t,j_t) \to$)}.
 Let $
\begin{bmatrix}
 0&\ldots& 0& f_{tt} & 0&\ldots& 0
\end{bmatrix}^T$ be the column $t$ of $f_\zg$. Then for all $s\not=t$
\begin{equation}
\label{eqlem 7.4} (g_0f_\zg)_{st} = \left\{
\begin{array}
{ll} i_{st}f_{tt} &\textup{if $f_{tt}\colon i_t\to j_t$ is an arrow};\\ 
0 &\textup{if $f_{tt}\colon i_t\to\cdot\to j_t$ is not an arrow}.
\end{array}
\right.
\end{equation}
  
\end{lemma}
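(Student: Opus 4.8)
The statement in Lemma~\ref{lem 7.4} is the last of four lemmas that together compute the composition $g_0 f_\zg$ column by column, organized according to the directions of the two steps in the crossing sequence immediately adjacent to the crossing pair $(i_t,j_t)$. This final case is the simplest one: since the step $\zS_t$ into $(i_t,j_t)$ is backward and the step $\zS_{t+1}$ out of $(i_t,j_t)$ is also backward (the pattern $\ot(i_t,j_t)\to$ in the notation of the lemma means $\zS_t$ goes \emph{out of} position $t-1$ into $t$ being non-forward and $\zS_{t+1}$ being forward out of $t$ — so the $t$-th column of $f_\zg$ has a single nonzero entry $f_{tt}$ on the diagonal). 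The plan is therefore a short direct computation: expand $(g_0 f_\zg)_{st} = \sum_r (g_0)_{sr} (f_\zg)_{rt}$, observe that the only nonzero term in the sum over $r$ is $r=t$, and then analyze the single product $(g_0)_{st} f_{tt} = i_{st} f_{tt}$ depending on whether the diagonal entry $f_{tt}$ is an arrow or a length-two path.

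\textbf{Key steps.} First I would record the shape of column $t$: because $\zS_t$ and $\zS_{t+1}$ have the indicated directions, the staircase description of $f_\zg$ in Proposition~\ref{prop 71} (together with Corollary~\ref{cor difference}) forces column $t$ to have $f_{rt}=0$ for all $r\neq t$, so $(g_0 f_\zg)_{st} = (g_0)_{st}\, f_{tt}$. By the definition of $g_0$ in Definition~\ref{def:pivot-maps} (or, in the nilpotent-endomorphism setting of diagram~(\ref{diagram 72}), simply by naming the matrix entry), $(g_0)_{st}$ is the path $i_{st}$ from $i_s$ to $i_t$. Second, I would split into the two cases. If $f_{tt}\colon i_t\to j_t$ is a single arrow, then $i_{st} f_{tt}$ is the composition of the path $i_s\leadsto i_t$ with the arrow $i_t\to j_t$, which is exactly the claimed answer and there is nothing further to check. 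If $f_{tt}$ is not an arrow, then by the construction of $f_\zg$ (see the paragraph after Definition~\ref{def:map}, and Lemma~\ref{lem degree}(c)) there is an arrow $j_t\to i_t$ and $f_{tt}$ is the length-two path $i_t\to k_t\to j_t$ going the other way around the $3$-cycle $\xymatrix@C=8pt@R=6pt{i_t\ar[rr]&&j_t\ar[dl]\\&k_t\ar[ul]}$. Here I would argue that any path $i_{st}$ from $i_s$ to $i_t$ with $s\neq t$ must, up to commutativity in $B$, enter $i_t$ along an arrow of one of the two $3$-cycles through the arrow $\alpha_t$ joining $i_t,j_t$; since the $3$-cycle lying in $Q(\zg)$ is the one containing $k_t$, and since $j_t\to i_t$ is an arrow, the path $i_{st}$ factors through $k_t$ (this is the content implicit in Lemmas~\ref{lem 7.1}(b) and~\ref{lem 7.2}(a), e.g. the ``vertex $y$'' discussion). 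Then $i_{st} f_{tt}$ contains the cyclic subpath at $k_t$ given by going into $k_t$ and then around via $i_t\to k_t\to j_t$, so it is zero in $B$ by Proposition~\ref{prop 38}.

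\textbf{Main obstacle.} The only genuinely non-formal point is the claim, in the case where $f_{tt}$ is not an arrow, that \emph{every} path $i_{st}$ in the algebra $B$ (with $s\neq t$) factors through $k_t$; a priori $i_{st}$ might approach $i_t$ along the arrow from the \emph{other} $3$-cycle containing $\alpha_t$, the one not inside $Q(\zg)$. I expect this to be handled exactly as in the proof of Lemma~\ref{lem 7.1}(b): one uses the planarity of $Q$ and the fact that $i_s$ lies in $Q(\zg)$, so the shortest representative of the parallel class of $i_{st}$ in $B$ — which by Proposition~\ref{prop 39} is the unique nonzero path — runs through $Q(\zg)$ and hence enters $i_t$ through $k_t$. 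Once that geometric observation is in place, the zero-subpath argument via Proposition~\ref{prop 38} finishes it. Since this lemma is strictly easier than the three that precede it and reuses their arguments verbatim, I would keep the write-up to a few lines, referring back to the proofs of Lemmas~\ref{lem 7.1}--\ref{lem 7.3} for the factorization claim rather than repeating it.
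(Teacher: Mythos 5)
Your proposal has the same skeleton as the paper's proof: the single nonzero entry in column $t$ gives $(g_0f_\zg)_{st}=i_{st}f_{tt}$ at once, the arrow case is immediate, and the non-arrow case is killed by exhibiting a cyclic subpath and invoking Proposition~\ref{prop 38}. The one place you diverge is exactly the point you flag as the ``main obstacle,'' and there the paper's device is different and simpler than what you propose. You want to show that $i_{st}$ factors through the third vertex $k_t$ of the particular $3$-cycle lying in $Q(\zg)$, and you plan to rule out the other $3$-cycle through the arrow $j_t\to i_t$ by a planarity/``shortest representative'' argument. The paper never makes that choice: it observes that $j_t\to i_t$ lies in two $3$-cycles with third vertices $k_1,k_2$, and that $f_{tt}$ is equal in $B$ to \emph{both} length-two paths $i_t\to k_1\to j_t$ and $i_t\to k_2\to j_t$ (the commutativity relation of Lemma~\ref{lem 37}(b)); hence whichever of $k_1,k_2$ the path $i_{st}$ runs through on its way into $j_t\to i_t$, one may take the matching representative of $f_{tt}$ and obtain a cyclic subpath at that vertex. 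So the dichotomy you were worried about never has to be resolved, and your proposed justification --- that the unique nonzero representative of $i_{st}$ ``runs through $Q(\zg)$'' --- is both unnecessary and, as stated, unproven; it would be the weak link if you kept it. (Both you and the paper do rely on the tacit point that $i_{st}$ enters $i_t$ along $j_t\to i_t$ preceded by $k_1\to j_t$ or $k_2\to j_t$; the paper asserts this directly, and your write-up should too rather than route it through Lemmas~\ref{lem 7.1}--\ref{lem 7.3}.) With that substitution your argument coincides with the paper's.
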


\begin{proof}
 We have $(g_0f_\zg)_{st}=\sum_u i_{su}f_{ut}=i_{st}f_{tt}$, where the last identity follows from the assumption that $f_{tt}$ is the only nonzero entry in the $t$-th column of $f_\zg$. It only remains to show that $i_{st}f_{tt}=0$ when $f_{tt}$ is not an arrow. In this situation, we have the following subquiver
 \[\xymatrix@!@R10pt@C10pt
 {\cdot&j_t\ar[l]\ar[d]\ar[r]&\cdot\\
 k_1 \ar[ru]&i_t\ar[r]\ar[l]&k_2\ar[lu]
 }\]
 Thus there is an arrow $j_t\to i_t$ that lies in two 3-cycles whose third vertex is labeled $k_1$ or $k_2$ in the figure, and $f_{tt}$ is equal to either one of the two paths $i_t\to k_1\to j_t$ and $i_t\to k_2\to j_t$. On the other hand, $i_{st}$, being a path ending at  $i_t$, must end in one of the two subpaths $k_1\to j_t\to i_t$ and $k_2\to j_t\to i_t$.  In both cases, the composition $i_{st}f_{tt}$ is zero, because it contains a cyclic subpath at $k_1$ or $k_2.$
\end{proof}

Lemmata \ref{lem 7.1}-\ref{lem 7.4} also imply the following dual statements for the composition $f_\zg g_1$, obtained by reformulation using the rows of $f_\zg$ instead of the columns. 
The proof follows directly from the property of the transpose $(f_\zg g_1)^T=g_1^T f_\zg^T$.

\begin{lemma}\label{lem 7.5}
 The entries of the matrix of $f_\zg g_1$ satisfy the following equations. 
 \begin{enumerate}
\item 
{\rm  (The case $\to (i_s,j_s) \to$)}. Let $
\begin{bmatrix}
 0&\ldots&0&f_{ss}&\ldots&f_{sr}&0&\ldots&0
\end{bmatrix}
$ be the row $s$ of $f_\zg$, with $r>s$.
 \begin{enumerate}
\item 
For all $t=r+1,\ldots,n$
\begin{equation}
 \label{eqlem 7.5(1)a}
 (f_\zg g_1)_{st}= f_{sr}j_{rt}+f_{s(r-1)}j_{(r-1)t} 
\end{equation}
Moreover, if $f_{rr}\colon i_r\to j_r$ is a single arrow and the steps before and after $(i_r,j_r)$ are of the form
 $\xymatrix{(i_{r-1},j_{r-1}) &(i_r,j_r)\ar@{<-}[l]_(0.4){\zS_{r}}\ar@{<-}[r]^(0.4){\zS_{r+1}}& (i_{r+1},j_{r+1})
 }$
 then 
 $f_{s(r-1)}j_{(r-1)t}=0$, so $(f_\zg g_1)_{st}=f_{sr}j_{rt}.$

\item 
For all $t=1,2,\ldots, s-1$
 \begin{equation}\label{eqlem 7.5(1)b} 
 (f_\zg g_1)_{st}=f_{ss}j_{st}+f_{ss'}j_{s't} 
 \end{equation}
 where $s'>s$ is the smallest integer such that $f_{ss'}$ is an arrow,
 
 and 
 $\left\{\begin{array}{ll}
  f_{ss}j_{st}=0 & \textup{if $f_{ss}$ is not an arrow;}   \\
  f_{ss'}j_{s't}=0 &\textup{if no such $s'$ exists.}   \\
 \end{array}
\right.$

\end{enumerate}

\item
{\rm (The case $\ot (i_s,j_s) \ot$)}.
Let $
\begin{bmatrix}
 0&\ldots&0&f_{sr}&\ldots&f_{ss}&0&\ldots&0
\end{bmatrix}
$ be the row $s$ of $f_\zg$, with $r<s$.
 \begin{enumerate}
\item 
For all $t=s+1,\ldots,n$
 \begin{equation}\label{eqlem 7.5(2)a} 
 (f_\zg g_1)_{st}=f_{ss}j_{st}+f_{ss'}j_{s't} 
 \end{equation}
 where $s'<s$ is the largest integer such that $f_{s's}$ is an arrow,
 
 and 
 $\left\{\begin{array}{ll}
  f_{ss}j_{st}=0 & \textup{if $f_{ss}$ is not an arrow;}   \\
  f_{ss'}j_{s't}=0 &\textup{if no such $s'$ exists.}   \\
 \end{array}
\right.$

\item 
For all $t=1,2,\ldots, r-1$
\begin{equation}
 \label{eqlem 7.5(2)b}
 (f_\zg g_1)_{st}= f_{sr}j_{rt}+f_{s(r+1)}j_{(r+1)t}.
\end{equation}
Moreover, if $f_{rr}\colon i_r\to j_r$ is a single arrow and the steps before and after $(i_r,j_r)$ are of the form
 $\xymatrix{(i_{r-1},j_{r-1}) &(i_r,j_r)\ar@{<-}[l]_(0.4){\zS_{r}}\ar@{<-}[r]^(0.4){\zS_{r+1}}& (i_{r+1},j_{r+1})
 }$
 then 
 $f_{s(r+1)}j_{(r+1)t}=0$, so $(f_\zg g_1)_{st}=f_{sr}j_{rt}.$
\end{enumerate}

\item {\rm (The case $\ot (i_s,j_s) \to$).}  Let $
\begin{bmatrix}
 0&\ldots&0&f_{sr}&\ldots&f_{ss}&\ldots&f_{sr'}&0&\ldots&0
\end{bmatrix}
$ be the row $s$ of $f_\zg$, with $r<s<r'$.  Then the entry $(f_\zg g_1)_{st}$ is computed according to formula (\ref{eqlem 7.5(1)a}) if $t>r'$ and according to formula  
(\ref{eqlem 7.5(2)b}) if $t<r$, unless we are  in one of the following two exceptional cases.

If $r'=s+1$, $f_{ss}$ is not an arrow,  the steps before and after $(i_{s+1}, j_{s+1})$ are of the form 
{$\xymatrix{(i_{s},j_{s}) &(i_{s+1},j_{s+1})\ar@{<-}[l]_-(0.4){\zS_{s+1}}\ar@{->}[r]^-(0.4){\zS_{s+2}}& (i_{s+2},j_{s+2})}$}, and there exists $s'<s$ such that $f_{ss'}\colon i_{s}\to j_{s'}$ is an arrow, then $(f_\zg g_1)_{st} $ has an additional term $f_{ss'}j_{s't}$, for all $t>r'$. 

If $r=s-1$, $f_{ss}$ is not an arrow,  the steps before and after $(i_{s-1},\, j_{s-1})$ are of the form 
{$\xymatrix{(i_{s-2},j_{s-2}) &(i_{s-1},j_{s-1})\ar@{->}[l]_-(0.4){\zS_{s-1}}\ar@{<-}[r]^-(0.4){\zS_{s}}& (i_{s},j_{s})}$}, and there exists $r''>s$ such that $f_{sr''}\colon i_{s}\to j_{r''}$ is an arrow, then $(f_\zg g_1)_{st} $ has an additional term $f_{sr''}j_{r''t}$, for all $t<r$.

\item {\rm (The case $\to (i_s,j_s) \ot$).}
Let $
\begin{bmatrix}
 0&&\ldots&f_{ss}&0&\ldots&0
\end{bmatrix}
$ be the row $s$ of $f_\zg$.

 Then for all $s\not=t$
\begin{equation}
\label{eqlem 7.5(4)}
(f_\zg g_1)_{st} = \left\{
\begin{array}
{ll} f_{ss} j_{st} &\textup{if $f_{ss}\colon i_s\to j_t$ is an arrow};\\ 
0 &\textup{if $f_{ss}\colon i_s\to\cdot\to j_s$ is not an arrow}.
\end{array}
\right.
\end{equation}

\end{enumerate}

\end{lemma}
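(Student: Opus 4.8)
The plan is to derive Lemma~\ref{lem 7.5} from Lemmata~\ref{lem 7.1}--\ref{lem 7.4} purely formally, via the transpose identity $(f_\zg g_1)^T=g_1^T f_\zg^T$. Given a nilpotent (or arbitrary) endomorphism $g$ of $M_\zg$ inducing $g_0,g_1$ on the projective presentation, the matrix relation $g_0f_\zg=f_\zg g_1$ transposes to $f_\zg^T g_0^T=g_1^T f_\zg^T$, which is exactly the shape of commutative square analysed in Lemmata~\ref{lem 7.1}--\ref{lem 7.4}: there $f_\zg^T$ plays the role of the staircase map, $g_0^T$ the role of the endomorphism of the source, and $g_1^T$ the role of the endomorphism of the target. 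So I would compute the entries of $g_1^T f_\zg^T$ with those four lemmata and then transpose back to read off the entries of $f_\zg g_1$.

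Before invoking them I first need to recognise $f_\zg^T$ as a map of the form $f_{\zg'}$ for an honest $2$-diagonal $\zg'$. The natural candidate is $\zg'=$ the arc $\zg$ traversed in the opposite direction, regarded inside the checkerboard polygon of the opposite quiver $Q^{\textup{op}}$; by Remark~\ref{rem orientation} this is the polygon $\cals$ again, with the orientations of all radical lines reversed. I would check the routine facts that reversing the direction of $\zg$ reverses the order of its crossing pairs, that passing to $Q^{\textup{op}}$ reverses every radical line and hence swaps the degree-$0$ and the degree-$1$ member of each crossing pair, so that the crossing sequence of $\zg'$ is $(j_n,i_n),\dots,(j_1,i_1)$, and that validity of paths, forwardness of steps, and the rectangular/trapezoidal dichotomy all transport correctly under path reversal, only the \emph{degree} of a step being exchanged. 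Tracking indices, this identifies $f_{\zg'}$ with $f_\zg^T$ after the reindexing $k\mapsto n+1-k$ applied to both row and column labels, so Lemmata~\ref{lem 7.1}--\ref{lem 7.4} apply to it verbatim.

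The remaining work is bookkeeping: matching the four positional cases. A nonzero row of $f_\zg$ becomes a nonzero column of $f_\zg^T$, and under reversal the two steps flanking a pair trade places while forwardness is preserved; hence case $\to(i_s,j_s)\to$ of the present lemma corresponds to case $\to(i_t,j_t)\to$ of Lemma~\ref{lem 7.1}, while $\ot(i_s,j_s)\ot$, $\ot(i_s,j_s)\to$ and $\to(i_s,j_s)\ot$ correspond respectively to $\ot(i_t,j_t)\ot$, $\to(i_t,j_t)\ot$ and $\ot(i_t,j_t)\to$ of Lemmata~\ref{lem 7.2}, \ref{lem 7.3} and \ref{lem 7.4}. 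The $j$-indexed path symbols $j_{rt}$ occurring in the formulas are precisely the entries of $g_1$ that reappear when the transposed entries of $g_0^T$ are rewritten in terms of $g_1$. The ``moreover'' refinements and the two exceptional configurations (in the mixed case, matching those of Lemma~\ref{lem 7.3}) carry over without change once one records that a backward step of $\zg$ is a forward step of $\zg'$ and that the hypothesis ``$f_{rr}$ is a single arrow'' is self-dual under transposition.

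The main obstacle I expect is exactly the first identification: making watertight that the transpose of $f_\zg$ is the map attached to a genuine $2$-diagonal in a genuine checkerboard polygon — that is, that the operation ``reverse $\zg$ and pass to $Q^{\textup{op}}$'' yields admissible combinatorial data ($Q^{\textup{op}}$ again satisfies Definition~\ref{def Q} with all chordless cycles of length three, its checkerboard polygon is $\cals$ with reversed radical-line orientations, and the reversed crossing sequence is the crossing sequence of $\zg'$), so that one is entitled to \emph{quote} Lemmata~\ref{lem 7.1}--\ref{lem 7.4} rather than re-run their proofs. This requires care because the notions ``degree $0$/$1$ crossing'', ``forward step'' and even the shape of $g_0,g_1$ are all defined through the orientation of the polygon, and each must be checked to be compatible with reversal; everything after that is routine relabelling.
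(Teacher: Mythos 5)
Your proposal follows essentially the paper's own route: the paper proves Lemma~\ref{lem 7.5} in one line, as the statement dual to Lemmata~\ref{lem 7.1}--\ref{lem 7.4} obtained by reading rows of $f_\zg$ instead of columns, via the transpose identity $(f_\zg g_1)^T=g_1^Tf_\zg^T$; your plan is a fleshed-out version of exactly that. One correction, though, in the step you yourself single out as the crux. Reversing the direction of $\zg$ flips the degree of every crossing (left and right trade places along the arc), and reversing the orientation of every radical line flips each degree again; so performing both operations, as your sketch does, leaves all degrees unchanged, and the crossing sequence of your $\zg'$ would come out as $(i_n,j_n),\dots,(i_1,j_1)$ rather than $(j_n,i_n),\dots,(j_1,i_1)$ --- in which case $f_{\zg'}$ would consist of paths $j_t\leadsto i_s$ of $Q$ read backwards, not the transposed entries of $f_\zg$. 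The genuine source of the degree swap is different: by the counterclockwise convention of Remark~\ref{rem orientation}, the checkerboard polygon of $Q^{\textup{op}}$ is the \emph{mirror image} of $\cals$ (the chordless cycles of $Q^{\textup{op}}$ run clockwise in $\cals$), and it is this orientation reversal of the ambient polygon, not a reversal of the individual curves, that exchanges left and right at every crossing and hence exchanges the degree-$0$ and degree-$1$ members of each pair. With that replacement your identification of $f_\zg^T$ with the map attached to $\zg$ in the $Q^{\textup{op}}$ polygon is correct (its entries become the $Q$-paths $i_s\leadsto j_t$ viewed in $B^{\textup{op}}$), validity of paths and forwardness of steps transport as you say, and the case-matching with Lemmata~\ref{lem 7.1}--\ref{lem 7.4}, including the exceptional configurations of Lemma~\ref{lem 7.3}, goes through as routine relabelling.
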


\subsubsection{Blocks of $g_0$}\label{sec:blocks}

We will think of the result in Section~\ref{sec:comp} as relations in the matrix $g_0$. To make this idea precise, we introduce the following block structure in the matrix $g_0$. 
Recall that both maps $f_\zg$ and $g_0$ are given by square matrices of the same size, and that we have defined the blocks $A_l$ and $B_l$ in $f_\zg$ as the subcolumns of the column $t_l$.
Each of these blocks $A_l$ and $ B_l$ induces a family of column blocks $A_{l,u}$ and $B_{l,u}$ in $g_0$ as follows.
%

\begin{definition}[Column Blocks]
 \label{def blocks of g0}
 Let $g$ be an endomorphism of $M_\zg$ and let $g_0$ be the induced endomorphism on the projective cover $P_0(\zg)=\oplus_{s=1}^n P(i_s)$. We treat the change of direction in parts (c) and (d).  For parts (a) and (b) below, we assume that the crossing sequence is forward without change of direction, and we use the notation of Definition~\ref{def blocks of f}, thus  $ 1=t_0<t_1<\
 \cdots <t_p=n$ is the staircase sequence of $f_\zg$ and  $a_l$ is the least positive integer such that the entry in $f_\zg$  at position $(a_l,t_l)$ is an arrow.  We define the following \emph{column blocks} in the matrix of $g_0$,
 \begin{itemize}
\item [(a)] \begin{itemize}
\item [(i)]
If  $a_l\ne t_{l-1}$ or $ f_{t_{(l-1)}t_{(l-1)}}$ is not an arrow, define 
\[A_{l,u}= 
[i_{a_lu} \ i_{(a_l+1)u}\ \ldots \ i_{t_lu}]^T,
\textup{ where $u\le a_{l}-1$}.\] 

\item[(ii)] If $a_l= t_{l-1}$ and $ f_{t_{(l-1)}t_{(l-1)}}$ is an arrow, define 
\[ A_{l,u} = [ i_{(a_l+1)u} \  i_{(a_l+2)u}\ \ldots \ i_{t_lu}]^T , \textup{
where $u\le a_{l}$.}\]
\end{itemize}
In the right picture of Figure~\ref{fig staircase}, the $A$-blocks are the colored framed regions with  bullet points below the diagonal.  In the example of Figure~\ref{fig staircase}, the block $A_{5,u}$ is of type (a)(ii). 
For a crossing sequence that is non-forward the definition is symmetric.
\item[(b)]  For all $1\le l\le p$ and $t_l\le u$ let
\[B_{l,u}= 
\begin{bmatrix}
 i_{t_{l-1} u} & i_{(t_{l-1}+1) u}& \cdots & i_{(t_l-1) u}
\end{bmatrix}^T.\]
In the right picture of Figure~\ref{fig staircase}, the $B$-blocks are the colored framed regions above the diagonal. The shading in some of these blocks will be explained in Example~\ref{ex shading}. 
For a crossing sequence that is non-forward the definition is symmetric.

\item[(c)] 
Assume the crossing sequence has a change of direction at $t_l$ of the form $\to (i_{t_l},j_{t_l})\ot$ and $f_{t_lt_l}$ is an arrow.  In addition to the $A$ and $B$ blocks defined above we also have a gluing of two of these as follows.

\begin{itemize}
\item[(i)] If $a_l\ne t_{l-1}$ or $f_{t_{(l-1)} t_{(l-1)}}$ is not an arrow, define 
 
 \[AB_{l,u}=[i_{a_lu} \ i_{(a_l+1)u}\ \ldots \ i_{t_lu} \  i_{(t_l+1)u} \ \dots \ i_{(t_{l+1})u}]^T, \textup{where $u \leq a_l-1$.}\]

\item[(ii)] If $a_l= t_{l-1}$ and $f_{t_{(l-1)} t_{(l-1)}}$ is  an arrow, define 
 
 \[AB_{l,u}=[i_{(a_l+1)u}\ \ldots \ i_{t_lu} \  i_{(t_l+1)u} \ \dots \ i_{(t_{l+1})u}]^T, \textup{where $u \leq a_l$.}\]
\end{itemize}

In a similar way we would also have blocks of type $BA_{l,u}$ with $\bar a_l$ defined as the largest integer such that $f_{\bar a_lt_l}$ is an arrow and $u\geq \bar{a_l}+1$ in type (i) or $u\geq \bar{a_l}$ in type (ii). 


%
%
%

\item[(d)] 
Assume the crossing sequence has a change of direction at $t_l$ of the form $\ot (i_{t_l},j_{t_l})\to$. Then the $A_{l,u}, B_{l,u}$ blocks are empty.  The $A_{l+1,u}, A_{l-1,u}$ blocks are as in part (a)(ii) and the $B_{l+1,u}, B_{l-1,u}$ are as in part (b). 

\end{itemize}

\end{definition}

The following lemma examines the entries of the composition $f_\zg g_1$ in positions corresponding to the column blocks of $g_0$. 

\begin{lemma}\label{column constant}
Let $\begin{bmatrix}i_{s'u} & i_{(s'+1)u} & \dots & i_{s''u} \end{bmatrix}^T$ be a column block in $g_0$, and let $s'\leq s \leq s''$.  Assume that if the column block is $A_{l,u}$ of Definition~\ref{def blocks of g0} and there exists $t'>s''$ such that $f_{s''t}$ is an arrow, then $s'\leq s<s''$. Then the coefficients of $(f_\zg g_1)_{su}$ are equal for all $s$. 
\end{lemma}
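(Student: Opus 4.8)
The plan is to exploit the commutativity of the endomorphism diagram and the staircase structure of $f_\zg$ established in Proposition~\ref{prop 71}, together with the explicit column-by-column descriptions of $f_\zg g_1$ from Lemma~\ref{lem 7.5}. Since $g\colon M_\zg\to M_\zg$ induces $g_0,g_1$ with $g_0 f_\zg = f_\zg g_1$, I would first note that the matrix $g_0$ is block-triangular compatible with the staircase blocks $A_{l,u}$ and $B_{l,u}$: a row/column containing a diagonal identity summand forces all other entries in that row/column to vanish, exactly as in the proof of Lemma~\ref{lem:ab}. So the analysis reduces to the off-diagonal part, which by Corollary~\ref{cor:534} satisfies $(g_0 f_\zg)_{s,t}=0$ whenever $(f_\zg)_{s,t}\neq 0$, and the "coefficients" referred to in the statement are exactly the scalars appearing in the diagonal and near-diagonal entries of $g_1$ (or $g_0$) after the block decomposition.

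The key steps, in order, are as follows. First I would fix the column block $\begin{bmatrix} i_{s'u} & \cdots & i_{s''u}\end{bmatrix}^T$ of $g_0$ and identify which of the four cases of Lemma~\ref{lem 7.5} governs the entries $(f_\zg g_1)_{s,u}$ for $s'\le s\le s''$; this depends on whether the crossing sequence at position $s$ goes forward or backward and whether there is a change of direction, precisely the hypotheses baked into Definition~\ref{def blocks of g0}. Second, using the staircase shape (Proposition~\ref{prop 71}) I would show that for consecutive indices $s,s+1$ in the block, the rows $s$ and $s+1$ of $f_\zg$ overlap in a column $r$ where $f_{s,r}$ and $f_{s+1,r}$ are both nonzero valid paths (this is where Corollary~\ref{cor RTL} and Lemma~\ref{lem degree} on rectangular/trapezoidal steps enter, guaranteeing the "diagonal below the diagonal" entry $f_{s(s+1)}$ or an entry in a common $B$-block). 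Third, applying the commutativity relation $g_0 f_\zg = f_\zg g_1$ at position $(s,r)$ and at $(s+1,r)$ and using Lemma~\ref{lem 7.5} (together with the fact that nonzero parallel paths in $Q$ are equal in $B$, Proposition~\ref{prop 39}), I would extract an equation of the form $\lambda_s \cdot (\text{nonzero path}) = \lambda_{s+1}\cdot(\text{the same nonzero path})$ forcing $\lambda_s=\lambda_{s+1}$; here the extra hypothesis "if the block is $A_{l,u}$ and there exists $t'>s''$ with $f_{s''t'}$ an arrow then $s<s''$" is exactly what is needed to ensure the overlap argument still closes at the bottom of the block without running into the change-of-direction anomaly handled separately in Lemma~\ref{lem 7.3} and parts (3),(4) of Lemma~\ref{lem 7.5}. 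Iterating over $s=s',\dots,s''-1$ then gives $\lambda_{s'}=\lambda_{s'+1}=\cdots=\lambda_{s''}$, which is the assertion.

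I would organize the case analysis by the type of block: for $B_{l,u}$ blocks the relevant rows all lie in a single triangular region $J_l$ of $f_\zg$ and the overlaps are automatic from the hypotenuse $(f_{s(s+1)})$ being nonzero; for $A_{l,u}$ blocks the rows $a_l,\dots,t_l$ carry arrows down the column $t_l$ by Definition~\ref{def blocks of f}, so the overlap is in column $t_l$ itself and one uses that consecutive arrows $i_s\to\cdot$ and $i_{s+1}\to\cdot$ in a common 3-cycle give the needed cancellation; for the glued blocks $AB_{l,u}$, $BA_{l,u}$ one simply concatenates the two arguments through the change-of-direction vertex $t_l$, which is legitimate precisely because $f_{t_lt_l}$ is assumed to be an arrow in Definition~\ref{def blocks of g0}(c). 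The main obstacle I anticipate is the careful bookkeeping at the extremities of each block — verifying that the overlap column $r$ actually exists and that the path appearing in $(f_\zg g_1)_{s,r}$ and $(f_\zg g_1)_{s+1,r}$ is genuinely the same nonzero element of $B$ rather than two a priori distinct parallel paths (resolved by Proposition~\ref{prop 39}) or a path that happens to be zero (resolved by the staircase/rectangle-trapezoid bookkeeping). Apart from that, the argument is a routine propagation of scalar equalities along the overlapping nonzero entries of the staircase matrix, essentially the same mechanism used in the proof of Proposition~\ref{prop:ind}.
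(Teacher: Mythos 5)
There is a genuine gap here: the mechanism at the heart of your plan proves the wrong kind of statement. The coefficients the lemma speaks of are the scalars of the composed entries $(f_\zg g_1)_{su}$ for the \emph{fixed} column $u$ of the $g_0$-block; they are not ``diagonal and near-diagonal entries of $g_1$ (or $g_0$)'', and no comparison between consecutive rows is ever needed. What must be shown is that, for each $s$ in the block separately, the sum $(f_\zg g_1)_{su}=\sum_h f_{sh}(g_1)_{hu}$ collapses to an expression in the \emph{same} entries of $g_1$. This is exactly what Lemma~\ref{lem 7.5} delivers: for an $A_{l,u}$ block the only arrow in row $s$ sits in column $s''$ and $f_{ss}$ is not an arrow, so Lemma~\ref{lem 7.5}(1)(b) gives $(f_\zg g_1)_{su}=f_{ss''}\,j_{s''u}$; for a $B_{l,u}$ block the last two nonzero entries of row $s$ sit in columns $t_l-1$ and $t_l$, so $(f_\zg g_1)_{su}=f_{s(t_l-1)}j_{(t_l-1)u}+f_{st_l}j_{t_lu}$. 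Since the $f$-prefactors are single paths with coefficient $1$, the scalar of $(f_\zg g_1)_{su}$ equals the coefficient of $j_{s''u}$, respectively the sum of the coefficients of $j_{(t_l-1)u}$ and $j_{t_lu}$, which is visibly independent of $s$. Commutativity is needed only in the boundary case $u=t_l$ of a $B$-block, where Corollary~\ref{cor:534} forces $(f_\zg g_1)_{st_l}=0$ because $f_{st_l}\neq 0$.

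Your proposed propagation step does not produce this. Writing $g_0f_\zg=f_\zg g_1$ at the two positions $(s,r)$ and $(s+1,r)$ of an overlap column $r$ gives two separate identities, each mixing row-$s$ (resp.\ row-$(s+1)$) entries of $g_0$ with column-$r$ entries of $g_1$; neither identity involves column $u$, and no relation of the form $\lambda_s\cdot w=\lambda_{s+1}\cdot w$ between the two target coefficients falls out of them. The template you import from Lemma~\ref{lem:ab} and Proposition~\ref{prop:ind} is built to equate \emph{diagonal} scalars ($b_{ss}=a_{tt}$ across a nonzero $f_{st}$), a different statement; in the setting where this lemma is applied $g$ is nilpotent and those diagonal entries vanish anyway. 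Finally, the side hypothesis excluding $s=s''$ is not about a ``change-of-direction anomaly'': it excludes the bottom row of an $A$-block precisely when that row carries a second arrow $f_{s''t'}$ with $t'>s''$, which would contribute an extra term $f_{s''t'}j_{t'u}$ to $(f_\zg g_1)_{s''u}$ and destroy the uniform formula above.
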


\begin{proof} 
First, suppose that the column block is of type $A_{l,u}$ and it lies below the main diagonal as in Figure~\ref{fig staircase}.   Then $f_{s's''}, f_{(s'+1)s''}, \dots, f_{s''s''}$ are arrows.  Moreover, there are no arrows in $f_{\zg}$ to the left of these entries by Definition~\ref{def blocks of g0}(a) of the column blocks.  By Lemma~\ref{lem 7.5}(1)(b)
$(f_\zg g_1)_{su}= f_{ss''}j_{s''u}$ for all $s'\leq s < s''$.   Then the coefficient of $(f_\zg g_1)_{su}$ is independent of $s$, so the lemma follows.  If $s=s''$ and no $t'$ as in the statement of the lemma exists then we still have the same equation for $(f_\zg g_1)_{s''u}$, and the lemma holds. 

If the column block is of type $AB_{l,u}$ or $BA_{l,u}$ then the argument is similar to the one above and also uses the moreover statement of part (1) and part (2) of Lemma~\ref{lem 7.5}. 

If the column block is of type $B_{l,u}$, and without loss of generality we may assume that $s''<u$.  Hence, the column block is to the right of the main diagonal in $g_0$ as in Figure~\ref{fig staircase}, and using notation of Definition~\ref{def blocks of g0} we have that $s'=t_{l-1}$ and $s''=t_l -1$ for some $l$.  Also, note that $t'\geq t_{l}$.  Then the row $s$ of $f_\zg $ is 
\[ \begin{bmatrix} 0 & \dots & 0 & f_{ss} & \dots & f_{s (t_l-1)}&f_{s (t_{l})} & 0 & \dots & 0\end{bmatrix}\]
and Lemma~\ref{lem 7.5} implies that 
\[(f_\zg g_1)_{su}=f_{s (t_l-1)}j_{(t_l-1)u}+f_{s (t_l)}j_{(t_l)u}\]
for all  $s'\leq s < s''$ and $u> t_{l}$.  In particular, the coefficient of $(f_\zg g_1)_{su}$ is the sum of the coefficients of $j_{(t_l-1)u}$ and $j_{(t_l)u}$, which is independent of $s$.  
If $u=t_l$ then by Corollary~\ref{cor:534} we have that $(f_\zg g_1)_{st_l}=0$ because $f_{st_l}\not=0$.  This is again independent of $s$, so the lemma holds. 
\end{proof}

Next we will introduce row blocks in the matrix of $g_0$. We assume that the steps in the crossing sequence are forward. The non-forward case is symmetric. The changes of direction will be discussed below as well.

\begin{definition}\label{def one-step}
(a)  We define \emph{maximal one-step sequence} in the matrix of $f_\zg$ to be a maximal sequence of consecutive columns $m,m+1,\dots,m'$ where the diagonal entry $i_{ll}$ and the entry right above the diagonal $i_{(l-1)l}$ are nonzero, and all other entries above the diagonal are zero. 

(b) We define \emph{maximal one-step sequence of arrows} in the matrix of $f_\zg$ to be a maximal sequence of consecutive columns $m,m+1,\dots,m'$ where the diagonal entry and the entry right above the diagonal are arrows.
\end{definition}
\begin{example}
 In the example in Figure~\ref{fig staircase}, there is one maximal one-step sequence of length two $m=t_4,m'=t_4+1$, and all other maximal one-step sequences are of length one $m=m'=t_l+1$, with $l=1,2,5,6$.
  
 The only two maximal one-step sequences of arrows
are $m=m'=t_1,$ or $t_5$. 
\end{example}
\begin{remark}
 Let  $m,m+1,\dots, m'$ be a maximal one-step sequence. If there is no change of direction at $m-1$ and $m'$, then it follows from the definition of the staircase sequence \ref{def ti} of $f_\zg$ that 
 there exists an $l$ such that $m=t_l, m+1=t_{l+1},\dots,m'-1=t_{l+m'-m-1}$, and $m'$ is not one of the $t_i$ of the staircase sequence.
Moreover, we have $m-1=t_{l-1}$.
In this situation the columns $m-1$ and $m'+1$ has at least three nonzero entries on and above the diagonal. 

If there is a change of direction at $m'$ then $m'=t_{l+m'-m}$ is also in the staircase sequence. In this case, the column $m'+1$ is zero at every position above the diagonal. 

If there is a change of direction at $m-1$ then the column $m-1$ is zero except at the diagonal position and $m-1$ is not in the staircase sequence.
\end{remark}

\begin{definition}[Row Blocks] 
\label{def rowblocks}
Let $g$ be an endomorphism of $M_\zg$ and let $g_0$ be the induced endomorphism on the projective cover $P_0(\zg)=\oplus_{s=1}^n P(i_s)$. We treat the change of direction in parts (a)(ii), (b)(ii) and (b)(iii).  Otherwise, we assume that the crossing sequence is forward without change of direction.   We define the following \emph{row blocks} in the matrix of $g_0$.

 \begin{itemize}
\item [(a)] 
For each maximal one-step sequence $m,m+1,\dots,m'$, we define 

\begin{itemize}
\item [(i)] Unless we are in the situation of case (ii) below, let 
\[F_{v,m}= [i_{v,m-1}\ i_{vm} \ \dots\ i_{vm'}], \textup{ where $v< m-1$}.\]
\item[(ii)] 
If there is a change of direction at $m-1$ whose diagonal entry $f_{(m-1)(m-1)}$ is an arrow then let

\[F_{v,m}= [i_{vm} \ \dots\ i_{vm'}], \textup{ where $v\le m-1$}.\]
\end{itemize}
\item[(b)]
For each maximal one-step sequence of arrows $m,m+1,\dots,m'$ we have the following row  blocks below the diagonal.
\begin{itemize}
\item [(i)] 
 Unless we are in the situation of case (ii) or (iii) below, let
\[E_{v,m}= [i_{v(m-1)}\ i_{vm}\ \dots\ i_{vm'}], \textup{ where $v\ge m'+1$}.\]
\item [(ii)] 
If  there is a change of direction at $m'+1$ such that  $f_{m'(m'+1)}$ is an arrow, $f_{(m'+1)(m'+1)}$ is not an arrow, and $f_{(m'+3)(m'+1)}=0$ then let

\[E_{v,m}= [i_{v(m-1)}\ i_{vm}\ \dots\ i_{vm'}\  i_{v(m'+1)}], \textup{ where $v\ge m'+3$}.\]

\item[(iii)] If  there is a change of direction at $m-1$ whose diagonal entry is not an arrow  then the block $E_{vm}$ extends to the left  into the adjacent $F$-block as follows,
\[FE_{v,m}=[i_{v(m'')}\dots i_{v(m-1)} \ i_{vm}\ \dots i_{vm'} ], \textup{ where $v\ge m'+1$}\]
and $m''$ is the first column of the (backward) maximal one-step sequence ending at $m'-1$.

In a similar way we would also have blocks of type $EF_{v,m}$ above the diagonal.  

\item[(iv)] If in a case (i), (ii) or (iii) above we have $f_{(m-1)(m-1)}$ or $f_{m'(m'+1)}$ is an arrow then the row block will behave somewhat differently in what follows, and we refer to it as a row block of type (iv).
\end{itemize}
\end{itemize}
\end{definition}
\begin{example}
\label{ex shading}
In the example of Figure \ref{fig staircase} the row blocks $F_{v,m}$ are formed by those colored blocks $B_{l,u}$ that are non-shaded. For example row one of the matrix has the following five row blocks.
\[ [i_{1t_1} \ i_{1(t_1+1)}] \ ,\ 
[i_{1t_2} \ i_{1(t_2+1)}]  \ , \
[i_{1t_3} \ i_{1t_4} \ i_{1(t_4+1)}]  \ , \
[i_{1t_5} \ i_{1(t_5+1)}]  \ , \
[i_{1t_6} \ i_{1(t_6+1)}]  
\]

\end{example}

\subsubsection{The cases where $g_0$ is constant on the column blocks or alternating on the row blocks} 

Each entry of the matrix of $g_0$ is given by a product of a scalar coefficient and path. If $C$ is any submatrix, we say that $g_0$ is \emph{constant on $C$}  if each entry of $g_0$ in $C$ has the same scalar coefficient. 
If the submatrix $C$ has only one row, we say that $g_0$ is \emph{alternating on $C$} if the scalar coefficient is constant up to sign and the sign alternates along the row.

\begin{lemma}
 \label{lem 713}
Let  $g$ be a nilpotent endomorphism of $M_\zg$. Then  $g=0$ in $\scmp\,B$ in each of the following cases. \begin{itemize}
\item[(i)]    $g_0$ is constant on a block $B_{l,u}$ and zero elsewhere, 
\item[(ii)]  $g_0$ is constant on a block $A_{l,u}$ and zero elsewhere,
\item[(iii)] $g_0$ is constant on $A_{l,u}\cup \{i_{a_lu}\}$ and zero elsewhere,
where $A_{l,u}= [ i_{(a_l+1)u} \  i_{(a_l+2)u}\ \ldots \ i_{t_lu}]^T$ is a column block as in part (a)(ii) of Definition~\ref{def blocks of g0},

\item[(iv)]  $g_0$ is constant on a block $AB_{l,u}$ or a block $BA_{l,u}$ and zero elsewhere.

\end{itemize}
\end{lemma}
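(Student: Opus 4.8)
The statement to be proved (Lemma~\ref{lem 713}) asserts that in each of the four listed special cases the nilpotent endomorphism $g$ becomes zero in $\scmp\,B$. By Lemma~\ref{lem:cmp-morphism}, it suffices in each case to produce maps $h_1\colon P_0(\zg)\to P_1(\zg)$ and $g'\colon P_0(\zg)\to P_0(\zg)$ with $g'f_\zg=0$ and $g_0=f_\zg h_1+g'$. The guiding idea is that a column block (of type $B$, $A$, $AB$ or $BA$) on which $g_0$ is constant with scalar $\lambda$ and zero elsewhere is, up to the correction term $g'$ which annihilates $f_\zg$, exactly $\lambda$ times a column of $f_\zg$ post-composed with the standard inclusion of $P(j_u)$ into $P_1(\zg)$; hence it factors through $f_\zg$ and is therefore zero modulo projectives. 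So the plan is: choose $h_1$ to be (a scalar multiple of) the appropriate coordinate map sending $P(i_u)$ identically to the summand $P(j_{?})$ of $P_1(\zg)$ that produces the given column of $f_\zg$, compute $f_\zg h_1$ using the block structure of $f_\zg$ from Proposition~\ref{prop 71} and Definition~\ref{def blocks of f}, and then set $g':=g_0-f_\zg h_1$ and verify that $g'$ is supported on positions where $f_\zg$ already forces the product to vanish.

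\textbf{Key steps, case by case.} (i) For a $B$-block $B_{l,u}=[i_{t_{l-1}u}\ \cdots\ i_{(t_l-1)u}]^T$: by Definition~\ref{def blocks of f}(b) the block $B_l$ of $f_\zg$ is precisely the sub-column $[f_{t_{l-1}t_l}\ \cdots\ f_{(t_l-1)t_l}]^T$ of column $t_l$, and these entries are valid paths whose supports match those of $B_{l,u}$ up to composition with a single path $i_u\leadsto j_{t_l}$ (or, if $u=t_l$, up to an arrow). Define $h_1$ by $P(i_u)\to P(j_{t_l})$ the multiplication by that connecting path, and zero on all other summands; then $f_\zg h_1$ has column $u$ equal to $\lambda$ times $B_l$ composed with that path, which by the staircase shape and Proposition~\ref{prop 39} (uniqueness of parallel paths) agrees with $g_0$ on $B_{l,u}$. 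The residual $g'=g_0-f_\zg h_1$ is supported only on positions $(s,u)$ with $s$ outside the block, and there $f_\zg$ is zero in the relevant rows, or $g'$ lands in a region where Lemma~\ref{column constant} together with Corollary~\ref{cor:534} gives $g'f_\zg=0$. (ii),(iii) For an $A$-block (resp. $A$-block together with its top entry $i_{a_lu}$): by Definition~\ref{def blocks of f}(a) the entries $f_{a_lt_l},\dots,f_{t_lt_l}$ of $f_\zg$ are all arrows, so $A_l$ is literally a column of arrows; taking $h_1$ to be (the scalar multiple of) the coordinate map $P(i_u)\to P(j_{t_l})$ given by the path $i_u\leadsto j_{t_l}$, one computes $f_\zg h_1$ and sees it equals $g_0$ on $A_{l,u}$, using Lemma~\ref{dots} to control which entries of column $t_l$ are arrows; again $g'=g_0-f_\zg h_1$ annihilates $f_\zg$ by Lemma~\ref{column constant}. (iv) For a glued block $AB_{l,u}$ or $BA_{l,u}$ at a change of direction $\to(i_{t_l},j_{t_l})\ot$ with $f_{t_lt_l}$ an arrow: here column $t_l$ of $f_\zg$ is nonzero both above and below the diagonal, so the same choice of $h_1$ reproduces the whole glued block in one stroke, and one checks the residual via the change-of-direction cases of Lemmata~\ref{lem 7.3}, \ref{lem 7.5}(3) and Corollary~\ref{cor:534}.

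\textbf{Main obstacle.} The genuinely delicate point is verifying in every case that $g'=g_0-f_\zg h_1$ satisfies $g'f_\zg=0$, i.e. that outside the prescribed block $g_0$ lives only in positions where the composition with $f_\zg$ is already forced to vanish. This is where the staircase combinatorics does the work: one must track, using Proposition~\ref{prop 71}, Lemma~\ref{dots}, and the four position-lemmata (Lemmata~\ref{lem 7.1}--\ref{lem 7.4}) together with their duals (Lemma~\ref{lem 7.5}), exactly which entries $f_{st}$ are arrows and which rows/columns of $f_\zg$ are zero, and then invoke Corollary~\ref{cor:534} (which says $(g'f_\zg)_{s,t}=0$ whenever $(f_\zg)_{s,t}\neq 0$, for nilpotent $g'$) together with Proposition~\ref{prop 38} (cyclic paths are zero) and Proposition~\ref{prop 39} (parallel paths coincide) to kill the remaining products. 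I expect the bookkeeping for the glued $AB$/$BA$ blocks at a change of direction (case (iv)) to be the most laborious, since there the exceptional terms of Lemma~\ref{lem 7.3} and Lemma~\ref{lem 7.5}(3) can appear and must be shown to be absorbed into $f_\zg h_1$. In all four cases, however, no new geometric input is needed beyond the matrix-shape results already established; the proof is a finite verification organized by the block decomposition.
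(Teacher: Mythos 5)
Your overall strategy is the paper's: use Lemma~\ref{lem:cmp-morphism} and factor the block of $g_0$ through $f_\zg$ by choosing a map $h_1\colon P_0(\zg)\to P_1(\zg)$ supported in a single column. But the execution has a genuine gap at exactly the point you flag as the ``main obstacle,'' and the shortcut you propose there does not work. First, in case (i) you route the factorization through $j_{t_l}$ (composing with column $t_l$ of $f_\zg$), whereas the entries of the block $B_{l,u}$ are shown in the paper, by an explicit analysis of the quiver near the rectangular step $\zS_{t_l-1}$ (Figure~\ref{fig 71}), to factor through $j_{t_l-1}$; factorization through $j_{t_l}$ is neither established nor true in general, and even when a path $j_{t_l}\leadsto i_u$ exists, your choice of $h_1$ produces an extra nonzero term $(f_\zg h_1)_{t_l u}=f_{t_lt_l}\cdot(j_{t_l}\leadsto i_u)$ in the diagonal row, outside the block. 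Second, and more seriously, you propose to absorb all such discrepancies into $g'=g_0-f_\zg h_1$ and to conclude $g'f_\zg=0$ ``by Lemma~\ref{column constant} and Corollary~\ref{cor:534}.'' Those results apply to maps $(g_1,g_0)$ that form a commutative square with $f_\zg$, i.e.\ that are induced by a (nilpotent) endomorphism of $M_\zg$; the residual $g'$ is not known to fit into any such square, so they give you nothing. In fact the discrepancy terms generally do \emph{not} annihilate $f_\zg$: in case (ii) the offending term is the nonzero path $i_{a_l}\to j_{t_l}\leadsto i_u$, and its composition with the entries $f_{uv}$ need not vanish --- which is precisely why the paper does not leave it in a residual but instead corrects $h$ itself, replacing $h_1$ by $h_2=h_1-(j_{a_l}\leadsto i_u)$ and, when further arrows into $j_{a_l}$ or additional ``flower'' configurations (Figures~\ref{figlem 713}, \ref{figlem 713b}) appear, by a finite cascade $h_3,h_4,\dots$ until $f_\zg h=g_0$ exactly.

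Concretely, what is missing from your argument is the content of the paper's proof: (a) the identification of the correct factorization vertex ($j_{t_l-1}$ for $B$-blocks, $j_{t_l}$ for $A$-blocks, justified by the local quiver pictures); (b) the compensation mechanism for the terms of $f_\zg h_1$ lying outside the prescribed block --- the cascading corrections for size-one $B$-blocks (adding $-h_{(t_{l-1}-1)u}$, etc.) and the flower-by-flower corrections in cases (ii)--(iv); and (c) a verification, not an assertion, that after these corrections one has $g_0=f_\zg h$ on the nose (so that one may take $g'=0$ in Lemma~\ref{lem:cmp-morphism}). Without (b) and (c), the claim $g'f_\zg=0$ is unsupported, and the proof does not go through. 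If you want to keep a nonzero $g'$, you must exhibit it explicitly and check $g'f_\zg=0$ entry by entry using Propositions~\ref{prop 38} and \ref{prop 39}; the paper's choice to push everything into $h$ is what makes that check unnecessary.
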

\begin{proof}
 We are going to construct a morphism $h\colon P_0(\zg)\to P_1(\zg)$ such that $g_0=f_\zg h$. Then the result will follow from Lemma~\ref{lem:cmp-morphism}.
 
 (i) 
 In order to construct $h$, we first must take a closer look at $g_0$. Let $(s,u)$ be a nonzero position in the matrix of $g_0$ that lies in the block $B_{l,u}$. Thus $t_{l-1}\le s \le t_l-1$ and $t_l\le u$. 
 Then $(g_0)_{su}$ is a scalar multiple of a path from $i_s$  to $i_u$. We shall show that this path factors through $j_{t_l}$.

 Indeed, if $t_l-t_{l-1} =1$ then $s=t_l-1$ and the block $B_{l,u}$ consists of a single position 
 $(g_0)_{(t_l-1)u}$,  $ u\ge t_l$, and the step $\zS_{t_l-1}$ in the crossing sequence of $\zg$ is trapezoidal of degree 1, by Corollary \ref{cor difference}. It is not hard to see that in this case the path $(g_0)_{(t_l-1)u}$ from $i_{t_l-1}$ to $i_u$ factors through $j_{t_l-1}$.
 
 On the other hand, if $t_l-t_{l-1}\ge 2 $ then $\zS_{t_l-1}$ is rectangular of degree zero, again by Corollary \ref{cor difference}.
 This case is illustrated in Figure~\ref{fig 71}. Thus  the path in $(g_0)_{su}$ from $i_s$ to $i_u$ with $s\le t_l-1$, $u\ge t_l$ factors through $j_{t_l-1}$.
 
\begin{figure}
\[\xymatrix@!R=8pt@!C=8pt{&&&&&j_{t_l+1} \\
&&&&\cdot\ar[r]&\cdot\ar@{.>}[u]\ar[ld]\\
&j_{t_l-1}\ar[r]\ar[ld] &\cdot\ar@{.>}[rr]\ar[ld]&&\cdot\ar[u]\ar[ld]\ar[r] &i_{t_l}\ar[u]\ar[ld]\\
i_{t_l-1}\ar[r]&\cdot\ar[u]\ar[ld]\ar@{.>}[rr]&&\cdot\ar[r]&j_{t_l}\ar[u]\\
\cdot\ar[u] \ar[r]&\cdot\ar[u]&&i_{t_l-3}\ar[ld]\\
&&\cdot\ar@{.>}[ld]\ar[r]&j_{t_l-3}\ar[u]\ar[ld]\\
\cdot\ar@{.>}[uu]\ar[r]&\cdot\ar@{.>}[uu]\ar[ld]&\cdot\ar@{.>}[ld]\ar[u]\\
j_{t_l-2}\ar[u]\ar[r]&i_{t_l-2}\ar[u]
}\]
\caption{Proof of Lemma \ref{lem 713}. The steps $\zS_{t_l-3}, \zS_{t_l-2}$ are both $T_0$ and the step $\zS_{t_l-1}$ is $R_0$.}
\label{fig 71}
\end{figure}
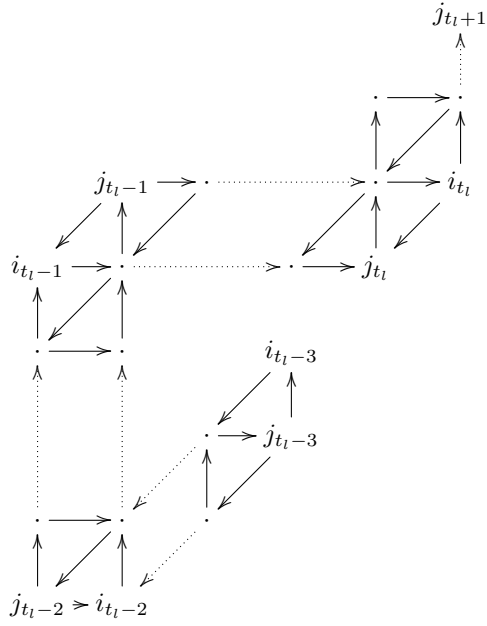 

Therefore, up to a scalar coefficient, $(g_0)_{su}$ is a path $w\colon i_s\leadsto j_{t_l-1}\leadsto i_u$. 
 Moreover, since $t_{l-1}\le s\le t_l-1$, the staircase shape of $f_\zg$ implies that $f_{s(t_l-1)}\ne 0$, and thus there exists a valid path $f_{s(t_l-1)}\colon i_s\leadsto j_{t_l-1}$.
 Let $h_{(t_l-1)u}$ denote the subpath of $w$ from $j_{t_l-1}$ to $i_u$, multiplied by the coefficient of $(g_0) _{su}$. Here we use the fact that this coefficient does not depend on $s$, since $g_0$ is constant on the block $B_{l,u}$. Then $h_{(t_l-1)u}$ is a morphism from $P(i_u)$ to $P(j_{t_l})$ and we have
 \[(g_0)_{su}=f_{s(t_l-1)}h_{(t_l-1)u}.\] 
 Note that the right hand side of this equation is the product of a single entry of $f_\zg$ with $h_{(t_l-1)u}$. We now shall consider the product of $f_\zg$ with $h_{(t_l-1)u}$.
 
 If the block $B_{l,u}$ is of size at least 2, that is $t_l-t_{l-1}\ge 2$ then 
 \[f_\zg h_{(t_l-1)u} = (f_{s'(t_l-1)}h_{(t_l-1)u})_{t_{(l-1)}\le s'\le t_l-1} =  g_0|_{B_{l,u}}.\]
  In this case, we define $h=h_{(t_l-1)u}$ at position ${(t_l-1)u}$ and zero elsewhere.

If the block $B_{l,u}$ is of size 1, that is $t_l-t_{l-1}=1$, then  
\[f_\zg h_{(t_l-1)u} = (f_{s'(t_l-1)}h_{(t_l-1)u})_{t_{l-2}\le s'\le t_l-1} \] is bigger that $B_{l,u}$. In this case, we defined $h$ to be $h_{(t_{l-1})u}$ at position ${(t_{l-1})u}$, and $-h_{(t_{l-1}-1)u}$ at position ${(t_{l-1}-1)u}$. Then if the previous block $B_{l-1,u}$ has size greater than 1 then 
$(f_\zg h)=g_0$. And if $B_{l-1,u}$ has size 1 as well, then we add $h_{(t_{l-2} -1)u}$ to $h$ in order to compensate again. Continuing this way will produce the desired map $h$.

(ii) and (iii) 
Recall that $a_l$ is the least integer such that in column $t_l$ of  the matrix of $f_\zg$ the positions $a_l,a_l+1,\ldots, t_l$ are given by arrows $i_{a_l}\to j_{t_l},i_{a_l+1}\to j_{t_l}, \ldots, i_{t_l}\to j_{t_l}$. Suppose first that the arrow between $i_{a_l}$ and $j_{a_l}$ is in the direction $j_{a_l}\to i_{a_l}$. The quiver in this case is illustrated in the left picture of Figure \ref{figlem 713}.
\begin{figure}
\begin{center}
\[\xymatrix{&j_{t_l-1}\ar[r]\ar[d] & i_{t_l-1}\ar[d] & j_{a_l+1}\ar@{.>}[l]\ar[d] \\
&i_{t_l}\ar[r] & j_{t_l}\ar[ldd]\ar[lu]\ar[ru]\ar[rd] & i_{a_l+1}\ar@{<-}[d]\ar[l]\\
&\cdot \ar[ru]& i_{a_l}\ar[u]&j_{a_l}\ar[l]
\\
i_u&\cdot\ar[u]\ar[ru]\ar@{~>}[l]
\\
} \qquad
\xymatrix{j_{t_l-1}\ar[r]\ar[d] & i_{t_l-1}\ar[d] & j_{a_l+1}\ar@{.>}[l]\ar[d] \\
i_{t_l}\ar[r] & j_{t_l}\ar[lu]\ar[ru]\ar[rd] & i_{a_l+1}\ar@{<-}[d]\ar[l]\\
& i_{a_l}\ar[u]\ar[rd]&x\ar[l]\ar@{~>}[rd]
\\ 
&&j_{a_l}\ar[u]
&
i_{u}
}
\]
\caption{Local configuration in the situation of block $A_{l,u}$. On the left, the arrow of the $a_l$-th crossing pair is  $j_{a_l}\to i_{a_l}$; on the right, the arrow is $i_{a_l}\to j_{a_l}$.}
\label{figlem 713}
\end{center}
\end{figure}
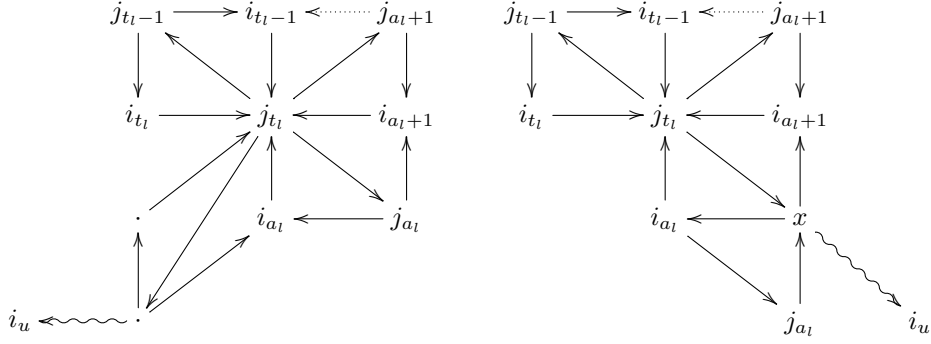
We call the configuration given by the full subquiver on the vertices $i_{a_l},j_{a_l},\ldots,i_{t_l},j_{t_l}$ a flower centered at $j_{t_l}$.  
In this situation the position $(a_l,a_l)$ in the matrix $f_\zg$ is not an arrow. Thus the block $A_{l,u}$ is of type (a)(i) in Definition~\ref{def blocks of g0}.  Hence, it consists of the positions $a_l, a_{l}+1,\ldots, {t_l} $ in column $u$ of the matrix $g_0$, where $u\le a_l-1$. The entries in this block are paths starting at one of the points $i_{a_l},\ldots,i_{t_l}$ and ending at $i_u$. From the figure it is clear that each of these paths must factor through $j_{t_l}$.   
Define $h_1$ to be the path $j_{t_l}\leadsto i_u$ at position $(t_l,u)$ and zero elsewhere. Then $f_\zg h_1=g_0|_{A_l,u},$ and we are done.

Now suppose that the arrow is $i_{a_l}\to j_{a_l}$. The quiver in this case is illustrated in the right picture of Figure \ref{figlem 713}.
In this situation, the flower centered at $j_{t_l}$ does not contain the vertex $j_{a_l}$, and the position $(a_l,a_l)$ in the matrix $f_\zg$ is an arrow. Thus the block $A_{l,u}$ consists of the positions $a_{l}+1, a_l+2,\ldots,t_l $ in column $u$ of the matrix $g_0$, where again $u\le a_l$. Again each of the entries in this block is a path that factor through $j_{t_l}$. In case (iii) this completes the proof. However, in case (ii), the map $h_1$ defined above will not work, because its composition with $f_\zg$ would contain the nonzero path $i_{a_l}\to j_{t_l}\leadsto i_u$ which is not part of the block $A_{l,u}$. In order to compensate, we note that this path factors through $j_{a_l}$ as well. So we can define $h_2=h_1-(j_{a_l}\leadsto i_u)$, so $h_2$ has two nonzero positions $j_{t_l}\leadsto i_u$ and $-(j_{a_l}\leadsto i_u)$, and then $f_\zg h_2$ is equal to $g$ on the block $A_{l,u}$ and is zero at position $(a_l,u)$. Thus if there is no other arrow from a vertex $i_s$ to the  vertex $j_{a_l}$ with $s<a_l$ then this $h_2$ has the desired property.

Now suppose there is  an arrow $i_s\to j_{a_l}$ with $s<a_l$. Here we need to consider two subcases, depending on whether or not the vertex $x$ in the right picture in Figure~\ref{figlem 713} is the vertex $j_{a_l-1}$. If it is, then  the path  $i_{a_l}\to j_{t_l}\leadsto i_u$ would be 
\[i_{a_l}\to j_{t_l}\to j_{a_l-1}\to i_{a_l-1}\to j_{a_l}\leadsto i_u,\] 
and, since the initial piece 
$i_{a_l}\to j_{t_l}\to j_{a_l-1}$ factors through $j_{a_l}$, this path is zero unless $u= a_{l}-1$. Hence our first morphism $h_1$ has the desired property if $u<a_{l}-1$, and our second morphism $h_2$ has the desired property if $u=a_l-1$. 
If, on the other hand, the vertex $x$ is not $j_{a_l-1}$ then either there is no arrow $i_{a_l-1}\to j_{a_l}$ and in this case we can use the map $h_2$, or there is an arrow  $i_{a_l-1}\to j_{a_l}$, see Figure~\ref{figlem 713b}.

\begin{figure}
\begin{center}
\[
\xymatrix{j_{t_l-1}\ar[r]\ar[d] & i_{t_l-1}\ar[d] & j_{a_l+1}\ar@{.>}[l]\ar[d] \\
i_{t_l}\ar[r] & j_{t_l}\ar[lu]\ar[ru]\ar[rd] & i_{a_l+1}\ar@{<-}[d]\ar[l]&j_{a_l-2}\ar[dl] &i_{a_l-2}\ar[l]\ar[dl] \\
& i_{a_l}\ar[u]\ar[rd]&x\ar[l]\ar[rd]\ar[rru]&j_{a_l-1}\ar[l]
\\ 
&&j_{a_l}\ar[u]
&
i_{a_l-1} \ar[l]\ar[u]
}
\]
\caption{Local configuration in the situation of block $A_{l,u}$ in the case where $i_{a_l}\to j_{a_l}$, $i_{a_l-1}\to j_{a_l}$  and $i_{a_l-1}\to j_{a_l-1}$ .}
\label{figlem 713b}
\end{center}
\end{figure}
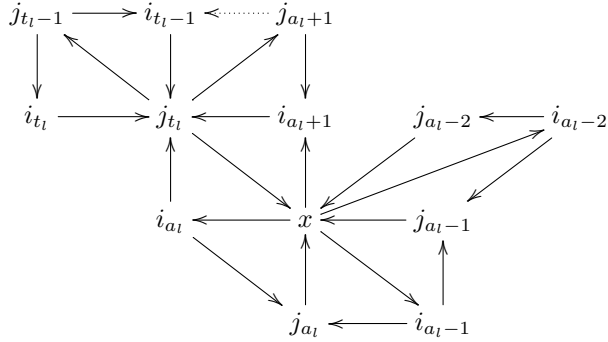
In that figure, we have an arrow from $j_{a_l-1}$ to $x$ which implies that the quiver contains a flower at $x$ and thus there exists a path from $x$ to some $i_u$ with $u<a_l-1$ (for example $u=a_l-2$) that does not factor through $i_{a_l-1}$. In this situation,
the composition $f_\zg h_2$ contains the term $-(i_{a_l-1}\to j_{a_l}\leadsto i_u)$ which is not in the block $A_{l,u}$, and again we need to compensate. In this case, the map $h_3=h_2+ (j_{a_l-1}\leadsto i_u)$ has the desired property.

This process will continue if there is another flower at a vertex $y$ that is not one of the vertices $i,j $ of the crossing sequence at that has an arrow from $x$. After a finite number of steps, it will produce a map $h$ such that $f_\zg h=g_0$ and we are done. 

Let us also remark that it is possible that  the crossing sequence has a change of direction at the pair $(i_{a_l},j_{a_l})$. In that situation, the map $h=\left\{\begin{array}{lll} 
h_1&  \textup{if $i_{a_l}\to j_{a_l}$ is an arrow;}\\
h_2&  \textup{if $i_{a_l}\leadsto j_{a_l}$ is not an arrow}\end{array}\right.$
will have the desired property. 

(iv) This case  is similar to case (ii).
 \end{proof}

Next we are going to study the case where $g_0$ is alternating on row blocks. 

\begin{lemma}
 \label{lem 714}  
 Let $g$ be a nilpotent endomorphism of $M_\zg$ such that $g_0$ is alternating on a row block and zero elsewhere else.
\begin{itemize}
\item [(a)] If the row block is not of type (iv) then $g_0f_\zg=0$. In particular $g=0$ in $\scmp\,B$.
\item [(b)] If the row block is of type (iv) then $g_0f_\zg$ is zero everywhere except possibly at two positions.
\end{itemize}

  \end{lemma}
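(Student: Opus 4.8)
The statement to be proved, Lemma~\ref{lem 714}, analyzes a nilpotent endomorphism $g$ of $M_\zg$ whose induced map $g_0$ on $P_0(\zg)$ is supported on a single row block (in the sense of Definition~\ref{def rowblocks}) and alternating there. The plan is to compute the row of the product $g_0 f_\zg$ corresponding to that row block, entry by entry, using the explicit formulas for $(g_0 f_\zg)_{st}$ established in Lemmata~\ref{lem 7.1}--\ref{lem 7.4}, and to exploit the staircase/one-step structure recorded in Proposition~\ref{prop 71} and Definition~\ref{def one-step}. The key point is that a row block of the matrix of $g_0$ sits over a maximal one-step sequence of columns $m,m+1,\dots,m'$ of $f_\zg$ (plus one extra column $m-1$ in the non-type-(iv) cases, per Definition~\ref{def rowblocks}(a)(i)/(b)(i)), and the alternating signs are arranged precisely so that the contributions through consecutive diagonal entries $i_{ll}$ and the super-diagonal entries $i_{(l-1)l}$ of $f_\zg$ cancel telescopically when the one-step entries are arrows; outside that sequence there is nothing to hit.

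\textbf{Key steps.} First I would fix the row index $v$ of the row block and look at $(g_0 f_\zg)_{vt}$ for each $t$. Because $g_0$ is zero outside the row block $[i_{v(m-1)}\ i_{vm}\ \cdots\ i_{vm'}]$, only columns $t$ of $f_\zg$ that have a nonzero entry in rows $m-1,m,\dots,m'$ can contribute; by the staircase shape (Proposition~\ref{prop 71}) and the one-step hypothesis, the relevant columns are $t = m-1, m, \dots, m', m'+1$, and each such column, restricted to these rows, contains only the diagonal/super-diagonal entries $i_{tt}$ and $i_{(t-1)t}$. Second, I would invoke the explicit composition formulas: for the columns in the interior of the one-step sequence, $(g_0 f_\zg)_{vt}$ is a sum of two terms of the shape (coefficient)$\cdot i_{v(t-1)}f_{(t-1)t}$ and (coefficient)$\cdot i_{vt}f_{tt}$ — exactly the form given in Lemma~\ref{lem 7.5}(1)(a) dualized, i.e. Lemma~\ref{lem 7.1}(a)/(b). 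Third, and this is the heart of the argument, I would show that when the row block is of type (i)/(ii)/(iii) (not (iv)), the two scalar coefficients attached to $i_{(t-1)t}$ and $i_{tt}$ appear with opposite signs because $g_0$ alternates along the row, and the paths $i_{v(t-1)}f_{(t-1)t}$ and $i_{vt}f_{tt}$ are equal in $B$ (they are parallel valid paths from $i_v$ to $j_t$, hence equal by Proposition~\ref{prop 39}), so each such entry of $g_0 f_\zg$ vanishes. At the two ends of the sequence — column $m-1$ and column $m'+1$ — I would use the boundary hypotheses recorded in the definition of the row block: in the non-(iv) case, either the diagonal entry $f_{(m-1)(m-1)}$ is not an arrow (so $i_{v(m-1)}f_{(m-1)(m-1)} = 0$ by Lemma~\ref{lem 7.4}, since a path into $i_{m-1}$ composed with the length-two path $f_{(m-1)(m-1)}$ contains a cyclic subpath), or the block extends to absorb the extra column so the telescoping still closes; similarly at $m'+1$, the change-of-direction/arrow hypotheses force the leftover term to be zero or to be matched by an adjacent term. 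For part~(b), the type-(iv) case, the same computation applies except that at one or both ends the entry $f_{(m-1)(m-1)}$ (resp. $f_{m'(m'+1)}$) \emph{is} an arrow, so the boundary term $i_{v(m-1)}f_{(m-1)(m-1)}$ (resp. the analogous one at $m'+1$) need not vanish and is not cancelled by anything; this leaves at most one nonzero entry at each end, i.e. at most two in total, which is the assertion of~(b). Finally, for part~(a), once $g_0 f_\zg = 0$ I would conclude $g = 0$ in $\scmp\,B$ via Lemma~\ref{lem:cmp-morphism}, taking $h_1 = 0$ and $g' = g_0$ (which satisfies $g' f_\zg = 0$).

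\textbf{Main obstacle.} The delicate part is the bookkeeping at the two ends of the maximal one-step sequence, where the precise shape of the quiver $Q(\zg)$ depends on whether the adjacent steps are rectangular or trapezoidal, on their degrees, and on whether a change of direction occurs — the various sub-cases (iii) with the absorbed $FE$/$EF$ blocks and (iv) with a boundary arrow. I expect to need the Rectangle--Trapezoid Lemma (Lemma~\ref{rect-trap-lemma}) and Corollary~\ref{cor difference} to pin down exactly which entries of $f_\zg$ are arrows near columns $m-1$ and $m'+1$, and to verify in each configuration that the ``leftover'' path either lies in the image of an arrow that gets cancelled by the alternating sign of the neighbouring block entry, or is forced to be zero in $B$ because it runs through a flower or a boundary arrow (Lemma~\ref{lem 37}). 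The interior cancellation, by contrast, is a clean application of Proposition~\ref{prop 39} plus the alternating-sign hypothesis, so the write-up should isolate the endpoint analysis as the only genuinely case-heavy portion, and present the interior telescoping uniformly.
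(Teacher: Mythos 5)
Your proposal is correct and follows essentially the same route as the paper: compute $(g_0f_\zg)_{vt}$ column by column via the composition formulas of Lemmata~\ref{lem 7.1}--\ref{lem 7.4}, cancel the two interior terms because they are scalar multiples of the same path from $i_v$ to $j_t$ (Proposition~\ref{prop 39}) with alternating coefficients, handle the boundary columns $m-1$ and $m'+1$ using the defining hypotheses of the row-block types (including the $E$, $F$, $FE$/$EF$ cases), note that only in type (iv) the end terms survive, giving at most two nonzero positions, and conclude $g=0$ in $\scmp\,B$ from $g_0f_\zg=0$ via Lemma~\ref{lem:cmp-morphism}. This matches the paper's argument in structure and in the supporting lemmas used.
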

\begin{proof} 
We may assume without loss of generality that the crossing sequence is forward. Suppose first that the row block is $F_{v,m}$ as in part (a) of Definition~\ref{def rowblocks}. Let  $(v,t)$ be a  position  in $F_{v,m}$. Suppose first that $t\ge m$.
 Since we have a maximal one-step sequence, the parameter $u$ in part (a) of Lemma \ref{lem 7.1} is equal to $t-1$. Therefore equation (\ref{eqlem 7.1a}) implies that $(g_0f_\zg)_{vt}=i_{v(t-1)}f_{(t-1)t}+i_{vt}f_{tt}$. Both terms in this expressions are equal to scalar multiples of the unique path  from $i_v$ to $j_t$ and the scalar is equal to the coefficient of $i_{v(t-1)}$ and $i_{vt}$, respectively. Thus the expression is zero because $g_0$ is alternating on the block. 
 
  Now suppose $t=m-1$. Then the column $t$ is not part of the maximal one-step sequence. Suppose first that there is not change of direction at $m-1$. Then Lemma~\ref{lem 7.1}(a) implies 
  $(g_0f_\zg)_{v(m-1)}=i_{vu}f_{u(m-1)}+i_{v(u+1)}f_{(u+1)(m-1)}$, with $u<m-2$. Thus the positions $(v,u),(v,u+1)$ are not part of the block $F_{v,m}$, hence $i_{vu}=i_{v(u+1)} =0 $, and we are done.
  Now suppose there is a change of direction at $m-1$. Then the moreover statement of Lemma~\ref{lem 7.1}(a) implies that $i_{vm}f_{mm}=0$ and thus 
  $(g_0f_\zg)_{vm}=i_{v(m-1)}f_{(m-1)m}$. But $i_{v(m-1)}=0$, since $(v,m-1)$ is not part of the block, because of Definition~\ref{def rowblocks}(a)(ii). This shows that $(g_0f_\zg)_{vm}=0$. Moreover,  $(v,m-1)$ not being part of the block also implies  
  $(g_0f_\zg)_{v(m-1)}=0$.  This completes the proof in the case when the row block is of type $F$.   
  
   Now suppose that  the row block is $E_{v,m}$ as in part (b)(i) of Definition~\ref{def rowblocks}. Let  $(v,t)$ be a  position  in $E_{v,m}$. Suppose first that $m\le t\le m'$.
 In this case, we have a maximal one-step sequence of arrows, and thus  the parameter $t'$ in part (b) of Lemma \ref{lem 7.1} is equal to $t-1$. Therefore equation (\ref{eqlem 7.1b}) implies that $(g_0f_\zg)_{vt}=i_{vt}f_{tt}+i_{v(t-1)}f_{(t-1)t}$ and both terms are nonzero. By the same argument as in the previous case, we see that the expression is zero because $g_0$ is alternating on the block.

Now suppose that $t=m-1$ and
assume first that $E_{v,m}$ is not of type (iv).
 In particular $f_{(m-1)( m-1)}$ is not an arrow.   Then in the composition $g_0f_\zg$ in column $m-1$, only the term $f_{(m-1)( m-1)}$ may contribute since $g_0$ is zero in columns $t$ with $t< m-1$.  
 Now, since $f_{(m-1)( m-1)}$ is not an arrow then by Lemma~\ref{lem 7.1} (b), $ g_0f_\zg$ is zero in column $m-1$.  Finally suppose that $t=m'+1$. Similarly to the previous case,  $f_{m'(m'+1)}$ is not an arrow.   Note, that since $f_{m'm'}$ is an arrow then $f_{(m'-1)(m'+1)}=0$, by Lemma~\ref{lem degree} (c). Thus in column $m'+1$ of  $f_\zg$, we will have two terms $f_{(m'+1)(m'+1)},f_{m'(m'+1)}$, and then $(g_0f_\zg)_{v(m'+1)}=i_{v(m'+1)}f_{(m'+1)(m'+1)}+i_{vm'}f_{m'(m'+1)}$.  Now,  $i_{vm'}f_{m'(m'+1)}=0$, because of Lemma~\ref{lem 7.1}(b) using the fact that $f_{m'(m'+1)}$ is not an arrow, and $i_{v(m'+1)}f_{(m'+1)(m'+1)}=0$, because the position $(v,m'+1)$ is not in the block.

If  the row block $E_{v,m}$ is as in part (b)(ii) of Definition~\ref{def rowblocks} then Lemma~\ref{lem 7.3} with $t'=m', t=m'+1$ implies that $(g_0f_\zg)_{v(m'+1)}$ has an additional term $i_{vm'} f_{m'(m'+1)}$. Thus
\[(g_0f_\zg)_{v(m'+1)}=i_{vm'} f_{m'(m'+1)}+i_{v(m'+1)} f_{(m'+1)(m'+1)}+i_{v(m'+2)} f_{(m'+2)(m'+1)}.\]
However, the position $(v,(m'+2))$ is not in $E_{v,m}$ so $i_{v,(m'+2)}=0$ and thus the above expression has only two non-zero terms. Again since $g_0$ is alternating on the block, the sum is zero.

If the row block is $FE_{v,m}$ or $EF_{v,m}$ as in part (b)(iii) of Definition~\ref{def rowblocks} then it consists of an $E$-block and and $F$-block that are joined. This case follows by combining the previous ones. Moreover, Lemma~\ref{lem 7.4} guarantees that the $g_0f_\zg$ is also zero at position $(v,m-1)$, because $f_{(m-1)(m-1)}$ is not an arrow.

 Thus in all cases we have $g_0f_\zg=0$. 
  In particular, Lemma~\ref{lem:cmp-morphism} implies $g=0$ in $\scmp\,B$.

(b) Now assume that $E_{v,m}$ is
of type (iv). In this case the entries $(g_0f_\zg)_{vt}$, with $t$ the first or last entry of the block may be nonzero.
  \end{proof}

\subsubsection{Proof of Theorem \ref{thm nilpotent endo} }
%

\begin{thm} 
 \label{Athm nilpotent endo}
 Let $\zg$ be a 2-diagonal and $M_\zg$ the associated indecomposable syzygy over $B$. Then $M_\zg$ does not admit any nonzero nilpotent endomorphisms in $\scmp\,B$.
\end{thm}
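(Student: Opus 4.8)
The plan is to show that any nilpotent endomorphism $g\colon M_\zg\to M_\zg$ in $\scmp\,B$ is already zero, by working with the induced endomorphisms $g_0\colon P_0(\zg)\to P_0(\zg)$ and $g_1\colon P_1(\zg)\to P_1(\zg)$ in the commutative diagram \eqref{diagram 72}. By Lemma~\ref{lem:cmp-morphism}, it suffices to write $g_0 = f_\zg h_1 + g'$ with $g' f_\zg = 0$ for some maps $h_1\colon P_0(\zg)\to P_1(\zg)$ and $g'$; thus the whole argument is a reduction of $g_0$ modulo the two subspaces $f_\zg\cdot\Hom(P_0,P_1)$ and $\{g' : g' f_\zg = 0\}$. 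Since $g$ is nilpotent, Lemma~\ref{Alem:ab} tells us that all diagonal entries of $g_0$ and $g_1$ are zero, and more usefully Corollary~\ref{cor:534} tells us that $(g_0 f_\zg)_{s,t}=0$ whenever $(f_\zg)_{s,t}\ne 0$. This is the engine: it says the composition $g_0 f_\zg$ is forced to vanish on the ``staircase support'' of $f_\zg$, which will translate into a system of relations among the entries of $g_0$.

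First I would reduce to the case where the crossing sequence of $\zg$ is forward without change of direction, treating changes of direction afterward by the symmetric/combined versions of the lemmas already proved (Lemmas~\ref{lem 7.1}--\ref{lem 7.4}, \ref{lem 7.5} and the block definitions in \S\ref{sec:blocks}). With $f_\zg$ in the staircase form of Proposition~\ref{prop 71}, I would exploit the explicit formulas for the entries of $g_0 f_\zg$ and $f_\zg g_1$ in terms of the column blocks $A_{l,u}, B_{l,u}$ and the row blocks $E_{v,m}, F_{v,m}$. The key structural facts are Lemma~\ref{column constant} (the coefficient of $(f_\zg g_1)_{s,u}$ is constant as $s$ runs over a column block) and the analysis behind Lemmas~\ref{lem 713} and \ref{lem 714}. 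Concretely: using $g_0 f_\zg = f_\zg g_1$ (the commutativity of \eqref{diagram 72}) together with Corollary~\ref{cor:534}, I would show block by block that $g_0$ must be \emph{constant} on each column block $B_{l,u}$ and each column block $A_{l,u}$, and \emph{alternating} on each row block. Then Lemma~\ref{lem 713} lets me subtract off, one column block at a time, a summand of $g_0$ of the form $f_\zg h_1$ — removing the $B$-blocks first (where $h_1$ has a single nonzero entry, adjusted by a telescoping correction when a block has size one), then the $A$- and $AB$/$BA$-blocks — and Lemma~\ref{lem 714} handles the row blocks not of type (iv), which contribute $g'$ with $g' f_\zg = 0$. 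After all these subtractions the only entries of $g_0$ that can survive lie in the finitely many ``type (iv)'' positions, i.e. at most two entries per row block adjacent to an isolated arrow on the staircase.

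The main obstacle — and where the bookkeeping is heaviest — is controlling exactly these residual type-(iv) entries and the entries at the seams where a column block of size one forces a telescoping correction to spill into the previous block, and where a change of direction glues an $E$-block to an $F$-block. Here I would argue that after the reduction $g_0$ is concentrated in a region small enough that nilpotency of $g_0$ itself (all diagonal entries zero, by Lemma~\ref{Alem:ab}) together with one more application of the commutativity $g_0 f_\zg = f_\zg g_1$ forces these last entries to vanish: an arrow on the diagonal of the staircase at column $t_l$ produces, via Lemma~\ref{lem 7.1}(a)--(b) and the fact that $(g_0 f_\zg)_{s,t_l}=0$, a relation equating the remaining coefficient to a diagonal coefficient of $g_1$, which is $0$. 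I then conclude $g_0 = f_\zg h_1 + g'$ with $g' f_\zg = 0$, so $g = 0$ in $\scmp\,B$ by Lemma~\ref{lem:cmp-morphism}. Finally, I would note that the case of the hexagon ($2N=6$) is trivial since there $\scmp\,B$ has only simple objects with trivial endomorphism rings, so the induction/reduction has a valid base.

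\begin{proof}[Proof sketch of Theorem~\ref{Athm nilpotent endo}]
Let $g$ be a nilpotent endomorphism of $M_\zg$, inducing $g_0,g_1$ as in diagram~\eqref{diagram 72}. By Lemma~\ref{lem:cmp-morphism} it is enough to find $h_1\colon P_0(\zg)\to P_1(\zg)$ and $g'$ with $g'f_\zg=0$ such that $g_0=f_\zg h_1+g'$. Reducing to the case of a forward crossing sequence (the general case follows by the symmetric versions of Lemmas~\ref{lem 7.1}--\ref{lem 7.5}), put $f_\zg$ in the staircase form of Proposition~\ref{prop 71}. By Lemma~\ref{Alem:ab} all diagonal entries of $g_0,g_1$ vanish, and by Corollary~\ref{cor:534}, $(g_0f_\zg)_{s,t}=0$ at every nonzero position of $f_\zg$. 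Combining this with the commutativity $g_0f_\zg=f_\zg g_1$ and Lemma~\ref{column constant}, one shows successively that $g_0$ is constant on each column block $B_{l,u}$ and $A_{l,u}$ (and $AB_{l,u}$, $BA_{l,u}$) and alternating on each row block. Applying Lemma~\ref{lem 713} repeatedly, we subtract from $g_0$ maps of the form $f_\zg h_1$ clearing all $B$-, then all $A$-, $AB$-, $BA$-blocks (using telescoping corrections for size-one blocks); applying Lemma~\ref{lem 714} clears the row blocks not of type (iv), producing the part $g'$ with $g'f_\zg=0$. What remains of $g_0$ is supported on finitely many type-(iv) positions; a final use of $g_0f_\zg=f_\zg g_1$ together with the formulas of Lemma~\ref{lem 7.1} and the vanishing of the diagonal of $g_1$ forces those entries to vanish as well. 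Hence $g_0=f_\zg h_1+g'$ with $g'f_\zg=0$, and $g=0$ in $\scmp\,B$. The hexagon case is immediate.
\end{proof}
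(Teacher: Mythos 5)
Your overall strategy is the same as the paper's (reduce via Lemma~\ref{lem:cmp-morphism}, use the vanishing of the diagonal entries of $g_0,g_1$ from Lemma~\ref{Alem:ab} through Corollary~\ref{cor:534}, the staircase form of Proposition~\ref{prop 71}, the block structure, and Lemmas~\ref{lem 713} and~\ref{lem 714}), but your pivotal middle claim is not established and is not true as stated: $g_0$ need not be constant on the column blocks, nor alternating on the row blocks. What the commutativity relations actually yield, via Lemma~\ref{column constant}, is the weaker relation~(\ref{eq coeff}) on a rectangular submatrix spanned by column blocks that are simultaneously row blocks, and this only gives a decomposition of that submatrix as a \emph{sum} of a column-constant matrix and a row-alternating matrix (the paper's case (4a)); it does not make $g_0$ itself column-constant or row-alternating. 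Moreover your reduction omits two whole families of entries: entries of $g_0$ lying in a column block but in no row block, where one must argue separately that either the block is constant or its contribution to $g_0f_\zg$ already vanishes (case (4b)), and entries lying in no block at all, for which neither Lemma~\ref{lem 713} nor Lemma~\ref{lem 714} applies and one instead shows directly, using the staircase shape together with Corollary~\ref{cor:534} and Lemmas~\ref{lem 7.1}(b), \ref{lem 7.2}(b), that the single-entry map $g_0'$ satisfies $g_0'f_\zg=0$ (cases (2) and (3b)). So "clearing all $B$-, $A$-, $AB$-, $BA$-blocks and the non-type-(iv) row blocks" does not leave only type-(iv) positions, and the structural claim you invoke to get there is stronger than what the relations support.

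The second gap is the endgame. You assert that the surviving type-(iv) entries are killed by "one more application of $g_0f_\zg=f_\zg g_1$," but that is exactly where the paper's proof does most of its work: it needs the precise location of arrows on the staircase (Lemma~\ref{dots}), the maximality of the auxiliary indices $t'$, $t'''$ in Lemmas~\ref{lem 7.1}(b) and~\ref{lem 7.5}(2a), and careful sign bookkeeping to conclude, case by case, either that the residual coefficient coincides with one already known to vanish, or that the relevant block is in fact column-constant so that Lemma~\ref{lem 713} applies after all. Your sketch gestures at "a relation equating the remaining coefficient to a diagonal coefficient of $g_1$," but no such clean relation is available in general; without the detailed analysis the argument does not close. (A minor point: the reduction is an induction on the number of nonzero entries of $g_0$, not on the size of the polygon, so no hexagon base case is needed; nilpotency of $g$ enters only through the vanishing of the diagonal entries.)
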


\begin{proof}
Let $g$ be a nilpotent endomorphism of $M_{\zg}$ with the corresponding maps $g_0, g_1$ on the projective presentation of $M_{\zg}$ as in the diagram~\eqref{diagram 72}.  Let $g_0=(i_{st}), g_1=(j_{st})$ denote the entries of these matrices.   Recall that $i_{st}$ is a path $i_s\leadsto j_t$ scaled by some coefficient from the field $\kb$, and since $g$ is nilpotent $i_{st}=0$ if $s=t$.  The strategy of the proof is as follows.  Given a nonzero entry $i_{st}$ of $g_0$ we construct a map $g_0': P_0(\zg)\to P_0(\zg)$ that induces a nilpotent endomorphism $g'$ of $M_{\zg}$ such that $g'=0$ in  $\scmp\,B$.  Moreover, $(g_0')_{st}= i_{st}$ and $g_0-g_0'$ has fewer nonzero entries than $g_0$.   Then replacing $g_0$ with $g_0-g_0'$ and repeating the argument we conclude that $g_0=0$ and, hence, $g=0$ in $\scmp\,B$.

Let $i_{st}$ be a nonzero entry of $g_0$.  Next we consider several cases based on whether $i_{st}$ belongs to a column block and/or a row block of $g_0$.  

(1) Suppose that $i_{st}$ belongs to a column block that consists of a single term.  Let $g_0'$ be the map that contains the entry $i_{st}$ in position $(s,t)$ and zero everywhere else. Then by Lemma~\ref{lem 713} $g_0'$ induces an endomorphism of $M_{\zg}$ that is zero in $\scmp\,B$.  By construction, $g_0-g_0'$ has fewer nonzero entries than $g_0$.  This shows the desired conclusion in this case. 

(2) Suppose that $i_{st}$ does not belong a column block and $f_{st}\not=0$.  Without loss of generality we may assume that $s<t$, then $t$ is not one of the $t_l$ in the staircase sequence, see Figure~\ref{fig staircase}.  In particular, $t_{l-1}\leq s<t<t_l$ for some $l$, and $f_{tu}\not=0$ if and only if $u=t, t+1, \dots, t_l$.   Moreover, note that $f_{su}\not=0$ for $u=t, \dots, t_l$.  Let $g_0'$ be the map that contains the entry $i_{st}$ in position $(s,t)$ and zero everywhere else.  Then the composition $g_0'f$ is zero everywhere except in positions $(s,u)$ where $(g_0'f_\zg)_{su}=i_{st}f_{tu}$ and $u=t, t+1, \dots, t_l$.   However, since $f_{su}\not=0$, Corollary~\ref{cor:534} implies that $(g_0'f_\zg)_{su}=0$.   Hence, $g_0'f_\zg=0$.   In particular, $g_0'$ induces an endomorphism $g'$ of $M_{\zg}$ which is zero in $\scmp\,B$.  By construction, $g_0-g_0'$ has fewer nonzero entries than $g_0$, which yields the desired conclusion in this case. 

(3) Suppose that $i_{st}$ does not belong to a column block and $f_{st}=0$.  Without loss of generality we may assume that $f_{s(s-1)}=0$, so in particular $f_{su}=0$ for all $u<s$.  
Then since $i_{st}$ does not belong to a column block and $f_{st}=0$, it follows that $s>t$ and $f_{su}$ is not an arrow for any $u\geq s$,  see Figure~\ref{fig staircase}.   In this case, Lemma~\ref{lem 7.5} parts (1b) and (4) imply the following equation.

\begin{equation}\label{eq:s}
(fg_1)_{su}=0 \text{ for } u\leq s
\end{equation}

Now we consider two subcases depending on whether $i_{st}$ belongs to a row block or not. 

(3a) Suppose that $i_{st}$ belongs to a row block 
\[ \begin{bmatrix} i_{st'} & i_{s(t'+1)} & \dots & i_{st} & \dots & i_{st''} \end{bmatrix}. \]
Then by definition of a row block there is a maximal one-step sequence in $f_\zg$ and thus column $u$ of $f_\zg$ has exactly two nonzero entries.  Then the composition 

\begin{equation}\label{eq:s2}
(g_0f)_{su}=i_{sv}f_{vu}+i_{s(v+1)}f_{(v+1)u} \text{ for } t'<u,v<t''
\end{equation}
where $v=u$ if the row block is an $F$ block and $v+1=u$ if the row block is an $E$ block.   The case when the row block is an $EF$ or an $FE$ block follows similarly, so we omit it from the discussion.   If the row block is not of type (iv) then equation~\eqref{eq:s2} also holds when $u=t''$ and $u=t'$.  In this case \eqref{eq:s} implies that two consecutive entries in $g_0$ in the row block have opposite signs.  Let $g_0'$ be the map that agrees with $g_0$ on the entries of the row block and is zero everywhere else.  By Lemma~\ref{lem 714}(a) the map $g_0'$ induces an endomorphism of $M_{\zg}$ that is zero in $\scmp\,B$. By construction, $g_0-g_0'$ has fewer nonzero entries than $g_0$, which yields the desired conclusion in the case when the row block is not of type (iv). 

Otherwise,  suppose that the row block is of type (iv).  Then $f_{t't'}$ is an arrow, and $i_{st'}f_{t't'}$ is a summand of $(g_0f)_{st'}$ or $f_{t''(t''+1)}$ is an arrow and $i_{st''}f_{t''(t''+1)}$ is a summand of $(g_0f)_{s(t''+1)}$ or both of these cases hold.   If  $(g_0f)_{st'}=i_{st'}f_{t't'}$ then equations \eqref{eq:s} and \eqref{eq:s2} imply that $g_0$ is constantly zero on the row block.  In particular, $i_{st}=0$, contrary to our assumption in the beginning of the proof, or  $(g_0f)_{st'}=0$ and here we can proceed in the same way as if the row block is not of type (iv).  If $f_{t''(t''+1)}$ is an arrow, then $f_{(t''+1)(t''+1)}$ is not an arrow because the maximal one-step sequence of arrows ends at $t''$, so $(g_0f)_{s(t''+1)}=i_{st''}f_{t''(t''+1)}$ by Lemma~\ref{lem 7.1}(b).  Then similarly to the above we conclude that $i_{st}=0$.  It remains to consider the case when $(g_0f)_{st'}=i_{st'}f_{t't'}+i_{st'''}f_{t'''t'}$ where $f_{t'''t'}$ is an arrow and $t'''<t'$ is maximal.  We claim that $i_{st'''}=0$, so again we can proceed in the same way as before. Note that $t'''<t'-1$ because the maximal one-step sequence of arrows starts at $t'+1$.  Hence, $f_{(t'-1)t'}$ is not an arrow and then Lemma~\ref{dots} implies that $f_{t'''(t'-1)}$ is an arrow and $t'''$ is maximal such that $f_{t'''(t'-1)}$ is an arrow.   In particular, Lemma~\ref{lem 7.1}(b) implies that $(g_0f)_{s(t'-1)}=i_{st'''}f_{t'''(t'-1)}$, which equals zero by \eqref{eq:s}. This shows the claim that $i_{st'''}=0$, and completes the proof in case (3a). 

(3b) Suppose that $i_{st}$ does not belong to a row block and $f_{st}=0$.    Then let $g_0'$ be the map that contains the entry $i_{st}$ in position $(s,t)$ and zero everywhere else.  First, assume that $f_{(t+1)t}\not=0$.  Then $f_{(t+2)t}\not=0$ since $t$ is not part of a maximal one-step sequence.  In particular, if $f_{tu}\not=0$ then $f_{(t+1)u}, f_{(t+2)u}$ are  also nonzero. Then the composition $g_0'f_\zg$ is zero everywhere except possibly in position $(s,u)$, where it equals $i_{st}f_{tu}$ for $u\leq s$.  However, the parameter $v$ in Lemma~\ref{lem 7.2}(b) is at least $t+2$ and equation~\eqref{eqlem 7.2b} implies that $f_{tu}$ does not contribute to $(g_0'f_\zg)_{su}$.  Therefore, $(g_0'f_\zg)_{su}=0$.   This completes the proof in case (3b) whenever $f_{(t+1)t}\not=0$.  Note that in this argument we did not need the stronger assumption of (3) that $i_{st}$ does not belong to a column block.

Now suppose that $f_{(t+1)t}=0$.  Then $(g_0'f)_{st'}$ equal zero or it equals $i_{st}f_{tt'}$ if $f_{tt'}$ is an arrow.  However, equation \eqref{eq:s} implies that $i_{st}f_{tt'}=0$ and so $g_0'f=0$.  Hence, $g_0'f=0$.   In particular, $g_0'$ induces an endomorphism $g'$ of $M_{\zg}$ which is zero in $\scmp\,B$.  By construction, $g_0-g_0'$ has fewer nonzero entries than $g_0$, which yields the desired conclusion.  This completes the proof of case (3).

(4) Suppose that $i_{st}$ belongs to a column block 
\[ \begin{bmatrix} i_{s't} & i_{(s'+1)t} & \dots & i_{st} & \dots & i_{s''t} \end{bmatrix}^T\]
of size greater than one.  Assume without loss of generality that $s>t$.

(4a) Suppose that $i_{st}$ also belongs to a row block 
\[ \begin{bmatrix} i_{st'} & i_{s(t'+1)} & \dots & i_{st} & \dots & i_{st''} \end{bmatrix}. \]
Let $C$ be the submatrix of $g_0$ with entries $i_{uv}$ where $s'\leq u\leq s''$ and $t'\leq v\leq t''$.  Hence, every column of $C$ is a column block of $g_0$ and every row of $C$ is a row block of $g_0$.  
Let $g_0'$ be map that agrees with $g_0$ on the entries in $C$ and zero everywhere else.  We will show that $g_0'$ induces a nilpotent endomorphism $g'$ of $M_{\zg}$. Recall that every $i_{uv}$ of $C$ is a path $i_u\leadsto i_v$ multiplied by a  coefficient $a_{uv} \in \kb$.
Denote by $A=(a_{uv})$  the matrix of the coefficients in $C$.

First, suppose that the row blocks in $C$ are not of type (iv). 
Assume further that if the column block is $A_{l,u}$ of Definition~\ref{def blocks of g0} then there does not exist a $v>s''$ such that $f_{s''v}$ is an arrow. In this situation, we may use 
Lemma~\ref{column constant} to conclude that for all $s'\leq u\leq s''$ and $t'\leq v\leq t''$ we have relations $
 a_{uv}+a_{u(v+1)} =a_{(u+1)v}+a_{(u+1)(v+1)} $ which imply
\begin{equation}\label{eq coeff}
 a_{(u+1)v} -a_{uv}   = - ( a_{(u+1)(v+1)}-a_{u(v+1)} )
\end{equation}

Define a matrix $A_{col}$ of the same size as $A$ such that every column of $A_{col}$ is constant and equal to the first entry in the corresponding column of $A$. Then 
  \[A-A_{col}=\left[\begin{matrix} 
 0&0&0&\cdots\\
 a_{(s'+1)s'}-a_{s's'} & a_{(s'+1)(s'+1)}-a_{s'(s'+1)}& a_{(s'+1)(s'+2)}-a_{s'(s'+2)} &\cdots &\\
 a_{(s'+2)s'}-a_{s's'} & a_{(s'+2)(s'+1)}-a_{s'(s'+1)}& a_{(s'+2)(s'+2)}-a_{s'(s'+2)} &\cdots & \\
 \vdots&\vdots&\vdots&\vdots&
 \end{matrix}\right]\] 
 We will show that this matrix is alternating on the rows. Indeed, for the first row this is trivial, and for the second row it follows directly form the relation (\ref{eq coeff}) with $u=s'$.
 For the row indexed by $u>s'+1$ it follows from the following telescoping argument
 \[a_{uv}-a_{1v}=\sum_{l=1}^{u-1} a_{(l+1)v}-a_{lv}
 \stackrel{(\ref{eq coeff})}{=} 
\sum_{l=1}^{u-1} -( a_{(l+1)(v+1)}-a_{l(v+1)})
 = 
 -(a_{u(v+1)}-a_{1(v+1)}) .\]
 Thus $g_0'$ can be written as a sum $g_0'=g_{col}+g_{row}$, where $g_{col}$ is constant on each block and $g_{row}$ is alternating on the rows. Now Lemmata \ref{lem 713}(i) and \ref{lem 714}(a) imply that $g'=0$ in $\scmp\,B$.
 
 Assume now that  the column block is $A_{l,u}$ of Definition~\ref{def blocks of g0} and there exists $v>s''$ such that $f_{s''v}$ is an arrow. Since $s>t$ this implies that the crossing sequence is forward. 
Since Lemma~\ref{column constant} also applies in this case except for the last row $s''$ of the column blocks, we can use the above argument for all rows except for $s''$. Hence we may assume that $g_0$ is zero on all positions $(s,t)$  in these blocks with $s<s''$. 
We want to find a $g_0'$ that is equal to $g_0$ on row $s''$ and zero on all rows above such that there is a morphism $h$ such that $g_0'=f_\zg h$. 

 In the very special case where $v=s''+1$ and $f_{(s''+1)(s''+1)}$ is not an arrow, we have $f_{(s''-1)(s''-1)}$ is not an arrow and $f_{(s''-1)s''}$ is an arrow (because the column block is not of size one), $f_{(s''-1)(s''+1)
}$ is zero (by Lemma~\ref{lem degree}(c)) and therefore it is easy to see that the map $h$ that is equal to the path $j_{(s''+1)}\leadsto i_t$ and zero elsewhere realizes a factorization $g_0'=f_\zg h$ whenever $g_0'$ is zero everywhere except for a path $i_{s''}\leadsto i_t$. So we are done. 
 Otherwise, $f_{s''v},f_{(s''+1)v},\dots,f_{vv}$ are arrows and we get a second set of column blocks $A_{l+1,u}$ with the property that the top entry of $A_{l+1,u}$ lies directly below the bottom entry of $A_{l,u}$. Then the block $A_{l+1,u}$ is of type (a)(ii) in Definition~\ref{def blocks of g0} and therefore  satisfies the  conditions  of Lemma~\ref{lem 713}(iii). Define $g_0'$ to be equal to $(g_0)_{s''u}$ on each entry of $A_{l+1,u}\cup\{i_{s''u}\}$ and zero elsewhere. Then Lemma~\ref{lem 713}(iii) implies $g'$ is zero in $\scmp\,B$. Subtracting $g_0'$ from $g_0$, we obtain a zero in each position on row $s''$ and we are done. This completes the case (4a) when the row blocks are not of type (iv).
 
Now, suppose that the row blocks are of type (iv).  Then at least one of $f_{t't'}, f_{t''(t''+1)}$ is an arrow.  We consider the case when $f_{t't'}$ is an arrow and the remaining cases follow similarly.   Then we still obtain the same relations in \eqref{eq coeff}, however the map $g_{row}$ which is alternating on the row blocks may no longer induce an endomorphism of $M_{\zg}$, see  Lemma \ref{lem 714}(b).  Since $f_{t't'}$ is an arrow and $f_{(t'-1)t'}$ is not an arrow, because the maximal one-step sequence of arrows starts with $t'+1$.  Then by Lemma~\ref{lem 7.5}(2a).
\begin{equation}\label{eq:t}
(g_0f)_{ut'}=i_{ut'}f_{t't'}+i_{ut'''}f_{t'''t'} \text{ for } s'\leq u\leq s''
\end{equation}
where $t'''<t'+1$ is maximal such that $f_{t'''t'}$ is an arrow, and if no such $t'''$ exists then this term does not appear in the equation.   Since $f_{t'''t'}$ is an arrow and $f_{(t'-1)t'}$ is not an arrow then Lemma~\ref{dots} implies that $f_{t'''(t'-1)}$ is also an arrow and no entry in $f$ below this one is an arrow.  Lemma~\ref{lem 7.5}(2a) implies that 
\[
(g_0f)_{u(t'-1)}=i_{ut'''}f_{t'''(t'-1)} \text{ for } s'\leq u\leq s'' 
\]
Then Lemma~\ref{column constant} implies that the coefficients $a_{ut'''}$ for $s'\leq u\leq s''$ are all the same (except maybe for $s''$, and this case is similar to the previous case). The same lemma together with \eqref{eq:t} imply that $a_{ut'}$ for all $s'\leq u\leq s''$ are also the same.   Similarly, \eqref{eq coeff} then shows that for a fixed $v$ the coefficients $a_{uv}$ are the  same for all $s'\leq u\leq s''$.  In particular, this means that the matrix $A=A_{col}$ is constant on the columns.  Hence, we conclude that $g'=0$  in $\scmp\,B$.  This completes the proof in the case (4a).

(4b) Suppose that $i_{st}$ does not belong to a row block.  If $f_{(t+1)t}\not=0$, then we proceed in the same way as in the first part of (3b).  Note that here $f_{st}=0$, since  $i_{st}$ lies in a column block but not in row block.  Therefore, we can assume that  $f_{(t+1)t}=0$.  Since $i_{st}$ is not in a row block then it is not the case that $f_{tt}, f_{(t-1)t}$ are both arrows.  Let $g_0'$ be the map that agrees with $g_0$ on the column block and is zero everywhere else.  We will show that $g_0'$ gives a nilpotent endomorphism $g'$ of $M_{\zg}$, so $g_0-g_0'$ has fewer nonzero entries then $g_0$, which yields the desired conclusion.

From Lemma~\ref{lem 7.1}(b) it follows that if the row $t$ of $f_\zg$ does not contain any arrows then the composition $g_0f_\zg$ does not contain any term that involves entries of $g_0$ coming from the column block.  Then $g_0'f_\zg=0$ and thus $g_0'$ gives a nilpotent endomorphism $g'$ of $M_{\zg}$ and the conclusion holds. 

If $f_{tt}$ is an arrow then $f_{(t-1)t}$ is not an arrow and 
\begin{equation}\label{eq:s4}
(g_0f)_{vt}=i_{vt}f_{tt}+i_{vt'}f_{t't} \text{ for } v=s', \dots, s''
\end{equation}
where $t'<t-1$ is maximal such that $f_{t't}$ is an arrow.   If no such $t'$ exists then the second summand does not appear in equation \eqref{eq:s4}.  We suppose that $t'$ exists, and the other case is a special situation of this one.  
Since $f_{(t-1)t}$ is not an arrow, by Lemma~\ref{dots} it follows that $f_{t'(t-1)}$ is an arrow and $f_{u(t-1)}$ with $u>t'$ is not an arrow.  Then
 \begin{equation}\label{eq:s5}
(g_0f)_{v(t-1)}=i_{vt'}f_{t'(t-1)} \text{ for } v=s', \dots, s''
\end{equation}
The entries of $(fg_1)_{v(t-1)}$ and $(fg_1)_{vt}$ do not depend on $v$ by Lemma~\ref{column constant} (except maybe for the last row $s''$ which is similar as in case (4a)), so equations \eqref{eq:s4} and \eqref{eq:s5} imply that either $i_{vt}f_{tt}=0$ for all $v = s', \dots s''$ and in that case $g_0'f_\zg=0$, or the coefficients of $i_{vt'}$ are nonzero and do not depend on $v$, and hence  the coefficients of $i_{vt}$ are nonzero and do not depend on $v$, thus $g_0$ is constant on the column block.  Either way we conclude that $g_0'$ gives a nilpotent endomorphism $g'$ of $M_{\zg}$ and the conclusion holds in the case when $f_{tt}$ is an arrow. 

Finally suppose  that $f_{tt}$ is not an arrow and $f_{tt'}$ is an arrow for some $t'>t$ that is minimal.  First suppose that $t'=t+1$. Since $i_{st}$ does not belong to a row block, then it is not the case that $f_{t(t+1)},  f_{(t+1)(t+1)}$ are both arrows.  Hence $f_{(t+1)(t+1)}$ is not an arrow.  Then 
\[(g_0f_\zg)_{v(t+1)}=i_{vt}f_{t(t+1)} \text{ for } v=s', \dots, s''\]
so by the same argument as above we reach the desired conclusion.   Otherwise, if $t'>t+1$ then $f_{t(t+1)}$ is not an arrow and Lemma~\ref{dots} implies that $f_{(t+1)t'}$ is an arrow.  Then the entries in the column block of $g_0$ do not appear in the composition $g_0f_\zg$ by Lemma~\ref{lem 7.1}(b), and thus $g_0'f_\zg=0$.  
This completes the proof in case (4), and finishes the proof of the theorem.
\end{proof}

\subsection{Proof of Proposition \ref{prop mesh}}\label{Asect 6.2}

\begin{prop}
 \label{Aprop mesh}  Let $\zg$ be a 2-diagonal in $\cals$  such that $R\zg$ is not a radical line. 
With the notation of diagram \textup{(\ref{eqmesh})}, we have the following identities in $\scmp\,B$.
\begin{itemize}
\item [(a)]  If $\zg$ is not short  then $h'g'=h''g''$. 
\item[(b)] if $\zg $ is short then $h'g'=0$. 
\end{itemize}
\end{prop}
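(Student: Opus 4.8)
The statement asserts that the mesh relations of $\diag$ are respected by the functor that sends a $2$-diagonal to its syzygy, i.e.\ that the two ways of going around the mesh agree (case (a)) or the single composite vanishes (case (b)) in the stable category $\scmp\,B$. My plan is to lift everything to the level of the $2$-term projective presentations and use the explicit pivot maps $(g_0^r,g_1^c)$ of Definition~\ref{def pivot} together with the commutative squares of Theorem~\ref{lem 63}. The first step is to choose, for the mesh $\zg\to\zg'\oplus\zg''\to R^2\zg$, compatible representatives of all four $2$-diagonals $\zg,\zg',\zg'',R^2\zg$ and also $R\zg$ — this is possible by Lemma~\ref{lem:rotation orbit} together with the remarks on compatibility in Section~\ref{sect 5.3}, and it is exactly the configuration depicted in the right picture of Figure~\ref{fig mesh}. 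Having done that, the pivot morphisms $g',g'',h',h''$ on the syzygies are induced by pivot morphisms $(g_0'^r,g_1'^c)$, etc., on the projective presentations, all of which fit together into commuting squares by Theorem~\ref{lem 63}.

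The second step is the core computation: comparing the composites $h_0'^r g_0'^r$ and $h_0''^r g_0''^r$ (and similarly on the degree-$1$ terms) as maps $P_0(\zg)\to P_0(R^2\zg)$. Because of the explicit description of the pivot maps in Lemma~\ref{lem:M} — each is of the form $I_n$, $[I_n\ 0]$, $[I_n\ 0;0\ M]$ with $M$ a very small matrix of arrows, and the total number of arrows in $g_0$ and $g_1$ combined is at most three — these compositions are controlled by a small number of paths of length at most two in $Q$. The two composites $h'g'$ and $h''g''$ will then differ, on each indecomposable summand, by a difference of two parallel paths in $Q$, which is zero in $B$ by Proposition~\ref{prop 39}; alternatively, in the short case (b), the relevant composite factors through a path containing a boundary arrow, hence is zero by Lemma~\ref{lem 37}. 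The geometry enters here: the key point is that in the mesh picture the four $2$-pivots correspond to ``pivoting at $a$'' and ``pivoting at $x$'', so that the composite $h'g'$ performs the $2$-pivots in one order and $h''g''$ in the other, and on the checkerboard polygon these two orders visit the same pair of crossing points of radical lines, hence the same arrows of $Q$ up to the equivalence $w\sim w'$. I would run the comparison summand by summand, using the classification of the blocks $I_n$, $M$ from Lemma~\ref{lem:M} and the compatibility of crossing sequences to match up which projective summand of $P_0(\zg)$ maps to which summand of $P_0(R^2\zg)$.

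The third step is to pass from the statement ``$h_0'^r g_0'^r = h_0''^r g_0''^r$ (or $=0$) up to a map killed by $f$'' to the desired equality in $\scmp\,B$. For this I would invoke Lemma~\ref{lem:cmp-morphism}: a morphism of syzygies is zero in $\scmp\,B$ precisely when, on degree-$0$ terms, it equals $f_{R^2\zg}h_1 + g'$ with $g'f_\zg=0$. Thus it suffices to show that $h_0'^r g_0'^r - h_0''^r g_0''^r$ (in case (a)) or $h_0'^r g_0'^r$ (in case (b)) decomposes in this way; the difference of parallel paths, respectively the boundary-arrow argument, will give exactly such a decomposition once one identifies the relevant factorizations through $R\zg$ — this is where the hypothesis ``$R\zg$ is not a radical line'' is used, guaranteeing that the intermediate syzygy $M_{R\zg}$ is non-projective so that the pivot morphisms through it are well-defined in the stable category.

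\textbf{Expected main obstacle.} The delicate part will be the bookkeeping in the core computation of step two: the four pivot maps $g_0'^r, g_0''^r, h_0'^r, h_0''^r$ each carry possibly nonzero off-diagonal blocks $M$ (arrows of $Q$), and the signs introduced by the superscripts $r$ and $c$ (the sign changes of Lemma~\ref{lem:M}/Definition~\ref{def pivot}) must be tracked carefully so that the two composites really cancel on the nose rather than merely up to sign. A secondary difficulty is handling the cases where one of $\zg,\zg',\zg'',R^2\zg$ happens to be a radical line (but $R\zg$ is not), since then one of the pivot maps is an identity and the matching of summands degenerates; these cases should be easier but need to be checked separately, and the short case (b) where only $\zg'$ exists must be extracted as a limiting case of the generic argument.
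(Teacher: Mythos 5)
Your outline works for the generic case, but it has a genuine gap exactly where the proof is delicate: the hypothesis only excludes $R\zg$ from being a radical line, so $\zg$ itself (or $R^2\zg$) may well be a radical line $\rho(i)$, and in that situation your first step already fails --- one cannot choose representatives of $\zg,\zg',\zg'',R^2\zg$ that are pairwise compatible in the sense needed to apply Theorem~\ref{lem 63} to both halves of the mesh simultaneously. The paper's proof handles this by taking the two one-sided representatives $\zg_1,\zg_2$ of $\rho(i)$ (which give the same $f_{\rho(i)}$ by Proposition~\ref{homotopy}), making $\zg'$ compatible with $\zg_1$ and $\zg''$ with $\zg_2$, and then inserting an explicit homotopy automorphism $\varphi_0$ of $P_0(R^2\zg)$ (Corollary~\ref{cor:comp}) so that the two pivot squares land on the \emph{same} presentation $f_{R^2\zg}$; this correction term is absent from your plan, and without it the composites $h_0'g_0'$ and $h_0''g_0''$ are not even maps between the same two-term complexes.

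More seriously, your proposed mechanism for the stable vanishing is the wrong one in that case. When $\zg=\rho(i)$, the degree-zero difference $a_0=h_0'g_0'-\varphi_0h_0''g_0''$ is \emph{not} zero in $B$: it is not a difference of parallel paths killed by Proposition~\ref{prop 39}, nor does it die on a boundary arrow, and it does not decompose via any factorization ``through $R\zg$'' (the object $M_{R\zg}$ does not enter the composites at all; the role of the hypothesis that $R\zg$ is not a radical line is, via Lemma~\ref{lem:three radical lines}, to prevent $\zg$ and $R^2\zg$ from both being radical lines, not to make pivot maps through $M_{R\zg}$ well defined). What actually happens is that the induced map $a\colon M_\zg\to M_{R^2\zg}$ factors through the projective $P(i)$, using that $M_\zg\cong\rad P(i)$ includes into $P(i)$ and that $P(i)$ is a summand of $P_0(R^2\zg)$ (since $R^2\zg$ crosses $\rho(i)$); the equality $h'g'=h''g''$ then holds only in $\scmp\,B$, not in $\textup{mod}\,B$. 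The same projective-factorization mechanism, with explicit diagrams, is what handles the short case (b) when $\zg$ is a radical line --- it is not a limiting case of the parallel-path cancellation. Your step two is correct and essentially the paper's argument in the case where neither $\zg$ nor $R^2\zg$ is a radical line (there $a_0=0$ on the nose, with cancellations of the form $\za-\za$), but the radical-line cases require the additional ideas above, and Lemma~\ref{lem:cmp-morphism} alone will not supply them from the data you propose to use.
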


\begin{proof}
(a)  
We consider two cases based on whether or not $\zg$ is a radical line. 

(1) First, suppose that  neither $\zg$ nor $R^2\zg$ is a radical line.  Since none of $\zg, R\zg, R^2\zg$ is a radical line,
we can choose representatives $\zg,\zg',\zg''$ and $R^2\zg$, as shown in  Figure~\ref{fig mesh} on the right, such that $\zg, R^2\zg$ are compatible with both $\zg', \zg''$.      

 Each of the modules $M_\za$ in the diagrams (\ref{eqmesh}) is defined via its projective presentation 
\[\xymatrix{P_1(\za)\ar[r]^{f_\za} &P_0(\za)\ar[r]& M_\za \ar[r] &0}\] 
associated to the 2-diagonal $\za\in\diag$. Each of the pivot morphisms $b\in\{g',g'',h',h''\}$ is defined on these presentations as in diagram (\ref{eq 62}) as b triple $b=(b_1,b_0,b)$. 
Let $a=h'g'-h''g'' $. We want to show that $a=0$ in $\scmp\,B$. 
Consider the following commutative diagram.
\begin{equation}
 \label{eqpivot}
 \xymatrix@C50pt{ P_1(\zg)\ar[r]^{f_\zg} \ar[d]_{a_1}&P_0(\zg)\ar[d]^{a_0}
\ar[r]
&M_\zg\ar[r]\ar[d]^a&0 \\
P_1(R^2\zg)\ar[r]^{f_{ R^2 \zg}}&P_0(R^2\zg)\ar[r]
&M_{R^2\zg}\ar[r]&0 \\
} 
\end{equation}
It suffices to show that $a_0=0$, and for that it suffices to show that, for each indecomposable summand $P(i)$ of $P_0(\zg)$ and each indecomposable summand $P(i')$ of $P_0(R^2\zg)$, the component $a_0(i,i')$  of $a_0$ that maps $P(i)$ to $P(i')$ is zero. 

If $P(i)$ is also a summand of $P_0(R^2\zg)$ then $a_0(i,i')=0$, by Definition~\ref{def:pivot-maps} of the pivot morphisms $g',g'',h'$ and $h''$. For the same reason, we have $a_0(i,i')=0$ if $P(i')$ is also a summand of $P_0(\zg)$. 

Suppose therefore that $P(i)$ is not a summand of $P_0(R^2\zg)$ and $P(i')$ is not a summand of $P_0(\zg)$. Thus the radical line $\rho(i)$ crosses $\zg$ in degree 0 but it does not cross $R^2\zg$, and the radical line $\rho(i')$ crosses $R^2\zg$ in degree 0 but it does not cross $\zg$. 

By assumption $\zg\ne \rho(i')$. This case is illustrated in the left picture in Figure \ref{fig mesh2}.
\begin{figure}
\begin{center}
\small\scalebox{0.8}{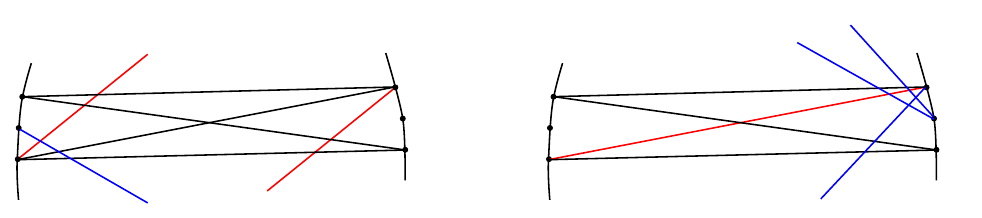}
\caption{Proof of Proposition \ref{prop mesh}. The radical line $\rho(i')$ is different from $\zg$ in the picture on the left and $\zg=\rho(i)$ in the picture on the right.}
\label{fig mesh2}
\end{center}
\end{figure}
Then the radical line $\rho(i)$ is incident to one of the points $b$ or $y$ and it crosses $\zg'$ or $\zg''$ respectively. Without loss of generality, we may assume that $\rho(i) $ is incident to $b$. Thus $\zg'$ crosses $\rho(i)$ and $\zg''$ doesn't. Consequently, the map $g'$ acts as the identity on $P(i)$.
On the other hand, the radical line $\rho(i')$ crosses $R^2\zg$ and is incident to either $a$ or $x$. Both cases are illustrated in Figure \ref{fig mesh2} in red.  If $\rho(i')$ is incident to $a$ then $\rho(i)$ and $\rho(i')$ cross, and thus there exists an arrow $\za\colon i'\to i$.
Then the maps $g'',h'\colon P(i)\to P(i')$ are given by the arrow $\za$, and the map $h'':P(i')\to P(i')$ is the identity. It follows that $a_0(i,i')=\za-\za=0$.

If $\rho(i')$ is incident to $x$ then it does not cross $\zg''$ and hence the map $h''g''\colon P(i)\to P(i')$ is zero.  Indeed, if $h''g''$ factors through some $P(k)$, a summand of $P_0(\zg'')$ then $k=i, i'$, because no radical line $\rho(k)$ can cross $\zg''$ without also crossing $\zg$ or $R^2\zg$. 
On the other hand, the map $g'$ is the identity on $P(i)$ and hence zero on $P(i)\to P(i')$, whereas the map $h'$ is the identity on $P(i')$ and hence also zero on $P(i)\to P(i')$. It follows that $a_0(i,i')=0-0=0$.  This proves the proposition in the case when $\zg$ is not short and neither $\zg$ nor $R^2\zg$ is a radical line.  

(2) Now suppose that $\zg=\rho(i)$ is a radical line. See the right picture in Figure \ref{fig mesh2}.  If $R^2\zg$ is also a radical line, then Lemma~\ref{lem:three radical lines} implies that $R\zg$ is a radical line.  This contradicts the assumption of the proposition, hence we are in the situation $\zg=\rho(i)$ and neither $R\zg$ nor $R^2\zg$ is a radical line. 
In this case, it is not possible to find all four compatible representatives $\zg, \zg' , \zg'', R^2\zg$.  Therefore, we choose particular representatives and apply automorphisms of the homotopy as follows.  
In the case of $\rho(i)$ there are exactly two possible representatives $\zg_1, \zg_2$ of $\rho(i)$ following $\rho(i)$ infinitesimally close but always staying to the right or the left of $\rho(i)$ respectively, see Figure~\ref{fig:phi}.  Moreover, these two representatives give rise to the same map $f_{\zg_1}=f_{\zg_2}=f_{\rho(i)}$, see Proposition~\ref{homotopy}.  Then the representatives for $\zg', \zg''$ are obtained from $\zg_1, \zg_2$ respectively by a pivot move so that the pairs $\zg', \zg_1$ and $\zg'', \zg_2$ are compatible. 

\begin{figure}  
\centerline{\scalebox{.9}{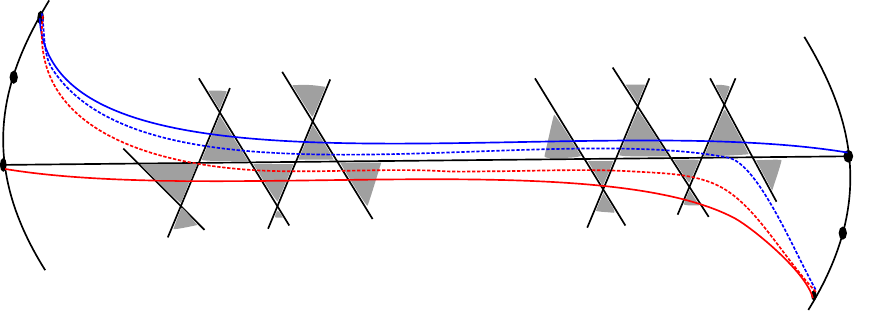}}
\caption{The proof of Proposition~\ref{prop mesh} when $\zg=\rho(i)$ is not short.}
\label{fig:phi}
\end{figure}

Now, we define $f_{R^2\zg}$ to be the map coming from a representative of $R^2\zg$ after applying a pivot move to this $\zg'$.  In this way, we see that the two chosen representatives $R^2\zg$ and $\zg'$ are compatible, and we conclude that $f_{R^2\zg}h_1' = h_0'f_{\zg'}$.   However, applying the pivot move to our chosen representative of $\zg''$ we generally obtain a different representative of the 2-diagonal $R^2\zg$, so that we may no longer guarantee that the equation $f_{R^2\zg}h_1'' = h_0''f_{\zg''}$ is satisfied for the same map $f_{R^2\zg}$.  However, according to Corollary~\ref{cor:comp} there exists an isomorphism $(\varphi_1, \varphi_0)  \in \text{Aut}(P_1(R^2\zg)) \oplus \text{Aut}(P_0(R^2\zg))$ such that 
\begin{equation}
\label{eq 613} 
 f_{R^2\zg}\varphi_1h_1'' = \varphi_0 h_0''f_{\zg''}.  
\end{equation}

Next, we find an explicit description of $\varphi_1, \varphi_0$.  Label the two representatives of $R^2\zg$ coming from $\zg_1, \zg_2$ by $R^2\zg_1, R^2\zg_2$ respectively.  Then $R^2\zg_1, R^2\zg_2$ have potentially different crossing sequences.  In Figure~\ref{fig:phi} if we move from the left to the right along $R^2\zg_1, R^2\zg_2$, then the two representatives agree until they reach a crossing pair $(k_2, h_2), (i, h_2)$ respectively, afterwards they have different crossing sequences, and then the two arcs agree again after the crossing pair $(i, h_t), (k_{t-1}, h_t)$ respectively.  Thus, we have the following crossing sequences in these cases.

\[R^2\zg_1: \hspace{1cm} \dots, (k_2, h_2), (k_3, h_3), \dots, (k_{t-1}, h_{t-1}), (i, h_t), \dots \]
\[R^2\zg_2:  \hspace{1cm}  \dots, (i, h_2), (k_2, h_3), \dots, (k_{t-2}, h_{t-1}), (k_{t-1}, h_t), \dots\]

Moreover, we note that the difference in the crossing sequences comes from 2-diagonals $k_2, \dots, k_{t-1}$ and $h_2, \dots, h_t$ that are common crossings for all of the following $\gamma,  \gamma', \gamma'', R^2\gamma$. 

Then $R^2\zg_2$ can be homotopically deformed to $R^2\zg_1$ by a sequence of moves that first replaces $(i, h_2), (k_2, h_3)$ by $(k_2, h_2), (i, h_3)$ in its crossing sequence, and then replaces $(i, h_3), (k_3, h_4)$ by $(k_3, h_3), (i, h_4)$, and continues in this way until reaching the crossing sequence for $R^2\zg_1$.  Each step $j=1, \dots, t-2$ in this process yields an isomorphism $\varphi^{(j)}$ of $P_0(R^2\zg)$ where $\varphi^{(j)}$ is obtained from $1_{P_0(R^2\zg)}$ by replacing a $2\times 2$ block $\begin{bsmallmatrix}  0 & 1_{P(i)}\\ 1_{P(k_{j+1})} & 0 \end{bsmallmatrix}$ in the matrix for $1_{P_0(R^2\zg)}$ with $\begin{bsmallmatrix}  i\to k_{j+1} & -1_{P(i)}\\ 1_{P(k_{j+1})} & 0 \end{bsmallmatrix}$ and leaving the remaining entries unchanged, see the proof of Proposition~\ref{homotopy}.  Then $\varphi_0= \varphi^{({t-2})}\dots \varphi^{(2)}\varphi^{(1)}$ and $\varphi_1=1$, so we conclude $\varphi_0 f_{R^2\zg_2}=f_{R^2\zg_1}$.  

Thus, equation (\ref{eq 613}) becomes $f_{R^2\zg} h_1'' = \varphi_0 h_0''f_{\zg''}$, and we obtain a commutative diagram as in \eqref{eqpivot}, where now we let $a_0=h_0'g_0'-\varphi_0 h_0''g_0''$ and $a_1= h_0'g_0'-h_0''g_0''$ as before.  
We want to compute $a_0(k,i'):P(k)\to P(i')$ for all $k$ and $i'$, but first, we consider what happens to the entries of $h''_0$ when we compose it with $\varphi_0$.  

Since $P(k_{2})$ is a summand of both $P_0(R^2\zg)$ and $P_0(\zg'')$, it follows form the definition of the pivot morphism that in the matrix of $h''_0$ the entry $1_{P(k_2)}$ is the only nonzero entry in its row and column.  Therefore, $\varphi^{(1)} h''_0$ replaces column $\begin{bsmallmatrix} 0 & \dots & 0 & 1_{P(k_2)} & 0 & \dots & 0\end{bsmallmatrix}^T$ with $\begin{bsmallmatrix} 0 & \dots & 0 & 1_{P(k_2)} & 0 & \dots & 0 & i\to k_2 \end{bsmallmatrix}^T$ and changes the sign of all entries in the (last) row of $h''_0$ that correspond to paths starting in $i$.   Similarly, $\varphi^{({j+1})}$ affects $\varphi^{(j)} h_0''$ by replacing a zero in the first row with an arrow $i\to k_{j+2}$ and changing the signs of all entires in this row.  This yields the desired description of $\varphi_0 h''_0$. 

To compute $a_0(k,i'):P(k)\to P(i')$, $a_0=h_0'g_0'-\varphi_0h_0''g_0''$, we consider several cases. 

(2.1) Suppose $i'\not=i$. Then we claim that $a_0(k,i')=0$. Since $i'$ crosses $R^2\zg$ in degree zero, then it crosses $\zg'$ or $\zg''$ or both in the same degree.  If $i'$ crosses both of these then it also crosses $\zg$ and $\varphi_0h_0''g_0'', h_0'g_0': P(k)\to P(i')$ is the identity map if $k=i'$ and otherwise is the zero map.  In both of these cases, we conclude that  $a_0(k,i')=0$.  

Now, suppose that $i'\not=i$ crosses $R^2\zg, \zg'$ but not $\zg''$.  Then $i'$ ends in $x$ and does not cross $\zg$, see Figure~\ref{fig mesh2} on the right.  In particular $k\not=i'$.   Then the restriction of $(h_0', -\varphi_0h''_0)$ to $P_0(\zg')\oplus P_0(\zg'')\to P(i')$ equals $P(i')\oplus P(j_1)\oplus P(j_2) \to P(i')$ with the map $[1, -(i'\to j_1), i'\to j_2]$, where $P(j_1)\oplus P(j_2)$ is a summand of $P(\zg'')$ such that the arcs $j_1, j_2$ cross $\zg''$ but not $R^2\zg$, as in the figure.  
Note that it may happen that either $j_2$ or both $j_1, j_2$ are not present, but here we depict the most general situation.    Moreover, the signs appearing here follow by  Lemma~\ref{lem:M}.
Then the restriction of $(g_0', g_0'')^T$ to $P(k)\to P(i')\oplus P(j_1)\oplus P(j_2)$ is given by the zero map if $k\not=j_1, j_2$ and otherwise it is give by $[i'\to j_1, 1, 0 ]^T, [-i'\to j_2, 0, 1 ]^T$ if $k=j_1, j_2$ respectively.  Again we conclude that $a_0(k,i')=0$ for all $k$.   The case when $i'\not=i$ crosses $R^2\zg, \zg''$ but not $\zg'$ follows similarly.  This shows the claim that $a_0(k,i')=0$ if $i\not=i'$. 

(2.2) Now, suppose that $i=i'$.  Observe, that for every summand $P(k)$ of $P_0(\zg)$ there is an arrow $\alpha:i\to k$, because  $P_0(\zg)$ is the projective cover of $\text{rad}\,P(i)$.  We claim that   $a_0(k,i)=\pm \alpha $ where the sign is the same as the sign of the arrow $\alpha$ appearing in the map $(h'_0, -\varphi_0h''_0)$.  
Observe that the maps $h_0'g_0', \varphi_0h''_0g''_0 :P(k)\to P(i)$ are either zero maps or they are given by multiplication with the arrow $\alpha$.  Therefore, we can restrict our attention to summands $P(k)$ in $P_0(\zg'), P_0(\zg'')$.  If $P(k)$ is a summand of both $P_0(\zg'), P_0(\zg'')$ then  by the description of $\varphi_0 h''_0$ obtained earlier, the restriction of $h_0', -\varphi_0h''_0$ to $P(k)\to P(i)$ is given by zero, $\pm \alpha$ respectively.  On the other hand, $g_0', g_0'': P(k)\to P(k)$ are given by the identity map, and we conclude that $a_0(k,i)=\pm \alpha$.  If $P(k)$ is a summand of  $P_0(\zg')$ but not $P_0(\zg'')$ then it is not a summand of $P_0(R^2\zg)$.  Thus, $\varphi_0h_0''g_0'':P(k)\to P(i)$ is given by the zero map while $h_0'g_0':P(k)\to P(i)$ is given by multiplication with the arrow $\pm \alpha$.  The case when $P(k)$ is a summand of $P_0(\zg'')$ but not $P_0(\zg')$ follows similarly.  This shows the desired claim that $a_0(k,i)=\pm \alpha $.


We will show that the induced morphism $a \colon M_\zg\to M_{R^2 \zg}$ on the cokernels factors through the projective $P(i)$. This will then imply that $a$ is the zero morphism in $\scmp\,B$.
To show the factorization, let $u\colon M_\zg=\rad P(i)\to P(i)$ be the inclusion morphism. As a map of projective presentations, $u$ is given by the zero map in degree 1 and by the arrows in degree 0 as follows.  For every indecomposable summand $P(k)$ of $P_0(\zg)$, in degree 0 we define $u_0: P_0(\zg)\to P(i)$ on each component $P(k)\to P(i)$ to be the multiplication with the arrow $\za: i\to k$ or $-\za$.  We can easily choose the signs in $u_0$ so that $u_0f_{\zg}=0$, because every row of the matrix of $f_{\zg}$ consists of at most two nonzero entries in adjacent columns, since $\zg$ is a radical line.  An explicit description of the map $f_{\zg}$ is given in the last paragraph of the proof of Proposition~\ref{homotopy}.  Then we obtain the following diagram.

\begin{equation}
 \label{eq 65}
 \xymatrix@C50pt{ P_1(\zg)\ar[r]^{f_\zg} \ar[d]_{0}&P(k)\oplus \overline{P_0}(\zg)\ar[d]^{u_0=[\pm \za\ \ast]}
\ar[r]
&M_\zg=\rad P(i)\ar[r]\ar[d]^u&0 \\
0\ar[r]^{0}&P(i)\ar[r]
&P(i)\ar[r]&0 \\
} 
\end{equation}
Define a map $v\colon P(i) \to M_{R^2\zg}$ as the cokernel map of the following diagram.
\begin{equation}
 \label{eq 66}
 \xymatrix@C50pt{ 0 \ar[r]^0 \ar[d]_{0}&P(i)\ar[d]^{v_0=
\left[\begin{smallmatrix}
 1\\ 0
\end{smallmatrix}\right]
 }
\ar[r]
&P(i)\ar[r]\ar[d]^v&0 \\
P_1(R^2\zg)\ar[r]^{f_{ R^2\zg}}&P(i)\oplus \overline{P_0}(R^2\zg)\ar[r]
&M_{R^2\zg}\ar[r]&0 \\
} 
\end{equation}
Thus in degree 0, the component $v_0u_0: P(k)\to P(i')$ of the composition $vu$ is given by $\pm \za$ if $i=i'$ and otherwise it equals zero.  Furthermore, we can arrange the signs, by possibly replacing $u$ with $-u$ so that the sign of every arrow $\za$ appearing in $u_0$ is opposite to the sign of $\za$ appearing in $(h_0', -\varphi_0h_0'')^T$. 
Using our computation of $a_0(k,i')$ above, we see that the difference $a(k,i')-vu$ is zero in degree 0. 
This shows that $a=vu$, and hence $a$ factors through the projective $P(i)$ and hence it is zero in the stable category $\scmp\,B$.
This completes the proof of (a). 



(b) Now, suppose that $\zg$ is short. First assume that $\zg=\rho(i) $ is a radical line.  Again using Lemma  \ref{lem:three radical lines} and our hypothesis that $R\zg$ is not a radical line, we conclude that $R^2\zg$ is also not a radical line.   Next, we consider two cases depending on whether the boundary segment between vertices $c$ and $b$ belongs to a white or a shaded region. 

Figure~\ref{fig:b} illustrates the situation when the boundary segment between vertices $c$ and $b$ belongs to a white region. Here the picture on the left shows the case when the  white region $W$ contains a boundary edge and the picture on the right shows the case when $W$ contains only a single vertex on the boundary of $\mathcal{S}$.   In the right picture, the 2-diagonal $j''$ may or may not be present depending on whether the white region $W'$ has only a vertex or an entire edge on the boundary of $\mathcal{S}$.  Thus, here we depict the most general situation.

\begin{figure}  
\centerline{\scalebox{.9}{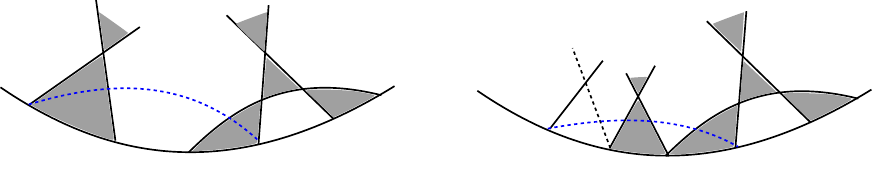}}
\caption{The proof of Proposition~\ref{prop mesh} when $\zg$ is short, and the boundary segment between $c$ and $b$ belongs to a white region.}
\label{fig:b}
\end{figure}

Then we obtain a commutative diagram with exact rows as follows coming from the situation of Figure~\ref{fig:b} on the right.  The diagram for the other case can be obtained from the given one by removing the extra summands in the projective presentations that do not cross $R\zg', R^2\zg$ according to Figure~\ref{fig:b} on the left. 

\begin{equation}
 \label{eq 655}
\xymatrix@C=24pt@R=30pt{
P(j) \ar[rrr]^{\begin{bsmallmatrix}k\to j\end{bsmallmatrix}} \ar[d]^{g_1}&&& P(k) \ar[r] \ar[d]^{g_0}& M_{\zg} \ar[r] \ar[d]^g& 0 \\
P(j'')\oplus P(j')\oplus P(j) \ar[rrr]^{\begin{bsmallmatrix}k'\leadsto j'' & k'\to j' & 0 \\ 0 & k\leadsto j' & k\to j\\ 0 & 0 & 0\end{bsmallmatrix}}\ar[d]^{h_1}&&& P(k')\oplus P(k)\oplus P(i) \ar[r] \ar[d]^{h_0}& M_{\zg'}\oplus P(i) \ar[r] \ar[d]^h& 0\\
P(j'')\oplus P(j')\ar[rrr]^{\begin{bsmallmatrix} k'\leadsto j'' & k'\to j' \\ 0 & i\to j'\end{bsmallmatrix}} &&& P(k')\oplus P(i) \ar[r] & M_{R^2\zg}\ar[r] & 0
}
\end{equation}
The vertical maps are given as follows.

\[g_1={\begin{bsmallmatrix} 0 \\ 0\\ 1_{P(j)} \end{bsmallmatrix}} 
\hspace{.5cm} g_0={\begin{bsmallmatrix} 0 \\ 1_{P(k)}\\ i\to k\end{bsmallmatrix}}
\hspace{.5cm} h_1={\begin{bsmallmatrix}1_{P(j'')} & 0 & 0 \\ 0 & 1_{P(j')} & 0\end{bsmallmatrix}}
\hspace{.5cm} h_0= {\begin{bsmallmatrix} 1_{P(k')} & 0 & 0 \\ 0 & -i\to k & 1_{P(i)}\end{bsmallmatrix}}\]
Note that here the pivot map $g_0' = [0, 1_{P(k)}]^T$ appears in the matrix $g_0$, and the pivot map $h_0' = {\begin{bsmallmatrix} 1_{P(k')} & 0   \\ 0 & -i\to k \end{bsmallmatrix}}$ appears in the matrix $h_0$.  

It is easy to see that in degree zero the composition $h_0g_0$ equals zero.  By the same reasoning as in case 2.2 of part (a), we conclude that the pivot map $M_{\zg}\to M_{\zg'}\to M_{R^2\zg}$ factors through $P(i)$, so it is zero in $\scmp\,B$.  This completes the proof in the case when the boundary  segment between vertices $c$ and $b$ belongs to a white region.  

\begin{figure}  
\centerline{\scalebox{.9}{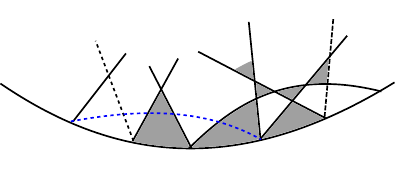}}
\caption{The proof of Proposition~\ref{prop mesh} when $\zg$ is short, and the boundary segment between $c$ and $b$ belongs to a shaded region.}
\label{fig:bb}
\end{figure}

Now, suppose that the boundary segment between $c$ and $b$ belongs to shaded triangular region.  This situation is illustrated in Figure~\ref{fig:bb}.
Note that the shaded triangle with edges $k''$ and $j$ must not be boundary, because otherwise $j=R\zg$, contrary to our assumption.  Furthermore, here we depict the most general situation when the white region $W$ has only a single vertex on the boundary.   As in the previous case, the dashed diagonals $j'', j'''$ may or may not be present.   Then we obtain the following commutative diagram with exact rows. 


\begin{equation}
\label{eq 656}
\xymatrix@C=30pt@R=30pt{
P(k)\oplus P(k'') \ar[rrr]^{\begin{bsmallmatrix} j\to k & j\to k'' \\ j'''\to k & 0\end{bsmallmatrix}} \ar[d]^{g_1}
&&& P(j)\oplus P(j''') \ar[r] \ar[d]^{g_0}
& M_{\zg} \ar[r] \ar[d]^g& 0 
\\
\genfrac{}{}{0pt}{0}{P(j'')\oplus P(j') }{\oplus P(k'')\oplus P(k)} 
\ar[rrr]^{\begin{bsmallmatrix} 0 & j\to j' & j\to k'' & j\to k \\ k' \leadsto j'' & k' \to j' & 0 & 0 \\ 0 & 0 & 0 & j'''\to k \\ 0 & 0 & 0 & 0 \end{bsmallmatrix}}\ar[d]^{h_1}
&&& 
\genfrac{}{}{0pt}{0}{P(j)\oplus P(k')}{\oplus P(j''')\oplus P(i)}
 \ar[r] \ar[d]^{h_0}& M_{\zg'}\oplus P(i) \ar[r] \ar[d]^h& 0
\\
P(j')\oplus P(j'')\ar[rrr]^{\begin{bsmallmatrix} k'\leadsto j' & k'\leadsto j'' \\ i\leadsto j' & 0 \end{bsmallmatrix}} 
&&&
 P(k')\oplus P(i) \ar[r] & M_{R^2\zg}\ar[r] & 0
}
\end{equation}
The vertical maps are given as follows.


\[\begin{array}{lll} 
g_1={\begin{bsmallmatrix} 0 & 0 \\ 0 & 0 \\ 0 & 1_{P(k'')} \\ 1_{P(k)} & 0 \end{bsmallmatrix}} 
&\hspace{.5cm} &g_0={\begin{bsmallmatrix} 1_{P(j)} & 0 \\ 0 & 0 \\ 0 & 1_{P(j''')} \\ -i\to j & i\to j'''\end{bsmallmatrix}}
\\ \\
 h_1={\begin{bsmallmatrix}0 & 1_{P(j')} & 0 & 0 \\ 1_{P(j'')} & 0 & 0 & 0\end{bsmallmatrix}}
&& h_0= {\begin{bsmallmatrix} 0 & 1_{P(k')} & 0 & 0 \\ i\to j  & 0 & -i\to j''' & 1_{P(i)}\end{bsmallmatrix}}
\end{array}\]

By the same reasoning as before, we conclude that the pivot map $M_{\zg}\to M_{\zg'}\to M_{R^2\zg}$ factors through $P(i)$, so it is zero in $\scmp\,B$. 
This completes the proof in the case when $\zg=\rho(i)$ is short.  

The case when $\zg$ is not a radical line but $R^2\zg$ is a radical line is similar to the case when $\zg$ is a radical line discussed above, and we omit the detailed discussion. 
\end{proof}

\subsection{Proof of Proposition \ref{prop:AR}}\label{Asect 7.4}

We need the following result from homological algebra. 
\begin{lemma}\label{lem:720}
Consider a commutative diagram with exact rows. 

\[\xymatrix{P_1(A) \ar[r]^{f_A} \ar[d]^{g_1}& P_0(A) \ar[r]^{\pi_A} \ar[d]^{g_0}& A \ar[r] \ar[d]^{g}& 0 \\
P_1(B) \ar[r]^{f_B} \ar[d]^{h_1}& P_0(B) \ar[r]^{\pi_B} \ar[d]^{h_0}& B \ar[r] \ar[d]^h& 0 \\
P_1(C) \ar[r]^{f_C} & P_0(C) \ar[r]^{\pi_C} & C \ar[r] & 0 }
\]
If $h_1$ is surjective and $\textup{ker}\,h_0\subset \textup{im} \,g_0$ then $\textup{ker}\,h\subset \textup{im} \,g$.
\end{lemma}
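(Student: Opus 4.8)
The statement is a pure diagram-chase in an abelian category (here $\textup{mod}\,B$), so the plan is to verify the inclusion $\textup{ker}\,h\subset \textup{im}\,g$ element by element. First I would pick $a\in A$ with $h(g?)$—no—rather: pick $b\in \textup{ker}\,h\subset B$. Lift $b$ through the surjection $\pi_B\colon P_0(B)\to B$ to some $y\in P_0(B)$ with $\pi_B(y)=b$. Then $\pi_C(h_0(y))=h(\pi_B(y))=h(b)=0$, so $h_0(y)\in\textup{ker}\,\pi_C=\textup{im}\,f_C$, say $h_0(y)=f_C(z)$ for some $z\in P_1(C)$.

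Next I would use the surjectivity of $h_1$ to choose $w\in P_1(B)$ with $h_1(w)=z$, and replace $y$ by $y':=y-f_B(w)$. By commutativity of the lower-left square, $h_0(y')=h_0(y)-h_0(f_B(w))=f_C(z)-f_C(h_1(w))=f_C(z)-f_C(z)=0$, so $y'\in\textup{ker}\,h_0$. Moreover $\pi_B(y')=\pi_B(y)-\pi_B(f_B(w))=b-0=b$ since $\pi_B f_B=0$. Now invoke the hypothesis $\textup{ker}\,h_0\subset\textup{im}\,g_0$: there is $x\in P_0(A)$ with $g_0(x)=y'$. Set $a:=\pi_A(x)\in A$. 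Then $g(a)=g(\pi_A(x))=\pi_B(g_0(x))=\pi_B(y')=b$ by commutativity of the upper-right square. Hence $b\in\textup{im}\,g$, which is what we wanted.

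There is essentially no obstacle here: every step is forced once one decides to chase $b$ through $P_0(B)$ and correct the lift using $h_1$ and $f_B$. The only point requiring a moment's care is the order—one must push $b$ down to $P_1(C)$ (via $\textup{im}\,f_C=\textup{ker}\,\pi_C$) and pull back through $h_1$ \emph{before} applying the hypothesis on kernels, so that the corrected element $y'$ genuinely lies in $\textup{ker}\,h_0$ while still mapping to $b$ under $\pi_B$. I would write this out in a half-page, citing only the exactness of the rows and the commutativity of the four squares; no deeper input from the rest of the paper is needed. If one prefers, the same argument can be phrased via the snake lemma applied to the map of short exact sequences $0\to K_\bullet\to P_\bullet\to(\,\cdot\,)\to 0$, but the direct chase is cleaner and self-contained.
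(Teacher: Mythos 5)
Your chase is correct and coincides step for step with the paper's own proof: lift $b$ through $\pi_B$, push to $P_1(C)$ via exactness, pull back through the surjection $h_1$, correct the lift by $f_B(w)$ to land in $\ker h_0$, and then apply the hypothesis $\ker h_0\subset\im g_0$ and commutativity of the top-right square. Nothing to change.
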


\begin{proof}
Let $b\in B$ such that $h(b)=0$.  Since the rows are exact, it follows that $\pi_B$ is surjective and there exists some $b_0\in P_0(B)$ such that $\pi_B(b_0)=b$.  By commutativity $\pi_Ch_0(b_0)=h\pi_B(b_0)=0$.  In particular, $h_0(b_0)\in \text{ker}\, \pi_C$.   Because the rows in the diagram are exact, it follows that there exists some $c_1\in P_1(C)$ such that $f_c(c_1)=h_0(b_0)$.  By assumption $h_1$ is surjective, so there exists some $b_1\in P_1(B)$ such that $h_1(b_1)=c_1$.   Then we have
\[h_0 f_B (b_1) = f_Ch_1(b_1)=f_C(c_1) = h_0(b_0)\]
and therefore $b_0-f_B(b_1)\in \text{ker}\, h_0$.  By assumption $\textup{ker}\,h_0\subset \textup{im} \,g_0$, so there exists some $a_0\in P_0(A)$ such that $g_0(a_0)=b_0-f_B(b_1)$.  Finally, we obtain 
\[g\pi_A(a_0) = \pi_B g_0(a_0)= \pi_B( b_0-f_B(b_1))= \pi_B(b_0) = b\]
where the last step follows because the rows in the diagram are exact so $\pi_Bf_B=0$.  This shows that $b\in \text{im}\,g$, thus   $\textup{ker}\,h\subset \textup{im} \,g$ as desired. 
\end{proof}

\begin{prop}\label{Aprop:AR}
Suppose $\zg$ is a 2-diagonal in $\mathcal{S}$ such that $R\zg$ is not a radical line.  
\begin{itemize}
\item[(a)] If $\zg$ 
is a radical line
$\rho(i)$ is not short then there exists a short exact sequence in $\textup{mod}\,B$ 
\[0\to M_{\zg}\to M_{\zg'}\oplus M_{\zg''} \oplus P(i) \to M_{R^2\zg}\to 0.\]
\item[(b)] If $\zg$
is a radical line 
$\rho(i)$ is short then there exists a short exact sequence in $\textup{mod}\,B$ 
\[0\to M_{\zg}\to M_{\zg'} \oplus P(i) \to M_{R^2\zg}\to 0.\]
\item[(c)] If $\{R^i \zg\}$ does not contain any radical lines then $\zg$ is not short and there exists a short exact sequence in $\textup{mod}\,B$ 
\[0\to M_{\zg}\to M_{\zg'}\oplus M_{\zg''} \to M_{R^2\zg}\to 0.\]
\end{itemize}
\end{prop}

\begin{proof}
We construct these short exact sequences from the projective presentations of these modules.   To prove part (a) consider the following diagram with exact rows coming from the projective presentations of the modules in the right column.   Moreover, we choose representatives for $\zg, \zg', \zg'', R^2\zg=R^2\zg_1$ as in Figure~\ref{fig:phi}. 

\[\xymatrix@R=30pt@C=40pt{P_1(\zg) \ar[r]^{f_{\zg}} \ar[d]^{g_1} & P_0(\zg)\ar[r]^{\pi_{\zg}} \ar[d]^{g_0} & M_{\zg}\cong \text{rad}\,P(i) \ar[r] \ar[d]^g& 0 \\
   P_1(\zg')\oplus P_1(\zg'') \ar[r]^-{\begin{bsmallmatrix} f_{\zg'} & 0 \\0 & f_{\zg''} \\ 0 & 0 \end{bsmallmatrix}}  \ar[d]^{h_1}  & P_0(\zg')\oplus P_0(\zg'')\oplus P(i)\ar[r]^{\begin{bsmallmatrix} \pi_{\zg'} & 0 & 0 \\ 0 &  \pi_{\zg''} & 0 \\ 0 & 0 & 1_{P(i)} \end{bsmallmatrix}}  \ar[d]^{h_0} & M_{\zg'}\oplus M_{\zg''}\oplus P(i) \ar[r] \ar[d]^h& 0\\
 P_1(R^2\zg) \ar[r]^{f_{R^2\zg}} & P_0(R^2\zg)\ar[r]^{\pi_{R^2\zg}} & M_{R^2\zg} \ar[r] & 0
}
\]

The vertical maps we define as follows 
\[g_1 = {\begin{bsmallmatrix} g_1'\\g_1'' \end{bsmallmatrix}} \hspace{.5cm} g_0 = \begin{bsmallmatrix} g_0'\\g_0''\\ u \end{bsmallmatrix} \hspace{.5cm} h_0=\begin{bsmallmatrix} h_0' \,& \,-\varphi_0 h_0'' \,& \,v \end{bsmallmatrix} \hspace{.5cm} h_1=\begin{bsmallmatrix} h_1' \,&\, -h_1'' \end{bsmallmatrix}.\]
The maps $g_0', g_1'$ and $g_0'', g_1''$ are the pivot morphisms coming from $\zg\mapsto\zg'$ and $\zg\mapsto\zg''$ respectively.  The maps $h_0', h_1'$ and $h_0'', h_1''$ are the pivot morphisms coming from $\zg'\mapsto R^2\zg$ and $\zg''\mapsto R^2\zg$ respectively.  Here we omit superscripts $r$ and $c$ for the pivot morphisms between the projective modules to simplify the notation.  The map $\varphi_0$  is an automorphism of $P_0(R^2\zg)$ needed to make the lower left square commute.  The same construction appears in the proof of Proposition~\ref{prop mesh}, which we refer to for further details regarding the commutativity of the diagram and the precise construction of $\varphi_0$. 

Observe that $R^2\zg$ crosses $\zg=\rho(i)$ so $P(i) \in \text{add}\, P_0(R^2\zg)$, and we define the map $v: P(i)\to P_0(R^2\zg)$ to be the inclusion that maps $P(i)\to P(i)$ via identity.  Finally, for every indecomposable summand $P(k)$ of $P_0(\zg)$ there exits an arrow $\alpha: i\to k$ in the quiver, because $P_0(\zg)$ is the projective cover of $\text{rad}\,P(i)$, so we define $u: P_0(\zg)\to P(i)$ on each component $P(k)\to P(i)$ to be the multiplication with the arrow $\alpha$ or $-\alpha$.  We can easily choose the signs in $u$ so that $uf_{\zg}=0$ because every row of the matrix of $f_{\zg}$ consists of at most two nonzero entries in adjacent columns, since $\zg$ is a radical line.  An explicit description of the map $f_{\zg}$ is given in the last paragraph of the proof of Proposition~\ref{homotopy}.  This defines the vertical maps between the projective presentations.

The maps between the projective presentations induce the corresponding maps $g,h$ between their cokernels as in the diagram, making the two squares on the right also commute.  To prove part (a) it suffices to show that $g$ is injective, $h$ is surjective, and $\text{im}\,g=\text{ker}\,h$.  

Observe that $g$ is injective, because its component $\text{rad}\,P(i)\to P(i)$ induced from the map $u$ is the inclusion map.  The module $P_0(R^2\zg)$ is a direct summand of $P_0(\zg')\oplus P_0(\zg'')\oplus P(i)$ so by the definition of the pivot maps and the map $v$ it follows that $h_0$ is surjective.  Therefore, $h$ is surjective, because both $\pi_{R^2\zg}$ and $h_0$ are surjective. Furthermore, $\text{im}\,g\subset \text{ker}\,h$ because of the mesh relations established in Proposition~\ref{prop mesh}.  Therefore, it remains to show that  $\text{ker}\,h\subset \text{im}\,g$.

Since $R\zg$ is not a radical line, it follows that $h_1$ is surjective because $P_1(R^2\zg)$ is a direct summand of $P_1(\zg')\oplus P_1(\zg'')$.   Then Lemma~\ref{lem:720} implies that $\text{ker}\,h\subset \text{im}\,g$ provided that $\text{ker}\,h_0 \subset \text{im}\,g_0$.     Therefore, it suffices to show that $\text{ker}\,h_0 \subset \text{im}\,g_0$, so we discuss these maps in more detail below. 

We already have a good understanding of the structure of the pivot maps, and we have an explicit description of $u$ and $v$. Now, we recall an explicit description of $\varphi_0 h_0''$ obtained in the proof of Proposition~\ref{prop mesh}.   In particular, the entries of the matrix $\varphi_0h_0''$ are obtained from those in $h_0''$ by replacing a zero with an arrow $\pm \alpha$, where $\alpha:i\to k$, if $P(k)$ is a summand of both $P_0(\zg')$ and $P_0(\zg'')$.   Moreover, the sign of the arrows in the matrix of $h_0$ satisfy the following important observation: 
\begin{equation}
\label{observation} 
\textup{by definition of the maps there is an arrow $x\to y$ in the matrix of $h_0$}\atop \textup{ if and only if the same arrow appears in the matrix of $g_0$.}\end{equation}
  Furthermore, by possibly replacing $u$ with $-u$ we can choose the signs so that $\pm(i\to k)$ is in $u$ if and only if $\mp(i\to k)$ is in $h_0$.

Now, we claim that $\text{ker}\,h_0 \subset \text{im}\,g_0$.
Let $m \in \text{ker}\,h_0$. We can write $m = \sum_{c\in Q_0} me_c$ where $e_c$ is the idempotent at vertex $c$ of the quiver $Q$, and it suffices to show that $me_c \in \text{im}\, g_0$ for every $c$.  Therefore, without loss of generality we may assume that every entry of $m$ consists of paths ending in some fixed vertex $c$ of $Q$.  
We think of $m$ as being a column vector, and we denote its entries corresponding to paths starting at a vertex $k$ by $m_k$.  
In particular, because there is at most one path in $B$ between any two vertices, then every entry $m_k$ of $m$ is of the form $\lambda_k (k\leadsto c)$,  a scalar multiple of a single path $k\leadsto c$ for some element $\lambda_k$ of the underlying field.  

Our strategy to show that $\text{ker}\,h_0 \subset \text{im}\,g_0$ is as follows.  We consider several cases, where we write $m=(m-m')+m'$ for some $m' \in \text{ker}\, h_0\cap \text{im}\,g_0$.  Then $m-m'\in \text{ker}\, h_0$ and it suffices to show that this element is also in $\text{im}\,g_0$.  

(a1) Suppose that $m_k\not=0$ and $P(k)$ is a summand of both $P_0(\zg')$ and $P_0(\zg'')$.  Then the map 
\[h_0=\left[ \begin{smallmatrix} \\
&&&0&&&&0&&&&0\\
&^\ast&&\vdots&&^\ast&&\vdots&&^\ast&&\vdots\\
&&&0&&&&0&&&&0\\
0&\cdots&0&1_{P(k)}&0&\cdots&0&-1_{P(k)}&0&\cdots&&0\\
&&&0&&&&0&&&&0\\
&^\ast&&\vdots&&^\ast&&\vdots&&^\ast&&\vdots\\
&&&0&&&&\pm \za&&&&1_{P(i)}\\
\end{smallmatrix}\right]\] 

Since $m$ is in the kernel of $h_0$ it follows that $m=\left[ 
\begin{smallmatrix}
 *&\cdots&*&m_k&*&\cdots&*&m_k&*&\cdots&*
\end{smallmatrix}\right]^T$, where the positions of the $m_k$ align with the columns of $h_0$ that contain $1_{P(k)}$. Define 
$m'=\left[ 
\begin{smallmatrix}
 0&\cdots&0&m_k&0&\cdots&0&m_k&0&\cdots&\mp \za m_k
\end{smallmatrix}\right]^T$.  Then it is easy to see that $h_0(m')=0$. 
On the other hand, $m'\in \im g_0$, because $g_0$ contains a column $\left[ 
\begin{smallmatrix}
 0&\cdots&0&1_{P(k)}&0&\cdots&0&1_{P(k)}&0&\cdots&\mp \za
\end{smallmatrix}\right]^T$. Thus $m'\in \ker h_0 \cap \im g_0$ and $m-m'\in \ker h_0$. Hence, in order to show that $m=(m-m')+m'$ is in the image of $g_0$ it suffices to show that $m-m'$ is in the image of $g_0$.
  

(a2) Suppose that $m_k\not=0$ and $P(k)$ is a summand of $P_0(\zg')$ but not $P_0(\zg'')$.  Then $1_{P(k)}$ is an entry in $h_0'$.  Since $m\in\text{ker}\, h_0$ then the row of $h_0$ containing $1_{P(k)}$ also contains at least one other nonzero entry, an arrow $-(k\to k'')$, and such that the path $m_k$ starts with the arrow $k\to k''$.  In particular, we can write $m_k=(k\to k'')\circ m_k''$ where $m_k''$ is a path $\lambda_k(k''\leadsto c)$.  Because of the observation (\ref{observation}),  we conclude that $P(k'')$ is a summand of $P_0(\zg)$.  Therefore, there is an arrow $\pm(i\to k'')$ in $u$.  But then by the same observation there should also be an arrow $\mp(i\to k'')$  in $h_0$.  In particular,  $h_0', -\varphi h_0''$ contain columns $\begin{bsmallmatrix} 0 & \dots & 0 & 1_{P(k)} & 0 & \dots & 0 \end{bsmallmatrix}^T$, $\begin{bsmallmatrix} 0 & \dots & 0 & -k\to k'' & 0 & \dots & \mp i\to k'' \end{bsmallmatrix}^T$ respectively, and $v$ is as in case (a1).  Then define 
\[m' = \begin{bsmallmatrix}0 & \dots & 0 & m_k & 0 & \dots & m_k'' & 0 & \dots & 0 & \pm (i\to k'')\circ m_k''\end{bsmallmatrix}^T.\]
Then $m'\in \text{ker}\,h_0$, because the matrix of $g_0$ contains a column 
\[\begin{bsmallmatrix}0 & \dots & 0 & k\to k'' & 0 & \dots & 0 & 1_{P(k'')} & 0 & \dots & 0 & \pm(i\to k'') \end{bsmallmatrix}^T\]
and again we can see that $m'$ is the image under $g_0$ of the vector that contains a single nonzero entry $m_k''$.   Therefore, applying the process in (a2) recursively we can construct $m'$ such that $m-m'$ is a vector whose every entry $(m-m')_k$ is zero whenever $P(k)$ is a summand of $P_0(\zg')$ but not $P_0(\zg'')$.

(a3) The case when $m_k\not=0$ and $P(k)$ is a summand of $P_0(\zg'')$ but not $P_0(\zg')$ follows in the same way as (a2) so we omit the detailed discussion.  

Therefore, by applying cases (a1)-(a3) and replacing $m$ with $m-m'$, it suffices to show that an element $m$ whose every entry $m_k$ is zero whenever $P(k)$ is a summand of $P_0(\zg')\oplus P_0(\zg'')$ belongs to $\text{im}\,g_0$.

(a4) Now suppose that $h_0$ contains arrows $k\to k', k\to k''$ in the same row of its matrix with $i\not=k$.  Then these arrows would also appear in $g_0$ by our earlier observation, so in $u$ we would have arrows $i\to k', i\to k''$ up to sign.  Since $i\not=k$, then there is a subquiver of $Q$ of the form 
\[\xymatrix@R=7pt@C=7pt{k\ar[r] \ar[d]& k'\\ k''&i\ar[u]\ar[l]}\]
which is not possible because there must be a sequence of 3-cycles at $i$ connecting $k'$ and $k''$ and another sequence of 3-cycles at $k$ connecting $k'$ and $k''$.  In particular, $Q$ would have interior vertices, which is a contradiction.  This shows that no row of $h_0$ contains two arrows except possibly for the last one corresponding to paths starting in $i$. 

(a5) 
Now suppose that $m_i\not=0$, recall that $M_\zg=\text{rad}\,P(i)$.  Since $m\in \text{ker}\,h_0$ there exists $m_{k''}\not=0$ such that the path $m_i$ starts with the arrow $i\to k''$.  We can write $m_i = (i\to k'')\circ m'_{k''}$ for some path $m'_{k''}$.  By steps (a1)-(a3) we have that $1_{P(k'')}$ is not in $h_0$, because otherwise $m_{k''}$ would be zero.  Then either $\pm i\to k''$ is the only nonzero entry in its column of $h_0$ or there exists another arrow $-k\to k''$ in this column.  If $-(k\to k'')\circ m'_{k''}\not=0$ then, since $m\in \text{ker}\,h_0$, it follows that there is another arrow $k\to k'''$ or $1_{P(k)}$ in $h_0$ in the same row as $-(k\to k'')$ such that $m_{k'''}$ or $m_{k}$ are nonzero.  This contradicts (a4) and (a3) respectively.  This shows that $\pm i\to k''$ is either the only nonzero entry in its column or there exists another nonzero entry $-(k\to k'')$ such that $-(k\to k'')\circ m'_{k''}=0$.  Then define
\[m' = \begin{bsmallmatrix} 0 & \dots 0 & \mp m'_{k''} & 0 & \dots & 0 & m_{i} \end{bsmallmatrix}^T\]
which belongs to the kernel of $h_0$.  The map $g_0$ contains a column 
\[\begin{bsmallmatrix} 0 & \dots & 0 & k\to k'' &0 & \dots 0 & 1_{P(k'')} & 0 & \dots & 0 & \mp i\to k'' \end{bsmallmatrix}^T\]
where the first arrow is possibly zero.  We see that $g_0$ applied to the vector that has entry $\mp m'_{k''}$ and all other entries being zero yields $m'$.  Therefore, by the same reasoning as before we can reduce $m$ to $m-m'$, the element whose every entry $(m-m')_k$ is zero whenever $P(k)$ is a summand of $P_0(\zg')\oplus P_0(\zg'')\oplus P(i)$.

(a6)  Suppose that $m_{k}\not=0$ and there is an arrow $\pm i\to k$ in $h_0$. By above we know that $1_{P(k)}$ does not appear in $h_0$.  If $(i\to k)\circ m_k=0$ then by our assumptions on $m$ if there exists another arrow $k'\to k$ in $h_0$ then $(k'\to k)\circ m_k=0$.  Thus, whether or not there exists this second arrow ending in $k$, define $m' = \begin{bsmallmatrix} 0 & \dots 0 & m_k & 0 & \dots & 0\end{bsmallmatrix}^T$ belongs to the kernel of $h_0$.   There is a column 
\[\begin{bsmallmatrix} 0 & \dots & 0 & k'\to k &0 & \dots 0 & 1_{P(k)} & 0 & \dots & 0 & \mp i\to k \end{bsmallmatrix}^T\]
in $g_0$ where $k'\to k$ may or may not be there.  Again we can conclude that this $m'\in \text{im}\,g_0$ if $(i\to k)\circ m_k=0$.   

Now, suppose that $(i\to k)\circ m_k\not=0$.  By our assumptions on $m$ we have $m_i=0$, so since $m\in \text{ker}\, h_0$ we conclude that there exists some other $m_{k'}\not=0$ such that $(i\to k)\circ m_k - (i\to k')\circ m'_{k'}$ for some path $m'_{k'}$.  It follows that the two arrows $i\to k, i\to k'$ must form a square as follows. 
\[\xymatrix@R=7pt@C=7pt{i\ar[r] \ar[d] & k \ar[d]\\ k' \ar[r] & \bullet \ar[ul]}\]
By construction of $h_0$ the two arrows $i\to k, i\to k'$ would come with opposite signs.   By our assumptions on $m$ entries $1_{P(k)}, 1_{P(k')}$ do not appear in $h_0$.  Hence, the two relevant columns of $h_0$ are as follows, where $k''\to k, k'''\to k'$ may or may not be there but we depict the most general situation.  Moreover, these arrows may also have some sings, but it will not be important for what follows.  
\[ \begin{bsmallmatrix} 0 & \dots & 0 & k''\to k &0 & \dots 0 & \pm i\to k \end{bsmallmatrix}^T  \hspace{1cm} \begin{bsmallmatrix} 0 & \dots & 0 & k'''\to k' &0 & \dots 0 & \mp i\to k' \end{bsmallmatrix}^T\]
Note that $k''\not=k'''$ by (a4), and again by our assumptions on $m$ we must have that $(k''\to k)\circ m_k=0$ and $(k'''\to k')\circ m_{k'}=0$.   The latter also implies that $(k'''\to k')\circ m'_{k'}=0$.  In particular, we conclude that $m'$ which is zero except for entries $m_k, m'_{k'}$ is in the kernel of $h_0$.  Furthermore, by similar computations as before $m'$ is then in the image of $g_0$, where $g_0$ contains two relevant columns given below. 
\[ \begin{bsmallmatrix} 0 & \dots & 0 & k''\to k &0 & \dots 0 & 1_{P(k)} &0& \dots &0& \mp i\to k \end{bsmallmatrix}^T  \hspace{1cm} \begin{bsmallmatrix} 0 & \dots & 0 & k'''\to k' &0 & \dots & 0 & 1_{P(k')} &0 & \dots &0 \pm i\to k' \end{bsmallmatrix}^T\]

Finally $m-m'$ satisfying all the earlier assumptions in cases (a1)-(a6) reduces $m$ to the zero element.  This completes the proof that $\text{ker}\,h_0 \subset \text{im}\,g_0$ and shows part (a) of the proposition.  

To show part (b) we have $\zg=\rho(i)$ is short and $R\zg$ is not a radical line.  In this case $\rho(i)$ is given as in Figure~\ref{fig:b} or Figure~\ref{fig:bb}.  In Figure~\ref{fig:b}, the picture on the left shows the case when the white region $W$ contains a boundary edge and the picture on the right shows the case when $W$ contains only a single vertex on the boundary of $\mathcal{S}$.   In the right picture, the 2-diagonal $j''$ may or may not be present depending on whether the white region $W'$ has only a vertex or an entire edge on the boundary of $\mathcal{S}$.  Thus, here we depict the most general situation.  Analogous cases may occur in the situation of Figure~\ref{fig:bb}, but here we illustrate the most general scenario.


Then we obtain a commutative diagram \eqref{eq 655} with exact rows as follows coming from the situation of Figure~\ref{fig:b} on the right.  The precise description of the vertical maps are given below the diagram.   The diagram for the situation of  Figure~\ref{fig:b} on the left can be obtained from the given one by removing the extra summands in the projective presentations that do not cross $R\zg'', R^2\zg$. 




By the same reasoning as in part (a) we conclude that $h$ is surjective, $g$ is injective, and $\text{im}\,g \subset \text{ker}\,h$.  Finally, we can easily see that $\text{ker}\,h_0\subset \text{im}\,g_0$, so Lemma~\ref{lem:720} again implies that $\text{im}\,g = \text{ker}\,h$.  This shows part (b) of the proposition if $\rho(i)$ is as in Figure~\ref{fig:b}.

If $\rho(i)$ is as in Figure~\ref{fig:bb},  then we obtain a commutative diagram \eqref{eq 656} with exact rows.  Similar reasoning shows that part (b) of the proposition holds in this case as well.   This completes the proof of part (b). 

To prove part (c) of the proposition we first observe that if $\zg$ is short then $\{R^i \zg\}$ contains a radical line $\rho(i)$ where vertex $i$ is a leaf in the dual graph of the quiver $Q$.   Since by assumption $\{R^i \zg\}$ does not contain any radical lines we conclude that $\zg$ is not short.  Then we construct a commutative diagram with exact rows, which is analogous to the one in case (a) except that we remove the direct summand $P(i)$ from the exact sequence in the middle row together with the appropriate maps.  Then we obtain maps $h, g$ as before.  Because $\zg$ is not a radical line it follows that $h_0$ is surjective, hence $h$ is surjective.  Again we obtain that $\text{im}\,g\subset \text{ker}\,h$ by Proposition~\ref{prop mesh}. 
We can also show that $\text{ker}\,g_0\subset \text{im}\,h_0$ by a similar argument as in case (a), so we omit the detailed discussion.  Note that the argument in this case would be simpler as the maps $g_0, h_0$ would not contain $u$ and $v$.  Since $R(\zg)$ is not a radical line then $h_1$ is surjective, so together with $\text{ker}\,g_0\subset \text{im}\,h_0$ we conclude that $\text{ker}\,h\subset \text{im}\,g$ by Lemma~\ref{lem:720}.  It remains to show that $g$ is injective.


Consider the following commutative diagram, where we now show the superscripts $c$ and $r$ on the vertical map as they will be important.   We know that the square on the left commutes.  In the square on the right we use the notation $\tilde{f}$ to denote a map $\varphi_0 f \varphi_1$ for some automorphisms $\varphi_1, \varphi_0$, and it is constructed as follows.   Recall that Proposition~\ref{prop:projective resolution} gives a projective resolution of $M_{\zg}$ starting with he map $f_{\zg}$ and choosing the appropriate representatives for $f_{R^i\zg}$ to produce the maps $\tilde{f}_{R^i\zg}$ for $i\geq 0$ as in the statement.   Then Proposition~\ref{prop:67} describes the first three vertical maps $g_0^r, g_1^c, g_2^r$ induced by the pivot morphism from $\zg$ to $\delta$.   A dual construction allows us to produce a projective co-resolution by starting with $f_{\zg}$ and then moving to the right with $f_{R^{-1}\zg}$ so that we obtain a commuting square on the right as in the diagram below.  Note that the vertical maps are now $g_1^c, g_0^r, g_{-1}^c$ moving from left to right.   Then $\tilde{f}_{R^{-1}\zg}$ factors through $M_{\zg}$, and $\tilde{f}_{R^{-1}\zg'}\oplus\tilde{f}_{R^{-1}\zg''}$ factors through $M_{\zg'}\oplus M_{\zg''}$ as shown below.

\[
\xymatrix@R=15pt@C=15pt{
P_1(\zg)\ar[rr]^{f_{\zg}} \ar[ddd]^{g_1^c}&& P_0(\zg) \ar[ddd]^{g_0^r} \ar[rr]^{ \tilde{f}_{R^{-1} \zg} } \ar@{->>}[dr] && P_{0}(R^{-1}\zg)\ar[ddd]^{g_{-1}^c}\\
&&&M_{\zg}\ar[d]^{g}\ar@{^{(}->}[ur] \\
&&&M_{\zg'}\oplus M_{\zg''} \ar@{^{(}->}[dr]\\
P_1(\zg')\oplus P_1(\zg'')\ar[rr]^{f_\zg'\oplus f_\zg''}&&P_0(\zg')\oplus P_0(\zg'') \ar@{->>}[ur]\ar[rr]^{\tilde{f}_{R^{-1}\zg'}\oplus\tilde{f}_{R^{-1}\zg''}}&& P_{0}(R^{-1}\zg')\oplus P_{0}(R^{-1}\zg'') 
 }
\]

Because $R^{-1}\zg$ is not a radical line then $P_0(R^{-1}\zg)$ is a direct summand of $P_0(R^{-1}\zg')\oplus P_0(R^{-1}\zg'')$, and we conclude that $g_{-1}^c$ is injective. Since the square on the right commutes and $g_{-1}^c$ is injective it follows that ${g}$ is also injective.  This completes the proof of part (c). 
\end{proof}

\end{document}